\tikzset{shorten <>/.style={shorten >=#1,shorten <=#1}}
\newcounter{nodemaker}
\tikzset{Rightarrow/.style={double equal sign distance,>={Implies},->},
triple/.style={-,preaction={draw,Rightarrow}},
quadruple/.style={preaction={draw,Rightarrow,shorten >=0pt},shorten >=1pt,-,double,double
distance=0.2pt}}
\tikzset{%
    symbol/.style={%
        draw=none,
        every to/.append style={%
            edge node={node [sloped, allow upside down, auto=false]{$#1$}}}
    }
}
\newcommand{\bigdoublevee}{\big@doubleop{\bigvee}}
\newcommand{\bigdoublewedge}{\big@doubleop{\bigwedge}}
\newcommand{\big@doubleop}[1]{%
  \DOTSB\mathop{\mathpalette\big@doubleop@aux{#1}}\slimits@
}
\newcommand\big@doubleop@aux[2]{%
  \sbox\z@{$\m@th#1#2$}%
  \makebox[1.35\wd\z@][s]{$\m@th#1#2\hss#2$}%
}
\newcommand*{\doublerightarrow}[2]{\mathrel{
  \settowidth{\@tempdima}{$\scriptstyle#1$}
  \settowidth{\@tempdimb}{$\scriptstyle#2$}
  \ifdim\@tempdimb>\@tempdima \@tempdima=\@tempdimb\fi
  \mathop{\vcenter{
    \offinterlineskip\ialign{\hbox to\dimexpr\@tempdima+1em{##}\cr
    \rightarrowfill\cr\noalign{\kern.5ex}
    \rightarrowfill\cr}}}\limits^{\!#1}_{\!#2}}}
\newcommand*{\triplerightarrow}[1]{\mathrel{
  \settowidth{\@tempdima}{$\scriptstyle#1$}
  \mathop{\vcenter{
    \offinterlineskip\ialign{\hbox to\dimexpr\@tempdima+1em{##}\cr
    \rightarrowfill\cr\noalign{\kern.5ex}
    \rightarrowfill\cr\noalign{\kern.5ex}
    \rightarrowfill\cr}}}\limits^{\!#1}}}
\newcommand{\xRrightarrow}[2][]{\ext@arrow 0359\Rrightarrowfill@{#1}{#2}}
\newcommand{\Rrightarrowfill@}{\arrowfill@\equiv\equiv\Rrightarrow}
\newcommand{\xLleftarrow}[2][]{\ext@arrow 3095\Lleftarrowfill@{#1}{#2}}
\newcommand{\Lleftarrowfill@}{\arrowfill@\Lleftarrow\equiv\equiv}
\newcommand{\hirayo}{\text{\usefont{U}{min}{m}{n}\symbol{'210}}}
\DeclareFontFamily{U}{min}{}
\DeclareFontShape{U}{min}{m}{n}{<-> udmj30}{}
\DeclareFontFamily{U}{min}{}
\DeclareFontShape{U}{min}{m}{n}{<-> udmj30}{}
\newcommand{\katayo}{\text{\usefont{U}{min}{m}{n}\symbol{'350}}}
\DeclareFontFamily{U}{min}{}
\DeclareFontShape{U}{min}{m}{n}{<-> udmj30}{}
\newcommand{\Spec}% spectrum
 {{\bf Spec}}
 \newcommand{\Alg}% alg
 {{\bf Alg}}
  \newcommand{\psAlg}% psalg
 {{\bf psAlg}}
 \newcommand{\Lex}% Lex
 {{\bf Lex}}
  \newcommand{\Mon}% Mon
 {{\bf Mon}}
 \newcommand{\biLex}% biLex
 {{\bf biLex}}
 \newcommand{\Et}% etale map
 {{\bf Et}}
\newcommand{\Loc}% local map
 {{\bf Loc}}
  \newcommand{\Diag}% diagonally universal morphisms
 {{\bf Diag}}
 \newcommand{\GTop}% Topos
 {{\bf GTop}}
 \newcommand{\Geom}% Topos
 {{\bf Geom}}
 \newcommand{\Cat}% cat
 {{\bf Cat}}
\newcommand{\cod}% codomain
 {{\rm cod}}
\newcommand{\comp}% composition
 {\circ}
\newcommand{\Cont}% category of continuous G-sets
 {{\bf Cont}}
\newcommand{\dom}% domain
 {{\rm dom}}
\newcommand{\fp}% fp
{{\rm fp }}
\newcommand{\li}% limit
{{\textup{lim} }}
\newcommand{\oplaxlim}% limit
{{\textup{oplaxlim} }}
\newcommand{\oplaxcolim}% limit
{{\textup{oplaxcolim} }}
\newcommand{\laxcolim}% limit
{{\textup{laxcolim} }}
\newcommand{\bicolim}% bicolimit
{{\textup{bicolim} }}
\newcommand{\bilim}% bilimit
{{\textup{bilim} }}
\newcommand{\pscolim}% pscolimit
{{\textup{pscolim} }}
\newcommand{\pslim}% pslimit
{{\textup{pslim} }}
\newcommand{\lan}% Lan
{{\textup{lan} }}
\newcommand{\ran}% limit
{{\textup{ran} }}
\newcommand{\colim}% colimit
{{\textup{colim}}}
\newcommand{\coeq}% coeq
{{\textup{coeq}}}
\newcommand{\eq}% eq
{{\textup{eq}}}
\newcommand{\comma}[2]% comma object
{\mbox{$(#1\!\downarrow\!#2)$}}
\newcommand{\empstg}% empty string
 {[\,]}
\newcommand{\epi}% epimorphism
 {\twoheadrightarrow}
\newcommand{\hy}% hyphen (in math mode)
 {\mbox{-}}
\newcommand{\im}% image
 {{\rm im}}
\newcommand{\imp}% implication
 {\!\Rightarrow\!}
\newcommand{\Ind}% ind
 {{\rm Ind}}
 \newcommand{\Pro}% ind
 {{\rm Pro}}
\newcommand{\mono}% monomorphism 
 {\rightarrowtail}
\newcommand{\ob}% class of objects
 {{\rm ob}}
 \newcommand{\can}% canonical topology
 {{\rm can}}
 \newcommand{\Hom}% class of objects
 {{\rm Hom}}
\newcommand{\op}% opposite category
 {{\rm op}}
 \newcommand{\pt}% points
 {{\bf pt}}
\newcommand{\Set}% category of sets
 {{\bf Set }}
\newcommand{\Sh}% category of sheaves
 {{\bf Sh}}
\newcommand{\sh}% category of sheaves
 {{\bf sh}}
\newcommand{\Sub}% subobject lattice
 {{\rm Sub}}
\newcommand{\Flat}% flat
 {{\bf Flat}}
\newcommand{\biIns}% biInserter
{{\textup{biIns} }}
\newcommand{\biInv}% biInverter
{{\textup{biInv} }}
\newcommand{\biIsoIns}% biIsoInserter
{{\textup{biIsoIns} }}
\newcommand{\biend}% biend
{{\textup{bi}\int }}
\newcommand{\biLan}% biLan
{{\textup{biLan} }}
\newcommand{\biRan}% biRan
{{\textup{biRan} }}
 \newcommand{\Cart}% cartesian map
 {{\bf Cart}}
  \newcommand{\ps}% pseudonat
 {{\bf ps}}
 \newcommand{\St}% bicategory of stacks
 {{\bf St}}
\newtheorem{theorem}{Theorem}[subsection]
\newtheorem*{theorem*}{Theorem}
\newtheorem{proposition}[theorem]{Proposition}
\newtheorem{corollary}[theorem]{Corollary}
\newtheorem{corollary'}[theorem]{Corollary}
\newtheorem{lemma}[theorem]{Lemma}
\theoremstyle{definition}
\newtheorem{definition}[theorem]{Definition}
\newtheorem{example}[theorem]{Example}
\theoremstyle{definition}
\newtheorem{remark}[theorem]{Remark}
\theoremstyle{definition}
\newtheorem{division}[theorem]{}
\theoremstyle{conjecture}
\title{Bi-accessible and bipresentable 2-categories}
\author{Ivan Di Liberti\footnote{Corresponding Author.} \hspace{1.75cm} Axel Osmond}
\date{}
\begin{document}

\maketitle

\vspace{-1cm}

\begin{abstract}
We develop a 2-dimensional version of accessibility and presentability compatible with the formalism of flat pseudofunctors. First we give prerequisites on the different notions of 2-dimensional colimits, filteredness and cofinality; in particular we show that \emph{$\sigma$-filteredness} and \emph{bifilteredness} are actually equivalent in practice for our purposes. Then, we define bi-accessible and bipresentable 2-categories in terms of \emph{bicompact} objects and \emph{bifiltered} bicolimits. We then characterize them as categories of \emph{flat pseudofunctors}. We also prove a bi-accessible right bi-adjoint functor theorem and deduce a 2-dimensional Gabriel-Ulmer duality relating small \emph{bilex} 2-categories and finitely bipresentable 2-categories.  Finally, we show that 2-categories of pseudo-algebras of finitary 2-monads on $\Cat$ are finitely bipresentable, which in particular captures the case of $\Lex$, the 2-category of small lex categories. Invoking the technology of \emph{lex-colimits}, we prove further that several $2$-categories arising in categorical logic (\textbf{Reg, Ex, Coh, Ext, Adh, Pretop}) are also finitely bipresentable. 

  \smallskip \noindent \textbf{Keywords.}  bi-accessible $2$-category, bipresentable $2$-category, bifiltered bicolimit, bicompact object, doctrine. \textbf{MSC2020.} 18C35; 18N10; 18A30; 18C10; 18D65.
\end{abstract}

  {
   \hypersetup{linkcolor=black}
   \tableofcontents
  }

\section*{Introduction}

The original motivation of this work was to find a 2-dimensional notion of presentability encompassing the different finitary \emph{first order doctrines} corresponding to different fragments of first order logic. Those are 2-categories whose objects are small categories endowed with a certain structure allowing to see them as syntactic categories for first order theories, where one can interpret connectors and inference rules. Prominent examples of doctrines are $ \textbf{Lex}$, for \emph{left exact} categories, corresponding to cartesian logic; $ \textbf{Reg}$, for \emph{regular} categories, corresponding to regular logic; $ \textbf{Coh}$ for \emph{coherent categories}, corresponding to coherent logic; but also the 2-categories $ \textbf{Ext}$ of \emph{extensive} categories, $ \textbf{Adh}$ of \emph{adhesive} categories, $\textbf{Ex}$ of \emph{exact} categories, $ \textbf{Pretop}_\omega$ of \emph{finitary pretopoi}, or $ \textbf{BoolPretop}$ of \emph{boolean} finitary pretopoi.\\ 

Those doctrines can also be seen as higher-dimensional versions of the different categories of propositional algebras, as $ \wedge \hy \textbf{Slat}$, the category of meet-semilattices, $ \textbf{DLat}$, the category of bounded distributive lattices, $ \textbf{Bool}$ the category of boolean algebras and so on. A common feature to most of those categories of propositional algebras is that they are \emph{finitely presentable}: they are cocomplete and generated under filtered colimits by an essentially small subcategory of compact objects. Finitely presentable categories are known to enjoy a lot of excellent properties and provide a framework generalizing universal algebra, a reason for which the 1-categorical theory of presentability, as well as the more general theory of \emph{accessibility}, have become classical topics at the intersection of category theory and model theory since \cite{makkai1989accessible} and \cite{adamek1994locally}. Already in \cite{makkai1995gabbay}, Makkai envisions the relevance of this project, prospecting that a general theory of $2$-dimensional accessibility and presentability should exist and should extend to the $2$-categories of interest in this paper\footnote{In \cite{makkai1995gabbay}, Makkai claims that much of this work should be straightforward, yet we will discuss a few paragraphs below that \textit{the devil is in the details}.}. \\

Previous proposal for 2-dimensional accessibility and presentability can be found in \cite{kelly1982structures} and \cite{bourke2020accessible} in the stricter context of enriched categories. However, capturing first order doctrines as examples requires a more relaxed version involving weaker notions of filteredness and colimits: for instance, $\textbf{Lex}$ is not 2-presentable in the sense of \cite{kelly1982structures} because it has only bicolimits and not all strict ones, beside issues about its expected rank of 2-accessibility in the sense of \cite{bourke2020accessible}. Here, we will work in the environment of strict 2-categories together with pseudofunctors and pseudonatural transformation (or equivalently, strict 2-functors and pseudonatural transformations), as our notions will involve weak inversal properties relying on equivalences of categories rather than isomorphisms; a good reference for this framework is provided by \cite{johnson20212}. \\

%for instance $ \Lex$ lacks some strict 2-colimits which would be expected in those formalism, while it has bicolimits. %for those versions still relied on 1-dimensional filtered colimits and involved too strict notions of 2-(co)limits, first order doctrines failed to be finitely 2-presentable in their sense - for instance, $\textbf{Lex}$ is not 2-presentable in the sense of \cite{kelly1982structures} because it has only bicolimits and not all strict ones, beside issues about its expected rank of 2-accessibility in the sense of \cite{bourke2020accessible}.  \\

We introduce here relaxed notions of \emph{bi-accessible} (\Cref{sigma accessible cat}) and \emph{bipresentable} (\Cref{bipres}) 2-categories and connect them to the recent advance of \cite{descotte2018sigma} on the theory of \emph{flat pseudofunctors}. Our notion relies on \cite{kennison1992fundamental} notion of \emph{bifilteredness}, together with a convenient notion of \emph{bicompact objects} (\Cref{bicompact}) enjoying the analogous property of compact objects against bifiltered bicolimits; we define finitely bi-accessible categories as those having bifiltered bicolimits and an essentially small subcategory of bicompact objects generating them under bifiltered bicolimits; finitely bipresentable 2-categories are as those that are moreover bicocomplete - but similarly to the one dimensional case, this amounts to having weighted bilimits. In particular, a handy criterion to detect finite bipresentability of a $2$-category is offered in \Cref{1.11}, which is a $2$-dimensional analog of a classical characterization through strong generators.  We then prove that categories of flat pseudofunctors are bi-accessible (\Cref{2-cat of flat pseudofunctor are accessible}) and bipresentable (\Cref{Flat pseudofunctors on bilex are bipres}) if their domain admits finite weighted bilimits, the latter result being part of a categorification of the well known \emph{Gabriel-Ulmer duality} established in \Cref{GU}. \\

A subtle part of this work is to chose first the appropriate shape of 2-dimensional filteredness and which associated classes of 2-dimensional (co)-limits should be used to express accessibility. There are two competing possible choices: \begin{itemize}
    \item one either could rely on (conical) bifiltered bicolimits as mentioned above à la \cite{kennison1992fundamental}, which looks as the most naive possible generalization; the problem with this approach is that $\Cat$-valuated pseudofunctors do not express in general as conical bicolimits of representables, which a priori is an obstruction to a theory of flat pseudofunctors; 
    \item or one could either use the formalism of \emph{$\sigma$-filtered $\sigma$-colimits}, a class intermediate between bi and lax. The point with this formulation is that it goes with an already established, nicely behaved theory of flat pseudofunctors, as developed in \cite{descotte2018sigma}. This theoretical advantage qualifies this formalism as a \textit{conceptually} correct notion of 2-dimensional filteredness.  
    
    %though most pseudofunctors cannot be expressed as conical bicolimits of representables (requiring either $\sigma$-bicolimits or non trivial weights to encode 2-dimensional data), 

\end{itemize} 

Luckily, those two approaches actually merge into a single one thanks to the key observation that any $ \sigma$-filtered 2-category admits a \emph{$\sigma$-cofinal} bifiltered 2-category (see \Cref{key lemma}) which simplifies the corresponding $ \sigma$-bicolimit into an ordinary bicolimit. Those simplifications can also be observed by the fact that all non invertible oplax inclusions of a $\sigma$-filtered $\sigma$-colimit can be ``swallowed" by the diagram and replaced by invertible inclusion 2-cells (\Cref{triangle lemma}). \\

In particular this explains why, though defined with bifiltered bicolimits, bi-accessible 2-category corresponds to 2-categories of flat pseudofunctors which where first defined in \cite{descotte2018sigma} in terms of $\sigma$-filteredness and $\sigma$-colimits. This phenomenon happens to be related also to the 2-dimensional part of the lifting property of bicompact objects, which allows to transfer non-invertible 2-cells of a $ \sigma$-cocone into the underlying diagram. It also ensures that the 2-dimensional analog of the canonical cocone over an object in a bi-accessible 2-category can be chosen either as the $\sigma$-filtered oplax cocone (relative to the cocartesian 1-cells, see \Cref{the oplaxcocone is the category of element of the binerve}) or more simply as a bifiltered bicocone over it (see \Cref{the pseudococone is bifiltered}). All of this ensures that any attempt to do a theory of ``$\sigma$-accessible 2-categories" uniquely stated in term of $ \sigma$-filtered $\sigma$-bicolimits will automatically reduce to our theory of bi-accessibility. \\

Our last section is devoted to the motivating examples. After the preliminary observation that $\Cat$ really is the ur-example of finitely bipresentable 2-categories, we prove in \Cref{Algebras of a finitary pseudomonad are sigma-presentable} that the 2-category of pseudo-algebras and pseudomorphisms for a bifinitary pseudomonad on a finitely bipresentable 2-category is itself finitely bipresentable. This captures in particular the example of $\Lex$ (see \Cref{Lex is finitary}), for its bifiltered bicolimits can be shown to be computed in $\Cat$. Incidentally, this argument applies also to the two category of monoidal categories and strong monoidal functors (see \Cref{monoidalcats}).\\

Proving the remaining finitary first order doctrines to be finitely bipresentable could be achieved in several ways. One could directly try to manipulate their axioms as kind of ``2-dimensional cartesian predicates" (involving only finitely weighted bilimits or left and right Kan extensions) in a manner reminiscent of Makkai paradigm of \emph{injectivity classes}. While we believe such a work to be interesting by itself as an occasion to conscientiously perform 2-dimensional logic, such a process would have represented an enormous amount of specific considerations and lemmas for each example. Such a tedious work appeared superfluous thanks to the powerful paradigm of \emph{lex colimits} introduced by \cite{GarnerLacklexcolimits}, which was specifically developed to capture a large class of examples of doctrines defined through different flavours of exactness conditions. Here they are axiomatized as classes of pseudo-algebras for some 2-monad on $\Lex$ defined as a free cocompletion under specific colimits weighted in some class of finite weights - this has been described as ``cocompleteness in the lex world". Having proven $\Lex$ to be finitely bipresentable, it suffices then to prove the embedding of the category of those pseudo-algebras (which form the corresponding 2-categories of exact categories relative to those weights) to be bifinitary, which is done in \Cref{The phiexactmonad is finitary}. This captures all the remaining doctrines defined from exactness properties as $ \textbf{Reg}$, $ \textbf{Ex}$, $ \textbf{Coh}$, $ \textbf{Adh}$, $ \textbf{Ext}$ and $ \textbf{Pretop}_\omega$.

\section{Prerequisites} \label{generalities}

\subsection{Notions of strictness and universality}

In this subsection we shall recall some general definitions and facts about $2$-dimensional category theory, mainly addressing the strictness nuances that will arise later in the paper. We briefly recall the notion of weighted (bi)limit. We refer to   \cite{johnson20212} for an introduction to the topic, but we address the reader to \cite[Sec. 3.4]{bourke2020accessible} and \cite[Sec. 1 and 2]{bird1989flexible}, for a very detailed discussion and comparison with the literature.

\begin{division}[$2$-Categories of $2$-functors]
For any 2-categories $\mathcal{A}, \, \mathcal{B}$ we have the following inclusions of 2-categories of strict 2-functors together with respectively \emph{strict}, \emph{pseudo} and \emph{lax} natural transformations:
\[ [\mathcal{A}, \mathcal{B}] \hookrightarrow [\mathcal{A}, \mathcal{B}]_p \hookrightarrow [\mathcal{A}, \mathcal{B}]_l. \]
%Beware not to confuse in general $ [\mathcal{A}, \mathcal{B}]_p $, the 2-category of strict 2-functors and pseudonatural transformations, with $ \ps[\mathcal{A}, \mathcal{B}]$, the 2-category of \emph{pseudofunctors} and pseudonatural transformations. However, this caution should not be taken too seriously in the case of (pseudo)functors with values in $\Cat$, as in practice the following result tells us those are almost the same.
\end{division}

However, we are going to work mostly with \emph{pseudofunctors} between 2-categories: whence the importance of the following observation, from \cite{johnson20212}, ensuring that considering pseudofunctors between strict 2-categories returns actually again a strict 2-category rather than the weaker notion of bicategory - which we are not going to consider in this work:

\begin{proposition}[{\cite{johnson20212}[Corollary 4.4.13]}] 
Let be $\mathcal{A}, \mathcal{B}$ two 2-categories; then taking pseudofunctors, pseudonatural transformations between them and natural modifications between those yields a 2-category $ \ps[\mathcal{A}, \mathcal{B}]$. 
\end{proposition}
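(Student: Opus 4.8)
The plan is to exhibit the claimed 2-categorical structure explicitly and to check the axioms, the point being that every place where strictness could fail is controlled by the associativity and unit constraints of $\mathcal{B}$, which are identities since $\mathcal{B}$ is a strict 2-category. First I would record the data: the objects are pseudofunctors $F\colon\mathcal{A}\to\mathcal{B}$; a 1-cell $F\Rightarrow G$ is a pseudonatural transformation $\alpha$, consisting of 1-cells $\alpha_a\colon Fa\to Ga$ for each object $a$ together with invertible naturality 2-cells $\alpha_f\colon Gf\circ\alpha_a\Rightarrow\alpha_b\circ Ff$ for each $f\colon a\to b$, subject to the usual coherence with respect to composition and identities in $\mathcal{A}$; and a 2-cell between $\alpha$ and $\beta$ is a modification, i.e.\ a family of 2-cells $\Gamma_a\colon\alpha_a\Rightarrow\beta_a$ compatible with the naturality data.

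Next I would define the three compositions. The vertical composite of $\alpha\colon F\Rightarrow G$ and $\beta\colon G\Rightarrow H$ has components $(\beta\alpha)_a=\beta_a\circ\alpha_a$ and naturality 2-cells obtained by pasting $\beta_f$ and $\alpha_f$ in $\mathcal{B}$; a short pasting argument shows the result is again pseudonatural. Modifications compose vertically, componentwise in $\mathcal{B}$, and horizontally by whiskering against the transformations, and their interchange law is inherited directly from interchange in $\mathcal{B}$. The identity 1-cell on $F$ has components $\mathrm{id}_{Fa}$ and identity naturality 2-cells.

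The heart of the matter---and the only point where strictness must be argued rather than transported verbatim---is the associativity and unitality of composition of pseudonatural transformations. Evaluated at an object $a$, a triple composite reads $(\gamma_a\circ\beta_a)\circ\alpha_a$ against $\gamma_a\circ(\beta_a\circ\alpha_a)$; these agree on the nose precisely because composition of 1-cells in $\mathcal{B}$ is strictly associative, and the accompanying naturality 2-cells match by the coherence of pasting in a strict 2-category. Likewise the identity transformation is a strict two-sided unit because units in $\mathcal{B}$ are strict. Hence no associator or unitor modifications intervene, and the bicategory of pseudofunctors, pseudonatural transformations and modifications degenerates to a strict 2-category, which we denote $\ps[\mathcal{A},\mathcal{B}]$.

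I expect the main obstacle to be bookkeeping rather than genuine difficulty: verifying that composites of pseudonatural transformations remain pseudonatural, and that all the pasting identities hold, requires several coherence diagram chases in $\mathcal{B}$. The conceptual content is simply that the weak structure of the functor bicategory is pulled back from that of $\mathcal{B}$, so strictness of $\mathcal{B}$ forces strictness of $\ps[\mathcal{A},\mathcal{B}]$; this is exactly \cite{johnson20212}[Corollary 4.4.13].
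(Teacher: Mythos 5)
Your proposal is correct: the paper gives no proof of this statement, citing it directly from Johnson--Yau, and your argument is precisely the content of that corollary --- the associator and unitors of the functor bicategory are computed pointwise from those of $\mathcal{B}$, so strictness of $\mathcal{B}$ forces $\ps[\mathcal{A},\mathcal{B}]$ to be a strict 2-category rather than merely a bicategory. The remaining verifications you defer (composites of pseudonatural transformations are pseudonatural, interchange for modifications) are indeed routine pasting arguments in $\mathcal{B}$ and do not hide any obstruction.
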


\begin{proposition}[Strictification of pseudofunctors, originally shown in {\cite{power1989general}}]\label{strictification}
If $ \mathcal{A}$ is a strict 2-category, then any pseudofunctor $ F: \mathcal{A} \rightarrow \Cat$ admits a \emph{strictification}, that is there exists a strict 2-functor $ \overline{F}: \mathcal{A} \rightarrow \Cat$ and a pseudonatural equivalence $ \eta_F : F \Rightarrow \overline{F}$. Moreover for any strict 2-functor $ G : \mathcal{A} \rightarrow \Cat$ we have an equivalence of categories, 
\[ \ps[\mathcal{A}, \Cat][F, \iota(G)] \simeq [\mathcal{A}, \Cat][\overline{F}, G].\] 
\end{proposition}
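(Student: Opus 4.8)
The plan is to realize $\overline{F}$ as the value at $F$ of a left adjoint $L$ to the inclusion $\iota \colon [\mathcal{A}, \Cat] \hookrightarrow \ps[\mathcal{A}, \Cat]$; then the asserted equivalence of hom-categories is exactly the adjunction $L \dashv \iota$, and $\eta_F \colon F \Rightarrow \iota(\overline{F})$ is its unit. The conceptual source of $\overline{F}$ is the pseudo co-Yoneda (density) presentation $F \simeq \int^{b} F(b) \cdot \mathcal{A}(b,-)$ as a weighted pseudocolimit of representables; the idea is to replace this pseudocolimit by a strict model. Concretely I would set $\overline{F}(a)$ to be the category whose objects are triples $(b, u, x)$ with $b \in \ob\mathcal{A}$, $u \colon b \to a$ a $1$-cell and $x \in \ob F(b)$, and whose morphisms are generated by the $1$- and $2$-cells of $\mathcal{A}$ together with the morphisms of the fibres $F(b)$, the coherence isomorphisms of $F$ being absorbed into the composition law. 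The crucial point making $\overline{F}$ a genuine strict $2$-functor is that reindexing along a $1$-cell $g \colon a \to a'$ acts by $(b,u,x) \mapsto (b, g u, x)$, which is strictly functorial precisely because $\mathcal{A}$ is a strict $2$-category: the pseudo-coherence of $F$ never interferes with functoriality in the variable $a$, only with the internal morphisms of $\overline{F}(a)$.

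I would then verify the universal property. For a strict $G \colon \mathcal{A} \to \Cat$, the universal property of the (strict) weighted colimit together with the strict Yoneda lemma $[\mathcal{A}, \Cat][\mathcal{A}(b,-),G] \cong G(b)$ gives
\[ [\mathcal{A}, \Cat][\overline{F}, G] \;\simeq\; \int_{b \in \mathcal{A}} \Cat\big[F(b), G(b)\big]. \]
The end on the right is exactly the category whose objects are families of functors $(F(b) \to G(b))_{b}$ rendered compatible with the action of $1$-cells up to the coherence data of $F$, with morphisms the corresponding modifications; this is by definition $\ps[\mathcal{A}, \Cat][F, \iota(G)]$. Matching the pseudonaturality constraints of a transformation $F \Rightarrow \iota(G)$ against the strict naturality of $G$ through this end is where the structure isomorphisms of $F$ are consumed, and it is routine but must be done with care; it yields the ``Moreover'' equivalence, pseudonaturally in $G$.

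The substantive step, which I expect to be the main obstacle, is to show the unit $\eta_F$ is a pseudonatural equivalence, equivalently that each $\eta_{F,a} \colon F(a) \to \overline{F}(a)$, $x \mapsto (a, 1_a, x)$, is an equivalence of categories; this is the flexibility content of strictification and not a formal consequence of the adjunction. I would produce an explicit pseudoinverse $\overline{F}(a) \to F(a)$ by $(b, u, x) \mapsto F(u)(x)$, and check using the unit and composition isomorphisms of the pseudofunctor $F$ that the two composites are naturally isomorphic to the identities, the nontrivial identification being $(a, 1_a, F(u)(x)) \cong (b, u, x)$ via the fibre relation along $u$. Should the coherence bookkeeping become unwieldy, the robust fallback is the two-dimensional monad theory of Blackwell--Kelly--Power: $[\mathcal{A}, \Cat]$ is the $2$-category of strict algebras for a $2$-monad on $\prod_{\ob \mathcal{A}}\Cat$, pseudofunctors are its pseudoalgebras, the left adjoint to $\iota$ is computed by a codescent object, and every pseudoalgebra of this particular $2$-monad is flexible, which forces the unit to be an equivalence.
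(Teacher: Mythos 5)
The paper does not actually prove this proposition: it is quoted from the literature (Power's general coherence result), so there is no in-paper argument to compare against. Your outline is, in essence, a reconstruction of that standard proof, and its architecture is sound. The explicit model you describe --- objects $(b, u\colon b \to a, x \in \ob F(b))$, reindexing along $g\colon a \to a'$ by postcomposition, strictly functorial precisely because $\mathcal{A}$ is strict --- is the concrete form of Power's construction, which factors the structure map $\coprod_b F(b)\times\mathcal{A}(b,a) \to F(a)$, $(x,u)\mapsto F(u)(x)$, as bijective-on-objects followed by fully faithful. Your fallback through two-dimensional monad theory (pseudofunctors as pseudoalgebras for a $2$-monad on $\prod_{\ob\mathcal{A}}\Cat$, strictification as a codescent object, coherence forcing the unit to be an equivalence) is the abstract form of the same argument; the pseudoinverse $(b,u,x)\mapsto F(u)(x)$ and the identification $(a,1_a,F(u)(x))\cong(b,u,x)$ are exactly right.

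Two steps need repair before this is a proof. First, the morphisms of $\overline{F}(a)$ should not be "generated by" cells of $\mathcal{A}$ and of the fibres with coherence "absorbed"; the clean definition is $\overline{F}(a)\big((b,u,x),(c,v,y)\big) := F(a)\big(F(u)(x),F(v)(y)\big)$, i.e. hom-sets pulled back along evaluation, which makes full faithfulness of the comparison to $F(a)$ automatic, whereas a generators-and-relations definition leaves that to be proved. Second, and more seriously, the step "$[\mathcal{A},\Cat][\overline{F},G] \simeq \int_b \Cat[F(b),G(b)]$ by the universal property of the strict weighted colimit and strict Yoneda" is not meaningful as written: $F$ is only a pseudofunctor, so it cannot serve as a weight for a strict weighted colimit, and the strict end of the pseudofunctor $\Cat[F(-),G(-)]$ is likewise undefined; if instead you read the end as a pseudo-end, its identification with $\ps[\mathcal{A},\Cat][F,\iota(G)]$ is the very statement being proved, so the paragraph is circular. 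The fix is either direct verification --- send $\theta\colon\overline{F}\Rightarrow G$ to $\alpha$ with $\alpha_b(x)=\theta_b(b,1_b,x)$, and conversely $\alpha$ to $\theta$ with $\theta_a(b,u,x)=G(u)(\alpha_b(x))$, using the canonical isomorphisms above to supply the pseudonaturality constraints --- or to run your own fallback honestly: exhibit $\overline{F}$ as a codescent object of a strict diagram of free $2$-functors, so that $[\mathcal{A},\Cat][\overline{F},G]$ becomes a descent object computing pseudonatural transformations. (Minor point: "flexible" applies to strict algebras; for pseudoalgebras the relevant statement is the coherence theorem itself.)
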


\begin{division}[Notions of $2$-limits]
Let $ I$ be a 2-category. For a weight, that is a pseudofunctor $W : I \rightarrow \Cat$, and a pseudofunctor $ F : I \rightarrow \mathcal{B}$ we denote respectively as $ \pslim^{\!W} F$ and $ \bilim^{W} F $ the  \emph{$W$-weighted} pseudolimit and {bilimit}, whose universal properties are respectively that for any $B$ in $\mathcal{B}$ we have a natural isomorphism, resp. a natural equivalence of categories

\[ \mathcal{B}[B, \pslim^{W}F] \cong \ps[I, \Cat]\big[ W, \mathcal{B}[B, F] \big]  \]
\[ \mathcal{B}[B, \bilim^{W}F] \simeq \ps[I, \Cat]\big[ W, \mathcal{B}[B, F] \big]  \]

Similarly for a contravariant weight $ W: I^{\op} \rightarrow \Cat $ and $F :I \rightarrow \mathcal{B}$ a pseudofunctor we define respectively the $W$-weighted \emph{pseudocolimit}\index{pseudocolimit} $ \pscolim^{W} F$ and \emph{bicolimit}\index{bicolimit} $ \bicolim^{W}F$ from the universal property that for any $B$ we have an isomorphism, resp. an equivalence of categories

\[ \mathcal{B}[\pscolim^{W}F, B] \cong \ps[I^{\op}, \Cat]\big[W, \mathcal{B}[F,B] \big] \]
\[ \mathcal{B}[\bicolim^{W}F, B] \simeq \ps[I^{\op}, \Cat]\big[W, \mathcal{B}[F,B] \big] \]

In particular, one recovers the corresponding \emph{conical} kind of (co)limits by considering the terminal weight $ \Delta_1 : I \rightarrow Cat$ sending any object of $I$ to the one object category 1.
\end{division}

\begin{definition}[Finite weighted bilimits] \label{finiteweibili}
A \emph{finite weight}\index{finite weight} (in the sense of \cite[Def. 3.2.2]{descotte2018sigma}) is a weight $ W : \mathcal{A} \rightarrow \Cat$ such that $ \mathcal{A}$ is an essentially finite category and each $ W(A)$ is an essentially finite category. A \emph{finite limit}\index{finite limit} is a weighted bilimit with finite weight.
\end{definition}

\begin{example}\label{sufficient all finite bilimits}
It immediately follows from the definition that finite biproducts, biequalizers and bicotensors with the arrow category 2 are all finite weighted bilimits. It is observed in \cite[Rem. 3.2.5]{descotte2018sigma} that these suffice to construct all finite weighted bilimits.
\end{example}

\begin{remark}[On finite $2$-limits]
Depending on the flavor of mathematics, the intentions of the paper and the historical moment, people have used different notions of \textit{finiteness} for $2$-limits.  Let us review those that are most related to ours.
\begin{itemize}
    \item In \cite[p. 149 and (more importantly) 150]{street1976limits}, Street provides one of the first historical discussions on the notion of finite $2$-limit. There a $2$-category is said to be \textit{finitely complete} when it has \textit{products, equalizers, and cotensors with the arrow category}. The notion of finite limit is based on the notion of \textit{finite presentable category}.  Of course, Street shows that a category is finitely complete if and only if it has all finite $2$-limits. This notion of $2$-limit will be later used 
     \cite{street1982two} to characterize Street's notion of $2$-dimensional topos. Street's theory is very \textit{strict}, and thus can be only considered inspirational for us.
    This theory may lead to a strict version of Diaconescu theorem, while \cite{descotte2018sigma} result on extension of flat functors is stated in terms of finite bilimits. All in all, the notions of finite limit in \cite{street1982two} and \cite{street1976limits} are strict and based on \textit{finite presentable categories}, this makes it incomparable to our framework.
    \item \cite{street1982characterization} clarifies how to technically manipulate the theory of \cite{street1982two} and \cite{street1976limits} to characterize bitopoi and thus meets our framework in a comparable way. His notion of finite (bi)limits is still based on \cite{street1976limits}, with the exception of the strictness of the universal property. Thus, finite bilimits in the sense of \cite{street1982characterization} are more than finite bilimits in our sense. Yet, because finite bilimits in the sense of Street are generated by \textit{biproducts, biequalizers, and bicotensors with the arrow category} \cite[analog of Cor. 8]{street1976limits}, his notion of lex functor is equivalent to ours via \Cref{sufficient all finite bilimits}.
    \item The last notion we would like to mention is in \cite[Def. 4.1]{kelly1982structures}. \cite[4.6]{kelly1982structures} makes this theory collapse to \cite{street1976limits} in the case of $\Cat$-enriched categories, at least for what concerns the notion of lexity.
\end{itemize}
\end{remark}

\subsection{$\sigma$-limits}

Now we turn to an intermediate class of limits between the pseudo and lax, identified first in \cite{descotte2018sigma}: it is the universal (op)lax cone where one requires only some 2-cells inclusions to be invertible as in a bilimit. In particular, this definition is always relative to a choice of a class of maps in the indexing 2-category. In this section, we carefully recall the main ideas of the theory of $\sigma$-limits from \cite{descotte2018sigma} and \cite{descotte2016exactness}. More recently, this notion has been revisited from another perspective in \cite{gagna2021bilimits} under the name of \emph{marked limits}. 

\begin{remark} \label{morethandubuc}
Our definition of $\sigma$-limit is slightly more general at first sight from that of \cite{descotte2016exactness}, as they only define the notion of $\sigma$-limit for a strict $2$-functor, while we will need it for pseudofunctors too for we will have to consider $\sigma$-limits of composite of strict 2-functors along pseudofunctors. Such a level of generality does not change the theory in expressivity, and we shall provide a lemma to prove it (see \Cref{sigma colim and strictification}). 
\end{remark}

\begin{definition}[$\sigma$-natural transformations]
Let $I$ be a 2-category and $ \Sigma$ a class of maps in $I$ containing equivalences and closed under composition and invertible 2-cells; let $ \mathcal{C}$ be a category and $ F,G : I \rightarrow \mathcal{C}$ a pair of pseudofunctors. A $\sigma$-\emph{natural transformation} relatively to $\Sigma$ is a lax natural transformation $ f: F \Rightarrow G$ whose lax naturality squares% https://q.uiver.app/?q=WzAsNCxbMCwwLCJGKGkpIl0sWzEsMCwiRihqKSJdLFswLDEsIkcoaSkiXSxbMSwxLCJHKGopIl0sWzAsMSwiRihzKSJdLFswLDIsImZfaSIsMl0sWzEsMywiZl9qIl0sWzIsMywiRyhzKSIsMl0sWzcsNiwiZl9zIiwwLHsiY3VydmUiOi0xLCJzaG9ydGVuIjp7InNvdXJjZSI6MjAsInRhcmdldCI6MjB9fV1d
\[\begin{tikzcd}
	{F(i)} & {F(j)} \\
	{G(i)} & {G(j)}
	\arrow["{F(s)}", from=1-1, to=1-2]
	\arrow["{f_i}"', from=1-1, to=2-1]
	\arrow[""{name=0, anchor=center, inner sep=0}, "{f_j}", from=1-2, to=2-2]
	\arrow[""{name=1, anchor=center, inner sep=0}, "{G(s)}"', from=2-1, to=2-2]
	\arrow["{f_s}", curve={height=-6pt}, shorten <=4pt, shorten >=4pt, Rightarrow, from=1, to=0]
\end{tikzcd}\]
at an arrow $ s$ in $\Sigma$ are invertible 2-cells of $\mathcal{C}$. Similarly an \emph{op}$\sigma$-\emph{natural transformation} is an oplax natural transformation whose oplax naturality squares over maps in $\Sigma$ are invertible. We denote as $\ps[I, \mathcal{C}]_\Sigma $ the 2-category of pseudofunctors and $ \sigma$-natural transformations relatively to $ \Sigma$, with no restriction on 2-cells, and $\ps[I, \mathcal{C}]_{\op\Sigma} $ for op$\sigma$-natural transformations.  
\end{definition}

\begin{definition}
For any $I$ and any class of map $\Sigma$ in $I$ (without assumption about $\Sigma$), one can consider the closure $\overline\Sigma$ of $\Sigma$ defined as the smallest sub-2-category containing $\Sigma$ and closed under invertible 2-cells.
\end{definition}

\begin{division}
It is clear that any $ \overline{\Sigma}$-natural transformation is in particular $\Sigma$-natural, so we have for each pairs of pseudofunctors $ F,G : I \rightarrow \Sigma$ a full inclusion $ \ps[I, \mathcal{C}]_{\overline{\Sigma}} \hookrightarrow\ps[I, \mathcal{C}]_\Sigma[F,G]  $, and similarly for op-$\sigma$-natural transformations. But in fact, the coherence conditions of (op)lax transformations ensure that any (op-)$\Sigma$-natural transformation is automatically (op-)$\overline{\Sigma}$-natural, and more formally:
\end{division}

\begin{lemma}
Let be $\Sigma$ a class of maps in a 2-category $I$. Then we have a biequivalence of 2-categories, pseudonatural in $C$:
\[ \ps[I, \mathcal{C}]_\Sigma \simeq \ps[I, \mathcal{C}]_{\overline{\Sigma}} \hskip0.5cm (\textrm{resp. } \ps[I, \mathcal{C}]_{\op\Sigma} \simeq \ps[I, \mathcal{C}]_{\op\overline{\Sigma}} )  \]
\end{lemma}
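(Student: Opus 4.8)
The plan is to upgrade the full inclusion $\ps[I,\mathcal{C}]_{\overline{\Sigma}} \hookrightarrow \ps[I,\mathcal{C}]_{\Sigma}$ recorded in the division preceding the statement to an isomorphism (\emph{a fortiori} a biequivalence) of $2$-categories, and then to check that everything is compatible with post-composition in $\mathcal{C}$. The two $2$-categories share the same objects—pseudofunctors $I \to \mathcal{C}$—and the same modifications, since the defining equation of a modification does not involve $\Sigma$ at all; hence the inclusion is the identity on objects and is fully faithful on each hom-category. The only remaining point is that it is also surjective on the objects of each hom-category, that is: every $\Sigma$-natural transformation $f : F \Rightarrow G$ is automatically $\overline{\Sigma}$-natural. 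The op-version is the verbatim dual (oplax in place of lax), so I treat only the lax case.

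So the heart of the matter is the claim that, if the naturality $2$-cells $f_s$ are invertible for all $s \in \Sigma$, then $f_u$ is invertible for all $u \in \overline{\Sigma}$. I would prove this by inspecting the three ways an arrow of $\overline{\Sigma}$ is produced, invoking in each case the corresponding coherence axiom of a lax transformation between pseudofunctors, together with the elementary fact that vertical and horizontal composites, and whiskerings, of invertible $2$-cells are again invertible. For an identity $1_i$, the unit axiom writes $f_{1_i}$ as a composite of $f_i$ with the invertible unit constraints of $F$ and $G$, so $f_{1_i}$ is invertible. For a composite $ts$ of arrows $s,t \in \Sigma$, the compatibility-with-composition axiom presents $f_{ts}$ as a pasting of $f_s$ and $f_t$ with the invertible compositor constraints of $F$ and $G$; as $f_s,f_t$ are invertible by hypothesis, so is $f_{ts}$. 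Finally, if $\alpha : d \Rightarrow s$ is invertible with $s \in \Sigma$, then $F(\alpha)$ and $G(\alpha)$ are invertible because pseudofunctors preserve invertibility of $2$-cells, and the naturality axiom at $\alpha$ relates $f_d$ to $f_s$ by whiskering with $F(\alpha)$ and $G(\alpha)$; hence $f_d$ is invertible whenever $f_s$ is. Reading off that every element of $\overline{\Sigma}$ is of one of these three forms (and inducting should the closure operations be iterated) gives the claim.

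Granting the claim, the inclusion is bijective on objects and fully faithful on each hom-category, hence an isomorphism there, and being the identity on objects it is an isomorphism of $2$-categories $\ps[I,\mathcal{C}]_{\Sigma} \cong \ps[I,\mathcal{C}]_{\overline{\Sigma}}$; this is the asserted biequivalence, and likewise in the op-case. Pseudonaturality in $\mathcal{C}$ is then immediate: for a $2$-functor $H : \mathcal{C} \to \mathcal{C}'$, whiskering $H \circ (-)$ carries $\Sigma$- (resp. $\overline{\Sigma}$-) natural transformations to transformations of the same kind, again because $H$ preserves invertible $2$-cells, and it strictly commutes with the two inclusions; the resulting square of $2$-functors commutes on the nose, so is in particular pseudonatural.

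I expect the only genuine friction to be bookkeeping rather than ideas: one must write the unit, composition and $2$-cell coherence axioms in whatever orientation the paper fixes for the square $f_s$, and track that the pseudofunctoriality constraints of $F$ and $G$ enter those equations invertibly. Once the diagrams are drawn, the conclusion ``a pasting of invertible $2$-cells is invertible'' is purely formal, which is why the statement is really an equality of underlying data dressed as a biequivalence.
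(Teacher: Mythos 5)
Your proposal is correct and follows essentially the same route as the paper: both reduce the biequivalence to showing that every $\Sigma$-natural transformation is automatically $\overline{\Sigma}$-natural, and both verify this case by case over the three closure rules by reading the lax-transformation coherence axioms (unit, composition, naturality in $2$-cells) as equations in which $f_{1_i}$, $f_{ts}$, $f_d$ are determined, up to pasting with invertible pseudofunctor constraints, by invertible data. Your additional remarks — that modifications are unaffected by $\Sigma$, that the closure may need an induction, and that pseudonaturality in $\mathcal{C}$ follows from strict compatibility of whiskering with the inclusions — are harmless elaborations of points the paper leaves implicit.
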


\begin{proof}
For there are no restrictions on pseudofunctors, those 2-categories have the same objects. It suffices then to prove for any two $F,G$ that the full inclusion $ \ps[I, \mathcal{C}]_{\overline{\Sigma}} \hookrightarrow\ps[I, \mathcal{C}]_\Sigma[F,G]  $ is actually surjective on objects: we show as promised that any $\Sigma$-natural transformation $ f: F \Rightarrow G$ is actually $ \overline{\Sigma}$-natural. First, take $ s: i \rightarrow j$ and $t : j \rightarrow k$ in $\Sigma$ and a pair: then lax naturality gives us the equality of 2-cells
% https://q.uiver.app/?q=WzAsNixbMCwxLCJGKGkpIl0sWzEsMCwiRihqKSJdLFsyLDEsIkYoaykiXSxbMCwyLCJHKGkpIl0sWzIsMiwiRyhrKSJdLFsxLDEsIkcoaikiXSxbMCwxLCJGKHMpIl0sWzEsMiwiRih0KSJdLFswLDMsInFfaSIsMl0sWzIsNCwicV9qIl0sWzEsNV0sWzMsNSwiRyhzKSIsMV0sWzUsNCwiRyh0KSIsMV0sWzMsNCwiRyh0cykiLDJdLFs1LDAsImZfcyBcXGF0b3AgXFxzaW1lcSIsMSx7InN0eWxlIjp7ImJvZHkiOnsibmFtZSI6Im5vbmUifSwiaGVhZCI6eyJuYW1lIjoibm9uZSJ9fX1dLFsyLDUsImZfdCBcXGF0b3AgXFxzaW1lcSIsMSx7InN0eWxlIjp7ImJvZHkiOnsibmFtZSI6Im5vbmUifSwiaGVhZCI6eyJuYW1lIjoibm9uZSJ9fX1dLFs1LDEzLCJHX3tzLHR9IFxcYXRvcCBcXHNpbWVxIiwxLHsic2hvcnRlbiI6eyJ0YXJnZXQiOjIwfSwic3R5bGUiOnsiYm9keSI6eyJuYW1lIjoibm9uZSJ9LCJoZWFkIjp7Im5hbWUiOiJub25lIn19fV1d
% https://q.uiver.app/?q=WzAsNixbMCwxLCJGKGkpIl0sWzEsMCwiRihqKSJdLFsyLDEsIkYoaykiXSxbMCwyLCJHKGkpIl0sWzIsMiwiRyhrKSJdLFsxLDEsIkcoaikiXSxbMCwxLCJGKHMpIl0sWzEsMiwiRih0KSJdLFswLDMsImZfaSIsMl0sWzIsNCwiZl9qIl0sWzEsNV0sWzMsNSwiRyhzKSIsMV0sWzUsNCwiRyh0KSIsMV0sWzMsNCwiRyh0cykiLDJdLFs1LDAsImZfcyBcXGF0b3AgXFxzaW1lcSIsMSx7InN0eWxlIjp7ImJvZHkiOnsibmFtZSI6Im5vbmUifSwiaGVhZCI6eyJuYW1lIjoibm9uZSJ9fX1dLFsyLDUsImZfdCBcXGF0b3AgXFxzaW1lcSIsMSx7InN0eWxlIjp7ImJvZHkiOnsibmFtZSI6Im5vbmUifSwiaGVhZCI6eyJuYW1lIjoibm9uZSJ9fX1dLFs1LDEzLCJHX3tzLHR9XnstMX0gXFxhdG9wIFxcc2ltZXEiLDEseyJzaG9ydGVuIjp7InRhcmdldCI6MjB9LCJzdHlsZSI6eyJib2R5Ijp7Im5hbWUiOiJub25lIn0sImhlYWQiOnsibmFtZSI6Im5vbmUifX19XV0=
\[\begin{tikzcd}
	& {F(j)} \\
	{F(i)} & {G(j)} & {F(k)} \\
	{G(i)} && {G(k)}
	\arrow["{F(s)}", from=2-1, to=1-2]
	\arrow["{F(t)}", from=1-2, to=2-3]
	\arrow["{f_i}"', from=2-1, to=3-1]
	\arrow["{f_k}", from=2-3, to=3-3]
	\arrow["f_j", from=1-2, to=2-2]
	\arrow["{G(s)}"{description}, from=3-1, to=2-2]
	\arrow["{G(t)}"{description}, from=2-2, to=3-3]
	\arrow[""{name=0, anchor=center, inner sep=0}, "{G(ts)}"', from=3-1, to=3-3]
	\arrow["{f_s \atop \simeq}"{description}, draw=none, from=2-2, to=2-1]
	\arrow["{f_t \atop \simeq}"{description}, draw=none, from=2-3, to=2-2]
	\arrow["{G_{s,t}^{-1} \atop \simeq}"{description}, draw=none, from=2-2, to=0]
\end{tikzcd} = 
% https://q.uiver.app/?q=WzAsNSxbMCwxLCJGKGkpIl0sWzEsMCwiRihqKSJdLFsyLDEsIkYoaykiXSxbMCwyLCJHKGkpIl0sWzIsMiwiRyhrKSJdLFswLDEsIkYocykiXSxbMSwyLCJGKHQpIl0sWzAsMywiZl9pIiwyXSxbMiw0LCJmX2oiXSxbMyw0LCJHKHRzKSIsMl0sWzAsMiwiRih0cykiLDFdLFsxLDEwLCJGX3tzLHR9XnstMX0gXFxhdG9wIFxcc2ltZXEiLDEseyJzaG9ydGVuIjp7InRhcmdldCI6MjB9LCJzdHlsZSI6eyJib2R5Ijp7Im5hbWUiOiJub25lIn0sImhlYWQiOnsibmFtZSI6Im5vbmUifX19XSxbOSw4LCJmX3tzdH0gIiwwLHsiY3VydmUiOi0xLCJzaG9ydGVuIjp7InNvdXJjZSI6MjAsInRhcmdldCI6MjB9fV1d
\begin{tikzcd}
	& {F(j)} \\
	{F(i)} && {F(k)} \\
	{G(i)} && {G(k)}
	\arrow["{F(s)}", from=2-1, to=1-2]
	\arrow["{F(t)}", from=1-2, to=2-3]
	\arrow["{f_i}"', from=2-1, to=3-1]
	\arrow[""{name=0, anchor=center, inner sep=0}, "{f_k}", from=2-3, to=3-3]
	\arrow[""{name=1, anchor=center, inner sep=0}, "{G(ts)}"', from=3-1, to=3-3]
	\arrow[""{name=2, anchor=center, inner sep=0}, "{F(ts)}"{description}, from=2-1, to=2-3]
	\arrow["{F_{s,t}^{-1} \atop \simeq}"{description}, draw=none, from=1-2, to=2]
	\arrow["{f_{ts} }", curve={height=-6pt}, shorten <=7pt, shorten >=7pt, Rightarrow, from=1, to=0]
\end{tikzcd}\]
Hence the composite $ f_k*F^{-1}_{s,t} f_{ts}  $ is invertible, and so is $f_{ts}$ by cancellation of invertible 2-cell. Similar argument for identity arrows. Finally suppose that $ \alpha : d \simeq s$ is an invertible 2-cell with $s $ in $\Sigma$, than in the same vein one has an equality of 2-cells
% https://q.uiver.app/?q=WzAsNCxbMCwwLCJGKGkpIl0sWzIsMCwiRihqKSJdLFswLDEsIkcoaSkiXSxbMiwxLCJHKGopIl0sWzAsMSwiRihzKSIsMCx7ImN1cnZlIjotMn1dLFswLDIsImZfaSIsMl0sWzEsMywiZl9qIl0sWzIsMywiRyhkKSIsMix7ImN1cnZlIjoyfV0sWzAsMSwiRihkKSIsMSx7ImN1cnZlIjoyfV0sWzQsOCwiRihcXGFscGhhKSBcXGF0b3AgXFxzaW1lcSIsMSx7InNob3J0ZW4iOnsic291cmNlIjoyMCwidGFyZ2V0IjoyMH0sInN0eWxlIjp7ImJvZHkiOnsibmFtZSI6Im5vbmUifSwiaGVhZCI6eyJuYW1lIjoibm9uZSJ9fX1dLFs3LDgsImZfZCIsMCx7InNob3J0ZW4iOnsic291cmNlIjoyMCwidGFyZ2V0IjoyMH19XV0=
\[\begin{tikzcd}
	{F(i)} && {F(j)} \\
	{G(i)} && {G(j)}
	\arrow[""{name=0, anchor=center, inner sep=0}, "{F(s)}", curve={height=-12pt}, from=1-1, to=1-3]
	\arrow["{f_i}"', from=1-1, to=2-1]
	\arrow["{f_j}", from=1-3, to=2-3]
	\arrow[""{name=1, anchor=center, inner sep=0}, "{G(d)}"', curve={height=12pt}, from=2-1, to=2-3]
	\arrow[""{name=2, anchor=center, inner sep=0}, "{F(d)}"{description}, curve={height=12pt}, from=1-1, to=1-3]
	\arrow["{F(\alpha) \atop \simeq}"{description}, draw=none, from=0, to=2]
	\arrow["{f_d}", shorten <=5pt, shorten >=5pt, Rightarrow, from=1, to=2]
\end{tikzcd} =
% https://q.uiver.app/?q=WzAsNCxbMCwwLCJGKGkpIl0sWzIsMCwiRihqKSJdLFswLDEsIkcoaSkiXSxbMiwxLCJHKGopIl0sWzAsMSwiRihzKSIsMCx7ImN1cnZlIjotMn1dLFswLDIsImZfaSIsMl0sWzEsMywiZl9qIl0sWzIsMywiRyhkKSIsMix7ImN1cnZlIjoyfV0sWzIsMywiIiwxLHsiY3VydmUiOi0yfV0sWzgsNywiRyhcXGFscGhhKSBcXGF0b3AgXFxzaW1lcSIsMSx7InNob3J0ZW4iOnsic291cmNlIjoyMCwidGFyZ2V0IjoyMH0sInN0eWxlIjp7ImJvZHkiOnsibmFtZSI6Im5vbmUifSwiaGVhZCI6eyJuYW1lIjoibm9uZSJ9fX1dLFs0LDgsImZfcyBcXGF0b3AgXFxzaW1lcSIsMSx7InNob3J0ZW4iOnsic291cmNlIjoyMCwidGFyZ2V0IjoyMH0sInN0eWxlIjp7ImJvZHkiOnsibmFtZSI6Im5vbmUifSwiaGVhZCI6eyJuYW1lIjoibm9uZSJ9fX1dXQ==
\begin{tikzcd}
	{F(i)} && {F(j)} \\
	{G(i)} && {G(j)}
	\arrow[""{name=0, anchor=center, inner sep=0}, "{F(s)}", curve={height=-12pt}, from=1-1, to=1-3]
	\arrow["{f_i}"', from=1-1, to=2-1]
	\arrow["{f_j}", from=1-3, to=2-3]
	\arrow[""{name=1, anchor=center, inner sep=0}, "{G(d)}"', curve={height=12pt}, from=2-1, to=2-3]
	\arrow[""{name=2, anchor=center, inner sep=0}, curve={height=-12pt}, from=2-1, to=2-3]
	\arrow["{G(\alpha) \atop \simeq}"{description}, draw=none, from=2, to=1]
	\arrow["{f_s \atop \simeq}"{description}, draw=none, from=0, to=2]
\end{tikzcd}\]
Then again cancellation of invertible 2-cells ensures that $ f_d$ has to be invertible.
\end{proof}

\begin{remark}
As a consequence, one can always stipulate $ \Sigma$ to be closed under the condition above; in this case $ \Sigma$ is in particular a full on 0-cells and 2-cells sub-2-category of $I$.
\end{remark}

\begin{definition}[$\sigma$-cone]
Let $ F : I \rightarrow \mathcal{C}$ be a pseudofunctor and $\Sigma$ a class in $I$. Then a $\sigma$-\emph{cone} on $F$ (relative to $\Sigma$) is a lax cone on $C$ in $\mathcal{C}$ is a lax natural transformation $ f: \Delta_B \Rightarrow F$ whose lax naturality triangles % https://q.uiver.app/?q=WzAsMyxbMCwxLCJGKGkpIl0sWzIsMSwiRihqKSJdLFsxLDAsIkMiXSxbMCwxLCJGKHMpIiwyXSxbMiwwLCJmX2kiLDJdLFsyLDEsImZfaiJdLFs0LDUsImZfcyIsMCx7Im9mZnNldCI6NCwic2hvcnRlbiI6eyJzb3VyY2UiOjIwLCJ0YXJnZXQiOjIwfX1dXQ==
\[\begin{tikzcd}
	& C \\
	{F(i)} && {F(j)}
	\arrow["{F(d)}"', from=2-1, to=2-3]
	\arrow[""{name=0, anchor=center, inner sep=0}, "{f_i}"', from=1-2, to=2-1]
	\arrow[""{name=1, anchor=center, inner sep=0}, "{f_j}", from=1-2, to=2-3]
	\arrow["{f_d}", shift right=4, shorten <=6pt, shorten >=6pt, Rightarrow, from=0, to=1]
\end{tikzcd}\]
which are in particular invertible whenever $ d$ is in $\Sigma$. Dually, a \emph{$\sigma$-cocone} is an \emph{op$\sigma$}-natural transformation $ q: F \Rightarrow \Delta_B$, with oplax naturality triangles invertible at maps in $\Sigma$
% https://q.uiver.app/?q=WzAsMyxbMCwwLCJGKGkpIl0sWzIsMCwiRihqKSJdLFsxLDEsIkIiXSxbMCwxLCJGKGQpIl0sWzEsMiwicV9qIl0sWzAsMiwicV9pIiwyXSxbNCw1LCJxX2QiLDIseyJzaG9ydGVuIjp7InNvdXJjZSI6MjAsInRhcmdldCI6MjB9fV1d
\[\begin{tikzcd}
	{F(i)} && {F(j)} \\
	& B
	\arrow["{F(d)}", from=1-1, to=1-3]
	\arrow[""{name=0, anchor=center, inner sep=0}, "{q_j}", from=1-3, to=2-2]
	\arrow[""{name=1, anchor=center, inner sep=0}, "{q_i}"', from=1-1, to=2-2]
	\arrow["{q_d}"', shorten <=6pt, shorten >=6pt, Rightarrow, from=0, to=1]
\end{tikzcd}\]
\end{definition}

\begin{remark}[Cones are lax, cocones are oplax]
The reader might be confused by the orientation of the $2$-dimensional data in the diagrams above: the $\sigma$-cones are made of lax cells below an object, while $\sigma$-cocones are made of \emph{oplax} cells above it.  %Beware that, though this last definition involves an opsigma natural transformation, we really speak of a $\sigma$-cocone. 
The opsigma naturality condition involved is explained at \cite[Remark 2.4.1]{descotte2018sigma}. We will also give some justifications at the definition of $\sigma$-colimit in \Cref{why}.
\end{remark}

\begin{remark}
Observe that any lax cone (resp. any pseudocone) is a $\sigma$-cone with $ \Sigma$ consisting only of isomorphisms in $I$ (resp. $ \Sigma$ containing all arrows of $I$). Dually, a pseudocone is a $\sigma$-cone for a $\Sigma$ consisting of all arrows. 
\end{remark}

\begin{definition}[Weighted $\sigma$-bilimit]
Let $I$ be a small 2-category and $ \Sigma$ a class of maps in $I$ containing equivalences and closed under composition and invertible 2-cells, $ W : I \rightarrow \Cat$ a pseudofunctor and $ F : I \rightarrow \mathcal{B}$ a pseudofunctor. Then the \emph{weighted $\sigma$-limit} (resp. \emph{$\sigma$-bilimit}) relatively to $ \Sigma$ 
such that for any $B$ in $\mathcal{B}$ we have an isomorphism (resp. an equivalence) of categories
\[ \mathcal{B}[B, \Sigma\lim \, F] \cong \ps[I, \Cat]_\Sigma \big[ W, \mathcal{B}[B, F] \big]  \]
\[ \mathcal{B}[B, \Sigma\bilim \, F] \simeq \ps[I, \Cat]_\Sigma \big[ W, \mathcal{B}[B, F] \big]  \]
\end{definition}

\begin{remark}[Conical $\sigma$-(bi)limits]
In particular we can consider conical $\sigma$-(bi)limits as those $\sigma$-cones \[  ( p_i : \Sigma\underset{i \in I}{\lim} \, F(i) \rightarrow F(i))_{i \in I} \hskip1cm \textup{( resp. }    ( p_i : \Sigma\underset{i \in I}{\bilim} \, F(i) \rightarrow F(i))_{i \in I}  \textup{)} \] 
such that any other $\sigma$-cone $ (f_i : B \rightarrow F(i))_{i \in I}$ induces a universal arrow $(f_i)_{i \in I} : B \rightarrow \Sigma\lim \, F $ commuting strictly, reps. up to a canonical invertible 2-cell, with the cone projections.
\end{remark}

\begin{division}[Weighted $\sigma$-(bi)colimits]\label{why}
We can also dually define weighted \emph{$\sigma$-(bi)colimit} for a weight $ I^{\op} \rightarrow \Cat$ with $(I,\Sigma)$ a $\Sigma$ pair and $F : I \rightarrow \mathcal{B}$ through the formula
\[ \mathcal{B}[\underset{i \in I}{\Sigma\bicolim^{W}} \, F(i), B] \simeq \ps[{I}^{\op}, \Cat]_{\Sigma^{\op}} \big{[} W, \mathcal{B} [F,B] \big{]}   \]
In particular, in the case of conical bicolimits, that is for the weight $ W= \Delta_1$, observe also that the later homcategory is equivalent to the category of $\sigma$-cocones over $F$ with tip $B$
\[  [{I}^{\op}, \Cat]_{\Sigma^{\op}} \big{[} \Delta_1, \mathcal{B} [F,B] \big{]} \simeq  \ps[I,\mathcal{B}]_{\op\Sigma} [F,\Delta_B] \]
In particular, this exhibits the $\sigma$-bicolimit as a universal $\sigma$-cocone; beware that its 2-cells inclusions at a morphism $ d : i \rightarrow j$ in $I$ are of the form
% https://q.uiver.app/?q=WzAsMyxbMCwwLCJGKGkpIl0sWzIsMCwiRihqKSJdLFsxLDEsIlxcdW5kZXJzZXR7aSBcXGluIEl9e1xcc2lnbWFfXFxTaWdtYVxcYmljb2xpbX0gXFwsIEYoaSkiXSxbMCwxLCJGKGQpIl0sWzEsMiwicV9qIl0sWzAsMiwicV9pIiwyXSxbNCw1LCJxX2QiLDIseyJzaG9ydGVuIjp7InNvdXJjZSI6MjAsInRhcmdldCI6MjB9fV1d
\[\begin{tikzcd}
	{F(i)} && {F(j)} \\
	& {\underset{i \in I}{\Sigma\bicolim} \, F(i)}
	\arrow["{F(d)}", from=1-1, to=1-3]
	\arrow[""{name=0, anchor=center, inner sep=0}, "{q_j}", from=1-3, to=2-2]
	\arrow[""{name=1, anchor=center, inner sep=0}, "{q_i}"', from=1-1, to=2-2]
	\arrow["{q_d}"', shorten <=11pt, shorten >=11pt, Rightarrow, from=0, to=1]
\end{tikzcd}\]
\end{division}

\begin{remark}[Avoiding weights] As one interest of $\sigma$-(co)limits is that they allow to turn weighted (co)limits into special conical lax (co)limits, for instance in the $\sigma$-colimit decomposition into representable we shall see below, we will only make use of conical $\sigma$-(co)limits in the following. We will mostly use also the bi(co)limits, yet in some contexts - in particular in $\Cat$ - we may innocently interchange pseudolimits with bilimits when the former have a canonical expression, using that, whenever they both exist, they are equivalent. 
\end{remark}

The next lemma ensures that there are no difference between our theory of sigma-bicolimits of pseudofunctors and \cite{descotte2018sigma} version for strict 2-functors:

%\textcolor{red}{Problem: that is not true that any pseudofunctor into a random 2-cat strictifies ! It normalizes, so we can use bicolimit of normal pseudofunctors, but there is still the problem of composition.}

\begin{lemma}\label{sigma colim and strictification}
Suppose that $ F : I \rightarrow \mathcal{A}$ is a pseudofunctor admitting a strictification $p_F : F \Rightarrow \overline{F}$. Take $\Sigma$ a class of maps in $I$. Then the strictification induces an equivalence:
\[  \Sigma\underset{I}{\bicolim} \, F \simeq  \Sigma\underset{I}{\bicolim} \, \overline{F} \]
\end{lemma}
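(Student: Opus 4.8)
The plan is to transport the universal property of the $\sigma$-bicolimit across the strictification equivalence $p_F : F \Rightarrow \overline{F}$. Since $\sigma$-bicolimits are defined (see \Cref{why}) by the natural equivalence
\[ \mathcal{B}[\underset{I}{\Sigma\bicolim} \, F, B] \simeq \ps[I^{\op}, \Cat]_{\Sigma^{\op}} \big[ \Delta_1, \mathcal{B}[F, B] \big] \simeq \ps[I, \mathcal{B}]_{\op\Sigma}[F, \Delta_B], \]
it suffices to produce, naturally in $B$, an equivalence of categories of $\sigma$-cocones $\ps[I, \mathcal{B}]_{\op\Sigma}[F, \Delta_B] \simeq \ps[I, \mathcal{B}]_{\op\Sigma}[\overline{F}, \Delta_B]$, and then invoke that an object representing one representable-valued pseudofunctor (up to equivalence) represents any equivalent one, so the two bicolimits are equivalent by bicategorical Yoneda.

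First I would observe that $p_F : F \Rightarrow \overline{F}$, being a pseudonatural equivalence, admits a pseudoinverse $p_F^{-1} : \overline{F} \Rightarrow F$ together with invertible modifications witnessing $p_F^{-1} \comp p_F \cong \mathrm{id}_F$ and $p_F \comp p_F^{-1} \cong \mathrm{id}_{\overline{F}}$. Then I would define the comparison functors by whiskering: a $\sigma$-cocone $q : F \Rightarrow \Delta_B$ is sent to the pasting composite $q \comp p_F^{-1} : \overline{F} \Rightarrow \Delta_B$, and conversely an op$\sigma$-cocone $\overline{q} : \overline{F} \Rightarrow \Delta_B$ is sent to $\overline{q} \comp p_F$. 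The key point to check is that these whiskerings preserve the op$\sigma$-condition: I would verify that precomposing with a pseudonatural \emph{equivalence} turns an oplax naturality square that is invertible over $\Sigma$ into one that is again invertible over $\Sigma$. This is where the hypothesis that $p_F$ is an equivalence (rather than a general pseudonatural transformation) is essential: the naturality cells of $p_F$ itself are all invertible, so pasting them against the op$\sigma$-cocone cells at an arrow $s \in \Sigma$ yields again an invertible cell, the composite of invertibles. Finally, the two assignments are mutually pseudoinverse up to the invertible modifications coming from $p_F^{-1} \comp p_F \cong \mathrm{id}$, which gives the claimed equivalence of hom-categories; naturality in $B$ is automatic since whiskering by a fixed $p_F$ commutes with postcomposition by arrows $B \to B'$.

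I expect the main obstacle to be purely bookkeeping: carefully tracking the composite of structure 2-cells when whiskering the op$\sigma$-cocone by $p_F$ and $p_F^{-1}$, and confirming that invertibility over $\Sigma$ is genuinely preserved in both directions. There is no conceptual difficulty once one notes that every comparison cell introduced by the equivalence $p_F$ is invertible regardless of whether the underlying arrow lies in $\Sigma$; the $\Sigma$-restricted invertibility is carried entirely by the original cocone, and the equivalence only paste invertible cells onto it. One could alternatively phrase the whole argument at the representable level, using \Cref{strictification} directly to identify the two hom-categories $\ps[I, \Cat]_{\Sigma}[\Delta_1, \mathcal{B}[F, B]]$ and $\ps[I, \Cat]_{\Sigma}[\Delta_1, \mathcal{B}[\overline{F}, B]]$ via the equivalence $\mathcal{B}[F, B] \simeq \mathcal{B}[\overline{F}, B]$ induced by $p_F$, but the whiskering argument is the most direct and keeps the $\Sigma$-condition manifest.
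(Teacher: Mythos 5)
Your proposal is correct and follows essentially the same route as the paper's proof: the paper likewise observes that the pseudonatural equivalence $p_F$ induces an equivalence of hom-categories $\ps[I,\mathcal{B}]_{\op\Sigma}[F,\Delta_B] \simeq \ps[I,\mathcal{B}]_{\op\Sigma}[\overline{F},\Delta_B]$ and concludes by essential uniqueness of representing objects. Your additional verification that whiskering by $p_F$ preserves the op$\sigma$-condition (since all naturality cells of an equivalence are invertible) is exactly the bookkeeping the paper leaves implicit.
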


\begin{proof}
This is simply because the strictification $p_F$ is a pseudonatural equivalence and hence induces a pseudonatural equivalence at the level of the hom-categories 
\[  \ps[I,\mathcal{B}]_{\op\Sigma} [F,\Delta_B] \simeq \ps[I,\mathcal{B}]_{\op\Sigma} [\overline{F},\Delta_B]  \]
Hence if those pseudofunctors are representables, the representing objects are equivalent in an essentially unique way: hence the $\sigma$-bicolimits of $F$ and of its strictification are equivalent.
\end{proof}

As the theory of $\sigma$-limits is very recent and lax limits are not as well documented as pseudo-limits or strict 2-limits, it is worth giving a few lemmas to ensure they can be manipulated as expected.

\begin{lemma}
Right bi-adjoints preserve weighted $\sigma$-bilimits. Respectively, left bi-adjoints preserve weighted $\sigma$-bicolimits.
\end{lemma}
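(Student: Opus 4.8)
The plan is to run the classical ``adjoints preserve (co)limits'' argument in its $2$-dimensional, up-to-equivalence form, with the ordinary Yoneda lemma replaced by its bicategorical counterpart. I would treat the right bi-adjoint case in detail and obtain the left bi-adjoint case by formal duality, reversing the $1$-cells and replacing $\Sigma$ by $\Sigma^{\op}$. So let $L \dashv R$ be a bi-adjunction with $R : \mathcal{B} \to \mathcal{A}$, let $F : I \to \mathcal{B}$ be a pseudofunctor, $W$ a weight, $\Sigma$ a class of maps in $I$, and suppose $\Sigma\bilim^{W} F$ exists in $\mathcal{B}$. First I would fix an arbitrary object $A$ of $\mathcal{A}$ and assemble the chain of equivalences of categories
\[ \mathcal{A}[A, R(\Sigma\bilim^{W} F)] \simeq \mathcal{B}[LA, \Sigma\bilim^{W} F] \simeq \ps[I, \Cat]_\Sigma[W, \mathcal{B}[LA, F]] \simeq \ps[I, \Cat]_\Sigma[W, \mathcal{A}[A, RF]], \]
where the first equivalence is the bi-adjunction hom-equivalence $\mathcal{B}[LA, -] \simeq \mathcal{A}[A, R-]$, the second is the defining universal property of the weighted $\sigma$-bilimit, and the third is obtained by whiskering the weight with the pseudonatural equivalence of pseudofunctors $\mathcal{B}[LA, F(-)] \simeq \mathcal{A}[A, RF(-)]$ that the bi-adjunction provides pointwise in $I$. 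Reading the right-hand side as the universal property of $\Sigma\bilim^{W}(RF)$ evaluated at $A$ is then the goal.

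The step I expect to be the genuine content is the third equivalence, which has two things to check. One must first verify that the bi-adjunction comparison $\mathcal{B}[LA, F(-)] \simeq \mathcal{A}[A, RF(-)]$ is an equivalence inside $\ps[I,\Cat]$, i.e.\ a pseudonatural equivalence in the variable of $I$; this is the coherence bookkeeping of the bi-adjunction unit/counit and is where working up to equivalence rather than isomorphism matters. Second, one must check that whiskering such an equivalence into the weight preserves the full sub-$2$-category cut out by $\sigma$-naturality, so that it restricts to an equivalence on $\ps[I,\Cat]_\Sigma[W,-]$. For this I would show that $\ps[I,\Cat]_\Sigma[W,-]$ is functorial along pseudonatural transformations: whiskering a $\sigma$-natural transformation $W \Rightarrow G$ with a pseudonatural $\phi : G \Rightarrow G'$ again yields a $\sigma$-natural transformation, because the naturality square of the composite over a map $s \in \Sigma$ factors as the pasting of the (invertible, since $s \in \Sigma$) square of the original $\sigma$-natural transformation with the (always invertible) square of $\phi$, hence is invertible. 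Consequently $\ps[I,\Cat]_\Sigma[W,-]$ sends pseudonatural equivalences to equivalences, which delivers the third step.

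It then remains to note that each equivalence in the chain is pseudonatural in $A$ — for the first two this is standard, for the third it follows from pseudonaturality of the bi-adjunction comparison in $A$ — so the composite is a pseudonatural equivalence $\mathcal{A}[A, R(\Sigma\bilim^{W} F)] \simeq \ps[I, \Cat]_\Sigma[W, \mathcal{A}[A, RF]]$ in $A$. The bicategorical Yoneda lemma then forces $R(\Sigma\bilim^{W} F) \simeq \Sigma\bilim^{W}(RF)$, which is exactly the asserted preservation. The dual statement for left bi-adjoints and weighted $\sigma$-bicolimits follows by the same computation, using the universal property of $\Sigma\bicolim^{W}$ recorded in \Cref{why} in place of that of $\Sigma\bilim^{W}$ and running $\Sigma^{\op}$ in place of $\Sigma$ throughout.
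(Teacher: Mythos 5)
Your proof is correct and follows essentially the same route as the paper's: the same chain of equivalences (bi-adjunction hom-equivalence, universal property of the $\sigma$-bilimit, pointwise bi-adjunction equivalence inside the weight, universal property again), read off via Yoneda. You are in fact more careful than the paper, which silently elides the pseudonaturality and $\sigma$-naturality-preservation checks you spell out.
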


\begin{proof}
Let $ L \dashv R $ be a biadjunction with $R : \mathcal{A} \rightarrow \mathcal{B}$, a 2-category $I$ with $\Sigma$ a class of maps in $I$, $ F : I \rightarrow \mathcal{A}$ and $W : I \rightarrow \Cat$ a weight. Suppose that $ \mathcal{A}$ and $\mathcal{B}$ have weighted $\sigma$-limits. Then one has 
\begin{align*}
    \mathcal{B}[B, R (\underset{i \in I}{\Sigma \bilim^{W}} \, F(i))] &\simeq  \mathcal{A}[LB, \underset{i \in I}{\Sigma \bilim^{W}} \, F(i)] \\
    &\simeq [I,\Cat]_\Sigma \big[ W, \mathcal{A}[LB, F(i)] \big] \\
    &\simeq [I,\Cat]_\Sigma \big[ W, \mathcal{B}[B, RF(i)] \big] \\
    &\simeq \mathcal{B}[B, \underset{i \in I}{\Sigma \bilim^{W}} \, RF(i)]
\end{align*}
\end{proof}

\begin{division}[$\sigma$-bicolimits of categories as fractions of oplax-colimits]\label{sigma-colimit of categories}
It is worth recalling the computation of conical $\sigma$-bicolimits of categories according to \cite[Subsec. 2.5]{descotte2018sigma}. For a $\sigma$-pair $ (I,\Sigma)$ and a pseudofunctor $ F : I \rightarrow \Cat $, the $\sigma$-colimit is obtained as the localization of the oplax colimit at cocartesian lifts of $\Sigma$-arrows. If one defines
\[ \Sigma_{(F,\Sigma)} = \textbf{coCart}_F \cap \pi_F^{-1}(\Sigma)  \]
with $ \pi_F : \oplaxcolim_{i \in I} F(i) \rightarrow I$ the associated fibration (recall that $\oplaxcolim_{i \in I} F(i)$ is the classifying category of the Grothendieck construction of $ F$), one has the equation below. Moreover this $\sigma$-bicolimit has actually the universal property of a $\sigma$-colimit. 
\[   \underset{i \in I}{\Sigma\bicolim} \, F(i) \simeq \underset{i \in I}{\oplaxcolim}\,  F(i)  [\Sigma_{(F,\Sigma)}^{-1}].  \]
\end{division}

% \begin{remark}
% This calculation is given for a 2-functor; in the case of a $\Cat$-valued pseudofunctor, we can consider by \cref{strictification} its strictification, whose $\sigma$-bicolimit will be computed as described above, and then we know by \cref{sigma colim and strictification} that this category will be canonically equivalent to the bicolimit of the pseudofunctor, and can hence be used as such in practice. 
% \end{remark}

\begin{remark}[Equalization of parallel vertical pairs] \label{local faithfulness of colimit inclusions}
Here we should give an observation about when two morphisms in a member of the $\sigma$-bicolimit are identified in the $\sigma$-bicolimit itself after applying the inclusion to them: this will be of use later. The class $ \Sigma_{F,\Sigma}$ admits a \emph{left calculus of fractions} where a free arrow is created for each cospan of the form 
% https://q.uiver.app/?q=WzAsMyxbMCwwLCIoaSxhKSJdLFsyLDAsIihpJyxhJykiXSxbMSwxLCIoaixGKHMpKGEnKSkiXSxbMCwyLCIoZixcXHBoaSkiLDJdLFsxLDIsIihzLCAxX3tGKHMpKGEnKX0pIl1d
\[\begin{tikzcd}
	{(i,a)} && {(i',a')} \\
	& {(j,F(s)(a'))}
	\arrow["{(f,\phi)}"', from=1-1, to=2-2]
	\arrow["{(s, 1_{F(s)(a')})}", from=1-3, to=2-2]
\end{tikzcd}\]
with $s$ in $\Sigma$. Moreover one can characterize when two such spans are equivalent in the localization, see the left dual of proposition \cite{borceux1994handbook}[Proposition 5.2.4 (3)] which stands for a right calculus of fractions. In particular if we apply this formula to the case of a parallel pair of the specific form $ (1_i, \phi), (1_i, \phi') : (i,a) \rightrightarrows (i,a')$ (that is, coming from a parallel pair in $F(i)$), then saying they are equivalent in the localization amounts to saying there exists $ s : i \rightarrow i' $ in $\Sigma$ such that the corresponding cocartesian morphism equalizes them 
% https://q.uiver.app/?q=WzAsMyxbMCwwLCIoaSxhKSJdLFsxLDAsIihpLGEnKSJdLFszLDAsIihqLEYocykoYScpKSJdLFswLDEsIigxX2ksXFxwaGkpIiwwLHsib2Zmc2V0IjotMX1dLFswLDEsIigxX2ksXFxwaGknKSIsMix7Im9mZnNldCI6MX1dLFsxLDIsIihzLDFfe0YocykoYScpfSkiXV0=
\[\begin{tikzcd}
	{(i,a)} & {(i,a')} && {(j,F(s)(a'))}
	\arrow["{(1_i,\phi)}", shift left=1, from=1-1, to=1-2]
	\arrow["{(1_i,\phi')}"', shift right=1, from=1-1, to=1-2]
	\arrow["{(s,1_{F(s)(a')})}", from=1-2, to=1-4]
\end{tikzcd}\]
But this latter condition exactly means that $F(s)(\phi) = F(s)(\phi)'$ in $F(i')$. This observation will have useful consequences in the study of the specific case of \emph{$\sigma$-filtered $\sigma$-bicolimits} of categories.
\end{remark}

\begin{example} 
In $\Cat$, pseudolimits can be taken for bilimits, and are computed as follows. For $I$ a small 2-category and $ F : I \rightarrow \Cat$ a pseudofunctor, the conical pseudolimit $ \pslim \, F$ has as objects pairs $ ((A_i)_{i \in I}, (\alpha_{d})_{d \in I^2})$ with $ A_i \in F(i)$ and for each $ d : i \rightarrow j$ in $I$, $ \alpha_d : F(i) \simeq F(j)$ an isomorphism, such that moreover we have the cocycle identities 
\[  \alpha_{1_i} = 1_{A_i} \quad \alpha_{d_1d_2} = \alpha_{d_1}\alpha_{d_2} \]
They are the case of a $\sigma$-bicolimit where all arrows are in $\Sigma$. 
\end{example}

\subsection{BiKan extensions}
In this subsection we recall biKan extensions. We follow the treatment of \cite[Sec. 4]{descotte2018sigma}, originally inspired by \cite{nunes2016biadjoint}.

\begin{definition}[Left biKan extension]
Let $ F: \mathcal{A} \rightarrow \mathcal{B}$ be and $ G : \mathcal{A} \rightarrow \mathcal{C}$ two pseudofunctors. Then the \emph{left biKan extension of $F$ along $G$}\index{left biKan extension} is the following 2-cell in $2$-$\Cat$  with the universal property that
% https://q.uiver.app/?q=WzAsMyxbMCwwLCJcXG1hdGhjYWx7QX0iXSxbMSwwLCJcXG1hdGhjYWx7Qn0iXSxbMCwxLCJcXG1hdGhjYWx7Q30iXSxbMCwyLCJHIiwyXSxbMCwxLCJGIl0sWzIsMSwiXFxiaUxhbl9HIEYiLDJdLFs0LDUsIlxcemV0YSIsMix7ImN1cnZlIjoxLCJzaG9ydGVuIjp7InNvdXJjZSI6MjAsInRhcmdldCI6MjB9fV1d
\[\begin{tikzcd}
	{\mathcal{A}} & {\mathcal{B}} \\
	{\mathcal{C}}
	\arrow["G"', from=1-1, to=2-1]
	\arrow[""{name=0, anchor=center, inner sep=0}, "F", from=1-1, to=1-2]
	\arrow[""{name=1, anchor=center, inner sep=0}, "{\biLan_{G} F}"', from=2-1, to=1-2]
	\arrow[" \lambda", shorten <=3pt, shorten >=3pt, Rightarrow, from=1-1, to=1]
\end{tikzcd}\]
 \begin{itemize}
    \item for any other pseudonatural transformation $ \zeta : F \Rightarrow HG$ there exists an essentially unique pseudonatural transformation $ \xi : \biLan_{G} F \Rightarrow H$ such that we have a canonical invertible modification $ \zeta \simeq \xi * G \lambda$
    % https://q.uiver.app/?q=WzAsMyxbMCwwLCJcXG1hdGhjYWx7QX0iXSxbMiwwLCJcXG1hdGhjYWx7Qn0iXSxbMCwxLCJcXG1hdGhjYWx7Q30iXSxbMCwyLCJHIiwyXSxbMCwxLCJGIl0sWzIsMSwiSCIsMix7ImN1cnZlIjoyfV0sWzQsNSwiXFx6ZXRhIiwyLHsiY3VydmUiOjEsInNob3J0ZW4iOnsic291cmNlIjoyMCwidGFyZ2V0IjoyMH19XV0=
\[% https://q.uiver.app/?q=WzAsMyxbMCwwLCJcXG1hdGhjYWx7QX0iXSxbMiwwLCJcXG1hdGhjYWx7Qn0iXSxbMCwxLCJcXG1hdGhjYWx7Q30iXSxbMCwxLCJGIl0sWzIsMSwiSCIsMix7ImN1cnZlIjozfV0sWzAsMiwiRyIsMl0sWzIsMSwiXFxiaUxhbl9HIEYiLDEseyJsYWJlbF9wb3NpdGlvbiI6MzB9XSxbMyw2LCJcXGxhbWJkYSIsMix7Im9mZnNldCI6MywiY3VydmUiOjEsInNob3J0ZW4iOnsic291cmNlIjoyMCwidGFyZ2V0IjoxMH19XSxbNiw0LCJcXHhpIiwwLHsib2Zmc2V0IjotMywic2hvcnRlbiI6eyJzb3VyY2UiOjIwLCJ0YXJnZXQiOjIwfX1dXQ==
\begin{tikzcd}[sep=large]
	{\mathcal{A}} && {\mathcal{B}} \\
	{\mathcal{C}}
	\arrow[""{name=0, anchor=center, inner sep=0}, "F", from=1-1, to=1-3]
	\arrow[""{name=1, anchor=center, inner sep=0}, "H"', curve={height=18pt}, from=2-1, to=1-3]
	\arrow["G"', from=1-1, to=2-1]
	\arrow[""{name=2, anchor=center, inner sep=0}, "{\biLan_{G} F}"{description, pos=0.3}, from=2-1, to=1-3]
	\arrow["\lambda", shorten <=5pt, shorten >=5pt, Rightarrow, from=1-1, to=2]
	\arrow["\xi", shift left=3, shorten <=3pt, shorten >=3pt, Rightarrow, from=2, to=1]
\end{tikzcd} \simeq \begin{tikzcd}[sep=large]
	{\mathcal{A}} && {\mathcal{B}} \\
	{\mathcal{C}}
	\arrow["G"', from=1-1, to=2-1]
	\arrow[""{name=0, anchor=center, inner sep=0}, "F", from=1-1, to=1-3]
	\arrow[""{name=1, anchor=center, inner sep=1}, "H"', from=2-1, to=1-3]
	\arrow["\zeta", shorten <=5pt, shorten >=5pt, Rightarrow, from=1-1, to=1]
\end{tikzcd}\]
\item for any $ H : \mathcal{C} \rightarrow \mathcal{B}$ and any natural modification $ \phi : \zeta \Rrightarrow \zeta'$ in the homcategory $ \ps[\mathcal{A}, \mathcal{B}][F, HG]$, there is a unique modification $ \psi$ such that $ \phi = (\psi * G)*\lambda$ as depicted below
% https://q.uiver.app/?q=WzAsMyxbMCwwLCJcXG1hdGhjYWx7QX0iXSxbMiwwLCJcXG1hdGhjYWx7Qn0iXSxbMCwyLCJcXG1hdGhjYWx7Q30iXSxbMCwyLCJHIiwyXSxbMCwxLCJGIl0sWzIsMSwiXFxiaUxhbl9HIEYiLDEseyJsYWJlbF9wb3NpdGlvbiI6MjB9XSxbMiwxLCJIIiwyLHsiY3VydmUiOjR9XSxbNSw2LCJcXHhpIiwyLHsib2Zmc2V0IjoyLCJzaG9ydGVuIjp7InNvdXJjZSI6MjAsInRhcmdldCI6MjB9fV0sWzQsNSwiXFxsYW1iZGEiLDIseyJvZmZzZXQiOjMsImN1cnZlIjoxLCJzaG9ydGVuIjp7InNvdXJjZSI6MTAsInRhcmdldCI6MTB9fV0sWzUsNiwiXFx4aSciLDAseyJvZmZzZXQiOi01LCJzaG9ydGVuIjp7InNvdXJjZSI6MjAsInRhcmdldCI6MjB9fV0sWzcsOSwiXFxwc2kiLDIseyJzaG9ydGVuIjp7InNvdXJjZSI6MjAsInRhcmdldCI6MjB9fV1d
\[\begin{tikzcd}[sep=large]
	{\mathcal{A}} && {\mathcal{B}} \\
	\\
	{\mathcal{C}}
	\arrow["G"', from=1-1, to=3-1]
	\arrow[""{name=0, anchor=center, inner sep=0}, "F", from=1-1, to=1-3]
	\arrow[""{name=1, anchor=center, inner sep=0}, "{\biLan_{G} F}"{description, pos=0.2}, from=3-1, to=1-3]
	\arrow[""{name=2, anchor=center, inner sep=0}, "H"', curve={height=35pt}, from=3-1, to=1-3]
	\arrow[""{name=3, anchor=center, inner sep=0}, "\xi"', shift right=2, shorten <=4pt, shorten >=4pt, Rightarrow, from=1, to=2]
	\arrow["\lambda"', shorten <=4pt, shorten >=4pt, Rightarrow, from=1-1, to=1]
	\arrow[""{name=4, anchor=center, inner sep=0}, "{\xi'}", shift left=5, shorten <=4pt, shorten >=4pt, Rightarrow, from=1, to=2]
	\arrow["\psi"', shorten <=2pt, shorten >=2pt, from=3, to=4, triple]
\end{tikzcd} = % https://q.uiver.app/?q=WzAsMyxbMCwwLCJcXG1hdGhjYWx7QX0iXSxbMiwwLCJcXG1hdGhjYWx7Qn0iXSxbMCwyLCJcXG1hdGhjYWx7Q30iXSxbMCwyLCJHIiwyXSxbMCwxLCJGIl0sWzIsMSwiSCIsMix7ImN1cnZlIjo0fV0sWzQsNSwiXFx6ZXRhJyIsMCx7Im9mZnNldCI6LTMsImN1cnZlIjoxLCJzaG9ydGVuIjp7InNvdXJjZSI6MjAsInRhcmdldCI6MjB9fV0sWzQsNSwiXFx6ZXRhIiwyLHsib2Zmc2V0Ijo1LCJjdXJ2ZSI6MSwic2hvcnRlbiI6eyJzb3VyY2UiOjIwLCJ0YXJnZXQiOjIwfX1dLFs3LDYsIlxccGhpIiwyLHsic2hvcnRlbiI6eyJzb3VyY2UiOjIwLCJ0YXJnZXQiOjIwfX1dXQ==
\begin{tikzcd}[column sep=large]
	{\mathcal{A}} && {\mathcal{B}} \\
	\\
	{\mathcal{C}}
	\arrow["G"', from=1-1, to=3-1]
	\arrow[""{name=0, anchor=center, inner sep=0}, "F", from=1-1, to=1-3]
	\arrow[""{name=1, anchor=center, inner sep=0}, "H"', curve={height=24pt}, from=3-1, to=1-3]
	\arrow[""{name=2, anchor=center, inner sep=0}, "{\zeta'}", shift left=4,  shorten <=7pt, shorten >=7pt, Rightarrow, from=1-1, to=1]
	\arrow[""{name=3, anchor=center, inner sep=0}, "\zeta"', shift right=4,  shorten <=7pt, shorten >=7pt, Rightarrow, from=1-1, to=1]
	\arrow["\phi"', shorten <=3pt, shorten >=3pt, from=3, to=2, triple]
\end{tikzcd}\]

% https://q.uiver.app/?q=WzAsMyxbMCwwLCJcXG1hdGhjYWx7QX0iXSxbMywwLCJcXG1hdGhjYWx7Qn0iXSxbMCwyLCJcXG1hdGhjYWx7Q30iXSxbMCwyLCJHIiwyXSxbMCwxLCJGIl0sWzIsMSwiXFxiaUxhbl9HIEYiLDEseyJsYWJlbF9wb3NpdGlvbiI6MjB9XSxbMiwxLCIiLDEseyJjdXJ2ZSI6M31dLFs1LDYsIlxceGkiLDIseyJvZmZzZXQiOjUsInNob3J0ZW4iOnsic291cmNlIjoyMCwidGFyZ2V0IjoyMH19XSxbNCw2LCIiLDEseyJjdXJ2ZSI6MSwic2hvcnRlbiI6eyJzb3VyY2UiOjIwLCJ0YXJnZXQiOjIwfX1dLFs0LDUsIlxcbGFtYmRhIiwyLHsib2Zmc2V0Ijo1LCJjdXJ2ZSI6MSwic2hvcnRlbiI6eyJzb3VyY2UiOjIwLCJ0YXJnZXQiOjIwfX1dLFs1LDYsIiIsMSx7Im9mZnNldCI6Miwic2hvcnRlbiI6eyJzb3VyY2UiOjIwLCJ0YXJnZXQiOjIwfX1dLFs0LDYsIlxcemV0YSIsMCx7Im9mZnNldCI6LTUsImN1cnZlIjoxLCJzaG9ydGVuIjp7InNvdXJjZSI6MjAsInRhcmdldCI6MjB9fV0sWzcsMTAsIlxccHNpIiwyLHsic2hvcnRlbiI6eyJzb3VyY2UiOjIwLCJ0YXJnZXQiOjIwfX1dLFs4LDExLCJcXHBoaSIsMCx7InNob3J0ZW4iOnsic291cmNlIjoyMCwidGFyZ2V0IjoyMH19XV0=

\end{itemize}
In other words, we have for each $ H : \mathcal{C} \rightarrow \mathcal{B}$ an equivalence between homcategories
\[ \ps[\mathcal{C}, \mathcal{B}][\biLan_{G} F, H] \simeq \ps[\mathcal{A}, \mathcal{B}][F, HG]  \]
We can also define the \emph{right biKan extension}, denoted $\textup{biRan}_G F$, which has the expected dual property. 
\end{definition}

Moreover, whenever they exist, left biKan extensions can be computed as weighted bicolimits: this is essentially the content of \cite[4.1.5,4.1.6]{descotte2018sigma}:

\begin{proposition}\label{Weighted colimit expression of pointwise biLan}
When $ \mathcal{B}$ has small bicolimits, we can compute the left biKan extension for any $ C$
 in $\mathcal{C}$ as the bicolimit
 \[  \biLan_{G} F (C) = \bicolim^{\mathcal{N}_G(C)} F   \]
 %= \biend^{A \in \mathcal{A}} \mathcal{C}[G(A), C] \times F(A)
where $ \mathcal{N}_G : \mathcal{C} \rightarrow \ps[\mathcal{A}^{\op}, \Cat]$ sends $ C$ on the contravariant weight $ \mathcal{C}[G, C] : \mathcal{A}^{\op} \rightarrow \Cat$.  
\end{proposition}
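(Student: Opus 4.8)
The plan is to define a pseudofunctor $L : \mathcal{C} \rightarrow \mathcal{B}$ by the pointwise formula $L(C) := \bicolim^{\mathcal{N}_G(C)} F$ and to show it satisfies the universal property characterizing the left biKan extension, namely an equivalence $\ps[\mathcal{C}, \mathcal{B}][L, H] \simeq \ps[\mathcal{A}, \mathcal{B}][F, HG]$ pseudonatural in $H$. By the biKan analogue of the (bicategorical) Yoneda lemma, two objects representing pseudonaturally equivalent pseudofunctors of $H$ are equivalent, so this will force $L \simeq \biLan_{G} F$ and deliver the pointwise formula. Functoriality of $L$ is not an issue: since $\mathcal{B}$ has all small bicolimits, the assignment $W \mapsto \bicolim^{W} F$ is pseudofunctorial in $W \in \ps[\mathcal{A}^{\op}, \Cat]$, and $\mathcal{N}_G : \mathcal{C} \rightarrow \ps[\mathcal{A}^{\op}, \Cat]$ is a pseudofunctor, so $L$ is simply their composite.

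The computation runs through the bicategorical end calculus. First I would express the hom-category of pseudonatural transformations as a bicategorical end, $\ps[\mathcal{C}, \mathcal{B}][L, H] \simeq \biend_{C \in \mathcal{C}} \mathcal{B}[LC, HC]$, the $2$-dimensional analogue of the end formula for natural transformations. Substituting the definition of $L$ and invoking the defining universal property of the weighted bicolimit, then rewriting the resulting weighted-diagram hom-category as an end over $\mathcal{A}$, gives for each $C$
\[ \mathcal{B}[LC, HC] \simeq \ps[\mathcal{A}^{\op}, \Cat]\big[ \mathcal{C}[G-, C], \mathcal{B}[F-, HC] \big] \simeq \biend_{A \in \mathcal{A}} \Cat\big[ \mathcal{C}[GA, C], \mathcal{B}[FA, HC] \big]. \]
Assembling, $\ps[\mathcal{C}, \mathcal{B}][L, H]$ becomes the iterated end $\biend_{C} \biend_{A} \Cat\big[ \mathcal{C}[GA, C], \mathcal{B}[FA, HC] \big]$. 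Next I would apply the Fubini theorem for bicategorical ends to interchange the two variables, obtaining $\biend_{A} \biend_{C} \Cat\big[ \mathcal{C}[GA, C], \mathcal{B}[FA, HC] \big]$. For fixed $A$ the inner end is the category $\ps[\mathcal{C}, \Cat]\big[ \mathcal{C}[GA, -], \mathcal{B}[FA, H-] \big]$, which the bicategorical Yoneda lemma evaluates to $\mathcal{B}[FA, H(GA)] = \mathcal{B}[FA, HG(A)]$. Reassembling the remaining end over $A$ yields $\biend_{A \in \mathcal{A}} \mathcal{B}[FA, HG(A)] \simeq \ps[\mathcal{A}, \mathcal{B}][F, HG]$, the desired right-hand side.

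The main obstacle is the Fubini interchange of the two bicategorical ends (equivalently, of the corresponding weighted bilimits): it must be set up coherently rather than as a bare equivalence at each stage, and — more importantly — one must track that \emph{every} equivalence in the chain is pseudonatural in $H$, since it is this pseudonaturality, not merely the pointwise equivalence of categories, that encodes the full universal property of $\biLan_{G} F$, including its second clause on modifications. The Yoneda and weighted-bicolimit steps are formal once the end calculus is in hand, so the whole argument amounts to importing the bicategorical end/coend machinery of \cite[Sec.~4]{descotte2018sigma}; absent such a calculus one can instead argue directly with $\sigma$-free cocones over $F$ weighted by $\mathcal{N}_G(C)$, but only at the cost of substantial coherence bookkeeping.
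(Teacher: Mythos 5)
Your proposal cannot be compared against an internal proof, because the paper does not prove this proposition at all: it is imported verbatim from the literature, with the preceding sentence attributing it to \cite[4.1.5, 4.1.6]{descotte2018sigma}. Judged on its own merits, your argument is the correct and standard one — it is the direct categorification of the classical pointwise-Kan-extension computation: define $L$ by the pointwise formula, reduce the universal property to a pseudonatural equivalence of hom-categories, and establish that equivalence by the chain (bi-end formula for pseudonatural transformations) $\Rightarrow$ (universal property of the weighted bicolimit) $\Rightarrow$ (Fubini for bi-ends) $\Rightarrow$ (bicategorical Yoneda). All the imported ingredients do exist and are coherent, so the chain goes through, and you correctly identify the two genuine burdens: the Fubini interchange and the pseudonaturality in $H$ of every link. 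One refinement worth making explicit: the paper's definition of $\biLan_G F$ does not merely ask for an equivalence $\ps[\mathcal{C},\mathcal{B}][L,H] \simeq \ps[\mathcal{A},\mathcal{B}][F,HG]$ for each $H$, but for one induced by pasting with a specific unit $\lambda : F \Rightarrow LG$; this is recovered from your argument in the usual way, by instantiating $H = L$, taking $\lambda$ to be the image of $1_L$ under the equivalence, and then using pseudonaturality in $H$ to show the equivalence coincides with pasting along $\lambda$ — so your closing emphasis on pseudonaturality is exactly the right place to put the weight. The only other caveat is a size hypothesis you use tacitly: $\mathcal{A}$ must be small (as in the paper's \Cref{cancellation rule}) for $\mathcal{N}_G(C)$ to be a small weight so that the bicolimit exists under the stated hypothesis on $\mathcal{B}$.
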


\begin{division}[Cancellation rule for left biKan extensions] \label{cancellation rule}
Now, let $\mathcal{A}$ be a small $2$-category, and consider an extension problem as above. Another way to express the formula above is given by the following diagram,

\begin{comment}
% https://q.uiver.app/?q=WzAsNCxbMiwwLCJcXG1hdGhjYWx7Qn0iXSxbMCwwLCJcXG1hdGhjYWx7QX0iXSxbMSwxLCJcXG1hdGhjYWx7Q30iXSxbMSwzLCJcXHBzW1xcbWF0aGNhbHtBfV5cXGNpcmMsIFxcQ2F0XSJdLFsxLDIsImciLDFdLFsxLDAsImYiXSxbMiwwLCJcXHRleHR7YmlMYW59X2dmIiwxXSxbMSwzLCJcXGhpcmF5byIsMix7ImN1cnZlIjo1fV0sWzIsMywiXFxtYXRoY2Fse0N9KGctLC0pIiwxXSxbMywwLCJcXHRleHR7YmlMYW59X3tcXGhpcmF5b30gZiIsMix7ImN1cnZlIjo1fV0sWzUsMiwiXFxzaW1lcSIsMSx7InNob3J0ZW4iOnsic291cmNlIjoyMH0sInN0eWxlIjp7ImJvZHkiOnsibmFtZSI6Im5vbmUifSwiaGVhZCI6eyJuYW1lIjoibm9uZSJ9fX1dLFsyLDksIlxcc2ltZXEiLDEseyJzaG9ydGVuIjp7InRhcmdldCI6MjB9LCJzdHlsZSI6eyJib2R5Ijp7Im5hbWUiOiJub25lIn0sImhlYWQiOnsibmFtZSI6Im5vbmUifX19XV0=
\[\begin{tikzcd}
	{\mathcal{A}} && {\mathcal{B}} \\
	& {\mathcal{C}} \\
	\\
	& {\ps[\mathcal{A}^\circ, \Cat]}
	\arrow["g"{description}, from=1-1, to=2-2]
	\arrow[""{name=0, anchor=center, inner sep=0}, "f", from=1-1, to=1-3]
	\arrow["{\text{biLan}_gf}"{description}, from=2-2, to=1-3]
	\arrow["\hirayo"', curve={height=30pt}, from=1-1, to=4-2]
	\arrow["{\mathcal{C}(g-,-)}"{description}, from=2-2, to=4-2]
	\arrow[""{name=1, anchor=center, inner sep=0}, "{\text{biLan}_{\hirayo} f}"', curve={height=30pt}, from=4-2, to=1-3]
	\arrow["\simeq"{description}, Rightarrow, draw=none, from=0, to=2-2]
	\arrow["\simeq"{description}, Rightarrow, draw=none, from=2-2, to=1]
\end{tikzcd}\]
\end{comment}
% https://q.uiver.app/?q=WzAsNCxbMiwwLCJcXG1hdGhjYWx7Qn0iXSxbMCwwLCJcXG1hdGhjYWx7QX0iXSxbMSwxLCJcXG1hdGhjYWx7Q30iXSxbMSwyLCJcXHBzW1xcbWF0aGNhbHtBfV5cXG9wLCBcXENhdF0iXSxbMSwyLCJHIiwxXSxbMSwwLCJGIl0sWzIsMCwiXFx0ZXh0e2JpTGFufV9HRiIsMV0sWzEsMywiXFxoaXJheW8iLDIseyJjdXJ2ZSI6M31dLFsyLDMsIlxcbWF0aGNhbHtDfVtHLC1dIiwxXSxbMywwLCJcXHRleHR7YmlMYW59X3tcXGhpcmF5b30gRiIsMix7ImN1cnZlIjozfV0sWzUsMiwiXFxsYW1iZGEiLDIseyJzaG9ydGVuIjp7InNvdXJjZSI6MjB9fV0sWzIsOSwiIiwxLHsic2hvcnRlbiI6eyJ0YXJnZXQiOjIwfSwic3R5bGUiOnsiYm9keSI6eyJuYW1lIjoiZGFzaGVkIn19fV0sWzcsMiwiRygtKSIsMCx7InNob3J0ZW4iOnsic291cmNlIjoyMH19XV0=
\[\begin{tikzcd}
	{\mathcal{A}} && {\mathcal{B}} \\
	& {\mathcal{C}} \\
	& {\ps[\mathcal{A}^\op, \Cat]}
	\arrow["G"{description}, from=1-1, to=2-2]
	\arrow[""{name=0, anchor=center, inner sep=0}, "F", from=1-1, to=1-3]
	\arrow["{\text{biLan}_GF}"{description}, from=2-2, to=1-3]
	\arrow[""{name=1, anchor=center, inner sep=0}, "\hirayo"', curve={height=18pt}, from=1-1, to=3-2]
	\arrow["{\mathcal{C}[G,-]}", from=2-2, to=3-2]
	\arrow[""{name=2, anchor=center, inner sep=0}, "{\text{biLan}_{\hirayo} F}"', curve={height=18pt}, from=3-2, to=1-3]
	\arrow["\lambda"', shorten <=3pt, Rightarrow, from=0, to=2-2]
	\arrow["\simeq", shorten >=7pt, Rightarrow, draw=none, from=2-2, to=2]
	\arrow["{G*}", shorten <=7pt, Rightarrow, from=1, to=2-2]
\end{tikzcd}\]
where the leftmost 2-cell $ G*$ is the pseudonatural transformation induced by $G$ between homsets, while the rightmost 2-cell happens to be a pseudonatural equivalence. Indeed the $\biLan_\hirayo F$, when applied to a weight $W$ gives us the bicolimit along the weight, and thus we have the following equation \[\biLan_\hirayo F (W) \simeq \bicolim^{W} F,\]
which thus gives us the formula \[\biLan_G F \simeq \biLan_\hirayo F \circ \mathcal{C}[G,-]\]
This observation suggests that a vast class of functors admits indeed the computation of a left Kan extenstion. Indeed, if the functor $\mathcal{C}[G,-]$ lands in the full subcategory of weights which are small bicolimits of representables, we can still use the formula above and compute the biKan extension, because we assume $\mathcal{B}$ to have small weighted bicolimits.
%The following is a well known generalization of the 1-categorical denseness criterion:
\end{division}

\begin{remark}\label{cancellation rule for biRan}
We also have the dual formula for the computation of the right biKan extension 
\[\biRan_G F \simeq \biRan_{\katayo^{\op}} F \circ \mathcal{C}[-,G]\]
with $ \katayo : \mathcal{A}^{\op} \hookrightarrow \ps[ \mathcal{A}, \Cat]$ being the contravariant 2-dimensional Yoneda embedding.
\end{remark}

\begin{proposition}[On the existence of biKan extensions] \label{cr}
In the notation of \Cref{cancellation rule}, if $\mathcal{A}$ is a (possibly large) $2$-category, $\mathcal{B}$ has small weighted bicolimits and the functor $\mathcal{C}[G,-]$ lands in the full subcategory of weights which are small bicolimits of representables, the $\biLan_GF$ exists and is computed as in \Cref{cancellation rule}.
\end{proposition}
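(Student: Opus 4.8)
The plan is to leverage the factorization $\biLan_G F \simeq \biLan_\hirayo F \circ \mathcal{C}[G,-]$ already derived in \Cref{cancellation rule}; the entire content of the statement is that this composite is legitimate even when $\mathcal{A}$ is large. The only potential obstruction is the existence of the values $\biLan_\hirayo F(W) \simeq \bicolim^{W} F$ for $W$ in the essential image of $\mathcal{C}[G,-]$: when $\mathcal{A}$ is large these are \emph{a priori} large weighted bicolimits, which need not exist under the mere hypothesis that $\mathcal{B}$ has \emph{small} weighted bicolimits. I would first reduce the existence of these bicolimits to small bicolimits in $\mathcal{B}$, and only then invoke the pointwise theory of biKan extensions to recognise the resulting pseudofunctor as $\biLan_G F$.

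The key step is the cocontinuity of the weighted bicolimit in its weight variable. For fixed $B$ the defining equivalence
\[ \mathcal{B}[\bicolim^{W} F, B] \simeq \ps[\mathcal{A}^{\op}, \Cat][W, \mathcal{B}[F,B]] \]
is pseudonatural and contravariant in $W$, hence sends bicolimits of weights to bilimits of categories; since representables $\mathcal{B}[-,B]$ jointly reflect bicolimits, the assignment $W \mapsto \bicolim^{W} F$ preserves any bicolimit it is fed. Thus whenever $W \simeq \bicolim_{j} \hirayo(A_j)$ is a small bicolimit of representables we obtain
\[ \bicolim^{W} F \simeq \underset{j}{\bicolim}\, \bicolim^{\hirayo(A_j)} F \simeq \underset{j}{\bicolim}\, F(A_j), \]
where the last equivalence is the $2$-dimensional Yoneda lemma identifying the bicolimit weighted by a representable with the value of $F$. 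The right-hand side is a \emph{small} bicolimit of objects of $\mathcal{B}$, so it exists by hypothesis. Applying this to $W = \mathcal{C}[G,C]$, which lies in the full sub-$2$-category of small bicolimits of representables by assumption, shows that each value $\biLan_\hirayo F(\mathcal{C}[G,C]) \simeq \bicolim^{\mathcal{C}[G,C]} F$ is a well-defined object of $\mathcal{B}$.

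It remains to assemble these values into a pseudofunctor and confirm the universal property. I would argue exactly as in \Cref{Weighted colimit expression of pointwise biLan}: the pointwise bicolimits $C \mapsto \bicolim^{\mathcal{C}[G,C]} F$ are functorial in $C$ through the action of $\mathcal{C}[G,-]$ on homcategories, and the chain of pseudonatural equivalences
\[ \ps[\mathcal{C}, \mathcal{B}]\big[\, \biLan_\hirayo F \circ \mathcal{C}[G,-],\ H \,\big] \simeq \ps[\mathcal{A}, \mathcal{B}][F, HG] \]
realising the biKan universal property is obtained by unwinding the weighted-bicolimit universal property pointwise and integrating over $\mathcal{C}$. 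The single point at which the size hypothesis was previously required — existence of the pointwise bicolimits — has now been supplied, so the argument of \Cref{Weighted colimit expression of pointwise biLan} applies verbatim. I expect the main obstacle to be precisely the coherence bookkeeping in this last step, in the fully weak setting: one must check that the equivalence $\bicolim^{W}F \simeq \bicolim_j F(A_j)$ is natural enough in $C$ to descend to a genuine pseudofunctor rather than merely holding objectwise, which is where the coherence of the pseudonatural equivalences displayed in \Cref{cancellation rule} does the real work.
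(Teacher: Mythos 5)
Your proof is correct and takes essentially the same route as the paper: the paper's own proof is simply ``follows from the discussion in \Cref{cancellation rule}'', i.e.\ the factorization $\biLan_G F \simeq \biLan_\hirayo F \circ \mathcal{C}[G,-]$ together with the observation that the hypothesis on $\mathcal{C}[G,-]$ reduces all required bicolimits to small ones in $\mathcal{B}$. Your explicit verification that $\bicolim^{W} F \simeq \bicolim_j F(A_j)$ when $W$ is a small bicolimit of representables (cocontinuity of the weighted bicolimit in its weight variable plus the bicategorical Yoneda lemma) merely spells out the existence step that the paper leaves implicit.
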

\begin{proof}
Follows from the discussion in \Cref{cancellation rule}.
\end{proof}

\begin{proposition}\label{bikan ext along ff}
Suppose in the above context that $ G$ is pseudofully faithful; then the canonical pseudonatural transformation $ \lambda : F \Rightarrow \biLan_{G} F G $ is a point-wise equivalence.   
\end{proposition}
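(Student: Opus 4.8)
The plan is to compute $\biLan_G F \circ G$ pointwise and recognize it as $F$ by a density argument, with the pseudofull faithfulness of $G$ serving precisely to turn the relevant weight into a representable one. I would begin from the pointwise weighted-bicolimit formula of \Cref{Weighted colimit expression of pointwise biLan}, which gives, for each object $A$ of $\mathcal{A}$,
\[ \biLan_G F(G(A)) \simeq \bicolim^{\mathcal{C}[G-,\, G(A)]} F, \]
so that everything reduces to understanding the weight $\mathcal{C}[G-, G(A)] : \mathcal{A}^{\op} \rightarrow \Cat$.

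Next I would use that $G$ is pseudofully faithful, which means exactly that the component $G_{A',A} : \mathcal{A}[A', A] \rightarrow \mathcal{C}[G(A'), G(A)]$ is an equivalence of categories for all $A', A$, pseudonaturally in $A'$. This exhibits an equivalence of weights $\mathcal{C}[G-, G(A)] \simeq \mathcal{A}[-, A] = \hirayo(A)$ in $\ps[\mathcal{A}^{\op}, \Cat]$. Since weighted bicolimits are invariant up to equivalence under equivalence of weights — an immediate consequence of their defining universal property — this yields
\[ \biLan_G F(G(A)) \simeq \bicolim^{\hirayo(A)} F. \]
I would then invoke the bicategorical co-Yoneda (density) lemma, namely that the bicolimit of $F$ weighted by a representable recovers its value, $\bicolim^{\hirayo(A)} F \simeq F(A)$; equivalently this is the statement $\biLan_{\hirayo} F \circ \hirayo \simeq F$, which is also what the cancellation rule of \Cref{cancellation rule} yields when evaluated on representable weights.

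The remaining and most delicate point is to verify that the resulting equivalence $F(A) \simeq \biLan_G F(G(A))$ is canonically $\lambda_A$, and not merely some abstract equivalence. For this I would trace the universal cocone: in $\bicolim^{\hirayo(A)} F$, the component of the defining pseudo-cocone indexed by the identity $1_A \in \hirayo(A)(A) = \mathcal{A}[A,A]$ is exactly the inclusion that the definition of the biKan extension names $\lambda_A$. Chasing this through the equivalence of weights induced by $G$ and through the density identification shows that $\lambda_A$ corresponds to the identity, hence is an equivalence. The main obstacle is precisely this bookkeeping — confirming that the comparison produced by manipulating weights agrees with the canonical comparison $\lambda$ supplied by the universal property — rather than the existence of some equivalence between the two objects; once the component at $1_A$ is identified, pseudonaturality of $\lambda$ delivers the claim uniformly in $A$.
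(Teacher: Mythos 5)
Your argument is correct, but it is worth noting that the paper does not actually prove this statement: its ``proof'' is a pointer to \cite[Prop.\ 4.1.10]{descotte2018sigma}, so what you have written is a genuine self-contained argument rather than a reproduction of the paper's. Your route is the standard pointwise one: use \Cref{Weighted colimit expression of pointwise biLan} to write $\biLan_G F(G(A)) \simeq \bicolim^{\mathcal{C}[G-,G(A)]}F$, observe that pseudofull faithfulness makes the canonical transformation $\mathcal{A}[-,A]\Rightarrow \mathcal{C}[G-,G(A)]$ a pointwise (hence genuine) equivalence of weights, and conclude by the bicategorical co-Yoneda lemma; this is sound, and you rightly flag that the only delicate point is identifying the resulting equivalence with $\lambda_A$, which your chase of the cocone component at the identity handles. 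Two remarks. First, that bookkeeping step can be streamlined using the paper's own \Cref{cancellation rule}: in that diagram $\lambda$ factors, up to the asserted equivalence $\biLan_G F \simeq \biLan_\hirayo F\circ\mathcal{C}[G,-]$, as the unit $F \Rightarrow \biLan_\hirayo F \circ \hirayo$ (an equivalence, i.e.\ co-Yoneda, which the paper already asserts there) followed by the whiskering $\biLan_\hirayo F * (G*)$; since $G*$ is a pointwise equivalence when $G$ is pseudofully faithful and pseudofunctors preserve equivalences, $\lambda$ is a pointwise equivalence with no further cocone-chasing. Second, ``the above context'' is that of \Cref{cr}, where $\mathcal{A}$ may be large and $\biLan_G F$ is defined via the cancellation rule rather than directly by the formula of \Cref{Weighted colimit expression of pointwise biLan}; your proof survives this (interpret $\bicolim^{\mathcal{C}[G-,G(A)]}F$ as $\biLan_\hirayo F(\mathcal{C}[G-,G(A)])$, which is invariant under equivalence of weights), but you should say so explicitly, since as written you quietly assume the small-bicolimit form of the pointwise formula.
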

\begin{proof}
\cite[Prop. 4.1.10]{descotte2018sigma}
\end{proof}

\subsection{Bi-adjunction and bireflectiveness}
It is well known that reflective subcategories inherit limits. Here we want to prove the corresponding result for bilimits in the context of bireflective sub-bicategories. 

\begin{proposition}\label{bireflective subbicategories have bilimits}
Let $R : \mathcal{A} \hookrightarrow \mathcal{B}$ be a pseudo-fully faithful pseudofunctor with a bi-adjoint $ L \dashv R$. Then if $ \mathcal{B}$ has bilimit, $\mathcal{A}$ is closed in $\mathcal{B}$ under bilimits.
\end{proposition}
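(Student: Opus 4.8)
The plan is to reduce the statement to a \emph{localness} property and prove it via biorthogonality, thereby sidestepping the fact that the reflector $L$, being a left bi-adjoint, need not preserve the bilimit in question. Concretely, saying that $\mathcal{A}$ is closed in $\mathcal{B}$ under bilimits means: for any weight $W : I \rightarrow \Cat$ and any pseudofunctor $F : I \rightarrow \mathcal{A}$, the bilimit $\ell := \bilim^{W}(RF)$ formed in $\mathcal{B}$ (which exists by hypothesis) lies in the essential image of $R$; granting this, $R$ creates the bilimit and $\mathcal{A}$ inherits it. Since $R$ is pseudo-fully faithful, the counit $\epsilon : LR \Rightarrow \mathrm{id}_{\mathcal{A}}$ is an equivalence, and the triangle identity $R\epsilon \cdot \eta R \simeq \mathrm{id}_R$ then forces $\eta R$ to be an equivalence; consequently the essential image of $R$ is exactly the class of \emph{local} objects, i.e. those $B$ for which the unit $\eta_B : B \rightarrow RLB$ is an equivalence. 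So the whole problem is to show that $\ell$ is local.

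To break the apparent circularity (one cannot transport the bilimit through $L$), I would invoke the standard orthogonality characterization of local objects. Let $\mathcal{W}$ denote the class of $1$-cells $w$ of $\mathcal{B}$ that $L$ sends to equivalences, and call $B$ \emph{biorthogonal} to $w : X \rightarrow Y$ when $\mathcal{B}[w, B] : \mathcal{B}[Y,B] \rightarrow \mathcal{B}[X,B]$ is an equivalence of categories. First I would prove that $B$ is local if and only if it is biorthogonal to every $w \in \mathcal{W}$. The forward direction is immediate from the biadjunction, as $\mathcal{B}[w, RA] \simeq \mathcal{A}[Lw, A]$ pseudonaturally and $Lw$ is an equivalence. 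For the converse, observe that $\eta_B$ itself lies in $\mathcal{W}$ (since $L\eta_B$ is an equivalence by the triangle identity); applying biorthogonality to $w = \eta_B$, together with the fact that $RLB$, being in the image of $R$, is already biorthogonal to all of $\mathcal{W}$, lets one manufacture a genuine pseudo-inverse of $\eta_B$ and conclude that $\eta_B$ is an equivalence.

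It then remains to show that each biorthogonality class is closed under bilimits. Fix $w : X \rightarrow Y$ in $\mathcal{W}$ and suppose every $F(i)$ is $w$-orthogonal. The components $\mathcal{B}[w, F(i)]$ assemble into a pseudonatural transformation $\mathcal{B}[w, F(-)] : \mathcal{B}[Y, F(-)] \Rightarrow \mathcal{B}[X, F(-)]$ of pseudofunctors $I \rightarrow \Cat$ which is a pointwise equivalence. Since a weighted bilimit hom-category $\ps[I, \Cat][W, -]$ is invariant under pointwise pseudonatural equivalence of its argument, the map $\mathcal{B}[w, \ell]$, which under the universal property of $\ell$ corresponds to $\ps[I, \Cat]\big[W, \mathcal{B}[w, F(-)]\big]$, is an equivalence; hence $\ell$ is $w$-orthogonal. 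As each $RF(i)$ is local and therefore biorthogonal to all of $\mathcal{W}$, and as an intersection of biorthogonality classes is again closed under bilimits, the object $\ell$ is biorthogonal to every $w \in \mathcal{W}$, hence local, hence in the essential image of $R$.

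The main obstacle I anticipate is the converse direction of the orthogonality characterization: producing a bona fide pseudo-inverse to $\eta_B$ from biorthogonality to $\eta_B$ while carefully tracking invertible $2$-cells rather than strict equalities, and verifying the coherence of the resulting equivalence. The other point requiring care is the invariance of weighted bilimits under pointwise pseudonatural equivalence — this is precisely the ingredient that legitimately couples a ``mapping-into'' property (closure under bilimits) with a ``mapping-out-of'' property (localness) and so dissolves the circularity that blocks a naive transport argument through $L$.
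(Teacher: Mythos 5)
Your proof is correct. Bear in mind that the paper offers no argument of its own here: the proposition is explicitly left as an exercise, with the hint to categorify Borceux's Proposition 3.5.3. That hinted route is the direct one: form $\ell = \bilim^{W} RF$ in $\mathcal{B}$; since precomposition with $\eta_\ell$ into any object of the essential image of $R$ is an equivalence of hom-categories (by the biadjunction and pseudo-full-faithfulness), the universal cone legs $\ell \rightarrow RF(i)$ factor coherently through $\eta_\ell$; these factorizations assemble, by the universal property of $\ell$, into a $1$-cell $r : RL\ell \rightarrow \ell$ with $r \circ \eta_\ell \simeq \mathrm{id}_\ell$, and one then cancels $\eta_\ell$ against the local object $RL\ell$ to get $\eta_\ell \circ r \simeq \mathrm{id}_{RL\ell}$. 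Your proof reaches the same goal --- $\eta_\ell$ is an equivalence --- but with different bookkeeping: you isolate as separate lemmas (i) the identification of the essential image of $R$ with the objects biorthogonal to $\mathcal{W} = L^{-1}(\mathrm{equivalences})$, and (ii) the closure of any biorthogonality class under weighted bilimits. Both proofs ultimately rest on the same two mechanisms, namely inverting $\eta$-precomposition against local targets and the pseudonaturality in $B$ of the equivalence $\mathcal{B}[B,\ell] \simeq \ps[I,\Cat]\big[W, \mathcal{B}[B,RF]\big]$; what your version buys is modularity and generality, since (ii) is a statement of independent interest valid for an arbitrary class of $1$-cells, and it quarantines the two genuinely $2$-categorical verifications you flag (the converse orthogonality argument, and invariance of $\ps[I,\Cat][W,-]$ under pointwise pseudonatural equivalences), both of which are standard. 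What it costs is length and the setup of the orthogonality calculus. Two small corrections to your framing: the ``circularity'' you worry about does not actually block the direct categorification --- one never needs $L$ to preserve the bilimit, only that cone legs into local objects factor through the unit --- and in your third paragraph $\mathcal{B}[w, F(-)]$ should read $\mathcal{B}[w, RF(-)]$.
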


This proposition is general abstract nonsense for which we did not find a direct reference in the literature: we leave it as an exercise for the reader (hint: categorify \cite{borceux1994handbook}[Proposition 3.5.3].)

\begin{remark}\label{bicoreflective subbicat have bicolimits}
By duality, we also have that bicoreflective sub-bicategories, that are, those $ L : \mathcal{A} \hookrightarrow \mathcal{B}$ that have a right bi-adjoint, inherit bicolimits. We are going to use this form later. \end{remark}

We also know that reflective sub-2-categories inherit bicolimits - though they might not be preserved by the forgetful functor:

\begin{proposition}\label{reflective sub2cat have bicolimit}
Let $R: \mathcal{A} \hookrightarrow \mathcal{B}$  be a pseudo-full and faithful pseudofunctor with a bi-adjoint $ L \dashv R$. If $\mathcal{B}$ has bicolimits, then $\mathcal{A}$ also has bicolimits which are computed as 
\[ \bicolim^{W}\, F \simeq L(\bicolim^{W} \, RF)  \]
\end{proposition}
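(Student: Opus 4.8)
The plan is to verify directly that the object $L(\bicolim^{W} RF)$ satisfies the universal property of the $W$-weighted bicolimit of $F$ in $\mathcal{A}$, by chaining together a short sequence of pseudonatural equivalences of hom-categories. Existence of bicolimits in $\mathcal{A}$ then comes for free, since $\mathcal{B}$ is assumed to have all weighted bicolimits and $L$ is defined on all of $\mathcal{B}$, so $L(\bicolim^{W} RF)$ always makes sense. I would avoid routing through ``$L$ preserves colimits and $LR \simeq \mathrm{id}$'' and instead read everything off the universal properties at once.

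First I would fix a contravariant weight $W : I^{\op} \rightarrow \Cat$ and a pseudofunctor $F : I \rightarrow \mathcal{A}$, and for an arbitrary test object $A$ in $\mathcal{A}$ compute
\begin{align*}
\mathcal{A}[L(\bicolim^{W} RF), A] &\simeq \mathcal{B}[\bicolim^{W} RF, RA] \\
&\simeq \ps[I^{\op}, \Cat]\big[W, \mathcal{B}[RF, RA]\big] \\
&\simeq \ps[I^{\op}, \Cat]\big[W, \mathcal{A}[F, A]\big].
\end{align*}
The first equivalence is the defining property of the biadjunction $L \dashv R$; the second is the universal property of the bicolimit $\bicolim^{W} RF$ taken in $\mathcal{B}$; the third is obtained by postcomposing the weight with the equivalence $\mathcal{A}[F_-, A] \simeq \mathcal{B}[RF_-, RA]$ supplied by pseudo-full faithfulness of $R$, using that an equivalence of the target pseudofunctor $I^{\op} \rightarrow \Cat$ induces an equivalence on the hom-categories $\ps[I^{\op}, \Cat][W, -]$.

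Next I would check that the whole composite is pseudonatural in $A$. Each of the three equivalences is itself pseudonatural in $A$ — the biadjunction counit/unit data, the bicolimit cocone, and the witness of pseudo-full faithfulness of $R$ are all pseudonatural in the appropriate variable — and pseudonatural equivalences compose. This exhibits $L(\bicolim^{W} RF)$, together with the cocone transported across these equivalences, as a $W$-weighted bicolimit of $F$ in $\mathcal{A}$, establishing both the existence claim and the asserted formula simultaneously.

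The hard part will be purely the two-dimensional bookkeeping rather than any genuine difficulty: one must confirm that the equivalence coming from pseudo-full faithfulness of $R$ is genuinely pseudonatural in the diagram variable $i \in I$ (so that it lives in $\ps[I^{\op}, \Cat]$ and may legitimately be whiskered with $W$), and that the composite of the three displayed equivalences assembles into a single pseudonatural equivalence of 2-functors in $A$, not merely a pointwise family of equivalences of categories. Once this coherence is in place, the argument is a direct categorification of the classical fact that a reflective subcategory computes its colimits by reflecting the colimit of the image, exactly as in the bilimit counterpart \Cref{bireflective subbicategories have bilimits}.
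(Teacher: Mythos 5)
Your proof is correct and takes essentially the same route as the paper's: the paper establishes the result by exactly the same chain of three equivalences --- pseudo-full faithfulness of $R$, the universal property of $\bicolim^{W} RF$ in $\mathcal{B}$, and the biadjunction $L \dashv R$ --- merely written in the reverse order, from $\ps[I^{\op},\Cat][W,\mathcal{A}[F,A]]$ down to $\mathcal{A}[L(\bicolim^{W} RF),A]$. Your explicit attention to pseudonaturality in $A$ and in the diagram variable simply spells out what the paper leaves implicit.
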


\begin{proof}
For $ F : I \rightarrow \mathcal{A}$ and $ W : I^{\op} \rightarrow \Cat$, we have a pseudonatural equivalence in each $A$ of $\mathcal{A}$
\begin{align*}
    [I^{\op}, \Cat] \big[ W, \mathcal{A}[F,A]   \big] &\simeq  [I^{\op}, \Cat] \big[ W, \mathcal{B}[RF,R(A)]   \big] \\
    &\simeq \mathcal{B}[ \bicolim^{W} F, R(A) ] \\
    &\simeq \mathcal{A}[ L(\bicolim^{W} F), A ]
\end{align*}
\end{proof}

\section{Notions of $2$-dimensional filteredness}

This section deals with categorifications of the notion of filteredness: namely \emph{bifilteredness} by \cite{kennison1992fundamental} and \cite{descotte2020theory} and \emph{$ \sigma$-filteredness} by \cite{descotte2018sigma}, together with suited notion of 2-dimensional cofinality. Although those notions were developed separately, we show here they are actually almost the same in the sense that the seemingly more complicated $\sigma$-filtered $\sigma$-bicolimits automatically collapse on ordinary bifiltered bicolimits, which was seemingly unnoticed before. 

\subsection{Bifilteredness and $\sigma$-filteredness} \label{sectionsigma}

%{\color{red} In the definition below, should it be $\sigma$-filtered or $\sigma$-filtered ?! Also, $\mathcal{A}$ should be replaced by $\mathcal{I}$.}
\begin{definition}[$\sigma$-filtered pairs, \cite{descotte2018sigma} $\sigma$-filtered]
Let $ I$ be a 2-category and $ \Sigma $ a class of maps in $I$. Then $ I$ is said to be \emph{$\sigma$-filtered relatively to $\Sigma$}, or that $(I,\Sigma)$ is a \emph{$\sigma$-filtered pair} if it satisfies the following conditions: \begin{enumerate}
    \item for any $i, \; i'$ in $I$ there exists a span in $\Sigma$ % https://q.uiver.app/?q=WzAsMyxbMCwwLCJBIl0sWzAsMiwiQSciXSxbMSwxLCJBJyciXSxbMCwyLCJhIl0sWzEsMiwiYSciLDJdXQ==
\[\begin{tikzcd}[row sep=tiny]
	i \\
	& {i''} \\
	{i'}
	\arrow["s \in \Sigma", from=1-1, to=2-2]
	\arrow["{s' \in \Sigma}"', from=3-1, to=2-2]
\end{tikzcd}\]
   \item for a parallel pair $ d, s : i \rightrightarrows i'$ with $ s$ in $ \Sigma$, there exists $ t : i' \rightarrow i''$ in $\Sigma$ and a  2-cell
% https://q.uiver.app/?q=WzAsNCxbMCwxLCJpIl0sWzEsMCwiaSciXSxbMSwyLCJpJyJdLFsyLDEsImknJyJdLFswLDEsImQiXSxbMSwzLCJ0XFxpbiBcXFNpZ21hIl0sWzAsMiwicyBcXGluIFxcU2lnbWEiLDJdLFsyLDMsInRcXGluIFxcU2lnbWEiLDJdLFsxLDIsIlxccGhpIiwyLHsic2hvcnRlbiI6eyJzb3VyY2UiOjMwLCJ0YXJnZXQiOjMwfSwibGV2ZWwiOjJ9XV0=
\[\begin{tikzcd}[row sep=small]
	& {i'} \\
	i && {i''} \\
	& {i'}
	\arrow["d", from=2-1, to=1-2]
	\arrow["{t\in \Sigma}", from=1-2, to=2-3]
	\arrow["{s \in \Sigma}"', from=2-1, to=3-2]
	\arrow["{t\in \Sigma}"', from=3-2, to=2-3]
	\arrow["\alpha"', shorten <=10pt, shorten >=10pt, Rightarrow, from=1-2, to=3-2]
\end{tikzcd}\]
Moreover the 2-cell $ \alpha$ can be chosen to be invertible whenever $d$ is also in $\Sigma$;
   \item for a pair of parallel 2-cells with codomain in $\Sigma$
   % https://q.uiver.app/?q=WzAsMixbMCwwLCJBIl0sWzIsMCwiQScnIl0sWzAsMSwiZiIsMCx7ImN1cnZlIjotMn1dLFswLDEsImYnIiwyLHsiY3VydmUiOjJ9XSxbMiwzLCIiLDEseyJzaG9ydGVuIjp7InNvdXJjZSI6MjAsInRhcmdldCI6MjB9LCJzdHlsZSI6eyJib2R5Ijp7Im5hbWUiOiJub25lIn0sImhlYWQiOnsibmFtZSI6Im5vbmUifX19XSxbMiwzLCJcXGFscGhhIiwyLHsib2Zmc2V0Ijo0LCJzaG9ydGVuIjp7InNvdXJjZSI6MjAsInRhcmdldCI6MjB9fV0sWzIsMywiXFxhbHBoYSciLDAseyJvZmZzZXQiOi00LCJzaG9ydGVuIjp7InNvdXJjZSI6MjAsInRhcmdldCI6MjB9fV1d
\[\begin{tikzcd}
	i && {i'}
	\arrow[""{name=0, anchor=center, inner sep=0}, "d", curve={height=-12pt}, from=1-1, to=1-3]
	\arrow[""{name=1, anchor=center, inner sep=0}, "{s \in \Sigma}"', curve={height=12pt}, from=1-1, to=1-3]
	\arrow[Rightarrow, draw=none, from=0, to=1]
	\arrow["\alpha"', shift right=3, shorten <=3pt, shorten >=3pt, Rightarrow, from=0, to=1]
	\arrow["{\alpha'}", shift left=3, shorten <=3pt, shorten >=3pt, Rightarrow, from=0, to=1]
\end{tikzcd}\]
there exists $ f : i' \rightarrow i''$ also in $\Sigma$ such that $ f*\alpha = f *\alpha'$.
\end{enumerate}
\end{definition}

\begin{comment}
The statement in the second condition that the inserted 2-cell can be chosen invertible in the case where both upper and lower 1-cell are in $\Sigma$ follows from the three conditions altogether: if $ s,s' : i \rightarrow j$ are a parallel pair in $\Sigma$, then one can successively find $ t : j \rightarrow k,t' : j \rightarrow k'$ in $\Sigma$ inserting respectively $ \alpha : ts \Rightarrow ts'$ and $ \alpha' : t's' \Rightarrow t's$; then one can chose a span $ u : k \rightarrow k' $
\end{comment}

In particular let us emphasize the following specific case into an autonomous definition:

\begin{definition}
A 2-category $ I$ is said to be \emph{bifiltered} if it satisfies the following conditions: \begin{enumerate}
    \item for any $i, \; i'$ in $I$ there exists a span % https://q.uiver.app/?q=WzAsMyxbMCwwLCJBIl0sWzAsMiwiQSciXSxbMSwxLCJBJyciXSxbMCwyLCJhIl0sWzEsMiwiYSciLDJdXQ==
\[\begin{tikzcd}[row sep=tiny]
	i \\
	& {i''} \\
	{i'}
	\arrow["d", from=1-1, to=2-2]
	\arrow["{d'}"', from=3-1, to=2-2]
\end{tikzcd}\]
   \item for a parallel pair $ d, d' : i \rightrightarrows i'$, there exists $ f : i' \rightarrow i''$ together with an invertible 2-cell
% https://q.uiver.app/?q=WzAsNCxbMCwxLCJpIl0sWzEsMCwiaSciXSxbMSwyLCJpJyJdLFsyLDEsImknJyJdLFswLDEsImQiXSxbMSwzLCJkJyciXSxbMCwyLCJkJyIsMl0sWzIsMywiZCcnIiwyXSxbMSwyLCJcXGFscGhhIFxcYXRvcCBcXHNpbWVxIiwxLHsic2hvcnRlbiI6eyJzb3VyY2UiOjMwLCJ0YXJnZXQiOjMwfSwibGV2ZWwiOjIsInN0eWxlIjp7ImJvZHkiOnsibmFtZSI6Im5vbmUifSwiaGVhZCI6eyJuYW1lIjoibm9uZSJ9fX1dXQ==
\[\begin{tikzcd}[row sep=small]
	& {i'} \\
	i && {i''} \\
	& {i'}
	\arrow["d", from=2-1, to=1-2]
	\arrow["{d''}", from=1-2, to=2-3]
	\arrow["{d'}"', from=2-1, to=3-2]
	\arrow["{d''}"', from=3-2, to=2-3]
	\arrow["{\alpha \atop \simeq}"{description}, Rightarrow, draw=none, from=1-2, to=3-2]
\end{tikzcd}\]
   \item for a pair of parallel 2-cells
   % https://q.uiver.app/?q=WzAsMixbMCwwLCJBIl0sWzIsMCwiQScnIl0sWzAsMSwiZiIsMCx7ImN1cnZlIjotMn1dLFswLDEsImYnIiwyLHsiY3VydmUiOjJ9XSxbMiwzLCIiLDEseyJzaG9ydGVuIjp7InNvdXJjZSI6MjAsInRhcmdldCI6MjB9LCJzdHlsZSI6eyJib2R5Ijp7Im5hbWUiOiJub25lIn0sImhlYWQiOnsibmFtZSI6Im5vbmUifX19XSxbMiwzLCJcXGFscGhhIiwyLHsib2Zmc2V0Ijo0LCJzaG9ydGVuIjp7InNvdXJjZSI6MjAsInRhcmdldCI6MjB9fV0sWzIsMywiXFxhbHBoYSciLDAseyJvZmZzZXQiOi00LCJzaG9ydGVuIjp7InNvdXJjZSI6MjAsInRhcmdldCI6MjB9fV1d
\[\begin{tikzcd}
	i && {i'}
	\arrow[""{name=0, anchor=center, inner sep=0}, "d", curve={height=-12pt}, from=1-1, to=1-3]
	\arrow[""{name=1, anchor=center, inner sep=0}, "{d'}"', curve={height=12pt}, from=1-1, to=1-3]
	\arrow[Rightarrow, draw=none, from=0, to=1]
	\arrow["\alpha"', shift right=3, shorten <=3pt, shorten >=3pt, Rightarrow, from=0, to=1]
	\arrow["{\alpha'}", shift left=3, shorten <=3pt, shorten >=3pt, Rightarrow, from=0, to=1]
\end{tikzcd}\]
there exists $ f : i' \rightarrow i''$ such that $ f*\alpha = f *\alpha'$.
\end{enumerate}
\end{definition}

\begin{remark}
Of course a $\sigma$-filtered pair $ (I,\Sigma)$ is bifiltered exactly when $\Sigma$ contains all arrows. 
\end{remark}

\begin{remark}[From finite to infinite filteredness] \label{infinite filteredness}
Traditionally there is a gradient of notions of filteredness, indexed by a regular cardinals $\lambda$, so that the definition above should be called $\omega,\sigma$-filteredness. In this paper, we shall mainly stick to this \textit{finite} notion, but in a couple of occurrences, we will need its infinite version. Thus, let us say that, in the notation of the definition above, $I$ is said to be $\lambda, \sigma$-filtered if the condition (1) of the definition holds for any $\lambda$-small family of objects in $I$.
\end{remark}

\begin{remark}[Trivial cases] \label{trivial}
Some reflections on the notion of $\sigma$-filteredness are needed. Let us analyze the trivial situations.
\begin{itemize}
    \item $\Sigma$ cannot be empty, unless $I$ is biequivalent to the terminal $2$-category;
    \item if the pair $(I, \Sigma)$ is $\sigma$-filtered, then the underlying category $I_0$ of $I$ is directed;
    \item a filtered $1$-category can be equipped with a locally discrete $2$-dimensional structure such that the resulting $2$-category is $\sigma$-filtered for every choice of $\Sigma$.
\end{itemize}
\end{remark}

%The following useful way to see $\sigma$-filteredness is \cite[Proposition 3.1.5]{descotte2018sigma}:

\begin{proposition}[{\cite[Proposition 3.1.5]{descotte2018sigma}}]\label{sigmacone in sigmafiltered}
A $2$-category $I$ is $\sigma$-filtered with respect to $\Sigma$ if and only if any finite $2$-subcategory admits a $\sigma$-cone above it with arrows in $\Sigma$.
\end{proposition}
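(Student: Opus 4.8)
The plan is to prove the two implications separately: the backward one by direct instantiation, the forward one by a three-stage construction over the finitely many cells of the subcategory. Throughout I would exploit that $\Sigma$ is closed under composition and, by the closure lemma for $\overline{\Sigma}$, under identities, so that composites of structural arrows remain in $\Sigma$ and fillers over composite $1$-cells are forced by those over their factors.

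For the easy direction, suppose every finite $2$-subcategory of $I$ carries a $\sigma$-cone with legs in $\Sigma$; I would recover the three defining clauses of $\sigma$-filteredness by instantiating this at minimal subcategories. Taking the discrete subcategory on a pair of objects $i,i'$, the two legs $\ell_i,\ell_{i'}$ into the apex $C$ form exactly the span of clause (1). Taking the subcategory spanned by a parallel pair $d,s:i\rightrightarrows i'$ with $s\in\Sigma$, the cone provides legs $\ell_i,\ell_{i'}$ together with fillers over $d$ and over $s$, the latter invertible because $s\in\Sigma$; composing the filler over $d$ with the inverse of the filler over $s$ and reading off $t:=\ell_{i'}$ yields the $\Sigma$-arrow and the $2$-cell $\alpha$ of clause (2), invertible when $d\in\Sigma$ as well, since then the filler over $d$ is invertible too. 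Finally, for a parallel pair of $2$-cells $\alpha,\alpha':d\Rightarrow s$ with $s\in\Sigma$, the compatibility of the cone's fillers with $\alpha$ and with $\alpha'$ forces $\ell_{i'}*\alpha=\ell_{i'}*\alpha'$ after cancelling the invertible filler over $s$, which is clause (3) with $f:=\ell_{i'}$.

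For the hard direction, assume $(I,\Sigma)$ is $\sigma$-filtered and fix a finite $2$-subcategory $J$, with finitely many objects, $1$-cells and $2$-cells. \emph{Stage one}: iterating clause (1) over the finitely many objects of $J$ yields an apex $C$ together with a $\Sigma$-arrow $\ell_i:i\to C$ for each object $i$. \emph{Stage two}: for each $1$-cell $d:i\to i'$ of $J$ the pair $\ell_i,\,\ell_{i'}\circ d:i\rightrightarrows C$ is parallel with $\ell_i\in\Sigma$, so clause (2) supplies a $\Sigma$-arrow $t$ out of $C$ and a filling $2$-cell; postcomposing every leg by $t$ installs this filler and, by whiskering, preserves the fillers built for previously treated $1$-cells, while for those $1$-cells of $J$ lying in $\Sigma$ I invoke the invertibility clause of (2) so that the filler is invertible, as the $\sigma$-cone requires. \emph{Stage three}: the finitely many coherence equations demanded of a $\sigma$-cone — unitality, compatibility with composites of $1$-cells, and compatibility with the $2$-cells of $J$ — are in each case an equality of parallel $2$-cells whose codomain is a $\Sigma$-leg, so clause (3) equalizes them after one further postcomposition with a $\Sigma$-arrow.

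I expect the only genuinely delicate point to be the bookkeeping in stage three: one must check that the coherence constraints form a finite list and that a refinement enforcing one equation never reopens an equation already achieved. Both hold because every refinement is a postcomposition with a $\Sigma$-arrow: this whiskers all existing fillers uniformly, hence preserves every equation of the shape $f*\alpha=f*\alpha'$, and clause (3) is itself visibly stable under further postcomposition. Modulo this coherence management, the argument is the evident $2$-dimensional upgrade of the classical fact that a category is filtered iff every finite diagram admits a cocone, and it recovers \cite[Proposition 3.1.5]{descotte2018sigma}.
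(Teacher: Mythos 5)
The paper itself contains no proof of this statement: it is quoted directly from \cite[Proposition 3.1.5]{descotte2018sigma}, so there is no in-paper argument to compare yours against. Your blind proof is correct and is, in substance, the expected argument (the one of the cited source). The backward direction is fine: instantiating at the three minimal finite $2$-subcategories works, and you correctly use the $2$-cell-compatibility axiom of an oplax cone together with invertibility of the filler over the $\Sigma$-arrow $s$ to cancel and obtain clause (3) with $f:=\ell_{i'}$. The forward direction is also sound, and you have isolated the genuinely delicate points correctly: in stage two the invertibility of fillers over $\Sigma$-arrows of $J$ needs $\ell_{i'}\circ d\in\Sigma$, which is exactly closure of $\Sigma$ under composition; and in stage three every coherence equation (unit, composition, $2$-cell naturality) is an equality of parallel $2$-cells whose codomain $1$-cell is a $\Sigma$-leg, so clause (3) applies, while stability of already-enforced equations, of $\Sigma$-membership of the legs, and of invertibility of fillers under further postcomposition follows from functoriality of whiskering (interchange) and closure of $\Sigma$ under composition, as you argue. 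The only cosmetic simplification available is to take the filler over each identity $1$-cell to be the identity $2$-cell from the outset, which removes the unit equations from stage three; this does not change the structure of the proof.
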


\begin{division}[A concrete description of $\sigma$-bicolimits of in $\Cat$, {\cite[Def 2.1]{descotte2016exactness}}]\label{sigma-filtered sigma-colimit of categories} Here we should give a few words on $\sigma$-filtered $\sigma$-colimit of categories, which is a generalization of the construction of pseudocolimits from oplax colimits. The localization formula of \Cref{sigma-colimit of categories} exhibits the $\sigma$-colimit as a category of fraction of the oplax colimit at the cartesian lifts of $\Sigma$-arrows, which can be shown to enjoy a right calculus of fractions. From \cite[Definition 2.1]{descotte2016exactness}, we can give the following more concrete description of $ {\Sigma\bicolim}_{i \in I} \, F(i) $ when $(I,\Sigma)$ is a $\sigma$-filtered pair: its objects are pairs $ (i, a)$ with $ a $ an object of $F(i)$, and a morphism $(i_1, a_1) \rightarrow (i_2, a_2)$ is the data of a span $ s : i_1 \rightarrow i_3$, $ d : i_2 \rightarrow i_3$ in $I$ with $s$ in $\Sigma$ and a morphism $ \phi : F(s)(a_1) \Rightarrow F(d)(a_2)$, which can be visualized in the following diagram
% https://q.uiver.app/?q=WzAsNCxbMCwwLCJGKGlfMSkiXSxbMCwyLCJGKGlfMikiXSxbMywxLCJcXHVuZGVyc2V0e2kgXFxpbiBJfXtcXHNpZ21hX1xcU2lnbWFcXGJpY29saW19IFxcLCBGKGkpIl0sWzEsMSwiRihpXzMpIl0sWzEsMiwicV97aV8yfSIsMix7ImN1cnZlIjoyfV0sWzAsMiwicV97aV8xfSIsMCx7ImN1cnZlIjotMn1dLFswLDMsIkYocykiLDJdLFsxLDMsIkYoZCkiXSxbMywyLCJxX3tpXzN9IiwxXSxbNSwzLCJxX3MgXFxhdG9wIFxcc2ltZXEiLDAseyJzaG9ydGVuIjp7InNvdXJjZSI6MjB9LCJzdHlsZSI6eyJib2R5Ijp7Im5hbWUiOiJub25lIn0sImhlYWQiOnsibmFtZSI6Im5vbmUifX19XSxbMyw0LCJxX2QiLDAseyJzaG9ydGVuIjp7InRhcmdldCI6MjB9fV1d
% https://q.uiver.app/?q=WzAsNSxbMSwwLCJGKGlfMSkiXSxbMSwyLCJGKGlfMikiXSxbNCwxLCJcXHVuZGVyc2V0e2kgXFxpbiBJfXtcXHNpZ21hX1xcU2lnbWFcXGJpY29saW19IFxcLCBGKGkpIl0sWzIsMSwiRihpXzMpIl0sWzAsMSwiKiJdLFsxLDIsInFfe2lfMn0iLDIseyJjdXJ2ZSI6Mn1dLFswLDIsInFfe2lfMX0iLDAseyJjdXJ2ZSI6LTJ9XSxbMCwzLCJGKHMpIiwxXSxbMSwzLCJGKGQpIiwxXSxbMywyLCJxX3tpXzN9IiwxXSxbNCwwLCJhXzEiXSxbNCwxLCJhXzIiLDJdLFswLDEsIlxccGhpIiwyLHsic2hvcnRlbiI6eyJzb3VyY2UiOjMwLCJ0YXJnZXQiOjMwfSwibGV2ZWwiOjJ9XSxbNiwzLCJxX3MgXFxhdG9wIFxcc2ltZXEiLDAseyJzaG9ydGVuIjp7InNvdXJjZSI6MjB9LCJzdHlsZSI6eyJib2R5Ijp7Im5hbWUiOiJub25lIn0sImhlYWQiOnsibmFtZSI6Im5vbmUifX19XSxbMyw1LCJxX2QiLDAseyJzaG9ydGVuIjp7InRhcmdldCI6MjB9fV1d
\[\begin{tikzcd}[column sep=small]
	& {F(i_1)} \\
	{*} && {F(i_3)} && {\underset{i \in I}{\Sigma\bicolim} \, F(i)} \\
	& {F(i_2)}
	\arrow[""{name=0, anchor=center, inner sep=0}, "{q_{i_2}}"', curve={height=12pt}, from=3-2, to=2-5]
	\arrow[""{name=1, anchor=center, inner sep=0}, "{q_{i_1}}", curve={height=-12pt}, from=1-2, to=2-5]
	\arrow["{F(s)}"{description}, from=1-2, to=2-3]
	\arrow["{F(d)}"{description}, from=3-2, to=2-3]
	\arrow["{q_{i_3}}"{description}, from=2-3, to=2-5]
	\arrow["{a_1}", from=2-1, to=1-2]
	\arrow["{a_2}"', from=2-1, to=3-2]
	\arrow["\phi"', shorten <=10pt, shorten >=10pt, Rightarrow, from=1-2, to=3-2]
	\arrow["{q_s \atop \simeq}", Rightarrow, draw=none, from=1, to=2-3]
	\arrow["{q_d}", shorten >=4pt, Rightarrow, from=2-3, to=0]
\end{tikzcd}\]
In particular, we end with a composite arrow in the $\sigma$-bicolimit
% https://q.uiver.app/?q=WzAsMyxbMCwwLCJxX3tpXzF9RihzKShhXzEpIl0sWzEsMCwicV97aV8zfUYoZCkoYV8yKSJdLFsyLDAsInFfe2lfMn0oYV8yKSJdLFswLDEsInFfe2lfM30oXFxwaGkpIl0sWzEsMiwicV9kIl1d
\[\begin{tikzcd}
	{q_{i_1}F(s)(a_1)} & {q_{i_3}F(d)(a_2)} & {q_{i_2}(a_2)}
	\arrow["{q_{i_3}(\phi)}", from=1-1, to=1-2]
	\arrow["{(q_d)_a}", from=1-2, to=1-3]
\end{tikzcd}\]
which corresponds to the \emph{(vertical, cocartesian)} factorization in the oplax colimit $ \int F$.\\

Two 1-cells $ (s,d,j, \phi),(s',d',j',\phi') : (i_1,a_1) \rightrightarrows (i_2,a_2) $ of the oplax colimit are identified in the $\sigma$-bicolimit if one has the following equality of 2-cells (this relation being called \emph{homotopies of premorphisms} in \cite{descotte2016exactness}): 
% https://q.uiver.app/?q=WzAsNSxbMSwwLCJGKGlfMSkiXSxbMSwyLCJGKGlfMikiXSxbNCwxLCJcXHVuZGVyc2V0e2kgXFxpbiBJfXtcXHNpZ21hX1xcU2lnbWFcXGJpY29saW19IFxcLCBGKGkpIl0sWzIsMSwiRihqKSJdLFswLDEsIioiXSxbMSwyLCJxX3tpXzJ9IiwyLHsiY3VydmUiOjJ9XSxbMCwyLCJxX3tpXzF9IiwwLHsiY3VydmUiOi0yfV0sWzAsMywiRihzKSIsMV0sWzEsMywiRihkKSIsMV0sWzMsMiwicV97an0iLDFdLFs0LDAsImFfMSJdLFs0LDEsImFfMiIsMl0sWzAsMSwiXFxwaGkiLDIseyJzaG9ydGVuIjp7InNvdXJjZSI6MzAsInRhcmdldCI6MzB9LCJsZXZlbCI6Mn1dLFs2LDMsInFfcyBcXGF0b3AgXFxzaW1lcSIsMCx7InNob3J0ZW4iOnsic291cmNlIjoyMH0sInN0eWxlIjp7ImJvZHkiOnsibmFtZSI6Im5vbmUifSwiaGVhZCI6eyJuYW1lIjoibm9uZSJ9fX1dLFszLDUsInFfZCIsMCx7InNob3J0ZW4iOnsidGFyZ2V0IjoyMH19XV0=
\[\begin{tikzcd}[column sep=small]
	& {F(i_1)} \\
	{*} && {F(j)} && {\underset{i \in I}{\Sigma\bicolim} \, F(i)} \\
	& {F(i_2)}
	\arrow[""{name=0, anchor=center, inner sep=0}, "{q_{i_2}}"', curve={height=12pt}, from=3-2, to=2-5]
	\arrow[""{name=1, anchor=center, inner sep=0}, "{q_{i_1}}", curve={height=-12pt}, from=1-2, to=2-5]
	\arrow["{F(s)}"{description}, from=1-2, to=2-3]
	\arrow["{F(d)}"{description}, from=3-2, to=2-3]
	\arrow["{q_{j}}"{description}, from=2-3, to=2-5]
	\arrow["{a_1}", from=2-1, to=1-2]
	\arrow["{a_2}"', from=2-1, to=3-2]
	\arrow["\phi"', shorten <=10pt, shorten >=10pt, Rightarrow, from=1-2, to=3-2]
	\arrow["{q_s \atop \simeq}", Rightarrow, draw=none, from=1, to=2-3]
	\arrow["{q_d}", shorten >=4pt, Rightarrow, from=2-3, to=0]
\end{tikzcd} = \begin{tikzcd}[column sep=small]
	& {F(i_1)} \\
	{*} && {F(j')} && {\underset{i \in I}{\Sigma\bicolim} \, F(i)} \\
	& {F(i_2)}
	\arrow[""{name=0, anchor=center, inner sep=0}, "{q_{i_2}}"', curve={height=12pt}, from=3-2, to=2-5]
	\arrow[""{name=1, anchor=center, inner sep=0}, "{q_{i_1}}", curve={height=-12pt}, from=1-2, to=2-5]
	\arrow["{F(s')}"{description}, from=1-2, to=2-3]
	\arrow["{F(d')}"{description}, from=3-2, to=2-3]
	\arrow["{q_{j'}}"{description}, from=2-3, to=2-5]
	\arrow["{a_1}", from=2-1, to=1-2]
	\arrow["{a_2}"', from=2-1, to=3-2]
	\arrow["\phi'"', shorten <=10pt, shorten >=10pt, Rightarrow, from=1-2, to=3-2]
	\arrow["{q_{s'} \atop \simeq}", Rightarrow, draw=none, from=1, to=2-3]
	\arrow["{q_{d'}}", shorten >=4pt, Rightarrow, from=2-3, to=0]
\end{tikzcd} \]
\end{division}

\begin{remark}
In particular, if $I$ is a bifiltered 2-category, then a morphism in the bicolimit $ \bicolim_{I} F$ is of the form 
\[\begin{tikzcd}[column sep=small]
	& {F(i_1)} \\
	{*} && {F(i_3)} && {\underset{i \in I}{\bicolim} \, F(i)} \\
	& {F(i_2)}
	\arrow[""{name=0, anchor=center, inner sep=0}, "{q_{i_2}}"', curve={height=12pt}, from=3-2, to=2-5]
	\arrow[""{name=1, anchor=center, inner sep=0}, "{q_{i_1}}", curve={height=-12pt}, from=1-2, to=2-5]
	\arrow["{F(d)}"{description}, from=1-2, to=2-3]
	\arrow["{F(d')}"{description}, from=3-2, to=2-3]
	\arrow["{q_{i_3}}"{description}, from=2-3, to=2-5]
	\arrow["{a_1}", from=2-1, to=1-2]
	\arrow["{a_2}"', from=2-1, to=3-2]
	\arrow["\phi"', shorten <=10pt, shorten >=10pt, Rightarrow, from=1-2, to=3-2]
	\arrow["{q_d \atop \simeq}", Rightarrow, draw=none, from=1, to=2-3]
	\arrow["{q_{d'} \atop \simeq}", shorten >=4pt, Rightarrow, draw=none, from=2-3, to=0]
\end{tikzcd}\]
with both the upper and lower inclusion 2-cells invertible, so that the resulting 2-cell $ (i_1,a_1) \Rightarrow (i_2, a_2)$ is obtained directly (up to invertible 2-cell) as the whiskering of $ \phi$ along $ q_{i_3}$ - differently from the general $ \sigma$-filtered case where one has to paste this whiskering further with a non invertible lower inclusion 2-cell.
\end{remark}

\begin{remark}[$\sigma$-filtered diagrams are not too many]
$\sigma$-filteredness puts actually some constrain on the relation between the shape of the category and the laxness of the 2-cells in the $\sigma$-colimit inclusion. While any oplax colimit is a case of $\sigma$-colimit with trivial choice of $\Sigma$ (that is, containing only isomorphisms), being $\sigma$-filtered for such a choice of $\Sigma$ trivializes the underlying 2-category as any two objects must be isomorphic. This prevents $\sigma$-filtered $\sigma$-bicolimit to become too ``wild".  
\end{remark}

%The notion of $\sigma$-filteredness is also tightly related to the notion of $\sigma$-cofinality, which is introduced in \cite[Definition 3.3.1]{descotte2018sigma}.

The following lemma, though being an easy consequence of the axioms of $\sigma$-filteredness, actually simplifies a lot the whole theory:

\begin{lemma}[Triangle lemma]\label{triangle lemma}
Let $ (I,\Sigma)$ be a $\sigma$-filtered pair. Then any arrow $d : i \rightarrow i'$ in $I$ can be completed into a triangle as below with $s,s'$ in $\Sigma$:% https://q.uiver.app/?q=WzAsMyxbMCwxLCJpIl0sWzIsMSwiaScnIl0sWzEsMCwiaSciXSxbMCwxLCJzIiwyXSxbMCwyLCJkIl0sWzIsMSwicyciXSxbMiwzLCJcXHBoaSIsMCx7InNob3J0ZW4iOnsidGFyZ2V0IjoyMH19XV0=
\[\begin{tikzcd}
	& {i'} \\
	i && {j}
	\arrow[""{name=0, anchor=center, inner sep=0}, "s"', from=2-1, to=2-3]
	\arrow["d", from=2-1, to=1-2]
	\arrow["{s'}", from=1-2, to=2-3]
	\arrow["\phi", shorten >=3pt, Rightarrow, from=1-2, to=0]
\end{tikzcd}\]
\end{lemma}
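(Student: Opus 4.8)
The plan is to build the triangle in two moves, using only the first two axioms of $\sigma$-filteredness (axiom (3) is not needed). First I would apply axiom (1) to the two objects $i$ and $i'$. This produces an object $i''$ together with a span $u : i \to i''$ and $v : i' \to i''$ having both legs $u, v$ in $\Sigma$. At this point the triangle is \emph{not} yet filled: there is no reason for the composite $vd : i \to i''$ to be related to $u$ by any $2$-cell, so the outer boundary is present but the interior is empty.

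To repair this, I would next regard $u$ and $vd$ as a parallel pair $i \rightrightarrows i''$, noting the crucial fact that the leg $u$ lies in $\Sigma$. This is exactly the hypothesis demanded by axiom (2): feeding it the parallel pair with distinguished arrow $u \in \Sigma$ yields an arrow $t : i'' \to j$ in $\Sigma$ and a $2$-cell $\alpha : t(vd) \Rightarrow tu$ filling the resulting bigon.

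Finally I would set $s := tu : i \to j$ and $s' := tv : i' \to j$. Both are composites of arrows of $\Sigma$, hence themselves belong to $\Sigma$ since $\Sigma$ is closed under composition. Rewriting $t(vd) = (tv)d = s'd$ and $tu = s$, the $2$-cell $\alpha$ becomes $\alpha : s'd \Rightarrow s$, which is precisely the $2$-cell $\phi$ completing the triangle with its two legs in $\Sigma$. This finishes the construction.

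There is no genuine obstacle here; the argument is a short composition of the two axioms. The only point that warrants a moment's care is orienting the parallel pair correctly, namely presenting it as $(vd, u)$ rather than $(u, vd)$, so that the leg already known to lie in $\Sigma$ occupies the slot that axiom (2) requires to be in $\Sigma$. Everything else is routine bookkeeping with the closure of $\Sigma$ under composition.
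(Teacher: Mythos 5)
Your proof is correct and follows exactly the same route as the paper's: form a $\Sigma$-span over $i$ and $i'$ via axiom (1), apply axiom (2) to the resulting parallel pair $(vd,u)$ with $u$ in the $\Sigma$-slot, and compose with the resulting $t\in\Sigma$, using closure of $\Sigma$ under composition. The only difference is notational (your $u,v,t$ are the paper's $t,t',t''$).
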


\begin{proof}
First choose a span $ t : i \rightarrow i''$, $ t' : i' \rightarrow i''$ in $\Sigma$ provided by the first axiom; one gets a parallel pair $ t'd, t : i \rightrightarrows i''$ in $I $ whose bottom arrow is in $\Sigma$. Hence there exists $ t'' : i'' \rightarrow j$ in $\Sigma$ together with the desired 2-cell $ \phi : t''t'd \Rightarrow t''t$ and both $t''t$ and $ t''t'$ are in $\Sigma$. 
\end{proof}

\begin{division}
A consequence of \Cref{triangle lemma} is that the non-invertible transition 2-cells in the $\sigma$-bicolimiting cocone under a $\sigma$-filtered diagram can always be ``swallowed" by a member of the diagram: for any $ d : i \rightarrow i'$, any choice of $s,s', \phi$ as above provides the following decomposition 
% https://q.uiver.app/?q=WzAsMyxbMCwwLCJGKGkpIl0sWzIsMCwiRihpJykiXSxbMSwxLCJ7XFxTaWdtYSBcXGJpY29saW19IEYiXSxbMCwxLCJGKGQpIl0sWzAsMiwicV9pIiwyXSxbMSwyLCJxX3tpJ30iXSxbNSw0LCJcXHRoZXRhX2QiLDIseyJzaG9ydGVuIjp7InNvdXJjZSI6MjAsInRhcmdldCI6MjB9fV1d
\[\begin{tikzcd}[column sep=small, row sep=large]
	{F(i)} && {F(i')} \\
	& {\underset{I}{\Sigma \bicolim}\; F}
	\arrow["{F(d)}", from=1-1, to=1-3]
	\arrow[""{name=0, anchor=center, inner sep=0}, "{q_i}"', from=1-1, to=2-2]
	\arrow[""{name=1, anchor=center, inner sep=0}, "{q_{i'}}", from=1-3, to=2-2]
	\arrow["{\theta_d}"', shorten <=9pt, shorten >=9pt, Rightarrow, from=1, to=0]
\end{tikzcd} = % https://q.uiver.app/?q=WzAsNCxbMCwwLCJGKGkpIl0sWzIsMCwiRihpJykiXSxbMSwyLCJ7XFxTaWdtYSBcXGJpY29saW19IEYiXSxbMSwxLCJGKGopIl0sWzAsMSwiRihkKSJdLFswLDIsInFfaSIsMix7ImN1cnZlIjozfV0sWzEsMiwicV97aSd9IiwwLHsiY3VydmUiOi0zfV0sWzAsMywiRihzKSIsMl0sWzEsMywiRihzJykiXSxbMywyXSxbOCw3LCJGKFxccGhpKSIsMix7InNob3J0ZW4iOnsic291cmNlIjoyMCwidGFyZ2V0IjoyMH19XSxbMyw1LCJcXHRoZXRhX3MgXFxhdG9wIFxcc2ltZXEiLDAseyJzaG9ydGVuIjp7InRhcmdldCI6MjB9LCJzdHlsZSI6eyJib2R5Ijp7Im5hbWUiOiJub25lIn0sImhlYWQiOnsibmFtZSI6Im5vbmUifX19XSxbMyw2LCJcXHRoZXRhX3tzJ30gXFxhdG9wIFxcc2ltZXEiLDIseyJzaG9ydGVuIjp7InRhcmdldCI6MjB9LCJzdHlsZSI6eyJib2R5Ijp7Im5hbWUiOiJub25lIn0sImhlYWQiOnsibmFtZSI6Im5vbmUifX19XV0=
\begin{tikzcd}[column sep=small, row sep=large]
	{F(i)} && {F(i')} \\
	& {F(j)} \\
	& {\underset{I}{\Sigma \bicolim}\; F}
	\arrow["{F(d)}", from=1-1, to=1-3]
	\arrow[""{name=0, anchor=center, inner sep=0}, "{q_i}"', curve={height=18pt}, from=1-1, to=3-2]
	\arrow[""{name=1, anchor=center, inner sep=0}, "{q_{i'}}", curve={height=-18pt}, from=1-3, to=3-2]
	\arrow[""{name=2, anchor=center, inner sep=0}, "{F(s)}"', from=1-1, to=2-2]
	\arrow[""{name=3, anchor=center, inner sep=0}, "{F(s')}", from=1-3, to=2-2]
	\arrow[from=2-2, to=3-2]
	\arrow["{F(\phi)}"', shorten <=9pt, shorten >=9pt, Rightarrow, from=3, to=2]
	\arrow["{\theta_s \atop \simeq}", Rightarrow, draw=none, from=2-2, to=0]
	\arrow["{\theta_{s'} \atop \simeq}"', Rightarrow, draw=none, from=2-2, to=1]
\end{tikzcd}\]
Hence any non invertible transition 2-cell in the cocone can be replaced by a choice of a transition 2-cell in the diagram together with invertible transition 2-cells.  
\end{division}

\begin{division}[Morphisms in the $\sigma$-bicolimit comes from morphisms of the bicolimit]\label{morphisms of sigma-colimit are morphisms of the bicolimit}
In the case of a $\sigma$-filtered $\sigma$-colimit in $\Cat$, this means that cocartesian morphisms in the oplax colimit $ \int F$ acquire vertical mates when passing to the $\sigma$-bicolimit. Concretely, this means that the representation of morphisms in $ \Sigma \bicolim_{I} F$ as given in \Cref{sigma-filtered sigma-colimit of categories} can be simplified as if we worked in an ordinarily bifiltered bicolimit: for a morphism $ (i_1, a_1) \rightarrow (i_2,a_2)$ in $\Sigma \bicolim_{I} F$ represented by a triple $ s : i_1 \rightarrow i_3$, $ d : i_2 \rightarrow i_3$ and $ \phi : F(s)a_1 \Rightarrow F(d)a_2$ as above, the lower transition 2-cell $ \theta_d$, which is by itself not invertible, can be replaced thanks to some choice $ s_2 : i_2 \rightarrow i_4$, $ s_3 : i_3 \rightarrow i_4$ in $ \Sigma$ together with $ \psi : s_3 d \Rightarrow s_2$, which provides a decomposition of the diagram representing the morphism in the colimit as 
% https://q.uiver.app/?q=WzAsNyxbMSwwLCJGKGlfMSkiXSxbMSwyLCJGKGlfMikiXSxbNCwyLCJcXHVuZGVyc2V0e2kgXFxpbiBJfXtcXHNpZ21hX1xcU2lnbWFcXGJpY29saW19IFxcLCBGKGkpIl0sWzIsMSwiRihpXzMpIl0sWzAsMSwiKiJdLFszLDIsIkYoaV80KSJdLFsyLDMsIkYoaV8yKSJdLFswLDIsInFfe2lfMX0iLDAseyJjdXJ2ZSI6LTN9XSxbMCwzLCJGKHMpIiwxXSxbMSwzLCJGKGQpIiwxXSxbMywyLCJxX3tpXzN9IiwxLHsiY3VydmUiOi0xfV0sWzQsMCwiYV8xIl0sWzQsMSwiYV8yIiwyXSxbMyw1LCJGKHNfMykiLDFdLFs1LDIsInFfe2lfNH0iXSxbNiwyLCIiLDEseyJjdXJ2ZSI6Mn1dLFs2LDUsIkYoc18yKSIsMV0sWzEsNiwiIiwxLHsibGV2ZWwiOjIsInN0eWxlIjp7ImhlYWQiOnsibmFtZSI6Im5vbmUifX19XSxbMyw2LCJGKFxccHNpKSIsMSx7InNob3J0ZW4iOnsic291cmNlIjoyMCwidGFyZ2V0IjoyMH0sImxldmVsIjoyfV0sWzAsMSwiXFxwaGkiLDIseyJzaG9ydGVuIjp7InNvdXJjZSI6MzAsInRhcmdldCI6MzB9LCJsZXZlbCI6Mn1dLFs3LDMsInFfcyBcXGF0b3AgXFxzaW1lcSIsMSx7InNob3J0ZW4iOnsic291cmNlIjoyMH0sInN0eWxlIjp7ImJvZHkiOnsibmFtZSI6Im5vbmUifSwiaGVhZCI6eyJuYW1lIjoibm9uZSJ9fX1dLFsxMCw1LCJcXHRoZXRhX3tzXzN9IFxcYXRvcCBcXHNpbWVxIiwxLHsibGFiZWxfcG9zaXRpb24iOjcwLCJvZmZzZXQiOjIsInNob3J0ZW4iOnsic291cmNlIjoyMH0sInN0eWxlIjp7ImJvZHkiOnsibmFtZSI6Im5vbmUifSwiaGVhZCI6eyJuYW1lIjoibm9uZSJ9fX1dLFs1LDE1LCJcXHRoZXRhX3tzXzJ9IFxcYXRvcCBcXHNpbWVxIiwxLHsic2hvcnRlbiI6eyJ0YXJnZXQiOjIwfSwic3R5bGUiOnsiYm9keSI6eyJuYW1lIjoibm9uZSJ9LCJoZWFkIjp7Im5hbWUiOiJub25lIn19fV1d
\[\begin{tikzcd}
	& {F(i_1)} \\
	{*} && {F(i_3)} \\
	& {F(i_2)} && {F(i_4)} & {\underset{ I}{\Sigma\bicolim} \,F }  \\
	&& {F(i_2)}
	\arrow[""{name=0, anchor=center, inner sep=0}, "{q_{i_1}}", curve={height=-18pt}, from=1-2, to=3-5]
	\arrow["{F(s)}"{description}, from=1-2, to=2-3]
	\arrow["{F(d)}"{description}, from=3-2, to=2-3]
	\arrow[""{name=1, anchor=center, inner sep=0}, "{q_{i_3}}"{description}, curve={height=-6pt}, from=2-3, to=3-5]
	\arrow["{a_1}", from=2-1, to=1-2]
	\arrow["{a_2}"', from=2-1, to=3-2]
	\arrow["{F(s_3)}"{description}, from=2-3, to=3-4]
	\arrow["{q_{i_4}}"{description}, from=3-4, to=3-5]
	\arrow[""{name=2, anchor=center, inner sep=0}, curve={height=12pt}, from=4-3, to=3-5]
	\arrow["{F(s_2)}"{description}, from=4-3, to=3-4]
	\arrow[Rightarrow, no head, from=3-2, to=4-3]
	\arrow["{F(\psi)}"', shorten <=10pt, shorten >=10pt, Rightarrow, from=2-3, to=4-3]
	\arrow["\phi"', shorten <=10pt, shorten >=10pt, Rightarrow, from=1-2, to=3-2]
	\arrow["{q_s \atop \simeq}"{description}, Rightarrow, draw=none, from=0, to=2-3]
	\arrow["{\theta_{s_3} \atop \simeq}"{description, pos=0.7}, shift right=2, Rightarrow, draw=none, from=1, to=3-4]
	\arrow["{\theta_{s_2} \atop \simeq}"{description}, Rightarrow, draw=none, from=3-4, to=2]
\end{tikzcd}\]
where all the colimiting transition 2-cells are invertible as indexed by morphisms in $\Sigma$. Moreover, any two such decompositions are identified in the $\sigma$-bicolimit under the relation described at \Cref{sigma-filtered sigma-colimit of categories}. This means that the corresponding arrow $(i_1,a_1) \rightarrow (i_2,a_2)$ in the $\sigma$-bicolimit comes in an essentially unique way from some arrow already existing in some member of the bicolimit. 
\end{division}

The condition above is somewhat reminiscent of a condition of fullness: it says that any arrows in the $\sigma$-bicolimit comes from somewhere in the diagram. We should give here a complementary faithfulness statement. %We must first ensure some simplification in the way one can lifts parallels morphisms. 

\begin{lemma}\label{coequification lemma}
Let $ (I,\Sigma)$ be a $\sigma$-filtered pair and $ I \rightarrow \Cat$ a pseudofunctor. For any parallel pair $ (i,a) \rightrightarrows (i',a')$ in $ \Sigma \bicolim_{I} F$ \begin{itemize}
    \item there exists a parallel pair of 2-cells as below, with $ w,w'$ in $\Sigma$
\[\begin{tikzcd}[sep=small]
	& {F(i)} \\
	{*} && {F(k)} \\
	& {F(i')}
	\arrow[""{name=0, anchor=center, inner sep=0}, "a", from=2-1, to=1-2]
	\arrow[""{name=1, anchor=center, inner sep=0}, "{F(w)}", from=1-2, to=2-3]
	\arrow[""{name=2, anchor=center, inner sep=0}, "{a'}"', from=2-1, to=3-2]
	\arrow[""{name=3, anchor=center, inner sep=0}, "{F(w')}"', from=3-2, to=2-3]
	\arrow["\xi"', shift left=3, shorten <=4pt, shorten >=4pt, Rightarrow, from=0, to=2]
	\arrow["{\xi'}", shift right=3, shorten <=4pt, shorten >=4pt, Rightarrow, from=1, to=3]
\end{tikzcd}\]
providing representants of those two morphisms;
\item if moreover those 2-morphisms are identified in $ \Sigma \bicolim_{I} F$, then there exists $ v : k \rightarrow l$ in $\Sigma$ such that $ F(v)(\xi) = F(v)(\xi')$. 
\end{itemize}   
%represented as $ (j,s,t,\phi)$ and $(j',s',t',\phi')$ which are identified in $ \Sigma\bicolim_I F$. Then there exist $ j $
\end{lemma}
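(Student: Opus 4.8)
The plan is to reduce both items to the explicit description of morphisms and of their identification recalled in \Cref{sigma-filtered sigma-colimit of categories}, combining the Triangle Lemma \Cref{triangle lemma} (to replace arbitrary legs of representing spans by legs in $\Sigma$) with the axioms of $\sigma$-filteredness (to merge two such spans into a single common one). For the first item I would start from arbitrary representatives $(s_\ell, d_\ell, \phi_\ell)$, $\ell \in \{1,2\}$, of the two parallel morphisms, where $s_\ell : i \to j_\ell$ lies in $\Sigma$, $d_\ell : i' \to j_\ell$ is arbitrary, and $\phi_\ell : F(s_\ell)(a) \to F(d_\ell)(a')$ in $F(j_\ell)$. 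Applying \Cref{triangle lemma} to each $d_\ell$ produces $u_\ell : i' \to k_\ell$ and $u_\ell' : j_\ell \to k_\ell$ in $\Sigma$ together with a 2-cell $\psi_\ell : u_\ell' d_\ell \Rightarrow u_\ell$; whiskering $\phi_\ell$ by $F(u_\ell')$ and composing with $F(\psi_\ell)$ (modulo the pseudofunctoriality constraints $F_{s,t}$) yields an equivalent representative whose span $(u_\ell' s_\ell,\, u_\ell)$ has both legs in $\Sigma$. This is precisely the reduction already carried out in \Cref{morphisms of sigma-colimit are morphisms of the bicolimit}. Renaming, I obtain representatives over $\Sigma$-spans $(w_\ell : i \to k_\ell,\; w_\ell' : i' \to k_\ell)$ with associated $\xi_\ell : F(w_\ell)(a) \to F(w_\ell')(a')$.

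It then remains to merge the two spans. Using axiom (1) I choose $r_\ell : k_\ell \to k$ in $\Sigma$; the parallel pair $r_1 w_1, r_2 w_2 : i \rightrightarrows k$ then has both legs in $\Sigma$, so axiom (2) in its refined form guaranteeing an \emph{invertible} inserted 2-cell when the competing arrows lie in $\Sigma$ provides $p : k \to k'$ in $\Sigma$ and an invertible $p r_1 w_1 \Rightarrow p r_2 w_2$. Applying axiom (2) once more to $p r_1 w_1',\, p r_2 w_2' : i' \rightrightarrows k'$ yields $q : k' \to k''$ in $\Sigma$ and an invertible $q p r_1 w_1' \Rightarrow q p r_2 w_2'$; whiskering the first invertible 2-cell by $q$ gives invertible $\alpha : q p r_1 w_1 \Rightarrow q p r_2 w_2$ and I set $\beta$ to be the second. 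Writing $w := q p r_1 w_1$ and $w' := q p r_1 w_1'$, both in $\Sigma$, I transport the first representative by whiskering $\xi_1$ with $F(q p r_1)$, and the second by whiskering $\xi_2$ with $F(q p r_2)$ followed by conjugation with the invertible images $F(\alpha)$ and $F(\beta)$. Invertibility is exactly what makes this transport an honest reindexing, producing two morphisms $\xi, \xi' : F(w)(a) \to F(w')(a')$ in $F(k'')$ over the common $\Sigma$-span $(w,w')$, which is the required form (up to renaming $k'' \rightsquigarrow k$).

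For the second item I would exploit that the legs $w, w'$ now lie in $\Sigma$. In the localization $\int F[\Sigma_{(F,\Sigma)}^{-1}]$ each of $\xi, \xi'$ factors as $(\text{cocart } w')^{-1} \circ (1_k, \xi^{(\prime)}) \circ (\text{cocart } w)$, where the cocartesian lifts of $w$ and $w'$ become invertible precisely because $w, w' \in \Sigma$, i.e.\ their lifts belong to $\Sigma_{(F,\Sigma)}$. Hence if $\xi$ and $\xi'$ are identified as morphisms $(i,a) \to (i',a')$, cancelling the two isomorphisms on either side identifies the vertical arrows $(1_k, \xi)$ and $(1_k, \xi')$ as morphisms $(k, F(w)(a)) \to (k, F(w')(a'))$. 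This is a parallel vertical pair coming from $F(k)$, to which \Cref{local faithfulness of colimit inclusions} applies directly and produces an arrow $v : k \to l$ in $\Sigma$ with $F(v)(\xi) = F(v)(\xi')$, as claimed.

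The main obstacle I expect lies entirely in the first item: ensuring that each transport of a representing 2-cell across $F$ — through the coherence constraints $F_{s,t}$ and the images $F(\psi_\ell), F(\alpha), F(\beta)$ of the filteredness 2-cells — genuinely yields a morphism equivalent to the original under the homotopy-of-premorphisms relation of \Cref{sigma-filtered sigma-colimit of categories}, and not merely a morphism of the correct type. The invertibility clause of axiom (2) is the crucial ingredient keeping all these transports reversible, and hence keeping the passage from $(w_\ell, w_\ell', \xi_\ell)$ to $(w, w', \xi^{(\prime)})$ faithful.
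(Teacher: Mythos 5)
Your proof is correct and follows essentially the same route as the paper's: merge the two representing spans into a single component of the diagram using the axioms of $\sigma$-filteredness, then apply the calculus-of-fractions observation of \Cref{local faithfulness of colimit inclusions} to the resulting parallel vertical pair $(1_k,\xi),(1_k,\xi')$. If anything, your first step is more careful than the paper's: by normalizing both representatives to have \emph{both} legs in $\Sigma$ (via \Cref{triangle lemma}, as in \Cref{morphisms of sigma-colimit are morphisms of the bicolimit}) before merging, every subsequent application of axiom (2) is applied to pairs of $\Sigma$-arrows (so the inserted cells are invertible and the axiom genuinely applies), and the common span automatically has both legs in $\Sigma$ as the statement requires, a point the paper's own proof glosses over when it inserts a $2$-cell between the two legs out of $i'$, which need not lie in $\Sigma$.
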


\begin{proof}
We can pick two representants of this parallel pair, corresponding to 2-cells 
% https://q.uiver.app/?q=WzAsNCxbMCwxLCIqIl0sWzEsMCwiRihpKSJdLFsyLDEsIkYoaikiXSxbMSwyLCJGKGknKSJdLFswLDEsImEiXSxbMSwyLCJGKHMpIl0sWzAsMywiYSciLDJdLFszLDIsIkYocycpIiwyXSxbMSwzLCJcXHBoaSIsMix7InNob3J0ZW4iOnsic291cmNlIjoyMCwidGFyZ2V0IjoyMH0sImxldmVsIjoyfV1d
\[\begin{tikzcd}[sep=small]
	& {F(i)} \\
	{*} && {F(j)} \\
	& {F(i')}
	\arrow["a", from=2-1, to=1-2]
	\arrow["{F(s)}", from=1-2, to=2-3]
	\arrow["{a'}"', from=2-1, to=3-2]
	\arrow["{F(t)}"', from=3-2, to=2-3]
	\arrow["\phi"', shorten <=6pt, shorten >=6pt, Rightarrow, from=1-2, to=3-2]
\end{tikzcd} \hskip0.5cm \begin{tikzcd}[sep=small]
	& {F(i)} \\
	{*} && {F(j')} \\
	& {F(i')}
	\arrow["a", from=2-1, to=1-2]
	\arrow["{F(s')}", from=1-2, to=2-3]
	\arrow["{a'}"', from=2-1, to=3-2]
	\arrow["{F(t')}"', from=3-2, to=2-3]
	\arrow["\phi'"', shorten <=6pt, shorten >=6pt, Rightarrow, from=1-2, to=3-2]
\end{tikzcd}\]
Now combining the first and second axioms of $\sigma$-filteredness one can find $ u : j \rightarrow k$ and $ u' : j' \rightarrow k$ together with 2-cells 
% https://q.uiver.app/?q=WzAsNCxbMCwxLCJpIl0sWzEsMCwiaiJdLFsyLDEsImsiXSxbMSwyLCJqJyJdLFswLDEsInMiXSxbMSwyLCJ1Il0sWzAsMywidCIsMl0sWzMsMiwidSciLDJdLFsxLDMsIlxccHNpIiwyLHsic2hvcnRlbiI6eyJzb3VyY2UiOjIwLCJ0YXJnZXQiOjIwfSwibGV2ZWwiOjJ9XV0=
\[\begin{tikzcd}[sep=small]
	& j \\
	i && k \\
	& {j'}
	\arrow["s", from=2-1, to=1-2]
	\arrow["u", from=1-2, to=2-3]
	\arrow["t"', from=2-1, to=3-2]
	\arrow["{u'}"', from=3-2, to=2-3]
	\arrow["\psi"', shorten <=6pt, shorten >=6pt, Rightarrow, from=1-2, to=3-2]
\end{tikzcd}% https://q.uiver.app/?q=WzAsNCxbMCwxLCJpJyJdLFsxLDAsImoiXSxbMiwxLCJrIl0sWzEsMiwiaiciXSxbMCwxLCJzJyJdLFsxLDIsInUiXSxbMCwzLCJ0JyIsMl0sWzMsMiwidSciLDJdLFsxLDMsIlxccHNpJyIsMix7InNob3J0ZW4iOnsic291cmNlIjoyMCwidGFyZ2V0IjoyMH0sImxldmVsIjoyfV1d
 \hskip1cm  \begin{tikzcd}[sep=small]
	& j \\
	{i'} && k \\
	& {j'}
	\arrow["{s'}", from=2-1, to=1-2]
	\arrow["u", from=1-2, to=2-3]
	\arrow["{t'}"', from=2-1, to=3-2]
	\arrow["{u'}"', from=3-2, to=2-3]
	\arrow["{\psi'}"', shorten <=6pt, shorten >=6pt, Rightarrow, from=1-2, to=3-2]
\end{tikzcd}\]
which provides two alternative decompositions of the same morphism
% https://q.uiver.app/?q=WzAsNyxbMSwwLCJGKGkpIl0sWzEsMiwiRihpJykiXSxbNCwyLCJcXFNpZ21hXFx1bmRlcnNldHtJfXtcXGJpY29saW19IFxcOyBGIl0sWzIsMSwiRihqKSJdLFswLDEsIioiXSxbMywyLCJGKGspIl0sWzIsMywiRihqJykiXSxbMCwyLCJxX3tpXzF9IiwwLHsiY3VydmUiOi0zfV0sWzAsMywiRihzKSIsMV0sWzEsMywiRihzJykiLDFdLFszLDIsInFfe2lfM30iLDEseyJjdXJ2ZSI6LTF9XSxbNCwwLCJhXzEiXSxbNCwxLCJhXzIiLDJdLFszLDUsIkYodSkiLDFdLFs1LDIsInFfe2t9Il0sWzYsMiwicV97aid9IiwyLHsiY3VydmUiOjJ9XSxbNiw1LCJGKHUnKSIsMV0sWzEsNiwiRih0JykiLDJdLFszLDYsIkYoXFxwc2knKSIsMSx7InNob3J0ZW4iOnsic291cmNlIjoyMCwidGFyZ2V0IjoyMH0sImxldmVsIjoyfV0sWzAsMSwiXFxwaGkiLDIseyJzaG9ydGVuIjp7InNvdXJjZSI6MzAsInRhcmdldCI6MzB9LCJsZXZlbCI6Mn1dLFs3LDMsInFfcyBcXGF0b3AgXFxzaW1lcSIsMSx7InNob3J0ZW4iOnsic291cmNlIjoyMH0sInN0eWxlIjp7ImJvZHkiOnsibmFtZSI6Im5vbmUifSwiaGVhZCI6eyJuYW1lIjoibm9uZSJ9fX1dLFsxMCw1LCJcXHRoZXRhX3t1fSBcXGF0b3AgXFxzaW1lcSIsMSx7ImxhYmVsX3Bvc2l0aW9uIjo3MCwib2Zmc2V0IjoyLCJzaG9ydGVuIjp7InNvdXJjZSI6MjB9LCJzdHlsZSI6eyJib2R5Ijp7Im5hbWUiOiJub25lIn0sImhlYWQiOnsibmFtZSI6Im5vbmUifX19XSxbNSwxNSwiXFx0aGV0YV97dSd9IFxcYXRvcCBcXHNpbWVxIiwxLHsic2hvcnRlbiI6eyJ0YXJnZXQiOjIwfSwic3R5bGUiOnsiYm9keSI6eyJuYW1lIjoibm9uZSJ9LCJoZWFkIjp7Im5hbWUiOiJub25lIn19fV1d
\[\begin{tikzcd}[column sep=small]
	& {F(i)} \\
	{*} && {F(j)} \\
	& {F(i')} && {F(k)} & {\Sigma\underset{I}{\bicolim} \; F} \\
	&& {F(j')}
	\arrow[""{name=0, anchor=center, inner sep=0}, "{q_{i}}", curve={height=-18pt}, from=1-2, to=3-5]
	\arrow["{F(s)}"{description}, from=1-2, to=2-3]
	\arrow["{F(s')}"{description}, from=3-2, to=2-3]
	\arrow[""{name=1, anchor=center, inner sep=0}, "{q_{j}}"{description}, curve={height=-6pt}, from=2-3, to=3-5]
	\arrow["{a}", from=2-1, to=1-2]
	\arrow["{a'}"', from=2-1, to=3-2]
	\arrow["{F(u)}"{description}, from=2-3, to=3-4]
	\arrow["{q_{k}}", from=3-4, to=3-5]
	\arrow[""{name=2, anchor=center, inner sep=0}, "{q_{j'}}"', curve={height=12pt}, from=4-3, to=3-5]
	\arrow["{F(u')}"{description}, from=4-3, to=3-4]
	\arrow["{F(t')}"', from=3-2, to=4-3]
	\arrow["{F(\psi')}"{description}, shorten <=6pt, shorten >=6pt, Rightarrow, from=2-3, to=4-3]
	\arrow["\phi"', shorten <=10pt, shorten >=10pt, Rightarrow, from=1-2, to=3-2]
	\arrow["{\theta_s \atop \simeq}"{description}, Rightarrow, draw=none, from=0, to=2-3]
	\arrow["{\theta_{u} \atop \simeq}"{description, pos=0.7}, shift right=2, Rightarrow, draw=none, from=1, to=3-4]
	\arrow["{\theta_{u'} \atop \simeq}"{description}, Rightarrow, draw=none, from=3-4, to=2]
\end{tikzcd} = % https://q.uiver.app/?q=WzAsNyxbMSwzLCJGKGknKSJdLFsxLDEsIkYoaSkiXSxbNCwxLCJcXFNpZ21hXFx1bmRlcnNldHtJfXtcXGJpY29saW19IFxcOyBGIl0sWzIsMCwiRihqKSJdLFswLDIsIioiXSxbMywxLCJGKGspIl0sWzIsMiwiRihqJykiXSxbMCwyLCJxX3tpJ30iLDIseyJjdXJ2ZSI6NX1dLFsxLDMsIkYocykiXSxbMywyLCJxX3tqfSIsMCx7ImN1cnZlIjotMX1dLFs0LDAsImFfMSIsMl0sWzQsMSwiYV8yIl0sWzUsMiwicV97a30iXSxbNiwyLCJxX3tqJ30iLDEseyJjdXJ2ZSI6Mn1dLFs2LDUsIkYodScpIiwxXSxbMSw2LCJGKHQpIiwxXSxbMyw2LCJGKFxccHNpKSIsMSx7InNob3J0ZW4iOnsic291cmNlIjoyMCwidGFyZ2V0IjoyMH0sImxldmVsIjoyfV0sWzEsMCwiXFxwaGknIiwwLHsic2hvcnRlbiI6eyJzb3VyY2UiOjMwLCJ0YXJnZXQiOjMwfSwibGV2ZWwiOjJ9XSxbMCw2LCJGKHMnKSIsMV0sWzMsNSwiRih1KSIsMV0sWzksNSwiXFx0aGV0YV97dX0gXFxhdG9wIFxcc2ltZXEiLDEseyJsYWJlbF9wb3NpdGlvbiI6NzAsIm9mZnNldCI6Miwic2hvcnRlbiI6eyJzb3VyY2UiOjIwfSwic3R5bGUiOnsiYm9keSI6eyJuYW1lIjoibm9uZSJ9LCJoZWFkIjp7Im5hbWUiOiJub25lIn19fV0sWzUsMTMsIlxcdGhldGFfe3UnfSBcXGF0b3AgXFxzaW1lcSIsMSx7InNob3J0ZW4iOnsidGFyZ2V0IjoyMH0sInN0eWxlIjp7ImJvZHkiOnsibmFtZSI6Im5vbmUifSwiaGVhZCI6eyJuYW1lIjoibm9uZSJ9fX1dLFs3LDYsInFfcyBcXGF0b3AgXFxzaW1lcSIsMSx7InNob3J0ZW4iOnsic291cmNlIjoyMH0sInN0eWxlIjp7ImJvZHkiOnsibmFtZSI6Im5vbmUifSwiaGVhZCI6eyJuYW1lIjoibm9uZSJ9fX1dXQ==
\begin{tikzcd}[column sep=small]
	&& {F(j)} \\
	& {F(i)} && {F(k)} & {\Sigma\underset{I}{\bicolim} \; F} \\
	{*} && {F(j')} \\
	& {F(i')}
	\arrow[""{name=0, anchor=center, inner sep=0}, "{q_{i'}}"', curve={height=30pt}, from=4-2, to=2-5]
	\arrow["{F(s)}", from=2-2, to=1-3]
	\arrow[""{name=1, anchor=center, inner sep=0}, "{q_{j}}", curve={height=-6pt}, from=1-3, to=2-5]
	\arrow["{a'}"', from=3-1, to=4-2]
	\arrow["{a}", from=3-1, to=2-2]
	\arrow["{q_{k}}", from=2-4, to=2-5]
	\arrow[""{name=2, anchor=center, inner sep=0}, "{q_{j'}}"{description}, curve={height=12pt}, from=3-3, to=2-5]
	\arrow["{F(u')}"{description}, from=3-3, to=2-4]
	\arrow["{F(t)}"{description}, from=2-2, to=3-3]
	\arrow["{F(\psi)}"{description}, shorten <=6pt, shorten >=6pt, Rightarrow, from=1-3, to=3-3]
	\arrow["{\phi'}", shorten <=10pt, shorten >=10pt, Rightarrow, from=2-2, to=4-2]
	\arrow["{F(t')}"{description}, from=4-2, to=3-3]
	\arrow["{F(u)}"{description}, from=1-3, to=2-4]
	\arrow["{\theta_{u} \atop \simeq}"{description, pos=0.7}, shift right=2, Rightarrow, draw=none, from=1, to=2-4]
	\arrow["{\theta_{u'} \atop \simeq}"{description}, Rightarrow, draw=none, from=2-4, to=2]
	\arrow["{\theta_s \atop \simeq}"{description}, Rightarrow, draw=none, from=0, to=3-3]
\end{tikzcd}\]
We thus end with a parallel pair \emph{in the same component} $F(k)$
% https://q.uiver.app/?q=WzAsNixbMSwwLCJGKGkpIl0sWzIsMCwiRihqKSJdLFswLDEsIioiXSxbMywxLCJGKGspIl0sWzIsMiwiRihqJykiXSxbMSwyLCJGKGknKSJdLFswLDEsIkYocykiXSxbMiwwLCJhIl0sWzQsMywiRih1JykiLDJdLFsxLDMsIkYodSkiXSxbMiw1LCJhJyIsMl0sWzUsNCwiRih0JykiLDJdLFswLDUsIkYoXFxwc2kpKmFGKHUnKSpcXHBoaSciLDEseyJzaG9ydGVuIjp7InNvdXJjZSI6MjAsInRhcmdldCI6MjB9LCJsZXZlbCI6Mn1dLFsxLDQsIkYoXFxwc2knKSphJ0YodSlcXHBoaSIsMSx7InNob3J0ZW4iOnsic291cmNlIjoyMCwidGFyZ2V0IjoyMH0sImxldmVsIjoyfV1d
\[\begin{tikzcd}[column sep=large, row sep=small]
	& {F(i)} & {F(j)} \\
	{*} &&& {F(k)} \\
	& {F(i')} & {F(j')}
	\arrow["{F(s)}", from=1-2, to=1-3]
	\arrow["a", from=2-1, to=1-2]
	\arrow["{F(u')}"', from=3-3, to=2-4]
	\arrow["{F(u)}", from=1-3, to=2-4]
	\arrow["{a'}"', from=2-1, to=3-2]
	\arrow["{F(t')}"', from=3-2, to=3-3]
	\arrow["{F(\psi)*aF(u')*\phi'}"{description}, shorten <=6pt, shorten >=6pt, Rightarrow, from=1-2, to=3-2]
	\arrow["{F(\psi')*a'F(u)*\phi}"{description}, shorten <=6pt, shorten >=6pt, Rightarrow, from=1-3, to=3-3]
\end{tikzcd}\]
If now this parallel pair happens to be identified in $\Sigma \bicolim_{I} F$ (that is, coequified by whiskering with $q_k)$: hence, applying \Cref{local faithfulness of colimit inclusions}, there exists a further morphism $ v : k \rightarrow l$ in $\Sigma$ such that $F(v)$ coequifies this parallel pair 
\[  F(v)( F(\psi)*aF(u')*\phi' ) = F(v)(F(\psi')*a'F(u)*\phi) \]
\end{proof}

\begin{remark}
Although the last argument in \Cref{coequification lemma} relies on a general consideration about the calculus of fractions involved in arbitrary $ \sigma$-bicolimits, the crucial simplifications used here are strictly specific to $\sigma$-filteredness and by no mean can be inferred to arbitrary $ \sigma$-bicolimits.
\end{remark}

\subsection{2-dimensional cofinality and trivialization of $ \sigma$-filtered $\sigma$-bicolimit}

A consequence of \Cref{coequification lemma} and \Cref{triangle lemma} is that $\sigma$-filtered $ \sigma$-bicolimits are actually no more complicated nor expressive than bifiltered bicolimits, in the sense that for a $\sigma$-filtered pair $ (I,\Sigma)$, the arrows out of $\Sigma$ are somewhat ``useless" when it comes to compute an $I$-indexed $ \sigma$-bicolimit relatively to $\Sigma$. This observation will be made formal thanks to the following notions:

\begin{definition}[Cofinality]
Let $ I$ be a small 2-category with $ \Sigma$ a class of maps such that $I$ is $ \Sigma$-filtered, $J$ another 2-category with $\Sigma'$ a class of maps in $J$ and $ F : I \rightarrow J$ a pseudofunctor. Then $F$ is \emph{$\sigma$-cofinal}\index{$\sigma$-cofinal} with respect to $\Sigma$ and $\Sigma'$ if \begin{itemize}
    \item for any $ j $ in $J$ there is some arrow $ s : j \rightarrow F(i)$ in $\Sigma'$;
    \item for any parallel pair $d, t : j \rightrightarrows F(i)$ with $t$ in $\Sigma'$ there is $ s: i \rightarrow i' $ in $\Sigma$ and a 2-cell% https://q.uiver.app/?q=WzAsNCxbMCwxLCJqIl0sWzEsMCwiRihpKSJdLFsyLDEsIkYoaScpIl0sWzEsMiwiRihpKSJdLFswLDEsImRfMSJdLFsxLDIsIkYoZCkiXSxbMCwzLCJkXzIiLDJdLFszLDIsIkYoZCcpIiwyXSxbMSwzLCJcXGFscGhhIFxcYXRvcCBcXHNpbWVxIiwxLHsic3R5bGUiOnsiYm9keSI6eyJuYW1lIjoibm9uZSJ9LCJoZWFkIjp7Im5hbWUiOiJub25lIn19fV1d
\[\begin{tikzcd}[row sep=small]
	& {F(i)} \\
	j && {F(i')} \\
	& {F(i)}
	\arrow["{d}", from=2-1, to=1-2]
	\arrow["{F(s)}", from=1-2, to=2-3]
	\arrow["{t}"', from=2-1, to=3-2]
	\arrow["{F(s)}"', from=3-2, to=2-3]
	\arrow["{\alpha}"', Rightarrow, from=1-2, to=3-2]
\end{tikzcd}\]
Moreover $ \alpha$ can be chosen to be invertible whenever $d$ also is in $\Sigma$.
    \item For any parallel 2-cells \[\begin{tikzcd}
	j && {F(i)}
	\arrow[""{name=0, anchor=center, inner sep=0}, start anchor=40, "d", bend left=20, from=1-1, to=1-3]
	\arrow[""{name=1, anchor=center, inner sep=0}, start anchor=-40, "{t}"', bend right=20, from=1-1, to=1-3]
	\arrow[Rightarrow, draw=none, from=0, to=1]
	\arrow["\alpha"', shift right=4, shorten <=3pt, shorten >=3pt, Rightarrow, from=0, to=1]
	\arrow["{\alpha'}", shift left=4, shorten <=3pt, shorten >=3pt, Rightarrow, from=0, to=1]
\end{tikzcd}\]
with $t$ in $\Sigma'$, there exists $ s :i \rightarrow i'$ in $\Sigma$ such that $ F(s)*\alpha = F(s) *\alpha'$.
\end{itemize}
\end{definition}

\begin{remark}
This is a specific form of cofinality in the context of $\sigma$-filteredness, while a more general definition of cofinality may exist for non $\sigma$-filtered diagrams; however this one is sufficient for our purpose. Also we should point out that \cite[Definition 3.3.1]{descotte2018sigma} definition of $\sigma$-cofinality is given for a strict 2-functor, yet this does not modify anything in practice, for there is no relevant interaction between the data of $\sigma$-cofinality and the unit and composition data associated to the pseudofunctor. Moreover, for the classes of maps in $\sigma$-pairs can always be assumed to be closed under invertible 2-cells, one can check that a pseudofunctor admitting a strictification is $\sigma$-cofinal if and only if its strictification $\overline{F}$ is so. 
\end{remark}

\begin{lemma}[Cofinality preserve filteredness]\label{cofinal functors transfer sigma-filteredness}
Let $ (I,\Sigma)$ and $ (J,\Sigma')$ be two $\sigma$-pairs and $ F : I \rightarrow J$ such that \begin{itemize}
    \item $ I$ $\sigma$-filtered,
    \item $F$ is $\sigma$-cofinal for $ \Sigma, \Sigma'$,
    \item and $ F(\Sigma) \subseteq \Sigma'$.
\end{itemize} Then $ (J,\Sigma')$ is $\sigma$-filtered.
\end{lemma}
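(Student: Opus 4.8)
The plan is to verify the three axioms of $\sigma$-filteredness for $(J,\Sigma')$ one at a time, using in each case the homonymous clause of $\sigma$-cofinality of $F$ to ``reduce to the image of $F$'', and then transporting the resulting data back along $F$ by means of the hypothesis $F(\Sigma)\subseteq\Sigma'$ together with the closure of $\Sigma'$ under composition. The recurring first move is the first cofinality clause, which for any object $j$ of $J$ supplies a map $u:j\to F(i)$ in $\Sigma'$ pushing $j$ into the image; once everything relevant sits over objects of the form $F(i)$, the remaining structure of $I$ becomes usable.

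For the first axiom, given $j,j'$ in $J$ I would pick $u:j\to F(i)$ and $u':j'\to F(i')$ in $\Sigma'$ by the first cofinality clause. Since $I$ is $\sigma$-filtered there is a span $t:i\to i''$, $t':i'\to i''$ in $\Sigma$; applying $F$ and using $F(\Sigma)\subseteq\Sigma'$ yields $F(t),F(t')$ in $\Sigma'$, so the composites $F(t)u:j\to F(i'')$ and $F(t')u':j'\to F(i'')$ form the required span in $\Sigma'$. This is the only axiom invoking the filteredness of $I$ directly; the remaining two are discharged by the matching cofinality clauses. For the second axiom, given a parallel pair $d,s:j\rightrightarrows j'$ with $s\in\Sigma'$, I would choose $u:j'\to F(i)$ in $\Sigma'$ and consider the parallel pair $ud,us:j\rightrightarrows F(i)$, whose lower leg $us$ lies in $\Sigma'$. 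The second cofinality clause then produces $w:i\to i'$ in $\Sigma$ and a $2$-cell $\beta:F(w)(ud)\Rightarrow F(w)(us)$. Setting $t:=F(w)u:j'\to F(i')$, which lies in $\Sigma'$, and reading $\beta$ as a $2$-cell $td\Rightarrow ts$ gives the desired filler; moreover if $d\in\Sigma'$ then $ud$ is a composite of $\Sigma'$-maps, hence in $\Sigma'$, and the invertibility clause of the second cofinality condition makes $\beta$ invertible, as required.

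For the third axiom, given parallel $2$-cells $\alpha,\alpha':d\Rightarrow s$ with $s\in\Sigma'$, I would again whisker by a chosen $u:j'\to F(i)$ in $\Sigma'$ to obtain parallel $2$-cells $u*\alpha,u*\alpha':ud\Rightarrow us$ with target $us\in\Sigma'$. The third cofinality clause then yields $w:i\to i'$ in $\Sigma$ with $F(w)*(u*\alpha)=F(w)*(u*\alpha')$; taking $f:=F(w)u\in\Sigma'$ and using the interchange law to rewrite $F(w)*(u*\alpha)=f*\alpha$ (and likewise for $\alpha'$) gives $f*\alpha=f*\alpha'$.

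I expect no genuine obstacle: the argument is a systematic diagram chase in which each cofinality clause is engineered precisely to discharge the corresponding filteredness clause. The only points demanding care are the bookkeeping—checking at every step that the composites assembled from $u$, $F(w)$ and the $\Sigma$-data actually land in $\Sigma'$, which is exactly what $F(\Sigma)\subseteq\Sigma'$ and closure of $\Sigma'$ under composition guarantee—and the faithful propagation of the invertibility side-condition in the second axiom.
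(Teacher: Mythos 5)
Your proof is correct and follows essentially the same route as the paper's: push objects of $J$ into the image of $F$ via the first cofinality clause, then discharge each axiom of $\sigma$-filteredness with the matching cofinality clause, using $F(\Sigma)\subseteq\Sigma'$ and closure of $\Sigma'$ under composition to keep all assembled composites in $\Sigma'$. You are in fact slightly more careful than the paper, whose treatment of the second and third axioms only considers parallel pairs (and parallel $2$-cells) whose codomain is already of the form $F(i)$; your explicit whiskering by a $\Sigma'$-arrow $u : j' \rightarrow F(i)$ supplies the reduction to that case that the paper's terse proof glosses over.
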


\begin{proof}
Let us prove the different conditions of $\sigma$-filteredness. First, for any $j,j'$ in $J$, there are, by cofinality, both  $i,i'$ in $I$ with $ s: j \rightarrow i$ and $s' : j' \rightarrow i'$ in $\Sigma'$ and then by $\Sigma$-filteredness of $I$ a span $t : i \rightarrow i'' $ and $ t' : i' \rightarrow i''$ in $\Sigma$, and the composite $ F(t)s$, $F(t')s'$ are in $\Sigma'$ and provide a desired span. Now for a parallel pair $ d,s : j \rightarrow F(i)$ with $s$ in $\Sigma'$, cofinalness entails the existence of $t : i \rightarrow i'$ in $\Sigma$ such that $ F(t) $ inserts some $ \alpha : F(t)d \Rightarrow F(t)s$ (which can be made invertible if $ d $ is in $\Sigma'$, but $F(t)$ is in $\Sigma'$. Same argument for equalization of parallel 2-cells. 
\end{proof}

We also have the following converse property:

\begin{lemma}[\cite{descotte2018sigma} Proposition 3.3.2]\label{reflection of filteredness}
Let $ (I,\Sigma)$ and $ (J,\Sigma')$ be two $\sigma$-pairs and $ F : I \rightarrow J$ a pseudofunctor such that \begin{itemize}
    \item $(J,\Sigma')$ is $\sigma$-filtered,
    \item $F$ is pseudo-fully-faithful,
    \item for each $j$ in $J$ there is some $ s : i \rightarrow F(i)$ in $\Sigma'$.
\end{itemize}
Then $ F$ is $\sigma$-cofinal for $ \Sigma$, $\Sigma'$ and $ (I,\Sigma) $ is $\sigma$-filtered.
\end{lemma}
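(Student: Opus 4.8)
The plan is to verify the three clauses defining $\sigma$-cofinality for $F$ and the three clauses defining $\sigma$-filteredness of $(I,\Sigma)$, all by a single recurring manoeuvre that converts the $\sigma$-filteredness of $(J,\Sigma')$ into data on $I$. Given a configuration of objects, arrows or $2$-cells in the image of $F$, I first apply the matching clause of $\sigma$-filteredness of $(J,\Sigma')$ to obtain a witnessing arrow $u$ in $\Sigma'$ out of some object of $J$ (together with a $2$-cell or an equification when relevant); then I use the density hypothesis --- for every $j$ there is $s\colon j \to F(i)$ in $\Sigma'$ --- to post-compose $u$ into the image of $F$, obtaining a $\Sigma'$-arrow $wu\colon F(i)\to F(i')$; and finally I invoke pseudo-fully-faithfulness to reflect $wu$ to a $1$-cell $s\colon i\to i'$ of $I$ with an invertible $2$-cell $F(s)\cong wu$. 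Throughout I use that $\Sigma'$ is closed under composition and invertible $2$-cells, and the compatibility $s\in\Sigma \Leftrightarrow F(s)\in\Sigma'$ implicit in the setup, which is what certifies each reflected $s$ as a member of $\Sigma$.

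For $\sigma$-cofinality, the first clause is exactly the density hypothesis. For the second, starting from a parallel pair $d,t\colon j\rightrightarrows F(i)$ with $t\in\Sigma'$, the second clause of $\sigma$-filteredness in $J$ supplies $u\colon F(i)\to j'$ in $\Sigma'$ and a $2$-cell $ud\Rightarrow ut$ (invertible if $d\in\Sigma'$); densifying and lifting $wu$ to $s\in\Sigma$, I whisker by $w$ and transport along $F(s)\cong wu$ to produce the required $2$-cell $F(s)d\Rightarrow F(s)t$. The third clause is the same, beginning instead from the equification clause $u*\alpha=u*\alpha'$ of $J$. In both cases the produced datum lives in $J$ and need not be reflected, since the source $j$ is an arbitrary object; only the lift of $wu$ to $\Sigma$ is used.

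For $\sigma$-filteredness of $(I,\Sigma)$ I run the same move but now reflect the full solution --- including $2$-cells and their equations --- back along $F$, using that $F$ is full and faithful on hom-categories. A $\Sigma'$-span on $F(i),F(i')$ post-composed with density maps lifts to a $\Sigma$-span on $i,i'$, giving the first clause. For the second, from $d,s\colon i\rightrightarrows i'$ with $s\in\Sigma$ (so $F(s)\in\Sigma'$) the construction yields $t\colon i'\to i''$ in $\Sigma$ together with a $2$-cell $F(td)\Rightarrow F(ts)$ in $J$, which reflects uniquely to $\alpha\colon td\Rightarrow ts$; when $d\in\Sigma$ the chosen $2$-cell is invertible and pseudo-fully-faithfulness reflects invertibility. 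The third clause reflects an equality of whiskerings $F(t)*F(\alpha)=F(t)*F(\alpha')$ down to $t*\alpha=t*\alpha'$ by faithfulness. This is precisely the converse direction to \Cref{cofinal functors transfer sigma-filteredness}.

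The main obstacle is not any individual clause but the coherent bookkeeping of the reflection step. Since each lift $s$ is obtained only up to the invertible $2$-cell $F(s)\cong wu$, every transported $2$-cell must be conjugated both by this isomorphism and by the pseudofunctoriality constraints $F(s)F(d)\cong F(sd)$ before it can be recognized as $F$ of a cell of $I$; marshalling these coherences is the delicate part. Equally essential is the compatibility $s\in\Sigma\Leftrightarrow F(s)\in\Sigma'$, without which a reflected $\Sigma'$-arrow could not be certified as lying in $\Sigma$ and the cofinality clauses would fail. Granting this, the verification is routine, matching \cite[Proposition 3.3.2]{descotte2018sigma}, to which one may simply defer.
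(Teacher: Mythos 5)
Your proof is sound, but there is nothing in the paper to compare it against: the paper gives no proof of this lemma at all, deferring entirely to \cite{descotte2018sigma}, Proposition 3.3.2. Your blind reconstruction is essentially the argument of that reference, generalized from its original full-subcategory setting (where $\Sigma$ is by definition the restriction of $\Sigma'$) to a pseudo-fully-faithful pseudofunctor, and it is also the natural converse of the argument the paper does write out for \Cref{cofinal functors transfer sigma-filteredness}. Two remarks. First, the most valuable part of your write-up is the observation that the statement as printed is incomplete: with $\Sigma$ and $\Sigma'$ unrelated the conclusion can fail (take $\Sigma$ to consist only of equivalences of $I$), so the compatibility $s\in\Sigma \Leftrightarrow F(s)\in\Sigma'$, i.e.\ $\Sigma = F^{-1}(\Sigma')$ up to invertible $2$-cells, must indeed be read into the hypotheses; in the cited full-subcategory version this is automatic, which explains the paper's silence (you also silently correct the typo $s:i\to F(i)$ in the third hypothesis, which should read $s:j\to F(i)$). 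Second, your three-step manoeuvre (apply $\sigma$-filteredness in $J$, post-compose by a density arrow, reflect the resulting $\Sigma'$-arrow along the hom-equivalences of $F$) is exactly what is needed, and the coherence bookkeeping you defer is genuinely routine: invertibility and equality of $2$-cells are stable under conjugation by the isomorphisms $F(s)\cong wu$ and by the compositors $F(t)F(d)\cong F(td)$, and faithfulness of $F$ on $2$-cells reflects the final equation $F(t\ast\alpha)=F(t\ast\alpha')$ to $t\ast\alpha=t\ast\alpha'$, closing the third clause of $\sigma$-filteredness. So the proposal correctly fills a gap the paper leaves to an external citation.
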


The following proposition is a $\sigma$-version of \cite{descotte2020theory}[Theorem 1.3.9] (which was originally stated for \emph{bifilteredness} and \emph{bicofinality}) confirms that $\sigma$-cofinal functors have the expected behavior regarding $\sigma$-colimits - at least for $\sigma$-filtered ones:

\begin{proposition}\label{cofinal functors and bicolimits}
Let $ (I,\Sigma)$ and $ (J,\Sigma')$ be two $\sigma$-filtered pairs and $ F : I \rightarrow J$ a $\sigma$-cofinal pseudofunctor relatively to $ \Sigma$, $ \Sigma'$ such that $ F(\Sigma) \subseteq \Sigma'$, and $ G : J \rightarrow \mathcal{B}$ a pseudofunctor. Then one has
\[ \underset{j \in J}{{\Sigma'}\bicolim} \, G(j) \simeq \underset{i \in I}{\Sigma\bicolim} \, GF(i)   \]
\end{proposition}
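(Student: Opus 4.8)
The plan is to compare the universal properties of the two $\sigma$-bicolimits, reduce the claim to an equivalence between categories of $\sigma$-cocones, and then establish that equivalence using the three cofinality conditions. First I would fix an object $B$ of $\mathcal{B}$ and invoke the defining universal property of conical $\sigma$-bicolimits recalled in \Cref{why}, which yields natural equivalences of categories
\[ \mathcal{B}[\underset{j \in J}{\Sigma'\bicolim}\, G(j), B] \simeq \ps[J,\mathcal{B}]_{\op\Sigma'}[G, \Delta_B], \qquad \mathcal{B}[\underset{i \in I}{\Sigma\bicolim}\, GF(i), B] \simeq \ps[I,\mathcal{B}]_{\op\Sigma}[GF, \Delta_B]. \]
Restriction along $F$ sends a $\sigma$-cocone $q : G \Rightarrow \Delta_B$ to $qF : GF \Rightarrow \Delta_B$; since $F(\Sigma) \subseteq \Sigma'$, every transition 2-cell $(qF)_s = q_{F(s)}$ at an $s$ in $\Sigma$ is invertible, so $qF$ is genuinely a $\sigma$-cocone relative to $\Sigma$, and the induced functor $F^*$ is well defined and plainly pseudonatural in $B$. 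By the bicategorical Yoneda lemma it then suffices to prove that
\[ F^* : \ps[J,\mathcal{B}]_{\op\Sigma'}[G, \Delta_B] \longrightarrow \ps[I,\mathcal{B}]_{\op\Sigma}[GF, \Delta_B] \]
is an equivalence of categories for every $B$; the representing objects will then be equivalent in an essentially unique way.

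Next I would check faithfulness and fullness. Throughout, the key mechanism is that for any $j$ in $J$ the first cofinality condition furnishes an arrow $s : j \rightarrow F(i)$ in $\Sigma'$, along which the cocone 2-cell $q_s$ is \emph{invertible} by op$\sigma$-naturality. For faithfulness, two modifications $\Gamma, \Gamma' : q \Rrightarrow q'$ agreeing after restriction satisfy $\Gamma_{F(i)} = \Gamma'_{F(i)}$; the modification axiom at $s$ gives $\Gamma_j = q'_s \cdot (\Gamma_{F(i)} * G(s)) \cdot q_s^{-1}$ and likewise for $\Gamma'_j$, whence $\Gamma_j = \Gamma'_j$. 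For fullness, given a modification $\Theta : F^*q \Rrightarrow F^*q'$ I would set $\Gamma_j := q'_s \cdot (\Theta_i * G(s)) \cdot q_s^{-1}$; the content is that this is independent of the choice of $s$ and satisfies the modification axiom, which follows by comparing two choices through a common refinement supplied by cofinality conditions (2) and (3) together with the $\sigma$-filteredness of $(I,\Sigma)$.

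Finally, essential surjectivity is the crux. Given a $\sigma$-cocone $p : GF \Rightarrow \Delta_B$ I would build $q : G \Rightarrow \Delta_B$ by choosing for each $j$ an arrow $s_j : j \rightarrow F(i_j)$ in $\Sigma'$ (taking an identity whenever $j$ is already of the form $F(i)$, so that $F^*q \cong p$ becomes immediate on the image) and setting $q_j := p_{i_j} \circ G(s_j)$. The transition 2-cell $q_d$ over $d : j \rightarrow j'$ is assembled from the components of $p$ and from the cofinality data comparing $s_{j'}\circ d$ with $s_j$; since these two arrows need not share a codomain, one first uses $\sigma$-filteredness of $I$ to place them over a common object $F(i'')$ via a $\Sigma$-span, then applies cofinality condition (2) to insert the comparing 2-cell. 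One checks that $q_d$ is invertible whenever $d$ lies in $\Sigma'$, so that $q$ is indeed op$\sigma$-natural.

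The main obstacle, as already for fullness, is showing that all these assignments are coherent and independent of the numerous choices up to canonical invertible 2-cell: one must systematically use cofinality condition (2) to compare two selected $\Sigma'$-arrows out of the same object and condition (3) to equate the resulting 2-cells after a further $\Sigma$-map, invoking $\sigma$-filteredness of $I$ to find common upper bounds, and then verify the unit and associativity coherences of the constructed oplax cocone. Tracking these coherences, rather than any single isolated step, is the genuinely laborious part; once it is done, the canonical comparison $F^*q \Rightarrow p$ is an isomorphism of $\sigma$-cocones, so $F^*$ is essentially surjective and hence an equivalence, completing the proof.
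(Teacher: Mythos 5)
Your proof is correct, and it reaches the result by a recognizably different packaging than the paper's. The paper works directly at the level of the two bicolimit objects: it restricts the colimiting $\Sigma'$-cocone of $G$ along $F$ to induce a 1-cell $\langle q_{F(i)}\rangle_{i\in I} : \Sigma\bicolim_I GF \to \Sigma'\bicolim_J G$, then uses cofinality condition (1) to define pointwise maps $g_j = q'_i \circ G(t)$ (the same formula as your $q_j := p_{i_j}\circ G(s_j)$, specialized to the tip $\Sigma\bicolim_I GF$), assembles them into a comparison 1-cell the other way, and checks the two composites are isomorphic to identities. You instead abstract the tip: you show restriction along $F$ gives an equivalence of $\sigma$-cocone categories $\ps[J,\mathcal{B}]_{\op\Sigma'}[G,\Delta_B] \simeq \ps[I,\mathcal{B}]_{\op\Sigma}[GF,\Delta_B]$ pseudonaturally in every $B$, and conclude by birepresentability. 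The underlying mechanics coincide — condition (1) for lifting/extending cocones, condition (2) plus $\sigma$-filteredness of $(I,\Sigma)$ for comparing two choices of $\Sigma'$-arrow via a common $F(i'')$, condition (3) for equating the resulting 2-cells — but your route buys two things the paper's does not make explicit: it handles the 2-cell level head-on (your faithfulness and fullness arguments, using invertibility of $q_s$ at $s\in\Sigma'$ to transport modification components, are correct and are simply absent from the paper, which only verifies the 1-cell comparisons and the unit/counit invertible 2-cells), and it shows that existence of either $\sigma$-bicolimit implies existence of the other, since equivalent pseudofunctors are birepresentable together. The cost is the coherence bookkeeping you flag in essential surjectivity — well-definedness of the transition cells $q_d$, their invertibility over $\Sigma'$, and the oplax cocone axioms — but these are exactly the verifications the paper's own proof also leaves implicit (e.g., its unexamined assertion that the $g_j$ assemble into a cocone inducing $\langle g_j\rangle_{j\in J}$), so there is no gap in your argument beyond what the published proof itself elides.
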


\begin{proof}

For $ F(\Sigma) \subseteq \Sigma'$, the restriction of the colimiting $\sigma$-cocone for $G$ to objects in the range of $F$ defines a $\Sigma$-cocone $(q_{F(i)} : GF(i) \rightarrow {\Sigma'}\bicolim_{j \in J} G(j))_{i \in I}$ as its 2-cells $ q_{F(s)}$ for $s$ in $\Sigma$ are invertible. Then we have a uniquely induced arrow $\langle q_{F(i)} \rangle_{i \in I}$ with a factorization of the colimit inclusion at each $i$ 
% https://q.uiver.app/?q=WzAsMyxbMCwwLCJcXHVuZGVyc2V0e2kgXFxpbiBJfXtcXHNpZ21hX1xcU2lnbWFcXGJpY29saW19IFxcLCBHRihpKSJdLFsxLDAsIlxcdW5kZXJzZXR7aiBcXGluIEp9e1xcc2lnbWFfe1xcU2lnbWEnfVxcYmljb2xpbX0gXFwsIEcoaikiXSxbMCwxLCJHRihpKSJdLFswLDEsIlxcbGFuZ2xlIHFfe0YoaSl9IFxccmFuZ2xlX3tpIFxcaW4gSX0iXSxbMiwwLCJxJ19pIl0sWzIsMSwicV97RihpKX0iLDJdLFswLDUsIlxcc2ltZXEiLDEseyJzaG9ydGVuIjp7InRhcmdldCI6MjB9LCJzdHlsZSI6eyJib2R5Ijp7Im5hbWUiOiJub25lIn0sImhlYWQiOnsibmFtZSI6Im5vbmUifX19XV0=
\[\begin{tikzcd}
	{\underset{i \in I}{\Sigma\bicolim} \, GF(i)} & {\underset{j \in J}{{\Sigma'}\bicolim} \, G(j)} \\
	{GF(i)}
	\arrow["{\langle q_{F(i)} \rangle_{i \in I}}", from=1-1, to=1-2]
	\arrow["{q'_i}", from=2-1, to=1-1]
	\arrow[""{name=0, anchor=center, inner sep=0}, "{q_{F(i)}}"', from=2-1, to=1-2]
	\arrow["\simeq"{description}, Rightarrow, draw=none, from=1-1, to=0]
\end{tikzcd}\]

By cofinality, any $ j$ in $J$ admits a $\Sigma'$ arrow $ t: j \rightarrow F(i)$, so that we have a factorization of the corresponding $j$-inclusion through the following invertible 2-cells
% https://q.uiver.app/?q=WzAsNCxbMywwLCJcXHVuZGVyc2V0e2kgXFxpbiBJfXtcXHNpZ21hX1xcU2lnbWFcXGJpY29saW19IFxcLCBHRihpKSJdLFsxLDEsIlxcdW5kZXJzZXR7aiBcXGluIEp9e1xcc2lnbWFfe1xcU2lnbWEnfVxcYmljb2xpbX0gXFwsIEcoaikiXSxbMSwwLCJHRihpKSJdLFswLDAsIkcoaikiXSxbMCwxLCJcXGxhbmdsZSBxX3tGKGkpfSBcXHJhbmdsZV97aSBcXGluIEl9Il0sWzIsMCwicSdfaSJdLFsyLDEsInFfe0YoaSl9IiwxXSxbMywyLCJHKHQpIl0sWzMsMSwicV9qIiwyXSxbMCw2LCJcXHNpbWVxIiwxLHsic2hvcnRlbiI6eyJ0YXJnZXQiOjIwfSwic3R5bGUiOnsiYm9keSI6eyJuYW1lIjoibm9uZSJ9LCJoZWFkIjp7Im5hbWUiOiJub25lIn19fV0sWzIsOCwicV90IFxcYXRvcCBcXHNpbWVxIiwxLHsic2hvcnRlbiI6eyJ0YXJnZXQiOjIwfSwic3R5bGUiOnsiYm9keSI6eyJuYW1lIjoibm9uZSJ9LCJoZWFkIjp7Im5hbWUiOiJub25lIn19fV1d
\[\begin{tikzcd}
	{G(j)} & {GF(i)} && {\underset{i \in I}{\Sigma\bicolim} \, GF(i)} \\
	& {\underset{j \in J}{{\Sigma'}\bicolim} \, G(j)}
	\arrow["{\langle q_{F(i)} \rangle_{i \in I}}", from=1-4, to=2-2]
	\arrow["{q'_i}", from=1-2, to=1-4]
	\arrow[""{name=0, anchor=center, inner sep=0}, "{q_{F(i)}}"{description}, from=1-2, to=2-2]
	\arrow["{G(t)}", from=1-1, to=1-2]
	\arrow[""{name=1, anchor=center, inner sep=0}, "{q_j}"', from=1-1, to=2-2]
	\arrow["\simeq"{description}, Rightarrow, draw=none, from=1-4, to=0]
	\arrow["{q_t \atop \simeq}"{description}, Rightarrow, draw=none, from=1-2, to=1]
\end{tikzcd}\]
Moreover such a factorization is actually uniquely defined: by cofinality, if one as two distinct arrows $ t_1 : j \rightarrow F(i_1)$, $t_2 : j \rightarrow F(i_2)$ in $\Sigma'$, one has a $\Sigma$ span $ s_1 : i_1 \rightarrow i$, $ s_2 : i_2 \rightarrow i$ in $I$ as $( I,\Sigma)$ is $\sigma$-filtered, and moreover one can choose this $i$ together with an invertible 2-cell $ \alpha : F(s_1)t_1 \simeq F(s_2)t_2$. This ensures that one has actually a unique $g_j : G(j) \rightarrow {{\Sigma}\bicolim}_{i \in J} \, GF(i)$ factorizing $ q_j$ through $ \langle q_{F(i)} \rangle_{i \in I}$: hence we have a canonical invertible 2-cell $ 1_{{\Sigma'}\bicolim_{j \in J} G(j) } \simeq \langle q_{F(i)} \rangle_{i \in I} \langle g_j \rangle_{j \in J}$
% https://q.uiver.app/?q=WzAsMyxbMCwxLCJ7XFx1bmRlcnNldHtqIFxcaW4gSn17e1xcU2lnbWEnfVxcYmljb2xpbX0gXFwsIEcoail9Il0sWzAsMCwie1xcdW5kZXJzZXR7aiBcXGluIEp9e3tcXFNpZ21hJ31cXGJpY29saW19IFxcLCBHKGopfSJdLFsxLDAsIntcXHVuZGVyc2V0e2kgXFxpbiBJfXtcXFNpZ21hXFxiaWNvbGltfSBcXCwgR0YoaSl9Il0sWzEsMiwiXFxsYW5nbGUgZ19qIFxccmFuZ2xlX3tqIFxcaW4gSn0iXSxbMiwwLCJcXGxhbmdsZSBxX3tGKGkpfSBcXHJhbmdsZV97aSBcXGluIEl9Il0sWzEsMCwiIiwyLHsibGV2ZWwiOjIsInN0eWxlIjp7ImhlYWQiOnsibmFtZSI6Im5vbmUifX19XV0=
% https://q.uiver.app/?q=WzAsMyxbMCwxLCJ7XFx1bmRlcnNldHtqIFxcaW4gSn17e1xcU2lnbWEnfVxcYmljb2xpbX0gXFwsIEcoail9Il0sWzAsMCwie1xcdW5kZXJzZXR7aiBcXGluIEp9e3tcXFNpZ21hJ31cXGJpY29saW19IFxcLCBHKGopfSJdLFsxLDAsIntcXHVuZGVyc2V0e2kgXFxpbiBJfXtcXFNpZ21hXFxiaWNvbGltfSBcXCwgR0YoaSl9Il0sWzEsMiwiXFxsYW5nbGUgZ19qIFxccmFuZ2xlX3tqIFxcaW4gSn0iXSxbMiwwLCJcXGxhbmdsZSBxX3tGKGkpfSBcXHJhbmdsZV97aSBcXGluIEl9Il0sWzEsMCwiIiwyLHsibGV2ZWwiOjIsInN0eWxlIjp7ImhlYWQiOnsibmFtZSI6Im5vbmUifX19XSxbMSw0LCJcXHNpbWVxIiwxLHsibGFiZWxfcG9zaXRpb24iOjQwLCJzaG9ydGVuIjp7InRhcmdldCI6MjB9LCJzdHlsZSI6eyJib2R5Ijp7Im5hbWUiOiJub25lIn0sImhlYWQiOnsibmFtZSI6Im5vbmUifX19XV0=
\[\begin{tikzcd}
	{{\underset{j \in J}{{\Sigma'}\bicolim} \, G(j)}} & {{\underset{i \in I}{\Sigma\bicolim} \, GF(i)}} \\
	{{\underset{j \in J}{{\Sigma'}\bicolim} \, G(j)}}
	\arrow["{\langle g_j \rangle_{j \in J}}", from=1-1, to=1-2]
	\arrow[""{name=0, anchor=center, inner sep=0}, "{\langle q_{F(i)} \rangle_{i \in I}}", from=1-2, to=2-1]
	\arrow[Rightarrow, no head, from=1-1, to=2-1]
	\arrow["\simeq"{description, pos=0.4}, Rightarrow, draw=none, from=1-1, to=0]
\end{tikzcd}\]
Moreover one has a canonical invertible 2-cell $ g_j \simeq \langle g_j \rangle_{j \in J} q_j$, so in particular one has $ g_{F(i)} \simeq \langle g_j \rangle_{j \in J} q_{F(i)}$: but clearly $ g_F(i) \simeq q'_i$ as one can take $ 1_{F(i)}$ as the $t$ in the construction of the $g_{F(i)}$ above: this gives a converse factorization of the bicolimit inclusions 
% https://q.uiver.app/?q=WzAsMyxbMSwwLCJ7XFx1bmRlcnNldHtqIFxcaW4gSn17e1xcU2lnbWEnfVxcYmljb2xpbX0gXFwsIEcoail9Il0sWzEsMSwie1xcdW5kZXJzZXR7aSBcXGluIEl9e1xcU2lnbWFcXGJpY29saW19IFxcLCBHRihpKX0iXSxbMCwwLCJHRihpKSJdLFswLDEsIlxcbGFuZ2xlIGdfaiBcXHJhbmdsZV97aiBcXGluIEp9Il0sWzIsMCwicV97RihpKX0iXSxbMiwxLCJxJ19pIiwyXSxbMCw1LCJcXHNpbWVxIiwxLHsic2hvcnRlbiI6eyJ0YXJnZXQiOjIwfSwic3R5bGUiOnsiYm9keSI6eyJuYW1lIjoibm9uZSJ9fX1dXQ==
\[\begin{tikzcd}
	{GF(i)} & {{\underset{j \in J}{{\Sigma'}\bicolim} \, G(j)}} \\
	& {{\underset{i \in I}{\Sigma\bicolim} \, GF(i)}}
	\arrow["{\langle g_j \rangle_{j \in J}}", from=1-2, to=2-2]
	\arrow["{q_{F(i)}}", from=1-1, to=1-2]
	\arrow[""{name=0, anchor=center, inner sep=0}, "{q'_i}"', from=1-1, to=2-2]
	\arrow["\simeq"{description, pos=0.4}, shorten >=2pt, Rightarrow, no body, from=1-2, to=0]
\end{tikzcd}\]
which in turn induces an invertible 2-cell 
\[ 1_{{\Sigma}\bicolim_{i \in I} GF(i) } \simeq \langle g_j \rangle_{j \in J}\langle q_{F(i)} \rangle_{i \in I}  \]
This exhibits the desired equivalence between those $\sigma$-bicolimits.
\end{proof}

%\textcolor{red}{The key lemma is actually almost trivial in its general form; yet I put it.}
%The following observation, though it sounds tautological and its hypothesis ad-hoc, is actually so crucial we chose to put it as a lemma: \textcolor{red}{This seems both trivial and surprizingly strong to me because it trivializes Dubuc:}

Now, we come to a central observation, essential to the next sections of this work, which seems to have been unnoticed hitherto; yet it reduces the theory of $\sigma$-filteredness to the theory of bifilteredness thanks to an almost tautological argument of $\sigma$-cofinality: 

\begin{lemma}[Trivialization lemma]\label{key lemma}
Let $ (I,\Sigma)$ be a $\sigma$-pair. Then $ (I,\Sigma)$ is $\sigma$-filtered if and only if the full on 0-cells and 2-cells subcategory $ \Sigma \hookrightarrow I$ (i.e., full on objects and $2$ cells and containing only $\Sigma$ as $1$-cells) is bifiltered and $\sigma$-cofinal relatively to $\Sigma$.
\end{lemma}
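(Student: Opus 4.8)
The plan is to name the sub-2-category $\Sigma \hookrightarrow I$ as $\mathcal{S}$: it has the same objects and the same 2-cells as $I$, but only the $\Sigma$-arrows as 1-cells (this is what ``full on $0$-cells and $2$-cells'' means). We regard it as a $\sigma$-pair $(\mathcal{S}, \Sigma_{\mathcal{S}})$ in which $\Sigma_{\mathcal{S}}$ consists of \emph{all} its arrows, so that ``$\mathcal{S}$ is bifiltered'' means exactly ``$(\mathcal{S}, \Sigma_{\mathcal{S}})$ is $\sigma$-filtered''. Under this identification the inclusion $\iota : \mathcal{S} \hookrightarrow I$ acts as the identity on objects and 2-cells and satisfies $\iota(\Sigma_{\mathcal{S}}) = \Sigma$ tautologically, so both implications come down to matching the three filteredness axioms against the three cofinality clauses.

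For the forward implication I would assume $(I,\Sigma)$ is $\sigma$-filtered and read off the two conclusions directly. Bifilteredness of $\mathcal{S}$: axiom (1) of $\sigma$-filteredness yields the span with both legs in $\Sigma$; given a parallel pair $d,d' : i \rightrightarrows i'$ of $\mathcal{S}$-arrows (hence both in $\Sigma$), axiom (2) — applied with $s := d'$, which is legitimate since $d' \in \Sigma$ — produces $f : i' \to i''$ in $\Sigma$ together with a 2-cell $fd \Rightarrow fd'$ which is \emph{invertible} precisely because $d$ too lies in $\Sigma$; and axiom (3), applied to a pair of parallel 2-cells between $\Sigma$-arrows, supplies the coequifying $f \in \Sigma$. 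For $\sigma$-cofinality of $\iota$: the first clause is met by taking $i = j$ and $s = 1_j$, which belongs to $\Sigma$ since $\Sigma$ contains all equivalences and hence all identities; the second and third clauses are, after applying $\iota$ (the identity on objects and 2-cells), verbatim axioms (2) and (3) of $\sigma$-filteredness.

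For the converse I would simply invoke \Cref{cofinal functors transfer sigma-filteredness}. Taking $F = \iota : \mathcal{S} \to I$ with $\mathcal{S}$ $\sigma$-filtered (being bifiltered), $\iota$ $\sigma$-cofinal by hypothesis, and $\iota(\Sigma_{\mathcal{S}}) = \Sigma \subseteq \Sigma$, that lemma concludes that $(I,\Sigma)$ is $\sigma$-filtered.

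The only genuinely delicate point is the invertibility clause in axiom (2) of $\sigma$-filteredness. Bifilteredness of $\mathcal{S}$ demands an \emph{invertible} insertion 2-cell in its second axiom, and what furnishes it is exactly the sub-clause ``$\alpha$ can be chosen invertible whenever $d$ is in $\Sigma$''; without that refinement the forward direction would fall short. Everything else is bookkeeping: keeping track of the orientations of the 2-cells and of the tautological identity $\iota(\Sigma_{\mathcal{S}}) = \Sigma$.
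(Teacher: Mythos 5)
Your proposal is correct and follows essentially the same route as the paper: one direction by invoking \Cref{cofinal functors transfer sigma-filteredness} applied to the inclusion $\iota_\Sigma$, the other by matching the axioms of $\sigma$-filteredness against bifilteredness and the cofinality clauses directly. If anything, you are slightly more thorough than the paper's own proof, which treats the cofinality clauses and the invertibility refinement in axiom (2) as tautological rather than spelling them out.
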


\begin{proof}
This is just putting altogether the definition of $\sigma$-filteredness and $\sigma$-cofinality, having in mind that moreover a $\sigma$-pair $ (I, \Sigma)$ is bifiltered exactly when $ \Sigma$ contains all arrows of $I$. Although this is tautological, this lemma is important enough for us to state a carefull proof. Suppose that $ (I,\Sigma) $ is a $\sigma$-pair such that the subcategory $ \Sigma$ is bifiltered and the inclusion $ \iota_\Sigma : \Sigma \hookrightarrow I$ is $\sigma$-cofinal for $\Sigma$: then we are in the condition of \Cref{cofinal functors transfer sigma-filteredness}, which entails that $(I,\Sigma)$ is $\sigma$-filtered. Conversely, suppose that $ (I,\Sigma)$ is $ \sigma$-filtered. Then for $ i,i'$ in $I$ the span provided by $\sigma$-filteredness is in $\Sigma$. %As $\Sigma$, seen as a subcategory of $I$, contains all object of $ I$ and also their identities, any object of $I$ admit an arrow in $\Sigma$ toward an object of $\Sigma$. 
Now take a parallel pair $ s,t : i \rightarrow j$ with both $s,s'$ in $\Sigma$: then there exists $ t $ in $\Sigma$ together with an invertible 2-cell $\alpha : ts \simeq ts'$; similar argument for parallel pairs of 2-cells: the $\sigma$-filteredness of $(I,\Sigma)$ automatically entails the bifilteredness of $\Sigma$.   
\end{proof}

\begin{remark}
As a corollary, this proves that, any $\sigma$-filtered pair being actually controlled by the bifiltered subcategory made of its $\sigma$-filtration arrows, any $\sigma$-bicolimit over a $\sigma$-filtered pair is actually equivalent to the bicolimit over this $\sigma$-cofinal bifiltered subcategory. This will explain why the $\sigma$-bicolimit decomposition of pseudofunctors into $\Cat$ can always be simplified to an ordinary bicolimit for flat pseudofunctors, and why
bifiltered bicolimits are sufficient in the definition of bi-accessible categories. 
\end{remark}

\begin{corollary}[$\sigma$-filtered bicolimits are bifiltered bicolimits] \label{sigmafiltered colimits are bifiltered colimits}
For any $\sigma$-filtered pair $(I, \Sigma)$, with $ \iota_\Sigma : \Sigma \hookrightarrow I$ being the corresponding inclusion, and any $F: I \rightarrow \Cat$ one has an equivalence of categories
\[\Sigma \underset{ I}\bicolim F  \simeq  \underset{ \Sigma}\bicolim \; F\iota_\Sigma\]
\end{corollary}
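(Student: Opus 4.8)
The plan is to derive this corollary directly from the two substantive results just established, namely the Trivialization lemma (\Cref{key lemma}) and the cofinality invariance of $\sigma$-bicolimits (\Cref{cofinal functors and bicolimits}); almost all the work has already been done there. First I would unwind the setup. Since $(I,\Sigma)$ is $\sigma$-filtered, \Cref{key lemma} guarantees that the full-on-objects-and-$2$-cells subcategory $\Sigma \hookrightarrow I$ is bifiltered and that the inclusion $\iota_\Sigma : \Sigma \hookrightarrow I$ is $\sigma$-cofinal relatively to $\Sigma$. Regarded as a $\sigma$-pair, this subcategory carries the class $\tau$ of \emph{all} of its arrows, and $(\Sigma, \tau)$ is $\sigma$-filtered precisely because bifilteredness is $\sigma$-filteredness for the maximal class of maps, as recorded in the remark following the definition of bifiltered $2$-category.

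Next I would apply \Cref{cofinal functors and bicolimits} to the cofinal functor $\iota_\Sigma$, taking the source $\sigma$-filtered pair to be $(\Sigma,\tau)$, the target pair to be $(I,\Sigma)$, and the diagram $G$ to be $F$ itself. The three hypotheses are met: both pairs are $\sigma$-filtered by the previous paragraph, $\iota_\Sigma$ is $\sigma$-cofinal by \Cref{key lemma}, and the image condition $\iota_\Sigma(\tau) \subseteq \Sigma$ holds tautologically since every $1$-cell of the subcategory $\Sigma$ is by construction a member of $\Sigma$. This yields the equivalence
\[ \Sigma\underset{I}{\bicolim}\, F \;\simeq\; \tau\underset{\Sigma}{\bicolim}\, F\iota_\Sigma. \]

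It then remains to identify the right-hand side with an ordinary bicolimit. A $\sigma$-bicolimit whose distinguished class $\tau$ contains every arrow of the indexing $2$-category is, by its universal property, nothing but the conical bicolimit: every op$\sigma$-naturality square is then required to be invertible, so an op$\tau$-cocone is exactly a pseudococone, and the two representing objects agree up to equivalence. Hence $\tau\underset{\Sigma}{\bicolim}\, F\iota_\Sigma \simeq \underset{\Sigma}{\bicolim}\, F\iota_\Sigma$, and composing the two equivalences gives the claim. I do not expect a genuine obstacle here; the only point demanding care is the bookkeeping of which class of maps is attached to which copy of the indexing data when invoking \Cref{cofinal functors and bicolimits}, so that the cofinality input on the $\Sigma$-side and the output class $\tau$ of all arrows are matched to the correct slots of the proposition.
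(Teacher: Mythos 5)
Your proposal is correct and follows essentially the same route as the paper: the corollary is obtained by combining the Trivialization lemma (\Cref{key lemma}) with \Cref{cofinal functors and bicolimits}, applied to the inclusion $\iota_\Sigma$ with source pair $(\Sigma,\text{all arrows})$ and target pair $(I,\Sigma)$, and then identifying the $\sigma$-bicolimit for the maximal class with the conical bicolimit. The paper additionally records a second, hands-on proof (constructing the comparison functor and checking it is essentially surjective, full and faithful), but that is presented as an optional alternative to the derivation you give.
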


\begin{division}[A direct proof without trivialization lemma]
It is worth the detailing of the construction to convince oneself of the fact above, for it might sound surprisingly strong. For this reason, we choose to provide here a concrete proof of the corollary above without invoking the trivialization lemma. Indeed, one could ask how the data encoded in the morphisms out of $\Sigma$ are managed in the restricted filtered bicolimit over $\Sigma$ if it is the same as the $\sigma$-bicolimit, knowing that the cocartesian morphisms are all invertible in the restricted bicolimit while those indexed by morphisms out of $\Sigma$ are not in the $\sigma$-bicolimit. Let us examine the case of the $\sigma$-bicolimit of a diagram of small categories $ F : I \rightarrow \Cat$ over a $\sigma$-filtered pair $ (I, \Sigma)$. Denote $ \iota_\Sigma : \Sigma \rightarrow I$ the corresponding inclusion: we have an induced functor 
% https://q.uiver.app/?q=WzAsMixbMCwwLCJcXGJpY29saW1fXFxTaWdtYSBGXFxpb3RhX1xcU2lnbWEiXSxbMSwwLCJcXFNpZ21hIFxcYmljb2xpbV9JIEYiXSxbMCwxLCJxIl1d
\[\begin{tikzcd}
	{\underset{ \Sigma}\bicolim \; F\iota_\Sigma} & {\Sigma \underset{ I}\bicolim \; F}
	\arrow["q", from=1-1, to=1-2]
\end{tikzcd}\]
induced by the inclusions $ q_i$ over the restricted cocone over $ \Sigma$. Let us prove that this functor is essentially surjective on objects, full and faithful by using successively the different axioms of cofinality. Essential surjectivity makes no doubt as $ \iota_\Sigma$ is essentially surjective on objects. Now take a morphism $ (i,a) \rightarrow (i', a') $ in $ \Sigma \bicolim_{I} F$: it can be presented by data $s : i \rightarrow i'',d : i' \rightarrow i''$ and $ \phi : F(s)a \Rightarrow F(d)a'$ with $s$ in $\Sigma$ as in \Cref{sigma-filtered sigma-colimit of categories}, but from \Cref{morphisms of sigma-colimit are morphisms of the bicolimit} this presentation can be itself replaced by a better presentation where $ d$ is substituted with a 2-cell $ \psi : t d \Rightarrow t'$ with both $ t,t'$ in $\Sigma$: then the following pasting
% https://q.uiver.app/?q=WzAsNCxbMCwxLCIqIl0sWzEsMCwiRihpKSJdLFsyLDEsIkYoaScnKSJdLFsxLDIsIkYoaScpIl0sWzAsMSwiYSJdLFsxLDIsIkYodHMpIiwxXSxbMCwzLCJhJyIsMl0sWzMsMiwiRih0JykiLDFdLFsxLDMsIkYoXFxwc2kpKmEnIEYodCkqXFxwaGkiLDEseyJzaG9ydGVuIjp7InNvdXJjZSI6MjAsInRhcmdldCI6MjB9LCJsZXZlbCI6Mn1dXQ==
\[\begin{tikzcd}
	& {F(i)} \\
	{*} && {F(i'')} \\
	& {F(i')}
	\arrow["a", from=2-1, to=1-2]
	\arrow["{F(ts)}"{description}, from=1-2, to=2-3]
	\arrow["{a'}"', from=2-1, to=3-2]
	\arrow["{F(t')}"{description}, from=3-2, to=2-3]
	\arrow["{F(\psi)*a' F(t)*\phi}"{description}, shorten <=6pt, shorten >=6pt, Rightarrow, from=1-2, to=3-2]
\end{tikzcd}\]
can be used to represent a morphism $ (i,a) \rightarrow (i',a')$ in the restricted bicolimit $ \bicolim_{\Sigma} F\iota_\Sigma$ which returns the original morphism in $ \Sigma \bicolim_{I} F$ through the following whiskering, proving $q$ to be full:
% https://q.uiver.app/?q=WzAsNixbMCwxLCIqIl0sWzEsMCwiRihpKSJdLFsyLDEsIkYoaScnKSJdLFsxLDIsIkYoaScpIl0sWzMsMSwiXFxiaWNvbGltX1xcU2lnbWEgRlxcaW90YV9cXFNpZ21hIl0sWzQsMSwiXFxTaWdtYVxcYmljb2xpbV9JIEYiXSxbMCwxLCJhIl0sWzEsMiwiRih0cykiLDFdLFswLDMsImEnIiwyXSxbMywyLCJGKHQnKSIsMV0sWzEsMywiRihcXHBzaSkqYScgRih0KSpcXHBoaSIsMSx7InNob3J0ZW4iOnsic291cmNlIjoyMCwidGFyZ2V0IjoyMH0sImxldmVsIjoyfV0sWzIsNCwicV97aScnfSIsMl0sWzEsNCwicV9pIiwwLHsiY3VydmUiOi0yfV0sWzMsNCwicV97aSd9IiwyLHsiY3VydmUiOjJ9XSxbNCw1LCJxIl0sWzIsMTIsIlxcdGhldGFfe3RzfSBcXGF0b3AgXFxzaW1lcSIsMSx7InNob3J0ZW4iOnsidGFyZ2V0IjoyMH0sInN0eWxlIjp7ImJvZHkiOnsibmFtZSI6Im5vbmUifSwiaGVhZCI6eyJuYW1lIjoibm9uZSJ9fX1dLFsyLDEzLCJcXHRoZXRhX3t0J30gXFxhdG9wIFxcc2ltZXEiLDEseyJzaG9ydGVuIjp7InRhcmdldCI6MjB9LCJzdHlsZSI6eyJib2R5Ijp7Im5hbWUiOiJub25lIn0sImhlYWQiOnsibmFtZSI6Im5vbmUifX19XV0=
\[\begin{tikzcd}[column sep=small]
	& {F(i)} \\
	{*} && {F(i'')} & {\bicolim_{\Sigma} F\iota_\Sigma} & {\Sigma\bicolim_I F} \\
	& {F(i')}
	\arrow["a", from=2-1, to=1-2]
	\arrow["{F(ts)}"{description}, from=1-2, to=2-3]
	\arrow["{a'}"', from=2-1, to=3-2]
	\arrow["{F(t')}"{description}, from=3-2, to=2-3]
	\arrow["{F(\psi)*a' F(t)*\phi}"{description}, shorten <=6pt, shorten >=6pt, Rightarrow, from=1-2, to=3-2]
	\arrow["{q_{i''}}"', from=2-3, to=2-4]
	\arrow[""{name=0, anchor=center, inner sep=0}, "{q_i}", curve={height=-12pt}, from=1-2, to=2-4]
	\arrow[""{name=1, anchor=center, inner sep=0}, "{q_{i'}}"', curve={height=12pt}, from=3-2, to=2-4]
	\arrow["q", from=2-4, to=2-5]
	\arrow["{\theta_{ts} \atop \simeq}"{description}, Rightarrow, draw=none, from=2-3, to=0]
	\arrow["{\theta_{t'} \atop \simeq}"{description}, Rightarrow, draw=none, from=2-3, to=1]
\end{tikzcd}\]
% https://q.uiver.app/?q=WzAsNixbMCwxLCIqIl0sWzEsMCwiRihpKSJdLFsyLDEsIkYoaScnKSJdLFsxLDIsIkYoaScpIl0sWzMsMSwiXFxTaWdtYVxcYmljb2xpbV9JIEYiXSxbNCwxXSxbMCwxLCJhIl0sWzEsMiwiRih0cykiLDFdLFswLDMsImEnIiwyXSxbMywyLCJGKHQnKSIsMV0sWzEsMywiRihcXHBzaSkqYScgRih0KSpcXHBoaSIsMSx7InNob3J0ZW4iOnsic291cmNlIjoyMCwidGFyZ2V0IjoyMH0sImxldmVsIjoyfV0sWzIsNCwicV97aScnfSIsMl0sWzEsNCwicV9pIiwwLHsiY3VydmUiOi0yfV0sWzMsNCwicV97aSd9IiwyLHsiY3VydmUiOjJ9XSxbMiwxMiwiXFx0aGV0YV97dHN9IFxcYXRvcCBcXHNpbWVxIiwxLHsic2hvcnRlbiI6eyJ0YXJnZXQiOjIwfSwic3R5bGUiOnsiYm9keSI6eyJuYW1lIjoibm9uZSJ9LCJoZWFkIjp7Im5hbWUiOiJub25lIn19fV0sWzIsMTMsIlxcdGhldGFfe3QnfSBcXGF0b3AgXFxzaW1lcSIsMSx7InNob3J0ZW4iOnsidGFyZ2V0IjoyMH0sInN0eWxlIjp7ImJvZHkiOnsibmFtZSI6Im5vbmUifSwiaGVhZCI6eyJuYW1lIjoibm9uZSJ9fX1dXQ==
Finally, to prove $q$ to be faithful: suppose we have two morphisms $ (i,a) \rightrightarrows (i',a')$ in the bicolimit that are identified in the $\Sigma$-bicolimit after whiskering with $q$. Then by \Cref{coequification lemma}, there is a parallel pair of 2-cells as below
% https://q.uiver.app/?q=WzAsNSxbMCwxLCIqIl0sWzEsMCwiRihpKSJdLFsxLDIsIkYoaScpIl0sWzIsMSwiRihrKSJdLFszLDEsIlxcU2lnbWFcXHVuZGVyc2V0e0l9e1xcYmljb2xpbX0gXFw7IEYiXSxbMCwxLCJhIl0sWzEsMywiRihzKSJdLFswLDIsImEnIiwyXSxbMiwzLCJGKHMnKSIsMl0sWzMsNCwicV9rIl0sWzUsNywiXFxwaGkiLDIseyJvZmZzZXQiOi0zLCJzaG9ydGVuIjp7InNvdXJjZSI6MjAsInRhcmdldCI6MjB9fV0sWzYsOCwiXFxwaGknIiwwLHsib2Zmc2V0IjozLCJzaG9ydGVuIjp7InNvdXJjZSI6MjAsInRhcmdldCI6MjB9fV1d
% https://q.uiver.app/?q=WzAsNCxbMCwxLCIqIl0sWzEsMCwiRihpKSJdLFsxLDIsIkYoaScpIl0sWzIsMSwiRihrKSJdLFswLDEsImEiXSxbMSwzLCJGKHMpIl0sWzAsMiwiYSciLDJdLFsyLDMsIkYocycpIiwyXSxbNCw2LCJcXHBoaSIsMix7Im9mZnNldCI6LTMsInNob3J0ZW4iOnsic291cmNlIjoyMCwidGFyZ2V0IjoyMH19XSxbNSw3LCJcXHBoaSciLDAseyJvZmZzZXQiOjMsInNob3J0ZW4iOnsic291cmNlIjoyMCwidGFyZ2V0IjoyMH19XV0=
\[\begin{tikzcd}[sep=small]
	& {F(i)} \\
	{*} && {F(k)} \\
	& {F(i')}
	\arrow[""{name=0, anchor=center, inner sep=0}, "a", from=2-1, to=1-2]
	\arrow[""{name=1, anchor=center, inner sep=0}, "{F(s)}", from=1-2, to=2-3]
	\arrow[""{name=2, anchor=center, inner sep=0}, "{a'}"', from=2-1, to=3-2]
	\arrow[""{name=3, anchor=center, inner sep=0}, "{F(s')}"', from=3-2, to=2-3]
	\arrow["\phi"', shift left=5, shorten <=4pt, shorten >=4pt, Rightarrow, from=0, to=2]
	\arrow["{\phi'}", shift right=5, shorten <=4pt, shorten >=4pt, Rightarrow, from=1, to=3]
\end{tikzcd}\]
such that $ (s,s',\phi)$ and $ (s,s',\phi')$ provides representants for the two morphisms above in $\bicolim_{\Sigma} F \iota_\Sigma$; but now, they are moreover coequified by $ q_k : F(k) \rightarrow \Sigma \bicolim_{I} F $ by hypothesis. Hence by \Cref{coequification lemma} there is a $ t : k \rightarrow l$ in $\Sigma$ such that $ F(t)(\phi) = F(t)(\phi')$. But this implies in particular that $ (s,s',\phi)$ and $ (s,s',\phi')$ are already identified in the bicolimit $ \bicolim_{\Sigma} F \iota_\Sigma$. Hence $q$ is faithful. This achieves to prove the equivalence of categories between the $\sigma$-bicolimit and the corresponding restricted bicolimit, and we hope at least the reader is convinced. 
\end{division} 

\begin{corollary}\label{bifiltered bicocompleteness entails sigma filtered sigma cocompleteness} 
A 2-category $\mathcal{C}$ has bifiltered bicolimits if and only if it has $\sigma$-filtered $\sigma$-bicolimits. Moreover a pseudofunctor preserves bifiltered bicolimits if and only if it preserves $ \sigma$-filtered $\sigma$-bicolimits. 
\end{corollary}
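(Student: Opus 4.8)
The plan is to reduce everything to the Trivialization Lemma (\Cref{key lemma}) together with the cofinality comparison (\Cref{cofinal functors and bicolimits}), which jointly exhibit every $\sigma$-filtered $\sigma$-bicolimit as a bifiltered bicolimit over its subcategory of $\Sigma$-arrows. The only genuinely new content is to promote the \emph{equivalence} of \Cref{cofinal functors and bicolimits} to a transfer of existence and of preservation; the two biconditionals then follow formally.

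First I would dispose of the easy inclusion. A bifiltered $2$-category $J$ is precisely the $\sigma$-filtered pair $(J,\text{all arrows})$, and for such a pair an op$\Sigma$-natural transformation is just a pseudonatural one, so the $\sigma$-cocones are exactly the pseudococones and $\Sigma\bicolim_{J} F = \bicolim_{J} F$. Hence bifiltered bicolimits are literally a special case of $\sigma$-filtered $\sigma$-bicolimits; consequently a $2$-category having the latter has the former, and a pseudofunctor preserving the latter preserves the former. This settles one direction of each biconditional.

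For the converse existence statement, suppose $\mathcal{C}$ has bifiltered bicolimits and fix a $\sigma$-filtered pair $(I,\Sigma)$ and $F : I \to \mathcal{C}$. By \Cref{key lemma} the inclusion $\iota_\Sigma : \Sigma \hookrightarrow I$ is a functor out of the bifiltered $2$-category $\Sigma$ which is $\sigma$-cofinal relatively to $\Sigma$, with $\iota_\Sigma(\text{all arrows}) \subseteq \Sigma$. Read at the level of cocone categories, the argument of \Cref{cofinal functors and bicolimits} produces not merely an equivalence of objects but a pseudonatural equivalence $\ps[I,\mathcal{C}]_{\op\Sigma}[F,\Delta_B] \simeq \ps[\Sigma,\mathcal{C}][F\iota_\Sigma,\Delta_B]$ in the tip $B$, identifying $\sigma$-cocones on $F$ with pseudococones on $F\iota_\Sigma$. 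Since representability is invariant under such a pseudonatural equivalence and the right-hand presheaf is represented by the bifiltered bicolimit $\bicolim_\Sigma F\iota_\Sigma$, which exists in $\mathcal{C}$ by hypothesis, the left-hand presheaf is represented by the same object. Thus $\Sigma\bicolim_{I} F$ exists and is computed by $\bicolim_\Sigma F\iota_\Sigma$ (recovering \Cref{sigmafiltered colimits are bifiltered colimits} for general targets); as $(I,\Sigma)$ and $F$ were arbitrary, $\mathcal{C}$ has all $\sigma$-filtered $\sigma$-bicolimits.

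For preservation, let $H : \mathcal{C} \to \mathcal{D}$ preserve bifiltered bicolimits and keep $(I,\Sigma)$, $F$ as above. The comparison of \Cref{cofinal functors and bicolimits} is realized by the canonical $1$-cell obtained from restricting the colimiting $\sigma$-cocone to the range of $\iota_\Sigma$ and absorbing the invertible transition cells $\theta_s$ at $\Sigma$-arrows. Applying $H$, using that $H$ preserves the bifiltered bicolimit $\bicolim_\Sigma F\iota_\Sigma$, and applying \Cref{cofinal functors and bicolimits} once more to the diagram $HF$ over the \emph{same} pair $(I,\Sigma)$, I obtain
\[ H\big(\Sigma\bicolim_{I} F\big) \simeq H\big(\bicolim_\Sigma F\iota_\Sigma\big) \simeq \bicolim_\Sigma HF\iota_\Sigma \simeq \Sigma\bicolim_{I} HF. \]
The hard part will be checking that this composite equivalence is the canonical comparison $1$-cell for $\Sigma\bicolim_{I} HF$, rather than merely an abstract equivalence of objects: one must verify that each of the three steps is compatible with the respective cocone inclusions. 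This is not delivered by \Cref{cofinal functors and bicolimits} as a bare equivalence statement, and I would secure it by unwinding that proof, where the comparison is built explicitly from cocone restriction; the one delicate point is the pseudonaturality in the tip of these identifications, which is exactly what guarantees their stability under $H$. Granting this, $H$ preserves $\Sigma\bicolim_{I} F$, completing the biconditional for preservation.
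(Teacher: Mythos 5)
Your proposal is correct and follows essentially the same route as the paper: the easy direction by viewing a bifiltered bicolimit as the case $\Sigma = $ all arrows, and the converse by combining the trivialization lemma (\Cref{key lemma}) with $\sigma$-cofinality (\Cref{cofinal functors and bicolimits}) to compute the $\sigma$-bicolimit as the restricted bifiltered bicolimit $\bicolim_{\Sigma} F\iota_\Sigma$. If anything you are more careful than the paper, whose proof of the preservation clause trails off unfinished: your transfer of representability via the equivalence of cocone categories pseudonatural in the tip, and your check that the resulting chain of equivalences is the canonical comparison, supply exactly the details the paper omits.
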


\begin{proof}
It is clear that cocompleteness under $\sigma$-filtered $\sigma$-bicolimits entails cocompleteness under bifiltered bicolimits as the latter are instances of the first; same argument for preservation. But now, suppose one has a $\sigma$-filtered pair $ (I,\Sigma)$ and $ F : I \rightarrow \mathcal{C}$. Then for $ \iota_\Sigma : \Sigma \hookrightarrow I$ is $\sigma$-cofinal relatively to $\Sigma$ and $ \Sigma$ is bifiltered, we can compute the restricted bifiltered bicolimit $ \bicolim F \iota_\Sigma $ in $\mathcal{C}$, and it provides a $\sigma$-bicolimit over $ I$. Now if $ G : \mathcal{C} \rightarrow \mathcal{D}$ preserves bifiltered bicolimits:  
\end{proof}

\begin{comment}
Let $(I,\Sigma)$ be a $\sigma$-filtered pair. Suppose moreover that $I$ has finitely weighted bicolimits and \begin{itemize}
    \item bicoproduct inclusions are always in $\Sigma$;
    \item for a parallel pair $ d,s : i \rightrightarrows j$ with $s \in \Sigma$, then the bicoinserter inclusion $ q_{(d,s)} : j \rightarrow \textbf{biCoins}(d,s)$ also is in $ \Sigma$;
    \item for a 2-cell $ \alpha : s \Rightarrow t$ with $ s,t : i \rightrightarrows j$ such that $s, t \in \Sigma$, then the bicoinverter inclusion $ q_{\alpha} : j \rightarrow \textbf{biCoinv}(\alpha)$ also is in $ \Sigma$;
    \item for parallel 2-cells $ \alpha, \alpha' : d \Rightarrow s$ with $ d,s : i \rightarrow j$ and $s \in \Sigma$, the bicoequifier inclusion $ q_{(\alpha,\alpha')} : j \rightarrow \textbf{biCoeq}(\alpha, \alpha') $ also is in $ \Sigma$;
\end{itemize}
Then the full on 0-cells and 2-cells subcategory $ I\!\! \mid_\Sigma $ whose objects are the same of $I$ and 1-cells are those in $\Sigma$, is bifiltered and the inclusion $ I\!\! \mid_\Sigma \, \hookrightarrow I$ is $\sigma$-cofinal relatively to $ \Sigma$ and $(I\!\! \mid_\Sigma)^{2} $. \begin{remark}
As a consequence, $\sigma$-filtered pairs having this property are controlled by a bifiltered part, and any $\sigma$-bicolimit over them reduce to an ordinary conical bicolimit over this bifiltered part. 
\end{remark}
\end{comment}

We should end this section by recalling the following crucial property, which can be found at \cite{descotte2018sigma}[2.7.3] and also \cite{descotte2016exactness}[Theorem 3.2] - again, it is stated there for 2-functors yet it works for pseudofunctors:

\begin{proposition}
$\sigma$-filtered $\sigma$-colimits commute with finitely weighted bilimits in $\Cat$: for $ (I,\Sigma)$ a $\sigma$-filtered pair and $ J$ a finite category with $ W : J \rightarrow \Cat$ a finite weight, if $ F : I \times J \rightarrow \Cat$ is a pseudofunctor, then one has
\[  \underset{i \in I}{\Sigma\bicolim}\, \underset{j \in J}{\bilim^{W}} \, F(i,j) \simeq  \underset{j \in J}{\bilim^{W}} \underset{i \in I}{\Sigma\bicolim} \, F(i,j)   \]
In particular, bifiltered bicolimits commute with finitely weighted bilimits in $\Cat$. 
\end{proposition}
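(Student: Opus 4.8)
The plan is to reduce the statement to the bifiltered case and then to the three generating finite weighted bilimits. First, by the trivialization lemma (\Cref{key lemma}) together with \Cref{sigmafiltered colimits are bifiltered colimits}, for the $\sigma$-filtered pair $(I,\Sigma)$ and the inclusion $\iota_\Sigma : \Sigma \hookrightarrow I$ of its bifiltered part one has $\Sigma\bicolim_I H \simeq \bicolim_\Sigma H\iota_\Sigma$ for any pseudofunctor $H$ out of $I$. Applying this to $H = \bilim^{W}_J F(-,j)$ on the left and componentwise (at each $j$) on the right, the desired commutation reduces to the case where $\Sigma$ consists of all arrows, i.e. to showing that \emph{bifiltered} bicolimits commute with finite weighted bilimits in $\Cat$. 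Next, by \Cref{sufficient all finite bilimits} every finite weighted bilimit is built from finite biproducts, biequalizers and bicotensors with the arrow category $\mathbf{2}$, so by the universal properties involved it suffices to prove that bifiltered bicolimits commute with each of these three building blocks separately.

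In each case I would first produce the canonical comparison functor $\Phi$ from the bicolimit of bilimits to the bilimit of bicolimits: the $j$-th bilimit projections, post-composed with the bicolimit inclusions, assemble into a $W$-weighted cone over $j \mapsto \bicolim_\Sigma F(-,j)$ with apex $\bicolim_\Sigma \bilim^{W}_J F(-,-)$, and factoring this cone through the universal property of the target bilimit yields $\Phi$. I would then check that $\Phi$ is essentially surjective, full and faithful using the concrete presentation of bifiltered bicolimits of categories recalled in \Cref{sigma-filtered sigma-colimit of categories}. For essential surjectivity, an object of the bilimit of bicolimits is a \emph{finite} (since $J$ and $W$ are finite) compatible family of objects $(i_{j}, a_{j})$, one in each $\bicolim_\Sigma F(i,j)$; applying the first bifilteredness axiom finitely many times produces a common index $i$ receiving spans from all the $i_{j}$, and the second axiom lets one make the transported coherence isomorphisms agree, so the whole family is represented at the single stage $i$ and hence lies in the image of $\Phi$. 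For the bicotensor with $\mathbf{2}$ this is exactly the statement that a morphism of $\bicolim_\Sigma F$ comes, essentially uniquely, from a morphism already present in some member of the diagram, which is \Cref{morphisms of sigma-colimit are morphisms of the bicolimit}.

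For full faithfulness I would again transport a morphism (respectively a parallel pair of morphisms) of the bilimit of bicolimits to a common stage using axioms (1) and (2), representing it by genuine data in $F(i,-)$, and then appeal to \Cref{coequification lemma} (which packages the third bifilteredness axiom) to conclude that two pieces of data inducing the same morphism already become equal after one further transition map, yielding faithfulness; the same transport procedure yields fullness. Concretely, commutation with finite biproducts uses only axiom (1) on objects and axioms (2)--(3) on the finitely many morphism and $2$-cell components; the biequalizer case is where axiom (2) does the essential work of inserting the comparison $2$-cell and axiom (3) performs the coequification; the bicotensor case rests on \Cref{morphisms of sigma-colimit are morphisms of the bicolimit}.

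The hard part will be the bookkeeping of the $2$-dimensional coherence data. Unlike the classical one-dimensional argument, where a weighted colimit over a finite diagram is merely a tuple of objects subject to equalities, here an object of a weighted bilimit carries coherence \emph{isomorphisms} (the data of a pseudonatural transformation $W \Rightarrow \mathcal{B}[B,F]$), and these extra invertible $2$-cells must themselves be represented at, and reconciled at, a common filtered stage. Ensuring that the finitely many such coherence cells can be simultaneously aligned — so that the transported family is genuinely an object of a bilimit computed at a single stage $i$, rather than only up to not-yet-reconciled $2$-cells — is precisely where one must use the invertibility clause of the second bifilteredness axiom and the finiteness of the weight $W$ in a coordinated way. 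This is the step with no one-dimensional analogue, and it is what makes the result genuinely $2$-categorical.
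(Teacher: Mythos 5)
The paper does not actually prove this proposition: it is recalled from the literature (\cite{descotte2018sigma}[2.7.3] and \cite{descotte2016exactness}[Theorem 3.2]), with only the remark that the statement there concerns strict $2$-functors but carries over to pseudofunctors (via \Cref{sigma colim and strictification}). Your proposal is therefore a genuinely different route: a self-contained proof assembled from the paper's own Section 2 toolkit, and its strategy is sound. It is in fact a nice inversion of the paper's phrasing: the proposition presents the bifiltered case as a special case (``in particular'') of the $\sigma$-filtered one, whereas you use \Cref{key lemma} and \Cref{sigmafiltered colimits are bifiltered colimits} to make the $\sigma$-filtered statement a formal consequence of the bifiltered one, and then reduce to the three generating finite bilimits of \Cref{sufficient all finite bilimits} and verify each by hand in $\Cat$. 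Two caveats if you carry this out. First, the reduction to generators silently uses that finite bilimits in $\ps[\Sigma, \Cat]$ are pointwise and that a pseudofunctor preserving finite biproducts, biequalizers and bicotensors with $2$ preserves everything constructed from them; this is true, but the ``construction'' claim of \Cref{sufficient all finite bilimits} is itself only cited in the paper, so your argument still rests on an imported result. Second, and more importantly, the coherence bookkeeping you defer --- lifting the finitely many coherence isomorphisms of an object of the bilimit to a common stage, reconciling their equations there, and witnessing at a further stage that lifted morphisms which become invertible in the bicolimit are already invertible somewhere in the diagram --- is not a routine afterthought: it is essentially the entire content of the proof in \cite{descotte2018sigma}. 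Your sketch points at exactly the right ingredients for it (\Cref{sigma-filtered sigma-colimit of categories}, \Cref{morphisms of sigma-colimit are morphisms of the bicolimit}, \Cref{coequification lemma}), so there is no wrong step, but a complete proof requires executing that work. What your approach buys is that the commutation theorem need not be a black box: it reduces to the bifiltered case in $\Cat$ plus machinery already established in the paper. What the paper's citation buys is precisely not having to do that labour.
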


\section{Bi-accessible and bipresentable 2-categories} \label{sectionaccessible}

This core section introduces 2-dimensional analogs of accessible and locally presentable categories. Our notions are closely related to \cite{kelly1982structures} definition, up to the difference that the latter is stated in an enriched context with strict constructions, while ours is suited for a weaker context; in particular, while \cite{kelly1982structures} directly use 1-dimensional filtered colimit in 2-categories together with strict 2-limits or colimits, we use bifiltered bicolimits for our definition of presentability.

%We should also emphasize that bifilteredness, rather than $\sigma$-filteredness, can be chosen to develop the $2$-dimensional theory of accessibility thanks to \Cref{the pseudococone is bifiltered}.

As opposed to the very 1-dimensional and strict version of \cite{kelly1982structures}, we build here a $2$-dimensional theory of accessibility from the more genuinely 2-dimensional approach of \cite{descotte2018sigma} in order to connect our definition with their theory of flat pseudofunctors. However, we should emphasize the fact that, though this later formalism relies on $\sigma$-filteredness, we can foresee from \Cref{key lemma} that we can reduce to only consider bifiltered bicolimits, as we will see in several instances as \Cref{sigmacompact are bicompact} and \Cref{the pseudococone is bifiltered}. 

\subsection{Bicompact objects}

This subsection is devoted to the properties of our 2-dimensional analog of finitely presented objects \cite{adamek1994locally} - also known as \emph{compact} objects, a terminology we prefer in order to avoid confusion with bipresentability - for we consider \emph{bicompact categories} we will avoid to confuse with presentable categories in the ordinary sense. As well as ordinary compact objects, bicompact objects will be defined through a lifting property relatively to bifiltered bicolimits.

\begin{definition}[Bicompactness] \label{bicompact}
An object $ K$ in a 2-category $\mathcal{B}$ is said to be (finitely) \emph{bicompact} if for any bifiltered 2-category $I$ and any pseudofunctor $F : I \rightarrow \mathcal{B}$, the functor induced from composing with bicolimits inclusions provides us with an equivalence of categories 
\[ \underset{i \in I}{\bicolim}\, \mathcal{B}[K,F(i)] \simeq \mathcal{B}[K, {\underset{I}{\bicolim} \; F}]   \]
\end{definition}

\begin{remark}
There is no difference with defining the lifting property of the bicompact against 2-functors rather than pseudofunctors: one can check that any object having the property above against 2-functors satisfies it automatically for pseudofunctors. This is because the conditions in bicompactness do not interact in a particular manner with unit and composition of pseudofunctors. 
\end{remark}

\begin{remark}\label{infinitary compactnes}
As in \Cref{infinite filteredness}, we can define $\lambda,\sigma$-compactness by preservation of $\lambda$-bifiltered bicolimits.
\end{remark}

\begin{division}[A concrete description of bicompactness: $1$-cells]\label{lifting property of sigma-compact}
Unravelling this definition gives us the explicit definition of a bicompact object. For any $ a : K \rightarrow \bicolim_{i \in I} F(i)$ we can pick some $ b : K \rightarrow F(i) $ and an invertible 2-cell  
% https://q.uiver.app/?q=WzAsMyxbMCwxLCJLIl0sWzEsMCwiRihpKSJdLFsxLDEsIlxcdW5kZXJzZXR7aSBcXGluIEl9e1xccHNjb2xpbX0gXFwsIEYoaSkiXSxbMCwxLCJiIl0sWzEsMiwicV9pIl0sWzAsMiwiYSIsMl0sWzMsMiwiXFxiZXRhIFxcYXRvcCBcXHNpbWVxIiwxLHsic2hvcnRlbiI6eyJzb3VyY2UiOjIwfSwic3R5bGUiOnsiYm9keSI6eyJuYW1lIjoibm9uZSJ9LCJoZWFkIjp7Im5hbWUiOiJub25lIn19fV1d
\[\begin{tikzcd}
	& {F(i)} \\
	K & {\underset{I}{\bicolim} \;  F}
	\arrow[""{name=0, anchor=center, inner sep=0}, "b", from=2-1, to=1-2]
	\arrow["{q_i}", from=1-2, to=2-2]
	\arrow["a"', from=2-1, to=2-2]
	\arrow["{\beta \atop \simeq}"{description, pos=0.65}, Rightarrow, draw=none, from=0, to=2-2]
\end{tikzcd}\]
and moreover, any such two choices $ (b, \beta)$ and $(b', \beta')$ of lifts over $a$ must be isomorphic in the bicolimit, which means that there is some $ i'' \in I$ and $ d : i \rightarrow i''$ and $ d': i' \rightarrow i''$ together with an invertible 2-cell 
% https://q.uiver.app/?q=WzAsNCxbMCwxLCJLIl0sWzEsMCwiRihpKSJdLFsxLDIsIkYoaScpIl0sWzIsMSwiRihpJycpIl0sWzAsMSwiYiJdLFswLDIsImInIiwyXSxbMSwyLCIiLDEseyJzdHlsZSI6eyJib2R5Ijp7Im5hbWUiOiJub25lIn0sImhlYWQiOnsibmFtZSI6Im5vbmUifX19XSxbMSwzLCJGKGQpIiwxXSxbMiwzLCJGKGQnKSIsMV0sWzAsMywiXFxnYW1tYSBcXGF0b3AgXFxzaW1lcSIsMSx7InN0eWxlIjp7ImJvZHkiOnsibmFtZSI6Im5vbmUifSwiaGVhZCI6eyJuYW1lIjoibm9uZSJ9fX1dXQ==
\[\begin{tikzcd}
	& {F(i)} \\
	K && {F(i'')} \\
	& {F(i')}
	\arrow["b", from=2-1, to=1-2]
	\arrow["{b'}"', from=2-1, to=3-2]
	\arrow[draw=none, from=1-2, to=3-2]
	\arrow["{F(d)}", from=1-2, to=2-3]
	\arrow["{F(d')}"', from=3-2, to=2-3]
	\arrow["{\gamma \atop \simeq}"{description}, draw=none, from=2-1, to=2-3]
\end{tikzcd}\]
whose pasting with the canonical 2-cells of the bicolimit provides with a 2-cell
% https://q.uiver.app/?q=WzAsNSxbMCwxLCJLIl0sWzEsMCwiRihpKSJdLFszLDEsIlxcdW5kZXJzZXR7aSBcXGluIEl9e1xccHNjb2xpbX0gXFwsIEYoaSkiXSxbMSwyLCJGKGknKSJdLFsyLDEsIkYoaScnKSJdLFswLDEsImIiXSxbMSwyLCJxX2kiLDAseyJjdXJ2ZSI6LTJ9XSxbMCwzLCJiJyIsMl0sWzMsMiwicV97aSd9IiwyLHsiY3VydmUiOjN9XSxbMSwzLCIiLDEseyJzdHlsZSI6eyJib2R5Ijp7Im5hbWUiOiJub25lIn0sImhlYWQiOnsibmFtZSI6Im5vbmUifX19XSxbNCwyLCJxX3tpJyd9IiwxXSxbMSw0LCJGKGQpIiwxXSxbMyw0LCJGKGQnKSIsMV0sWzAsNCwiXFxnYW1tYSBcXGF0b3AgXFxzaW1lcSIsMSx7InN0eWxlIjp7ImJvZHkiOnsibmFtZSI6Im5vbmUifSwiaGVhZCI6eyJuYW1lIjoibm9uZSJ9fX1dLFs0LDYsIlxcdGhldGFfZCBcXGF0b3AgXFxzaW1lcSIsMSx7InNob3J0ZW4iOnsidGFyZ2V0IjoyMH0sInN0eWxlIjp7ImJvZHkiOnsibmFtZSI6Im5vbmUifSwiaGVhZCI6eyJuYW1lIjoibm9uZSJ9fX1dLFs0LDgsIlxcdGhldGFfe2QnfSBcXGF0b3AgXFxzaW1lcSIsMSx7InNob3J0ZW4iOnsidGFyZ2V0IjoyMH0sInN0eWxlIjp7ImJvZHkiOnsibmFtZSI6Im5vbmUifSwiaGVhZCI6eyJuYW1lIjoibm9uZSJ9fX1dXQ==
\[\begin{tikzcd}
	& {F(i)} \\
	K && {F(i'')} & {\underset{I}{\bicolim} \, F} \\
	& {F(i')}
	\arrow["b", from=2-1, to=1-2]
	\arrow[""{name=0, anchor=center, inner sep=0}, "{q_i}", curve={height=-18pt}, from=1-2, to=2-4]
	\arrow["{b'}"', from=2-1, to=3-2]
	\arrow[""{name=1, anchor=center, inner sep=0}, "{q_{i'}}"', curve={height=18pt}, from=3-2, to=2-4]
	\arrow[draw=none, from=1-2, to=3-2]
	\arrow["{q_{i''}}"{description}, from=2-3, to=2-4]
	\arrow["{F(d)}"{description}, from=1-2, to=2-3]
	\arrow["{F(d')}"{description}, from=3-2, to=2-3]
	\arrow["{\gamma \atop \simeq}"{description}, draw=none, from=2-1, to=2-3]
	\arrow["{\theta_d \atop \simeq}"{description}, Rightarrow, draw=none, from=2-3, to=0]
	\arrow["{\theta_{d'} \atop \simeq}"{description}, Rightarrow, draw=none, from=2-3, to=1]
\end{tikzcd} \; = \; 
% https://q.uiver.app/?q=WzAsNCxbMCwxLCJLIl0sWzEsMCwiRihpKSJdLFsyLDEsIlxcdW5kZXJzZXR7aSBcXGluIEl9e1xccHNjb2xpbX0gXFwsIEYoaSkiXSxbMSwyLCJGKGknKSJdLFswLDEsImIiXSxbMSwyLCJxX2kiXSxbMCwyLCJhIiwxXSxbMCwzLCJiJyIsMl0sWzMsMiwicV97aSd9IiwyXSxbMSw2LCJcXGJldGEgXFxhdG9wIFxcc2ltZXEiLDEseyJzaG9ydGVuIjp7InNvdXJjZSI6MjB9LCJzdHlsZSI6eyJib2R5Ijp7Im5hbWUiOiJub25lIn0sImhlYWQiOnsibmFtZSI6Im5vbmUifX19XSxbMyw2LCJcXGJldGEnXFxhdG9wIFxcc2ltZXEiLDEseyJzaG9ydGVuIjp7InRhcmdldCI6MjB9LCJzdHlsZSI6eyJib2R5Ijp7Im5hbWUiOiJub25lIn0sImhlYWQiOnsibmFtZSI6Im5vbmUifX19XV0=
\begin{tikzcd}
	& {F(i)} \\
	K && {\underset{I}{\bicolim} \, F} \\
	& {F(i')}
	\arrow["b", from=2-1, to=1-2]
	\arrow["{q_i}", from=1-2, to=2-3]
	\arrow[""{name=0, anchor=center, inner sep=0}, "a"{description}, from=2-1, to=2-3]
	\arrow["{b'}"', from=2-1, to=3-2]
	\arrow["{q_{i'}}"', from=3-2, to=2-3]
	\arrow["{\beta \atop \simeq}"{description}, Rightarrow, draw=none, from=1-2, to=0]
	\arrow["{\beta'\atop \simeq}"{description}, Rightarrow, draw=none, from=3-2, to=0]
\end{tikzcd}   % https://q.uiver.app/?q=WzAsNCxbMCwxLCJLIl0sWzEsMCwiRihpKSJdLFsyLDEsIlxcdW5kZXJzZXR7aSBcXGluIEl9e1xccHNjb2xpbX0gXFwsIEYoaSkiXSxbMSwyLCJGKGknKSJdLFswLDEsImIiXSxbMSwyLCJxX2kiXSxbMCwzLCJiJyIsMl0sWzMsMiwicV97aSd9IiwyXSxbMSwzLCJcXGJldGEnIFxcYmV0YV57LTF9IFxcYXRvcCBcXHNpbWVxIiwxLHsic3R5bGUiOnsiYm9keSI6eyJuYW1lIjoibm9uZSJ9LCJoZWFkIjp7Im5hbWUiOiJub25lIn19fV1d
\]
\end{division}

\begin{division}[A concrete description of bicompactness: 2-cells]\label{A concrete description of sigma-compactness: 2-cells}

Moreover, for any 2-cell 
% https://q.uiver.app/?q=WzAsMixbMCwwLCJLIl0sWzIsMCwie1xcdW5kZXJzZXR7aSBcXGluIEl9e1xccHNjb2xpbX0gXFwsIEYoaSl9Il0sWzAsMSwiYSIsMCx7ImN1cnZlIjotM31dLFswLDEsImEnIiwyLHsiY3VydmUiOjN9XSxbMiwzLCJcXHBoaSIsMix7InNob3J0ZW4iOnsic291cmNlIjoyMCwidGFyZ2V0IjoyMH19XV0=
\[\begin{tikzcd}
	K \arrow[""{name=0, anchor=center, inner sep=0}, rr, bend left=20, "a", start anchor=45, end anchor=165]
	\arrow[""{name=1, anchor=center, inner sep=0, pos=0.51}, rr, "{a'}"', bend right=20, start anchor=-42, end anchor=190] && {\underset{I}{\bicolim} \, F}
	\arrow["\phi"', shorten <=3pt, shorten >=3pt, Rightarrow, from=0, to=1]
\end{tikzcd}\]
it suffices to paste it with the invertible 2-cells at two lifts as below to get a morphism between lifts
% https://q.uiver.app/?q=WzAsNCxbMCwxLCJLIl0sWzIsMSwie1xcdW5kZXJzZXR7aSBcXGluIEl9e1xccHNjb2xpbX0gXFwsIEYoaSl9Il0sWzEsMCwiRihpKSJdLFsxLDIsIkYoaScpIl0sWzAsMSwiYSIsMCx7ImN1cnZlIjotMn1dLFswLDEsImEnIiwyLHsiY3VydmUiOjJ9XSxbMCwyLCJiIl0sWzIsMSwicV9pIiwwLHsiY3VydmUiOi0yfV0sWzAsMywiYiciLDJdLFszLDEsInFfe2knfSIsMix7ImN1cnZlIjoyfV0sWzQsNSwiXFxwaGkiLDIseyJzaG9ydGVuIjp7InNvdXJjZSI6MjAsInRhcmdldCI6MjB9fV0sWzcsNCwiXFxiZXRhIFxcYXRvcCBcXHNpbWVxIiwxLHsic2hvcnRlbiI6eyJzb3VyY2UiOjIwLCJ0YXJnZXQiOjIwfSwic3R5bGUiOnsiYm9keSI6eyJuYW1lIjoibm9uZSJ9LCJoZWFkIjp7Im5hbWUiOiJub25lIn19fV0sWzksNSwiXFxiZXRhJyBcXGF0b3AgXFxzaW1lcSIsMSx7InNob3J0ZW4iOnsic291cmNlIjoyMCwidGFyZ2V0IjoyMH0sInN0eWxlIjp7ImJvZHkiOnsibmFtZSI6Im5vbmUifSwiaGVhZCI6eyJuYW1lIjoibm9uZSJ9fX1dXQ==
\[\begin{tikzcd}[row sep=large]
	& {F(i)} \\
	K \arrow[""{name=0, anchor=center, inner sep=0}, rr, bend left=20, "a", start anchor=45, end anchor=165]
	\arrow[""{name=1, anchor=center, inner sep=0}, rr, "{a'}"', bend right=20, start anchor=-45, end anchor=190] && {\underset{I}{\bicolim} \, F} \\
	& {F(i')}
	\arrow["b", shift left=1, from=2-1, to=1-2, start anchor=50]
	\arrow[""{name=2, anchor=center, inner sep=0}, "{q_i}", curve={height=-12pt}, from=1-2, to=2-3]
	\arrow["{b'}"', shift right=1, from=2-1, to=3-2]
	\arrow[""{name=3, anchor=center, inner sep=0}, "{q_{i'}}"', curve={height=12pt}, from=3-2, to=2-3]
	\arrow["\phi"', shorten <=3pt, shorten >=3pt, Rightarrow, from=0, to=1]
	\arrow["{\beta \atop \simeq}"{description}, Rightarrow, draw=none, from=2, to=0]
	\arrow["{\beta' \atop \simeq}"{description}, Rightarrow, draw=none, from=3, to=1]
\end{tikzcd} =
% https://q.uiver.app/?q=WzAsNCxbMCwxLCJLIl0sWzIsMSwie1xcdW5kZXJzZXR7aSBcXGluIEl9e1xccHNjb2xpbX0gXFwsIEYoaSl9Il0sWzEsMCwiRihpKSJdLFsxLDIsIkYoaScpIl0sWzAsMiwiYiJdLFsyLDEsInFfaSJdLFswLDMsImInIiwyXSxbMywxLCJxX3tpJ30iLDJdLFsyLDMsIlxcYmV0YSdcXHBoaVxcYmV0YV57LTF9IiwxLHsic2hvcnRlbiI6eyJzb3VyY2UiOjIwLCJ0YXJnZXQiOjIwfSwibGV2ZWwiOjJ9XV0=
\begin{tikzcd}
	& {F(i)} \\
	K && {\underset{I}{\bicolim} \, F} \\
	& {F(i')}
	\arrow["b", from=2-1, to=1-2]
	\arrow["{q_i}", from=1-2, to=2-3]
	\arrow["{b'}"', from=2-1, to=3-2]
	\arrow["{q_{i'}}"', from=3-2, to=2-3]
	\arrow["{\beta'\phi\beta^{-1}}"{description}, shorten <=6pt, shorten >=6pt, Rightarrow, from=1-2, to=3-2]
\end{tikzcd}\]
But now from the expression of the bifiltered bicolimit in $\Cat$ at \Cref{sigma-filtered sigma-colimit of categories}, we know that the functoriality of the bicompactness condition tells us that there exists some $ d : i \rightarrow i''$ and $ d' : i' \rightarrow i''$ in $I$ together with a 2-cell 
% https://q.uiver.app/?q=WzAsNCxbMCwxLCJLIl0sWzEsMCwiRihpKSJdLFsxLDIsIkYoaScpIl0sWzIsMSwiRihpJycpIl0sWzAsMSwiYiJdLFswLDIsImInIiwyXSxbMSwyLCJcXHBzaSIsMix7InNob3J0ZW4iOnsic291cmNlIjoyMCwidGFyZ2V0IjoyMH0sImxldmVsIjoyfV0sWzEsMywiRihkKSIsMV0sWzIsMywiRihkJykiLDFdXQ==
\[\begin{tikzcd}
	& {F(i)} \\
	K && {F(i'')} \\
	& {F(i')}
	\arrow["b", from=2-1, to=1-2]
	\arrow["{b'}"', from=2-1, to=3-2]
	\arrow["\psi"', shorten <=6pt, shorten >=6pt, Rightarrow, from=1-2, to=3-2]
	\arrow["{F(d)}", from=1-2, to=2-3]
	\arrow["{F(d')}"', from=3-2, to=2-3]
\end{tikzcd}\]
such that we have an equality between the following pasting 
% https://q.uiver.app/?q=WzAsNSxbMCwxLCJLIl0sWzMsMSwie1xcdW5kZXJzZXR7aSBcXGluIEl9e1xccHNjb2xpbX0gXFwsIEYoaSl9Il0sWzEsMCwiRihpKSJdLFsxLDIsIkYoaScpIl0sWzIsMSwiRihpJycpIl0sWzAsMiwiYiJdLFsyLDEsInFfaSIsMCx7ImN1cnZlIjotMn1dLFswLDMsImInIiwyXSxbMywxLCJxX3tpJ30iLDIseyJjdXJ2ZSI6Mn1dLFsyLDMsIlxccHNpIFxcRG93bmFycm93IiwxLHsic2hvcnRlbiI6eyJzb3VyY2UiOjIwLCJ0YXJnZXQiOjIwfSwibGV2ZWwiOjIsInN0eWxlIjp7ImJvZHkiOnsibmFtZSI6Im5vbmUifSwiaGVhZCI6eyJuYW1lIjoibm9uZSJ9fX1dLFsyLDQsIkYoZCkiLDFdLFszLDQsIkYoZCcpIiwxXSxbNCwxLCJxX3tpJyd9IiwxXSxbOCw0LCJxX3tkJ30gXFxhdG9wIFxcc2ltZXEiLDEseyJzaG9ydGVuIjp7InNvdXJjZSI6MjB9LCJzdHlsZSI6eyJib2R5Ijp7Im5hbWUiOiJub25lIn0sImhlYWQiOnsibmFtZSI6Im5vbmUifX19XSxbNiw0LCJxX2QgXFxhdG9wIFxcc2ltZXEiLDEseyJzaG9ydGVuIjp7InNvdXJjZSI6MjB9LCJzdHlsZSI6eyJib2R5Ijp7Im5hbWUiOiJub25lIn0sImhlYWQiOnsibmFtZSI6Im5vbmUifX19XV0=
\[\begin{tikzcd}
	& {F(i)} \\
	K && {F(i'')} & {\underset{I}{\bicolim} \, F} \\
	& {F(i')}
	\arrow["b", from=2-1, to=1-2]
	\arrow[""{name=0, anchor=center, inner sep=0}, "{q_i}", curve={height=-18pt}, from=1-2, to=2-4]
	\arrow["{b'}"', from=2-1, to=3-2]
	\arrow[""{name=1, anchor=center, inner sep=0}, "{q_{i'}}"', curve={height=18pt}, from=3-2, to=2-4]
	\arrow["{\psi}"', Rightarrow, shorten <=6pt, shorten >=6pt, from=1-2, to=3-2]
	\arrow["{F(d)}"{description}, from=1-2, to=2-3]
	\arrow["{F(d')}"{description}, from=3-2, to=2-3]
	\arrow["{q_{i''}}"{description}, from=2-3, to=2-4]
	\arrow["{q_{d'} \atop \simeq}"{description}, Rightarrow, draw=none, from=1, to=2-3]
	\arrow["{q_d \atop \simeq}"{description}, Rightarrow, draw=none, from=0, to=2-3]
\end{tikzcd}
=
\begin{tikzcd}
	& {F(i)} \\
	K && {\underset{I}{\bicolim} \, F} \\
	& {F(i')}
	\arrow["b", from=2-1, to=1-2]
	\arrow["{q_i}", from=1-2, to=2-3]
	\arrow["{b'}"', from=2-1, to=3-2]
	\arrow["{q_{i'}}"', from=3-2, to=2-3]
	\arrow["{\beta'\phi\beta^{-1} }"{description}, shorten <=6pt, shorten >=6pt, Rightarrow, from=1-2, to=3-2]
\end{tikzcd} \]
\end{division}

% \begin{remark}
% It is interesting to remark that those lifting properties do not actually involve 2-cells of $I$; this suggests that bicompact object do not distinguish diagrams indexed by 1 or 2-categories as long as they are bifiltered.
% \end{remark}

\begin{lemma}[Lifts of parallel 2-cells]\label{Lifts of parallel 2-cells}
Similarly, one can lift parallels pairs of 2-cells into a parallel pair between the sames lifts: for any parallel pair of 2-cells of the form %Let $K$ be bicompact and $ (I,\Sigma)$ a $\sigma$-filtered pair; 
% https://q.uiver.app/?q=WzAsMixbMCwwLCJLIl0sWzIsMCwiXFx1bmRlcnNldHtpIFxcaW4gSX17XFxzaWdtYV9cXFNpZ21hXFxiaWNvbGltfSBcXCwgRihpKSJdLFswLDEsImIiLDIseyJjdXJ2ZSI6Mn1dLFswLDEsImEiLDAseyJjdXJ2ZSI6LTJ9XSxbMywyLCJcXHBoaSIsMix7Im9mZnNldCI6Mywic2hvcnRlbiI6eyJzb3VyY2UiOjIwLCJ0YXJnZXQiOjIwfX1dLFszLDIsIlxccHNpIiwwLHsib2Zmc2V0IjotMywic2hvcnRlbiI6eyJzb3VyY2UiOjIwLCJ0YXJnZXQiOjIwfX1dXQ==
\[\begin{tikzcd}
	K && {\underset{i \in I}{\bicolim} \, F(i)}
	\arrow[""{name=0, anchor=center, inner sep=0, pos=0.51}, "a'"', start anchor=-40, end anchor=190, bend right=25, from=1-1, to=1-3]
	\arrow[""{name=1, anchor=center, inner sep=0}, "a", start anchor=42, end anchor=168, bend left=25, from=1-1, to=1-3]
	\arrow["\phi"', shift right=3, shorten <=3pt, shorten >=3pt, Rightarrow, from=1, to=0]
	\arrow["\psi", shift left=3, shorten <=3pt, shorten >=3pt, Rightarrow, from=1, to=0]
\end{tikzcd}\]
there exists a span $ d : i \rightarrow j$, $d' : i' \rightarrow j $ together with $ b : K \rightarrow F(i)$, $b' : K \rightarrow F(i')$ and invertible 2-cells $ \beta : a \simeq F(d) b $, $ \beta' : a' \simeq F(d')b'$, and parallel 2-cells $ \zeta, \xi : F(d)b \Rightarrow F(d')b' $ such that $ q_d^{-1} q_j*\zeta q_{d'} = \beta'\phi \beta^{-1}$ and $ q_d^{-1} q_j*\xi q_{d'} = \beta'\psi \beta^{-1}$.
\end{lemma}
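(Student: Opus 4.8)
The plan is to transport the entire problem across the bicompactness equivalence and then to invoke the first part of \Cref{coequification lemma}, which already performs the delicate bifiltered bookkeeping. First I would apply \Cref{bicompact} to obtain the equivalence of categories $\mathcal{B}[K, \underset{I}{\bicolim}\, F] \simeq \underset{i \in I}{\bicolim}\, \mathcal{B}[K, F(i)]$, which, since $I$ is bifiltered, is a \emph{bifiltered} bicolimit of categories and thus admits the concrete description recalled in the remark following \Cref{sigma-filtered sigma-colimit of categories}. Under this equivalence the $1$-cells $a, a' : K \rightarrow \underset{I}{\bicolim}\, F$ correspond to objects of $\underset{i}{\bicolim}\, \mathcal{B}[K, F(i)]$; choosing representatives is exactly the $1$-cell lifting of \Cref{lifting property of sigma-compact}, so I obtain $b : K \rightarrow F(i)$ and $b' : K \rightarrow F(i')$ together with invertible $2$-cells $\beta : a \simeq q_i b$ and $\beta' : a' \simeq q_{i'} b'$. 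Conjugating by $\beta, \beta'$ turns the parallel pair $\phi, \psi : a \Rightarrow a'$ into a parallel pair of morphisms $(i, b) \rightrightarrows (i', b')$ in $\underset{i}{\bicolim}\, \mathcal{B}[K, F(i)]$, represented by $\beta'\phi\beta^{-1}$ and $\beta'\psi\beta^{-1}$.

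Then I would apply \Cref{coequification lemma}, taking the pseudofunctor to be $\mathcal{B}[K, F(-)] : I \rightarrow \Cat$ and $\Sigma$ the class of all arrows (so that $\sigma$-filteredness is plain bifilteredness), to this parallel pair. Its first bullet yields precisely a common span $d : i \rightarrow j$, $d' : i' \rightarrow j$ and a parallel pair of $2$-cells $\zeta, \xi : F(d) b \Rightarrow F(d') b'$ in $\mathcal{B}[K, F(j)]$ providing representants of the two morphisms $\beta'\phi\beta^{-1}$ and $\beta'\psi\beta^{-1}$. This is the entire existence content of the statement; what remains is only to unwind what ``providing a representant'' means. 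Recalling again the description of morphisms in a bifiltered bicolimit, the morphism represented by $(d, d', \zeta)$ is obtained by whiskering $\zeta$ with the inclusion $q_j$ and correcting by the invertible cocone $2$-cells $q_d : q_j F(d) \simeq q_i$ and $q_{d'} : q_j F(d') \simeq q_{i'}$; asserting that this represented morphism equals $\beta'\phi\beta^{-1}$ is exactly the claimed identity $q_d^{-1}\,(q_j * \zeta)\, q_{d'} = \beta'\phi\beta^{-1}$, and likewise $q_d^{-1}\,(q_j * \xi)\, q_{d'} = \beta'\psi\beta^{-1}$ for $\xi$ and $\psi$.

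I expect no conceptual obstacle here, since the bifiltered merging is hidden inside \Cref{coequification lemma}; the only care needed is the routine matching of the coherence $2$-cells (the cocone isomorphisms $q_d, q_{d'}$ and the lift isomorphisms $\beta, \beta'$) when translating ``representant'' into the displayed equations. For completeness one could instead give a self-contained argument in the spirit of \Cref{A concrete description of sigma-compactness: 2-cells}: lift $\phi$ and $\psi$ \emph{separately} by that construction, obtaining two possibly distinct spans out of the pair $(i, i')$, and then merge them into a single span using bifilteredness (a cospan over the two apices, followed by the second bifilteredness axiom to equalize the two composites up to an invertible $2$-cell, pushing $\zeta$ and $\xi$ forward along these transition maps). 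On that route the merging of the two spans, together with the tracking of the invertible $2$-cells it introduces, would be the only genuinely nontrivial step.
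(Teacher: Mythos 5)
Your proposal is correct, and your primary route is a genuinely different factoring of the argument than the paper's. The paper works entirely inside $\mathcal{B}$: it lifts $\phi$ and $\psi$ \emph{separately} through the $2$-cell lifting property of \Cref{A concrete description of sigma-compactness: 2-cells}, obtaining two a priori unrelated spans with apices $j_\phi$ and $j_\psi$, and then merges them by hand --- choosing a cospan $t_\phi : j_\phi \rightarrow j$, $t_\psi : j_\psi \rightarrow j$ by bifilteredness, an invertible comparison $2$-cell between the resulting lifts (bicompactness again, via essential uniqueness of lifts), and defining $\zeta, \xi$ as explicit composite pastings; this is precisely the ``self-contained alternative'' you sketch in your last paragraph, including your correct identification of the span-merging as the only nontrivial step. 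Your main argument instead transports the parallel pair across the bicompactness equivalence of \Cref{bicompact} into the $\Cat$-valued bifiltered bicolimit $\bicolim_{i \in I} \mathcal{B}[K,F(i)]$ and invokes the first bullet of \Cref{coequification lemma} for the diagram $\mathcal{B}[K,F(-)]$, with $\Sigma$ taken to be all arrows (legitimate, since a bifiltered $2$-category is exactly a $\sigma$-filtered pair whose $\Sigma$ contains everything); that bullet already packages both the representant-picking and the merging, and unwinding ``representant'' through the equivalence gives the two displayed equations. There is no circularity, since \Cref{coequification lemma} is proved earlier and independently of bicompactness. What your factoring buys is economy and far less diagram bookkeeping: the coherence juggling that the paper's own proof handles only sketchily (e.g.\ the two distinct comparison cells both labelled $\gamma'$) is absorbed into an already-proven lemma. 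What the paper's version buys is an explicit construction of $\zeta$ and $\xi$ as concrete pastings in $\mathcal{B}$, without passing back and forth along the equivalence of categories.
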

\begin{proof}
These $ \zeta$ and $\xi$ can be constructed as follows: take lifts $(b_\phi, b_\phi', i_\phi, i_\phi', j_\phi, d_\phi, d'_\phi, \alpha_\phi) $ and $(b_\psi, b_\psi', i_\psi, i_\psi', j_\psi, d_\psi, d'_\psi, \alpha_\psi) $ as in \Cref{A concrete description of sigma-compactness: 2-cells}: then one has a common refinement $ t_\phi : j_\phi \rightarrow j$, $t_\psi : j_\psi \rightarrow j$, and moreover this common refinement can be chosen such that there is also invertible 2-cells $ \gamma' : F(t_\phi) F(d'_\phi) b'_\phi \simeq F(t_\psi) F(d'_\psi) b'_\psi$, which altogether provides the following two parallel composites 2-cells one can choose as the desired $ \zeta, \xi$
% https://q.uiver.app/?q=WzAsNyxbMCwxLCJLIl0sWzEsMCwiRihpX1xccGhpKSJdLFsxLDEsIkYoaV9cXHBoaScpIl0sWzIsMSwiRihqX1xccGhpKSJdLFszLDEsIkYoaikiXSxbMSwyLCJGKGknX1xccHNpKSJdLFsyLDIsIkYoal9cXHBzaSkiXSxbMCwxLCJiX1xccGhpIl0sWzAsMiwiYidfXFxwaGkiLDFdLFsxLDIsIlxcYWxwaGFfXFxwaGkiLDIseyJzaG9ydGVuIjp7InNvdXJjZSI6MjAsInRhcmdldCI6MjB9LCJsZXZlbCI6Mn1dLFsxLDMsIkYoc19cXHBoaSkiLDFdLFszLDQsIkYodF9cXHBoaSkiXSxbNiw0LCJGKHRfXFxwc2kpIiwxXSxbNSw2LCJGKGRfXFxwc2kpIiwyXSxbMCw1LCJiJ19cXHBzaSIsMl0sWzIsMywiRihkX1xccGhpKSIsMV0sWzE1LDEzLCJcXGdhbW1hJyBcXGF0b3AgXFxzaW1lcSIsMSx7InNob3J0ZW4iOnsic291cmNlIjoyMCwidGFyZ2V0IjoyMH0sInN0eWxlIjp7ImJvZHkiOnsibmFtZSI6Im5vbmUifSwiaGVhZCI6eyJuYW1lIjoibm9uZSJ9fX1dXQ==
\[\begin{tikzcd}
	& {F(i_\phi)} \\
	K & {F(i_\phi')} &[10pt] {F(j_\phi)} & {F(j)} \\
	& {F(i'_\psi)} & {F(j_\psi)}
	\arrow["{b_\phi}", from=2-1, to=1-2]
	\arrow["{b'_\phi}"{description}, from=2-1, to=2-2]
	\arrow["{\alpha_\phi}"', shorten <=2pt, shorten >=2pt, Rightarrow, from=1-2, to=2-2]
	\arrow["{F(d_\phi)}", from=1-2, to=2-3]
	\arrow["{F(t_\phi)}", from=2-3, to=2-4]
	\arrow["{F(t_\psi)}"{description}, from=3-3, to=2-4]
	\arrow[""{name=0, anchor=center, inner sep=0}, "{F(d'_\psi)}"', from=3-2, to=3-3]
	\arrow["{b'_\psi}"', from=2-1, to=3-2]
	\arrow[""{name=1, anchor=center, inner sep=0}, "{F(d'_\phi)}"{description}, from=2-2, to=2-3]
	\arrow["{\gamma' \atop \simeq}"{description}, Rightarrow, draw=none, from=1, to=0]
\end{tikzcd} \hskip0.7cm % https://q.uiver.app/?q=WzAsNyxbMCwxLCJLIl0sWzEsMCwiRihpX1xccGhpKSJdLFsyLDAsIkYoal9cXHBoaSkiXSxbMywxLCJGKGopIl0sWzEsMiwiRihpJ19cXHBzaSkiXSxbMiwxLCJGKGpfXFxwc2kpIl0sWzEsMSwiRihpX1xccHNpKSJdLFswLDEsImJfXFxwaGkiLDFdLFsyLDMsIkYodF9cXHBoaSkiXSxbNSwzLCJGKHRfXFxwc2kpIiwyXSxbNCw1LCJGKGRfXFxwc2kpIiwyXSxbMCw0LCJiJ19cXHBzaSIsMl0sWzEsMiwiRihzX1xccGhpKSJdLFsxLDUsIlxcZ2FtbWEnIFxcYXRvcCBcXHNpbWVxIiwxLHsic2hvcnRlbiI6eyJzb3VyY2UiOjIwLCJ0YXJnZXQiOjIwfSwibGV2ZWwiOjIsInN0eWxlIjp7ImJvZHkiOnsibmFtZSI6Im5vbmUifSwiaGVhZCI6eyJuYW1lIjoibm9uZSJ9fX1dLFswLDYsImJfXFxwc2kiLDFdLFs2LDUsIkYoc19cXHBzaSkiLDFdLFs2LDQsIlxcYWxwaGFfXFxwc2kiLDIseyJzaG9ydGVuIjp7InNvdXJjZSI6MjAsInRhcmdldCI6MTB9LCJsZXZlbCI6Mn1dXQ==
\begin{tikzcd}
	& {F(i_\phi)} &[10pt] {F(j_\phi)} \\
	K & {F(i_\psi)} & {F(j_\psi)} & {F(j)} \\
	& {F(i'_\psi)}
	\arrow["{b_\phi}"{description}, from=2-1, to=1-2]
	\arrow["{F(t_\phi)}", from=1-3, to=2-4]
	\arrow["{F(t_\psi)}"', from=2-3, to=2-4]
	\arrow["{F(d'_\psi)}"', from=3-2, to=2-3]
	\arrow["{b'_\psi}"', from=2-1, to=3-2]
	\arrow["{F(d_\phi)}", from=1-2, to=1-3]
	\arrow["{\gamma' \atop \simeq}"{description}, Rightarrow, draw=none, from=1-2, to=2-3]
	\arrow["{b_\psi}"{description}, from=2-1, to=2-2]
	\arrow["{F(d_\psi)}"{description}, from=2-2, to=2-3]
	\arrow["{\alpha_\psi}"', shorten <=2pt, shorten >=1pt, Rightarrow, from=2-2, to=3-2]
\end{tikzcd}\]
\end{proof}

\begin{remark}
One could also be tempted to define a notion of \emph{$\sigma$-compact} objects as having the analogous property relatively to $\sigma$-filtered colimits. From \Cref{sigmafiltered colimits are bifiltered colimits} such a notion would be redundant:
\end{remark}

\begin{proposition}\label{sigmacompact are bicompact}
Bicompact have the lifting property relative to $\sigma$-filtered $\sigma$-bicolimits: an object $ K$ of a 2-category $ \mathcal{B}$ is bicompact if and only if for any $\sigma$-filtered pair $(I, \Sigma)$ and any $F: I \rightarrow \mathcal{B} $ one has 
\[ \mathcal{B}[K, {\underset{I}{\Sigma\bicolim} \;  F}] \simeq \underset{i \in I}{ \Sigma \bicolim}\; \mathcal{B}[K,F(i)] \]
\end{proposition}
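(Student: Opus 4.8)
The plan is to prove both implications by reducing everything to the bifiltered case, using the trivialization lemma \Cref{key lemma} together with its corollary \Cref{sigmafiltered colimits are bifiltered colimits}. For the backward implication I would argue directly: if $K$ has the stated lifting property against all $\sigma$-filtered $\sigma$-bicolimits, then specializing to a $\sigma$-filtered pair $(I,\Sigma)$ in which $\Sigma$ contains every arrow of $I$ — that is, to a bifiltered $2$-category, for which the $\sigma$-bicolimit is the ordinary bicolimit — immediately recovers the defining equivalence of \Cref{bicompact}, so $K$ is bicompact.

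For the forward implication I would assume $K$ bicompact, fix a $\sigma$-filtered pair $(I,\Sigma)$ and a pseudofunctor $F : I \to \mathcal{B}$, and write $\iota_\Sigma : \Sigma \hookrightarrow I$ for the full-on-objects-and-$2$-cells inclusion spanned by $\Sigma$. By \Cref{key lemma} this $\Sigma$ is bifiltered and $\iota_\Sigma$ is $\sigma$-cofinal relatively to $\Sigma$, with $\iota_\Sigma(\Sigma)\subseteq\Sigma$ tautologically. The proof then runs through a chain of three equivalences. First I would apply \Cref{cofinal functors and bicolimits} to the cofinal inclusion $\iota_\Sigma$ and the diagram $F$ to obtain, \emph{in} $\mathcal{B}$, the equivalence $\Sigma\bicolim_I F \simeq \bicolim_\Sigma F\iota_\Sigma$, whence $\mathcal{B}[K,\Sigma\bicolim_I F]\simeq\mathcal{B}[K,\bicolim_\Sigma F\iota_\Sigma]$. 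Second, since $\Sigma$ is now an ordinary bifiltered $2$-category, I would invoke the defining property of bicompactness \Cref{bicompact} to pull $K$ inside: $\mathcal{B}[K,\bicolim_\Sigma F\iota_\Sigma]\simeq\bicolim_\Sigma\mathcal{B}[K,F\iota_\Sigma(-)]$. Third I would apply \Cref{sigmafiltered colimits are bifiltered colimits} to the $\Cat$-valued pseudofunctor $\mathcal{B}[K,F(-)]:I\to\Cat$, noting that $(\mathcal{B}[K,F(-)])\iota_\Sigma=\mathcal{B}[K,F\iota_\Sigma(-)]$, to rewrite $\bicolim_\Sigma\mathcal{B}[K,F\iota_\Sigma(-)]\simeq\Sigma\bicolim_I\mathcal{B}[K,F(-)]$. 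Composing the three gives exactly $\mathcal{B}[K,\Sigma\bicolim_I F]\simeq\Sigma\bicolim_{i\in I}\mathcal{B}[K,F(i)]$.

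The main obstacle I anticipate is keeping track of which reduction happens in which $2$-category: the first equivalence must be performed in the target $\mathcal{B}$, which is why I would use \Cref{cofinal functors and bicolimits} (valid for an arbitrary codomain) there, and reserve the $\Cat$-specific \Cref{sigmafiltered colimits are bifiltered colimits} for the last step where the diagram genuinely lands in $\Cat$. I would also take care to check that the three equivalences assemble compatibly with the canonical comparison functor induced by the bicolimit inclusions; this should be automatic, since every comparison $1$-cell in sight is determined up to canonical invertible $2$-cell by its restriction along $\iota_\Sigma$, so that the composite equivalence is indeed the one induced by composing with the $\sigma$-bicolimit inclusions as required by the statement.
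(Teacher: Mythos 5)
Your proposal is correct and takes essentially the same route as the paper: the paper likewise treats the backward implication as immediate and proves the forward one by the identical three-step chain, namely $\mathcal{B}[K,\Sigma\bicolim_I F]\simeq\mathcal{B}[K,\bicolim_\Sigma F\iota_\Sigma]$ via the trivialization of the $\sigma$-filtered pair, then bicompactness against the bifiltered bicolimit over $\Sigma$, then \Cref{sigmafiltered colimits are bifiltered colimits} in $\Cat$ to return to $\Sigma\bicolim_I\mathcal{B}[K,F(i)]$. If anything, your citation hygiene is slightly more careful than the paper's, which invokes \Cref{sigmafiltered colimits are bifiltered colimits} (stated only for $\Cat$-valued diagrams) also for the first, $\mathcal{B}$-valued step, where your appeal to \Cref{cofinal functors and bicolimits} is the precise justification.
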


\begin{proof}
It is clear that such a property implies bicompactness; if now $ K$ is bicompact, then combining \Cref{sigmafiltered colimits are bifiltered colimits} with \Cref{key lemma} one has 
 \begin{align*}
     \mathcal{B}[K, {\underset{I}{\Sigma\bicolim} \; F}] &\simeq \mathcal{B}[K, {\underset{\Sigma}{\bicolim} \; F \iota_\Sigma}] \\ 
     &\simeq \underset{i \in \Sigma}{\bicolim}\; \mathcal{B}[K,F(i)] \\
     &\simeq \underset{i \in I}{ \Sigma \bicolim}\; \mathcal{B}[K,F(i)]
 \end{align*}  
\end{proof}

\begin{remark}[A comparison with the literature] \label{literaturefinpres}
Two notions of finitely presentable categories had historically been introduced:
\begin{itemize}
    \item In \cite{street1976limits}, they are defined via the notion of computad, see \cite[between Thm. 3 and Prop. 4]{street1976limits}. Notably, finite categories are finitely presentable in the sense of Street, and they are also in our sense (see \Cref{finite cat are sigma-compact}). It would be very interesting to have a similar presentation of our notion of compactness, especially in the direction of providing a characterization as in \cite[Prop. 4]{street1976limits}
    \item In \cite{kelly1982structures}, Kelly defined finitely presentable categories in terms of filtered colimits, to be precise, as to this source an object $X$ is finitely presentable if the functor $\Cat[X,-]$ preserves \textit{conical} filtered colimits (see \cite[1.1 and 2.1]{kelly1982structures}). This notion is too strict to be compared with ours, but its bi-version would be comparable, because of \Cref{trivial}. 
\end{itemize}
\end{remark}

\begin{lemma}\label{finite cat are sigma-compact}
In $\Cat$, finite categories are bicompact. 
\end{lemma}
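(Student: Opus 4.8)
The plan is to show that for a finite category $K$ the representable pseudofunctor $\Cat[K,-]:\Cat\to\Cat$ preserves bifiltered bicolimits, which is precisely the defining condition of \Cref{bicompact}. The crucial observation I would exploit is that this hom-functor is a \emph{finite weighted bilimit functor}: for any category $X$ the hom-category $\Cat[K,X]$ is the cotensor $X^{K}$, which is the $K$-weighted bilimit of the constant pseudofunctor $\Delta_X:\mathbf{1}\to\Cat$ on the terminal $2$-category $\mathbf{1}$. Indeed, taking $I=\mathbf{1}$, the weight $W=K$ (the pseudofunctor $\mathbf{1}\to\Cat$ selecting $K$) and $F=\Delta_X$ in the universal property of a weighted bilimit gives, naturally in $B$, the equivalences $\Cat[B,\bilim^{K}\Delta_X]\simeq\Cat[K,\Cat[B,X]]\simeq\Cat[B,\Cat[K,X]]$, the last step using cartesian closedness of $\Cat$; by the $2$-dimensional Yoneda lemma this forces $\bilim^{K}\Delta_X\simeq\Cat[K,X]$. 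Since $\mathbf{1}$ is finite and the value $K$ of the weight is an essentially finite category, this weight is finite in the sense of \Cref{finiteweibili}, so $\Cat[K,-]$ really is computed by a finite weighted bilimit.

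With this identification in hand the statement becomes a direct instance of the commutation of bifiltered bicolimits with finite weighted bilimits in $\Cat$ recalled at the end of the previous section. Concretely, given a bifiltered $2$-category $I$ and a pseudofunctor $F:I\to\Cat$, I would form the pseudofunctor $G:I\times\mathbf{1}\to\Cat$ obtained from $F$ by projection, take $J=\mathbf{1}$ equipped with the finite weight $K$, and read off the chain of equivalences
\[ \Cat[K,\underset{I}{\bicolim}\, F]\;\simeq\;\big(\underset{I}{\bicolim}\, F\big)^{K}\;\simeq\;\underset{i\in I}{\bicolim}\, F(i)^{K}\;\simeq\;\underset{i\in I}{\bicolim}\,\Cat[K,F(i)], \]
where the middle equivalence is exactly the commutation theorem applied to $G$, and the outer ones are the cotensor identification above. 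This is precisely the equivalence demanded by \Cref{bicompact}.

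The only points requiring care — and hence the main, though minor, obstacle — are bookkeeping ones: first, checking that the equivalence produced by the commutation theorem coincides with the \emph{canonical} comparison functor of \Cref{bicompact}, namely the one induced by whiskering lifts $K\to F(i)$ with the bicolimit inclusions $q_i$; and second, confirming that the cotensor $X^{K}$ is genuinely a finite weighted bilimit. For the latter, should one prefer not to invoke \Cref{finiteweibili} directly, one may instead decompose $X^{K}$ via finite biproducts, biequalizers, and bicotensors with the arrow category indexed over the finitely many objects and morphisms of $K$, as in \Cref{sufficient all finite bilimits}; each such elementary finite bilimit commutes with bifiltered bicolimits and the comparison is manifestly the canonical one. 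Beyond these routine verifications there is no genuine difficulty, since the substantive content — the commutation of bifiltered bicolimits with finite bilimits in $\Cat$ — is already available.
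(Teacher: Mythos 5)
Your proof is correct, but it takes a genuinely different route from the paper. The paper's proof is a direct, hands-on verification: it uses the explicit description of bifiltered pseudocolimits in $\Cat$ as localizations of oplax colimits (Grothendieck constructions), picks representatives $(i_c,x_c)$ for the images of the finitely many objects and arrows of $K$, uses bifilteredness to push everything into a single index $i_0$, and then corrects the resulting assignment into an honest functor together with the required invertible $2$-cell. Your argument instead identifies $\Cat[K,-]$ with the cotensor $(-)^{K}$, observes that this is a bilimit weighted by the finite weight $\mathbf{1}\to\Cat$ selecting $K$ (so finite in the sense of \Cref{finiteweibili}), and concludes by the commutation of bifiltered bicolimits with finite weighted bilimits in $\Cat$ quoted from \cite{descotte2018sigma} at the end of the filteredness section. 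This is shorter and more conceptual, and it is non-circular within the paper's structure, since that commutation result is imported from the literature and stated before the lemma. What it buys is economy; what it costs is that the combinatorial content is outsourced to the external commutation theorem (whose proof is of essentially the same hands-on type as the paper's), and, as you rightly flag, one must still check that the equivalence produced by commutation is the \emph{canonical} comparison functor of \Cref{bicompact}, i.e.\ the one given by whiskering lifts with the bicolimit inclusions $q_i$ --- the paper's explicit construction gets this for free, and its style of argument (constructing lifts by hand) is reused repeatedly in later sections, which is presumably why the authors chose it.
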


\begin{proof}
For any finite category $C$ and $ F : I \rightarrow \Cat$ (which can be chosen as a strict 2-functor) with $ I$ bifiltered, a functor $ G : I \rightarrow \pscolim \, F$ can be described as picking for each $ c \in C$ an object $ (i,x)$ with $ i \in I$ and $ x \in F(i)$ in $\oplaxcolim \, F$ such that any two such choices for the same $c$ have to live in the same isomorphism class in the localization $ \pscolim \, F = \Pi_1 \oplaxcolim \, F[\Cart_F^{-1}]$; but as $ C$ is finite, if one picks $ i_c$ for each $c$ in $\mathcal{C}$ there exists some $ i_0 $ and a family of arrows $ d_c : i_c \rightarrow i_0 $ in $I$. Moreover, for any $ g: c \rightarrow c'$ in $\mathcal{C}$, we can pick a representant $(f_g, \phi_g)$ with $ f_g : i_c \rightarrow i_{c'}$ and $ \phi_g : F(f)(x_c) \rightarrow x_{c'}$ for a convenient choice of $ (i_c, x_c)$ and $(i_{c'}, x_{c'})$. Then one can define an assignment $ H : C \rightarrow F(i_0) $ sending $ c$ on $(i_0,F(d_c)(x_c))$. In fact $H$ may not yet be a functor, but one can use bifilteredness of $I$ to correct it into a functor: for a map $ g : c \rightarrow c'$ with a representant $ (f_g, \phi_g)$ we have in $ i_0$ a map 
% https://q.uiver.app/?q=WzAsMixbMCwwLCJGKGRfe2MnfSlGKGZfZykoeF97Y30pIl0sWzIsMCwiRihkX3tjJ30pKHhfe2MnfSkiXSxbMCwxLCJGKGRfe2MnfSkoXFxwaGlfZykiXV0=
\[\begin{tikzcd}
	{F(d_{c'})F(f_g)(x_{c})} && {F(d_{c'})(x_{c'})}
	\arrow["{F(d_{c'})(\phi_g)}", from=1-1, to=1-3]
\end{tikzcd}\]
Now we have a parallel pair in $\oplaxcolim\, F$
% https://q.uiver.app/?q=WzAsMyxbMCwwLCIoaV9jLCB4X2MpIl0sWzIsMCwiKGlfe2MnfSwgRihmX2cpKHhfe2N9KSkiXSxbMSwxLCIoaV8wLCBGKGRfe2MnfSlGKGZfZykoeF97Y30pKSJdLFswLDIsIihkX2MsRihkX3tjJ30pKFxccGhpX2cpKSIsMix7ImxhYmVsX3Bvc2l0aW9uIjoyMH1dLFswLDEsIihmX2csIDFfe0YoZl9nKSh4X3tjfSl9KSJdLFsxLDIsIihkX3tjJ30sIDFfe0YoZF97Yyd9KUYoZl9nKSh4X3tjfSl9KSIsMCx7ImxhYmVsX3Bvc2l0aW9uIjoxMH1dXQ==
\[\begin{tikzcd}
	{(i_c, x_c)} && {(i_{c'}, F(f_g)(x_{c}))} \\
	& {(i_0, F(d_{c'}f_g)(x_{c}))}
	\arrow["{(d_c,F(d_{c'})(\phi_g))}"', from=1-1, to=2-2]
	\arrow["{(f_g, 1_{F(f_g)(x_{c})})}", from=1-1, to=1-3]
	\arrow["{(d_{c'}, 1_{F(d_{c'}f_g)(x_{c})})}", from=1-3, to=2-2]
\end{tikzcd}\]
with the composite $ (d_{c'}, 1_{F(d_{c'})F(f_g)(x_{c})}) (f_g, 1_{F(f_g)(x_{c})})$ cocartesian. Hence there exists a 2-cell 
% https://q.uiver.app/?q=WzAsNCxbMCwwLCIoaV9jLCB4X2MpIl0sWzIsMCwiKGlfe2MnfSwgRihmX2cpKHhfe2N9KSkiXSxbMSwxLCIoaV8wLCBGKGRfe2MnfWZfZykoeF97Y30pKSJdLFsxLDMsIihpXzEsIEYoZl8xZF97Yyd9Zl9nKSh4X3tjfSkpIl0sWzAsMiwiKGRfYyxGKGRfe2MnfSkoXFxwaGlfZykpIiwxXSxbMCwxLCIoZl9nLCAxX3tGKGZfZykoeF97Y30pfSkiXSxbMSwyLCIoZF97Yyd9LCAxX3tGKGRfe2MnfWZfZykoeF97Y30pfSkiLDFdLFsyLDMsIihmXzEsMV97RihmXzFkX3tjJ31mX2cpKHhfe2N9KX0pIiwxLHsibGFiZWxfcG9zaXRpb24iOjcwfV0sWzEsMywiKGZfMWRfe2MnfSwgMV97RihmXzFkX3tjJ31mX2cpKHhfe2N9KX0pIiwwLHsiY3VydmUiOi00fV0sWzAsMywiKGZfMWRfYywgRihmXzFkX3tjJ30pKFxccGhpX2cpKSIsMix7ImN1cnZlIjo0fV0sWzksOCwicyIsMCx7ImxhYmVsX3Bvc2l0aW9uIjozMCwic2hvcnRlbiI6eyJzb3VyY2UiOjEwLCJ0YXJnZXQiOjEwfX1dXQ==
\[\begin{tikzcd}
	{(i_c, x_c)} && {(i_{c'}, F(f_g)(x_{c}))} \\
	& {(i_0, F(d_{c'}f_g)(x_{c}))} \\
	\\
	& {(i_1, F(f_1d_{c'}f_g)(x_{c}))}
	\arrow["{(d_c,F(d_{c'})(\phi_g))}"{description}, from=1-1, to=2-2]
	\arrow["{(f_g, 1_{F(f_g)(x_{c})})}", from=1-1, to=1-3]
	\arrow["{(d_{c'}, 1_{F(d_{c'}f_g)(x_{c})})}"{description}, from=1-3, to=2-2]
	\arrow["{(f_1,1_{F(f_1d_{c'}f_g)(x_{c})})}"{description, pos=0.7}, from=2-2, to=4-2]
	\arrow[""{name=0, anchor=center, inner sep=0}, "{(f_1d_{c'}, 1_{F(f_1d_{c'}f_g)(x_{c})})}", curve={height=-30pt}, from=1-3, to=4-2]
	\arrow[""{name=1, anchor=center, inner sep=0}, "{(f_1d_c, F(f_1d_{c'})(\phi_g))}"', curve={height=30pt}, from=1-1, to=4-2]
	\arrow["s"{pos=0.3}, shorten <=11pt, shorten >=11pt, Rightarrow, from=1, to=0, crossing over]
\end{tikzcd}\]
which provides an intermediate map
\[ F(f_1d_c)(x_c) \stackrel{s}{\rightarrow} F(f_1d_{c'}f_g)(x_{c}) \]
one can compose with $ F(d_{c'})(\phi_g)$ to get a map as desired
% https://q.uiver.app/?q=WzAsMyxbMCwwLCJGKGZfMWRfe2MnfWZfZykoeF97Y30pIl0sWzAsMSwiIEYoZl8xZF9jKSh4X2MpIl0sWzEsMCwiRihmXzFkX3tjJ30pKHhfe2MnfSkiXSxbMSwwLCJzIl0sWzAsMiwiRihkX3tjJ30pKFxccGhpX2cpIl0sWzEsMiwiIiwyLHsic3R5bGUiOnsiYm9keSI6eyJuYW1lIjoiZGFzaGVkIn19fV1d
\[\begin{tikzcd}
	{F(f_1d_{c'}f_g)(x_{c})} & {F(f_1d_{c'})(x_{c'})} \\
	{ F(f_1d_c)(x_c)}
	\arrow["s", from=2-1, to=1-1]
	\arrow["{F(d_{c'})(\phi_g)}", from=1-1, to=1-2]
	\arrow[dashed, from=2-1, to=1-2]
\end{tikzcd}\]
Hence it is mostly a technical effort to show that this defines an invertible 2-cell
% https://q.uiver.app/?q=WzAsMyxbMCwxLCJDIl0sWzEsMSwiXFxwc2NvbGltXFwsIEYiXSxbMSwwLCJGKGlfMSkiXSxbMCwyLCJIIl0sWzIsMSwicV97aV8xfSJdLFswLDEsIkciLDJdLFszLDEsIlxcYWxwaGEgXFxhdG9wIFxcc2ltZXEiLDEseyJzaG9ydGVuIjp7InNvdXJjZSI6MjB9LCJzdHlsZSI6eyJib2R5Ijp7Im5hbWUiOiJub25lIn0sImhlYWQiOnsibmFtZSI6Im5vbmUifX19XV0=
\[\begin{tikzcd}
	& {F(i_1)} \\
	C & {\pscolim\, F}
	\arrow[""{name=0, anchor=center, inner sep=0}, "H", from=2-1, to=1-2]
	\arrow["{q_{i_1}}", from=1-2, to=2-2]
	\arrow["G"', from=2-1, to=2-2]
	\arrow["{\alpha \atop \simeq}"{description, pos=0.6}, Rightarrow, draw=none, from=0, to=2-2]
\end{tikzcd}\]
showing the bicompactness of $C$. 
\end{proof}

\begin{remark}\label{Bicompact categories might not be finite}[Bicompact categories might not be finite]
We do not assert that all bicompact objects in $\Cat$ are finite. For example, the monoid $\mathbb{N}$ - seen as a $1$-object category - is the coinserter of the diagram below and thus is bicompact, being a finite bicolimit of bicompact objects by \Cref{finite bicolimit of sigma-compact are sigma-compact}.

% https://q.uiver.app/?q=WzAsMyxbMCwwLCJcXHtcXGJ1bGxldFxcfSJdLFsyLDAsIlxce1xcYnVsbGV0XFx9Il0sWzQsMCwiXFxtYXRoYmJ7Tn0iXSxbMCwxLCJcXHRleHR7aWR9IiwwLHsib2Zmc2V0IjotNX1dLFswLDEsIlxcdGV4dHtpZH0iLDIseyJvZmZzZXQiOjV9XSxbMSwyXV0=
\[\begin{tikzcd}
	{\{*\}} && {\{*\}} && {\mathbb{N}}
	\arrow["{\text{id}}", shift left=2, from=1-1, to=1-3]
	\arrow["{\text{id}}"', shift right=2, from=1-1, to=1-3]
	\arrow[from=1-3, to=1-5]
\end{tikzcd}\]

\end{remark}

\begin{comment}
A preliminary lecture of \cite{descotte2018sigma} theorem of extensions of flat functors, involving $\sigma$-filtered $\sigma$-colimits of representables, could lead to think one may need actually a notion of ``lax-compact" object preserving $\sigma$-filtered $ \sigma$-bicolimits. As pointed in the introduction, such a notion does not capture enough objects: for instance in $ \Cat$, even the walking arrow 2 fails to be lax compact in this sense - the reader may convince himself by transposing the concrete lifting properties above but against $\sigma$-filtered $\sigma$-bicolimits and see how one of the 2-cell involved in the pasting as above, being no longer invertible, exactly says that the property does not works with 2. If this seems at first sight to be a discrepancy between a bi-version of accessibility and Dubuc theory of flat pseudofunctors, we will actually manage to conciliate them later thanks to our key \Cref{key lemma}. 
\end{comment}

Bicompactness behaves as nicely as it can. We will now list some theorems that show how the theory of bicompact objects perfectly mirrors that of compact objects in \cite{adamek1994locally}. The following two results below deal with the closure properties of bicompact objects; the first one is the $2$-dimensional analog of \cite[1.3]{adamek1994locally}.

\begin{proposition}[Bicompact objects are closed under finite weighed bicolimits]\label{finite bicolimit of sigma-compact are sigma-compact}
Let $ \mathcal{B}$ be a 2-category, $ W: I^{\op} \rightarrow \Cat$ a finite weight and $G : I \rightarrow \mathcal{B}  $ a pseudofunctor such that each $ G(i)$ is bicompact. Then $ \bicolim^{W} \, G$ still is bicompact. 
\end{proposition}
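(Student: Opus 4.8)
The plan is to mimic the one-dimensional argument of \cite[1.3]{adamek1994locally}: a hom-category out of a weighted bicolimit is a weighted bilimit of hom-categories, so the problem reduces to the commutation of finite weighted bilimits with bifiltered bicolimits in $\Cat$ recorded in the proposition closing the previous section. Throughout I assume, as the statement implicitly does, that $\bicolim^{W} G$ exists in $\mathcal{B}$. To test bicompactness, fix an arbitrary bifiltered $2$-category $J$ and a pseudofunctor $F : J \rightarrow \mathcal{B}$; I must produce the canonical equivalence $\underset{j \in J}{\bicolim}\, \mathcal{B}[\bicolim^{W} G, F(j)] \simeq \mathcal{B}[\bicolim^{W} G, \underset{j \in J}{\bicolim}\, F(j)]$.

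The core is the following chain of equivalences, read from the target hom-category:
\begin{align*}
\mathcal{B}[\bicolim^{W} G, \underset{j \in J}{\bicolim}\, F(j)]
&\simeq \ps[I^{\op}, \Cat]\big[W, \mathcal{B}[G, \underset{j \in J}{\bicolim}\, F(j)]\big] \\
&\simeq \ps[I^{\op}, \Cat]\big[W, \underset{j \in J}{\bicolim}\, \mathcal{B}[G, F(j)]\big] \\
&\simeq \underset{j \in J}{\bicolim}\, \ps[I^{\op}, \Cat]\big[W, \mathcal{B}[G, F(j)]\big] \\
&\simeq \underset{j \in J}{\bicolim}\, \mathcal{B}[\bicolim^{W} G, F(j)]
\end{align*}
The first and last equivalences are just the universal property of the $W$-weighted bicolimit $\bicolim^{W} G$ (note $W : I^{\op} \rightarrow \Cat$ is a finite weight, so the right-hand side is a finite weighted bilimit in $\Cat$ of the diagram $\mathcal{B}[G(-), B] : I^{\op} \rightarrow \Cat$). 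The second equivalence applies the functor $\ps[I^{\op},\Cat][W,-]$ to the pseudonatural equivalence of diagrams $\mathcal{B}[G(-), \bicolim_{j} F(j)] \simeq \bicolim_{j} \mathcal{B}[G(-), F(j)]$ obtained, componentwise in $i$, from the bicompactness of each $G(i)$. The third equivalence is the key commutation step: since $W$ is finite and $J$ is bifiltered, the finite $W$-weighted bilimit commutes with the bifiltered bicolimit over $J$ in $\Cat$, by the commutation proposition quoted at the end of the filteredness section (applied to the pseudofunctor $\mathcal{B}[G(-), F(-)] : I^{\op} \times J \rightarrow \Cat$).

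The main obstacle, and the only point demanding genuine care, is the second equivalence: I must check that the componentwise bicompactness equivalences assemble into a \emph{pseudonatural} equivalence in the variable $i \in I^{\op}$, so that it may legitimately be fed into the weight functor $\ps[I^{\op},\Cat][W,-]$. This holds because the comparison map in the definition of bicompactness is the canonical one induced by whiskering with the bicolimit inclusions $q_j$, hence is manifestly pseudonatural in $i$; I would spell out that naturality square explicitly. Relatedly, because every equivalence in the chain is a canonical comparison rather than an abstract one, the composite equivalence is itself the canonical comparison functor appearing in \Cref{bicompact}, so the displayed equivalence genuinely witnesses bicompactness of $\bicolim^{W} G$ and not merely an equivalence of hom-categories. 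Everything else is routine bookkeeping with the universal properties.
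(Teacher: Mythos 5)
Your proof is correct and is essentially the paper's own argument: the identical chain of equivalences, namely the universal property of $\bicolim^{W} G$, the componentwise bicompactness of the $G(i)$ assembled into a pseudonatural equivalence, commutation of the finite weighted bilimit $\ps[I^{\op},\Cat][W,-]$ with the bifiltered bicolimit over $J$, and the universal property again under the bicolimit. The only divergence is how the commutation step is justified: you invoke the commutation proposition for finite weighted bilimits and bifiltered bicolimits in $\Cat$ directly (applied to $\mathcal{B}[G(-),F(-)] : I^{\op}\times J \rightarrow \Cat$), whereas the paper re-derives it by hand from the bicompactness of each finite category $W(i)$ in $\Cat$ (\Cref{finite cat are sigma-compact}); your appeal to the general commutation result is, if anything, the tighter argument, since the paper's pointwise justification still tacitly needs the essential finiteness of $I$ to pull the hom $\ps[I^{\op},\Cat][W,-]$ out of the bicolimit.
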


\begin{proof}
Let $ F : J \rightarrow \mathcal{B}$ be a pseudofunctor with $J$ bifiltered. We have an equivalence of categories
\begin{align*}
     \mathcal{B}[\underset{I}\bicolim^{W} \; G, \underset{J}\bicolim\; F] &\simeq \ps[I^{\op}, \Cat]\big[ W, \mathcal{B}[G, \underset{J}\bicolim \; F]\big] \\
     &\simeq \ps[I^{\op}, \Cat]\big[ W, \underset{j \in J}{\bicolim}\;\mathcal{B}[G, F(j)]\big]
\end{align*}
where the last equivalence comes from the fact that each $ G(i)$ is bicompact. But now for any pseudonatural transformation $ \phi : W \Rightarrow {\bicolim}_{j \in J}\,\mathcal{B}[G, F(j)]$, each $ W(i)$ is a finite category, hence by \Cref{finite cat are sigma-compact}, is bicompact in $\Cat$, so we have a pseudonatural equivalence
\[ \Cat\big[ W(i), \underset{j \in J}{\bicolim}\,\mathcal{B}[G(i), F(j)]\big] \simeq \underset{j \in J}{\bicolim}\,\Cat\big[ W(i),
\mathcal{B}[G(i), F(j)]\big] \]
Now as pointwise natural equivalences are equivalences in $ \ps[I^{\op}, \Cat]$ this means we have an equivalence in $ \ps[I^{\op}, \Cat]$, which achieves to prove that $ \bicolim^{W}\, G$ is bicompact in $\mathcal{B}$.
\begin{align*}
    \ps[I^{\op}, \Cat]\big[ W, \underset{j \in J}{\bicolim}\,\mathcal{B}[G, F(j)]\big] &\simeq \underset{j \in J}{\bicolim}\,\ps[I^{\op}, \Cat]\big[ W, 
\mathcal{B}[G, F(j)]\big] \\
&\simeq \underset{j \in J}{\bicolim}\, \mathcal{B}[\underset{I}\bicolim^{W} \, G, F(j)]
\end{align*}

\end{proof}

\begin{comment}\label{compactly weighted bicolim of bicompacts}
The previous lemma admits actually a more general form. Let be $ I $ a finite category and $ W : I^{\op} \rightarrow \Cat$ such that each $W(i)$ is a bicompact category - not necessarily finite - and $ F : I \rightarrow \mathcal{B}  $ 2-functor such that each $ F(i)$ is bicompact in $\mathcal{B}$:  then $ \bicolim^W F$ still is bicompact. 
\end{comment}

\begin{definition}[Pseudoretracts]
 We recall that a \emph{pseudoretract}\index{pseudoretract} of an object $ A$ is an object $B$ equipped with an invertible 2-cell
% https://q.uiver.app/?q=WzAsMyxbMCwwLCJCIl0sWzIsMCwiQiJdLFsxLDEsIkEiXSxbMCwxLCIiLDAseyJsZXZlbCI6Miwic3R5bGUiOnsiaGVhZCI6eyJuYW1lIjoibm9uZSJ9fX1dLFswLDIsInIiLDJdLFsyLDEsInMiLDJdLFszLDIsIlxcYWxwaGEgXFxhdG9wIFxcc2ltZXEiLDEseyJzaG9ydGVuIjp7InNvdXJjZSI6MjB9LCJzdHlsZSI6eyJib2R5Ijp7Im5hbWUiOiJub25lIn0sImhlYWQiOnsibmFtZSI6Im5vbmUifX19XV0=
\[\begin{tikzcd}
	B && B \\
	& A
	\arrow[""{name=0, anchor=center, inner sep=0}, Rightarrow, no head, from=1-1, to=1-3]
	\arrow["r"', from=1-1, to=2-2]
	\arrow["s"', from=2-2, to=1-3]
	\arrow["{\alpha \atop \simeq}"{description}, Rightarrow, draw=none, from=0, to=2-2]
\end{tikzcd}\]
\end{definition}

\begin{corollary}
A pseudoretract of a bicompact is bicompact. 
\end{corollary}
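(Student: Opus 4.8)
The plan is to exhibit the representable pseudofunctor $\mathcal{B}[B,-]$ as a pseudoretract of $\mathcal{B}[A,-]$ and then transport the bicompactness equivalence for $A$ to one for $B$. Write $r : B \to A$, $s : A \to B$ and the invertible $\alpha : sr \cong 1_B$ for the pseudoretract data. Precomposition gives, for every object $X$, functors $\mathcal{B}[s,X]\colon \mathcal{B}[B,X] \to \mathcal{B}[A,X]$ and $\mathcal{B}[r,X]\colon \mathcal{B}[A,X] \to \mathcal{B}[B,X]$ whose composite is $\mathcal{B}[sr,X]$; whiskering with $\alpha$ yields an invertible $2$-cell $\mathcal{B}[r,X]\circ\mathcal{B}[s,X]\cong 1_{\mathcal{B}[B,X]}$, pseudonatural in $X$. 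Thus $\mathcal{B}[B,-]$ is a pseudoretract of $\mathcal{B}[A,-]$.

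Next I would fix a bifiltered $2$-category $I$ and a pseudofunctor $F\colon I \to \mathcal{B}$, and let $c_A, c_B$ denote the canonical comparison functors of \Cref{bicompact}. Because each comparison functor is defined by whiskering the $1$-cells $K \to F(i)$ with the bicolimit inclusions $q_i$, precomposition by $r$ and by $s$ commutes with them up to canonical invertible $2$-cells: writing $\rho = \bicolim_i \mathcal{B}[r,F(i)]$, $\rho' = \mathcal{B}[r,\bicolim F]$, $\sigma = \bicolim_i \mathcal{B}[s,F(i)]$ and $\sigma' = \mathcal{B}[s,\bicolim F]$, one obtains $c_B\circ\rho \cong \rho'\circ c_A$ and $\sigma'\circ c_B \cong c_A\circ\sigma$. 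Moreover, since $\bicolim$ is $2$-functorial, the $\alpha$-retraction passes to the bicolimits, giving $\rho\circ\sigma \cong 1$ and $\rho'\circ\sigma' \cong 1$.

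Finally I would invoke the principle that a pseudoretract of an equivalence is an equivalence. Since $A$ is bicompact, $c_A$ is an equivalence; choosing a pseudo-inverse $c_A^{-1}$ and setting $d := \rho\circ c_A^{-1}\circ\sigma'$, a chase through the invertible $2$-cells above gives $d\circ c_B \cong \rho\circ c_A^{-1}\circ c_A\circ\sigma \cong \rho\circ\sigma \cong 1$ and $c_B\circ d \cong \rho'\circ c_A\circ c_A^{-1}\circ\sigma' \cong \rho'\circ\sigma' \cong 1$, so that $c_B$ is an equivalence and $B$ is bicompact. The only real effort is bookkeeping: verifying that the comparison squares for $r$ and $s$ commute coherently and that the two retraction $2$-cells are mutually compatible. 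Both follow formally from the $2$-functoriality of $\bicolim$ and the naturality of the comparison functor $c_K$ in $K$, so I expect no genuine obstacle beyond this coherence check.
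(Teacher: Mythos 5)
Your proof is correct, but it takes a genuinely different route from the paper's. The paper offers no written argument at all: the statement is positioned as a corollary of \Cref{finite bicolimit of sigma-compact are sigma-compact}, the implicit derivation being that a pseudoretract $B$ of $A$ is a bisplitting of the pseudoidempotent $e = rs$ on $A$, that bisplittings are (absolute) finite weighted bicolimits of a diagram valued at $A$ (compare the dual construction via bi-iso-inserters in \Cref{bilex cat have bisplitting of pseudoidempotent}), and hence that closure of bicompacts under finite weighted bicolimits applies. You instead argue directly on hom-categories: $\mathcal{B}[B,-]$ is a pseudoretract of $\mathcal{B}[A,-]$, the comparison functors $c_K$ of \Cref{bicompact} are pseudonatural in $K$, the retraction $2$-cell $\alpha$ passes to bicolimits by pseudofunctoriality of $\bicolim$, and then you verify by hand, via $d = \rho \circ c_A^{-1} \circ \sigma'$, that a pseudoretract of an equivalence is an equivalence, so $c_B$ is one. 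Both arguments are sound. Yours is more self-contained and elementary: it needs no coherence check that $rs$ is a pseudoidempotent, no identification of $B$ with an absolute finite weighted bicolimit, and it works verbatim in any $2$-category; its only cost is the coherence bookkeeping you flag, which is genuine but routine. The paper's route is a one-liner once the closure proposition is in hand, but it buries precisely that kind of verification inside the unstated identification of a pseudoretract with a finite weighted bicolimit over $A$.
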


Now we get back briefly to $\sigma$-bicolimits which are involved in the following auxiliary notion, we shall make use of when embedding bi-accessible 2-categories in 2-categories of pseudofunctors.  

\begin{definition}[$\sigma$-tiny]
An object $ K$ in a 2-category $\mathcal{B}$ is said to be \emph{$\sigma$-tiny}\index{$\sigma$-tiny} if for any $\sigma$-pair $(I,\Sigma)$ and $F : I \rightarrow \mathcal{B}$, we have an equivalence of category 
\[ \mathcal{B}[K, {\underset{I}{\Sigma\bicolim} \, F}] \simeq \underset{i \in I}{\Sigma\bicolim}\, \mathcal{B}[K,F(i)] \]
\end{definition}

\begin{remark}
In other words, a $\sigma$-tiny object preserves any small $\sigma$-bicolimits. Any $\sigma$-tiny object is in particular bicompact. As $\sigma$-tiny are tiny, the only $\sigma$-tiny object in $\Cat$ is the point category. 
\end{remark}

\subsection{Bi-accessible 2-categories}

\begin{definition}[Bi-accessible $2$-category] \label{sigma accessible cat}
A 2-category $\mathcal{B}$ will be said \emph{finitely bi-accessible} if \begin{itemize}
    \item $\mathcal{B}$ has bifiltered bicolimits,
    \item there is an essentially small full on $1$-cells and $2$-cells sub-2-category $ \mathcal{B}_0 \hookrightarrow \mathcal{B}$ consisting of bicompact objects such that for any $B$ in $\mathcal{B}$ is a bifiltered bicolimit of object in $\mathcal{B}_0$.
\end{itemize}
We refer to such a choice of $\mathcal{B}_0$ as \emph{a generator of bicompact objects}. As in \Cref{infinitary compactnes} and \Cref{infinite filteredness}, a  $\lambda$-bi-accessible $2$-category can be defined as above by replacing all the occurrences of compactness and filteredness with their $\lambda$-generalization.
\end{definition}

\begin{lemma}
If $ \mathcal{B}$ is finitely bi-accessible, then the full on $1$-cells and $2$-cells sub-2-category consisting of all bicompact objects is essentially small. 
\end{lemma}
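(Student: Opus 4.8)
The plan is to show that every bicompact object occurs as a pseudoretract of an object of the chosen generator $\mathcal{B}_0$, and then to bound the pseudoretracts of each object of $\mathcal{B}_0$ by a set. Let $K$ be an arbitrary bicompact object. By the second clause of \Cref{sigma accessible cat} there exist a bifiltered $2$-category $I$ and a pseudofunctor $F : I \rightarrow \mathcal{B}$ with each $F(i)$ in $\mathcal{B}_0$, together with an equivalence $e : \bicolim_I F \xrightarrow{\sim} K$.

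First I would factor the identity of $K$ through a stage of the diagram. Viewing $e^{-1} : K \rightarrow \bicolim_I F$ as a $1$-cell into the bicolimit, bicompactness of $K$ (\Cref{bicompact}) yields $\mathcal{B}[K, \bicolim_I F] \simeq \bicolim_i \mathcal{B}[K, F(i)]$. Since in a bifiltered bicolimit of categories every object lies, up to isomorphism, in the image of some colimit inclusion (the concrete description of \Cref{sigma-filtered sigma-colimit of categories}, or directly the lifting property of \Cref{lifting property of sigma-compact}), there are an index $i$, a $1$-cell $\iota : K \rightarrow F(i)$ and an invertible $2$-cell $q_i \iota \cong e^{-1}$. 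Setting $\rho := e\,q_i : F(i) \rightarrow K$ we obtain an invertible $2$-cell $\rho\iota \cong 1_K$, which exhibits $K$ as a pseudoretract of $F(i) \in \mathcal{B}_0$.

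Next I would reduce the classification of such $K$ to a set of data. To a pseudoretract structure $(\iota,\rho,\,\rho\iota \cong 1_K)$ I associate the endomorphism $e_K := \iota\rho : F(i) \rightarrow F(i)$, which is a pseudo-idempotent since $e_K e_K = \iota(\rho\iota)\rho \cong \iota\rho = e_K$. Choosing a representative for each equivalence class of bicompact object, the assignment $K \mapsto (F(i),\, e_K)$ lands in pairs consisting of an object of $\mathcal{B}_0$ and an object of one of its endo-hom-categories. The crucial point is that $K$ is recovered, up to equivalence, as the splitting of $e_K$: if $(B,\iota',\rho')$ is another pseudoretract of the same $F(i)$ with $\iota'\rho' \cong e_K$, then $u := \rho\iota' : B \rightarrow K$ and $v := \rho'\iota : K \rightarrow B$ satisfy $uv = \rho(\iota'\rho')\iota \cong \rho(\iota\rho)\iota = (\rho\iota)(\rho\iota) \cong 1_K$ and symmetrically $vu \cong 1_B$, so that $B \simeq K$. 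Hence the assignment above is injective on equivalence classes.

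Finally, since $\mathcal{B}_0$ is essentially small and $\mathcal{B}$ is locally small (so that each hom-category $\mathcal{B}[A,A]$ is essentially small), the isomorphism classes of endomorphisms of objects of $\mathcal{B}_0$ form a set; the injection above therefore exhibits the collection of bicompact objects as essentially small, and the full on $1$-cells and $2$-cells sub-$2$-category they span inherits small hom-categories from $\mathcal{B}$. I expect the main delicate step to be the penultimate one, namely verifying that the chains of invertible $2$-cells producing $uv \cong 1_K$ and $vu \cong 1_B$ paste coherently, using the structural constraints of the pseudoretracts and of $F$; this is precisely the $2$-dimensional replacement for the elementary fact that idempotents split uniquely, and it is what guarantees that $u$ and $v$ assemble into a genuine equivalence.
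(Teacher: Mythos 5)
Your proof is correct and follows essentially the same route as the paper's: both factor the identity of a bicompact $K$ through a stage of its bifiltered bicolimit decomposition via bicompactness, exhibit $K$ as a pseudoretract of an object of the generator, and then bound such retracts by a set (the paper phrases this as $K$ being a pseudoequalizer on an object of $\mathcal{B}_0$, which is the same datum as your split pseudo-idempotent). Your closing worry about pasting coherence is unfounded: an equivalence in a $2$-category only requires invertible $2$-cells $uv \cong 1_K$ and $vu \cong 1_B$, which your whiskering computations already provide.
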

\begin{proof}
Consider a bicompact object $X$. By definition of accessibility, this a bifiltered bicolimit of objects in $\mathcal{B}_0$, as above. Thus, the identity $X \to X$ has to factor through the bifiltered bicolimit. As a result, we obtain that $X$ is the pseudoequalizer of a pair of parallel morphisms on a bicompact object in $\mathcal{B}_0$. Hence bicompact objects form a set up to isomorphisms.
\end{proof}

\begin{division}[The canonical $\sigma$-cocone and the bidenseness condition]\label{the oplaxcocone is the category of element of the binerve}
If $\mathcal{B}$ is finitely bi-accessible, then one can consider the \emph{canonical lax-cone} of $B$ relatively to the full subcategory of all bicompact objects $\mathcal{B}_\omega$ which is defined by the \emph{binerve} $ \nu_B$ of $\mathcal{B}_\omega$ at $B$, which is the composite
% https://q.uiver.app/?q=WzAsMyxbMCwwLCJcXG1hdGhjYWx7Qn1fMCBee1xcb3B9Il0sWzEsMCwiXFxtYXRoY2Fse0J9XntcXG9wfSJdLFsyLDAsIlxcQ2F0Il0sWzAsMSwiXFxpb3RhXzAiLDAseyJzdHlsZSI6eyJ0YWlsIjp7Im5hbWUiOiJob29rIiwic2lkZSI6InRvcCJ9fX1dLFsxLDIsIlxcbWF0aGNhbHtCfVstLEJdIl1d
\[\begin{tikzcd}
	{\mathcal{B}_\omega ^{\op}} & {\mathcal{B}^{\op}} & \Cat
	\arrow["{\iota^{\op}}", hook, from=1-1, to=1-2]
	\arrow["{\mathcal{B}[-,B]}", from=1-2, to=1-3]
\end{tikzcd}\]
This defines a fibration $ \int \nu_B $ over $ \mathcal{B}_\omega$ whose objects are pairs $ (K, a)$ with $ a : K \rightarrow B$, and a morphism $(K_1,a_1) \rightarrow (K_2, a_2)$ is a pair $(k, \phi)$ coding for a 2-cell
% https://q.uiver.app/?q=WzAsMyxbMCwwLCJLXzEiXSxbMiwwLCJLXzIiXSxbMSwxLCJCIl0sWzAsMiwiYV8xIiwyXSxbMSwyLCJhXzIiXSxbMCwxLCJrIl0sWzQsMywiXFxwaGkiLDIseyJzaG9ydGVuIjp7InNvdXJjZSI6MjAsInRhcmdldCI6MjB9fV1d
\[\begin{tikzcd}
	{K_1} && {K_2} \\
	& B
	\arrow[""{name=0, anchor=center, inner sep=0}, "{a_1}"', from=1-1, to=2-2]
	\arrow[""{name=1, anchor=center, inner sep=0}, "{a_2}", from=1-3, to=2-2]
	\arrow["k", from=1-1, to=1-3]
	\arrow["\phi"', shorten <=6pt, shorten >=6pt, Rightarrow, from=1, to=0]
\end{tikzcd}\]
In particular, cartesian morphisms are exactly those $(k,\phi)$ where $ \phi$ is an isomorphism $ a_1 \simeq a_2 k$. They will be denoted as $ \Cart_B$. Observe that $ \int \nu_B$ is equivalent to the oplax-slice $ \mathcal{B}_\omega \Downarrow B$.
\end{division}

\begin{division}[The canonical pseudococone]
Although the canonical $\sigma$-cocone $ \mathcal{B}_\omega \Downarrow B$ contains all the necessary information about $B$ relatively to $ \mathcal{B}_\omega$, we are going to see below that this information is actually condensed into a canonical \emph{pseudococone} consisting of the restriction to cartesian morphisms over $B$, which corresponds to the restriction of the pseudoslice over $B$ to the bicompact objects $ \mathcal{B}_\omega \downarrow B$. 
\end{division}

\begin{lemma}[The canonical pseudococone is bifiltered]\label{the pseudococone is bifiltered}
If $\mathcal{B}$ is finitely bi-accessible with $ \mathcal{B}_0$ a generator of bicompact objects, then for any object $B$, if $ F : I_B \rightarrow \mathcal{B}_0$ with $I_B $ bifiltered is such that $ B \simeq \bicolim\, F$, then the inclusion 
\[ I_B \hookrightarrow \mathcal{B}_\omega\downarrow B \]
sending $i $ on $ q_i : F(i) \rightarrow B$ is bicofinal, whence $\mathcal{B}_\omega\downarrow B$ is bifiltered and $ B \simeq \bicolim \; \mathcal{B}_\omega\downarrow B$.  
\end{lemma}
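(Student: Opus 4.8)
The plan is to verify that the inclusion $\iota : I_B \hookrightarrow \mathcal{B}_\omega\downarrow B$ satisfies the three axioms of bicofinality — which are exactly the bifiltered instances (all arrows marked) of the $\sigma$-cofinality conditions — and then to transfer bifilteredness and the bicolimit along $\iota$ using the cofinality machinery of \Cref{cofinal functors transfer sigma-filteredness} and \Cref{cofinal functors and bicolimits}. The crucial observation is that each of the three axioms is precisely a reformulation of one of the lifting properties of the bicompact objects $K$ against the bifiltered bicolimit $B \simeq \bicolim F$.

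First I would check the object condition. Given $(K,a)$ in $\mathcal{B}_\omega\downarrow B$, the object $K$ is bicompact and $a : K \to B \simeq \bicolim F$, so the $1$-cell lifting property of \Cref{lifting property of sigma-compact} provides some $i\in I_B$, a map $b : K \to F(i)$ and an invertible $2$-cell $\beta : a \simeq q_i b$. As $\beta$ is invertible, the pair $(b,\beta)$ is a morphism $(K,a)\to (F(i),q_i)=\iota(i)$ of the pseudoslice, which is the required arrow into the image of $\iota$.

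Next, the parallel-pair condition. A parallel pair $(k_1,\phi_1),(k_2,\phi_2):(K,a)\rightrightarrows(F(i),q_i)$ consists of $k_1,k_2:K\rightrightarrows F(i)$ with invertible $\phi_\ell : a\simeq q_i k_\ell$; these assemble into an invertible $2$-cell $\omega := \phi_2\phi_1^{-1} : q_i k_1 \Rightarrow q_i k_2$ over $B$. Applying the $2$-cell lifting property of \Cref{A concrete description of sigma-compactness: 2-cells} to $\omega$ (with $k_1,k_2$ themselves as the chosen lifts) yields $d,d':i\rightrightarrows i''$ in $I_B$ and a $2$-cell $F(d)k_1\Rightarrow F(d')k_2$ lifting $\omega$; bifilteredness of $I_B$ then coalesces $d,d'$ into a single $s:i\to i'$ carrying an $\alpha:F(s)k_1\Rightarrow F(s)k_2$. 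Compatibility of $\alpha$ with the cocone $2$-cells is automatic since $\alpha$ was built to lift $\omega$, which itself intertwines $\phi_1$ and $\phi_2$. The main obstacle is to guarantee that this $\alpha$ is \emph{invertible}, as the bifiltered cofinality axiom demands: the lifting lemma only produces an a priori non-invertible $2$-cell. Here I would lift $\omega^{-1}$ as well and apply the coequification of \Cref{coequification lemma} to force both round-trip composites to become identities after a further transition in $I_B$, so that $\alpha$ becomes invertible — this is the standard fact that isomorphisms in a bifiltered bicolimit of categories are already detected, together with their inverses, at a finite stage.

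Finally, the $2$-cell condition. Given parallel $2$-cells $\alpha,\alpha':(k_1,\phi_1)\Rrightarrow(k_2,\phi_2)$ in $\mathcal{B}_\omega\downarrow B$, the defining compatibility of pseudoslice $2$-cells forces $(q_i*\alpha)\cdot\phi_1=\phi_2=(q_i*\alpha')\cdot\phi_1$, and since $\phi_1$ is invertible we get $q_i*\alpha=q_i*\alpha'$; that is, $\alpha$ and $\alpha'$ are identified after whiskering by the inclusion $q_i$. The faithfulness half of \Cref{coequification lemma} then produces $s:i\to i'$ with $F(s)*\alpha=F(s)*\alpha'$, which is exactly the third axiom. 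With all three axioms in hand, $\iota$ is bicofinal; since $I_B$ is bifiltered this gives, by the bifiltered case of \Cref{cofinal functors transfer sigma-filteredness}, that $\mathcal{B}_\omega\downarrow B$ is bifiltered, and by \Cref{cofinal functors and bicolimits} applied to the forgetful projection $U:\mathcal{B}_\omega\downarrow B\to\mathcal{B}$ (which satisfies $U\iota=F$) that $\bicolim U\simeq\bicolim F\simeq B$, as claimed.
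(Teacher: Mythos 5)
Your proof is correct and follows essentially the same route as the paper's: verify the three bicofinality axioms for $I_B \hookrightarrow \mathcal{B}_\omega\downarrow B$ through the lifting properties of bicompact objects against the bifiltered bicolimit $B \simeq \bicolim F$, then transfer bifilteredness and the bicolimit along the cofinal inclusion via \Cref{cofinal functors transfer sigma-filteredness} and \Cref{cofinal functors and bicolimits}. The one inefficiency is in your insertion axiom: since both $\phi_1$ and $\phi_2$ are invertible, $(k_1,\phi_1)$ and $(k_2,\phi_2)$ are two lifts of the \emph{same} $1$-cell $a$, so the essential-uniqueness clause of \Cref{lifting property of sigma-compact} already supplies an invertible comparison $2$-cell between them (this is what the paper uses), rendering your detour through lifting $\omega^{-1}$ and coequifying via \Cref{coequification lemma} correct but unnecessary.
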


\begin{proof}
By assumption of finite bi-accessibility, one has for each $B$ a $\sigma$-filtered $\sigma$-bicolimit decomposition
\[  B \simeq \underset{i \in I_B}{{\Sigma_B}\bicolim} \, F(i)  \]
with the $F : I_B \rightarrow \mathcal{B}_\omega$ returning bicompact objects. In particular this diagram admits an embedding into the pseudoslice of bicompact objects over $B$  
% https://q.uiver.app/?q=WzAsMixbMCwwLCJJX0IiXSxbMSwwLCJcXG1hdGhjYWx7Qn1fXFxvbWVnYVxcRG93bmFycm93IEIiXSxbMCwxLCIiLDAseyJzdHlsZSI6eyJ0YWlsIjp7Im5hbWUiOiJob29rIiwic2lkZSI6InRvcCJ9fX1dXQ==
\[\begin{tikzcd}
	{I_B} & {\mathcal{B}_\omega\downarrow B}
	\arrow[hook, from=1-1, to=1-2]
\end{tikzcd}\]
sending $i$ on the pair $(F(i), q_i) $, $ d :i \rightarrow j$ on the invertible 2-cell $(F(d), q_d) : (F(i), q_i) \rightarrow (F(j), q_j)$ and the 2-cells as expected. %In particular it sends arrows $s$ in $\Sigma$ to cartesian arrows as the corresponding 2-cell inclusion $q_s$ are invertible. 
We must show this embedding to be bicofinal to deduce bifilteredness of $\mathcal{B}_\omega \downarrow B$ from the bifilteredness of $I_B$. 

For $ K $ in $\mathcal{B}_\omega$ any $ a : K \rightarrow B$ factorizes through some $ q_i : F(i) \rightarrow B$, so we have a triangle in $\mathcal{B}_\omega \Downarrow B$
% https://q.uiver.app/?q=WzAsMyxbMCwwLCJLIl0sWzEsMSwiQiJdLFsyLDAsIkYoaSkiXSxbMCwyLCJiIl0sWzAsMSwiYSIsMl0sWzIsMSwicV9pIl0sWzMsMSwiXFxhbHBoYSBcXGF0b3AgXFxzaW1lcSIsMSx7InNob3J0ZW4iOnsic291cmNlIjoyMH0sInN0eWxlIjp7ImJvZHkiOnsibmFtZSI6Im5vbmUifSwiaGVhZCI6eyJuYW1lIjoibm9uZSJ9fX1dXQ==
\[\begin{tikzcd}
	K && {F(i)} \\
	& B
	\arrow[""{name=0, anchor=center, inner sep=0}, "b", from=1-1, to=1-3]
	\arrow["a"', from=1-1, to=2-2]
	\arrow["{q_i}", from=1-3, to=2-2]
	\arrow["{\sigma \atop \simeq}"{description}, Rightarrow, draw=none, from=0, to=2-2]
\end{tikzcd}\]
This is in particular the name of a cartesian arrow $(b,\sigma) : (K,a) \rightarrow (F_i, q_i)$ in . This ensures the first condition of cofinality.

For the second condition, suppose one has a parallel pair $(b,\beta), (b',\beta') : (K,a) \rightrightarrows (F(i),q_i)$ in $ \mathcal{B}_\omega \downarrow B$: then this provides two invertible 2-cells $ \beta : a \simeq q_i b $ and $ \beta' : a \simeq q_i b'$, so that $ (b, \beta),(b', \beta')$ are two lifts of the same arrow, so by \Cref{lifting property of sigma-compact} one has a span $ d : i \rightarrow j$, $ d' : i \rightarrow j$ in $I_B$ together with some invertible two-cell $\gamma$ as below
\[\begin{tikzcd}
	& {F(i)} \\
	K && {F(j)} & B \\
	& {F(i)}
	\arrow["b", from=2-1, to=1-2]
	\arrow[""{name=0, anchor=center, inner sep=0}, "{q_i}", curve={height=-18pt}, from=1-2, to=2-4]
	\arrow["{b'}"', from=2-1, to=3-2]
	\arrow[""{name=1, anchor=center, inner sep=0}, "{q_{i}}"', curve={height=18pt}, from=3-2, to=2-4]
	\arrow[draw=none, from=1-2, to=3-2]
	\arrow["{q_{j}}"{description}, from=2-3, to=2-4]
	\arrow["{F(d)}"{description}, from=1-2, to=2-3]
	\arrow["{F(d')}"{description}, from=3-2, to=2-3]
	\arrow["{\gamma \atop \simeq }"{description}, Rightarrow, draw=none, shorten <=6pt, shorten >=6pt, from=1-2, to=3-2]
	\arrow["{\theta_d \atop \simeq}"{description}, Rightarrow, draw=none, from=2-3, to=0]
	\arrow["{\theta_{d'} \atop \simeq}"{description}, Rightarrow, draw=none, from=2-3, to=1]
\end{tikzcd} \]
As the lifts $b,b'$ were through the same index $i$, the span provided by bicompactness is actually a parallel pair in $I_B$: hence by bifilteredness there exists a further arrow $ d'' : j \rightarrow k$ in $I_B$ together with an invertible 2-cell $ \alpha : d''d \simeq d'' d'$: then the composite $ d''d : i \rightarrow k$ together with the pasting $ \alpha d'' \gamma$ provides the desired inserted 2-cell.

To conclude, if now one has two parallel 2-cells $ \beta, \beta' : (b,\sigma) \Rightarrow (b',\sigma')$ (with both $ \sigma$, $\sigma'$ invertible), that is two simultaneous factorizations of $\sigma : q_i b \Rightarrow a$ as below
% https://q.uiver.app/?q=WzAsMyxbMCwwLCJLIl0sWzEsMSwiQiJdLFsyLDAsIkYoaSkiXSxbMCwyLCJiIiwwLHsib2Zmc2V0IjotMSwiY3VydmUiOi0yfV0sWzAsMSwiYSIsMl0sWzIsMSwicV9pIl0sWzAsMiwiYiciLDEseyJvZmZzZXQiOjF9XSxbNiwxLCJcXGFscGhhJyBcXGF0b3AgXFxzaW1lcSIsMSx7InNob3J0ZW4iOnsic291cmNlIjoyMH0sInN0eWxlIjp7ImJvZHkiOnsibmFtZSI6Im5vbmUifSwiaGVhZCI6eyJuYW1lIjoibm9uZSJ9fX1dLFszLDYsIlxcYmV0YSciLDAseyJvZmZzZXQiOi00LCJzaG9ydGVuIjp7InNvdXJjZSI6MjAsInRhcmdldCI6MjB9fV0sWzMsNiwiXFxiZXRhIiwyLHsib2Zmc2V0Ijo0LCJzaG9ydGVuIjp7InNvdXJjZSI6MjAsInRhcmdldCI6MjB9fV1d
\[\begin{tikzcd}
	K && {F(i)} \\
	& B
	\arrow[""{name=0, anchor=center, inner sep=0}, "b", shift left=1, curve={height=-12pt}, from=1-1, to=1-3]
	\arrow["a"', from=1-1, to=2-2]
	\arrow["{q_i}", from=1-3, to=2-2]
	\arrow[""{name=1, anchor=center, inner sep=0}, "{b'}"{description}, shift right=1, from=1-1, to=1-3]
	\arrow["{\sigma' \atop \simeq}"{description}, Rightarrow, draw=none, from=1, to=2-2]
	\arrow["{\beta'}", shift left=4, shorten <=2pt, shorten >=2pt, Rightarrow, from=0, to=1]
	\arrow["\beta"', shift right=4, shorten <=2pt, shorten >=2pt, Rightarrow, from=0, to=1]
\end{tikzcd}\]
expressing the equalities $ \sigma' q_i * \beta = \sigma = \sigma' q_i*\beta' $, then for $K$ is bicompact we have common lift $(j,j',s,d',k, \psi)$ together with invertible 2-cells $ \gamma, \gamma'$ as below providing a decomposition of $\sigma \sigma'^{-1}$
% https://q.uiver.app/?q=WzAsNyxbMCwyLCJLIl0sWzEsMCwiRihpKSJdLFsxLDEsIkYoaikiXSxbMSwzLCJGKGonKSJdLFsxLDQsIkYoaSkiXSxbMiwyLCJGKGspIl0sWzMsMiwiQiJdLFswLDEsImIiLDAseyJjdXJ2ZSI6LTJ9XSxbMCwyLCJhIiwxXSxbMCwzLCJhJyIsMV0sWzAsNCwiYiciLDIseyJjdXJ2ZSI6Mn1dLFsyLDUsIkYocykiLDFdLFszLDUsIkYoZCkiLDFdLFsyLDMsIlxccHNpIiwxLHsic2hvcnRlbiI6eyJzb3VyY2UiOjIwLCJ0YXJnZXQiOjIwfSwibGV2ZWwiOjJ9XSxbNSw2LCJxX2siLDFdLFszLDYsInFfe2onfSIsMSx7ImN1cnZlIjoyfV0sWzEsNiwicV9pIiwwLHsiY3VydmUiOi0yfV0sWzQsNiwicV9pIiwyLHsiY3VydmUiOjJ9XSxbMyw0LCJcXGdhbW1hJyBcXGF0b3AgXFxzaW1lcSIsMSx7InN0eWxlIjp7ImJvZHkiOnsibmFtZSI6Im5vbmUifSwiaGVhZCI6eyJuYW1lIjoibm9uZSJ9fX1dLFsxLDIsIlxcZ2FtbWEgXFxhdG9wIFxcc2ltZXEiLDEseyJzdHlsZSI6eyJib2R5Ijp7Im5hbWUiOiJub25lIn0sImhlYWQiOnsibmFtZSI6Im5vbmUifX19XSxbNSwxNSwiXFx0aGV0YV9kIiwwLHsic2hvcnRlbiI6eyJ0YXJnZXQiOjIwfX1dXQ==
\[\begin{tikzcd}
	& {F(i)} \\
	& {F(j)} \\
	K && {F(k)} & B \\
	& {F(j')} \\
	& {F(i)}
	\arrow["b", curve={height=-12pt}, from=3-1, to=1-2]
	\arrow["a"{description}, from=3-1, to=2-2]
	\arrow["{a'}"{description}, from=3-1, to=4-2]
	\arrow["{b'}"', curve={height=12pt}, from=3-1, to=5-2]
	\arrow["{F(s)}"{description}, from=2-2, to=3-3]
	\arrow["{F(d')}"{description}, from=4-2, to=3-3]
	\arrow["\psi \atop \simeq"{description}, shorten <=6pt, shorten >=6pt, Rightarrow, draw=none, from=2-2, to=4-2]
	\arrow["{q_k}"{description}, from=3-3, to=3-4]
	%\arrow[""{name=0, anchor=center, inner sep=0}, "{q_{j'}}"{description}, curve={height=12pt}, from=4-2, to=3-4]
	\arrow["{q_i}", curve={height=-12pt}, from=1-2, to=3-4]
	\arrow["{q_i}"', curve={height=12pt}, from=5-2, to=3-4]
	\arrow["{\gamma' \atop \simeq}"{description}, draw=none, from=4-2, to=5-2]
	\arrow["{\gamma \atop \simeq}"{description}, draw=none, from=1-2, to=2-2]
	%\arrow["{\theta_{d'} \atop \simeq}"{description, pos=0.4}, shorten >=3pt, Rightarrow, draw=none, from=3-3, to=0]
\end{tikzcd}\]
% https://q.uiver.app/?q=WzAsNyxbMCwxLCJLIl0sWzEsMCwiRihpKSJdLFsxLDEsIkYoaikiXSxbMSwyLCJGKGonKSJdLFsxLDMsIkYoaSkiXSxbMiwxLCJGKGspIl0sWzMsMSwiQiJdLFswLDEsImIiXSxbMCwyLCJhIiwxXSxbMCwzLCJhJyIsMV0sWzAsNCwiYiciLDIseyJjdXJ2ZSI6Mn1dLFsyLDUsIkYocykiLDFdLFszLDUsIkYoZCkiLDFdLFsyLDMsIlxccHNpIiwxLHsic2hvcnRlbiI6eyJzb3VyY2UiOjIwLCJ0YXJnZXQiOjIwfSwibGV2ZWwiOjJ9XSxbNSw2LCJxX2siLDFdLFszLDYsInFfe2onfSIsMSx7ImN1cnZlIjoyfV0sWzEsNiwicV9pIl0sWzQsNiwicV9pIiwyLHsiY3VydmUiOjJ9XSxbMyw0LCJcXGdhbW1hJyBcXGF0b3AgXFxzaW1lcSIsMSx7InN0eWxlIjp7ImJvZHkiOnsibmFtZSI6Im5vbmUifSwiaGVhZCI6eyJuYW1lIjoibm9uZSJ9fX1dLFsxLDIsIlxcZ2FtbWEgXFxhdG9wIFxcc2ltZXEiLDEseyJzdHlsZSI6eyJib2R5Ijp7Im5hbWUiOiJub25lIn0sImhlYWQiOnsibmFtZSI6Im5vbmUifX19XSxbNSwxNSwiXFx0aGV0YV9kIiwwLHsic2hvcnRlbiI6eyJ0YXJnZXQiOjIwfX1dXQ==
But then, combining bicompactness of $K$ and bifilteredness of $I_B$, one can find a $l $ in $I_B$ together with $t : i \rightarrow l$ and $r: k \rightarrow l$ together with decompositions of $\gamma$ and $\gamma'$ as pasting of invertible 2-cells $\delta$ and $\delta'$ such that this same 2-cell factorizes as
% https://q.uiver.app/?q=WzAsOCxbMCwyLCJLIl0sWzEsMCwiRihpKSJdLFsxLDEsIkYoaikiXSxbMSwzLCJGKGonKSJdLFsxLDQsIkYoaSkiXSxbMiwyLCJGKGspIl0sWzQsMiwiQiJdLFszLDIsIkYobCkiXSxbMCwxLCJiIiwwLHsiY3VydmUiOi0yfV0sWzAsMiwiYSIsMV0sWzAsMywiYSciLDFdLFswLDQsImInIiwyLHsiY3VydmUiOjJ9XSxbMiw1LCJGKHMpIiwxXSxbMyw1LCJGKGQpIiwxXSxbMiwzLCJcXHBzaSIsMix7InNob3J0ZW4iOnsic291cmNlIjoyMCwidGFyZ2V0IjoyMH0sImxldmVsIjoyfV0sWzEsNiwicV9pIiwwLHsiY3VydmUiOi0zfV0sWzQsNiwicV9pIiwyLHsiY3VydmUiOjN9XSxbNyw2LCJxX2wiLDFdLFs1LDcsIkYocikiLDFdLFs0LDcsIkYodCkiLDFdLFsxLDcsIkYodCkiLDFdLFszLDQsIlxcemV0YSIsMCx7InNob3J0ZW4iOnsidGFyZ2V0IjoyMH0sImxldmVsIjoyfV0sWzEsMiwiXFxkZWx0YSBcXGF0b3AgXFxzaW1lcSIsMSx7InN0eWxlIjp7ImJvZHkiOnsibmFtZSI6Im5vbmUifSwiaGVhZCI6eyJuYW1lIjoibm9uZSJ9fX1dLFsxNSw3LCJcXG51IFxcYXRvcCBcXHNpbWVxIiwxLHsic2hvcnRlbiI6eyJzb3VyY2UiOjIwfSwic3R5bGUiOnsiYm9keSI6eyJuYW1lIjoibm9uZSJ9LCJoZWFkIjp7Im5hbWUiOiJub25lIn19fV0sWzE5LDE2LCJcXG51JyBcXGF0b3AgXFxzaW1lcSIsMCx7ImxhYmVsX3Bvc2l0aW9uIjo2MCwic2hvcnRlbiI6eyJzb3VyY2UiOjMwLCJ0YXJnZXQiOjIwfSwic3R5bGUiOnsiYm9keSI6eyJuYW1lIjoibm9uZSJ9LCJoZWFkIjp7Im5hbWUiOiJub25lIn19fV1d
\[\begin{tikzcd}[column sep=large]
	& {F(i)} \\
	& {F(j)} \\
	K && {F(k)} & {F(l)} & B \\
	& {F(j')} \\
	& {F(i)}
	\arrow["b", curve={height=-12pt}, from=3-1, to=1-2]
	\arrow["a"{description}, from=3-1, to=2-2]
	\arrow["{a'}"{description}, from=3-1, to=4-2]
	\arrow["{b'}"', curve={height=12pt}, from=3-1, to=5-2]
	\arrow["{F(s)}"{description}, from=2-2, to=3-3]
	\arrow["{F(d)}"{description}, from=4-2, to=3-3]
	\arrow["\psi \atop \simeq"{description}, shorten <=6pt, shorten >=6pt, Rightarrow, draw=none, from=2-2, to=4-2]
	\arrow[""{name=0, anchor=center, inner sep=0}, "{q_i}", curve={height=-18pt}, from=1-2, to=3-5]
	\arrow[""{name=2, anchor=center, inner sep=0}, ""{name=1, anchor=center, inner sep=0}, "{q_i}"', curve={height=18pt}, from=5-2, to=3-5]
	\arrow["{q_l}"{description}, from=3-4, to=3-5]
	\arrow["{F(r)}"{description}, from=3-3, to=3-4]
	\arrow["{F(t)}"{description}, from=5-2, to=3-4]
	\arrow["{F(t)}"{description}, from=1-2, to=3-4]
	\arrow["\delta' \atop \simeq"{description}, shorten >=2pt, Rightarrow, draw=none, from=4-2, to=5-2]
	\arrow["{\delta \atop \simeq}"{description}, draw=none, from=1-2, to=2-2]
	\arrow["{\theta_t \atop \simeq}"{description}, Rightarrow, draw=none, from=0, to=3-4]
	\arrow["{\theta_t \atop \simeq}"{description}, Rightarrow, draw=none, from=2, to=3-4]
\end{tikzcd}\]
% https://q.uiver.app/?q=WzAsOCxbMCwxLCJLIl0sWzIsMCwiRihpKSJdLFsxLDEsIkYoaikiXSxbMSwyLCJGKGonKSJdLFsxLDMsIkYoaSkiXSxbMiwyLCJGKGspIl0sWzQsMywiQiJdLFszLDMsIkYobCkiXSxbMCwxLCJiIiwwLHsiY3VydmUiOi0yfV0sWzAsMiwiYSIsMV0sWzAsMywiYSciLDFdLFswLDQsImInIiwyLHsiY3VydmUiOjJ9XSxbMiw1LCJGKHMpIiwxXSxbMyw1LCJGKGQpIiwxXSxbMiwzLCJcXHBzaSIsMix7InNob3J0ZW4iOnsic291cmNlIjoyMCwidGFyZ2V0IjoyMH0sImxldmVsIjoyfV0sWzEsNiwicV9pIiwwLHsiY3VydmUiOi0zfV0sWzQsNiwicV9pIiwyLHsiY3VydmUiOjN9XSxbNyw2LCJxX2wiLDFdLFs1LDcsIkYocikiLDFdLFs0LDcsIkYodCkiLDFdLFsxLDcsIkYodCkiLDFdLFszLDQsIlxcemV0YSIsMCx7InNob3J0ZW4iOnsidGFyZ2V0IjoyMH0sImxldmVsIjoyfV0sWzE1LDcsIlxcbnUgXFxhdG9wIFxcc2ltZXEiLDEseyJzaG9ydGVuIjp7InNvdXJjZSI6MjB9LCJzdHlsZSI6eyJib2R5Ijp7Im5hbWUiOiJub25lIn0sImhlYWQiOnsibmFtZSI6Im5vbmUifX19XSxbMTksMTYsIlxcbnUnIFxcYXRvcCBcXHNpbWVxIiwwLHsibGFiZWxfcG9zaXRpb24iOjYwLCJzaG9ydGVuIjp7InNvdXJjZSI6MzAsInRhcmdldCI6MjB9LCJzdHlsZSI6eyJib2R5Ijp7Im5hbWUiOiJub25lIn0sImhlYWQiOnsibmFtZSI6Im5vbmUifX19XSxbMSwxMiwiXFxkZWx0YSBcXGF0b3AgXFxzaW1lcSIsMSx7ImxldmVsIjoxLCJzdHlsZSI6eyJib2R5Ijp7Im5hbWUiOiJub25lIn0sImhlYWQiOnsibmFtZSI6Im5vbmUifX19XV0=
Then this exactly says that $ F(t)$ coequalizes $ \beta,\beta'$ into the same invertible 2-cell $ \delta'^{-1} F(r)*\psi \delta : F(t)b \simeq F(t)b'$, which manifests itself as a coequalizing 2-cell in $\mathcal{B}_\omega\downarrow B$:
% https://q.uiver.app/?q=WzAsMyxbMCwwLCIoSyxhKSJdLFsyLDAsIihGKGkpLHFfaSkiXSxbNCwwLCIoRihsKSxxX2wpIl0sWzAsMSwiKGInLFxcc2lnbWEpIiwyLHsiY3VydmUiOjN9XSxbMCwxLCIoYixcXHBoaSkiLDAseyJjdXJ2ZSI6LTN9XSxbMSwyLCIoRih0KSwgXFx0aGV0YV90KSJdLFs0LDMsIlxcYmV0YSIsMix7Im9mZnNldCI6Mywic2hvcnRlbiI6eyJzb3VyY2UiOjIwLCJ0YXJnZXQiOjIwfX1dLFs0LDMsIlxcYmV0YSciLDAseyJvZmZzZXQiOi0zLCJzaG9ydGVuIjp7InNvdXJjZSI6MjAsInRhcmdldCI6MjB9fV1d
\[\begin{tikzcd}
	{(K,a)} && {(F(i),q_i)} && {(F(l),q_l)}
	\arrow[""{name=0, anchor=center, inner sep=0}, "{(b',\sigma')}"', curve={height=18pt}, from=1-1, to=1-3]
	\arrow[""{name=1, anchor=center, inner sep=0}, "{(b,\sigma)}", curve={height=-18pt}, from=1-1, to=1-3]
	\arrow["{(F(t), \theta_t)}", from=1-3, to=1-5]
	\arrow["\beta"', shift right=3, shorten <=5pt, shorten >=5pt, Rightarrow, from=1, to=0]
	\arrow["{\beta'}", shift left=3, shorten <=5pt, shorten >=5pt, Rightarrow, from=1, to=0]
\end{tikzcd}\]
This achieves to prove the bicofinalness of the inclusion of $I_B$ in $\mathcal{B}_\omega\downarrow B $
\end{proof}

\begin{corollary}\label{the canonical cone is sigma-filtered}
If $ \mathcal{B}$ is finitely bi-accessible, then for any $B$ the canonical pseudocone $ \mathcal{B}_\omega \downarrow B$ is bifiltered.  
\end{corollary}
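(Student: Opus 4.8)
The plan is to read this off directly from the preceding \Cref{the pseudococone is bifiltered}, which already carries all the genuine content. The only gap between that lemma and the present statement is cosmetic: the lemma is phrased relative to a fixed generator $\mathcal{B}_0$ together with a chosen bifiltered presentation $F : I_B \to \mathcal{B}_0$ of $B$, whereas the corollary simply asserts the resulting conclusion about the pseudoslice $\mathcal{B}_\omega \downarrow B$ of \emph{all} bicompact objects. So the first step is to produce such a presentation: by the very definition of finite bi-accessibility (\Cref{sigma accessible cat}), any object $B$ of $\mathcal{B}$ is a bifiltered bicolimit of objects of the generator, i.e.\ there is a bifiltered $2$-category $I_B$ and a pseudofunctor $F : I_B \to \mathcal{B}_0$ with $B \simeq \bicolim F$.

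With this presentation in hand, the second step is a one-line invocation of \Cref{the pseudococone is bifiltered}: that lemma shows the canonical inclusion $I_B \hookrightarrow \mathcal{B}_\omega \downarrow B$, sending $i$ to $q_i : F(i) \to B$, is bicofinal, and since $I_B$ is bifiltered it transfers bifilteredness along this bicofinal inclusion (cf.\ the mechanism of \Cref{cofinal functors transfer sigma-filteredness}) to conclude that $\mathcal{B}_\omega \downarrow B$ is itself bifiltered. Thus the statement is obtained immediately.

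There is essentially no obstacle here beyond what is already proved in \Cref{the pseudococone is bifiltered}; the only point meriting a word is that the conclusion does not depend on the chosen presentation $F$, which is automatic because $\mathcal{B}_\omega \downarrow B$ is a fixed $2$-category attached to $B$ and the lemma establishes its bifilteredness whenever \emph{any} such bifiltered presentation exists. Accordingly the proof reduces to: \emph{Immediate from \Cref{the pseudococone is bifiltered}, since finite bi-accessibility guarantees that each $B$ admits a bifiltered bicolimit decomposition by objects of a generator $\mathcal{B}_0$ of bicompact objects.}
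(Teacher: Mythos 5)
Your proof is correct and follows exactly the paper's own route: the paper's proof reads ``Combine the lemma above with \Cref{cofinal functors transfer sigma-filteredness}'', i.e.\ obtain a bifiltered presentation from the definition of bi-accessibility, get bicofinality of the inclusion $I_B \hookrightarrow \mathcal{B}_\omega \downarrow B$ from \Cref{the pseudococone is bifiltered}, and transfer bifilteredness along it. Your write-up merely makes these same three steps explicit.
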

\begin{proof}
Combine the lemma above with \Cref{cofinal functors transfer sigma-filteredness}.
\end{proof}

\begin{remark}
As a consequence, we know also from \Cref{key lemma} that the oplax cocone $ \mathcal{B}_\omega \Downarrow B$ is $\sigma$-filtered for $\Cart_B$, as the pseudococone coincides with the restriction of the oplax cocone to cartesian arrows. Moreover by $\sigma$-cofinality, we also have a $\sigma$-filtered $\sigma$-bicolimit decomposition $ B \simeq \Cart_B \bicolim \; \mathcal{B}_\omega \Downarrow B$.
\end{remark}

As in the $1$-dimensional case, we can provide an equivalent characterization of accessibility, providing a canonical candidate for the choice of the generator in an accessible category. The corollary below is the $2$-dimensional analog of \cite[page 19]{makkai1989accessible}.

\begin{corollary}
\label{characterization of biacc with pseudococone}
A 2-category $\mathcal{B}$ is {finitely bi-accessible} if and only if \begin{itemize}
    \item $\mathcal{B}$ has bifiltered bicolimits
    \item the full subcategory $ \mathcal{B}_\omega$ of bicompact objects is essentially small
    \item for any object $B$ the canonical pseudococone provides an equivalence
    \[ B \simeq \bicolim\; \mathcal{B}_\omega\downarrow B \]
\end{itemize}
\end{corollary}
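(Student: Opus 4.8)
The plan is to prove the two implications separately, leaning on the definition of bi-accessibility (\Cref{sigma accessible cat}) together with the two facts established just above: the essential smallness of the full sub-2-category of bicompact objects, and \Cref{the pseudococone is bifiltered}, which controls the canonical pseudococone. The whole corollary is really a repackaging of these ingredients into a characterization that singles out $\mathcal{B}_\omega$ as a \emph{canonical} generator, in the spirit of the one-dimensional statement of \cite{makkai1989accessible}.

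For the forward implication, assume $\mathcal{B}$ is finitely bi-accessible. The first bullet is immediate, as having bifiltered bicolimits is part of \Cref{sigma accessible cat}. The second bullet is exactly the lemma proved just before \Cref{the oplaxcocone is the category of element of the binerve}. For the third bullet, fix a generator $\mathcal{B}_0$ of bicompact objects witnessing bi-accessibility; by definition every $B$ comes equipped with a bifiltered 2-category $I_B$ and a pseudofunctor $F : I_B \rightarrow \mathcal{B}_0$ with $B \simeq \bicolim F$. Since $\mathcal{B}_0$ sits inside $\mathcal{B}_\omega$, I would invoke \Cref{the pseudococone is bifiltered} directly: it tells us both that $\mathcal{B}_\omega\downarrow B$ is bifiltered and that the canonical pseudococone yields $B \simeq \bicolim\;\mathcal{B}_\omega\downarrow B$. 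No extra computation is needed here.

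For the converse, assume the three displayed conditions and take the canonical choice $\mathcal{B}_0 := \mathcal{B}_\omega$. By hypothesis it is essentially small; it is full on $1$-cells and $2$-cells by construction; and it consists of bicompact objects by definition of $\mathcal{B}_\omega$. The third condition exhibits each $B$ as the bicolimit of the canonical pseudococone indexed by $\mathcal{B}_\omega\downarrow B$, and, this indexing 2-category being bifiltered, the presentation is precisely a bifiltered bicolimit of objects of $\mathcal{B}_0$. Together with the assumed existence of bifiltered bicolimits, this matches \Cref{sigma accessible cat} verbatim, so $\mathcal{B}$ is finitely bi-accessible. The remaining verifications (that the objects appearing along the canonical pseudococone are the bicompact ones, and that the universal properties line up) are pure bookkeeping.

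The one genuinely substantial point, and the step I expect to be the main obstacle, is the bifilteredness of the canonical diagram $\mathcal{B}_\omega\downarrow B$. In the forward direction this is not for free: it is obtained by transporting bifilteredness along the bicofinal inclusion of an \emph{a priori} bifiltered presentation $I_B \hookrightarrow \mathcal{B}_\omega\downarrow B$, which is exactly the work done in \Cref{the pseudococone is bifiltered}. In the backward direction this bifilteredness is what gives the third condition its force: the assertion that the \emph{canonical pseudococone} provides the equivalence $B \simeq \bicolim\;\mathcal{B}_\omega\downarrow B$ is understood as the statement that $B$ is the bifiltered bicolimit of its bicompact approximations, since a bare equivalence of objects over a possibly non-bifiltered $\mathcal{B}_\omega\downarrow B$ would not suffice to recover accessibility. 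Making this reading explicit is the only place where care is required; everything else reduces to assembling the definitions and the two cited results.
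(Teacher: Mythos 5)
Your proof is correct and follows exactly the route the paper intends for this corollary (which it states without a written proof): the forward direction is assembled from the essential-smallness lemma and \Cref{the pseudococone is bifiltered}, and the converse takes $\mathcal{B}_0 := \mathcal{B}_\omega$ as the generator in \Cref{sigma accessible cat}. Your explicit flagging of how the third bullet must be read --- namely that the canonical pseudococone is a \emph{bifiltered} bicolimit presentation, not merely an equivalence over a possibly non-bifiltered index --- is the intended interpretation (matching the one-dimensional statement of \cite{makkai1989accessible} that the paper cites), and it is genuinely necessary, since without that reading the converse implication would fail.
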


Moreover, from what precedes, we know this characterization can be rephrased in term of $\sigma$-filtered $\sigma$-bicolimits - observing that cocompleteness under bifiltered bicolimits is sufficient to entail cocompleteness under $\sigma$-filtered $\sigma$-bicolimit as stated in \Cref{bifiltered bicocompleteness entails sigma filtered sigma cocompleteness}:

\begin{corollary}
A 2-category $\mathcal{B}$ is {finitely bi-accessible} if and only if \begin{itemize}
    \item $\mathcal{B}$ has $\sigma$-filtered $\sigma$-bicolimits
    \item the full subcategory $ \mathcal{B}_\omega$ of bicompact objects is essentially small
    \item for any object $B$, the oplax cocone together with the class of cartesian arrows provides us with a $\sigma$-bicolimit:
    \[ B \simeq \Cart_B\bicolim\; \mathcal{B}_\omega\Downarrow B \]
\end{itemize}
\end{corollary}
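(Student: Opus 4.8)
The plan is to derive this characterization directly from its bifiltered counterpart \Cref{characterization of biacc with pseudococone} by matching the three conditions one at a time, using the dictionary between bifiltered and $\sigma$-filtered data developed in \Cref{sectionsigma}. For the first condition, the equivalence between ``$\mathcal{B}$ has bifiltered bicolimits'' and ``$\mathcal{B}$ has $\sigma$-filtered $\sigma$-bicolimits'' is exactly \Cref{bifiltered bicocompleteness entails sigma filtered sigma cocompleteness}. The second condition, essential smallness of $\mathcal{B}_\omega$, is literally the same in both statements, so nothing is required there.

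The heart of the argument is the equivalence of the two third conditions, for which I would show that the $\sigma$-bicolimit of the oplax cocone relative to the cartesian arrows agrees with the bicolimit of the pseudococone, i.e.
\[ \Cart_B\bicolim\; \mathcal{B}_\omega\Downarrow B \;\simeq\; \bicolim\; \mathcal{B}_\omega\downarrow B. \]
The key structural observation, recorded in \Cref{the oplaxcocone is the category of element of the binerve}, is that the pseudoslice $\mathcal{B}_\omega\downarrow B$ is precisely the full-on-objects-and-$2$-cells subcategory of the oplax slice $\mathcal{B}_\omega\Downarrow B = \int\nu_B$ whose $1$-cells are the cartesian morphisms $\Cart_B$. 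Once $(\mathcal{B}_\omega\Downarrow B,\Cart_B)$ is known to be a $\sigma$-filtered pair, the Trivialization \Cref{key lemma} identifies this cartesian subcategory as bifiltered and $\sigma$-cofinal, and \Cref{sigmafiltered colimits are bifiltered colimits} then yields the displayed equivalence verbatim. Consequently the decomposition $B \simeq \Cart_B\bicolim\,\mathcal{B}_\omega\Downarrow B$ holds if and only if $B \simeq \bicolim\,\mathcal{B}_\omega\downarrow B$ does, which closes the matching of the third conditions and, together with the first two, transports \Cref{characterization of biacc with pseudococone} into the present $\sigma$-formulation.

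The main obstacle is therefore to guarantee that the oplax cocone equipped with its cartesian arrows forms a $\sigma$-filtered pair, since this is exactly what makes the Trivialization \Cref{key lemma} applicable. In the direction ``bi-accessibility $\Rightarrow$ the three $\sigma$-conditions'' this is already supplied by \Cref{the pseudococone is bifiltered} together with the remark following \Cref{the canonical cone is sigma-filtered}, where the pseudoslice is shown to be bifiltered and the oplax slice to be $\sigma$-filtered for $\Cart_B$, so the first condition follows from \Cref{bifiltered bicocompleteness entails sigma filtered sigma cocompleteness}, the second from the essential-smallness lemma, and the third from that same remark. For the converse I would run the colimit equivalence backwards: from the hypothesis $B \simeq \Cart_B\bicolim\,\mathcal{B}_\omega\Downarrow B$ and the identification of $\mathcal{B}_\omega\downarrow B$ as the $\sigma$-cofinal cartesian part, one recovers $B \simeq \bicolim\,\mathcal{B}_\omega\downarrow B$ with $\mathcal{B}_\omega$ essentially small, which is condition three of \Cref{characterization of biacc with pseudococone} and hence delivers finite bi-accessibility. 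Thus the delicate point shared by both directions is precisely the $\sigma$-cofinality and bifilteredness of the cartesian part of the oplax cocone, everything else being a mechanical substitution of the two flavours of filtered colimit.
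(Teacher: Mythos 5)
Your proposal is correct and takes essentially the same route as the paper, which justifies this corollary in a single connecting sentence as a rephrasing of \Cref{characterization of biacc with pseudococone}: \Cref{bifiltered bicocompleteness entails sigma filtered sigma cocompleteness} handles the first condition, and the remark following \Cref{the canonical cone is sigma-filtered} (resting on \Cref{the pseudococone is bifiltered}, \Cref{key lemma} and \Cref{cofinal functors and bicolimits}) identifies $\Cart_B\bicolim\,\mathcal{B}_\omega\Downarrow B$ with $\bicolim\,\mathcal{B}_\omega\downarrow B$. Your write-up simply makes this dictionary explicit, and correctly isolates as the crux the bifilteredness and $\sigma$-cofinality of the cartesian part of the oplax slice, which is exactly the point the paper invokes from the same sources and treats just as briefly.
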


This latter characterization, though seemingly redundant given the previous one, is of interest because it more directly relates with the notion of \emph{bidenseness} which is a condition on the \emph{binerve}.  

\begin{definition}
For $ F : \mathcal{C} \rightarrow \mathcal{D}$ a pseudofunctor with $ \mathcal{C}$ a small 2-category, the \emph{binerve} of $ F$ is the pseudofunctor $ \nu_F : \mathcal{D} \rightarrow \ps[\mathcal{C}^{\op}, \Cat]$ sending $ D$ on the pseudofunctor $ [F,D] : \mathcal{C}^{\op} \rightarrow \Cat$. A pseudofunctor $ F$ is said to be \emph{bidense} if its binerve $ \nu_F$ is pseudo-fully-faithful.
\end{definition}

As expected, bidenseness of a functor equates to the possibility to decompose any object as a bicolimit over a convenient notion of cocone from the functor to this object; while this bicolimit decomposition is usually stated in term of weights, it can be de-weighted into a $\sigma$-bicolimit:

\begin{lemma}
For a small full sub-2-category $\mathcal{C}$ of a 2-category $\mathcal{B}$, the following are equivalent:\begin{itemize}
    \item Any object $B$ in $ \mathcal{B}$ decomposes as the $\sigma$-bicolimit: $ B \simeq {\Sigma_B}\bicolim \; \mathcal{C} \Downarrow B$ for the $\sigma$-pair $ (\mathcal{C} \Downarrow B, \Sigma_B)$ where $\Sigma_B$ consists of all oplax cells above $B$ whose underlying 2-cell is invertible;
    \item The binerve 2-functor $ \nu_{\mathcal{C}} : \mathcal{B} \rightarrow \ps[\mathcal{C}^{\op}, \Cat]$ is pseudo-fully faithful, that is, $\mathcal{C}$ is a bidense subcategory of $\mathcal{B}$.
\end{itemize}
\end{lemma}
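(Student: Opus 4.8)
The plan is to reduce both conditions, for each fixed $B$, to the statement that a single comparison functor is an equivalence for all $B'$. Write $\pi_B : \mathcal{C}\Downarrow B \to \mathcal{B}$ for the projection $(C,a)\mapsto C$, and consider the tautological $\sigma$-cocone on $\pi_B$ with tip $B$, whose component at $(C,a)$ is $a$ itself. Whiskering it by a $1$-cell $f : B \to B'$ produces the comparison functor
\[ \Phi_{B'} : \mathcal{B}[B, B'] \longrightarrow \ps[\mathcal{C}\Downarrow B, \mathcal{B}]_{\op\Sigma_B}\big[\pi_B, \Delta_{B'}\big], \qquad f \longmapsto \big((C,a)\mapsto f\circ a\big). \]
By the universal property of conical $\sigma$-bicolimits recalled in \Cref{why}, the first condition holds for $B$ precisely when $\Phi_{B'}$ is an equivalence for every $B'$, since this is exactly the assertion that the tautological $\sigma$-cocone exhibits $B$ as $\Sigma_B\bicolim\,\mathcal{C}\Downarrow B$.

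The heart of the argument is a de-weighting equivalence identifying the codomain of $\Phi_{B'}$ with a hom-category of the binerve. Recall from \Cref{the oplaxcocone is the category of element of the binerve} that $\mathcal{C}\Downarrow B$ is the category of elements of the weight $\nu_{\mathcal{C}}(B)=\mathcal{B}[-,B]$ restricted to $\mathcal{C}^{\op}$, and that its cartesian morphisms are exactly the $(k,\phi)$ with $\phi$ invertible, i.e. the class $\Sigma_B$. I claim there is an equivalence, pseudonatural in $B'$,
\[ \ps[\mathcal{C}\Downarrow B, \mathcal{B}]_{\op\Sigma_B}\big[\pi_B, \Delta_{B'}\big] \;\simeq\; \ps[\mathcal{C}^{\op}, \Cat]\big[\nu_{\mathcal{C}}(B), \nu_{\mathcal{C}}(B')\big]. \]
To a pseudonatural transformation $\alpha$ one associates the $\sigma$-cocone with components $q_{(C,a)}:=\alpha_C(a)$, and with transition $2$-cell at a morphism $(k,\phi):(C_1,a_1)\to(C_2,a_2)$ given by pasting $\alpha_{C_1}(\phi):\alpha_{C_1}(a_1)\Rightarrow\alpha_{C_1}(a_2k)$ with the pseudonaturality constraint $\alpha_{C_1}(a_2k)\cong\alpha_{C_2}(a_2)k$; this cell is invertible exactly when $\phi$ is, hence precisely on $\Sigma_B$. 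Conversely a $\sigma$-cocone $q$ yields $\alpha_C(a):=q_{(C,a)}$, with action on a $2$-cell $\psi:a\Rightarrow a'$ given by $q_{(1_C,\psi)}$ and pseudonaturality constraint at $k:C_1\to C_2$ given by the transition cell $q_{(k,1)}$ along the cartesian lift $(k,1):(C_1,a_2k)\to(C_2,a_2)\in\Sigma_B$, which is invertible exactly because it lies over $\Sigma_B$. These two assignments are mutually inverse up to coherent isomorphism.

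Granting this, one checks that $\Phi_{B'}$ is carried by the de-weighting equivalence to the functor $f\mapsto\nu_{\mathcal{C}}(f)$, since both send $f:B\to B'$ to the data $a\mapsto f\circ a$. Therefore, for fixed $B$, the first condition (that $\Phi_{B'}$ is an equivalence for all $B'$) is literally the statement that the action $\mathcal{B}[B,B']\to\ps[\mathcal{C}^{\op},\Cat][\nu_{\mathcal{C}}(B),\nu_{\mathcal{C}}(B')]$ is an equivalence for all $B'$. Quantifying over all $B$ then gives exactly that $\nu_{\mathcal{C}}$ is pseudo-fully-faithful, i.e. the second condition.

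The main obstacle is the de-weighting equivalence itself. Conceptually it is the $2$-dimensional, $\sigma$-flavoured analogue of the classical Grothendieck-construction description of weighted cocones as cocones over the category of elements; making it precise requires a patient matching of the $\sigma$-cocone coherence axioms against the pseudonaturality and modification axioms, and in particular a verification that the invertibility imposed on $\Sigma_B$ corresponds exactly to the invertibility of the pseudonaturality constraints. This is precisely the point at which the choice of $\Sigma_B$ as the cartesian arrows — equivalently, the op-$\sigma$ orientation of $\sigma$-cocones stressed in \Cref{why} — is forced, and it is the only genuinely non-formal input; everything else is bookkeeping with the universal properties already established.
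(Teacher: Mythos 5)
Your proposal is correct and is essentially the paper's own argument: the paper's proof rests on exactly the same correspondence, sending a pseudonatural transformation $\phi : \nu_{\mathcal{C}}(B) \Rightarrow \nu_{\mathcal{C}}(B')$ to the $\sigma$-cocone with components $\phi_C(a)$ and transition cells obtained by pasting the pseudonaturality constraint with $\phi_{C_1}(\alpha)$ (your ``de-weighting equivalence''), and then invokes the universal property of $\Sigma_B\bicolim\;\mathcal{C}\Downarrow B$ (your comparison functor $\Phi_{B'}$). The only difference is organizational: you treat both implications symmetrically by packaging the correspondence as an equivalence of hom-categories, whereas the paper spells out only the direction producing a $1$-cell from a pseudonatural transformation, leaving functoriality in modifications as an exercise and the converse as immediate.
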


\begin{proof}
The binerve functor sends $B$ on the pseudofunctor $ \mathcal{B}[\iota_\mathcal{C}, B]$ where $ \iota_\mathcal{C}$ is the full inclusion of $\mathcal{C}$. In one direction, for $ B_1,B_2$, a pseudonatural transformation $ \phi : \mathcal{B}[\iota_\mathcal{C}, B_1] \rightarrow \mathcal{B}[\iota_\mathcal{C}, B_2]  $ defines uniquely a $ \sigma$-cocone indexed by the $\sigma$-pair $(\mathcal{C} \Downarrow B_2, \Sigma_B)$ as follows: take a pair $ (C, a) $ with $ a : C \rightarrow B_1$ to $\phi_C(a)$ and a 2-cell $(u,\alpha) : (C_1,a_1) \rightarrow (C_2,a_2)$ to the 2-cell % https://q.uiver.app/?q=WzAsMyxbMCwwLCIgdV4qXFxwaGlfe0NfMn0oYV8yKSAiXSxbMSwwLCJcXHBoaV97Q18xfShhXzJ1KSJdLFsyLDAsIlxccGhpX3tDXzF9KGFfMSkiXSxbMCwxLCJcXHBoaV91IFxcYXRvcCBcXHNpbWVxIiwwLHsibGV2ZWwiOjJ9XSxbMSwyLCJcXHBoaV97Q18xfShcXGFscGhhKSIsMCx7ImxldmVsIjoyfV1d
\[\begin{tikzcd}
	{ \phi_{C_2}(a_2)u } & {\phi_{C_1}(a_2u)} & {\phi_{C_1}(a_1)}
	\arrow["{\phi_u \atop \simeq}", Rightarrow, from=1-1, to=1-2]
	\arrow["{\phi_{C_1}(\alpha)}", Rightarrow, from=1-2, to=1-3]
\end{tikzcd}\]
where $ \phi_u$ is invertible pseudonaturality component of $\phi$ at $u$. This 2-cell is invertible whenever $\alpha$ is, that is, whenever $ (u,\alpha)$ is in $\Sigma_{B_1}$. This defines a $ \sigma$-cocone over $B_2$, and $ B_1 \simeq \sigma_{\Sigma_{B_1}}\!\bicolim \mathcal{C} \Downarrow B_1$ ensures that we end up with a unique 1-cell $\phi : B_1 \rightarrow B_2$ in $\mathcal{B}$; this moreover can be done functorially relative to natural modifications $ \phi \Rrightarrow \psi $, which we let as an exercise. The converse direction is obvious. 
\end{proof}

\begin{remark}\label{biacc as category of flat pseudofunctors}
In particular, it follows from the previous lemma and the discussion above that for any bi-accessible 2-category $ \mathcal{B}$, the pseudofully faithful inclusion $ \iota : \mathcal{B}_\omega \hookrightarrow \mathcal{B}$ is bidense, which amounts to saying that the corresponding binerve sending any $ B$ on the strict 2-functor $ \mathcal{B}[\iota, B] $, is a pseudo-fully faithful pseudofunctor,
% https://q.uiver.app/?q=WzAsMixbMCwwLCJcXG1hdGhjYWx7Qn0gIl0sWzEsMCwiXFxwc1soXFxtYXRoY2Fse0J9X1xcb21lZ2EpXntcXG9wfSwgXFxDYXRdIl0sWzAsMSwiXFxudV9cXG1hdGhjYWx7Qn0iLDAseyJzdHlsZSI6eyJ0YWlsIjp7Im5hbWUiOiJob29rIiwic2lkZSI6InRvcCJ9fX1dXQ==
\[\begin{tikzcd}
	{\mathcal{B} } & {\ps[(\mathcal{B}_\omega)^{\op}, \Cat]}
	\arrow["{\nu}", hook, from=1-1, to=1-2]
\end{tikzcd}\]
But the point of this observation is less to provide a further restatement of the definition of bi-accessibility in term of bidenseness than leading us to consider the binerve relative to bicompact objects. Instead, as in the 1-dimensional case, crucial properties of bi-accessible (and later bipresentable) 2-categories will be retrieved from the possibility to embed them into the category of pseudofunctors over their bicompact objects thanks to this binerve.   
\end{remark}

\begin{lemma}\label{binerve preserve bifiltered bicolimits}
The binerve pseudofunctor $ \nu: \mathcal{B} \hookrightarrow \ps[(\mathcal{B}_\omega)^{\op}, \Cat] $ preserves bifiltered bicolimits (and thus $\sigma$-filtered $\sigma$-bicolimits). 
\end{lemma}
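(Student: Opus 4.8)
The plan is to reduce the statement to the pointwise computation of bicolimits in the pseudofunctor 2-category $\ps[(\mathcal{B}_\omega)^{\op}, \Cat]$, combined with the defining lifting property of bicompact objects. First I would recall that, exactly as in the one-dimensional situation, bicolimits in a 2-category of pseudofunctors valued in $\Cat$ are computed pointwise: for a bifiltered $I$ and $F : I \rightarrow \mathcal{B}$, the universal property exhibits $\bicolim_{i \in I} (\nu F(i))$ as the pseudofunctor sending each bicompact $K$ to $\bicolim_{i \in I} (\nu F(i))(K)$. This is the main structural input, and I would either cite it as standard or derive it from the universal property, using that $\ps[(\mathcal{B}_\omega)^{\op}, \Cat][-, G]$ is computed as a bi-end over $\mathcal{B}_\omega$ and that $\Cat$ itself has and detects bicolimits objectwise.

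Granting the pointwise computation, evaluating the image at any bicompact object $K$ gives
\[ \big(\underset{i \in I}{\bicolim}\, \nu F\big)(K) \simeq \underset{i \in I}{\bicolim}\, (\nu F(i))(K) = \underset{i \in I}{\bicolim}\, \mathcal{B}[K, F(i)], \]
while on the other side $\nu(\bicolim_I F)(K) = \mathcal{B}[K, \bicolim_I F]$. The comparison $1$-cell $\bicolim_I \nu F \Rightarrow \nu(\bicolim_I F)$ induced by the universal property of the bicolimit, read off at $K$, is precisely the canonical functor
\[ \underset{i \in I}{\bicolim}\, \mathcal{B}[K, F(i)] \longrightarrow \mathcal{B}[K, \underset{I}{\bicolim}\, F]. \]
By \Cref{bicompact}, bicompactness of $K$ says exactly that this functor is an equivalence of categories. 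Since every object of $\mathcal{B}_\omega$ is bicompact, the comparison is an objectwise equivalence.

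Finally I would invoke the fact, already used in the proof of \Cref{finite bicolimit of sigma-compact are sigma-compact}, that objectwise pseudonatural equivalences are equivalences in $\ps[(\mathcal{B}_\omega)^{\op}, \Cat]$; hence the comparison $1$-cell is itself an equivalence, which is to say that $\nu$ carries the bifiltered bicolimiting cocone of $F$ to a bicolimiting cocone of $\nu F$. The parenthetical assertion about $\sigma$-filtered $\sigma$-bicolimits is then immediate from \Cref{bifiltered bicocompleteness entails sigma filtered sigma cocompleteness} together with \Cref{sigmafiltered colimits are bifiltered colimits}, since any $\sigma$-filtered $\sigma$-bicolimit reduces to a bifiltered bicolimit over the $\sigma$-cofinal subcategory $\Sigma$.

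The step I expect to be the main obstacle is justifying the pointwise formula for bicolimits in the pseudofunctor 2-category, and in particular assembling the family of objectwise comparison equivalences into a genuine \emph{pseudonatural} equivalence rather than a mere unstructured family: one must track the coherence data of the bicolimit cocones and verify that the invertible comparison $2$-cells supplied by bicompactness at each $K$ are pseudonatural in $K$. Everything else is then a formal consequence of \Cref{bicompact}.
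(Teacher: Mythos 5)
Your proof is correct, but it takes a genuinely different route from the paper's for the step that matters. The paper also starts from the objectwise equivalences $e_K : \mathcal{B}[\iota(K), \bicolim F] \simeq \bicolim_{i \in I}\mathcal{B}[\iota(K), F(i)]$ supplied by bicompactness, but then spends essentially the whole proof building the pseudonaturality data by hand: for each $f : K_1 \to K_2$ it uses the lifting properties of \Cref{lifting property of sigma-compact} and \Cref{A concrete description of sigma-compactness: 2-cells} to construct comparison isomorphisms $(e_f)_a$ between lifts, checks their naturality against $2$-cells $\phi : a_1 \Rightarrow a_2$, and assembles the result into a pseudonatural equivalence. You instead work with the canonical comparison $1$-cell $\bicolim_I \nu F \Rightarrow \nu(\bicolim_I F)$ induced by the universal property of the bicolimit in $\ps[(\mathcal{B}_\omega)^{\op}, \Cat]$: this is pseudonatural \emph{by construction}, its component at each $K$ is (canonically isomorphic to) the functor that \Cref{bicompact} asserts to be an equivalence, and then the standard fact that a pseudonatural transformation with equivalence components is an equivalence (the same fact the paper invokes inside the proof of \Cref{finite bicolimit of sigma-compact are sigma-compact}) finishes the argument. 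Your route is shorter and outsources all coherence-tracking to two standard facts (pointwise computation of bicolimits in pseudofunctor $2$-categories, and pointwise equivalences being equivalences), whereas the paper's route is more self-contained, exhibiting the coherence cells explicitly. Two small points to tighten: your phrase ``is precisely the canonical functor'' should be ``is canonically isomorphic to the canonical functor'' -- both functors are induced by the same cocone $(\mathcal{B}[K,q_i])_{i\in I}$, so they agree only up to an invertible $2$-cell, which is harmless since equivalences are stable under isomorphism; and your closing worry about ``assembling the objectwise equivalences into a pseudonatural equivalence'' is in fact already dissolved by your own argument, since the comparison you use is pseudonatural from the outset.
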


\begin{proof}
Take $ F :I \rightarrow \mathcal{B}$ with $ I$ bifiltered. In each bicompact object $K$ and  we have an equivalence
\[ \mathcal{B}[\iota(K), \bicolim \; F] \stackrel{e_K}{\simeq} \underset{i \in I}{\bicolim} \;\mathcal{B}[\iota(K), F(i)] \]
We must show this equivalence is pseudonatural in $ K$. Take $ f : K_1 \rightarrow K_2$ in $\mathcal{B}_\omega$: we must prove that any precomposition by $f$ of a lift is isomorphic to a lift of the precomposition by $f$. For any $ a : K_2 \rightarrow \bicolim\,F$ there is for some $i \in I$ an invertible 2-cell
% https://q.uiver.app/?q=WzAsMyxbMCwwLCJLXzIiXSxbMSwwLCJcXGJpY29saW0gXFwsIEYiXSxbMSwxLCJGKGkpIl0sWzAsMSwiYSJdLFsyLDEsInFfaSIsMl0sWzAsMiwieCIsMl0sWzEsNSwiXFx4aSBcXGF0b3AgXFxzaW1lcSIsMSx7InNob3J0ZW4iOnsidGFyZ2V0IjoyMH0sInN0eWxlIjp7ImJvZHkiOnsibmFtZSI6Im5vbmUifSwiaGVhZCI6eyJuYW1lIjoibm9uZSJ9fX1dXQ==
\[\begin{tikzcd}
	{K_2} & {\bicolim \, F} \\
	& {F(i)}
	\arrow["a", from=1-1, to=1-2]
	\arrow["{q_i}"', from=2-2, to=1-2]
	\arrow[""{name=0, anchor=center, inner sep=0}, "x"', from=1-1, to=2-2]
	\arrow["{\xi \atop \simeq}"{description}, Rightarrow, draw=none, from=1-2, to=0]
\end{tikzcd}\]
Then whiskering with $f$ produces a lift $(xf, \xi*f)$ of $ af$. Now take another lift $(x', \xi')$ of $ af$ 
% https://q.uiver.app/?q=WzAsMyxbMSwxLCJcXGJpY29saW0gXFwsIEYiXSxbMSwwLCJGKGkpIl0sWzAsMCwiS18xIl0sWzEsMCwicV9pIl0sWzIsMCwiYWYiLDJdLFsyLDEsIngnIl0sWzAsNSwiXFx4aScgXFxhdG9wIFxcc2ltZXEiLDEseyJzaG9ydGVuIjp7InRhcmdldCI6MjB9LCJzdHlsZSI6eyJib2R5Ijp7Im5hbWUiOiJub25lIn0sImhlYWQiOnsibmFtZSI6Im5vbmUifX19XV0=
\[\begin{tikzcd}
	{K_1} & {F(i')} \\
	& {\bicolim \, F}
	\arrow["{q_{i'}}", from=1-2, to=2-2]
	\arrow["af"', from=1-1, to=2-2]
	\arrow[""{name=0, anchor=center, inner sep=0}, "{x'}", from=1-1, to=1-2]
	\arrow["{\xi' \atop \simeq}"{description, pos=0.6}, Rightarrow, draw=none, from=2-2, to=0]
\end{tikzcd}\]
Since $ K_1$ is bicompact and $I$ is bifiltered, there exists $ d : i \rightarrow j $ and $ d' : i' \rightarrow j$ and an invertible 2-cell $ \gamma $ such that 
% https://q.uiver.app/?q=WzAsNSxbMCwxLCJLIl0sWzEsMCwiRihpKSJdLFszLDEsIlxcYmljb2xpbVxcLCBGIl0sWzEsMiwiRihpJykiXSxbMiwxLCJGKGknJykiXSxbMCwxLCJ4ZiJdLFsxLDIsInFfaSIsMCx7ImN1cnZlIjotMn1dLFswLDMsIngnIiwyXSxbMywyLCJxX3tpJ30iLDIseyJjdXJ2ZSI6M31dLFsxLDMsIiIsMSx7InN0eWxlIjp7ImJvZHkiOnsibmFtZSI6Im5vbmUifSwiaGVhZCI6eyJuYW1lIjoibm9uZSJ9fX1dLFs0LDIsInFfe2knJ30iLDFdLFsxLDQsIkYoZCkiLDFdLFszLDQsIkYoZCcpIiwxXSxbMCw0LCJcXGdhbW1hIFxcYXRvcCBcXHNpbWVxIiwxLHsic3R5bGUiOnsiYm9keSI6eyJuYW1lIjoibm9uZSJ9LCJoZWFkIjp7Im5hbWUiOiJub25lIn19fV0sWzQsNiwiXFx0aGV0YV9kIFxcYXRvcCBcXHNpbWVxIiwxLHsic2hvcnRlbiI6eyJ0YXJnZXQiOjIwfSwic3R5bGUiOnsiYm9keSI6eyJuYW1lIjoibm9uZSJ9LCJoZWFkIjp7Im5hbWUiOiJub25lIn19fV0sWzQsOCwiXFx0aGV0YV97ZCd9IFxcYXRvcCBcXHNpbWVxIiwxLHsic2hvcnRlbiI6eyJ0YXJnZXQiOjIwfSwic3R5bGUiOnsiYm9keSI6eyJuYW1lIjoibm9uZSJ9LCJoZWFkIjp7Im5hbWUiOiJub25lIn19fV1d
\[\begin{tikzcd}
	& {F(i)} \\
	K && {F(i'')} & {\bicolim\, F} \\
	& {F(i')}
	\arrow["xf", from=2-1, to=1-2]
	\arrow[""{name=0, anchor=center, inner sep=0}, "{q_i}", curve={height=-18pt}, from=1-2, to=2-4]
	\arrow["{x'}"', from=2-1, to=3-2]
	\arrow[""{name=1, anchor=center, inner sep=0}, "{q_{i'}}"', curve={height=18pt}, from=3-2, to=2-4]
	\arrow[draw=none, from=1-2, to=3-2]
	\arrow["{q_{i''}}"{description}, from=2-3, to=2-4]
	\arrow["{F(d)}"{description}, from=1-2, to=2-3]
	\arrow["{F(d')}"{description}, from=3-2, to=2-3]
	\arrow["{\gamma \atop \simeq}"{description}, draw=none, from=2-1, to=2-3]
	\arrow["{\theta_d \atop \simeq}"{description}, Rightarrow, draw=none, from=2-3, to=0]
	\arrow["{\theta_{d'} \atop \simeq}"{description}, Rightarrow, draw=none, from=2-3, to=1]
\end{tikzcd} = \begin{tikzcd}
	& {F(i)} \\
	{K_1} && {\bicolim \, F} \\
	& {F(i')}
	\arrow["xf", from=2-1, to=1-2]
	\arrow["{q_i}", from=1-2, to=2-3]
	\arrow[""{name=0, anchor=center, inner sep=0}, "af"{description}, from=2-1, to=2-3]
	\arrow["{x'}"', from=2-1, to=3-2]
	\arrow["{q_{i'}}"', from=3-2, to=2-3]
	\arrow["{\xi*f \atop \simeq}"{description}, Rightarrow, draw=none, from=1-2, to=0]
	\arrow["{\xi'\atop \simeq}"{description}, Rightarrow, draw=none, from=3-2, to=0]
\end{tikzcd}\]
This means that $ xf$ and $ x'$ are isomorphic in $ {\bicolim}_{i \in I}\mathcal{B}[K, F(i)]$. Denote as $ (e_f)_a$ this isomorphism. Then we claim this produces a pseudocommutative square 
% https://q.uiver.app/?q=WzAsNCxbMCwwLCJcXG1hdGhjYWx7Qn1bS18yLCBcXGJpY29saW0gRl0iXSxbMCwxLCJcXG1hdGhjYWx7Qn1bS18xLCBcXGJpY29saW0gRl0iXSxbMSwwLCJcXHVuZGVyc2V0e2kgXFxpbiBJfXtcXGJpY29saW19W0tfMiwgRihpKV0iXSxbMSwxLCJcXHVuZGVyc2V0e2kgXFxpbiBJfXtcXGJpY29saW19W0tfMSwgRihpKV0iXSxbMCwxLCJcXG1hdGhjYWx7Qn1bZiwgXFxiaWNvbGltIEZdIiwyXSxbMCwyLCJlX3tLXzJ9IFxcYXRvcCBcXHNpbWVxIl0sWzIsMywiXFx1bmRlcnNldHtpIFxcaW4gSX17XFxiaWNvbGltfVtmLCBGKGkpXSJdLFsxLDMsImVfe0tfMX0gXFxhdG9wIFxcc2ltZXEiLDJdLFs1LDcsImVfZiBcXGF0b3AgXFxzaW1lcSIsMSx7InNob3J0ZW4iOnsic291cmNlIjoyMCwidGFyZ2V0IjoyMH0sInN0eWxlIjp7ImJvZHkiOnsibmFtZSI6Im5vbmUifSwiaGVhZCI6eyJuYW1lIjoibm9uZSJ9fX1dXQ==
\[\begin{tikzcd}
	{\mathcal{B}[\iota(K_2), \bicolim  \;F]} & {\underset{i \in I}{\bicolim} \;\mathcal{B}[\iota(K_2), F(i)]} \\
	{\mathcal{B}[\iota(K_1), \bicolim \; F]} & {\underset{i \in I}{\bicolim}\;\mathcal{B}[\iota(K_1), F(i)]}
	\arrow["{\mathcal{B}[f, \bicolim \;F]}"', from=1-1, to=2-1]
	\arrow[""{name=0, anchor=center, inner sep=0}, "{e_{K_2} \atop \simeq}", from=1-1, to=1-2]
	\arrow["{\underset{i \in I}{\bicolim}\;[f, F(i)]}", from=1-2, to=2-2]
	\arrow[""{name=1, anchor=center, inner sep=0}, "{e_{K_1} \atop \simeq}"', from=2-1, to=2-2]
	\arrow["{e_f \atop \simeq}"{description}, Rightarrow, draw=none, from=0, to=1]
\end{tikzcd}\]
Now consider $ \phi : a_1 \Rightarrow a_2$ in $\mathcal{B}[K_2, \bicolim F]$. It lifts as a 2-cell $ \psi : F(d_1)x_1 \Rightarrow F(d_2)x_2$ with $ d_1 : i_1 \rightarrow j$ and $ d_2 : i_2 \rightarrow j$ with $ (x_1, \xi_1)$ cartesian and $ (x_2, \xi_2)$ lifts of $a_1$ and $ a_2$ respectively, and whiskering with $ f$ provides lift of $ f^*\phi$; now if we choose an alternative lift $ \psi' : F(i_1')x_1' \Rightarrow F(i_2')x_2'$ of $ f*\phi$, we know that there are isomorphisms $ (e^f)_{a_1} : x_1f \simeq x_1'$ and $ (e^f)_{a_2} : x_2f \simeq x_2'$ in $ {\bicolim}_{i \in I}\mathcal{B}[K, F(i)]$, and we have a commutation of 2-cells
% https://q.uiver.app/?q=WzAsNCxbMCwwLCJxX3tpXzF9eF8xZiJdLFswLDEsInFfe2lfMn14XzJmIl0sWzEsMCwicV97aV8xJ314XzEnIl0sWzEsMSwicV97aV8yJ314XzInIl0sWzAsMSwiXFxwc2kqIGYiLDIseyJsZXZlbCI6Mn1dLFswLDIsIihlXmYpX3thXzF9IFxcYXRvcCBcXHNpbWVxIiwwLHsibGV2ZWwiOjJ9XSxbMSwzLCIoZV5mKV97YV8yfSBcXGF0b3AgXFxzaW1lcSIsMix7ImxldmVsIjoyfV0sWzIsMywiXFxwc2knIiwwLHsibGV2ZWwiOjJ9XV0=
\[\begin{tikzcd}
	{q_{i_1}x_1f} & {q_{i_1'}x_1'} \\
	{q_{i_2}x_2f} & {q_{i_2'}x_2'}
	\arrow["{\psi* f}"', Rightarrow, from=1-1, to=2-1]
	\arrow["{(e_f)_{a_1} \atop \simeq}", Rightarrow, from=1-1, to=1-2]
	\arrow["{(e_f)_{a_2} \atop \simeq}"', Rightarrow, from=2-1, to=2-2]
	\arrow["{\psi'}", Rightarrow, from=1-2, to=2-2]
\end{tikzcd}\]
which expresses the naturality of $e_f$. Finally, hence the $e_f$, for $f$ ranging over all 1-cell between bicompact objects, altogether define a pseudonatural equivalence 
\[  \mathcal{B}[\iota, \bicolim  \; F] \stackrel{e}{\simeq} \underset{i \in I}{\bicolim} \; \mathcal{B}[\iota, F(i)] \]
\end{proof}

We are going to see later (\Cref{sigma-accessible 2-cat are categories of flat functors}) that this binerve pseudofunctor identifies any bi-accessible 2-category with the 2-category of \emph{flat pseudofunctors} in the sense of \cite{descotte2018sigma} over its generator of bicompact objects. But we defer the study of flatness to a further section, and turn now to bipresentable 2-categories where further properties will be extracted from the binerve.

\subsection{Bipresentable 2-categories}

\begin{definition}[Bipresentable 2-categories] \label{bipres}
A 2-category is said to be \emph{finitely bipresentable} if it is finitely bi-accessible and has all small weighted bicolimits. We define $ \lambda$-bipresentable $2$-categories as $\lambda$-bi-accessible $2$-categories with all small weighted bicolimits.
\end{definition}

It is worth detailing how one can see directly in the case of a finitely bipresentable 2-category why the canonical diagram $ (\mathcal{B}_\omega \Downarrow B, \Cart_B)$ is a $\sigma$-filtered pair and, for this very reason, the pseudoslice $ \mathcal{B}_\omega \downarrow B $ is lax-cofinal in $\mathcal{B}_\omega \Downarrow B$ relatively to cartesian morphisms. From \Cref{finite bicolimit of sigma-compact are sigma-compact}, we know that finitely weighted bicolimits of bicompacts are bicompacts. This encompasses in particular bicoproducts, bicoinserter and bicoequifiers. Altogether, these arguments will ensure that the pair made of the oplax-slice together with is cartesian morphisms $(\mathcal{B}_\omega \Downarrow B, \Cart_B)$ is $\sigma$-filtered - from which we automatically deduce bifilteredness and cofinality of the pseudoslice.

\begin{division}[Discrete cones via bicoproducts]\label{Discrete cones via bicoproducts}
Hence in $\mathcal{B}_\omega \Downarrow B$ if one has two objects $ (K_1, a_1) $ and $ (K_2, a_2)$, then there is a common factorization through the bicoproduct
% https://q.uiver.app/?q=WzAsNCxbMSwxLCJLXzErS18yIl0sWzAsMiwiS18yIl0sWzAsMCwiS18xIl0sWzMsMSwiQiJdLFsyLDAsInFfMSIsMl0sWzEsMCwicV8yIl0sWzAsMywiXFxsYW5nbGUgYV8xLCBhXzIgXFxyYW5nbGUiLDFdLFsyLDMsImFfMSIsMCx7ImN1cnZlIjotMn1dLFsxLDMsImFfMiIsMix7ImN1cnZlIjoyfV0sWzAsNywiXFxhbHBoYV8xIFxcYXRvcCBcXHNpbWVxIiwxLHsic2hvcnRlbiI6eyJ0YXJnZXQiOjIwfSwic3R5bGUiOnsiYm9keSI6eyJuYW1lIjoibm9uZSJ9LCJoZWFkIjp7Im5hbWUiOiJub25lIn19fV0sWzAsOCwiXFxhbHBoYV8yIFxcYXRvcCBcXHNpbWVxIiwxLHsic2hvcnRlbiI6eyJ0YXJnZXQiOjIwfSwic3R5bGUiOnsiYm9keSI6eyJuYW1lIjoibm9uZSJ9LCJoZWFkIjp7Im5hbWUiOiJub25lIn19fV1d
\[\begin{tikzcd}
	{K_1} \\
	& {K_1+K_2} && B \\
	{K_2}
	\arrow["{q_1}"', from=1-1, to=2-2]
	\arrow["{q_2}", from=3-1, to=2-2]
	\arrow["{\langle a_1, a_2 \rangle}"{description}, from=2-2, to=2-4]
	\arrow[""{name=0, anchor=center, inner sep=0}, "{a_1}", curve={height=-12pt}, from=1-1, to=2-4]
	\arrow[""{name=1, anchor=center, inner sep=0}, "{a_2}"', curve={height=12pt}, from=3-1, to=2-4]
	\arrow["{\alpha_1 \atop \simeq}"{description}, Rightarrow, draw=none, from=2-2, to=0]
	\arrow["{\alpha_2 \atop \simeq}"{description}, Rightarrow, draw=none, from=2-2, to=1]
\end{tikzcd}\]
and from the universal property of the bicolimit, one has invertible 2-cells $ \alpha_1 : a_1 \simeq \langle a_1, a_2 \rangle q_1$ and $ \alpha_2: a_2 \simeq \langle a_1, a_2 \rangle q_2$ exhibiting the pairs $(q_1, \alpha_1) : (K_1, a_1) \rightarrow (K_1 + K_2, \langle a_1, a_2 \rangle )$, $(q_2, \alpha_2) : (K_2, a_2) \rightarrow (K_1 + K_2, \langle a_1, a_2 \rangle )$ as a span of cartesian arrows in $\mathcal{B}_\omega \Downarrow B$.
\end{division}

\begin{division}[Insertion of 2-cells]\label{Insertion of 2-cells}

For a parallel pair $(k_1,\phi), (k_2, \sigma) : (K_1, a_1) \rightrightarrows (K_2, a_2)$ with $ (k_2,\sigma)$ cartesian, that is with $ \sigma : a_2k_2\simeq a_1$ invertible, then $ a_2$ inserts a 2-cell 
% https://q.uiver.app/?q=WzAsNCxbMCwxLCJLXzEiXSxbMiwxLCJCIl0sWzEsMCwiS18yIl0sWzEsMiwiS18yIl0sWzAsMiwia18xIiwwLHsib2Zmc2V0IjotMX1dLFsyLDEsImFfMiJdLFswLDEsImFfMSIsMV0sWzAsMywia18yIiwyXSxbMywxLCJhXzIiLDJdLFs2LDMsIlxcc2lnbWFeey0xfSBcXGF0b3AgXFxzaW1lcSIsMSx7InNob3J0ZW4iOnsic291cmNlIjoyMH0sInN0eWxlIjp7ImJvZHkiOnsibmFtZSI6Im5vbmUifX19XSxbMiw2LCJcXHBoaSIsMCx7InNob3J0ZW4iOnsidGFyZ2V0IjoyMH19XV0=
\[\begin{tikzcd}
	& {K_2} \\
	{K_1} && B \\
	& {K_2}
	\arrow["{k_1}", shift left=1, from=2-1, to=1-2]
	\arrow["{a_2}", from=1-2, to=2-3]
	\arrow[""{name=0, anchor=center, inner sep=0}, "{a_1}"{description}, from=2-1, to=2-3]
	\arrow["{k_2}"', from=2-1, to=3-2]
	\arrow["{a_2}"', from=3-2, to=2-3]
	\arrow["{\sigma^{-1} \atop \simeq}"{description}, Rightarrow,draw=none, from=0, to=3-2]
	\arrow["\phi", shorten >=3pt, Rightarrow, from=1-2, to=0]
\end{tikzcd}\]
Hence we have a factorizations of $a_2$ through the bicoinserter of $k_1,k_2$ (which is bicompact):
% https://q.uiver.app/?q=WzAsMyxbMCwwLCJLXzIiXSxbMSwxLCJCIl0sWzIsMCwiXFx0ZXh0YmZ7Y29JbnN9KGtfMSxrXzIpIl0sWzIsMSwiXFxsYW5nbGUgXFxzaWdtYV57LTF9XFxwaGkgXFxyYW5nbGUiXSxbMCwyLCJxX3soa18xLGtfMil9Il0sWzAsMSwiYV8yIiwyXSxbNCwxLCJcXGFscGhhIFxcYXRvcCBcXHNpbWVxIiwxLHsic2hvcnRlbiI6eyJzb3VyY2UiOjIwfSwic3R5bGUiOnsiYm9keSI6eyJuYW1lIjoibm9uZSJ9LCJoZWFkIjp7Im5hbWUiOiJub25lIn19fV1d
\[\begin{tikzcd}
	{K_2} && {\textbf{coIns}(k_1,k_2)} \\
	& B
	\arrow["{\langle \sigma^{-1}\phi \rangle}", from=1-3, to=2-2]
	\arrow[""{name=0, anchor=center, inner sep=0}, "{q_{(k_1,k_2)}}", from=1-1, to=1-3]
	\arrow["{a_2}"', from=1-1, to=2-2]
	\arrow["{\alpha \atop \simeq}"{description}, Rightarrow, draw=none, from=0, to=2-2]
\end{tikzcd}\]
which provides in particular a cartesian 1-cell in $\mathcal{B}_\omega \Downarrow B$. This inserts a 2-cell % https://q.uiver.app/?q=WzAsNCxbMCwxLCIoS18xLGFfMSkiXSxbMSwwLCIoS18yLGFfMikiXSxbMiwxLCIoXFx0ZXh0YmZ7Y29JbnN9KGtfMSxrXzIpLFxcbGFuZ2xlIFxcc2lnbWFeey0xfVxccGhpIFxccmFuZ2xlKSJdLFsxLDIsIihLXzIsYV8yKSJdLFswLDEsIihrXzEsIFxccGhpKSIsMCx7Im9mZnNldCI6LTF9XSxbMSwyLCIocV97KGtfMSxrXzIpfSwgXFxhbHBoYSApIl0sWzAsMywiKGtfMiwgXFxzaWdtYSkiLDIseyJvZmZzZXQiOjF9XSxbMywyLCIocV97KGtfMSxrXzIpfSwgXFxhbHBoYSApIiwyXSxbMSwzLCJcXHBoaV97KGtfMSxrXzIpfSIsMSx7InNob3J0ZW4iOnsic291cmNlIjoxMCwidGFyZ2V0IjoyMH0sImxldmVsIjoyfV1d
\[\begin{tikzcd}
	& {(K_2,a_2)} \\
	{(K_1,a_1)} && {(\textbf{coIns}(k_1,k_2),\langle \sigma^{-1}\phi \rangle)} \\
	& {(K_2,a_2)}
	\arrow["{(k_1, \phi)}", shift left=1, from=2-1, to=1-2]
	\arrow["{(q_{(k_1,k_2)}, \alpha )}", from=1-2, to=2-3]
	\arrow["{(k_2, \sigma)}"', shift right=1, from=2-1, to=3-2]
	\arrow["{(q_{(k_1,k_2)}, \alpha )}"', from=3-2, to=2-3]
	\arrow["{\phi_{(k_1,k_2)}}"{description}, shorten <=3pt, shorten >=6pt, Rightarrow, from=1-2, to=3-2]
\end{tikzcd}\]
where $\phi_{(k_1,k_2)}$ is the universal 2-cell inserted by the bicoinserter.
\end{division}

\begin{division}[Equification of parallel 2-cells]\label{Equification of parallel 2-cells}
If now one has parallel 2-cells 
% https://q.uiver.app/?q=WzAsMixbMCwwLCIoS18xLGFfMSkiXSxbMiwwLCIoS18yLGFfMikiXSxbMCwxLCIoa18xLCBcXHBoaSkiLDAseyJjdXJ2ZSI6LTN9XSxbMCwxLCIoa18yLCBcXHNpZ21hKSIsMix7ImN1cnZlIjozfV0sWzIsMywiXFxwaGlfMiIsMCx7Im9mZnNldCI6LTQsInNob3J0ZW4iOnsic291cmNlIjoyMCwidGFyZ2V0IjoyMH19XSxbMiwzLCJcXHBoaV8xIiwyLHsib2Zmc2V0Ijo0LCJzaG9ydGVuIjp7InNvdXJjZSI6MjAsInRhcmdldCI6MjB9fV1d
\[\begin{tikzcd}
	{(K_1,a_1)} && {(K_2,a_2)}
	\arrow[""{name=0, anchor=center, inner sep=0}, "{(k_1, \phi)}", curve={height=-18pt}, from=1-1, to=1-3]
	\arrow[""{name=1, anchor=center, inner sep=0}, "{(k_2, \sigma)}"', curve={height=18pt}, from=1-1, to=1-3]
	\arrow["{\theta_2}", shift left=4, shorten <=5pt, shorten >=5pt, Rightarrow, from=0, to=1]
	\arrow["{\theta_1}"', shift right=4, shorten <=5pt, shorten >=5pt, Rightarrow, from=0, to=1]
\end{tikzcd}\]
then one has an equality of 2-cell in the oplax slice
% https://q.uiver.app/?q=WzAsMyxbMCwwLCJLXzEiXSxbMiwwLCJLXzIiXSxbMSwxLCJCIl0sWzAsMSwia18xIl0sWzEsMiwiYV8yIl0sWzAsMiwiYV8xIiwyXSxbNCw1LCJcXHBoaSIsMix7InNob3J0ZW4iOnsic291cmNlIjoyMCwidGFyZ2V0IjoyMH19XV0=
\[ \begin{tikzcd}
	{K_1} && {K_2} \\
	& B
	\arrow[""{name=0, anchor=center, inner sep=0}, "{k_2}"{description}, from=1-1, to=1-3]
	\arrow["{a_2}", from=1-3, to=2-2]
	\arrow["{a_1}"', from=1-1, to=2-2]
	\arrow[""{name=1, anchor=center, inner sep=0}, "{k_1}", curve={height=-18pt}, from=1-1, to=1-3]
	\arrow["{\sigma \atop \simeq}"{description}, Rightarrow, draw=none, from=0, to=2-2]
	\arrow["{\theta_1}", shorten <=2pt, shorten >=2pt, Rightarrow, from=1, to=0]
\end{tikzcd} =
\begin{tikzcd}
	{K_1} && {K_2} \\
	& B
	\arrow["{k_1}", from=1-1, to=1-3]
	\arrow[""{name=0, anchor=center, inner sep=0}, "{a_2}", from=1-3, to=2-2]
	\arrow[""{name=1, anchor=center, inner sep=0}, "{a_1}"', from=1-1, to=2-2]
	\arrow["\phi"', shorten <=6pt, shorten >=6pt, Rightarrow, from=0, to=1]
\end{tikzcd} = 
\begin{tikzcd}
	{K_1} && {K_2} \\
	& B
	\arrow[""{name=0, anchor=center, inner sep=0}, "{k_2}"{description}, from=1-1, to=1-3]
	\arrow["{a_2}", from=1-3, to=2-2]
	\arrow["{a_1}"', from=1-1, to=2-2]
	\arrow[""{name=1, anchor=center, inner sep=0}, "{k_1}", curve={height=-18pt}, from=1-1, to=1-3]
	\arrow["{\sigma \atop \simeq}"{description}, Rightarrow, draw=none, from=0, to=2-2]
	\arrow["{\theta_2}", shorten <=2pt, shorten >=2pt, Rightarrow, from=1, to=0]
\end{tikzcd}\]
Hence $ a_2 $ coequifies $ \theta_1$ and $ \theta_2$, hence we have as above a factorizations through the bicoequifier (which is bicompact)
\[\begin{tikzcd}
	{K_2} && {\textbf{coEq}(\theta_1,\theta_2)} \\
	& B
	\arrow["{\langle \sigma^{-1}\phi \rangle}", from=1-3, to=2-2]
	\arrow[""{name=0, anchor=center, inner sep=0}, "{q_{(\theta_1,\theta_2)}}", from=1-1, to=1-3]
	\arrow["{a_2}"', from=1-1, to=2-2]
	\arrow["{\alpha \atop \simeq}"{description}, Rightarrow, draw=none, from=0, to=2-2]
\end{tikzcd}\] 
and this provides again a coequifying diagram in the following with the coequifying 1-cell being cartesian.
% https://q.uiver.app/?q=WzAsMyxbMCwwLCIoS18xLGFfMSkiXSxbMiwwLCIoS18yLGFfMikiXSxbMywwLCIoXFx0ZXh0YmZ7Y29FcX0oXFx0aGV0YV8xLFxcdGhldGFfMiksXFxsYW5nbGUgXFxzaWdtYV57LTF9XFxwaGkgXFxyYW5nbGUpIl0sWzAsMSwiKGtfMSwgXFxwaGkpIiwwLHsiY3VydmUiOi0zfV0sWzEsMiwiKHFfeyhcXHRoZXRhXzEsXFx0aGV0YV8yKX0sIFxcYWxwaGEgKSJdLFswLDEsIihrXzIsIFxcc2lnbWEpIiwyLHsiY3VydmUiOjN9XSxbMyw1LCJcXHRoZXRhXzIiLDAseyJvZmZzZXQiOi00LCJzaG9ydGVuIjp7InNvdXJjZSI6MjAsInRhcmdldCI6MjB9fV0sWzMsNSwiXFx0aGV0YV8xIiwyLHsib2Zmc2V0Ijo0LCJzaG9ydGVuIjp7InNvdXJjZSI6MjAsInRhcmdldCI6MjB9fV1d
\[\begin{tikzcd}
	{(K_1,a_1)} && {(K_2,a_2)} && {(\textbf{coEq}(\theta_1,\theta_2),\langle \sigma^{-1}\phi \rangle)}
	\arrow[""{name=0, anchor=center, inner sep=0}, "{(k_1, \phi)}", curve={height=-18pt}, from=1-1, to=1-3]
	\arrow["{(q_{(\theta_1,\theta_2)}, \alpha )}", from=1-3, to=1-5]
	\arrow[""{name=1, anchor=center, inner sep=0}, "{(k_2, \sigma)}"', curve={height=18pt}, from=1-1, to=1-3]
	\arrow["{\theta_2}", shift left=4, shorten <=5pt, shorten >=5pt, Rightarrow, from=0, to=1]
	\arrow["{\theta_1}"', shift right=4, shorten <=5pt, shorten >=5pt, Rightarrow, from=0, to=1]
\end{tikzcd}\]
From this together with \Cref{key lemma}, we see even more directly than in the bi-accessible case why the pseudoslice over the bicompacts is bifiltered and bicofinal thanks to internalization of finite bicolimits; this condenses to the following result: 
\end{division}

\begin{lemma} 
If $ \mathcal{B}$ is a finitely bipresentable 2-category, then for any $B$ the $\sigma$-pair $(\mathcal{B}_\omega\Downarrow B, \Cart_B)$ is $\sigma$-filtered; as a consequence, the pseudococone $ \mathcal{B}_\omega\downarrow B$ is bifiltered and cofinal in $\mathcal{B}_\omega\Downarrow B$ relatively to $\Cart_B $, and we have 
\[  B \simeq \bicolim \; \mathcal{B}_\omega \downarrow B \]
\end{lemma}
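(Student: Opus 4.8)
The plan is to read the three conditions of $\sigma$-filteredness for the pair $(\mathcal{B}_\omega \Downarrow B, \Cart_B)$ directly off the three preceding divisions, and then let the Trivialization lemma (\Cref{key lemma}) convert $\sigma$-filteredness into the asserted bifilteredness and cofinality of the cartesian part. Concretely, the span condition is furnished by \Cref{Discrete cones via bicoproducts}, where the bicoproduct $K_1 + K_2$ is bicompact by \Cref{finite bicolimit of sigma-compact are sigma-compact} and its two legs, together with the canonical invertible 2-cells, form a span in $\Cart_B$; the insertion condition is furnished by \Cref{Insertion of 2-cells}, the bicoinserter being again bicompact and carrying a cartesian arrow with its universal inserted 2-cell; and the equification condition is furnished by \Cref{Equification of parallel 2-cells} through the (bicompact) bicoequifier. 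The only clause of the $\sigma$-filtered axioms not produced by an explicit construction --- invertibility of the inserted 2-cell when both legs are cartesian --- is a formal consequence of the three conditions just established (by the standard whisker-and-equify argument), so no separate construction is needed. This shows $(\mathcal{B}_\omega \Downarrow B, \Cart_B)$ is $\sigma$-filtered.

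I would then apply \Cref{key lemma}: for a $\sigma$-filtered pair, the full-on-objects-and-2-cells subcategory spanned by the $\Sigma$-arrows is bifiltered and its inclusion is $\sigma$-cofinal relative to $\Sigma$. With $\Sigma = \Cart_B$ this subcategory is precisely the pseudoslice $\mathcal{B}_\omega \downarrow B$, whose 1-cells are exactly the cartesian morphisms; hence $\mathcal{B}_\omega \downarrow B$ is bifiltered and cofinal in $\mathcal{B}_\omega \Downarrow B$ relatively to $\Cart_B$.

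For the last claim I would take the $\sigma$-filtered $\sigma$-bicolimit presentation $B \simeq \Cart_B \bicolim \mathcal{B}_\omega \Downarrow B$ already available from bi-accessibility (the remark following \Cref{characterization of biacc with pseudococone}) and collapse it onto the cofinal bifiltered subcategory by \Cref{sigmafiltered colimits are bifiltered colimits}, equivalently by feeding the cofinality just obtained into \Cref{cofinal functors and bicolimits}; this yields $B \simeq \bicolim \mathcal{B}_\omega \downarrow B$. I do not anticipate a genuine obstacle, since all the substance has been front-loaded into the three divisions and into \Cref{key lemma}, leaving only a bookkeeping assembly. The one place warranting care is the matching of each auxiliary colimit inclusion (coproduct, coinserter, coequifier) with the ``$\Sigma$-arrow'' slot in the definition of $\sigma$-filteredness, that is, confirming that each such inclusion is genuinely cartesian --- which is exactly what the three divisions verify.
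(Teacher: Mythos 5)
Your proposal is correct and follows essentially the same route as the paper: the lemma is stated there with no separate proof precisely because it is meant to condense the three preceding divisions (bicoproducts, bicoinserters, bicoequifiers internal to the bicompacts) into the axioms of $\sigma$-filteredness, after which \Cref{key lemma} gives bifilteredness and $\sigma$-cofinality of the pseudoslice, and the $\sigma$-bicolimit decomposition from bi-accessibility collapses along this cofinal inclusion to yield $B \simeq \bicolim \, \mathcal{B}_\omega \downarrow B$, exactly as you assemble it. Your one point of extra care---that the invertibility clause of the insertion axiom follows formally from the three conditions (insert weakly in both directions, then equify the two round-trip composites against identities to make the whiskered insertion two-sided invertible)---is sound, and is the only spot where you are more explicit than the paper, which could equivalently invoke the bi-co-iso-inserter as another finite weighted bicolimit of bicompacts.
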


Now we are going to prove that, as like as in the 1-dimensional case, existence of bicolimits in a finitely accessible category is actually equivalent to existence of bilimits. This makes use of the binerve functor.

\begin{lemma} \label{reflectivity}
For a finitely bipresentable 2-category $ \mathcal{B}$ we have a biadjunction
% https://q.uiver.app/?q=WzAsMixbMCwwLCJcXG1hdGhjYWx7Qn0iXSxbMiwwLCJcXHBzW1xcbWF0aGNhbHtCfSwgXFxDYXRdX3AiXSxbMCwxLCJcXGlvdGEiLDIseyJzdHlsZSI6eyJ0YWlsIjp7Im5hbWUiOiJob29rIiwic2lkZSI6InRvcCJ9fX1dLFsxLDAsIkwiLDIseyJjdXJ2ZSI6M31dLFsxLDAsIiIsMSx7ImN1cnZlIjotMywic3R5bGUiOnsiYm9keSI6eyJuYW1lIjoibm9uZSJ9LCJoZWFkIjp7Im5hbWUiOiJub25lIn19fV0sWzMsNCwiXFxwZXJwIiwxLHsibGFiZWxfcG9zaXRpb24iOjMwLCJsZXZlbCI6MSwic3R5bGUiOnsiYm9keSI6eyJuYW1lIjoibm9uZSJ9LCJoZWFkIjp7Im5hbWUiOiJub25lIn19fV1d
\[\begin{tikzcd}
	\mathcal{B}	\arrow["\nu"', rr, bend right=20, start anchor=-25, end anchor=190, hook] & \perp & \arrow[ll, "L "', bend right=20, start anchor=170, end anchor=25] \ps[(\mathcal{B}_\omega)^{\op}, \Cat]
	\end{tikzcd}\]
where $ L$ denotes the left biKan extension $ L = \textup{biLan}_\hirayo \iota$. In particular, we can exhibit $ \mathcal{B}$ as a bireflective sub-2-category of $\ps[(\mathcal{B}_\omega)^{\op}, \Cat]$. 
\end{lemma}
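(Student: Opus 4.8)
The plan is to read off the biadjunction directly from the universal property of weighted bicolimits, leaning on two facts already available: that $\mathcal{B}$ is bicocomplete (being bipresentable) and that the binerve is pseudo-fully-faithful. First I would recall that, since $\mathcal{B}$ is finitely bi-accessible, the inclusion $\iota : \mathcal{B}_\omega \hookrightarrow \mathcal{B}$ is bidense, so by \Cref{biacc as category of flat pseudofunctors} the binerve $\nu$ is pseudo-fully-faithful. This is exactly the half of the statement concerning the right bi-adjoint, and it is what will upgrade a mere biadjunction to a \emph{bireflective} embedding.

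Next I would produce the left bi-adjoint as the prescribed biKan extension. Because $\mathcal{B}$ is finitely bipresentable it has all small weighted bicolimits and $\mathcal{B}_\omega$ is essentially small, so the left biKan extension $L := \biLan_\hirayo \iota$ exists and is computed pointwise. Indeed, by \Cref{Weighted colimit expression of pointwise biLan} we have $\biLan_\hirayo \iota (W) \simeq \bicolim^{\mathcal{N}_\hirayo(W)}\iota$ with $\mathcal{N}_\hirayo(W) = \ps[(\mathcal{B}_\omega)^\op,\Cat][\hirayo(-),W]$; the $2$-dimensional Yoneda lemma identifies $\mathcal{N}_\hirayo(W) \simeq W$, so that, in agreement with the cancellation rule of \Cref{cancellation rule},
\[ L(W) \;=\; \biLan_\hirayo \iota (W) \;\simeq\; \bicolim^{W}\iota. \]

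The core step is then a short chain of equivalences, for $W$ in $\ps[(\mathcal{B}_\omega)^\op,\Cat]$ and $B$ in $\mathcal{B}$, obtained by unwinding the defining universal property of the $W$-weighted bicolimit of the diagram $\iota : \mathcal{B}_\omega \rightarrow \mathcal{B}$:
\[ \mathcal{B}[L(W),B] \simeq \mathcal{B}[\bicolim^{W}\iota,\, B] \simeq \ps[(\mathcal{B}_\omega)^\op,\Cat]\big[W,\, \mathcal{B}[\iota(-),B]\big] = \ps[(\mathcal{B}_\omega)^\op,\Cat][W,\nu(B)]. \]
Here the middle equivalence is literally the universal property of $\bicolim^{W}\iota$ (with tip $B$), and the last identification is the very definition of the binerve $\nu(B) = \mathcal{B}[\iota(-),B]$. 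Since the weighted-bicolimit universal property is pseudonatural in the tip $B$, and the pointwise biKan/colimit formula is pseudonatural in the weight $W$, this composite equivalence is pseudonatural in both variables, which is precisely the data exhibiting a biadjunction $L \dashv \nu$ with right bi-adjoint $\nu$.

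Finally I would conclude bireflectiveness. A right bi-adjoint that is pseudo-fully-faithful exhibits its domain as a bireflective sub-$2$-category; equivalently, once $\nu$ is pseudo-fully-faithful the counit $L\nu \Rightarrow \mathrm{id}_{\mathcal{B}}$ is forced to be a pseudonatural equivalence. Combining this with the previous paragraphs gives the claim, with $L = \biLan_\hirayo \iota$ as the reflection. I expect the only real obstacle to be coherence bookkeeping rather than anything conceptual: one must verify that the separate equivalences in the display assemble into the modifications witnessing a genuine biadjunction, and that the pointwise biKan formula indeed delivers a bi-adjoint and not merely an objectwise left extension. Both are, however, guaranteed by \Cref{Weighted colimit expression of pointwise biLan} and the pseudonaturality already built into the bicolimit universal property, so no new construction is needed.
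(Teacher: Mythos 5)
Your proof is correct and takes essentially the same route as the paper's: both compute $L = \biLan_\hirayo\, \iota$ pointwise as a weighted bicolimit via \Cref{Weighted colimit expression of pointwise biLan}, then read off the biadjunction from the universal property of that bicolimit combined with the Yoneda identification $\mathcal{N}_\hirayo(W)\simeq W$ (you apply Yoneda before the hom-chain, the paper applies it as the last step of the chain — the same computation in a different order). Your explicit closing remark that pseudo-fully-faithfulness of $\nu$ (from \Cref{biacc as category of flat pseudofunctors}) upgrades the biadjunction to a bireflective embedding spells out what the paper leaves implicit, but it rests on the same prior results, so there is no genuine divergence.
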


\begin{proof}
By hypothesis, $ \mathcal{B}$ is bicocomplete, so we can compute the pointwise left biKan extension
% https://q.uiver.app/?q=WzAsMyxbMCwwLCJcXG1hdGhjYWx7Qn1fIFxcb21lZ2EiXSxbMCwxLCJcXHBzWyhcXG1hdGhjYWx7Qn1fXFxvbWVnYSlee1xcb3B9LCBcXENhdF0iXSxbMSwwLCJcXG1hdGhjYWx7Qn0iXSxbMCwyLCJcXGlvdGEiLDAseyJzdHlsZSI6eyJ0YWlsIjp7Im5hbWUiOiJob29rIiwic2lkZSI6InRvcCJ9fX1dLFswLDEsIlxcaGlyYXlvIiwyXSxbMSwyLCJcXGJpTGFuX1xcaGlyYXlvIFxcaW90YSIsMl0sWzMsNSwiXFxzaW1lcSIsMSx7InNob3J0ZW4iOnsic291cmNlIjoyMCwidGFyZ2V0IjoyMH0sInN0eWxlIjp7ImJvZHkiOnsibmFtZSI6Im5vbmUifSwiaGVhZCI6eyJuYW1lIjoibm9uZSJ9fX1dXQ==
\[\begin{tikzcd}
	{\mathcal{B}_ \omega} & {\mathcal{B}} \\
	{\ps[(\mathcal{B}_\omega)^{\op}, \Cat]}
	\arrow[""{name=0, anchor=center, inner sep=0}, "\iota", hook, from=1-1, to=1-2]
	\arrow["\hirayo"', from=1-1, to=2-1, hook]
	\arrow[""{name=1, anchor=center, inner sep=0}, "{\biLan_\hirayo \iota}"', from=2-1, to=1-2]
	\arrow["\simeq"{description}, Rightarrow, draw=none, from=0, to=1]
\end{tikzcd}\]
and this pointwise extension expresses as the weighted bicolimit as discussed in \Cref{Weighted colimit expression of pointwise biLan} and \Cref{cancellation rule},
\[ \biLan_\hirayo \iota(F) = \bicolim^{N_\hirayo(F)} \iota  \]
% where $ \mathcal{N}_\hirayo$ is the left biKan extension of $ \hirayo$ along itself, with $ F \simeq N_\hirayo(F)$ (see lemma on the binerve). 
% https://q.uiver.app/?q=WzAsMyxbMCwwLCJcXG1hdGhjYWx7Qn1fXFxvbWVnYSJdLFsxLDAsIlxccHNbXFxtYXRoY2Fse0J9X1xcb21lZ2Fee29wfSwgXFxDYXRdIl0sWzAsMSwiXFxwc1tcXG1hdGhjYWx7Qn1fXFxvbWVnYV57b3B9LCBcXENhdF0iXSxbMCwyLCJcXGhpcmF5byIsMix7InN0eWxlIjp7InRhaWwiOnsibmFtZSI6Imhvb2siLCJzaWRlIjoidG9wIn19fV0sWzAsMSwiXFxoaXJheW8iLDAseyJzdHlsZSI6eyJ0YWlsIjp7Im5hbWUiOiJob29rIiwic2lkZSI6InRvcCJ9fX1dLFsyLDEsIk5fXFxoaXJheW8gPSBcXGJpTGFuX1xcaGlyYXlvIFxcaGlyYXlvIl0sWzQsNSwiXFxzaW1lcSIsMSx7InNob3J0ZW4iOnsic291cmNlIjoyMCwidGFyZ2V0IjoyMH0sInN0eWxlIjp7ImJvZHkiOnsibmFtZSI6Im5vbmUifSwiaGVhZCI6eyJuYW1lIjoibm9uZSJ9fX1dXQ==
%and is hence a biequivalence by bidenseness, so that for any $ F$ in $\ps[(\mathcal{B}_\omega)^{\op}, \Cat]$ we have a pseudonatural equivalence $ F \simeq N_\hirayo(F)$. 
Though such a result is rather standard, let us check that this defines a left biadjoint to $\nu$, for the sake of completeness. For $ B $ in $ \mathcal{B}$ and $ F $ in $ \ps[(\mathcal{B}_\omega)^{\op}, \Cat]$ we have 
\begin{align*}
    \mathcal{B}[\biLan_\hirayo \iota(F), B] &\simeq \mathcal{B}[\bicolim^{N_\hirayo(F)} \iota, B] \\ 
    &\simeq \ps[(\mathcal{B}_\omega)^{\op}, \Cat]\big[ N_\hirayo(F), \mathcal{B}[\iota, B] \big] \\
    &\simeq \ps[(\mathcal{B}_\omega)^{\op}, \Cat]\big[ F, \nu_B\big]
\end{align*}
\end{proof}

%This says that the binerve exhibits $ \mathcal{B}$ as a sub 2-category of the pseudofunctor category which is moreover closed under bilimits. 

\begin{corollary}\label{bipres are bicomplete}
Any finitely bipresentable 2-category $ \mathcal{B}$ is bicomplete, and moreover, finite weighted bilimits commute with bifiltered bicolimits. 
\end{corollary}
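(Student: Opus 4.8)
The plan is to leverage the bireflection of \Cref{reflectivity} to transport both assertions from the presheaf $2$-category $\ps[(\mathcal{B}_\omega)^{\op}, \Cat]$, where they hold for elementary reasons, back to $\mathcal{B}$ along the binerve $\nu$.

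For bicompleteness, I would first observe that $\ps[(\mathcal{B}_\omega)^{\op}, \Cat]$ is bicomplete: weighted bilimits of pseudofunctors valued in $\Cat$ are computed pointwise, and $\Cat$ has all small weighted bilimits. By \Cref{reflectivity} the binerve exhibits $\mathcal{B}$ as a bireflective sub-$2$-category, i.e.\ $\nu$ is pseudo-fully-faithful and admits the left biadjoint $L = \biLan_\hirayo \iota$. Applying \Cref{bireflective subbicategories have bilimits} to this biadjunction then shows that $\mathcal{B}$ is closed under bilimits inside $\ps[(\mathcal{B}_\omega)^{\op}, \Cat]$, hence has all small weighted bilimits. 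Since $\mathcal{B}$ already has all small weighted bicolimits by the definition of bipresentability, it is bicomplete.

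For the commutation, fix a finite weight $W : J \to \Cat$, a bifiltered $2$-category $I$, and a pseudofunctor $F : I \times J \to \mathcal{B}$, and consider the canonical comparison $1$-cell $c : \underset{i \in I}{\bicolim}\, \underset{j \in J}{\bilim^{W}} F(i,j) \to \underset{j \in J}{\bilim^{W}} \underset{i \in I}{\bicolim}\, F(i,j)$. I would prove $c$ is an equivalence by transporting it along $\nu$. On the one hand $\nu$ preserves bifiltered bicolimits by \Cref{binerve preserve bifiltered bicolimits}; on the other hand, being a right biadjoint, $\nu$ preserves weighted bilimits (right biadjoints preserve weighted $\sigma$-bilimits, as shown above), in particular finite ones. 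Consequently $\nu(c)$ is, up to the coherence equivalences witnessing these two preservations, the analogous comparison $1$-cell in $\ps[(\mathcal{B}_\omega)^{\op}, \Cat]$ between $\underset{i}{\bicolim}\, \underset{j}{\bilim^{W}} \nu F(i,j)$ and $\underset{j}{\bilim^{W}} \underset{i}{\bicolim}\, \nu F(i,j)$.

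Finally, in $\ps[(\mathcal{B}_\omega)^{\op}, \Cat]$ both the finite bilimit and the bifiltered bicolimit are computed pointwise, so at each bicompact object $K$ the map $\nu(c)$ restricts to the canonical comparison between a finite weighted bilimit and a bifiltered bicolimit in $\Cat$, which is an equivalence by the commutation of bifiltered bicolimits with finite weighted bilimits in $\Cat$ recalled at the close of the filteredness section. Hence $\nu(c)$ is a pointwise equivalence, therefore an equivalence in the presheaf $2$-category; since $\nu$ is pseudo-fully-faithful it reflects equivalences, so $c$ itself is an equivalence, which is exactly the desired commutation. The main obstacle I anticipate is the bookkeeping required to identify $\nu(c)$ with the presheaf-level comparison map — checking that the equivalences witnessing preservation of bilimits and of bifiltered bicolimits by $\nu$ are mutually compatible with the two canonical comparison cells — together with justifying the pointwise computation of bicolimits in the pseudofunctor $2$-category, which is the $2$-dimensional analogue of the fact that colimits in functor categories are pointwise.
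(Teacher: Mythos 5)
Your proposal is correct and follows essentially the same route as the paper: bicompleteness via the bireflection of \Cref{reflectivity} combined with \Cref{bireflective subbicategories have bilimits}, and the commutation by transporting everything along the binerve (which preserves bifiltered bicolimits by \Cref{binerve preserve bifiltered bicolimits} and bilimits as a right biadjoint) into $\ps[(\mathcal{B}_\omega)^{\op}, \Cat]$, where both constructions are pointwise and the commutation holds in $\Cat$. Your version merely makes explicit the comparison $1$-cell and the reflection of equivalences by the pseudo-fully-faithful $\nu$, which the paper leaves implicit.
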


\begin{proof}
The first item comes from the general fact that bireflective sub-2-categories are closed under bilimits as seen at \Cref{bireflective subbicategories have bilimits}. In particular, if we see a finitely birepresentable 2-category $ \mathcal{B}$ as a category of pseudofunctor from its binerve, bilimits are computed pointwisely as in the pseudofunctor category. 

Moreover we saw at \Cref{binerve preserve bifiltered bicolimits} that the binerve preserves bifiltered bicolimits. Hence both bilimits and bifiltered bicolimits of $ \mathcal{B}$ are computed in $\ps[(\mathcal{B}_\omega)^{\op}, \Cat]$, where both are pointwise. But finite weighted bilimits commute with bifiltered bicolimit in $\Cat$, hence so do they in the pseudofunctor category, and in $\mathcal{B}$. 
\end{proof}

\begin{theorem}\label{Biacc is bipres iff bicomplete} The following are equivalent:
\begin{itemize}
    \item $\mathcal{B}$ is finitely bipresentable,
    \item $\mathcal{B}$ is finitely bi-accessible and bicomplete.
\end{itemize}
\end{theorem}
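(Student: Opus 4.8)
The plan is to treat the two implications separately; only the converse carries real content. For the forward direction, if $\mathcal{B}$ is finitely bipresentable then it is finitely bi-accessible by the very definition (\Cref{bipres}, \Cref{sigma accessible cat}), and it is bicomplete by the corollary immediately preceding this theorem, which already shows that a finitely bipresentable $2$-category is bicomplete. Hence bipresentability at once yields bi-accessibility together with bicompleteness, and no further work is needed here.

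For the converse, assume $\mathcal{B}$ is finitely bi-accessible and bicomplete; I must produce all small weighted bicolimits. The strategy is to exhibit $\mathcal{B}$ as a bireflective sub-$2$-category of the (bicocomplete) presheaf $2$-category $\ps[(\mathcal{B}_\omega)^{\op}, \Cat]$ through the binerve $\nu$, and then invoke \Cref{reflective sub2cat have bicolimit} to transport bicolimits back along the left biadjoint: once a biadjunction $L \dashv \nu$ is in hand with $\nu$ pseudo-fully faithful, every weighted bicolimit in $\mathcal{B}$ is computed as $L(\bicolim^{W} \nu F)$, so $\mathcal{B}$ becomes bicocomplete. By \Cref{biacc as category of flat pseudofunctors} the binerve is pseudo-fully faithful, and by \Cref{binerve preserve bifiltered bicolimits} it preserves bifiltered bicolimits; moreover each hom-pseudofunctor $\mathcal{B}[\iota K, -]$ preserves weighted bilimits, and since $\nu$ is assembled pointwise from these it preserves all bilimits. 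What remains, and this is the whole difficulty, is the construction of the left biadjoint $L$.

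To build $L$ I would fix $F$ in $\ps[(\mathcal{B}_\omega)^{\op}, \Cat]$ and seek an object $LF$ of $\mathcal{B}$ representing the pseudofunctor $\Phi_F := \ps[(\mathcal{B}_\omega)^{\op}, \Cat][F, \nu(-)] : \mathcal{B} \to \Cat$. Because $\nu$ preserves bilimits and $\ps[(\mathcal{B}_\omega)^{\op}, \Cat][F,-]$ preserves bilimits, $\Phi_F$ preserves every bilimit of the bicomplete $2$-category $\mathcal{B}$. A bilimit-preserving pseudofunctor out of a bicomplete $2$-category into $\Cat$ is bi-representable as soon as it satisfies a solution-set condition, and this condition is exactly what finite bi-accessibility supplies: writing each candidate value $B$ as a bifiltered bicolimit of bicompact objects and using the lifting property of bicompacts (\Cref{lifting property of sigma-compact}), one checks that any datum in $\Phi_F(B) = \ps[(\mathcal{B}_\omega)^{\op}, \Cat][F, \nu B]$ factors through a bicompact stage, yielding an essentially small weakly initial family in the category of elements of $\Phi_F$. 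Pseudo-fully faithfulness of $\nu$ then promotes the representing objects $LF$, chosen pseudonaturally in $F$, into a genuine biadjunction $L \dashv \nu$ exhibiting $\nu$ as bireflective.

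With $L \dashv \nu$ established, \Cref{reflective sub2cat have bicolimit} gives $\mathcal{B}$ all small weighted bicolimits, so $\mathcal{B}$ is finitely bi-accessible and bicocomplete, that is, finitely bipresentable. The main obstacle is the middle step: the $2$-dimensional representability argument producing $L$, where one must combine bicompleteness (which gives $\Phi_F$ its bilimit-preservation) with bi-accessibility (which bounds the solution set) into a working $2$-categorical initial-object/representability criterion. This is the honest categorification of the classical fact that a complete accessible category is cocomplete, and it is precisely the place where a bi-adjoint functor theorem is genuinely required rather than formal nonsense.
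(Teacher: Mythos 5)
Your overall architecture is the same as the paper's: the forward implication is the corollary on bicompleteness proved just before the theorem, and the converse exhibits $\mathcal{B}$ as bireflective in $\ps[(\mathcal{B}_\omega)^{\op}, \Cat]$ along the binerve $\nu$ and then transports bicolimits through the reflection. The paper obtains the left biadjoint simply by invoking its left bi-adjoint functor theorem (\Cref{LAFT}, proved later via right biKan extensions), so you have correctly isolated where the real content lies; but your substitute for that theorem, the solution-set argument, has a genuine gap that cannot be repaired at rank $\omega$.

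The failing step is the claim that every datum of $\Phi_F(B) = \ps[(\mathcal{B}_\omega)^{\op}, \Cat][F, \nu B]$ factors through a bicompact stage of $B$. The lifting property of \Cref{lifting property of sigma-compact} concerns $1$-cells whose \emph{domain} is bicompact mapping into a bifiltered bicolimit; here the domain is an arbitrary pseudofunctor $F$, which need not be bicompact in the presheaf $2$-category, so a pseudonatural transformation $F \Rightarrow \nu B \simeq \bicolim_i\, \nu K_i$ need not factor through any stage $\nu K_i$. Componentwise, each element of $F(K)$ does give a $1$-cell $K \to B$ factoring through \emph{some} stage, but these factorizations land in different stages and there is no finite-rank reason they can be glued uniformly: gluing them is precisely the assertion that $F$ is bicompact. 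A concrete counterexample: take $F = \nu B$ and the identity transformation; a factorization through $\nu K \to \nu B$ with $K$ bicompact would, by pseudo-full faithfulness of $\nu$, exhibit $B$ as a pseudoretract of $K$ and hence bicompact, which is false for any non-bicompact $B$. So your proposed family is not weakly initial, and the $2$-dimensional representability criterion (itself a black box that the paper never proves, preferring biKan extensions) has nothing to run on. The repair is exactly the rank-raising step you omit and which drives the paper's proof of \Cref{LAFT}: every object of a finitely bi-accessible $2$-category is $\lambda$-bicompact for \emph{some} $\lambda$ (\Cref{closureiscute3}, \Cref{every object compact}), so one chooses $\lambda$ large enough that $F$ is $\lambda$-bicompact, decomposes $B$ at rank $\lambda$ (\Cref{closureiscute2}), and only then does the factorization — equivalently, the paper's computation of the right biKan extension through \Cref{cr} and \Cref{laKan} — go through.
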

\begin{proof}
The implication $(1) \Rightarrow (2)$ was already proven, thus we concentrate on the other one. Consider the nerve pseudofunctor % https://q.uiver.app/?q=WzAsMixbMCwwLCJcXG1hdGhjYWx7Qn0iXSxbMSwwLCJcXHBzW1xcbWF0aGNhbHtCfV9cXG9tZWdhXntcXG9wfSwgXFxDYXRdIl0sWzAsMSwiXFxudSAiLDAseyJzdHlsZSI6eyJ0YWlsIjp7Im5hbWUiOiJob29rIiwic2lkZSI6InRvcCJ9fX1dXQ==
\[\begin{tikzcd}
	{\mathcal{B}} & {\ps[(\mathcal{B}_\omega)^{\op}, \Cat]}
	\arrow["{\nu }", hook, from=1-1, to=1-2]
\end{tikzcd}\]
we know that this is fully faithful, because $\mathcal{B}$ is finitely bi-accessible, and it preserve all weighted bilimits, as all nerves do. Thus, we can apply \Cref{LAFT}, which we will prove later in the paper, and infer that the nerve has a left adjoint. As a result, $\mathcal{B}$ is reflective and thus has all bicolimits.
\end{proof}

\begin{remark}
Beware however that arbitrary small bicolimits are not preserved by the binerve pseudofunctor. In particular, even when seeing objects of $B$ as pseudofunctors $ (\mathcal{B}_\omega)^{\op} \rightarrow \Cat$, arbitrary bicolimits cannot be calculated pointwisely as bicolimits in the pseudofunctor 2-category - though bifiltered ones are so. \end{remark}

To conclude this section, we give a process to construct finitely bipresentable 2-categories from other ones. It is well known that, for a locally finitely presentable category, a reflective subcategory whose embedding is accessible is locally finitely presentable itself. We give here the corresponding statement; but before that, the following lemma, with weaker condition, gives some insight to bicompact objects in this context:

\begin{lemma}\label{left biadjoint of finitary 2-functor preserve sigma-compact}
Let $ \mathcal{B}$ be a finitely bipresentable 2-category, and $ R :  \mathcal{A} \hookrightarrow \mathcal{B}$ preserving bifiltered bicolimits with a left bi-adjoint $L$. Then $L$ sends bicompacts on bicompacts. 
\end{lemma}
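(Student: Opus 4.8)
The plan is to run the familiar adjunction argument from one-dimensional accessibility theory, transposed with care into the bicategorical setting. First I would record the standing hypotheses on $\mathcal{A}$: since $R$ is a pseudo-fully-faithful right bi-adjoint into the bicocomplete 2-category $\mathcal{B}$, the sub-2-category $\mathcal{A}$ is bireflective, so by \Cref{reflective sub2cat have bicolimit} it inherits all bicolimits from $\mathcal{B}$. In particular $\mathcal{A}$ has bifiltered bicolimits, and the very statement ``$L(K)$ is bicompact in $\mathcal{A}$'' becomes meaningful. Now fix a bicompact object $K$ of $\mathcal{B}$; the goal is to verify the defining lifting property of \Cref{bicompact} for $L(K)$ against an arbitrary pseudofunctor $G : I \rightarrow \mathcal{A}$ with $I$ bifiltered.

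The heart of the proof is then a single chain of pseudonatural equivalences of categories, each step legitimized by exactly one of the three available inputs. Starting from $\mathcal{A}[L(K), \bicolim_I G]$, I would apply the biadjunction $L \dashv R$ to move to $\mathcal{B}$, then use the hypothesis that $R$ preserves bifiltered bicolimits, then invoke bicompactness of $K$ in $\mathcal{B}$ to pull the hom outside the bicolimit, and finally apply the biadjunction once more objectwise:
\[ \mathcal{A}[L(K), \underset{I}{\bicolim}\, G] \simeq \mathcal{B}[K, R(\underset{I}{\bicolim}\, G)] \simeq \mathcal{B}[K, \underset{I}{\bicolim}\, RG] \simeq \underset{i \in I}{\bicolim}\, \mathcal{B}[K, RG(i)] \simeq \underset{i \in I}{\bicolim}\, \mathcal{A}[L(K), G(i)]. \]
The composite equivalence is precisely the comparison asserting bicompactness of $L(K)$, so this closes the argument.

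The main point requiring attention---more a matter of bookkeeping than a genuine obstacle---is coherence: each link above is only an equivalence of categories, pseudonatural in the relevant variable, and one must check that these assemble into the \emph{canonical} comparison functor out of the bicolimit of hom-categories (as in \Cref{bicompact}) rather than merely into some abstract equivalence. The last two steps deserve the most care, since the final adjunction equivalence is applied inside a bicolimit; this is legitimate because the biadjunction unit/counit data make the comparison functors pseudonatural in $i$, and bifiltered bicolimits of categories are computed functorially, as recalled in \Cref{sigma-filtered sigma-colimit of categories}. No new filteredness input is needed beyond \Cref{key lemma}-style facts already available: once preservation of bifiltered bicolimits by $R$ and bicompactness of $K$ are in hand, the argument is purely formal.
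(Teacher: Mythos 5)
Your proposal is correct and follows essentially the same argument as the paper: the identical four-step chain of equivalences (biadjunction, preservation of bifiltered bicolimits by $R$, bicompactness of $K$, biadjunction again). The paper's proof is exactly this chain, stated without the additional coherence commentary; your remarks on pseudonaturality of the comparison and on $\mathcal{A}$ inheriting bifiltered bicolimits are sensible supplements but do not change the route.
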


\begin{proof}
For $ F : I \rightarrow \mathcal{A}$ with $ (I,\Sigma)$ bifiltered, one has 
\begin{align*}
    \mathcal{A}[L(K), \underset{i \in I}{\bicolim} \, F(i) ] &\simeq \mathcal{B}[K, R(\underset{i \in I}{\bicolim} \, F(i))] \\
    &\simeq \mathcal{B}[K, \underset{i \in I}{\bicolim} \, RF(i)] \\
    &\simeq \underset{i \in I}{\bicolim}\, \mathcal{B}[K,  RF(i)] \\
    &\simeq  \underset{i \in I}{\bicolim}\, \mathcal{A}[L(K), F(i)] 
\end{align*}
Hence each $L(K)$ is bicompact in $\mathcal{A}$. 
\end{proof}

\subsection{A criterion to recognize finitely bipresentable $2$-categories}

\begin{definition}
A a small sub $2$-category $\iota: \mathcal{G} \hookrightarrow \mathcal{B}$ is a \textit{strong generator} if its associated nerve% https://q.uiver.app/?q=WzAsMixbMCwwLCJcXG1hdGhjYWx7Qn0iXSxbMSwwLCJcXHBzW1xcbWF0aGNhbHtHfV57XFxvcH0sIFxcQ2F0XSJdLFswLDEsIlxcbnUiXV0=
\[\begin{tikzcd}
	{\mathcal{B}} & {\ps[\mathcal{G}^{\op}, \Cat]}
	\arrow["\nu", from=1-1, to=1-2]
\end{tikzcd}\] 
is biconservative, that is, reflects equivalences.
\end{definition}

\begin{remark}
Dense generators are strong, of course. Under very weak and natural assumptions on the ambient $2$-category $\mathcal{B}$, eso-generators in the sense of \cite{street1982two,street1982characterization} are also strong. In practise, we do not know a reasonable $2$-category where eso-generators are not strong.
\end{remark}

\begin{theorem}[Recognition theorem for finitely bipresentable $2$-categories]\label{1.11}
Let $\mathcal{B}$ be locally small $2$-category with weigthed bicolimits. Then the following are equivalent:
\begin{enumerate}
    \item $\mathcal{B}$ is finitely bipresentable,
    \item $\mathcal{B}$ has a strong generator $\mathcal{G} \hookrightarrow \mathcal{B}$ made of bicompact objects.
\end{enumerate}
\end{theorem}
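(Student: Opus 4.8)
The plan is to prove the two implications separately, the forward one being almost a restatement and the converse carrying the genuine content. For $(1)\Rightarrow(2)$ I would take for $\mathcal{G}$ (a small skeleton of) the sub-2-category $\mathcal{B}_\omega$ of bicompact objects, which is essentially small by the lemma following \Cref{sigma accessible cat}. By \Cref{biacc as category of flat pseudofunctors} the inclusion $\mathcal{B}_\omega\hookrightarrow\mathcal{B}$ is bidense, i.e. the binerve $\nu$ is pseudo-fully-faithful; since a pseudo-fully-faithful pseudofunctor reflects equivalences, $\nu$ is biconservative and $\mathcal{G}$ is a strong generator made of bicompact objects.

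For $(2)\Rightarrow(1)$ I would first enlarge the given strong generator. Closing $\mathcal{G}$ under finite weighted bicolimits (available since $\mathcal{B}$ is bicocomplete) yields an essentially small full sub-2-category $\overline{\mathcal{G}}$; by \Cref{finite bicolimit of sigma-compact are sigma-compact} it still consists of bicompact objects, and since it contains $\mathcal{G}$ its nerve still reflects equivalences, so $\overline{\mathcal{G}}$ is again a strong generator. Bicocompleteness lets us form the left biKan extension $L=\biLan_{\hirayo}\iota$ of the inclusion $\iota:\overline{\mathcal{G}}\hookrightarrow\mathcal{B}$, which by the computation of \Cref{reflectivity} is left biadjoint to the nerve $\nu:\mathcal{B}\rightarrow\ps[\overline{\mathcal{G}}^{\op},\Cat]$; by \Cref{binerve preserve bifiltered bicolimits} this $\nu$ preserves bifiltered bicolimits, and by hypothesis it is biconservative.

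The crux is then to show that $\overline{\mathcal{G}}$ is bidense, that is, that the counit $\epsilon_B:L\nu B\rightarrow B$ is an equivalence for every $B$. Because $\overline{\mathcal{G}}$ is closed under finite weighted bicolimits, the arguments of \Cref{Discrete cones via bicoproducts}, \Cref{Insertion of 2-cells} and \Cref{Equification of parallel 2-cells}---which only use that the relevant finite bicolimits of the objects at hand are again bicompact and lie in the generator---show that $(\overline{\mathcal{G}}\Downarrow B,\Cart_B)$ is $\sigma$-filtered, so by \Cref{key lemma} the pseudoslice $\overline{\mathcal{G}}\downarrow B$ is bifiltered. Deweighting the pointwise formula of \Cref{Weighted colimit expression of pointwise biLan} as in \Cref{the oplaxcocone is the category of element of the binerve}, the object $L\nu B$ is the bicolimit of the canonical bifiltered diagram $\overline{\mathcal{G}}\downarrow B\rightarrow\mathcal{B}$ and $\epsilon_B$ is its canonical comparison. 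Since the diagram is bifiltered, $\nu$ preserves its bicolimit, so $\nu\epsilon_B$ is, pointwise at each $K$ in $\overline{\mathcal{G}}$, the comparison functor $\bicolim_{\overline{\mathcal{G}}\downarrow B}\mathcal{B}[K,-]\rightarrow\mathcal{B}[K,B]$; I would check this is an equivalence of categories using essential surjectivity (any $K\rightarrow B$ factors through a member of the cofinal diagram, the tautological object $(K,g)$ realizing it) together with the lifting properties \Cref{lifting property of sigma-compact}, \Cref{A concrete description of sigma-compactness: 2-cells} and \Cref{Lifts of parallel 2-cells} and the explicit description of morphisms of a bifiltered bicolimit of categories from \Cref{sigma-filtered sigma-colimit of categories}. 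Biconservativity of $\nu$ then upgrades ``$\nu\epsilon_B$ is an equivalence'' to ``$\epsilon_B$ is an equivalence''.

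Consequently every $B$ is a bifiltered bicolimit of bicompact objects of the essentially small $\overline{\mathcal{G}}$, so $\mathcal{B}$ is finitely bi-accessible in the sense of \Cref{sigma accessible cat}; having moreover all small weighted bicolimits, it is finitely bipresentable by \Cref{bipres}. I expect the main obstacle to be precisely the crux above: establishing bifilteredness of $\overline{\mathcal{G}}\downarrow B$ purely from closure under finite weighted bicolimits, and then verifying that the pointwise comparison in $\Cat$ is a genuine equivalence of categories, which requires carefully categorified essential-surjectivity, fullness and faithfulness computations rather than the one-dimensional bijection that suffices classically.
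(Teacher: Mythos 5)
Your proposal is correct and follows the same overall architecture as the paper's proof: close $\mathcal{G}$ under finite weighted bicolimits to get $\overline{\mathcal{G}}$, observe via \Cref{finite bicolimit of sigma-compact are sigma-compact} that it still consists of bicompacts and is still strong, and then use biconservativity of the nerve together with its preservation of bifiltered bicolimits to conclude that $B$ is the bicolimit of its canonical diagram. The genuine difference lies in how the crux --- that the nerve sends the canonical $\sigma$-filtered bicolimit to $\nu(B)$ --- is established. The paper outsources it: since the inclusion $j\colon \overline{\mathcal{G}}\hookrightarrow\mathcal{B}$ preserves finite weighted bicolimits, $\nu_j(B)=\mathcal{B}[j,B]$ is bilex, hence flat by \Cref{flat is bilex}, and the decomposition $\nu_j(\sigma_\Cart\bicolim\,\pi_B)\simeq\nu_j(B)$ is exactly \cite[4.2.7]{descotte2018sigma} (cf. \Cref{Dubuc extension of flat pseudofunctor}). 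You instead inline this step: you derive $\sigma$-filteredness of $(\overline{\mathcal{G}}\Downarrow B,\Cart_B)$ directly from the internalized bicoproducts, bicoinserters and bicoequifiers --- transplanting \Cref{Discrete cones via bicoproducts}, \Cref{Insertion of 2-cells} and \Cref{Equification of parallel 2-cells}, which indeed only use closure of the generator under finite weighted bicolimits --- then apply \Cref{key lemma} and verify the pointwise comparison in $\Cat$ by hand, in effect reproving the relevant special case of Dubuc's theorem; you also frame the conclusion through the counit of the biadjunction of \Cref{reflectivity} rather than the paper's bare comparison $1$-cell. What each buys: the paper's route is shorter and leans on existing flatness machinery; yours is self-contained, makes transparent exactly where closure under finite bicolimits enters, and avoids the flat-pseudofunctor formalism at the cost of the tedious categorified surjectivity/fullness/faithfulness checks you anticipate. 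One small correction: in those checks the citations of the bicompactness lifting lemmas (\Cref{lifting property of sigma-compact} and its companions) are not the right tools --- applied to $B$ as a bicolimit they would be circular, since that is the statement being proved; what actually carries fullness and faithfulness is the tautological objects $(K,g)$, the swallowing of non-cartesian cells via \Cref{triangle lemma}, the coequification through internal bicoequifiers, and the explicit description of morphisms in bifiltered bicolimits of categories (\Cref{sigma-filtered sigma-colimit of categories} and \Cref{local faithfulness of colimit inclusions}), all of which you cite anyway, so this is a misattribution rather than a gap. Note finally that \Cref{binerve preserve bifiltered bicolimits} and \Cref{reflectivity} are stated in the paper under bi-accessibility and bipresentability hypotheses respectively, but, as you implicitly assume, their proofs use only bicompactness of the generator and bicocompleteness, so they do apply here.
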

\begin{proof}
Because dense generators are in particular strong, the implication $(1) \Rightarrow (2)$ is trivial. We focus on the other implication. Our proof is inspired by \cite[7.2 (i)]{kelly1982structures} and \cite[1.11]{adamek1994locally}. Call $\bar{\mathcal{G}}$ the closure of $\mathcal{G}$ under finite weighted bicolimits and consider the obvious inclusions,
% https://q.uiver.app/?q=WzAsMyxbMCwwLCJcXG1hdGhjYWx7R30iXSxbMSwwLCJcXGJhcntcXG1hdGhjYWx7R319Il0sWzIsMCwiXFxtYXRoY2Fse0J9Il0sWzAsMSwiaSIsMCx7InN0eWxlIjp7InRhaWwiOnsibmFtZSI6Imhvb2siLCJzaWRlIjoidG9wIn19fV0sWzEsMiwiaiIsMCx7InN0eWxlIjp7InRhaWwiOnsibmFtZSI6Imhvb2siLCJzaWRlIjoidG9wIn19fV1d
\[\begin{tikzcd}
	{\mathcal{G}} & {\bar{\mathcal{G}}} & {\mathcal{B}}
	\arrow["i", hook, from=1-1, to=1-2]
	\arrow["j", hook, from=1-2, to=1-3]
\end{tikzcd}\]
Of course $\bar{\mathcal{G}}$ consists of compact objects, because of \Cref{finite bicolimit of sigma-compact are sigma-compact} and duly, $j$ preserves finite weighted bicolimits by construction. Also, $\bar{\mathcal{G}}$ is a strong generator too. In order to finish the proof it is enough to show that every object in $\mathcal{B}$ is a $\sigma$-filtered bicolimit of objects in $\bar{\mathcal{G}}$, indeed by the \Cref{key lemma} this entails that every objects is a bifiltered bicolimit of objects in $\bar{\mathcal{G}}$. Let $B$ be an object in $\mathcal{B}$ and consider the canonical diagram \[  (\bar{\mathcal{G}} \Downarrow B) \stackrel{\pi_B}{\longrightarrow} \mathcal{B}. \]
%Since $\bar{\mathcal{G}}$ has finite weighted bicolimits, the functor % https://q.uiver.app/?q=WzAsMixbMCwwLCJcXG1hdGhjYWx7R31cXERvd25hcnJvdyBCIl0sWzEsMCwiXFxvdmVybGluZXtcXG1hdGhjYWx7R319XFxEb3duYXJyb3cgQiJdLFswLDFdXQ==

So it suffices to show that $B$ is indeed equivalent to the $ \sigma$-filtered $\sigma$-bicolimit $\sigma_\Cart\bicolim \pi_B$ for the class of cartesian morphisms. In order to see this, consider the diagram,

% https://q.uiver.app/?q=WzAsMixbMiwwLCJcXENhdF57XFxiYXJ7XFxtYXRoY2Fse0d9fV5cXGNpcmN9Il0sWzAsMCwiXFxtYXRoY2Fse0J9Il0sWzEsMCwiXFxudShqKSIsMV1d
\[\begin{tikzcd}
	{\mathcal{B}} & \ps[\bar{\mathcal{G}}^{\op}, \Cat]
	\arrow["{\nu_j}", from=1-1, to=1-2]
\end{tikzcd}\]

Because $j$ preserves finite weighted bicolimits, $\nu_j(B) = \mathcal{B}[j,B]$ is lex. By applying \cite[4.2.7]{descotte2018sigma}, we know that $\nu_j(\sigma_\Cart\bicolim \pi_B) \simeq \nu_j(B)$. Now, using the conservativity of $\nu_j$ and the fact that it must preserve $\sigma$-filtered bicolimits, this shows that $B$ must be the bicolimit of $\pi_B$.

%is $ \sigma$-cofinal and prove $ \overline{\mathcal{G}} \Downarrow B$ to be $\sigma$-filtered by \Cref{sigmacone in sigmafiltered} for the $\sigma$ pairs consisting of invertible oplax cells, by the same arguments as in \Cref{Discrete cones via bicoproducts}, \Cref{Insertion of 2-cells} and \Cref{Equification of parallel 2-cells}. This exhibits $B$ as the $\sigma$-filtered $ \sigma$-bicolimit of its canonical diagram relative to $\overline{G}$.
\end{proof}

\begin{corollary} \label{closureiscute1}
Let $\mathcal{B}$ be a finitely bipresentable $2$-category. Let $\mathcal{G} \hookrightarrow \mathcal{B}_\omega$ be a strong generator of compact objects. Then $\mathcal{B}_\omega$ coincides with the closure of $\mathcal{G}$ under finite weighted bicolimits. 
\end{corollary}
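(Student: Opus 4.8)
The plan is to prove the two inclusions $\bar{\mathcal{G}} \subseteq \mathcal{B}_\omega$ and $\mathcal{B}_\omega \subseteq \bar{\mathcal{G}}$, where $\bar{\mathcal{G}}$ denotes the closure of $\mathcal{G}$ under finite weighted bicolimits. The first inclusion is immediate: every object of $\mathcal{G}$ is bicompact by hypothesis, and \Cref{finite bicolimit of sigma-compact are sigma-compact} shows that finite weighted bicolimits of bicompacts are bicompact, so $\bar{\mathcal{G}}$ consists of bicompact objects and thus $\bar{\mathcal{G}} \subseteq \mathcal{B}_\omega$.

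For the reverse inclusion, I would first record that $\bar{\mathcal{G}}$ is a generator of bicompact objects: by \Cref{1.11}, $\mathcal{B}$ is finitely bipresentable, and the proof of that theorem exhibits every object of $\mathcal{B}$ as a bifiltered bicolimit of objects of $\bar{\mathcal{G}}$. Fix now $K \in \mathcal{B}_\omega$ and write $K \simeq \bicolim_{i \in I} \pi(i)$ with $I$ bifiltered and each $\pi(i) \in \bar{\mathcal{G}}$. Since $K$ is bicompact, $\mathcal{B}[K,-]$ preserves this bicolimit, so applying the lifting property of \Cref{lifting property of sigma-compact} to $1_K : K \to K$ produces an index $i_0$, a $1$-cell $b : K \to \pi(i_0)$, and an invertible $2$-cell $q_{i_0} b \cong 1_K$, where $q_{i_0}$ is the colimit inclusion. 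This exhibits $K$ as a pseudoretract of $\pi(i_0) \in \bar{\mathcal{G}}$.

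It then remains to show that $\bar{\mathcal{G}}$ is closed under pseudoretracts. The key observation is that a pseudoretract is the splitting of the pseudo-idempotent $e = b\,q_{i_0} : \pi(i_0) \to \pi(i_0)$ (the invertible $q_{i_0} b \cong 1_K$ yields a coherent $e^2 \cong e$), and such a splitting is computed as the weighted bicolimit of $\pi(i_0)$ along the idempotent-splitting weight. Since the walking pseudo-idempotent $2$-category is essentially finite and the splitting weight takes essentially finite values, this weight is finite in the sense of \Cref{finiteweibili}. As $\mathcal{B}$ has all weighted bicolimits, this bicolimit exists, agrees with $K$ by its universal property, and lies in $\bar{\mathcal{G}}$ because $\pi(i_0) \in \bar{\mathcal{G}}$ and $\bar{\mathcal{G}}$ is closed under finite weighted bicolimits. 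Hence $K \in \bar{\mathcal{G}}$, which gives $\mathcal{B}_\omega \subseteq \bar{\mathcal{G}}$ and completes the proof.

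The delicate point, and the one I would treat most carefully, is this last closure-under-pseudoretracts step: namely verifying that the splitting of a coherent pseudo-idempotent genuinely is the finite-weighted bicolimit along the idempotent-splitting weight, rather than merely an absolute bicolimit of a priori uncontrolled weight. This is the $2$-dimensional analog of the fact that, in the $1$-categorical theory, one must know that retracts of finite colimits of a strong generator are again finite colimits (cf. \cite[1.11]{adamek1994locally}); everything else — the two easy directions and the factorization of the identity through a finite stage — is a routine application of bicompactness together with \Cref{1.11}.
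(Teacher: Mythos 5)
Your proof is correct and follows essentially the same route as the paper: the paper's own proof also exhibits a bicompact $B$ as a pseudoretract of an object of $\bar{\mathcal{G}}$, by factoring (an equivalence to) $B$ through the $\sigma$-filtered canonical diagram from the proof of \Cref{1.11} and invoking bicompactness, with the easy inclusion $\bar{\mathcal{G}} \subseteq \mathcal{B}_\omega$ handled by \Cref{finite bicolimit of sigma-compact are sigma-compact}. Where you genuinely add something is the final step: the paper stops at ``$B$ is a retract of an object of $\bar{\mathcal{G}}$'' and leaves implicit that $\bar{\mathcal{G}}$ is closed under pseudoretracts, which is exactly the point you isolate as delicate. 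Your proposed justification --- realizing the splitting as a bicolimit weighted over the walking pseudo-idempotent --- does require the verification you defer: the \emph{free} walking pseudo-idempotent is not essentially finite (its endo-hom-category is a genuinely infinite groupoid of coherence cells), so you must first pass to the \emph{coherent} walking pseudo-idempotent and check that the iso $q_{i_0} b \cong 1_K$ supplies coherent data, as you indicate parenthetically. A cleaner route, entirely within the paper's toolkit, is to dualize \Cref{bilex cat have bisplitting of pseudoidempotent}: the bisplitting of a pseudo-idempotent is computed by a bi-co-iso-inserter of $(e, 1_{\pi(i_0)})$, which is a finite weighted bicolimit on the nose, so $\bar{\mathcal{G}}$ is closed under it with no coherence or finiteness analysis of the walking pseudo-idempotent needed. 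With that substitution your argument is complete and, if anything, more careful than the paper's.
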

\begin{proof}
We only need to show that every object in $\mathcal{B}_\omega$ is a finite weighted bicolimit of objects in $\mathcal{G}$. Let $B$ be an object in $\mathcal{B}_\omega$. Using the proof of the previous theorem, we know that $B \simeq \sigma_\Cart\bicolim \; \pi_B$. In particular, we have a $1$-cell (which is an equivalence) \[B \to \sigma_\Cart\bicolim \; \pi_B.\] Since the diagram is $\sigma$-filtered and $B$ is bicompact, we obtain on the spot that $B$ must be a retract of one of the $G $ in $ \mathcal{G}$. This finishes the proof.
\end{proof}

\begin{corollary} \label{closureiscute2}
Let $\mathcal{B}$ be a finitely bipresentable $2$-category. Then $\mathcal{B}_\lambda$, the full $2$-subcategory of $\lambda$-compact objects is the closure of $\mathcal{B}_\omega$ under $\lambda$-small weighted bicolimits.
\end{corollary}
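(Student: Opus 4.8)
The plan is to prove the two inclusions separately, working as usual with a regular cardinal $\lambda$. The case $\lambda=\omega$ is exactly \Cref{finite bicolimit of sigma-compact are sigma-compact} (together with the fact that $\mathcal{B}_\omega$ is closed under finite weighted bicolimits), so I would assume $\lambda$ uncountable. Write $\mathcal{C}$ for the closure of $\mathcal{B}_\omega$ under $\lambda$-small weighted bicolimits. For the inclusion $\mathcal{C}\subseteq\mathcal{B}_\lambda$, first note that every bicompact object is $\lambda$-compact, since any $\lambda$-bifiltered $2$-category is a fortiori bifiltered (\Cref{infinite filteredness}); hence $\mathcal{B}_\omega\subseteq\mathcal{B}_\lambda$. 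It then suffices to reprove \Cref{finite bicolimit of sigma-compact are sigma-compact} with ``finite'' replaced by ``$\lambda$-small'' throughout: a $\lambda$-small weighted bicolimit of $\lambda$-compact objects is $\lambda$-compact. This is the verbatim $\lambda$-analog, its two inputs being the $\lambda$-version of \Cref{finite cat are sigma-compact} (every $\lambda$-small category is $\lambda$-compact in $\Cat$) and the $\lambda$-version of the commutation of $\lambda$-bifiltered bicolimits with $\lambda$-small weighted bilimits in $\Cat$; both are established exactly as their finitary counterparts.

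For the inclusion $\mathcal{B}_\lambda\subseteq\mathcal{C}$, let $B$ be $\lambda$-compact. Since $\mathcal{B}$ is in particular finitely bi-accessible, \Cref{characterization of biacc with pseudococone} and \Cref{the canonical cone is sigma-filtered} provide a bifiltered $2$-category $I:=\mathcal{B}_\omega\downarrow B$ with a diagram $\pi_B:I\to\mathcal{B}$ landing in $\mathcal{B}_\omega$ and an equivalence $B\simeq\bicolim\,\pi_B$. Next I would decompose $I$ as the $\lambda$-bifiltered bicolimit (union) of its $\lambda$-small bifiltered full sub-$2$-categories: ordered by inclusion these form a $\lambda$-bifiltered poset $P$ whose union is $I$, because $\lambda$ being regular and uncountable lets one close any $\lambda$-small family of objects, $1$-cells and $2$-cells under the three (finitary) bifilteredness clauses without leaving the $\lambda$-small range. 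Computing the bicolimit in two stages gives
\[ B\;\simeq\;\bicolim\,\pi_B\;\simeq\;\underset{J\in P}{\bicolim}\;\Big(\,\underset{J}{\bicolim}\;\pi_B|_J\,\Big), \]
a $\lambda$-bifiltered bicolimit in which each inner term $\bicolim_J\pi_B|_J$ is a $\lambda$-small conical bicolimit of objects of $\mathcal{B}_\omega$, hence lies in $\mathcal{C}$.

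Finally, since $B$ is $\lambda$-compact and the outer bicolimit is $\lambda$-bifiltered, the ($\lambda$-version of the) bicompactness lifting property of \Cref{lifting property of sigma-compact}, applied to the equivalence $1_B$ through the factorization above, factors the identity through some stage $A:=\bicolim_{J_0}\pi_B|_{J_0}$, exhibiting $B$ as a pseudoretract of $A$. As in the proof of \Cref{closureiscute1}, a pseudoretract is obtained by splitting a pseudo-idempotent, which is a finite weighted bicolimit; so $B$ is a finite weighted bicolimit of $A\in\mathcal{C}$, and as $\mathcal{C}$ is closed under (finite, hence $\lambda$-small) weighted bicolimits, $B\in\mathcal{C}$. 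This yields $\mathcal{B}_\lambda\subseteq\mathcal{C}$ and hence the asserted equality.

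The main obstacle I expect is the $\lambda$-bifiltered union decomposition of $I$ (the two-stage equivalence $B\simeq\bicolim_{J\in P}\bicolim_J\pi_B|_J$) together with the absorption of the pseudoretract: both are routine in the $1$-dimensional theory but need care here, where one must verify that closing a $\lambda$-small collection of cells under the bifilteredness clauses stays $\lambda$-small, and that splitting a pseudo-idempotent (coherence $2$-cell included) is genuinely a finite, a fortiori $\lambda$-small, weighted bicolimit so that it can be reabsorbed into $\mathcal{C}$.
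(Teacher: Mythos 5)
Your proof is correct in outline, but it takes a genuinely different route from the paper's. The paper disposes of the statement in two lines: it first observes that the $\lambda$-version of the recognition theorem (\Cref{1.11}) makes every finitely bipresentable $2$-category $\lambda$-bipresentable (since $\mathcal{B}_\omega$ is a strong generator of bicompact, hence $\lambda$-compact, objects), and then invokes the $\lambda$-version of \Cref{closureiscute1} with $\mathcal{G} = \mathcal{B}_\omega$. Unwound, that argument gets $\lambda$-filteredness of the canonical diagram over the closure $\bar{\mathcal{G}}$ \emph{directly}, by internalizing $\lambda$-small bicoproducts, bicoinserters and bicoequifiers into the cocone exactly as in \Cref{Discrete cones via bicoproducts}--\Cref{Equification of parallel 2-cells}, and then concludes via biconservativity of the nerve and the pseudoretract argument. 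You instead prove both inclusions by hand: one via the $\lambda$-version of \Cref{finite bicolimit of sigma-compact are sigma-compact} (which the paper also uses freely, e.g.\ in \Cref{closureiscute3}), the other by filtering the canonical bifiltered diagram $\mathcal{B}_\omega \downarrow B$ into its $\lambda$-small bifiltered full subdiagrams and computing the bicolimit in two stages. What your route buys is independence from the machinery of \Cref{1.11} (no nerve, no strong-generator argument at level $\lambda$); what it costs is precisely the step you flag: the two-stage equivalence $B \simeq \bicolim_{J \in P}\bicolim_J \pi_B|_J$ is standard in the $1$-dimensional theory (it is essentially Makkai--Par\'e's canonical $\lambda$-filtration) but is proved nowhere in this paper in the bicategorical setting, and it carries real $2$-dimensional overhead -- pseudofunctoriality of $J \mapsto \bicolim_J \pi_B|_J$ and the gluing of pseudococone data across the $\lambda$-directed poset $P$ -- so it would need to be stated and proved as a separate lemma, whereas the paper's route sidesteps it entirely. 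Your cardinality bookkeeping (regular uncountable $\lambda$, closure of a $\lambda$-small sub-$2$-category under the three bifilteredness clauses in countably many stages) is sound, and your endgame -- $\lambda$-compactness factors the identity through a stage, so $B$ is a pseudoretract, absorbed into the closure by splitting the pseudoidempotent as a finite weighted bicolimit -- is exactly the mechanism implicit in the paper's proof of \Cref{closureiscute1} and \Cref{GUhalf1}, so it sits at the same level of rigor as the text itself.
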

\begin{proof}
Using the $\lambda$-version of \Cref{1.11} is is evident that an finitely bipresentable is $\lambda$ bipresentable for every higher $\lambda$. Then we use the $\lambda$-version of \Cref{closureiscute1}.
\end{proof}

\begin{corollary} \label{closureiscute3}
Let $\mathcal{B}$ be a finitely bipresentable $2$-category. Then every object is $\lambda$-compact for some $\lambda$.
\end{corollary}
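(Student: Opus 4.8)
The plan is to mimic the classical one-dimensional argument (in the spirit of \cite[1.16]{adamek1994locally} and the reasoning already used in \Cref{closureiscute2}): exhibit $B$ as a \emph{small} bicolimit of bicompact objects, then choose a regular cardinal $\lambda$ large enough that this presentation becomes a $\lambda$-small weighted bicolimit of $\lambda$-compact objects, and finally invoke the $\lambda$-version of \Cref{finite bicolimit of sigma-compact are sigma-compact}.

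First I would record the expected monotonicity of compactness. If $\mu \geq \lambda$, then any $\mu$-bifiltered $2$-category is in particular $\lambda$-bifiltered, since in the sense of \Cref{infinite filteredness} a $\lambda$-small family of objects is a fortiori $\mu$-small; hence every $\mu$-bifiltered bicolimit is also a $\lambda$-bifiltered bicolimit. Consequently, if $\mathcal{B}[K,-]$ preserves $\lambda$-bifiltered bicolimits it also preserves $\mu$-bifiltered ones, so a $\lambda$-compact object is automatically $\mu$-compact. In particular, every bicompact object (that is, $\omega$-compact in the sense of \Cref{infinitary compactnes}) is $\lambda$-compact for every regular $\lambda \geq \omega$.

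Next, fix an object $B$. By finite bi-accessibility together with \Cref{the pseudococone is bifiltered} (equivalently \Cref{characterization of biacc with pseudococone}), the canonical pseudococone exhibits $B$ as a bifiltered bicolimit $B \simeq \bicolim \, \mathcal{B}_\omega\!\downarrow\! B$ of bicompact objects. Since $\mathcal{B}$ is locally small and $\mathcal{B}_\omega$ is essentially small, the indexing $2$-category $\mathcal{B}_\omega\!\downarrow\! B$ is itself essentially small. I would then let $\lambda$ be a regular cardinal that is $\geq \omega$ and strictly larger than the number of cells of $\mathcal{B}_\omega\!\downarrow\! B$ (up to equivalence), so that $\mathcal{B}_\omega\!\downarrow\! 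B$ is essentially $\lambda$-small and the terminal weight $\Delta_1$ over it is a $\lambda$-small weight. Thus this conical bicolimit is, in particular, a $\lambda$-small weighted bicolimit.

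Finally, each object appearing in the diagram $\mathcal{B}_\omega\!\downarrow\! B$ is bicompact, hence $\lambda$-compact by the monotonicity step above. Applying the $\lambda$-version of \Cref{finite bicolimit of sigma-compact are sigma-compact} — that $\lambda$-compact objects are closed under $\lambda$-small weighted bicolimits — we conclude that $B$, being such a bicolimit, is $\lambda$-compact. The only genuinely delicate points are the two bits of bookkeeping: that a conical bifiltered bicolimit over an essentially $\lambda$-small $2$-category legitimately qualifies as a $\lambda$-small weighted bicolimit for the terminal weight, and the standing local-smallness hypothesis on $\mathcal{B}$ needed to guarantee that $\mathcal{B}_\omega\!\downarrow\! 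B$ is small in the first place; both become routine once the definitions are unwound.
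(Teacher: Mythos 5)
Your proof is correct and takes essentially the same route as the paper's: both exhibit $B$ as the bicolimit of its canonical diagram of bicompact objects, note that this diagram is $\lambda$-small for some regular cardinal $\lambda$, and conclude by the $\lambda$-version of \Cref{finite bicolimit of sigma-compact are sigma-compact} (equivalently \Cref{closureiscute2}). Your additional bookkeeping — the monotonicity of $\lambda$-compactness in $\lambda$ and the local-smallness needed for $\mathcal{B}_\omega\!\downarrow\! B$ to be essentially small — only makes explicit what the paper's terse two-line proof leaves implicit.
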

\begin{proof}
The canonical diagram must be $\lambda$-small for some $\lambda$. Then we use that $\lambda$-small bicolimits of bicompact objects are still bicompact.
\end{proof}

\begin{remark} \label{every object compact}
The previous Corollary is true at a higher level of generality. Indeed if $\mathcal{B}$ is finitely bi-accessible, every object is still the bicolimit of its canonical diagram, and \Cref{finite bicolimit of sigma-compact are sigma-compact} still applies.
\end{remark}

\subsection{Bi-accessibility of pseudo-arrow category}

Before relating our theory of accessibility to the theory of flatness, as will be done in the next section, we would like to describe a basic operations on finitely bipresentables categories, namely the notion of \emph{category of arrows}. First, recall that the \emph{pseudoarrow 2-category}\index{pseudoarrow 2-category} of a 2-category $\mathcal{B}$ is the 2-category of pseudofunctors $ \ps[2,\mathcal{B}]$, which is moreover equipped with a canonical 2-cell 
% https://q.uiver.app/?q=WzAsMixbMCwwLCJcXHBzWzIsXFxtYXRoY2Fse0J9XSJdLFsyLDAsIlxcbWF0aGNhbHtCfSJdLFswLDEsIlxcZG9tIiwwLHsiY3VydmUiOi0yfV0sWzAsMSwiXFxjb2QiLDIseyJjdXJ2ZSI6Mn1dLFsyLDMsIlxcbGFtYmRhIiwyLHsic2hvcnRlbiI6eyJzb3VyY2UiOjIwLCJ0YXJnZXQiOjIwfX1dXQ==
\[\begin{tikzcd}
	{\ps[2,\mathcal{B}]} && {\mathcal{B}}
	\arrow[""{name=0, anchor=center, inner sep=0}, "\dom", bend left=20, from=1-1, to=1-3, start anchor=10]
	\arrow[""{name=1, anchor=center, inner sep=0}, "\cod"', bend right=20, from=1-1, to=1-3, start anchor=-10]
	\arrow["\lambda"', shorten <=3pt, shorten >=3pt, Rightarrow, from=0, to=1]
\end{tikzcd}\]
where $ \lambda$ is a pseudonatural transformation whose component at \begin{itemize}
    \item an arrow $ f $ is the 1-cell $ \lambda_f= f : \dom (f) \rightarrow \cod(f)$
    \item a pseudosquare $ (u, u', \alpha) : f_1 \rightarrow f_2$ is the invertible 2-cell $ \lambda_{(u,u',\alpha)} = \alpha $ 
    \item a morphism of pseudosquares $ (\phi, \phi') : (u_1, u'_1, \alpha_1) \Rightarrow (u_2, u'_2, \alpha_2)$ is the equality of 2-cells $ \alpha_2 f_2*\phi = \phi'*f_1 \alpha_1$.
\end{itemize}
% We recall also that both bilimits and bicolimits in $ \ps[2, \mathcal{B}]$ are componentwise, which means that both $ \cod $ and $ \dom$ preserve either bilimits or bicolimits; we have two bi-adjunctions
% % https://q.uiver.app/?q=WzAsMixbMiwwLCJcXHBzWzIsXFxtYXRoY2Fse0J9XSJdLFswLDAsIlxcbWF0aGNhbHtCfSJdLFswLDEsIlxcY29kIiwyLHsiY3VydmUiOjN9XSxbMSwwLCIxIiwxXSxbMCwxLCJcXGRvbSIsMCx7ImN1cnZlIjotM31dLFsyLDMsIiIsMCx7ImxldmVsIjoxLCJzdHlsZSI6eyJuYW1lIjoiYWRqdW5jdGlvbiJ9fV0sWzMsNCwiIiwwLHsibGV2ZWwiOjEsInN0eWxlIjp7Im5hbWUiOiJhZGp1bmN0aW9uIn19XV0=
% \[\begin{tikzcd}
% 	{\mathcal{B}} && {\ps[2,\mathcal{B}]}
% 	\arrow[""{name=0, anchor=center, inner sep=0}, "\cod"', bend right=25, from=1-3, to=1-1, start anchor=165]
% 	\arrow[""{name=1, anchor=center, inner sep=0}, "1"{description}, from=1-1, to=1-3]
% 	\arrow[""{name=2, anchor=center, inner sep=0}, "\dom", bend left=25, from=1-3, to=1-1, start anchor=195]
% \end{tikzcd}\]

%for $ F : I \rightarrow \ps[2, \mathcal{B}]$ and $ W : I \rightarrow \Cat$ (resp. $ W : I^{\op} \rightarrow \Cat$), we have the following pairs 

\begin{proposition}\label{arrow 2-category is sigma-presentable}
If $ \mathcal{B}$ is finitely bi-accessible, then so is $ \ps[2, \mathcal{B}]$. If moreover $ \mathcal{B}$ is finitely bipresentable, then so is $ \ps[2, \mathcal{B}]$, with a biequivalence 
\[  (\ps[2, \mathcal{B}])_\omega \simeq \ps[2, \mathcal{B}_\omega]  \]

% and moreover both $ \dom$, $1$ and $ \cod$ are finitely bi-accessible.

%Moreover both the following bi-adjoints $ (\cod \dashv 1)$ and $ (1 \dashv \dom)$ are morphisms of finitely bipresentable 2-categories. 
\end{proposition}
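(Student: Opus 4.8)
The plan is as follows. Since $2$ is a small $2$-category, all weighted bicolimits in $\ps[2,\mathcal{B}]$ — and in particular bifiltered bicolimits — are computed pointwise, i.e. componentwise at the two objects of $2$. Consequently the evaluation pseudofunctors $\dom, \cod : \ps[2,\mathcal{B}] \to \mathcal{B}$ preserve every bicolimit that exists, and $\ps[2,\mathcal{B}]$ has bifiltered bicolimits as soon as $\mathcal{B}$ does (and all small weighted bicolimits as soon as $\mathcal{B}$ is bicocomplete). For $f : A \to B$ and $g : C \to D$ in $\ps[2,\mathcal{B}]$, the homcategory $\ps[2,\mathcal{B}][f,g]$ is the category of pseudosquares from $f$ to $g$, which is the iso-comma (pseudo-pullback) of the cospan
\[ \mathcal{B}[A,C] \xrightarrow{g\circ -} \mathcal{B}[A,D] \xleftarrow{-\circ f} \mathcal{B}[B,D] \]
in $\Cat$; being an iso-comma indexed by a finite diagram, this is a finite weighted bilimit in the sense of \Cref{sufficient all finite bilimits}.

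Second, I would prove that arrows between bicompact objects are bicompact, which already gives $\ps[2,\mathcal{B}_\omega]\subseteq(\ps[2,\mathcal{B}])_\omega$. Let $f:K\to K'$ with $K,K'$ bicompact, and let $G:I\to\ps[2,\mathcal{B}]$ with $I$ bifiltered, $G(i)=(C_i\to D_i)$. Since $\dom,\cod$ preserve the bicolimit, $\bicolim G=(\bicolim_i C_i\to\bicolim_i D_i)$. Substituting into the iso-comma description, using first that $K,K'$ are bicompact (so $\mathcal{B}[K,-]$ and $\mathcal{B}[K',-]$ send this bifiltered bicolimit to a bicolimit of homcategories) and then the commutation of finite weighted bilimits with bifiltered bicolimits in $\Cat$, I obtain
\[ \ps[2,\mathcal{B}][f,\bicolim G] \simeq \bicolim_i \ps[2,\mathcal{B}][f, G(i)], \]
so $f$ is bicompact. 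As $\mathcal{B}_\omega$ is essentially small, so is $\ps[2,\mathcal{B}_\omega]$, which therefore supplies an essentially small family of bicompact objects.

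Third — and this is the main obstacle — I must show every $f:A\to B$ is a bifiltered bicolimit of objects of $\ps[2,\mathcal{B}_\omega]$. I would build an indexing $2$-category $J$ whose objects are tuples $(a,b,k,\sigma)$ with $(K_a,a)$ in the canonical pseudococone $\mathcal{B}_\omega\downarrow A$, $(K_b,b)$ in $\mathcal{B}_\omega\downarrow B$, a $1$-cell $k:K_a\to K_b$ and an invertible $\sigma: fa\simeq bk$, with transition morphisms cartesian on both ends; the diagram $P:J\to\ps[2,\mathcal{B}]$ sends such a tuple to $(K_a\xrightarrow{k}K_b)$. The two projections $J\to\mathcal{B}_\omega\downarrow A$ and $J\to\mathcal{B}_\omega\downarrow B$ should be bicofinal: surjectivity uses that each $K_a$ is bicompact, so $fa:K_a\to B$ factors through a bicompact piece $b:K_b\to B$ of $B\simeq\bicolim(\mathcal{B}_\omega\downarrow B)$, while the remaining cofinality axioms follow from bifilteredness of $\mathcal{B}_\omega\downarrow A$ and $\mathcal{B}_\omega\downarrow B$ (\Cref{the pseudococone is bifiltered}) together with the lifting properties of the bicompact $K_a$. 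Bicofinality then forces $J$ to be bifiltered and yields $\dom\bicolim P\simeq A$, $\cod\bicolim P\simeq B$ with induced $1$-cell $f$, whence $f\simeq\bicolim P$. The delicate point is the careful verification of the bicofinality axioms for $J$; this completes the bi-accessible case.

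Finally, for the bipresentable case I would instead invoke the recognition \Cref{1.11}. Here $\mathcal{B}$ is bicocomplete, so $\dom$ has a left adjoint $D:K\mapsto(K\xrightarrow{\mathrm{id}}K)$ and $\cod$ a left adjoint $C:K\mapsto(\emptyset\to K)$ (using the initial object of $\mathcal{B}$). Both $\dom$ and $\cod$ preserve bifiltered bicolimits, so by \Cref{left biadjoint of finitary 2-functor preserve sigma-compact} the functors $D,C$ send bicompacts to bicompacts. Taking a generator $\mathcal{B}_0$ of $\mathcal{B}$, the family $\{D(K),C(K):K\in\mathcal{B}_0\}$ consists of bicompacts, and the adjunction equivalences $\ps[2,\mathcal{B}][D(K),-]\simeq\mathcal{B}[K,\dom-]$ and $\ps[2,\mathcal{B}][C(K),-]\simeq\mathcal{B}[K,\cod-]$ exhibit it as a strong generator: a $1$-cell inducing equivalences against all $D(K),C(K)$ induces equivalences $\mathcal{B}[K,\dom(-)]$ and $\mathcal{B}[K,\cod(-)]$ for all $K\in\mathcal{B}_0$, hence, as $\mathcal{B}_0$ is a strong generator of $\mathcal{B}$, equivalences on $\dom$ and $\cod$, hence an equivalence in $\ps[2,\mathcal{B}]$. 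Thus \Cref{1.11} yields finite bipresentability, and \Cref{closureiscute1} identifies $(\ps[2,\mathcal{B}])_\omega$ with the closure of this generator under finite weighted bicolimits; since $\dom,\cod$ preserve finite weighted bicolimits and carry the generator into $\mathcal{B}_\omega$, every object of that closure has bicompact domain and codomain, so lies in $\ps[2,\mathcal{B}_\omega]$. Combined with the second step, this gives the biequivalence $(\ps[2,\mathcal{B}])_\omega\simeq\ps[2,\mathcal{B}_\omega]$.
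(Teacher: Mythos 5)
Your overall architecture is the one the paper itself uses: the paper does not prove this proposition directly but deduces it as the case $D=2$ of \Cref{2-cat of diagram is sigma-accessible}, whose proof is exactly your three ingredients (pointwise bicolimits, bicompactness of diagrams of bicompacts, density of $\ps[2,\mathcal{B}_\omega]$ via the canonical pseudoslice and cofinal projections). Two of your steps are genuinely different from, and in places cleaner than, the paper. Your bicompactness argument via the iso-comma presentation of $\ps[2,\mathcal{B}][f,g]$ plus commutation of finite weighted bilimits with bifiltered bicolimits in $\Cat$ makes rigorous what the paper disposes of with ``similar calculations can be performed to see that this equivalence is pseudonatural in $d$'' (you still owe the pseudonaturality in $i$ of the bicompactness equivalences before commuting, but the paper's \Cref{binerve preserve bifiltered bicolimits} shows how). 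Your treatment of the bipresentable case via \Cref{1.11}, using the left biadjoints $D\dashv\dom$ and $C\dashv\cod$, \Cref{left biadjoint of finitary 2-functor preserve sigma-compact} to get bicompactness of $D(K),C(K)$, and \Cref{closureiscute1} to pin down $(\ps[2,\mathcal{B}])_\omega$, is an attractive shortcut the paper does not take: it buys you the bipresentable statement and the biequivalence without re-running the density argument, at the price of only working when $\mathcal{B}$ is bicocomplete (you need the bi-initial object for $C$, and $\emptyset$ is bicompact as an empty finite bicolimit by \Cref{finite bicolimit of sigma-compact are sigma-compact}).

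The genuine gap is in the bi-accessible case, and it is not merely the deferred ``careful verification.'' You propose to establish bicofinality of the projections $J\to\mathcal{B}_\omega\downarrow A$, $J\to\mathcal{B}_\omega\downarrow B$ first and then claim ``bicofinality forces $J$ to be bifiltered.'' This implication is backwards. In the paper's framework, $\sigma$-cofinality is only defined when the domain is already filtered, and filteredness does not descend along cofinal functors in any reasonable sense: any nonempty connected $2$-category maps cofinally to the terminal one without being bifiltered. The paper's own lemmas go the other way (\Cref{cofinal functors transfer sigma-filteredness} pushes filteredness forward; \Cref{reflection of filteredness} pulls it back only along pseudo-fully-faithful inclusions, which your projections are not). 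Consequently the bifilteredness of $J\simeq\ps[2,\mathcal{B}_\omega]\downarrow f$ must be proven directly, before any cofinality or bicolimit computation via \Cref{cofinal functors and bicolimits} can be invoked; and this direct proof --- constructing a common refinement of two squares $(a_1,b_1,k_1,\sigma_1)$, $(a_2,b_2,k_2,\sigma_2)$, inserting invertible $2$-cells between parallel square maps, and equifying parallel $2$-cells, all by interleaving bifilteredness of $\mathcal{B}_\omega\downarrow A$ and $\mathcal{B}_\omega\downarrow B$ (\Cref{the pseudococone is bifiltered}) with the $1$-cell and $2$-cell lifting properties of bicompacts (\Cref{lifting property of sigma-compact}, \Cref{A concrete description of sigma-compactness: 2-cells}, \Cref{Lifts of parallel 2-cells}) --- is precisely the bulk of the paper's proof of \Cref{2-cat of diagram is sigma-accessible}. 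Until that construction is carried out, the generation of a general arrow $f:A\to B$ by $\ps[2,\mathcal{B}_\omega]$, and hence the bi-accessible half of the proposition, remains unproven.
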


\begin{proof}
From what was said before, $ \ps[2, \mathcal{B}]$ inherits bilimits and bicolimits existing in $\mathcal{B}$, hence in particular its filtered bicolimits. In particular, as soon as $ \ps[2, \mathcal{B}]$ is proven to be finitely bi-accessible, it will be automatically finitely bipresentable. The properties of the domain and codomain functors process from the fact they preserve both bilimits and bicolimits. We then have to prove bi-accessibility.\\

First, we prove that bicompact arrows have bicompact domains and codomains. For the codomain, let $ k : \dom(k) \rightarrow \cod(k)$ be a bicompact arrow in $  \ps[2, \mathcal{B}]$ and a bifiltered diagram $F :  I \rightarrow  \mathcal{B}$. Then for any $ a : \cod(k) \rightarrow \bicolim F$ defines an identity 2-cell
% https://q.uiver.app/?q=WzAsNCxbMCwwLCJcXGRvbShrKSJdLFswLDEsIlxcY29kKGspIl0sWzEsMCwiXFxiaWNvbGltXFwsIEYiXSxbMSwxLCJcXGJpY29saW1cXCwgRiJdLFswLDEsImsiLDJdLFsxLDMsImEiLDJdLFsyLDMsIlxcYmljb2xpbVxcLCAxX0YiLDAseyJsZXZlbCI6Miwic3R5bGUiOnsiaGVhZCI6eyJuYW1lIjoibm9uZSJ9fX1dLFswLDIsImFrIl0sWzAsMywiPSIsMSx7InN0eWxlIjp7ImJvZHkiOnsibmFtZSI6Im5vbmUifSwiaGVhZCI6eyJuYW1lIjoibm9uZSJ9fX1dXQ==
\[\begin{tikzcd}
	{\dom(k)} & {\bicolim\, F} \\
	{\cod(k)} & {\bicolim\, F}
	\arrow["k"', from=1-1, to=2-1]
	\arrow["a"', from=2-1, to=2-2]
	\arrow["{\bicolim\, 1_F}", Rightarrow, no head, from=1-2, to=2-2]
	\arrow["ak", from=1-1, to=1-2]
	\arrow["{=}"{description}, draw=none, from=1-1, to=2-2]
\end{tikzcd}\]
where $ 1_{\bicolim\, F}=\bicolim\, 1_F$. Hence for $ k$ is bicompact we have a factorization of $ (ak, a) : k \rightarrow 1_{\bicolim \, F}$ through some $ q_i : F(i) \rightarrow \bicolim\, F$
% https://q.uiver.app/?q=WzAsNixbMCwxLCJcXGRvbShrKSJdLFswLDMsIlxcY29kKGspIl0sWzIsMSwiXFxiaWNvbGltXFwsIEYiXSxbMSwwLCJGKGkpIl0sWzIsMywiXFxiaWNvbGltXFwsIEYiXSxbMSwyLCJGKGkpIl0sWzAsMSwiayIsMl0sWzAsMywiYiciXSxbMywyLCJxX2kiXSxbMSw0LCJhIiwxXSxbMiw0LCJcXGJpY29saW1cXCwgMV9GIiwwLHsibGV2ZWwiOjIsInN0eWxlIjp7ImhlYWQiOnsibmFtZSI6Im5vbmUifX19XSxbMSw1LCJiIiwxXSxbNSw0LCJxX2kiLDFdLFszLDUsIiIsMSx7ImxldmVsIjoyLCJzdHlsZSI6eyJoZWFkIjp7Im5hbWUiOiJub25lIn19fV0sWzAsNSwiXFxhbHBoYVxcYXRvcCBcXHNpbWVxIiwxLHsic3R5bGUiOnsiYm9keSI6eyJuYW1lIjoibm9uZSJ9LCJoZWFkIjp7Im5hbWUiOiJub25lIn19fV0sWzUsMiwiPSIsMSx7InN0eWxlIjp7ImJvZHkiOnsibmFtZSI6Im5vbmUifSwiaGVhZCI6eyJuYW1lIjoibm9uZSJ9fX1dLFswLDIsImFrIiwxLHsibGFiZWxfcG9zaXRpb24iOjcwfV1d
\[\begin{tikzcd}
	& {F(i)} \\
	{\dom(k)} && {\bicolim\, F} \\
	& {F(i)} \\
	{\cod(k)} && {\bicolim\, F}
	\arrow["k"', from=2-1, to=4-1]
	\arrow["{b'}", from=2-1, to=1-2]
	\arrow["{q_i}", from=1-2, to=2-3]
	\arrow["a"{description}, from=4-1, to=4-3]
	\arrow["{\bicolim\, 1_F}", Rightarrow, no head, from=2-3, to=4-3]
	\arrow["b"{description}, from=4-1, to=3-2]
	\arrow["{q_i}"{description}, from=3-2, to=4-3]
	\arrow[Rightarrow, no head, from=1-2, to=3-2]
	\arrow["{\alpha\atop \simeq}"{description}, draw=none, from=2-1, to=3-2]
	\arrow["{=}"{description}, draw=none, from=3-2, to=2-3]
	\arrow["ak"{description, pos=0.7}, from=2-1, to=2-3, crossing over]
\end{tikzcd}\]
where $ b'$ provides a desired lift. Hence $ \cod(k)$ is bicompact. \\

For the domain, consider the bifiltered colimit $ \dom (k) \simeq \bicolim \; \mathcal{B}_\omega \downarrow \dom(k)$, which exhibits $ k$ as the induced arrow $ k = \langle ka \rangle_{a \in \mathcal{B}_\omega\downarrow k}$. Hence we have a bifiltered colimit in $ \ps[2, \mathcal{B}]$ 
\[ k \simeq \underset{a \in \mathcal{B}_\omega\downarrow k}{\bicolim} \; ka \]
so that we have a factorization for some $ a : K \rightarrow \dom(k)$
% https://q.uiver.app/?q=WzAsNixbMCwxLCJcXGRvbShrKSJdLFswLDMsIlxcY29kKGspIl0sWzIsMSwiXFxkb20oaykiXSxbMSwwLCJLIl0sWzIsMywiXFxjb2QoaykiXSxbMSwyLCJcXGNvZChrKSJdLFswLDEsImsiLDJdLFswLDMsImInIl0sWzMsMiwiYSJdLFsxLDQsIiIsMSx7ImxldmVsIjoyLCJzdHlsZSI6eyJoZWFkIjp7Im5hbWUiOiJub25lIn19fV0sWzIsNCwiayIsMCx7ImxldmVsIjoyLCJzdHlsZSI6eyJoZWFkIjp7Im5hbWUiOiJub25lIn19fV0sWzEsNSwiIiwxLHsibGV2ZWwiOjIsInN0eWxlIjp7ImhlYWQiOnsibmFtZSI6Im5vbmUifX19XSxbNSw0LCIiLDEseyJsZXZlbCI6Miwic3R5bGUiOnsiaGVhZCI6eyJuYW1lIjoibm9uZSJ9fX1dLFszLDUsImthIiwxLHsibGFiZWxfcG9zaXRpb24iOjMwfV0sWzAsNSwiXFxhbHBoYVxcYXRvcCBcXHNpbWVxIiwxLHsic3R5bGUiOnsiYm9keSI6eyJuYW1lIjoibm9uZSJ9LCJoZWFkIjp7Im5hbWUiOiJub25lIn19fV0sWzUsMiwiPSIsMSx7InN0eWxlIjp7ImJvZHkiOnsibmFtZSI6Im5vbmUifSwiaGVhZCI6eyJuYW1lIjoibm9uZSJ9fX1dLFswLDIsIiIsMSx7ImxldmVsIjoyLCJzdHlsZSI6eyJoZWFkIjp7Im5hbWUiOiJub25lIn19fV1d
\[\begin{tikzcd}
	& K \\
	{\dom(k)} && {\dom(k)} \\
	& {\cod(k)} \\
	{\cod(k)} && {\cod(k)}
	\arrow["k"', from=2-1, to=4-1]
	\arrow["{b'}", from=2-1, to=1-2]
	\arrow["a", from=1-2, to=2-3]
	\arrow[Rightarrow, no head, from=4-1, to=4-3]
	\arrow["k", Rightarrow, no head, from=2-3, to=4-3]
	\arrow[Rightarrow, no head, from=4-1, to=3-2]
	\arrow[Rightarrow, no head, from=3-2, to=4-3]
	\arrow["ka"{description, pos=0.3}, from=1-2, to=3-2]
	\arrow["{\alpha\atop \simeq}"{description}, draw=none, from=2-1, to=3-2]
	\arrow["{=}"{description}, draw=none, from=3-2, to=2-3]
	\arrow[Rightarrow, no head, from=2-1, to=2-3, crossing over]
\end{tikzcd}\]
exhibiting $ \dom(k)$ as a pseudoretract of a bicompact, and hence as a bicompact object itself. \\

Now we prove that any arrow between bicompact object is bicompact in $ \ps[2, \mathcal{B}]$. Take $ k : K \rightarrow K'$ with $ K, \, K'$ bicompact, and $ F : I \rightarrow \ps[2, \mathcal{B}]$ a bifiltered diagram. Then for any square as below 
% https://q.uiver.app/?q=WzAsNCxbMCwwLCJLIl0sWzAsMSwiSyciXSxbMSwwLCJcXGJpY29saW0gXFwsIFxcZG9tXFwsIEYiXSxbMSwxLCJcXGJpY29saW0gXFwsIFxcY29kXFwsIEYiXSxbMCwyLCJhIl0sWzEsMywiYSciLDJdLFsyLDMsIlxcYmljb2xpbSBcXCwgIEYiXSxbMCwxLCJrIiwyXSxbMCwzLCJcXGFscGhhIFxcYXRvcCBcXHNpbWVxIiwxLHsic3R5bGUiOnsiYm9keSI6eyJuYW1lIjoibm9uZSJ9LCJoZWFkIjp7Im5hbWUiOiJub25lIn19fV1d
\[\begin{tikzcd}
	K & {\bicolim \, \dom\, F} \\
	{K'} & {\bicolim \, \cod\, F}
	\arrow["a", from=1-1, to=1-2]
	\arrow["{a'}"', from=2-1, to=2-2]
	\arrow["{\bicolim \,  F}", from=1-2, to=2-2]
	\arrow["k"', from=1-1, to=2-1]
	\arrow["{\alpha \atop \simeq}"{description}, draw=none, from=1-1, to=2-2]
\end{tikzcd}\]
bicompactness of $ K$ and $ K'$ provides us respectively with a upper and lower lift 
% https://q.uiver.app/?q=WzAsNixbMCwxLCJLIl0sWzAsMywiSyciXSxbMiwxLCJcXGJpY29saW0gXFwsIFxcZG9tXFwsIEYiXSxbMiwzLCJcXGJpY29saW0gXFwsIFxcY29kXFwsIEYiXSxbMSwwLCJcXGRvbVxcLCBGKGkpIl0sWzEsMiwiXFxjb2RcXCwgRihpJykiXSxbMCwyLCJhIiwxXSxbMSwzLCJhJyIsMV0sWzIsMywiXFxiaWNvbGltIFxcLCAgRiJdLFswLDEsImsiLDJdLFswLDQsImIiXSxbNCwyLCJcXGRvbSBcXCwgcV9pIl0sWzEsNSwiYiciXSxbNSwzLCJcXGNvZCBcXCwgcV97aSd9IiwyXSxbNCw2LCJcXGJldGEgXFxhdG9wIFxcc2ltZXEiLDEseyJzaG9ydGVuIjp7InRhcmdldCI6MjB9LCJzdHlsZSI6eyJib2R5Ijp7Im5hbWUiOiJub25lIn0sImhlYWQiOnsibmFtZSI6Im5vbmUifX19XSxbNSw3LCJcXGJldGEnIFxcYXRvcCBcXHNpbWVxIiwxLHsic2hvcnRlbiI6eyJ0YXJnZXQiOjIwfSwic3R5bGUiOnsiYm9keSI6eyJuYW1lIjoibm9uZSJ9LCJoZWFkIjp7Im5hbWUiOiJub25lIn19fV1d
\[\begin{tikzcd}
	& {\dom\, F(i)} \\
	K && {\bicolim \, \dom\, F} \\
	& {\cod\, F(i')} \\
	{K'} && {\bicolim \, \cod\, F}
	\arrow[""{name=0, anchor=center, inner sep=0}, "a"{description}, from=2-1, to=2-3]
	\arrow[""{name=1, anchor=center, inner sep=0}, "{a'}"{description}, from=4-1, to=4-3]
	\arrow["{\bicolim \,  F}", from=2-3, to=4-3]
	\arrow["k"', from=2-1, to=4-1]
	\arrow["b", from=2-1, to=1-2]
	\arrow["{\dom \, q_i}", from=1-2, to=2-3]
	\arrow["{b'}", from=4-1, to=3-2]
	\arrow["{\cod \, q_{i'}}", from=3-2, to=4-3]
	\arrow["{\beta \atop \simeq}"{description}, Rightarrow, draw=none, from=1-2, to=0]
	\arrow["{\beta' \atop \simeq}"{description}, Rightarrow, draw=none, from=3-2, to=1]
\end{tikzcd}\]
and we want to construct further lifts with a vertical arrow between them. But precomposing respectively the upper lift with $ F(i)$ and the lower lift with $ k$ provides two distincts lifts of a same arrow $ \bicolim \, F a$ as seen below
% https://q.uiver.app/?q=WzAsNCxbMCwxLCJLIl0sWzIsMSwiXFxiaWNvbGltIFxcLCBcXGNvZFxcLCBGIl0sWzEsMCwiXFxkb21cXCwgRihpKSJdLFsxLDIsIlxcY29kXFwsIEYoaScpIl0sWzAsMiwiYiJdLFszLDEsIlxcY29kIFxcLCBxX3tpJ30iLDJdLFsyLDEsIlxcYmljb2xpbSBcXCwgIEZcXGRvbSBcXCwgcV9pIl0sWzAsMSwiXFxiaWNvbGltIFxcLCAgRiBhIiwxXSxbMCwzLCJiJ2siLDJdLFsyLDcsIlxcYmljb2xpbSBcXCwgIEYqXFxiZXRhIFxcYXRvcCBcXHNpbWVxIiwxLHsic2hvcnRlbiI6eyJ0YXJnZXQiOjIwfSwic3R5bGUiOnsiYm9keSI6eyJuYW1lIjoibm9uZSJ9LCJoZWFkIjp7Im5hbWUiOiJub25lIn19fV0sWzcsMywiXFxiZXRhJyprIFxcYXRvcCBcXHNpbWVxIiwxLHsic2hvcnRlbiI6eyJzb3VyY2UiOjIwfSwic3R5bGUiOnsiYm9keSI6eyJuYW1lIjoibm9uZSJ9LCJoZWFkIjp7Im5hbWUiOiJub25lIn19fV1d
\[\begin{tikzcd}
	& {\cod\, F(i)} \\
	K && {\bicolim \, \cod\, F} \\
	& {\cod\, F(i')}
	\arrow["F(i)b", from=2-1, to=1-2]
	\arrow["{\cod \, q_{i'}}"', from=3-2, to=2-3]
	\arrow["{\cod \, q_i}", from=1-2, to=2-3]
	\arrow[""{name=0, anchor=center, inner sep=0}, "{\bicolim \,  F a}"{description}, from=2-1, to=2-3]
	\arrow["{b'k}"', from=2-1, to=3-2]
	\arrow["{\bicolim \,  F*\beta \atop \simeq}"{description}, Rightarrow, draw=none, from=1-2, to=0]
	\arrow["{\beta'*k \atop \simeq}"{description}, Rightarrow, draw=none, from=0, to=3-2]
\end{tikzcd}\]
Now for $ K$ is bicompact, those two lifts admit a common refinement for some $ d : i \rightarrow j$, $ d' : i' \rightarrow j$ equipped with an invertible 2-cell $ \gamma$, and in the diagram below
\[\begin{tikzcd}
	& {\dom\, F(i)} \\
	&& {\dom\, F(j)} && {\bicolim \, \dom\, F} \\
	& {\cod\, F(i)} &&& \bullet \\
	K && {\cod\, F(j)} && {\bicolim \, \cod\, F} \\
	& {\cod\, F(i')}
	\arrow["{F(i)b}"{description}, from=4-1, to=3-2]
	\arrow[""{name=0, anchor=center, inner sep=0}, "{\cod \, q_{i'}}"', curve={height=12pt}, from=5-2, to=4-5]
	\arrow[""{name=1, anchor=center, inner sep=0}, "{\cod \, q_{i}}"{description, pos=0.6}, curve={height=-12pt}, from=3-2, to=4-5]
	\arrow["{b'k}"', from=4-1, to=5-2]
	\arrow["{\cod\, F(d)}"{description}, from=3-2, to=4-3]
	\arrow["{\cod\, F(d')}"{description}, from=5-2, to=4-3]
	\arrow["{\cod \,q_j}"{description}, from=4-3, to=4-5]
	\arrow["{\gamma \atop \simeq}"{description}, draw=none, from=3-2, to=5-2]
	\arrow["{F(i)}"{description}, from=1-2, to=3-2]
	\arrow[""{name=2, anchor=center, inner sep=0}, "b", curve={height=-12pt}, from=4-1, to=1-2]
	\arrow["{\dom\, F(d)}"{description}, from=1-2, to=2-3]
	\arrow["{\dom \,q_j}"{description}, from=2-3, to=2-5]
	\arrow["{\bicolim \,  F}"{description}, from=2-5, to=4-5]
	\arrow[""{name=3, anchor=center, inner sep=0}, "{\dom\, q_i}"{description}, curve={height=-12pt}, from=1-2, to=2-5]
	\arrow["{q_j \atop \simeq}"{description}, draw=none, from=2-3, to=3-5]
	\arrow["{F(d) \atop \simeq}"{description}, draw=none, from=1-2, to=4-3]
	\arrow["{F(j)}"{description, pos=0.3}, from=2-3, to=4-3, crossing over]
	\arrow["{\cod \,\theta_d \atop \simeq}"{description}, shift left=4, Rightarrow, draw=none, from=1, to=4-3]
	\arrow["{\theta_d \atop \simeq}"{description}, Rightarrow, draw=none, from=4-3, to=0]
	\arrow["{=}"{description}, Rightarrow, draw=none, from=2, to=3-2]
	\arrow["{\dom \, \theta_d \atop \simeq}"{description}, Rightarrow, draw=none, from=3, to=2-3]
\end{tikzcd}\]
we can extract the following lift 
% https://q.uiver.app/?q=WzAsNCxbMCwwLCJLIl0sWzAsMSwiSyciXSxbMSwwLCJcXGRvbSBcXCwgRihqKSJdLFsxLDEsIlxcY29kIFxcLCBGKGopIl0sWzAsMSwiayIsMl0sWzAsMiwiXFxkb21cXCwgRihkKSBiIl0sWzIsMywiRihqKSJdLFsxLDMsIlxcY29kXFwsRihkJyliJyIsMl0sWzAsMywiXFxnYW1tYSBiKkYoZCkgXFxjb2RcXCwgRihkKSpcXGJldGEgXFxhdG9wIFxcc2ltZXEiLDEseyJzdHlsZSI6eyJib2R5Ijp7Im5hbWUiOiJub25lIn0sImhlYWQiOnsibmFtZSI6Im5vbmUifX19XV0=
\[\begin{tikzcd}[column sep=huge]
	K & {\dom \, F(j)} \\
	{K'} & {\cod \, F(j)}
	\arrow["k"', from=1-1, to=2-1]
	\arrow["{\dom\, F(d) b}", from=1-1, to=1-2]
	\arrow["{F(j)}", from=1-2, to=2-2]
	\arrow["{\cod\,F(d')b'}"', from=2-1, to=2-2]
	\arrow["{\gamma b*F(d) \cod\, F(d)*\beta \atop \simeq}"{description}, draw=none, from=1-1, to=2-2]
\end{tikzcd}\]
Now for the two-dimensional condition, consider a morphism of pseudosquares
% https://q.uiver.app/?q=WzAsNCxbMCwwLCJLIl0sWzAsMSwiSyciXSxbMiwwLCJcXGJpY29saW0gXFwsIFxcZG9tXFwsIEYiXSxbMiwxLCJcXGJpY29saW0gXFwsIFxcY29kXFwsIEYiXSxbMCwyLCJhXzIiLDFdLFsxLDMsImFfMiciLDJdLFsyLDMsIlxcYmljb2xpbSBcXCwgIEYiXSxbMCwxLCJrIiwyXSxbMCwzLCJcXGFscGhhXzIgXFxhdG9wIFxcc2ltZXEiLDEseyJzdHlsZSI6eyJib2R5Ijp7Im5hbWUiOiJub25lIn0sImhlYWQiOnsibmFtZSI6Im5vbmUifX19XSxbMCwyLCJhXzEiLDAseyJjdXJ2ZSI6LTR9XSxbOSw0LCJcXHBoaSIsMCx7InNob3J0ZW4iOnsic291cmNlIjoyMCwidGFyZ2V0IjoyMH19XV0=
\[\begin{tikzcd}
	K && {\bicolim \, \dom\, F} \\
	{K'} && {\bicolim \, \cod\, F}
	\arrow[""{name=0, anchor=center, inner sep=0}, "{a_1'}"{description}, from=2-1, to=2-3]
	\arrow["{\bicolim \,  F}", from=1-3, to=2-3]
	\arrow["k"', from=1-1, to=2-1]
	\arrow["{\alpha_1 \atop \simeq}"{description}, draw=none, from=1-1, to=2-3]
	\arrow["{a_1}", from=1-1, to=1-3]
	\arrow[""{name=1, anchor=center, inner sep=0}, "{a_2'}"', bend right=30, end anchor=190, from=2-1, to=2-3]
	\arrow["{\phi'}"{pos=-0.2}, shorten <=3pt, shorten >=3pt, Rightarrow, from=0, to=1]
\end{tikzcd}\; =\; \begin{tikzcd}
	K && {\bicolim \, \dom\, F} \\
	{K'} && {\bicolim \, \cod\, F}
	\arrow[""{name=0, anchor=center, inner sep=0}, "{a_2}"{description}, from=1-1, to=1-3]
	\arrow["{a_2'}"', from=2-1, to=2-3]
	\arrow["{\bicolim \,  F}", from=1-3, to=2-3]
	\arrow["k"', from=1-1, to=2-1]
	\arrow["{\alpha_2 \atop \simeq}"{description}, draw=none, from=1-1, to=2-3]
	\arrow[""{name=1, anchor=center, inner sep=0}, "{a_1}", bend left=30, end anchor=170, from=1-1, to=1-3]
	\arrow["\phi", shorten <=4pt, shorten >=4pt, Rightarrow, from=1, to=0]
\end{tikzcd}\]
Then we have both an upper and lower morphism of lifts as below
\[\begin{tikzcd}[column sep=large]
	& {\dom\, F(i_1)} \\
	K && {\dom\,F(j)} & {\bicolim \, \dom\, F} \\
	& {\dom\, F(i_2)} \\
	& {\cod\, F(i_1')} \\
	{K'} && {\cod\,F(j')} & {\bicolim \, \cod\, F} \\
	& {\cod\, F(i_2')}
	\arrow["k"', from=2-1, to=5-1]
	\arrow["{b_1}", from=2-1, to=1-2]
	\arrow["{\dom\, F(d_1)}"{description}, from=1-2, to=2-3]
	\arrow["{b_1'}"{description}, from=5-1, to=4-2]
	\arrow["{\cod\, F(d_1')}"{description}, from=4-2, to=5-3]
	\arrow["{b_2}"{description}, from=2-1, to=3-2]
	\arrow["{\dom\, F(d_2)}"{description}, from=3-2, to=2-3]
	\arrow["{b_2'}"', from=5-1, to=6-2]
	\arrow["{\cod\, F(d_2')}"{description}, from=6-2, to=5-3]
	\arrow["{\bicolim \,  F}", from=2-4, to=5-4]
	\arrow["{\dom\,q_j}"{description}, from=2-3, to=2-4]
	\arrow["{\cod\,q_{j'}}"{description}, from=5-3, to=5-4]
	\arrow["\psi", shorten <=10pt, shorten >=10pt, Rightarrow, from=1-2, to=3-2]
	\arrow["{\psi'}", shorten <=10pt, shorten >=10pt, Rightarrow, from=4-2, to=6-2]
	\arrow[""{name=0, anchor=center, inner sep=0}, "{\dom\, q_{i_1}}"{description}, curve={height=-18pt}, from=1-2, to=2-4]
	\arrow[""{name=1, anchor=center, inner sep=0}, "{\dom\, q_{i_2}}"{description}, curve={height=18pt}, from=3-2, to=2-4]
	\arrow[""{name=2, anchor=center, inner sep=0}, "{\cod\, q_{i_1'}}"{description}, curve={height=-18pt}, from=4-2, to=5-4]
	\arrow[""{name=3, anchor=center, inner sep=0}, "{\cod\, q_{i_2'}}"{description}, curve={height=18pt}, from=6-2, to=5-4]
	\arrow["{\dom\, \theta_{d_1} \atop \simeq}"{description}, Rightarrow, draw=none, from=0, to=2-3]
	\arrow["{\dom\, \theta_{d_2} \atop \simeq}"{description}, Rightarrow, draw=none, from=2-3, to=1]
	\arrow["{\cod\, \theta_{d_1'} \atop \simeq}"{description}, Rightarrow, draw=none, from=2, to=5-3]
	\arrow["{\cod\, \theta_{d_2'} \atop \simeq}"{description}, Rightarrow, draw=none, from=3, to=5-3]
\end{tikzcd}\]
and by the same argument as before, we can exhibit a further refinement $ g: j \rightarrow l $, $ g': j' \rightarrow l$ defining a morphism of lifts in $ \ps[2, \mathcal{B}]$:
% https://q.uiver.app/?q=WzAsNCxbMCwwLCJLIl0sWzAsMiwiSyciXSxbMiwwLCJcXGRvbSBcXCwgRihsKSJdLFsyLDIsIlxcY29kIFxcLCBGKGwpIl0sWzAsMSwiayIsMl0sWzAsMiwiXFxkb21cXCwgRihnZF8xKSBiXzEiLDAseyJjdXJ2ZSI6LTN9XSxbMiwzLCJGKGwpIl0sWzEsMywiXFxjb2RcXCxGKGcnZF8yJyliJ18yIiwyXSxbMCwzLCJcXGdhbW1hIGIqRihnZF8yKSBcXGNvZFxcLCBGKGRfMikqXFxiZXRhXzIgXFxhdG9wIFxcc2ltZXEiLDEseyJzdHlsZSI6eyJib2R5Ijp7Im5hbWUiOiJub25lIn0sImhlYWQiOnsibmFtZSI6Im5vbmUifX19XSxbMCwyLCJcXGRvbVxcLCBGKGdkXzIpIGJfMiIsMl0sWzUsOSwiXFxkb21cXCwgRihnKSogXFxwc2kiLDAseyJzaG9ydGVuIjp7InNvdXJjZSI6MjAsInRhcmdldCI6MjB9fV1d
\[ \begin{tikzcd}[column sep=huge]
	K && {\dom \, F(l)} \\
	\\
	{K'} && {\cod \, F(l)}
	\arrow["k"', from=1-1, to=3-1]
	\arrow["{\dom\, F(gd_1) b_1}", from=1-1, to=1-3]
	\arrow["{F(l)}", from=1-3, to=3-3]
	\arrow[""{name=0, anchor=center, inner sep=0}, "{\cod\,F(g'd_2')b'_2}"', bend right=45, end anchor=190, from=3-1, to=3-3]
	\arrow["{\gamma b*F(gd_1) \cod\, F(d_1)*\beta_1 \atop \simeq}"{description}, draw=none, from=1-1, to=3-3]
	\arrow[""{name=1, anchor=center, inner sep=0}, "{\cod\,F(g'd_2')b'_2}", from=3-1, to=3-3]
	\arrow["{\cod\,F(g')*\psi'}"{description}, shorten <=2pt, shorten >=2pt, Rightarrow, from=1, to=0]
\end{tikzcd} \; = \;  \begin{tikzcd}[column sep=huge]
	K && {\dom \, F(l)} \\
	\\
	{K'} && {\cod \, F(l)}
	\arrow["k"', from=1-1, to=3-1]
	\arrow[""{name=0, anchor=center, inner sep=0}, "{\dom\, F(gd_1) b_1}", bend left=45, end anchor=170, from=1-1, to=1-3]
	\arrow["{F(l)}", from=1-3, to=3-3]
	\arrow["{\cod\,F(g'd_2')b'_2}"', from=3-1, to=3-3]
	\arrow["{\gamma b*F(gd_2) \cod\, F(d_2)*\beta_2 \atop \simeq}"{description}, draw=none, from=1-1, to=3-3]
	\arrow[""{name=1, anchor=center, inner sep=0}, "{\dom\, F(gd_2) b_2}"', from=1-1, to=1-3]
	\arrow["{\dom\, F(g)* \psi}"{description}, shorten <=3pt, shorten >=3pt, Rightarrow, from=0, to=1]
\end{tikzcd}\]
This achieves to prove that bicompacts of $ \ps[2, \mathcal{B}]$ are exactly arrows between bicompacts. \\

Finally we have to prove that any arrow is a bifiltered bicolimit of bicompact arrows. For any $ f $ we have both that $ \dom (f) = \bicolim \, \mathcal{B}_\omega \downarrow \dom (f)$ and $ \cod (f) = \bicolim \, \mathcal{B}_\omega \downarrow \cod (f)$, and moreover $f$ is induced as $ f = \langle fa \rangle_{ \mathcal{B}_\omega \downarrow \dom (f)}$. But now, as $ \mathcal{B}_\omega \downarrow \cod (f)$ for each $a : K \rightarrow \dom (f)$ we can pick a lift 
% https://q.uiver.app/?q=WzAsNCxbMCwwLCJLIl0sWzEsMCwiXFxkb20gXFwsIGYiXSxbMSwxLCJcXGNvZCBcXCwgZiJdLFswLDEsIksnIl0sWzEsMiwiZiJdLFswLDEsImEiXSxbMywyLCJhJyIsMl0sWzAsMywiYiIsMix7InN0eWxlIjp7ImJvZHkiOnsibmFtZSI6ImRhc2hlZCJ9fX1dLFswLDIsIlxcYmV0YSBcXGF0b3AgXFxzaW1lcSIsMSx7InN0eWxlIjp7ImJvZHkiOnsibmFtZSI6Im5vbmUifSwiaGVhZCI6eyJuYW1lIjoibm9uZSJ9fX1dXQ==
\[\begin{tikzcd}
	K & {\dom \, f} \\
	{K'} & {\cod \, f}
	\arrow["f", from=1-2, to=2-2]
	\arrow["a", from=1-1, to=1-2]
	\arrow["{a'}"', from=2-1, to=2-2]
	\arrow["b"', dashed, from=1-1, to=2-1]
	\arrow["{\beta \atop \simeq}"{description}, draw=none, from=1-1, to=2-2]
\end{tikzcd}\]
and $f $ is exhibited as the bicolimit of the subcategory of $ \ps[2, \mathcal{B}_\omega] \downarrow f$ consisting of all those lifts $ (a, b, \beta)$ for $ a : K \rightarrow \dom(f)$ and $ (b,\beta) $ a lift of $fa$. Now checking that this subcategory is bifiltered is a straitforward utilisation of the bifilteredness of the canonical cones of the domain and codomain. 
\end{proof}

\section{2-Categories of flat pseudofunctors} \label{sectionbiflat}

We deduced several properties of the bi-accessible and bipresentable 2-categories from analysing their binerve pseudofunctors, which allowed to see them as 2-categories of $\Cat$-valued pseudofunctors. Here we describe the precise class of pseudofunctors obtained through this process, the analog of the ordinary \emph{flat} functors. They were defined in \cite{descotte2018sigma}, from which we give the following definitions and elementary property. As in the 1-dimensional case, those are pseudofunctors that virtually preserves finitely weighted bilimits whenever they exist (which amounts to testing real preservation at the level of the left biKan extension); it was also remarked that this amounted to requiring their category of elements to be $\sigma$-cofiltered relatively to their opcartesian morphisms. We give a further simplification of this latter property into a condition of bifilteredness thanks to our key observation, which harmonizes this result with our definitions of bi-accessibility. We then prove the 2-categories of flat pseudofunctors to be themselves bi-accessible and bipresentable whenever their domain 2-category have finite weighted bilimits. 

\subsection{Extension of flat pseudofunctors}

\begin{definition}[Bilex $2$-categories]
A \emph{bilex}\index{bilex!2-category}\index{bilex!pseudofunctor} 2-category is a 2-category with all finite weighted bilimits as defined in \Cref{finiteweibili}, and a pseudofunctor is said to be \emph{bilex} if it preserves them (up to equivalence). For a small bilex 2-category $ \mathcal{C}$ and a 2-category $\mathcal{D}$ we write $ \biLex[\mathcal{C}, \mathcal{D}]$ the 2-category of bilex pseudofunctors from $\mathcal{C}$ to $\mathcal{D}$. 
\end{definition}

This very brief subsection sets the stage for the later discussions, we will study extensions of pseudofunctors along the Yoneda 2-embedding into the 2-category of pseudofunctors into $\Cat$ (which is biequivalent to the 2-category of strict 2-functors and pseudonatural transformations)
\[ \begin{tikzcd}
    {\mathcal{C}} \arrow["\hirayo", r,hook] &  {[\mathcal{C}^{\op}, \Cat]_p \simeq \ps[\mathcal{C}^{\op}, \Cat]}
\end{tikzcd}  \]
We will use pseudofunctors into $\Cat$ to keep the correct level of strictness; again, for $ \Cat$ is a strict 2-category, pseudofunctors always form a strict 2-category, as stated for instance at \cite{johnson20212}[Corollary 4.4.13]. %The subsection is not original and follows the work of \cite{descotte2018sigma}.

\begin{definition}[Flat pseudofunctor {\cite[Def. 4.1.11]{descotte2018sigma}}]
A pseudofunctor $F : \mathcal{C} \rightarrow \Cat$ is \emph{flat}\index{flat pseudofunctor} if its left biKan extension $ \biLan_{\hirayo} F$ is bilex. In particular, a flat pseudofunctor preserves any finitely weighted bilimit already existing in $\mathcal{C}$. 
\end{definition}

\begin{proposition}[{\cite[Prop 4.1.14]{descotte2018sigma}}]
Corepresentable 2-functors $ \katayo_C : \mathcal{C} \rightarrow \Cat$ are flat 2-functors. Their biKan extension can be chosen to be the evaluation functor at the corresponding object $ C$.
\end{proposition}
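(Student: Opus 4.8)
The plan is to identify the left biKan extension $\biLan_{\hirayo}\katayo_C$ explicitly with the evaluation pseudofunctor $\mathrm{ev}_C : \ps[\mathcal{C}^{\op}, \Cat] \to \Cat$, $W \mapsto W(C)$, and then to observe that evaluation is automatically bilex. First I would invoke the cancellation rule of \Cref{cancellation rule} (which rests on \Cref{Weighted colimit expression of pointwise biLan}, since $\Cat$ has all small bicolimits) to compute the extension pointwise on a weight $W : \mathcal{C}^{\op}\to\Cat$ as a weighted bicolimit:
\[ \biLan_{\hirayo}\katayo_C(W) \simeq \bicolim^{W}\katayo_C. \]

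The key step is then a $2$-dimensional co-Yoneda computation showing $\bicolim^{W}\katayo_C \simeq W(C)$. For any test category $X$, the universal property of the weighted bicolimit followed by the bicategorical Yoneda lemma gives
\[ \Cat[\bicolim^{W}\katayo_C, X] \simeq \ps[\mathcal{C}^{\op},\Cat]\big[W, \Cat[\katayo_C(-), X]\big] \simeq \Cat[W(C), X], \]
the last equivalence sending a pseudonatural family $\alpha_c : W(c)\times\mathcal{C}[C,c]\to X$ to its value at $(\,\cdot\,, 1_C)$, exactly as in the one-dimensional density argument. Since this chain is pseudonatural in $X$, birepresentability yields $\bicolim^{W}\katayo_C \simeq W(C)$, and tracking the construction shows the equivalence is pseudonatural in $W$. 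Hence $\biLan_{\hirayo}\katayo_C \simeq \mathrm{ev}_C$.

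Finally, $\mathrm{ev}_C$ preserves all weighted bilimits --- in particular finite ones --- because bilimits in $\ps[\mathcal{C}^{\op},\Cat]$ are computed pointwise and therefore commute with evaluation at $C$. Thus $\biLan_{\hirayo}\katayo_C$ is bilex, which is precisely the assertion that $\katayo_C$ is flat, and the evaluation functor is the promised choice of biKan extension.

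I expect the main obstacle to be the careful bookkeeping in the middle step: upgrading the one-dimensional co-Yoneda bijection to an equivalence of categories that is genuinely pseudonatural in both $X$ and $W$, while keeping track of the coherence isomorphisms of the pseudofunctors involved. The skeleton of the argument is identical to the $\mathbf{Set}$-valued case, but the invertible $2$-cells witnessing pseudonaturality must be produced and shown compatible; the heavy lifting, however, is already packaged in \Cref{cancellation rule} and the $2$-dimensional Yoneda lemma, so this amounts to a coherence verification rather than a genuinely new difficulty.
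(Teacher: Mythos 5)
Your proof is correct, but there is nothing in the paper to compare it against: the proposition is stated with a citation to \cite[Prop. 4.1.14]{descotte2018sigma}, and the paper gives no proof of its own. Your argument is the standard one, and almost certainly the substance of the cited proof. The individual steps all check out against the paper's toolkit: the formula $\biLan_\hirayo \katayo_C(W) \simeq \bicolim^{W}\katayo_C$ is exactly the displayed equation in \Cref{cancellation rule} (combined with the bicategorical Yoneda equivalence $\ps[\mathcal{C}^{\op},\Cat][\hirayo(-),W]\simeq W$ turning the canonical weight into $W$ itself); the identification $\bicolim^{W}\katayo_C \simeq W(C)$ is the bicategorical co-Yoneda lemma, and your chain of equivalences establishes it correctly --- the middle step is cleanest if phrased as currying (cartesian closedness of $\Cat$) followed by the bicategorical Yoneda lemma applied to the corepresentable $\katayo_C$, which packages the ``evaluate at $1_C$'' argument and its pseudonaturality in $X$ without extra coherence work; and bilexness of $\mathrm{ev}_C$ follows from the pointwise computation of weighted bilimits in $\ps[\mathcal{C}^{\op},\Cat]$, a fact the paper itself states and uses in the proof of \Cref{preserveandcreatecolimits}. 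One point worth making explicit: flatness asks that the biKan extension be bilex, and bilexness is invariant under pseudonatural equivalence, so identifying $\biLan_{\hirayo}\katayo_C$ with evaluation \emph{up to equivalence} indeed suffices; your remark on pseudonaturality in $W$ is what secures this, and it is the only place where the bookkeeping genuinely matters.
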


In \cite{descotte2018sigma}, we find this crucial theorem which provides the 2-dimensional Diaconescu theorem of extension of flat (pseudo)functors. Our formulation puts together two results in \cite{descotte2018sigma}, which account on the possible levels of strinctness of the result.

\begin{theorem}[{\cite[Prop 4.2.7 and A.6]{descotte2018sigma}}]\label{Dubuc extension of flat pseudofunctor}
Let $\mathcal{C}$ be a small 2-category. Then for a 2-functor (resp. pseudofunctor) $ F :  \mathcal{C} \rightarrow \Cat$ we have the following equivalences \begin{itemize}
    \item $F$ is flat, that is, $ \biLan_{\hirayo} F$ is bilex
    \item $( \int F)^{\op}$ is $\sigma$-filtered relatively to the class of opcartesian arrows
    \item $F$ is a $\sigma$-filtered pseudocolimit (resp. bicolimit) of representables in $ [\mathcal{C}, \Cat]_p$ (resp. in $ \ps[\mathcal{C}, \Cat]$).
\end{itemize}
\end{theorem}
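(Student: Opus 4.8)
The plan is to run a cyclic chain of implications $(2)\Rightarrow(3)\Rightarrow(1)\Rightarrow(2)$, using as a backbone the $\sigma$-colimit decomposition of $F$ into corepresentables together with the fact that $\sigma$-filtered $\sigma$-colimits commute with finite weighted bilimits in $\Cat$. Throughout I would keep the class $\Sigma$ equal to the opcartesian arrows of the category of elements $\int F$, so that the $\sigma$-pair under scrutiny is always $\big((\int F)^{\op}, \textup{opCart}\big)$; the bookkeeping of this variance --- covariant $F$, corepresentables $\mathcal{C}[C,-]$, opposite category, opcartesian class --- is fiddly and I would fix it once and for all at the outset.

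First, $(2)\Leftrightarrow(3)$. The starting point is the $2$-dimensional co-Yoneda decomposition: for any pseudofunctor $F:\mathcal{C}\to\Cat$ one has a canonical equivalence exhibiting $F$ as the conical $\sigma$-bicolimit, relative to opcartesian arrows, of the corepresentables indexed by $(\int F)^{\op}$, computed as the localization of the relevant oplax colimit exactly as in \Cref{sigma-colimit of categories}. Granting this, if $(\int F)^{\op}$ is $\sigma$-filtered relatively to opcartesian arrows, the decomposition is by definition a $\sigma$-filtered $\sigma$-bicolimit of representables, which is $(3)$. For the converse I would use that the category of elements of a $\sigma$-filtered $\sigma$-colimit of corepresentables is again $\sigma$-filtered: each $(\int \mathcal{C}[C,-])^{\op}$ carries a terminal object $(C,1_{C})$ and is thus $\sigma$-filtered, and the compatibility of the Grothendieck construction with $\sigma$-colimits lets one transfer the three filteredness conditions to $(\int F)^{\op}$. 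The pseudo-versus-strict discrepancy between the two clauses of $(3)$ is absorbed by strictification (\Cref{strictification}) together with \Cref{sigma colim and strictification}, which guarantees the strict and pseudo $\sigma$-bicolimits agree.

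Next, $(3)\Rightarrow(1)$ is where the commutation result does the work. By the pointwise formula of \Cref{Weighted colimit expression of pointwise biLan} and the cancellation rule of \Cref{cancellation rule}, $\biLan_{\hirayo}F$ evaluated at a weight $W$ is $\bicolim^{W}F$; since each corepresentable is flat with left biKan extension the evaluation pseudofunctor (which preserves all bilimits that exist), writing $F$ as in $(3)$ expresses $\biLan_{\hirayo}F$ as a $\sigma$-filtered $\sigma$-colimit of such bilex pseudofunctors. That this colimit is again bilex is precisely because $\sigma$-filtered $\sigma$-colimits commute with finite weighted bilimits in $\Cat$, whence $(1)$. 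The genuinely hard direction is $(1)\Rightarrow(2)$: here I would read off the three $\sigma$-filtered conditions on $(\int F)^{\op}$ from the preservation of specific finite weighted bilimits under $\biLan_{\hirayo}F$ --- existence of $\sigma$-spans from preservation of the terminal bilimit, insertion of $2$-cells over $\Sigma$ from preservation of comma/biinserter-type bilimits, and equification from preservation of biequifiers --- mirroring the one-dimensional extraction of filteredness of the category of elements from left exactness.

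The main obstacle is exactly this last step: translating the abstract, representability-level statement that $\biLan_{\hirayo}F$ preserves finite weighted bilimits into the explicit combinatorial $\sigma$-filteredness data, while respecting that only the opcartesian naturality cells are required to be invertible and keeping all the $2$-cells coherent. I expect this to require test weights realizing each of the generating finite bilimits listed in \Cref{sufficient all finite bilimits} (finite biproducts, biequalizers, and the bicotensor with $2$) and a patient analysis of the induced cones inside $\int F$. Finally, I would remark that by \Cref{key lemma} and \Cref{sigmafiltered colimits are bifiltered colimits} the $\sigma$-filtered decomposition in $(3)$ collapses onto an ordinary bifiltered bicolimit over the opcartesian subcategory, which is the form in which the result is used elsewhere in the paper.
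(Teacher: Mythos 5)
The first thing to note is that the paper does not prove this statement at all: it is imported from Descotte--Dubuc--Szyld (Prop.~4.2.7 and A.6 of \cite{descotte2018sigma}), the paper's only contribution being to merge the strict and pseudo variants into a single statement. So there is no internal proof to compare against; what can be assessed is whether your reconstruction is sound and consistent with the cited source. In outline it is: your cycle $(2)\Rightarrow(3)\Rightarrow(1)\Rightarrow(2)$ is exactly the categorified Diaconescu architecture used there. The step $(2)\Rightarrow(3)$ is the $2$-dimensional co-Yoneda decomposition of $F$ as the conical $\sigma$-bicolimit of corepresentables over $\big((\int F)^{\op}, \textrm{op}\Cart^{\op}\big)$ --- the same fact this paper invokes inside the proof of \Cref{flatness as bifilteredness condition} --- and $(3)\Rightarrow(1)$ follows, as you say, from the pointwise formula $\biLan_{\hirayo}F(W)\simeq \bicolim^{W}F$ together with commutation of $\sigma$-filtered $\sigma$-colimits with finite weighted bilimits in $\Cat$, which the paper records as a standalone proposition. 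Your standalone $(3)\Rightarrow(2)$ argument is vague, but the cycle makes it redundant, so no harm done.

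The genuine problem is in $(1)\Rightarrow(2)$, which you correctly identify as carrying all the content of the theorem but then both defer and misattribute. Preservation of the \emph{terminal} bilimit cannot produce $\sigma$-spans between two arbitrary objects of $(\int F)^{\op}$; it only yields the $\sigma$-cone over the \emph{empty} diagram, i.e.\ nonemptiness (cf.\ \Cref{sigmacone in sigmafiltered}). Spans must come from preservation of binary \emph{biproducts} of representables: the equivalence
\[ \biLan_{\hirayo}F(\hirayo_C \times \hirayo_{C'}) \;\simeq\; F(C)\times F(C') \]
identifies the pair $(a,a')\in F(C)\times F(C')$ with an object of the weighted bicolimit $\bicolim^{\hirayo_C\times\hirayo_{C'}}F$, which forces the existence of some $(D,b)$ in $\int F$ with opcartesian arrows witnessing $F(f)(b)\simeq a$ and $F(g)(b)\simeq a'$, i.e.\ a $\Sigma$-span $(C,a)\rightarrow(D,b)\leftarrow(C',a')$ in $(\int F)^{\op}$ --- exactly as in dimension one. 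The insertion and equification axioms then come from the bicotensor with $2$ (or biinserter-type weights) and from biequifier/biequalizer-type weights respectively, and the genuinely $2$-dimensional work --- checking that the comparison $2$-cells so produced are the invertible/opcartesian ones demanded by $\sigma$-filteredness relative to $\textrm{op}\Cart$, with all coherences --- is precisely what occupies the bulk of the proof in \cite{descotte2018sigma} and what your sketch leaves entirely unexecuted. As a strategy the proposal is correct and faithful to the cited source; as a proof it is incomplete exactly where the theorem is nontrivial, and the one concrete dictionary entry you do give is wrong.
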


\begin{comment}
The theorem says much more than Diaconescu: not only a flat functor extends to a 2-geometric morphism, but it also says that such a functor is a filtered colimit of representables. In the case where $ \mathcal{C}$ is bilex, it proves that points form a finitely bipresentable 2-category. 
\end{comment}

But in the regard of \Cref{key lemma}, it appears that one can complete this theorem with a last item simplifying the $\sigma$-filtered decomposition into a bifiltered one:

\begin{lemma}[Flatness as a bifilteredness condition]\label{flatness as bifilteredness condition}
A pseudofunctor $ F : \mathcal{C} \rightarrow \Cat$ is flat if and only if it decomposes as a bifiltered bicolimit of representables.
\end{lemma}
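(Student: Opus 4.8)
The plan is to deduce both implications of the biconditional directly from \Cref{Dubuc extension of flat pseudofunctor}, with the trivialization machinery of \Cref{key lemma} as the only extra ingredient needed to convert $\sigma$-filtered bicolimits of representables into genuinely bifiltered ones. In other words, I regard this lemma as a specialization of \Cref{key lemma} to the diagram-of-representables setting, so that essentially all the work has already been carried out earlier in the paper.

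For the forward implication, I would start from a flat $F$ and invoke the third item of \Cref{Dubuc extension of flat pseudofunctor}: this produces a $\sigma$-filtered pair $(I,\Sigma)$ together with a pseudofunctor $D : I \rightarrow \ps[\mathcal{C}, \Cat]$ taking representable values, with $F \simeq \Sigma\bicolim_{I} D$. Next I would feed this $\sigma$-pair into \Cref{key lemma}, which tells us that the full-on-$0$-and-$2$-cells inclusion $\iota_\Sigma : \Sigma \hookrightarrow I$ has bifiltered domain and is $\sigma$-cofinal relatively to $\Sigma$, while tautologically sending the (all) arrows of $\Sigma$ into $\Sigma$. Applying \Cref{cofinal functors and bicolimits} to $\iota_\Sigma$ — with source pair $(\Sigma,\text{all arrows})$, which is bifiltered, and target pair $(I,\Sigma)$ — then yields $\Sigma\bicolim_{I} D \simeq \bicolim_{\Sigma} D\iota_\Sigma$; alternatively, since bifiltered and $\sigma$-filtered bicolimits are computed pointwise in the presheaf $2$-category, one may obtain the same equivalence objectwise in $\Cat$ via \Cref{sigmafiltered colimits are bifiltered colimits}. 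As $D\iota_\Sigma$ still lands in representables and $\Sigma$ is bifiltered, this exhibits $F$ as a bifiltered bicolimit of representables, as wanted.

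For the converse I would argue that a bifiltered $2$-category $I$ is precisely a $\sigma$-filtered pair $(I,\Sigma)$ with $\Sigma$ the class of \emph{all} arrows, so that a bifiltered bicolimit of representables is in particular a $\sigma$-filtered bicolimit of representables; then the third-to-first item of \Cref{Dubuc extension of flat pseudofunctor} gives flatness of $F$ on the spot. I do not anticipate any genuine obstacle: the only point that might require a line of care is checking that the hypotheses of \Cref{cofinal functors and bicolimits} are met in the presheaf target (in particular that the relevant bifiltered bicolimit of representables exists, which holds because $\ps[\mathcal{C},\Cat]$ is bicocomplete), but this is routine. The substantive content has been isolated upstream in \Cref{key lemma}, and this statement is its clean reformulation on the side of flat pseudofunctors.
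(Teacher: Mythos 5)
Your proof is correct and follows essentially the same route as the paper's: both deduce the statement from \Cref{Dubuc extension of flat pseudofunctor} by applying \Cref{key lemma} and \Cref{cofinal functors and bicolimits} to trade the $\sigma$-filtered $\sigma$-bicolimit of representables for a bifiltered one over the $\Sigma$-subcategory, with the easy converse coming from viewing a bifiltered bicolimit as a $\sigma$-bicolimit with $\Sigma$ all arrows. The only cosmetic difference is that the paper anchors the argument at the canonical diagram $(\int F)^{\op}$ restricted to opcartesian morphisms (via the second item of \Cref{Dubuc extension of flat pseudofunctor}), whereas you work with an arbitrary $\sigma$-filtered decomposition from the third item.
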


\begin{proof}
In \Cref{Dubuc extension of flat pseudofunctor}, the last item is obtained by combining the general fact that any pseudofunctor $F $ is the $ \sigma$-bicolimit of the composite 
% https://q.uiver.app/?q=WzAsMyxbMCwwLCIoXFxkaXNwbGF5c3R5bGVcXGludCBGKV57XFxvcH0iXSxbMSwwLCJcXG1hdGhjYWx7Q31ee1xcb3B9Il0sWzIsMCwiXFxwc1tcXG1hdGhjYWx7Q30sIFxcQ2F0XSJdLFswLDEsIlxccGlfRiJdLFsxLDIsIlxca2F0YXlvIl1d
\[\begin{tikzcd}
	{(\displaystyle\int F)^{\op}} & {\mathcal{C}^{\op}} & {\ps[\mathcal{C}, \Cat]}
	\arrow["{\pi_F}", from=1-1, to=1-2]
	\arrow["\katayo", from=1-2, to=1-3]
\end{tikzcd}\]
for the class of opposites of opcartesian morphisms. Now $F$ is flat if and only if $ ((\int F)^{\op}, \textrm{op}\Cart^{\op})$ is a $\sigma$-filtered pair, which amounts by \Cref{key lemma} to saying that the full on 0-cells and 2-cells subcategory $ \iota_F : \textrm{op}\Cart^{\op} \hookrightarrow (\int F)^{\op}$ is bifiltered and $\sigma$-cofinal in $ (\int F)^{\op}$ relatively to itself. But then by \Cref{cofinal functors and bicolimits}, those two observations yield altogether that we have a bifiltered bicolimit 
\[  F \simeq \underset{\textrm{op}\Cart^{\op} }{\bicolim} \katayo \pi_F \iota_F  \]
over the restriction of its 2-category of elements to opcartesian morphisms. 
\end{proof}

\begin{remark}
This result just is dual to our observations that the pseudoslice restricted to bicompact objects provides a convenient notion of canonical diagram in a bi-accessible category. We are precisely going to see why in the next section. 
\end{remark}

Now it appears that we already encountered flat pseudofunctors when examining bi-accessible and bipresentable 2-categories: their binerve pseudofunctor identified them with 2-categories of flat pseudofunctors over their generator of bicompact object:

\begin{theorem}[Canonical representation of bi-accessible $2$-categories]\label{sigma-accessible 2-cat are categories of flat functors}
For a finitely bi-accessible 2-category $\mathcal{B}$, we have a biequivalence 
\[ \mathcal{B} \simeq \Flat_\ps[(\mathcal{B}_\omega)^{\op}, \Cat] \]
\end{theorem}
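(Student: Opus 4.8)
The plan is to show that the binerve $ \nu : \mathcal{B} \hookrightarrow \ps[(\mathcal{B}_\omega)^{\op}, \Cat]$ corestricts to a biequivalence onto the full sub-$2$-category $\Flat_\ps[(\mathcal{B}_\omega)^{\op}, \Cat]$ of flat pseudofunctors. Two structural facts are already available: $\nu$ is pseudo-fully faithful by \Cref{biacc as category of flat pseudofunctors} (since $\mathcal{B}_\omega \hookrightarrow \mathcal{B}$ is bidense), and $\nu$ preserves bifiltered bicolimits by \Cref{binerve preserve bifiltered bicolimits}. The first fact already exhibits $\nu$ as a biequivalence onto its essential image, so the entire problem reduces to identifying this essential image with the flat pseudofunctors.

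First I would record that $\nu$ carries bicompacts to representables: since $\mathcal{B}_\omega$ is full in $\mathcal{B}$, for $K \in \mathcal{B}_\omega$ one has $\nu(K) = \mathcal{B}[\iota(-),K] = \mathcal{B}_\omega[-,K] = \hirayo(K)$, so every representable presheaf lies in the essential image of $\nu$. For the inclusion \emph{image $\subseteq$ flat}, I would use that, by finite bi-accessibility (\Cref{sigma accessible cat}) together with \Cref{the pseudococone is bifiltered}, every $B$ is a bifiltered bicolimit $B \simeq \bicolim\, \mathcal{B}_\omega \downarrow B$ of bicompact objects; applying $\nu$ and invoking preservation of bifiltered bicolimits shows that $\nu(B)$ is a bifiltered bicolimit of representables, hence flat by \Cref{flatness as bifilteredness condition}. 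Thus $\nu(B)$ is flat for every $B$.

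For the reverse inclusion \emph{flat $\subseteq$ image}, the key observation is that the essential image is closed under bifiltered bicolimits formed in $\ps[(\mathcal{B}_\omega)^{\op}, \Cat]$: given a bifiltered diagram valued in the image, pseudo-full-faithfulness lets me lift it to a bifiltered diagram in $\mathcal{B}$, whose bicolimit exists (as $\mathcal{B}$ has bifiltered bicolimits) and is preserved by $\nu$, so the ambient bicolimit is again of the form $\nu(B)$. Now any flat $F$ is, by \Cref{flatness as bifilteredness condition}, a bifiltered bicolimit of representables, and the representables lie in the image by the previous step; hence $F$ lies in the essential image. Combining the two inclusions identifies the essential image with $\Flat_\ps[(\mathcal{B}_\omega)^{\op}, \Cat]$, and the pseudo-full-faithfulness of $\nu$ upgrades its corestriction to the full sub-$2$-category of flat pseudofunctors into the desired biequivalence.

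The delicate point is the lifting of a bifiltered diagram of representables back through $\nu$ (equivalently through the $2$-Yoneda embedding) to a genuine pseudofunctor valued in $\mathcal{B}_\omega \subseteq \mathcal{B}$: one must transport not merely the objects and $1$-cells, but all the coherence $2$-cells of the indexing diagram compatibly. This is routine given that both $\nu$ and $\hirayo$ are pseudo-fully faithful, but it is the only step requiring genuine $2$-categorical care; everything else is a formal consequence of the preservation of bifiltered bicolimits and the flatness criterion of \Cref{flatness as bifilteredness condition}.
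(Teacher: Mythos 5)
Your proposal is correct and follows essentially the same route as the paper: embed $\mathcal{B}$ via the binerve (pseudo-fully faithful by bidenseness, preserving bifiltered bicolimits), show the image consists of flat pseudofunctors using the bifiltered canonical pseudococone of \Cref{the pseudococone is bifiltered}, and show every flat pseudofunctor is in the image using that representables are values of $\nu$ on bicompacts together with closure of the image under bifiltered bicolimits. The only cosmetic difference is that you invoke the bifiltered flatness criterion (\Cref{flatness as bifilteredness condition}) in both directions, whereas the paper uses the $\sigma$-filtered category-of-elements characterization of \Cref{Dubuc extension of flat pseudofunctor} via \Cref{the oplaxcocone is the category of element of the binerve} for the first inclusion; these are interchangeable by the trivialization lemma.
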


\begin{proof}
By \Cref{biacc as category of flat pseudofunctors}, $ \mathcal{B}$ is equivalent to a full on 1-cells and 2-cells sub-2-category of $\ps[(\mathcal{B}_\omega)^{\op}, \Cat] $ which is moreover closed under bifiltered bicolimits by \Cref{binerve preserve bifiltered bicolimits}. On the other hand, we know from \Cref{the pseudococone is bifiltered} that for each $ B$ the canonical pseudococone $ \mathcal{B}_\omega\downarrow B$ is bifiltered, or equivalently that the oplax cocone $ \mathcal{B}_\omega \Downarrow B$ is $\sigma$-filtered relatively to $\Cart_B$: but we saw at \Cref{the oplaxcocone is the category of element of the binerve} that the oplax cocone is exactly the category of elements of the image $ \nu_B$ of $B$ along the binerve; this exactly means that $ \nu_B$ is flat, so that $ \nu$ factorizes through the inclusion $ \Flat_\ps[(\mathcal{B}_\omega)^{\op}, \Cat]  \hookrightarrow \ps[(\mathcal{B}_\omega)^{\op}, \Cat] $, exhibiting $\mathcal{B} $ as consisting of flat pseudofunctors. 

For the converse, observe that representable 2-functors of the form $ \hirayo_K : (\mathcal{B}_\omega)^{\op} \rightarrow \Cat$ are in particular equivalent to the image of the underlying object along the binerve, that is $ \hirayo_K \simeq \nu_K $, so they are at the same time flat and in the range of $ \nu$. But by \Cref{Dubuc extension of flat pseudofunctor} any flat pseudofunctor is a $\sigma$-filtered bicolimit of representables; for $ \nu$ preserves $\sigma$-filtered bicolimits, this forces any flat functor to be equivalent to a functor of the form $ \nu_B$ for $B$ in $\mathcal{B}$. 
\end{proof}

In the presence of finite weighted bilimits, flatness simplifies as follows:

\begin{proposition}[\cite{descotte2018sigma} Proposition 4.2.9]\label{flat is bilex}
If $ \mathcal{C}$ has finite weighted bilimits, then flat pseudofunctors $ \mathcal{C} \rightarrow \Cat$ are exactly the bilex ones, that is that we have a biequivalence
\[  \Flat_\ps[\mathcal{C},\Cat] \simeq \biLex[\mathcal{C}, \Cat] \]
\end{proposition}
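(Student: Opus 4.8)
The plan is to prove the two inclusions separately, the forward one being immediate and the reverse one carrying all the weight.

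For the easy direction, suppose $F$ is flat, i.e. $\biLan_\hirayo F$ is bilex. Since the Yoneda 2-embedding $\hirayo : \mathcal{C} \to \ps[\mathcal{C}^{\op}, \Cat]$ is pseudofully faithful, \Cref{bikan ext along ff} gives a pointwise equivalence $F \simeq \biLan_\hirayo F \circ \hirayo$. The Yoneda embedding preserves all weighted bilimits existing in $\mathcal{C}$, in particular the finite ones; composing with the bilex pseudofunctor $\biLan_\hirayo F$ then shows that $F$ itself preserves finite weighted bilimits, i.e. $F$ is bilex. This direction uses only that $\mathcal{C}$ has (hence $\hirayo$ preserves) finite weighted bilimits.

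For the converse, assume $F$ is bilex. By \Cref{Dubuc extension of flat pseudofunctor} it suffices to show that $(\int F)^{\op}$ is $\sigma$-filtered relatively to the class of (opposites of) opcartesian morphisms; equivalently, I will check that $\int F$ is $\sigma$-cofiltered for its cartesian morphisms. I would verify this through \Cref{sigmacone in sigmafiltered}, i.e. by exhibiting, for every finite sub-2-category of $\int F$, a cone with cartesian legs. By \Cref{sufficient all finite bilimits} all finite weighted bilimits are generated by finite biproducts, biequalizers and bicotensors with the arrow category, so a bilex $F$ preserves each of these together with the derived biinserters, iso-inserters and biequifiers. Translating the universal properties of these preserved bilimits into the fibred category $\int F$ supplies exactly the cones required by the three axioms of $\sigma$-filteredness: finite biproducts $C_1 \times C_2$ with $F(C_1 \times C_2) \simeq F(C_1) \times F(C_2)$ produce the spans of axiom (1); biinserters of a parallel pair $f,g$ in $\mathcal{C}$ produce the transition map together with the inserted $2$-cell of axiom (2), with the invertible refinement coming from the corresponding iso-inserter when both legs are cartesian; and biequifiers produce the arrow equifying a parallel pair of $2$-cells in axiom (3). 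In each case the cartesianness of the constructed legs is automatic, since $F$ sends the projection and inclusion $1$-cells to components of equivalences.

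The main obstacle is the two-dimensional bookkeeping in the second part: one must carry the data $(f, \phi : F(f)(x) \to x')$ of morphisms of $\int F$ and the compatibility of $2$-cells through each preserved bilimit, and in particular handle the invertibility clause of the second $\sigma$-filteredness axiom, which is precisely where the iso-inserter, rather than the plain inserter, is needed. Once $(\int F)^{\op}$ is seen to be $\sigma$-filtered, \Cref{Dubuc extension of flat pseudofunctor} yields flatness; alternatively one may phrase the conclusion through \Cref{flatness as bifilteredness condition}, presenting the bilex $F$ directly as a bifiltered bicolimit of representables over the cartesian part of its $2$-category of elements.
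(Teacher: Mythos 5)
Your proposal is correct, and it is essentially the proof of the result the paper itself only cites: the paper gives no argument for this proposition, deferring verbatim to \cite{descotte2018sigma} Proposition 4.2.9, whose proof runs exactly as yours does — the easy direction via the pointwise equivalence $F \simeq \biLan_\hirayo F \circ \hirayo$ from \Cref{bikan ext along ff} together with $\hirayo$ preserving bilimits, and the converse by verifying the three $\sigma$-filteredness axioms for $(\int F)^{\op}$ relative to the opcartesian class, with the cartesian legs produced from preserved finite biproducts, bi(iso)inserters and biequifiers, and then concluding by \Cref{Dubuc extension of flat pseudofunctor}. Two small points: include the empty biproduct (the terminal object of $\mathcal{C}$, whose image $F(1)\simeq 1$ supplies an object of $\int F$) so that the nullary case of axiom (1) — nonemptiness of the canonical diagram — is covered; and your appeal to \Cref{sigmacone in sigmafiltered} is superfluous, since what you actually do afterwards is check the axioms of $\sigma$-filteredness directly, which is the cleaner route.
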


In particular, in the case of a finitely bipresentable 2-category, where the generator of bicompact object is closed under finite weighted bicolimits, \Cref{sigma-accessible 2-cat are categories of flat functors} reduces to the following:

\begin{theorem}[Representation theorem for finitely bipresentable $2$-categories] \label{A is flat functors}
Let $\mathcal{B}$ be a finitely bipresentable $2$-category. Then the binerve pseudofunctor induces a biequivalence of $2$-categories with bilex pseudofunctors \[ \mathcal{B} \simeq \textup{\biLex}[(\mathcal{B}_\omega)^\op, \Cat]\] 
\end{theorem}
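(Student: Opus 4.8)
The plan is to compose the two principal theorems already at our disposal, observing that bipresentability supplies exactly the extra hypothesis needed to pass from flatness to bilexity. First, since a finitely bipresentable $2$-category is in particular finitely bi-accessible, \Cref{sigma-accessible 2-cat are categories of flat functors} applies verbatim and yields a biequivalence $\mathcal{B} \simeq \Flat_\ps[(\mathcal{B}_\omega)^{\op}, \Cat]$, realized by the binerve $\nu$. It then remains only to identify the flat pseudofunctors on $(\mathcal{B}_\omega)^{\op}$ with the bilex ones, for which I would invoke \Cref{flat is bilex}; but that proposition is conditional on the domain $(\mathcal{B}_\omega)^{\op}$ possessing all finite weighted bilimits, which is precisely the hypothesis distinguishing the present (bipresentable) situation from the merely bi-accessible one.

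The crux of the argument is therefore to verify that $\mathcal{B}_\omega$ is closed under finite weighted bicolimits inside $\mathcal{B}$. Since $\mathcal{B}$ is finitely bipresentable it is bicocomplete, so every finite weighted bicolimit of a diagram valued in $\mathcal{B}_\omega$ exists in $\mathcal{B}$; by \Cref{finite bicolimit of sigma-compact are sigma-compact} such a bicolimit of bicompact objects is again bicompact, hence lies, up to equivalence, in $\mathcal{B}_\omega$. Consequently $\mathcal{B}_\omega$ admits all finite weighted bicolimits, and dualizing, $(\mathcal{B}_\omega)^{\op}$ admits all finite weighted bilimits, so it is a (small) bilex $2$-category in the sense of the relevant definition.

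With the domain now known to be bilex, \Cref{flat is bilex} applied to $\mathcal{C} = (\mathcal{B}_\omega)^{\op}$ provides a biequivalence $\Flat_\ps[(\mathcal{B}_\omega)^{\op}, \Cat] \simeq \biLex[(\mathcal{B}_\omega)^{\op}, \Cat]$, and composing this with the biequivalence of the first step delivers the asserted representation \[ \mathcal{B} \simeq \biLex[(\mathcal{B}_\omega)^{\op}, \Cat], \] still induced by the binerve. I do not anticipate any genuine obstacle here, as both of the heavy theorems being combined are already established; the single point requiring attention is the closure of $\mathcal{B}_\omega$ under finite weighted bicolimits, which is exactly the place where bipresentability, rather than mere bi-accessibility, is used, and which follows cleanly from bicocompleteness together with \Cref{finite bicolimit of sigma-compact are sigma-compact}.
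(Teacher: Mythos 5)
Your proof is correct and follows exactly the paper's own route: the theorem is stated there as the immediate combination of \Cref{sigma-accessible 2-cat are categories of flat functors} with \Cref{flat is bilex}, using precisely the observation you single out — that bicocompleteness together with \Cref{finite bicolimit of sigma-compact are sigma-compact} makes $\mathcal{B}_\omega$ closed under finite weighted bicolimits, so that $(\mathcal{B}_\omega)^{\op}$ is bilex. Nothing is missing; your identification of the closure of $\mathcal{B}_\omega$ as the single point where bipresentability (rather than mere bi-accessibility) is used matches the paper's own framing.
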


\subsection{2-category of flat pseudofunctors are finitely bi-accessibles}

In this subsection, we want to exhibit the relation between a small bilex category and the bicompact objects of the associated category of pseudofunctors. The results of this part are actually quite similar to the strategy in the 1-categorical context: \begin{itemize}
    \item[\ref{creation of sigmafiltered colimits of flat functors}] we first prove that 2-categories of flat pseudofunctors have $\sigma$-filtered colimits which are computed in the 2-categories of pseudofunctors,
    \item[\ref{corepresentable are compact}] then we prove that bicorepresentable are always bicompact in 2-categories of flat pseudofunctors,
   % \item[] then we describe the canonical cone of a given functor 
    \item[\ref{compact are retract of representable}] finally, we prove that bicompact objects are pseudoretracts of birepresentables in $\Flat_{\ps}[\mathcal{C}, \Cat]$.
\end{itemize}

First of all, observe that for $\mathcal{C}$ a small 2-category, birepresentables are in $\Flat_{\ps}[\mathcal{C}, \Cat]$. This is an immediate consequence of preservation of any bilimit by corepresentables. Hence the Yoneda embedding $ \katayo$ factorizes through the 2-category of flat pseudofunctors.

\begin{lemma}\label{corepresentable are tiny}
For $\mathcal{C}$ a small 2-category, birepresentables are $\sigma$-tiny in $ \ps[\mathcal{C}, \Cat]$.
\end{lemma}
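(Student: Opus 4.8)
The plan is to reduce the statement to two standard facts about the pseudofunctor $2$-category $\ps[\mathcal{C}, \Cat]$: the $2$-dimensional Yoneda lemma, and the pointwise computation of $\sigma$-bicolimits. Recall that being $\sigma$-tiny amounts to asking that the hom-pseudofunctor $\ps[\mathcal{C}, \Cat][\katayo_C, -]$ send every $\sigma$-bicolimit to the corresponding $\sigma$-bicolimit in $\Cat$; so the whole claim will follow once we identify this hom-pseudofunctor with the evaluation pseudofunctor $\mathrm{ev}_C$ at $C$ and observe that $\mathrm{ev}_C$ preserves $\sigma$-bicolimits.

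First I would invoke the pseudo-Yoneda lemma for the covariant embedding $\katayo : \mathcal{C}^{\op} \hookrightarrow \ps[\mathcal{C}, \Cat]$: for any pseudofunctor $G : \mathcal{C} \rightarrow \Cat$ there is an equivalence $\ps[\mathcal{C}, \Cat][\katayo_C, G] \simeq G(C)$, pseudonatural in $G$. In other words, the corepresentable hom-pseudofunctor associated to $\katayo_C$ is (pseudonaturally equivalent to) the evaluation pseudofunctor $\mathrm{ev}_C : \ps[\mathcal{C}, \Cat] \rightarrow \Cat$ sending $G \mapsto G(C)$.

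Next I would record that $\sigma$-bicolimits in $\ps[\mathcal{C}, \Cat]$ are computed pointwise, exactly as bicolimits and bifiltered bicolimits are (this pointwise principle is used repeatedly in the paper, e.g. in the proof of \Cref{2-cat of diagram is sigma-accessible}). Concretely, for a $\sigma$-pair $(I, \Sigma)$ and $F : I \rightarrow \ps[\mathcal{C}, \Cat]$, the pseudofunctor $C \mapsto \Sigma\bicolim_{i \in I} F(i)(C)$ — each fibre computed in $\Cat$, which has all small $\sigma$-bicolimits — carries the universal property of $\Sigma\bicolim_I F$; this is verified by matching $\op\Sigma$-cocones on $F$ with tip $\Delta_G$ against families of $\op\Sigma$-cocones on the $F(-)(C)$ with tips $G(C)$, coherently in $C$. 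Equivalently, the evaluation pseudofunctors $\mathrm{ev}_C$ jointly create $\sigma$-bicolimits. Granting this, $\mathrm{ev}_C$ preserves them, i.e. $\big(\Sigma\bicolim_I F\big)(C) \simeq \Sigma\bicolim_{i \in I} F(i)(C)$. Chaining the two facts gives the desired equivalence:
\begin{align*}
\ps[\mathcal{C}, \Cat]\big[\katayo_C, \underset{I}{\Sigma\bicolim}\, F\big] &\simeq \big(\underset{I}{\Sigma\bicolim}\, F\big)(C) \\
&\simeq \underset{i \in I}{\Sigma\bicolim}\, F(i)(C) \\
&\simeq \underset{i \in I}{\Sigma\bicolim}\, \ps[\mathcal{C}, \Cat][\katayo_C, F(i)],
\end{align*}
which is precisely the $\sigma$-tininess of $\katayo_C$.

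The main obstacle is the pointwise computation of $\sigma$-bicolimits in the third step. Unlike ordinary weighted bicolimits, $\sigma$-bicolimits are defined through the restricted hom-$2$-categories $\ps[\,\cdot\,]_{\op\Sigma}$ of op-$\sigma$-natural transformations, so the pointwise principle must be checked at this relative level rather than merely quoted for conical bicolimits; the real content is verifying that assembling a coherent family of pointwise $\op\Sigma$-cocones into a single $\op\Sigma$-cocone of pseudofunctors respects the invertibility constraints imposed by $\Sigma$, and that the induced comparison $1$-cells and $2$-cells glue pseudonaturally in $C$. Once this coherence bookkeeping is dispatched, the rest is a formal consequence of Yoneda.
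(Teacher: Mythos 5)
Your proposal is correct and follows essentially the same route as the paper's proof: identify $\ps[\mathcal{C}, \Cat][\katayo_C, -]$ with evaluation at $C$ via the $2$-dimensional Yoneda lemma, then use the pointwise computation of $\sigma$-bicolimits in the pseudofunctor $2$-category to chain the three equivalences. The paper likewise treats the pointwise principle for $\sigma$-bicolimits as a quotable fact (it only adds the cosmetic step of transporting along an equivalence $F \simeq \katayo_A$ to cover all birepresentables), so the coherence bookkeeping you flag as the real content is exactly the part the paper also leaves implicit.
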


\begin{proof}
Let $ F : \mathcal{C} \rightarrow \Cat $ be a flat pseudofunctor equipped with a pseudonatural equivalence $ F \simeq \katayo_A$
% https://q.uiver.app/?q=WzAsNCxbMCwwLCJGIl0sWzAsMSwiRiJdLFsxLDAsIlxcaGlyYXlvXipfQSJdLFsxLDEsIlxcaGlyYXlvXipfQSJdLFswLDEsIiIsMCx7ImxldmVsIjoyLCJzdHlsZSI6eyJoZWFkIjp7Im5hbWUiOiJub25lIn19fV0sWzAsMiwiciJdLFsyLDMsIiIsMix7ImxldmVsIjoyLCJzdHlsZSI6eyJoZWFkIjp7Im5hbWUiOiJub25lIn19fV0sWzEsMywiciIsMl0sWzIsMSwicyIsMV0sWzAsOCwiXFxhbHBoYSBcXGF0b3AgXFxzaW1lcSIsMSx7InNob3J0ZW4iOnsidGFyZ2V0IjoyMH0sInN0eWxlIjp7ImJvZHkiOnsibmFtZSI6Im5vbmUifSwiaGVhZCI6eyJuYW1lIjoibm9uZSJ9fX1dLFs4LDMsIlxcYmV0YSBcXGF0b3AgXFxzaW1lcSIsMSx7InNob3J0ZW4iOnsic291cmNlIjoyMH0sInN0eWxlIjp7ImJvZHkiOnsibmFtZSI6Im5vbmUifSwiaGVhZCI6eyJuYW1lIjoibm9uZSJ9fX1dXQ==
and $ G : I \rightarrow \ps[\mathcal{C}, \Cat] $ with $ I$ a small 2-category equipped with a class $\Sigma$, then we have a sequence of equivalences
\begin{align*}
    \ps[\mathcal{C}, \Cat] \big[ F, {\Sigma \bicolim} \, G \big] &\simeq \ps[\mathcal{C}, \Cat] \big[ \katayo_A, {\Sigma\bicolim} \, G \big] \\ 
    &\simeq ({\Sigma\bicolim} \, G)(A) \\ 
    &\simeq \underset{i \in I}{\Sigma\bicolim} \, G(i)(A)\\
    &\simeq \underset{i \in I}{\Sigma\bicolim} \, \ps[\mathcal{C}, \Cat] \big[ \katayo_A, G(i) \big] \\
    &\simeq \underset{i \in I}{\Sigma\bicolim}\, \ps[\mathcal{C}, \Cat] \big[ F, G(i) \big]
\end{align*}
where the third equivalence comes from the fact that $\sigma$-bicolimits are pointwise in pseudofunctors categories. 
\end{proof}

\begin{proposition}\label{creation of sigmafiltered colimits of flat functors}
If $ \mathcal{C}$ is a small 2-category, then the 2-category $ \Flat_{\ps}[\mathcal{C}, \Cat]$ has $\sigma$-filtered bicolimits. Moreover they are created by the pseudo-fully faithful inclusion 
\[ \Flat_{\ps}[\mathcal{C}, \Cat] \stackrel{i_C}{\hookrightarrow }  \ps[\mathcal{C}, \Cat] \]
\end{proposition}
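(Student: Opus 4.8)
The plan is to show that the pseudo-fully faithful inclusion $i_C$ \emph{creates} $\sigma$-filtered bicolimits, which splits into two tasks: that the ambient $\sigma$-filtered bicolimit of a diagram of flat pseudofunctors exists in $\ps[\mathcal{C},\Cat]$, and that it is again flat. The first is immediate, since $\ps[\mathcal{C},\Cat]$ is bicocomplete with all bicolimits --- in particular all $\sigma$-filtered ones --- computed pointwise in $\Cat$. Once the second is established, creation follows formally: if $G\colon I \to \Flat_{\ps}[\mathcal{C},\Cat]$ is $\sigma$-filtered and $L := \Sigma\bicolim_{i\in I} G(i)$, computed in $\ps[\mathcal{C},\Cat]$, is flat, then since $i_C$ is pseudo-fully faithful the ambient universal property transfers verbatim: for any flat $H$ the hom-category $\Flat_{\ps}[\mathcal{C},\Cat][L,H]$ agrees via $i_C$ with $\ps[\mathcal{C},\Cat][i_C L, i_C H]$, hence with the category of $\sigma$-cocones from $G$ to $H$, exactly as in the dual of \Cref{bireflective subbicategories have bilimits}.

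So the heart of the matter is that $L$ is flat, i.e. that $\biLan_{\hirayo} L$ is bilex. First I would compute this biKan extension. By the cancellation rule of \Cref{cancellation rule}, $\biLan_{\hirayo} L$ is the functor $W \mapsto \bicolim^{W} L$ on weights $W\colon \mathcal{C}^{\op} \to \Cat$. For fixed $W$ the weighted bicolimit functor $\bicolim^{W}(-)\colon \ps[\mathcal{C},\Cat]\to\Cat$ is itself a bicolimit construction and hence preserves $\sigma$-filtered bicolimits; combined with the pointwise formation of $L$ this yields, pseudonaturally in $W$,
\[ \biLan_{\hirayo} L (W) \simeq \bicolim^{W}\big(\Sigma\bicolim_{i\in I} G(i)\big) \simeq \Sigma\bicolim_{i\in I}\bicolim^{W} G(i) \simeq \Sigma\bicolim_{i\in I}\biLan_{\hirayo} G(i)(W), \]
so that $\biLan_{\hirayo} L \simeq \Sigma\bicolim_{i\in I}\biLan_{\hirayo} G(i)$, the bicolimit again being pointwise in $\Cat$.

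It then remains to check that this $\sigma$-filtered bicolimit of the bilex pseudofunctors $\biLan_{\hirayo} G(i)$ is bilex. Given a finite weight $V\colon J \to \Cat$ and a diagram $H\colon J \to \ps[\mathcal{C}^{\op},\Cat]$, the finite weighted bilimit $\bilim^{V} H$ is computed pointwise in $\Cat$, as is $L$; so I would chain equivalences, first distributing $\biLan_{\hirayo} L$ over the bilimit using that each $\biLan_{\hirayo} G(i)$ is bilex, then swapping the $\sigma$-filtered bicolimit past the finite weighted bilimit by invoking the commutation of $\sigma$-filtered $\sigma$-bicolimits with finite weighted bilimits in $\Cat$ recalled at the close of Section 2. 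This exhibits $\biLan_{\hirayo} L$ as preserving finite weighted bilimits, whence $L$ is flat.

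The main obstacle --- and the only place genuinely $2$-dimensional content enters --- is precisely this final commutation step; everything else is bookkeeping about pointwise (co)limits, the cancellation rule, and the transfer of universal properties through a pseudo-fully faithful inclusion. The two subtleties I would take care to justify explicitly are that $\bicolim^{W}(-)$ does commute with $\sigma$-filtered bicolimits (it is a cocontinuous construction in its functor argument), and that finite weighted bilimits in the presheaf $2$-category $\ps[\mathcal{C}^{\op},\Cat]$ are computed pointwise, so that the $\Cat$-level commutation may legitimately be applied argumentwise.
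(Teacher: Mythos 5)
Your argument is correct, but it is worth noting that the paper does not actually prove this proposition at all: its ``proof'' is a citation to Descott\'e--Dubuc (the reference \cite{descotte2020theory}, items 2.3 and 2.3.5), where the creation of $\sigma$-filtered (bi)colimits of flat pseudofunctors is established in their framework. What you have written is therefore a genuinely self-contained alternative, and a pleasant one, because it uses only tools already available in this paper: the cancellation rule identifying $\biLan_{\hirayo}F(W)\simeq \bicolim^W F$, the pointwise computation of $\sigma$-bicolimits in pseudofunctor 2-categories, and the commutation of $\sigma$-filtered $\sigma$-bicolimits with finite weighted bilimits in $\Cat$ from the end of Section 2. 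The one step you should tighten is the claim that $\bicolim^W(-)\colon \ps[\mathcal{C},\Cat]\to\Cat$ preserves $\sigma$-filtered bicolimits; ``it is a cocontinuous construction'' is not an argument, but the precise justification is short: $\bicolim^W(-)$ is a left biadjoint, with right biadjoint $X\mapsto \Cat[W(-),X]$, by the standard symmetry
\[
\ps[\mathcal{C}^{\op},\Cat]\big[W,\Cat[F-,X]\big]\;\simeq\;\ps[\mathcal{C},\Cat]\big[F,\Cat[W-,X]\big],
\]
so the paper's lemma that left biadjoints preserve weighted $\sigma$-bicolimits applies verbatim. Also note that your second flagged subtlety (pointwiseness of finite bilimits in $\ps[\mathcal{C}^{\op},\Cat]$) is not actually needed: in the chain of equivalences, the finite bilimit is carried inside by bilexness of each $\biLan_{\hirayo}G(i)$, after which the commutation takes place entirely in $\Cat$. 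What your route buys, compared with the paper's outsourcing, is that the whole proposition becomes internal to the paper's own formalism; what the citation buys the authors is avoiding exactly the pseudonaturality bookkeeping you (rightly) defer.
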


\begin{proof}
See \cite{descotte2020theory}[2.3 and 2.3.5]. 
\end{proof}

\begin{corollary}
As a consequence, $ \Flat_{\ps}[\mathcal{C}, \Cat]$ also has bifiltered bicolimits created by the inclusion. 
\end{corollary}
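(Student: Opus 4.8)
The plan is to observe that bifiltered bicolimits are exactly the instances of $\sigma$-filtered $\sigma$-bicolimits whose marking class $\Sigma$ contains all arrows, as recorded in the remark following the definition of bifilteredness. Since \Cref{creation of sigmafiltered colimits of flat functors} already establishes that $\Flat_{\ps}[\mathcal{C}, \Cat]$ has all $\sigma$-filtered bicolimits and that these are created by the inclusion $i_C$, the bifiltered case should follow immediately by specialization.

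Concretely, I would take a bifiltered $2$-category $I$ together with a pseudofunctor $F : I \rightarrow \Flat_{\ps}[\mathcal{C}, \Cat]$, and regard $I$ as the $\sigma$-filtered pair $(I, \Sigma)$ with $\Sigma$ the class of all arrows of $I$. By \Cref{creation of sigmafiltered colimits of flat functors}, the $\sigma$-filtered bicolimit of $i_C F$ is computed in $\ps[\mathcal{C}, \Cat]$ and lands again in the full sub-$2$-category of flat pseudofunctors. But a $\sigma$-bicolimit for which $\Sigma$ contains every arrow is nothing but an ordinary conical bicolimit, since all the oplax inclusion $2$-cells are then required to be invertible; hence this $\sigma$-bicolimit coincides with the bifiltered bicolimit $\bicolim\, i_C F$, and the latter is therefore created by $i_C$.

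An equivalent and even more economical route is to invoke \Cref{bifiltered bicocompleteness entails sigma filtered sigma cocompleteness} directly: it states that a $2$-category has bifiltered bicolimits precisely when it has $\sigma$-filtered $\sigma$-bicolimits, together with the matching statement for preservation. Combined with the pseudo-fully faithfulness of $i_C$ recorded in \Cref{creation of sigmafiltered colimits of flat functors}, this yields that $i_C$ not only preserves but creates bifiltered bicolimits. I do not expect any genuine obstacle here: all the substantive work lies in the preceding proposition, and this corollary merely translates its conclusion from the $\sigma$-filtered formalism to the bifiltered one, a passage legitimated by \Cref{key lemma}.
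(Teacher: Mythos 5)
Your proposal is correct and matches the paper's (implicit) reasoning exactly: the paper offers no written proof for this corollary precisely because it follows by specializing \Cref{creation of sigmafiltered colimits of flat functors} to the $\sigma$-pair $(I,\Sigma)$ with $\Sigma$ all arrows of $I$, where the $\sigma$-bicolimit's universal property reduces to that of an ordinary conical bicolimit since every oplax inclusion $2$-cell is forced to be invertible. Your alternative route via \Cref{bifiltered bicocompleteness entails sigma filtered sigma cocompleteness} is a harmless repackaging of the same observation and adds nothing essential.
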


\begin{corollary}\label{corepresentable are compact}
For any small 2-category $\mathcal{C}$, bicorepresentables are bicompact in $  \Flat_{\ps}[\mathcal{C}, \Cat]$
\end{corollary}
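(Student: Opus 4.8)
The plan is to deduce bicompactness of a corepresentable inside $\Flat_{\ps}[\mathcal{C}, \Cat]$ from the already-established $\sigma$-tininess of corepresentables in the ambient $2$-category $\ps[\mathcal{C}, \Cat]$, transferring the property across the inclusion $i_\mathcal{C} : \Flat_{\ps}[\mathcal{C}, \Cat] \hookrightarrow \ps[\mathcal{C}, \Cat]$. The two structural facts that make the transfer work are that $i_\mathcal{C}$ is pseudo-fully faithful and that it \emph{creates} bifiltered bicolimits, by the corollary following \Cref{creation of sigmafiltered colimits of flat functors}. Note first that $\katayo_A$ is genuinely an object of $\Flat_{\ps}[\mathcal{C}, \Cat]$, since corepresentables preserve all bilimits and hence are flat.

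Concretely, I would fix a corepresentable $\katayo_A$ and a pseudofunctor $F : I \rightarrow \Flat_{\ps}[\mathcal{C}, \Cat]$ with $I$ bifiltered. Because $i_\mathcal{C}$ creates bifiltered bicolimits, the bicolimit $\bicolim_I F$ formed in $\Flat_{\ps}[\mathcal{C}, \Cat]$ is carried by $i_\mathcal{C}$ to the bicolimit $\bicolim_I i_\mathcal{C} F$ formed in $\ps[\mathcal{C}, \Cat]$. Pseudo-fully faithfulness then yields a chain of equivalences
\begin{align*}
\Flat_{\ps}[\mathcal{C}, \Cat]\big[\katayo_A, \bicolim_I F\big] & \simeq \ps[\mathcal{C}, \Cat]\big[\katayo_A, \bicolim_I i_\mathcal{C} F\big] \\
& \simeq \underset{i \in I}{\bicolim}\; \ps[\mathcal{C}, \Cat]\big[\katayo_A, i_\mathcal{C} F(i)\big] \\
& \simeq \underset{i \in I}{\bicolim}\; \Flat_{\ps}[\mathcal{C}, \Cat]\big[\katayo_A, F(i)\big],
\end{align*}
where the middle equivalence is precisely the bicompactness of $\katayo_A$ in $\ps[\mathcal{C}, \Cat]$. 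The latter holds since $\katayo_A$ is $\sigma$-tiny there by \Cref{corepresentable are tiny}, and $\sigma$-tiny objects are in particular bicompact.

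The only point requiring genuine care — and the main (mild) obstacle — is verifying that the hom-equivalences $\Flat_{\ps}[\mathcal{C}, \Cat][\katayo_A, F(i)] \simeq \ps[\mathcal{C}, \Cat][\katayo_A, i_\mathcal{C} F(i)]$ coming from pseudo-fully faithfulness are pseudonatural in the index $i$, so that they assemble into an equivalence of bicolimits rather than a merely fibrewise family of equivalences. This is the exact analog of the pseudonaturality verification carried out in \Cref{binerve preserve bifiltered bicolimits}, and here it is immediate from the functoriality of the action of $i_\mathcal{C}$ on hom-categories together with its pseudo-fully faithfulness. Composing the displayed equivalences then reproduces the defining equivalence of \Cref{bicompact}, establishing that $\katayo_A$ is bicompact in $\Flat_{\ps}[\mathcal{C}, \Cat]$.
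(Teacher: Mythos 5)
Your proof is correct and follows essentially the same route as the paper: both rest on the $\sigma$-tininess of corepresentables in $\ps[\mathcal{C},\Cat]$ (hence their bicompactness there) together with the closure of $\Flat_{\ps}[\mathcal{C},\Cat]$ under bifiltered bicolimits via the creating, pseudo-fully faithful inclusion. You merely unpack explicitly the hom-category transfer that the paper's one-line proof leaves implicit.
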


\begin{proof}
From \Cref{corepresentable are tiny}, corepresentables are $\sigma$-tiny in $\ps[\mathcal{C}, \Cat]$, where they are hence bicompact; and as $\Flat_{\ps}[\mathcal{C}, \Cat]$ is closed in $\ps[\mathcal{C}, \Cat]$ under bifiltered bicolimits, we are done.  
\end{proof}

In the general case, there are more bicompacts than bicorepresentables in 2-categories of flat functors; we would like to characterize those bicompact objects.

\begin{proposition}\label{compact are retract of representable}
Let $ \mathcal{C}$ be an arbitrary 2-category. Then any bicompact object in $\Flat_{\ps}[\mathcal{C}, \Cat]$ is a pseudoretract of a bicorepresentable. 
\end{proposition}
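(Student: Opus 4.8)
The plan is to exploit the decomposition of flat pseudofunctors as bifiltered bicolimits of birepresentables and then feed the identity into the bicompactness lifting property. So let $K$ be a bicompact object of $\Flat_{\ps}[\mathcal{C}, \Cat]$. By \Cref{flatness as bifilteredness condition}, since $K$ is flat it decomposes as a bifiltered bicolimit of birepresentables; concretely, writing $I = \textrm{op}\Cart^{\op} \hookrightarrow (\int K)^{\op}$ for the bifiltered $2$-category obtained by restricting its $2$-category of elements to opcartesian morphisms, and $D = \katayo \pi_K \iota_K : I \to \ps[\mathcal{C}, \Cat]$ for the associated diagram of birepresentables, one has $K \simeq \bicolim_{I} D$. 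First I would observe that, since birepresentables lie in $\Flat_{\ps}[\mathcal{C}, \Cat]$ and, by \Cref{creation of sigmafiltered colimits of flat functors}, bifiltered bicolimits in $\Flat_{\ps}[\mathcal{C}, \Cat]$ are created by the inclusion into $\ps[\mathcal{C}, \Cat]$, the whole diagram $D$ together with its bicolimit already lives inside the $2$-category of flat pseudofunctors. Hence $K$ is genuinely a bifiltered bicolimit of birepresentables computed in $\Flat_{\ps}[\mathcal{C}, \Cat]$.

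Next I would apply the defining lifting property of bicompactness (\Cref{bicompact}) to the diagram $D$. Since $K$ is bicompact, postcomposition with the bicolimit inclusions yields an equivalence
\[ \underset{i \in I}{\bicolim}\, \Flat_{\ps}[\mathcal{C}, \Cat][K, D(i)] \simeq \Flat_{\ps}[\mathcal{C}, \Cat][K, \underset{I}{\bicolim}\, D] \simeq \Flat_{\ps}[\mathcal{C}, \Cat][K, K]. \]
I would then transport the identity $1_K$, regarded as an object of the right-hand side, backward along this equivalence. By the concrete description of bifiltered bicolimits of categories recalled in \Cref{sigma-filtered sigma-colimit of categories}, an object of the left-hand bicolimit is a pair $(i, r)$ with $r : K \to D(i)$ a $1$-cell of $\Flat_{\ps}[\mathcal{C}, \Cat]$, and the inverse of the comparison equivalence sends such a pair to the composite $q_i \circ r$, where $q_i : D(i) \to \bicolim_I D \simeq K$ is the corresponding bicolimit inclusion. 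Thus there is an index $i$, a $1$-cell $r : K \to D(i)$, and an invertible $2$-cell $q_i \circ r \simeq 1_K$.

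This is exactly a pseudoretract datum: setting $s = q_i : D(i) \to K$, the invertible $2$-cell $\alpha : 1_K \simeq s \circ r$ exhibits $K$ as a pseudoretract of the birepresentable $D(i)$, as required. The only point demanding care is the bookkeeping in the second step, namely ensuring that the diagram of representables is taken inside $\Flat_{\ps}[\mathcal{C}, \Cat]$ so that bicompactness (which is tested against bifiltered diagrams landing in the ambient $2$-category) legitimately applies, and that the image of $(i,r)$ under the comparison equivalence is indeed $q_i \circ r$ up to the coherence isomorphisms of the bicolimiting cocone. I expect no genuine obstacle beyond this tracking of invertible $2$-cells, the argument being the faithful $2$-dimensional transcription of the classical fact that a compact flat functor is a retract of a representable.
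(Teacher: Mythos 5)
Your proof is correct and takes essentially the same route as the paper's: decompose the bicompact flat $K$ as a bifiltered bicolimit of birepresentables via \Cref{flatness as bifilteredness condition} (noting, as you do, that this bicolimit is created in $\Flat_{\ps}[\mathcal{C}, \Cat]$), then apply the bicompactness lifting property to factor the identity through a single inclusion $q_{(A,a)}$ — the paper phrases this by lifting the equivalence $r_K : K \to \bicolim \katayo_A$ to some $x : K \to \katayo_A$ and pasting with the equivalence data $\alpha$, which amounts to exactly your transport of $1_K$. One cosmetic slip worth noting: it is the canonical comparison functor $\bicolim_{i}\,\Flat_{\ps}[\mathcal{C},\Cat][K, D(i)] \to \Flat_{\ps}[\mathcal{C},\Cat][K, \bicolim_I D]$, not its inverse, that sends $(i,r)$ to $q_i \circ r$; since it is an equivalence, hence essentially surjective, your conclusion that some $(i,r)$ satisfies $q_i \circ r \simeq 1_K$ stands unchanged.
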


\begin{proof}
Let $ K : \mathcal{C} \rightarrow \Cat$ be a flat pseudofunctor which is bicompact in $ \Flat_{\ps}[\mathcal{C}, \Cat]$ and $ r_K, s_K, \alpha, \beta, \theta$ as above. $K$ being flat, it decomposes as a bifiltered bicolimit by \Cref{flatness as bifilteredness condition} and there exists a pseudonatural equivalence 
% https://q.uiver.app/?q=WzAsNCxbMCwwLCJLIl0sWzAsMSwiSyJdLFsxLDAsIlxcdW5kZXJzZXR7KEEsYSkgXFxpbiBcXGludCBLfXtcXGJpY29saW19IFxcLCBcXGhpcmF5b14qX0EiXSxbMSwxLCJcXHVuZGVyc2V0eyhBLGEpIFxcaW4gXFxpbnQgS317XFxiaWNvbGltfSBcXCwgXFxoaXJheW9eKl9BIl0sWzIsMywiIiwwLHsibGV2ZWwiOjIsInN0eWxlIjp7ImhlYWQiOnsibmFtZSI6Im5vbmUifX19XSxbMCwxLCIiLDAseyJsZXZlbCI6Miwic3R5bGUiOnsiaGVhZCI6eyJuYW1lIjoibm9uZSJ9fX1dLFswLDIsInJfSyJdLFsxLDMsInJfSyIsMl0sWzMsMCwic19LIiwxXSxbOCwxLCJcXGFscGhhIFxcYXRvcCBcXHNpbWVxIiwxLHsic2hvcnRlbiI6eyJzb3VyY2UiOjIwfSwic3R5bGUiOnsiYm9keSI6eyJuYW1lIjoibm9uZSJ9LCJoZWFkIjp7Im5hbWUiOiJub25lIn19fV0sWzQsOCwiXFxiZXRhIFxcYXRvcCBcXHNpbWVxIiwxLHsic2hvcnRlbiI6eyJ0YXJnZXQiOjIwfSwic3R5bGUiOnsiYm9keSI6eyJuYW1lIjoibm9uZSJ9LCJoZWFkIjp7Im5hbWUiOiJub25lIn19fV1d
\[\begin{tikzcd}
	K & {\underset{I_K}{\bicolim} \, \katayo_A} \\
	K & {\underset{I_K}{\bicolim} \, \katayo_A}
	\arrow[""{name=0, anchor=center, inner sep=0}, Rightarrow, no head, from=1-2, to=2-2]
	\arrow[Rightarrow, no head, from=1-1, to=2-1]
	\arrow["{r_K}", from=1-1, to=1-2]
	\arrow["{r_K}"', from=2-1, to=2-2]
	\arrow[""{name=1, anchor=center, inner sep=0}, "{s_K}"{description}, from=2-2, to=1-1]
	\arrow["{\alpha \atop \simeq}"{description}, Rightarrow, draw=none, from=1, to=2-1]
	\arrow["{\beta \atop \simeq}"{description, pos=0.4}, Rightarrow, draw=none, from=0, to=1]
\end{tikzcd}\]
But hypothesis that $ K$ is bicompact, there is for some $ (A,a)$ an invertible 2-cell
% https://q.uiver.app/?q=WzAsMyxbMCwwLCJLIl0sWzIsMCwiXFx1bmRlcnNldHsoQSxhKSBcXGluIChcXGludCBLKV57XFxvcH19e1xcYmljb2xpbX0gXFwsIFxcaGlyYXlvXipfQSJdLFsxLDEsIkEiXSxbMCwyLCJ4IiwyXSxbMiwxLCJxX3soQSxhKX0iLDJdLFswLDEsInJfSyJdLFsyLDUsIlxceGkgXFxhdG9wIFxcc2ltZXEiLDEseyJzaG9ydGVuIjp7InRhcmdldCI6MjB9LCJzdHlsZSI6eyJib2R5Ijp7Im5hbWUiOiJub25lIn0sImhlYWQiOnsibmFtZSI6Im5vbmUifX19XV0=
\[\begin{tikzcd}
	K && {\underset{I_K}{\bicolim} \, \katayo_A} \\
	& \katayo_A
	\arrow["x"', from=1-1, to=2-2]
	\arrow["{q_{(A,a)}}"', from=2-2, to=1-3]
	\arrow[""{name=0, anchor=center, inner sep=0}, "{r_K}", from=1-1, to=1-3]
	\arrow["{\xi \atop \simeq}"{description}, Rightarrow, draw=none, from=2-2, to=0]
\end{tikzcd}\]
we can paste with $ \alpha$ to exhibit $ x$ as a pseudosection of $ s_K q_{(A,a)}$:
% https://q.uiver.app/?q=WzAsNCxbMCwxLCJLIl0sWzIsMSwiXFx1bmRlcnNldHsoQSxhKSBcXGluIChcXGludCBLKV57XFxvcH19e1xcYmljb2xpbX0gXFwsIFxcaGlyYXlvXipfQSJdLFsxLDIsIkEiXSxbMCwwLCJLIl0sWzAsMiwieCIsMl0sWzIsMSwicV97KEEsYSl9IiwyXSxbMCwxLCJyX0siLDFdLFszLDAsIiIsMSx7ImxldmVsIjoyLCJzdHlsZSI6eyJoZWFkIjp7Im5hbWUiOiJub25lIn19fV0sWzEsMywic19LIiwyXSxbMiw2LCJcXHhpIFxcYXRvcCBcXHNpbWVxIiwxLHsic2hvcnRlbiI6eyJ0YXJnZXQiOjIwfSwic3R5bGUiOnsiYm9keSI6eyJuYW1lIjoibm9uZSJ9LCJoZWFkIjp7Im5hbWUiOiJub25lIn19fV0sWzgsMCwiXFxhbHBoYSBcXGF0b3AgXFxzaW1lcSIsMSx7InNob3J0ZW4iOnsic291cmNlIjoyMH0sInN0eWxlIjp7ImJvZHkiOnsibmFtZSI6Im5vbmUifSwiaGVhZCI6eyJuYW1lIjoibm9uZSJ9fX1dXQ==
\[\begin{tikzcd}
	K \\
	K && {\underset{I_K}{\bicolim} \, \katayo_A} \\
	& \katayo_A
	\arrow["x"', from=2-1, to=3-2]
	\arrow["{q_{(A,a)}}"', from=3-2, to=2-3]
	\arrow[""{name=0, anchor=center, inner sep=0}, "{r_K}"{description}, from=2-1, to=2-3]
	\arrow[Rightarrow, no head, from=1-1, to=2-1]
	\arrow[""{name=1, anchor=center, inner sep=0}, "{s_K}"', from=2-3, to=1-1]
	\arrow["{\xi \atop \simeq}"{description}, Rightarrow, draw=none, from=3-2, to=0]
	\arrow["{\alpha \atop \simeq}"{description}, Rightarrow, draw=none, from=1, to=2-1]
\end{tikzcd}\]
and pasting this 2-cell with by pasting the following 2-cell with $ \theta_{(A,a)}$ exhibits $ x$ as a pseudosection of $a$. This exhibits $ K$ as a pseudoretract of a representable. 
\end{proof}

\begin{corollary}\label{2-cat of flat pseudofunctor are accessible}
For any small 2-category $\mathcal{C}$, $\Flat_{\ps}[\mathcal{C}, \Cat]$ is finitely bi-accessible. 
\end{corollary}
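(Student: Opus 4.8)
The plan is to verify the two clauses of \Cref{sigma accessible cat} directly, assembling the structural results already established for $\Flat_{\ps}[\mathcal{C}, \Cat]$.

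First I would dispatch the requirement of bifiltered bicolimits: this is immediate from \Cref{creation of sigmafiltered colimits of flat functors} together with the corollary following it, which tell us that $\Flat_{\ps}[\mathcal{C}, \Cat]$ has $\sigma$-filtered bicolimits --- hence in particular bifiltered ones --- all created by the pseudo-fully faithful inclusion $i_{\mathcal{C}} : \Flat_{\ps}[\mathcal{C}, \Cat] \hookrightarrow \ps[\mathcal{C}, \Cat]$. Creation by the inclusion will matter in the next step, since it guarantees that a bifiltered bicolimit of flat pseudofunctors, computed pointwise in $\ps[\mathcal{C}, \Cat]$, already lands in $\Flat_{\ps}[\mathcal{C}, \Cat]$ and realises the bicolimit there.

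For the generator of bicompact objects, I would take $\mathcal{B}_0$ to be the full on $1$-cells and $2$-cells sub-$2$-category of $\Flat_{\ps}[\mathcal{C}, \Cat]$ spanned by the bicorepresentables $\katayo_C$ for $C$ ranging over $\ob \mathcal{C}$. This family is essentially small precisely because $\mathcal{C}$ is small, and each $\katayo_C$ is bicompact by \Cref{corepresentable are compact}. It then remains to check that every flat pseudofunctor is a bifiltered bicolimit of objects in $\mathcal{B}_0$; but this is exactly the content of \Cref{flatness as bifilteredness condition}, which exhibits any flat $F$ as the bifiltered bicolimit $\bicolim_{\textup{op}\Cart^{\op}} \katayo \pi_F \iota_F$ of representables. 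Since the representables are themselves flat and bifiltered bicolimits are created by $i_{\mathcal{C}}$, this decomposition takes place in $\Flat_{\ps}[\mathcal{C}, \Cat]$, as required.

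Assembling these, both clauses of \Cref{sigma accessible cat} hold, so $\Flat_{\ps}[\mathcal{C}, \Cat]$ is finitely bi-accessible. I do not anticipate a genuine obstacle: the corollary is essentially a bookkeeping combination of \Cref{creation of sigmafiltered colimits of flat functors}, \Cref{corepresentable are compact} and \Cref{flatness as bifilteredness condition}. The only point deserving care is the compatibility between the bicolimit computed in $\ps[\mathcal{C}, \Cat]$ and the one in $\Flat_{\ps}[\mathcal{C}, \Cat]$, which is precisely resolved by the creation statement; and the essential smallness of $\mathcal{B}_0$, which rests only on the smallness hypothesis on $\mathcal{C}$.
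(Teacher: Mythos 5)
Your proof is correct and takes essentially the same route as the paper's: inherited bifiltered bicolimits via \Cref{creation of sigmafiltered colimits of flat functors} and its corollary, bicompactness of the bicorepresentables via \Cref{corepresentable are compact}, and the decomposition of every flat pseudofunctor as a bifiltered bicolimit of representables, with essential smallness coming from the smallness of $\mathcal{C}$. If anything, your appeal to \Cref{flatness as bifilteredness condition} is marginally more direct than the paper's citation of the $\sigma$-filtered statement in \Cref{Dubuc extension of flat pseudofunctor}, and you correctly note that the paper's closing remark about bicompacts being pseudoretracts of corepresentables is not needed once $\mathcal{B}_0$ is taken to be the corepresentables themselves, since \Cref{sigma accessible cat} only asks for \emph{some} essentially small generator of bicompacts.
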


\begin{proof}
We saw that flat pseudofunctors inherit $\sigma$-filtered and bifiltered bicolimits of $\ps[\mathcal{C}, \Cat]$; moreover from \Cref{Dubuc extension of flat pseudofunctor} we know that any flat pseudofunctor is a $\sigma$-filtered bicolimit of its canonical cocone, so the representable form a generator. Finally, for bicompact objects are retracts of corepresentables, they form an essentially small subcategory for $\mathcal{C} $ is small, hence has a small set of pseudo-idempotent. 
\end{proof}

\begin{comment}
In the case of a pseudocompact object, the $ \xi$ induced from the property above is actually an equality. In the case of a strict 2-functor, $ \alpha$ and $ \beta$ can be chosen as equalities, as well as the $\theta$ because we actually take a pseudocolimit. Hence a pseudocompact strict 2-functor is a strict retract of a representable. 
\end{comment}

\subsection{Finitely bipresentable 2-categories of flat pseudofunctors}
Now we want to refine this result in in the case of a bilex 2-category: here we can replace everywhere the condition of being flat by the condition of being bilex thanks to \Cref{flat is bilex}. We are going to prove that for any small bilex 2-category, the corresponding 2-category of flat (aka bilex) pseudofunctors is finitely bipresentable. The strategy is the following:\begin{itemize}
    \item[\ref{preserveandcreatecolimits}] we prove that the category of flat pseudofunctors is bicomplete - in fact, its bilimits are computed in the 2-category of pseudofunctors,
    \item[\ref{bilex cat have bisplitting of pseudoidempotent}] then we prove that bilex 2-categories have bisplitting of pseudoidempotent,
    \item[\ref{GUhalf1}] then we deduce that bicompact are exactly the birepresentable in the bilex context,
    \item[\ref{Flat pseudofunctors on bilex are bipres}] then combining those result with admissibility ensures the desired result. 
\end{itemize}

We need however the following general observation before anything:

\begin{lemma}
For a small bilex 2-category $ \mathcal{C}$, the Yoneda embedding turns finite bilimits into finite bicolimits.   \[  \mathcal{C}^{\op} \stackrel{\katayo }{\hookrightarrow} \biLex[\mathcal{C}, \Cat]  \]
 
\end{lemma}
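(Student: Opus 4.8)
The plan is to verify directly that for a finite weighted bilimit $L = \bilim^{W} F$ in $\mathcal{C}$ --- where $F : J \to \mathcal{C}$ and $W : J \to \Cat$ is a finite weight --- the corepresentable $\katayo_L = \mathcal{C}[L,-]$ is the finite weighted bicolimit in $\biLex[\mathcal{C}, \Cat]$ of the diagram $\katayo F^{\op} : J^{\op} \to \biLex[\mathcal{C}, \Cat]$ weighted by $W$. Handling an arbitrary finite weight at once is in fact cleaner than treating the generators of \Cref{sufficient all finite bilimits} separately. First I would record that each $\katayo_C$ genuinely lands in $\biLex[\mathcal{C}, \Cat]$: corepresentables preserve all weighted bilimits, hence are flat, hence bilex by \Cref{flat is bilex}, so $\katayo$ factors through the full sub-$2$-category $\biLex[\mathcal{C}, \Cat] \hookrightarrow \ps[\mathcal{C}, \Cat]$. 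Applying the contravariant $\katayo$ to the bilimit projections $L \to F(j)$ produces a cocone $\katayo_{F(j)} \to \katayo_L$, the candidate bicolimiting cocone, and it remains to check its universal property.

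The heart of the argument is a chain of pseudonatural equivalences in the variable $H \in \biLex[\mathcal{C}, \Cat]$. On one side, the $2$-dimensional Yoneda lemma (imported from the ambient $\ps[\mathcal{C}, \Cat]$ via the full inclusion) gives $\biLex[\mathcal{C}, \Cat][\katayo_L, H] \simeq H(L)$. Because $H$ is bilex it preserves the finite weighted bilimit $L$, so
\[ H(L) \simeq \bilim^{W}(HF) \simeq \ps[J, \Cat]\big[ W, \Cat[\mathbf{1}, HF-] \big] \simeq \ps[J, \Cat]\big[ W, HF- \big], \]
where the middle step is the universal property of the weighted bilimit in $\Cat$ evaluated at the terminal category $\mathbf{1}$, and the last uses $\Cat[\mathbf{1}, -] \simeq \mathrm{id}$.

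On the other side, the defining universal property of the weighted bicolimit together with Yoneda once more yields
\[ \biLex[\mathcal{C}, \Cat]\big[ \bicolim^{W} \katayo F^{\op}, H \big] \simeq \ps[J, \Cat]\big[ W, \biLex[\mathcal{C}, \Cat][\katayo F^{\op}, H] \big] \simeq \ps[J, \Cat]\big[ W, HF- \big]. \]
The two expressions agree, and I would check that the induced equivalence is precisely the canonical comparison $1$-cell determined by the cocone above; granting this, the bicategorical Yoneda lemma upgrades the pseudonatural equivalence of representables into the desired equivalence $\katayo_L \simeq \bicolim^{W} \katayo F^{\op}$, exhibiting the image of $L$ as a finite weighted bicolimit.

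The main obstacle is bookkeeping rather than conceptual: I must ensure each equivalence is genuinely pseudonatural in $H$ and that the composite realizes the canonical comparison cocone, since only then does Yoneda identify the two objects rather than merely witnessing an abstract equivalence. The preservation step $H(\bilim^{W} F) \simeq \bilim^{W}(HF)$ holds only up to the equivalence furnished by bilexness of $H$, so some care is needed to see these cohere as $H$ varies --- this is where the full faithfulness of $\biLex[\mathcal{C}, \Cat] \hookrightarrow \ps[\mathcal{C}, \Cat]$ and the pseudonaturality of the Yoneda equivalence do the real work. Note finally that one never needs to assume a priori that $\biLex[\mathcal{C}, \Cat]$ admits finite weighted bicolimits: the argument constructs the bicolimit explicitly as $\katayo_L$ and checks its universal property on the nose.
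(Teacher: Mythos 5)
Your proposal is correct and follows essentially the same route as the paper's proof: both compute $\biLex[\mathcal{C},\Cat][\katayo_{\bilim^W F}, H]$ via Yoneda, use bilexness of the test object $H$ to pull the finite bilimit out, and then recognize the universal property of the weighted bicolimit, so that $\katayo_{\bilim^W F}$ is exhibited as the representing object. Your extra care about pseudonaturality in $H$ and the explicit remark that the bicolimit need not be assumed to exist a priori are refinements of bookkeeping the paper leaves implicit, not a different argument.
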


\begin{proof}
Let $ G : I \rightarrow \mathcal{C}$ be with $ I$ a finite 2-category and $ W : I \rightarrow \Cat$ a finite weight. Then for any flat pseudofunctor $ F : \mathcal{C} \rightarrow \Cat $ we have an equivalence
\begin{align*}
   \biLex[\mathcal{C}, \Cat]\big[ \katayo_{\bilim^{W} G}, F  \big] &\simeq F(\bilim^{W} G) \\ 
   &\simeq \underset{i \in I}{\bilim}^{W} F(G(i)) \\
   &\simeq \underset{i \in I}{\bilim}^{W} \biLex[\mathcal{C}, \Cat]\big[ \katayo_{G(i)}, F  \big] \\ 
   &\simeq \biLex[\mathcal{C}, \Cat]\big[ \underset{i \in I}{\bicolim}^{W^{\op}} \!\katayo_{G(i)}, F  \big]
 \end{align*}
\end{proof}

% We first observe the following generality ensuring before anything the bicompleteness of their corresponding 2-categories of flat functors: 

\begin{proposition}\label{preserveandcreatecolimits}
If $ \mathcal{C}$ is a small bilex 2-category, then the 2-category $ \biLex[\mathcal{C}, \Cat]$ has small bilimits, which are created by the pseudofully faithful inclusion 
\[ \biLex[\mathcal{C}, \Cat] \stackrel{i_C}{\hookrightarrow }  \ps[\mathcal{C}, \Cat] \]
Moreover, finite bilimits in $\biLex[\mathcal{C}, \Cat] $ commute with bifiltered bicolimits.
\end{proposition}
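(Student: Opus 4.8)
The plan is to show that small weighted bilimits in $\biLex[\mathcal{C},\Cat]$ are computed \emph{pointwise}, i.e. exactly as in the ambient 2-category $\ps[\mathcal{C},\Cat]$, and then to transfer the commutation with bifiltered bicolimits from $\Cat$ through this pointwise description. First I would recall the standard fact that $\ps[\mathcal{C},\Cat]$ has all small weighted bilimits, computed pointwise: for a weight $V : J \rightarrow \Cat$ and a pseudofunctor $H : J \rightarrow \ps[\mathcal{C},\Cat]$, the bilimit $\bilim^{V} H$ is the pseudofunctor sending $C$ to $\underset{j \in J}{\bilim^{V}}\, H(j)(C)$ in $\Cat$. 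This is the 2-dimensional analog of the pointwise computation of limits in functor categories, and follows from the defining universal property of the weighted bilimit together with bicompleteness of $\Cat$.

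The key step is then to verify that this pointwise bilimit is again \emph{bilex} whenever each $H(j)$ is, so that the full sub-2-category $\biLex[\mathcal{C},\Cat]$ is closed under pointwise bilimits. Given a finite weight $W : I \rightarrow \Cat$ and $G : I \rightarrow \mathcal{C}$, I would compute
\[ (\bilim^{V} H)(\bilim^{W} G) \simeq \underset{j \in J}{\bilim^{V}}\, H(j)(\bilim^{W} G) \simeq \underset{j \in J}{\bilim^{V}}\,\underset{i \in I}{\bilim^{W}}\, H(j)(G(i)), \]
using pointwiseness and then the bilexness of each $H(j)$; the interchange of weighted bilimits in $\Cat$ (bilimits commute with bilimits) then gives
\[ \underset{j \in J}{\bilim^{V}}\,\underset{i \in I}{\bilim^{W}}\, H(j)(G(i)) \simeq \underset{i \in I}{\bilim^{W}}\,\underset{j \in J}{\bilim^{V}}\, H(j)(G(i)) \simeq \underset{i \in I}{\bilim^{W}}\, (\bilim^{V} H)(G(i)), \]
so $\bilim^{V} H$ preserves finite weighted bilimits. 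As the inclusion $i_C$ is pseudofully faithful and $\biLex[\mathcal{C},\Cat]$ is closed under the ambient pointwise bilimits, this exhibits $i_C$ as creating all small bilimits.

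For the final clause I would combine this with the dual situation for bifiltered bicolimits. By \Cref{flat is bilex} we may identify $\biLex[\mathcal{C},\Cat]$ with $\Flat_\ps[\mathcal{C},\Cat]$, and \Cref{creation of sigmafiltered colimits of flat functors} together with its corollary tells us that bifiltered bicolimits in this 2-category are likewise created by the inclusion into $\ps[\mathcal{C},\Cat]$, hence computed pointwise as well. Thus both finite weighted bilimits and bifiltered bicolimits of objects of $\biLex[\mathcal{C},\Cat]$ are computed objectwise in $\Cat$. Since bifiltered bicolimits commute with finite weighted bilimits in $\Cat$ (the commutation proposition closing Section 2), this commutation passes to the pointwise level in $\ps[\mathcal{C},\Cat]$ and therefore holds in $\biLex[\mathcal{C},\Cat]$.

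The genuinely delicate point — and the step I expect to be the main obstacle — is not the formal Fubini bookkeeping above but checking that the displayed equivalences are all \emph{pseudonatural} in $C$, and coherent enough, so that they assemble into an equivalence of pseudofunctors rather than a mere objectwise family of equivalences. Here I would lean on the fact that objectwise equivalences in $\ps[\mathcal{C},\Cat]$ are already equivalences (as exploited in \Cref{finite bicolimit of sigma-compact are sigma-compact}), together with the naturality of the interchange equivalence in $\Cat$ in the indexing data, to promote the pointwise comparison into a genuine equivalence in $\biLex[\mathcal{C},\Cat]$.
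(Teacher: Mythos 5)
Your proposal is correct and takes essentially the same approach as the paper: bilimits in $\ps[\mathcal{C},\Cat]$ are pointwise, the pointwise bilimit of bilex pseudofunctors is again bilex by an interchange-of-bilimits argument, and pseudofull faithfulness of the inclusion then yields creation, with the ``moreover'' clause following because bifiltered bicolimits are also pointwise (via \Cref{flat is bilex} and \Cref{creation of sigmafiltered colimits of flat functors}) and the two commute in $\Cat$. The only difference is cosmetic: the paper runs the interchange through the Yoneda embedding and hom-category equivalences, which automatically packages the pseudonaturality and coherence you flag as the delicate point, whereas you invoke Fubini in $\Cat$ directly and then promote the objectwise equivalences, exactly as the paper does elsewhere (e.g.\ in \Cref{finite bicolimit of sigma-compact are sigma-compact}).
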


\begin{proof}
Bilimits exist and are pointwise in $\ps[\mathcal{C}, \Cat]$: for any $F : I \rightarrow \ps[\mathcal{C}, \Cat]$, any weight $ W : I \rightarrow \Cat$ and any $ C$ in $\mathcal{C}$ we have 
\[  (\bilim^{W}\, F)(C) \simeq \underset{i \in I}{\bilim}^{W} \, F_i(C) \]
Let us prove that this pseudofunctor is flat. As $\mathcal{C}$ is supposed to be bilex, this amounts to check that $ \bilim^{W} \, F$ is itself bilex. But this is a consequence of commutations of bilimits. Indeed, let $ G : J \rightarrow \mathcal{C}$ be with $ J$ a finite and $ V : J \rightarrow \Cat$ a finite weight, so that $ \bilim^{V}G$ exists in $\mathcal{C}$. Then we have 
\begin{align*}
    (\bilim^{W}F)(\bilim^{V}G) &\simeq \ps[\mathcal{C}, \Cat][ \katayo_{\bilim^{V}G}, \bilim^{W}F] \\ 
    &\simeq \ps[I, \Cat]\big[ W, \biLex[\mathcal{C},\Cat][ \katayo_{\bilim^{V}G}, F(-)]  \big]
\end{align*}
But observe that in each $i \in I$ we have an equivalence 
\begin{align*}
     \biLex[\mathcal{C}, \Cat][ \katayo_{\bilim^{V}G}, F(i)] &\simeq F(i)(\bilim^{V}G) \\ 
     &\simeq \underset{j \in J}{\bilim}^{\!V}F(i)(G(j))
\end{align*}
and this provides us with a pseudonatural equivalence 
\[ \biLex[\mathcal{C},\Cat][ \katayo_{\bilim^{V}G}, F(-)] \simeq \underset{j \in J}{\bilim}^{\!V}F(-)(G(j)) \]
Now, for we can extract a bilimit on the right of a homcategory, this gives the following chain of equivalences
\begin{align*}
    (\bilim^{W}F)(\bilim^{V}G) &\simeq \ps[I, \Cat]\big[ W, \underset{j \in J}{\bilim}^{\!V}F(-)(G(j))] \\
    &\simeq  \underset{j \in J}{\bilim}^{\!V} [I, \Cat]\big[ W, \biLex[\mathcal{C},\Cat][ \katayo_{G(j)}, F(-)]  \big]\\
    &\simeq \underset{j \in J}{\bilim}^{\!V} \biLex[\mathcal{C},\Cat]\big[\katayo_{G(j)}, \bilim^{W}F \big]\\
    &\simeq \underset{j \in J}{\bilim}^{\!V} (\bilim^{W}F)(G(j))
\end{align*}
Hence the pseudofunctor $ (\bilim^{W}F)$ is bilex, hence is in $\biLex[\mathcal{C}, \Cat]$. Now for we have a pseudo-fully faithful inclusion, we know that this is already a bilimit there. 
\end{proof}

In the general case, we saw that bicorepresentables were bicompact, but one had also to consider pseudoretracts of bicorepresentables to have all the bicompacts. In the bilex case, we prove that this simplifies, as the one dimensional case. To this end we introduce the following 2-dimensional analog of idempotents:

\begin{definition}
In a 2-category $\mathcal{C}$, a \emph{pseudoidempotent}\index{pseudoidempotent} is a 1-cell $ e: A \rightarrow A$ equipped with an invertible 2-cell
% https://q.uiver.app/?q=WzAsMyxbMCwwLCJBIl0sWzEsMCwiQSJdLFsyLDAsIkEiXSxbMCwxLCJlIl0sWzEsMiwiZSJdLFswLDIsImUiLDIseyJjdXJ2ZSI6M31dLFsxLDUsIlxcdXBzaWxvbiBcXGF0b3AgXFxzaW1lcSIsMSx7ImxhYmVsX3Bvc2l0aW9uIjozMCwic2hvcnRlbiI6eyJ0YXJnZXQiOjIwfSwic3R5bGUiOnsiYm9keSI6eyJuYW1lIjoibm9uZSJ9LCJoZWFkIjp7Im5hbWUiOiJub25lIn19fV1d
\[\begin{tikzcd}
	A & A & A
	\arrow["e", from=1-1, to=1-2]
	\arrow["e", from=1-2, to=1-3]
	\arrow[""{name=0, anchor=center, inner sep=0}, "e"', curve={height=20pt}, from=1-1, to=1-3]
	\arrow["{\upsilon \atop \simeq}"{description, pos=0.3}, Rightarrow, draw=none, from=1-2, to=0]
\end{tikzcd}\]
Now a \emph{bisplitting}\index{bisplitting} of a pseudoidempotent is a pair of invertible 2-cell as below
% https://q.uiver.app/?q=WzAsNCxbMCwwLCJBIl0sWzIsMCwiQSJdLFswLDEsIkIiXSxbMiwxLCJCIl0sWzAsMSwiZSJdLFswLDIsInIiLDJdLFsyLDEsInMiLDFdLFsxLDMsInIiXSxbMiwzLCIiLDIseyJsZXZlbCI6Miwic3R5bGUiOnsiaGVhZCI6eyJuYW1lIjoibm9uZSJ9fX1dLFswLDYsIlxcYWxwaGEgXFxhdG9wIFxcc2ltZXEiLDEseyJzaG9ydGVuIjp7InRhcmdldCI6MjB9LCJzdHlsZSI6eyJib2R5Ijp7Im5hbWUiOiJub25lIn0sImhlYWQiOnsibmFtZSI6Im5vbmUifX19XSxbNiwzLCJcXGJldGEgXFxhdG9wIFxcc2ltZXEiLDEseyJzaG9ydGVuIjp7InNvdXJjZSI6MjB9LCJzdHlsZSI6eyJib2R5Ijp7Im5hbWUiOiJub25lIn0sImhlYWQiOnsibmFtZSI6Im5vbmUifX19XV0=
\[\begin{tikzcd}
	A && A \\
	B && B
	\arrow["e", from=1-1, to=1-3]
	\arrow["r"', from=1-1, to=2-1]
	\arrow[""{name=0, anchor=center, inner sep=0}, "s"{description}, from=2-1, to=1-3]
	\arrow["r", from=1-3, to=2-3]
	\arrow[Rightarrow, no head, from=2-1, to=2-3]
	\arrow["{\alpha \atop \simeq}"{description}, Rightarrow, draw=none, from=1-1, to=0]
	\arrow["{\beta \atop \simeq}"{description}, Rightarrow, draw=none, from=0, to=2-3]
\end{tikzcd}\]
\end{definition}

\begin{lemma}\label{bilex cat have bisplitting of pseudoidempotent}
A bilex 2-category is closed under splitting of pseudoidempotents. 
\end{lemma}

\begin{proof}
Consider the following bi-iso-inserter
% https://q.uiver.app/?q=WzAsNSxbMSwyLCJBIl0sWzMsMSwiQSJdLFs0LDFdLFswLDEsIlxcYmlJc29JbnMoZSwxX0EpIl0sWzEsMCwiQSJdLFswLDEsIjFfQSIsMix7Im9mZnNldCI6MSwibGV2ZWwiOjIsInN0eWxlIjp7ImhlYWQiOnsibmFtZSI6Im5vbmUifX19XSxbMywwLCJpX3soZSwxX0EpfSIsMl0sWzQsMSwiZSIsMCx7Im9mZnNldCI6LTF9XSxbMyw0LCJpX3soZSwxX0EpfSJdLFs0LDAsIlxcbXVfeyhlLDFfQSl9IFxcYXRvcCBcXHNpbWVxIiwxLHsic2hvcnRlbiI6eyJzb3VyY2UiOjIwLCJ0YXJnZXQiOjIwfSwibGV2ZWwiOjIsInN0eWxlIjp7ImJvZHkiOnsibmFtZSI6Im5vbmUifSwiaGVhZCI6eyJuYW1lIjoibm9uZSJ9fX1dXQ==
\[\begin{tikzcd}
	& A \\
	{\biIsoIns(e,1_A)} &&& A & {} \\
	& A
	\arrow["{1_A}"', shift right=1, Rightarrow, no head, from=3-2, to=2-4]
	\arrow["{i_{(e,1_A)}}"', from=2-1, to=3-2]
	\arrow["e", shift left=1, from=1-2, to=2-4]
	\arrow["{i_{(e,1_A)}}", from=2-1, to=1-2]
	\arrow["{\mu_{(e,1_A)} \atop \simeq}"{description}, draw=none, from=1-2, to=3-2]
\end{tikzcd}\]
Then observe that precomposing the parallel pair $(e,1_A)$ with $ e$ also insert an invertible 2-cell $ \upsilon$ so the universal property of the bilimit provides us with a canonical map $ r$ and a pair of invertible 2-cells $\alpha, \beta$ as below
\[\begin{tikzcd}
	&&& A \\
	A && {\biIsoIns(e,1_A)} &&& A & {} \\
	&&& A
	\arrow["e", shift left=1, from=1-4, to=2-6]
	\arrow["{i_{(e,1_A)}}"{description}, from=2-3, to=1-4]
	\arrow[""{name=0, anchor=center, inner sep=0}, "e", curve={height=-18pt}, from=2-1, to=1-4]
	\arrow["r"{description}, dashed, from=2-1, to=2-3]
	\arrow["{1_A}"', shift right=1, Rightarrow, no head, from=3-4, to=2-6]
	\arrow["{i_{(e,1_A)}}"{description}, from=2-3, to=3-4]
	\arrow[""{name=1, anchor=center, inner sep=0}, "{e^2}"', curve={height=18pt}, from=2-1, to=3-4]
	\arrow["{\mu_{(e, 1_A)} \atop \simeq}"{description}, draw=none, from=1-4, to=3-4]
	\arrow["{\alpha \atop \simeq}"{description}, draw=none, from=0, to=2-3]
	\arrow["{\beta \atop \simeq}"{description}, draw=none, from=2-3, to=1]
\end{tikzcd}\]
which form altogether with the composite $ i_{(e,1_A)}i_{(\mu_{(e,1_A)}}$ a biplitting of the pseudoidempotent $e$. 
\end{proof}

\begin{theorem} \label{GUhalf1}
For any bilex 2-category $ \mathcal{C}$, we have a biequivalence \[ \mathcal{C}^{\op} \simeq (\biLex[\mathcal{C}, \Cat])_\omega \]
In other words, bicompact objects are bicorepresentables and bicorepresentables are bicompact in  $\biLex[\mathcal{C}, \Cat]$.
\end{theorem}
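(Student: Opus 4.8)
The plan is to show that the contravariant Yoneda embedding $\katayo : \mathcal{C}^{\op} \hookrightarrow \biLex[\mathcal{C}, \Cat]$ corestricts to a biequivalence onto the full sub-2-category of bicompact objects. Since $\katayo$ is pseudofully faithful by the 2-dimensional Yoneda lemma, it will suffice to prove two things: that $\katayo$ lands in $(\biLex[\mathcal{C}, \Cat])_\omega$ (every bicorepresentable is bicompact) and that it is essentially surjective onto it (every bicompact object is bicorepresentable). Together with pseudofull faithfulness, these give a pseudofully faithful, essentially surjective corestriction, hence a biequivalence.

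For the first claim I would transport \Cref{corepresentable are compact} along the biequivalence $\Flat_\ps[\mathcal{C}, \Cat] \simeq \biLex[\mathcal{C}, \Cat]$ of \Cref{flat is bilex}, which applies because $\mathcal{C}$ has all finite weighted bilimits; this immediately yields that bicorepresentables are bicompact in $\biLex[\mathcal{C}, \Cat]$, so $\katayo$ factors through $(\biLex[\mathcal{C}, \Cat])_\omega$.

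The substantive direction is the converse. Given a bicompact $K$, \Cref{compact are retract of representable} (read in the bilex setting through \Cref{flat is bilex}) exhibits $K$ as a pseudoretract of some $\katayo_A$, with $r : \katayo_A \to K$, $s : K \to \katayo_A$ and an invertible 2-cell $rs \simeq 1_K$. Then $e = sr$ carries the structure of a pseudoidempotent on $\katayo_A$ of which $K$ is a bisplitting. As $\katayo$ is pseudofully faithful, $e$ is the image of a pseudoidempotent $\bar e$ on $A$ in $\mathcal{C}^{\op}$; since $\mathcal{C}$ is bilex it admits bisplittings of pseudoidempotents by \Cref{bilex cat have bisplitting of pseudoidempotent}, and---bisplittings being self-dual---so does $\mathcal{C}^{\op}$, producing a bisplitting of $\bar e$ through some object $B$. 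Because any pseudofunctor preserves bisplittings, $\katayo_B$ is again a bisplitting of $e$ in $\biLex[\mathcal{C}, \Cat]$. Invoking the essential uniqueness of bisplittings of a fixed pseudoidempotent, the two bisplittings $K$ and $\katayo_B$ of $e$ must be equivalent, whence $K \simeq \katayo_B$ is bicorepresentable.

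The main obstacle is the careful bookkeeping of the pseudoidempotent along $\katayo$: checking that a pseudoretract genuinely determines a pseudoidempotent together with its coherence 2-cell, that pseudofull faithfulness reflects this structure down to a pseudoidempotent in $\mathcal{C}^{\op}$, that $\katayo$ transports the resulting bisplitting back up faithfully, and that bisplittings of a given pseudoidempotent are unique up to equivalence. Each of these is a coherence-heavy but essentially routine 2-categorical verification; once they are in place, combining the two inclusions delivers the corestricted biequivalence $\mathcal{C}^{\op} \simeq (\biLex[\mathcal{C}, \Cat])_\omega$.
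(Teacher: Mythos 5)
Your proposal is correct and follows essentially the same route as the paper's own proof: identify bicorepresentables as bicompact via \Cref{corepresentable are compact} (through \Cref{flat is bilex}), exhibit a bicompact $K$ as a pseudoretract of some $\katayo_A$ via \Cref{compact are retract of representable}, reflect the resulting pseudoidempotent along the pseudofully faithful Yoneda embedding, bisplit it in $\mathcal{C}$ using \Cref{bilex cat have bisplitting of pseudoidempotent}, and conclude $K \simeq \katayo_B$ by preservation and essential uniqueness of bisplittings. Your extra remarks on the variance ($\mathcal{C}$ versus $\mathcal{C}^{\op}$) and on why the Yoneda embedding preserves bisplittings (absoluteness) only make explicit what the paper leaves implicit.
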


\begin{proof}
At this point the theorem appears as a corollary of the previous lemmas. We saw that birepresentable are bicompact; conversely we saw that any bicompact object is a pseudoretract of a representable
% https://q.uiver.app/?q=WzAsMyxbMCwwLCJLIl0sWzIsMCwiSyJdLFsxLDEsIlxcaGlyYXlvXipfQSJdLFswLDEsIiIsMCx7ImxldmVsIjoyLCJzdHlsZSI6eyJoZWFkIjp7Im5hbWUiOiJub25lIn19fV0sWzAsMiwicyIsMl0sWzIsMSwiciIsMl0sWzMsMiwiXFxhbHBoYSBcXGF0b3BcXHNpbWVxIiwxLHsic2hvcnRlbiI6eyJzb3VyY2UiOjIwfSwic3R5bGUiOnsiYm9keSI6eyJuYW1lIjoibm9uZSJ9LCJoZWFkIjp7Im5hbWUiOiJub25lIn19fV1d
\[\begin{tikzcd}
	K && K \\
	& {\katayo_A}
	\arrow[""{name=0, anchor=center, inner sep=0}, Rightarrow, no head, from=1-1, to=1-3]
	\arrow["s"', from=1-1, to=2-2]
	\arrow["r"', from=2-2, to=1-3]
	\arrow["{\alpha \atop\simeq}"{description}, Rightarrow, draw=none, from=0, to=2-2]
\end{tikzcd}\]
But then we have a pseudo-idempotent in $\biLex[\mathcal{C}, \Cat]$
% https://q.uiver.app/?q=WzAsMyxbMSwwLCJLIl0sWzAsMSwiXFxoaXJheW9eKl9BIl0sWzIsMSwiXFxoaXJheW9eKl9BIl0sWzEsMCwiciJdLFswLDIsInMiXSxbMSwyLCJzciIsMl0sWzAsNSwiPSIsMSx7InNob3J0ZW4iOnsidGFyZ2V0IjoyMH0sInN0eWxlIjp7ImJvZHkiOnsibmFtZSI6Im5vbmUifSwiaGVhZCI6eyJuYW1lIjoibm9uZSJ9fX1dXQ==
\[\begin{tikzcd}
	& K \\
	{\katayo_A} && {\katayo_A}
	\arrow["r", from=2-1, to=1-2]
	\arrow["s", from=1-2, to=2-3]
	\arrow[""{name=0, anchor=center, inner sep=0}, "sr"', from=2-1, to=2-3]
	\arrow["{=}"{description}, Rightarrow, draw=none, from=1-2, to=0]
\end{tikzcd}\]
which comes uniquely by full faithfulness of $\katayo$ from a pseudo-idempotent $ e : A \rightarrow A$ in $\mathcal{C}$. But now as $\mathcal{C}$ is bilex, this pseudo-idempotent has a bisplitting in $\mathcal{C}$,
% https://q.uiver.app/?q=WzAsMyxbMCwxLCJBIl0sWzIsMSwiQSJdLFsxLDAsIkIiXSxbMCwxLCJlIiwyXSxbMCwyLCJyJyJdLFsyLDEsInMnIl0sWzMsMiwiXFxhbHBoYScgXFxhdG9wIFxcc2ltZXEiLDEseyJzaG9ydGVuIjp7InNvdXJjZSI6MjB9LCJzdHlsZSI6eyJib2R5Ijp7Im5hbWUiOiJub25lIn0sImhlYWQiOnsibmFtZSI6Im5vbmUifX19XV0=
\[\begin{tikzcd}
	& B \\
	A && A
	\arrow[""{name=0, anchor=center, inner sep=0}, "e"', from=2-1, to=2-3]
	\arrow["{r'}", from=2-1, to=1-2]
	\arrow["{s'}", from=1-2, to=2-3]
	\arrow["{\alpha' \atop \simeq}"{description}, Rightarrow, draw=none, from=0, to=1-2]
\end{tikzcd}\]
which is preserved by $\katayo$. Now by uniqueness up to equivalence of bisplitting, we must have a pseudonatural equivalence $ K \simeq \katayo_B$. 
\end{proof}

%Now putting altogether the previous results, we get the following:

\begin{theorem}\label{Flat pseudofunctors on bilex are bipres}
Let $\mathcal{C}$ be a small bilex 2-category. Then $ \biLex[\mathcal{C}, \Cat]$ is finitely bipresentable, and $ \mathcal{C}$ can be chosen as a generator of bicompact objects. 
\end{theorem}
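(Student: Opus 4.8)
The plan is to assemble the preceding results, since almost all the genuine work has already been done; the theorem is essentially a corollary of the chain of lemmas leading up to it. First I would invoke \Cref{flat is bilex}: because $\mathcal{C}$ has finite weighted bilimits, flat pseudofunctors coincide with bilex ones, giving a biequivalence $\Flat_\ps[\mathcal{C},\Cat] \simeq \biLex[\mathcal{C},\Cat]$. Through this identification, \Cref{2-cat of flat pseudofunctor are accessible} immediately yields that $\biLex[\mathcal{C},\Cat]$ is finitely bi-accessible, so the only thing left to secure before invoking \Cref{Biacc is bipres iff bicomplete} is bicompleteness.

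For the bilimit side, \Cref{preserveandcreatecolimits} already shows that $\biLex[\mathcal{C},\Cat]$ has all small bilimits, computed and created by the pseudofully faithful inclusion into $\ps[\mathcal{C},\Cat]$; that is, $\biLex[\mathcal{C},\Cat]$ is bicomplete. Feeding the conjunction \emph{finitely bi-accessible and bicomplete} into the equivalence of \Cref{Biacc is bipres iff bicomplete} produces finite bipresentability, and in particular bicocompleteness. One could alternatively bypass \Cref{Biacc is bipres iff bicomplete} by exhibiting $\biLex[\mathcal{C},\Cat]$ as a bireflective subcategory of the bicocomplete $\ps[\mathcal{C},\Cat]$ and applying \Cref{reflective sub2cat have bicolimit}, but the accessibility-plus-completeness route is cleaner given what is available.

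It then remains to identify the generator. Here I would appeal to \Cref{GUhalf1}, which gives a biequivalence $\mathcal{C}^{\op} \simeq (\biLex[\mathcal{C},\Cat])_\omega$, so that the Yoneda image of $\mathcal{C}$ is precisely the full sub-2-category of bicompact objects; being essentially small, it is a generator of bicompact objects by bi-accessibility. This establishes the final clause of the statement, that $\mathcal{C}$ may be chosen as the generator.

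The assembly is routine, and the one genuine subtlety — the step I would flag as the real content — sits inside \Cref{GUhalf1}. In the merely small, non-bilex case one only obtains bicompacts as pseudoretracts of birepresentables (\Cref{compact are retract of representable}); it is exactly the closure of bilex 2-categories under bisplittings of pseudoidempotents (\Cref{bilex cat have bisplitting of pseudoidempotent}) that collapses these retracts back onto honest birepresentables, pinning down the generator as $\mathcal{C}^{\op}$ rather than its idempotent completion. Everything else is bookkeeping across the equivalence flat $=$ bilex, finite bi-accessibility, and the accessible-plus-complete criterion.
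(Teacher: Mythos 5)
Your proposal is correct and follows essentially the same route as the paper, which states this theorem without a separate proof precisely because it is the assembly announced in the itemized strategy at the head of the subsection: flat $=$ bilex via \Cref{flat is bilex}, bi-accessibility from \Cref{2-cat of flat pseudofunctor are accessible}, bicompleteness from \Cref{preserveandcreatecolimits} fed into \Cref{Biacc is bipres iff bicomplete}, and the identification of the generator via \Cref{GUhalf1}. Your closing remark correctly locates the genuine content in the bisplitting of pseudoidempotents (\Cref{bilex cat have bisplitting of pseudoidempotent}), which is exactly the step the paper singles out to upgrade pseudoretracts of birepresentables to birepresentables.
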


\section{$2$-dimensional Gabriel-Ulmer duality}

\subsection{Bi-accessible pseudofunctors}

\begin{definition}[Bi-accessible pseudofunctors]
A pseudo-functor between finitely bi-accessible $2$-categories is finitely bi-accesssible if it preserves bifiltered bicolimits. Similarly we say that it is $\lambda$-bi-accessible if it preserves $\lambda$-bifiltered bicolimits.
\end{definition}

\begin{proposition}
Let $\mathcal{A},\mathcal{B}$ be finitely bi-accessible categories. There is a $1$-to-$1$ correspondence between finitely bi-accessible pseudofunctors $\mathcal{A} \to \mathcal{B}$ and pseudofunctors $\mathcal{A}_\omega \to B$, which is induced by biKan extension.

% https://q.uiver.app/?q=WzAsMyxbMCwwLCJcXG1hdGhjYWx7QX1fXFxvbWVnYSJdLFswLDEsIlxcbWF0aGNhbHtBfSJdLFsxLDEsIlxcbWF0aGNhbHtCfSJdLFswLDEsImkiLDIseyJzdHlsZSI6eyJ0YWlsIjp7Im5hbWUiOiJob29rIiwic2lkZSI6InRvcCJ9fX1dLFsxLDIsIlxcYmlsYW5faSBmIiwyXSxbMCwyLCJmIl0sWzUsMSwiXFxzaW1lcSIsMSx7InNob3J0ZW4iOnsic291cmNlIjoyMH0sInN0eWxlIjp7ImJvZHkiOnsibmFtZSI6Im5vbmUifSwiaGVhZCI6eyJuYW1lIjoibm9uZSJ9fX1dXQ==
\[\begin{tikzcd}[column sep=large]
	{\mathcal{A}_\omega} \\
	{\mathcal{A}} & {\mathcal{B}}
	\arrow["i"', hook, from=1-1, to=2-1]
	\arrow["{\biLan_i f}"', from=2-1, to=2-2]
	\arrow[""{name=0, anchor=center, inner sep=0}, "f", from=1-1, to=2-2]
	\arrow["\simeq"{description}, Rightarrow, draw=none, from=0, to=2-1]
\end{tikzcd}\]
\end{proposition}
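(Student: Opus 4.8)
The plan is to exhibit restriction along $i \colon \mathcal{A}_\omega \hookrightarrow \mathcal{A}$ and the left biKan extension $\biLan_i(-)$ as mutually quasi-inverse operations: $G \mapsto G\circ i$ sends a bi-accessible pseudofunctor $\mathcal{A}\to\mathcal{B}$ to a pseudofunctor $\mathcal{A}_\omega\to\mathcal{B}$, and $f\mapsto\biLan_i f$ goes back. The whole argument hinges on the fact that although $\mathcal{B}$ need not be bicocomplete, every bicolimit we are forced to form is \emph{bifiltered}, hence available in $\mathcal{B}$.

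First I would establish that $\biLan_i f$ exists for every $f\colon \mathcal{A}_\omega\to\mathcal{B}$. In the notation of \Cref{cancellation rule}, the relevant weight at an object $A$ is $\mathcal{N}_i(A)=\mathcal{A}[i-,A]=\nu_A$, the binerve of $A$. Since $\mathcal{A}$ is bi-accessible, \Cref{sigma-accessible 2-cat are categories of flat functors} shows $\nu_A$ is flat, and then \Cref{flatness as bifilteredness condition} presents it as a bifiltered bicolimit of representables. Because $\mathcal{B}$ has bifiltered bicolimits, the pointwise formula $\biLan_i f(A)\simeq\bicolim^{\nu_A}\!f$ de-weights to a conical bifiltered bicolimit that genuinely lives in $\mathcal{B}$; this is exactly the situation covered by \Cref{cr}, once one notes that only bifiltered bicolimits of $\mathcal{B}$ are ever needed. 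Concretely $\biLan_i f(A)\simeq\bicolim_{(K,a)\in\mathcal{A}_\omega\downarrow A} f(K)$, the bicolimit over the canonical pseudococone, which is bifiltered by \Cref{the pseudococone is bifiltered}.

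Next I would check the two round-trips. For $f\colon\mathcal{A}_\omega\to\mathcal{B}$, pseudofull faithfulness of $i$ and \Cref{bikan ext along ff} give that the unit $f\Rightarrow(\biLan_i f)\circ i$ is a pointwise equivalence, so restriction recovers $f$. For a bi-accessible $G\colon\mathcal{A}\to\mathcal{B}$, I compute $\biLan_i(G i)(A)\simeq\bicolim_{(K,a)\in\mathcal{A}_\omega\downarrow A} G(K)$; since $\mathcal{A}_\omega\downarrow A$ is bifiltered and $A\simeq\bicolim\,\mathcal{A}_\omega\downarrow A$ by \Cref{characterization of biacc with pseudococone}, preservation of this bifiltered bicolimit by $G$ yields $\biLan_i(Gi)(A)\simeq G(A)$, naturally in $A$. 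I would then assemble these equivalences into a biequivalence of the two pseudofunctor $2$-categories, upgrading the bijection on objects to the full $2$-dimensional correspondence.

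Finally I must verify that $\biLan_i f$ is itself bi-accessible, i.e.\ preserves bifiltered bicolimits, so that the extension really lands in the correct class. Writing $A\simeq\bicolim_J D$ for bifiltered $J$, the binerve preserves this bicolimit by \Cref{binerve preserve bifiltered bicolimits}, so $\nu_A\simeq\bicolim_J\nu_{D(j)}$ computed pointwise; combining the pointwise formula with the cocontinuity of $\bicolim^{(-)}\!f$ in its weight then gives $\biLan_i f(A)\simeq\bicolim_J\biLan_i f(D(j))$. The main obstacle is precisely this last interchange: one must make sure all the iterated bicolimits exist in the non-bicocomplete $\mathcal{B}$ — which they do, being bifiltered throughout — and that weight-cocontinuity of weighted bicolimits is legitimately available in this flat, de-weighted setting. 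The \Cref{key lemma} reduction of $\sigma$-filtered/flat data to honest bifiltered bicolimits is exactly what keeps this step under control and completes the argument.
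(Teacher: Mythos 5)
Your proof is correct, but it takes a noticeably different route from the paper's. The paper never de-weights inside $\mathcal{B}$: it post-composes $f$ with the binerve $\nu_\mathcal{B}$ so as to land in the bicocomplete $2$-category $\ps[(\mathcal{B}_\omega)^{\op},\Cat]$, forms $\biLan_\hirayo(\nu_\mathcal{B}f)$ there (where it exists and is cocontinuous for free), composes with $\nu_\mathcal{A}$, and then observes that the composite lands in the closure of the corepresentables under bifiltered bicolimits, i.e.\ in the essential image of $\nu_\mathcal{B}$, so it lifts along $\nu_\mathcal{B}$; the lift is identified with $\biLan_\iota f$ via the cancellation rule \Cref{cancellation rule} and \Cref{bikan ext along ff}. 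Accessibility of the extension then falls out of this factorization (cocontinuous $\circ$ bifiltered-preserving, lifted along a $2$-functor that creates bifiltered bicolimits by \Cref{creation of sigmafiltered colimits of flat functors} and \Cref{A is flat functors}), with no interchange argument needed. You instead stay inside $\mathcal{B}$ throughout: flatness of $\nu_A$ (\Cref{sigma-accessible 2-cat are categories of flat functors}, \Cref{flatness as bifilteredness condition}) lets you de-weight the pointwise formula to the concrete conical bifiltered bicolimit $\biLan_i f(A)\simeq\bicolim_{\mathcal{A}_\omega\downarrow A}f(K)$ over the canonical pseudococone of \Cref{the pseudococone is bifiltered}, and you then pay for this concreteness by having to verify the interchange $\bicolim^{\bicolim_J\nu_{D(j)}}f\simeq\bicolim_J\bicolim^{\nu_{D(j)}}f$ to get preservation of bifiltered bicolimits — a step that does go through (both sides exist, being bifiltered, and the universal-property chain identifies them), but which the paper's factorization avoids entirely. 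Your version buys an explicit formula and, importantly, a complete treatment of both round-trips: restriction-of-extension via \Cref{bikan ext along ff}, and extension-of-restriction via $G$ preserving $A\simeq\bicolim\,\mathcal{A}_\omega\downarrow A$ — the latter is only handled by the paper separately in \Cref{laKan}, and the paper's own proof of this proposition is in fact left visibly unfinished on exactly the correspondence part. The paper's version buys a reusable lifting technique (the same presheaf-embedding pattern reappears in its adjoint functor theorems, \Cref{RAFT} and \Cref{LAFT}) and a cleaner accessibility argument.
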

\begin{proof}
Several things need to be shown. We start from showing that given a pseudofunctor $f: \mathcal{A}_\omega \to \mathcal{B}$, its left biKan extension preserves bifiltered bicolimits. This follows from the fact that we have an explicit way to compute the biKan extension. First observe that, combining the cancellation rule observed at \Cref{cancellation rule} together with \Cref{bikan ext along ff} knowning that here both $\iota$ and $\nu_\mathcal{A}$ are pseudofully-faithful, all the 2-cells in the following diagram happen to be natural equivalences:
% https://q.uiver.app/?q=WzAsNCxbMCwwLCJcXG1hdGhjYWx7QX1fXFxvbWVnYSJdLFsxLDEsIlxcbWF0aGNhbHtBfSJdLFsyLDAsIlxcbWF0aGNhbHtCfSJdLFsxLDIsIlxccHNbKFxcbWF0aGNhbHtBfV9cXG9tZWdhKV57XFxvcH0sIFxcQ2F0XSJdLFswLDEsIlxcaW90YSIsMSx7InN0eWxlIjp7InRhaWwiOnsibmFtZSI6Imhvb2siLCJzaWRlIjoidG9wIn19fV0sWzEsMiwiXFxiaUxhbl9pIGYiLDFdLFswLDIsImYiXSxbMCwzLCJcXGhpcmF5byIsMix7ImN1cnZlIjo0fV0sWzEsMywiXFxudV9cXG1hdGhjYWx7QX0iLDEseyJjb2xvdXIiOlswLDYwLDYwXX0sWzAsNjAsNjAsMV1dLFszLDIsIlxcYmlMYW5fXFxoaXJheW8gRiIsMSx7ImN1cnZlIjo0fV0sWzcsMSwiXFxzaW1lcSIsMSx7InNob3J0ZW4iOnsic291cmNlIjoyMH0sInN0eWxlIjp7ImJvZHkiOnsibmFtZSI6Im5vbmUifSwiaGVhZCI6eyJuYW1lIjoibm9uZSJ9fX1dLFs2LDEsIlxcc2ltZXEiLDEseyJzaG9ydGVuIjp7InNvdXJjZSI6MjB9LCJzdHlsZSI6eyJib2R5Ijp7Im5hbWUiOiJub25lIn0sImhlYWQiOnsibmFtZSI6Im5vbmUifX19XSxbMSw5LCJcXHNpbWVxIiwxLHsic2hvcnRlbiI6eyJ0YXJnZXQiOjIwfSwic3R5bGUiOnsiYm9keSI6eyJuYW1lIjoibm9uZSJ9LCJoZWFkIjp7Im5hbWUiOiJub25lIn19fV1d
\[\begin{tikzcd}
	{\mathcal{A}_\omega} && {\mathcal{B}} \\
	& {\mathcal{A}} \\
	& {\ps[(\mathcal{A}_\omega)^{\op}, \Cat]}
	\arrow["\iota"{description}, hook, from=1-1, to=2-2]
	\arrow["{\biLan_i f}"{description}, from=2-2, to=1-3]
	\arrow[""{name=0, anchor=center, inner sep=0}, "f", from=1-1, to=1-3]
	\arrow[""{name=1, anchor=center, inner sep=0}, "\hirayo"', curve={height=24pt}, from=1-1, to=3-2]
	\arrow["{\nu_\mathcal{A}}", hook, from=2-2, to=3-2]
	\arrow[""{name=2, anchor=center, inner sep=0}, "{\biLan_\hirayo f}"', curve={height=24pt}, from=3-2, to=1-3]
	\arrow["\simeq"{description}, Rightarrow, draw=none, from=1, to=2-2]
	\arrow["\simeq"{description}, Rightarrow, draw=none, from=0, to=2-2]
	\arrow["\simeq"{description}, Rightarrow, draw=none, from=2-2, to=2]
\end{tikzcd}\]
%Consider the diagram below,

% https://q.uiver.app/?q=WzAsNSxbMCwwLCJcXG1hdGhjYWx7QX1fXFxvbWVnYSJdLFswLDIsIlxcbWF0aGNhbHtBfSJdLFsyLDIsIlxcbWF0aGNhbHtCfSJdLFswLDQsIlxccHNbKFxcbWF0aGNhbHtBfV9cXG9tZWdhKV57XFxvcH0sIFxcQ2F0XSJdLFsyLDQsIlxccHNbKFxcbWF0aGNhbHtCfV9cXG9tZWdhKV57XFxvcH0sIFxcQ2F0XSJdLFswLDEsImkiLDJdLFsxLDIsIlxcYmlMYW5faSBmIiwxLHsic3R5bGUiOnsiYm9keSI6eyJuYW1lIjoiZGFzaGVkIn19fV0sWzAsMiwiZiJdLFswLDMsIlxcaGlyYXlvIiwxLHsiY3VydmUiOjV9XSxbMSwzLCJcXG51X1xcbWF0aGNhbHtBfSIsMSx7ImNvbG91ciI6WzAsNjAsNjBdfSxbMCw2MCw2MCwxXV0sWzIsNCwiXFxudV9cXG1hdGhjYWx7Qn0iLDJdLFszLDQsIlxcYmlMYW5fXFxoaXJheW8gXFxudV97XFxtYXRoY2Fse0J9fWYiLDIseyJjb2xvdXIiOlswLDYwLDYwXX0sWzAsNjAsNjAsMV1dXQ==
\begin{comment}
\[\begin{tikzcd}
	{\mathcal{A}_\omega} \\
	\\
	{\mathcal{A}} && {\mathcal{B}} \\
	\\
	{\ps[(\mathcal{A}_\omega)^{\op}, \Cat]} && {\ps[(\mathcal{B}_\omega)^{\op}, \Cat]}
	\arrow["i"', from=1-1, to=3-1]
	\arrow["{\biLan_i f}"{description}, dashed, from=3-1, to=3-3]
	\arrow["f", from=1-1, to=3-3]
	\arrow["\hirayo"{description}, curve={height=30pt}, from=1-1, to=5-1]
	\arrow["{\nu_\mathcal{A}}"{description}, from=3-1, to=5-1]
	\arrow["{\nu_\mathcal{B}}"', from=3-3, to=5-3]
	\arrow["{\biLan_\hirayo \nu_{\mathcal{B}}f}"', from=5-1, to=5-3]
\end{tikzcd}\]
\end{comment}
Now, both binerve pseudofunctors $ \nu_\mathcal{A} $ and $\nu_\mathcal{B}$ preserve bifiltered bicolimits by \Cref{creation of sigmafiltered colimits of flat functors} and \Cref{A is flat functors}, the biKan extension $\biLan_\hirayo \nu_\mathcal{B} f$ preserves all bicolimits by its universal property. Thus, the composition $\biLan_\hirayo(\nu_\mathcal{B}f)\nu_{\mathcal{A}}$ preserves bifiltered bicolimits. Moreover, it lands by construction in the closure under bifiltered bicolimits of the corepresentables, and thus lifts against $\nu_\mathcal{B}$, and this lift is exhibited as being the biKan extension $ \biLan_\iota f$

%\textcolor{red}{This diagram should be part of the previous one. Also, what is this lifting argument ? Isnt this rather an exploitation of the universal property of biLan together with the fact that the nerve and inclusion are pseudofully faithful ?} {\color{blue} Yes, which is precisely what I mean by "lift against"...}

% https://q.uiver.app/?q=WzAsNCxbMCwwLCJcXG1hdGhjYWx7QX0iXSxbMiwwLCJcXG1hdGhjYWx7Qn0iXSxbMCwxLCJcXHBzWyhcXG1hdGhjYWx7QX1fXFxvbWVnYSlee1xcb3B9LCBcXENhdF0iXSxbMiwxLCJcXHBzWyhcXG1hdGhjYWx7Qn1fXFxvbWVnYSlee1xcb3B9LCBcXENhdF0iXSxbMCwxLCJGIl0sWzAsMiwiXFxudV9cXG1hdGhjYWx7QX0iLDIseyJzdHlsZSI6eyJ0YWlsIjp7Im5hbWUiOiJob29rIiwic2lkZSI6InRvcCJ9fX1dLFsxLDMsIlxcbnVfXFxtYXRoY2Fse0J9IiwyLHsic3R5bGUiOnsidGFpbCI6eyJuYW1lIjoiaG9vayIsInNpZGUiOiJ0b3AifX19XSxbMiwzLCJcXGJpTGFuX1xcaGlyYXlvIFxcbnVfe1xcbWF0aGNhbHtCfX1GIiwyXSxbNCw3LCJcXHNpbWVxIiwxLHsic2hvcnRlbiI6eyJzb3VyY2UiOjIwLCJ0YXJnZXQiOjIwfSwic3R5bGUiOnsiYm9keSI6eyJuYW1lIjoibm9uZSJ9LCJoZWFkIjp7Im5hbWUiOiJub25lIn19fV1d
\[\begin{tikzcd}
	{\mathcal{A}} && {\mathcal{B}} \\
	{\ps[(\mathcal{A}_\omega)^{\op}, \Cat]} && {\ps[(\mathcal{B}_\omega)^{\op}, \Cat]}
	\arrow[""{name=0, anchor=center, inner sep=0}, "\biLan_\iota f", from=1-1, to=1-3]
	\arrow["{\nu_\mathcal{A}}"', hook, from=1-1, to=2-1]
	\arrow["{\nu_\mathcal{B}}"', hook, from=1-3, to=2-3]
	\arrow[""{name=1, anchor=center, inner sep=0}, "{\biLan_\hirayo \nu_{\mathcal{B}}f}"', from=2-1, to=2-3]
	\arrow["\simeq"{description}, Rightarrow, draw=none, from=0, to=1]
\end{tikzcd}\]

\textcolor{red}{}
\end{proof}

\begin{theorem} \label{laKan}
Let $F: \mathcal{A} \to \mathcal{B}$ be a $\lambda$-biaccessible pseudofunctor between $\lambda$-bipresentable $2$-categories. Then in the diagram below

% https://q.uiver.app/?q=WzAsMyxbMCwxLCJcXG1hdGhjYWx7QX0iXSxbMSwxLCJcXG1hdGhjYWx7Qn0iXSxbMCwwLCJcXG1hdGhjYWx7QX1fXFxsYW1iZGEiXSxbMCwxLCJGIiwxXSxbMiwwLCJcXGlvdGEiLDFdLFsyLDEsIkZcXGlvdGEiLDFdXQ==
\[\begin{tikzcd}
	{\mathcal{A}_\lambda} \\
	{\mathcal{A}} & {\mathcal{B}}
	\arrow["F"', from=2-1, to=2-2]
	\arrow["\iota"', from=1-1, to=2-1]
	\arrow["F\iota", from=1-1, to=2-2]
\end{tikzcd}\]

$F$ coincides up to a natural equivalence with the with the biKan extension $\text{biLan}_\iota F\iota$.
\end{theorem}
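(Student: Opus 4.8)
The plan is to exhibit a canonical comparison pseudonatural transformation $\xi : \biLan_\iota F\iota \Rightarrow F$ and to prove that it is a pointwise equivalence, by reducing — via the canonical bicolimit decomposition of objects — to its components at $\lambda$-compact objects, where it is manifestly an equivalence.

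First I would record that $\biLan_\iota F\iota$ exists: since $\mathcal{B}$ is $\lambda$-bipresentable it has all small weighted bicolimits, and $\mathcal{A}_\lambda$ is essentially small, so the pointwise formula of \Cref{Weighted colimit expression of pointwise biLan} applies. Writing $f := F\iota$, the canonical unit $\eta : f \Rightarrow (\biLan_\iota f)\iota$ is a pointwise equivalence because $\iota$ is pseudofully faithful (\Cref{bikan ext along ff}). Feeding the identity $2$-cell $\mathrm{id} : f = F\iota \Rightarrow F\iota$ into the universal property of the left biKan extension then produces an essentially unique $\xi : \biLan_\iota f \Rightarrow F$ with $\mathrm{id} \simeq (\xi\iota)\cdot\eta$. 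Since $\eta$ is a pointwise equivalence, cancelling it shows that $\xi\iota$ is a pointwise equivalence; that is, $\xi_K$ is an equivalence for every $\lambda$-compact object $K$.

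Next I would invoke preservation of $\lambda$-bifiltered bicolimits on both sides. By hypothesis $F$ is $\lambda$-bi-accessible, hence preserves them; and the $\lambda$-analog of the proposition preceding this theorem shows that $\biLan_\iota f$, being a left biKan extension out of the $\lambda$-compacts, also preserves $\lambda$-bifiltered bicolimits. Now fix an arbitrary $A$ in $\mathcal{A}$. By the $\lambda$-version of \Cref{the pseudococone is bifiltered}, the pseudoslice $\mathcal{A}_\lambda\downarrow A$ is $\lambda$-bifiltered and $A \simeq \bicolim_{(K,a)\in\mathcal{A}_\lambda\downarrow A} K$. Applying both functors and using their preservation of this bicolimit gives
\[ \biLan_\iota f(A) \simeq \underset{(K,a)}{\bicolim}\; \biLan_\iota f(K), \qquad F(A) \simeq \underset{(K,a)}{\bicolim}\; f(K), \]
and $\xi_A$ is identified with the map induced between these bicolimits by the family $(\xi_K)_{(K,a)}$. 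As each $\xi_K$ is an equivalence and $\xi$ respects the two bicolimiting pseudococones, $\xi_A$ is an equivalence; hence $\xi$ is a pointwise equivalence and $F \simeq \biLan_\iota F\iota$.

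The hard part will be the bookkeeping in the last step: verifying that $\xi_A$ really is the canonical arrow induced on bicolimits by the $\xi_K$, i.e. that the comparison $\xi$ is compatible with the colimiting pseudococones coming from the canonical decomposition on both sides. This amounts to a pseudonaturality and coherence check using the essential uniqueness clause in the universal property of $\biLan_\iota f$ together with the explicit pointwise bicolimit formula; once the two cocones are matched, the general fact that a bifiltered bicolimit of a diagram connected to another by a levelwise equivalence produces an equivalence of bicolimits closes the argument.
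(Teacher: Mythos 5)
Your proof is correct and is essentially the paper's own argument written out in full: the paper's one-line proof invokes exactly the density of $\mathcal{A}_\lambda$ (your canonical $\lambda$-bifiltered decomposition via the $\lambda$-version of \Cref{the pseudococone is bifiltered}) together with the pointwise computation of biKan extensions (your use of \Cref{Weighted colimit expression of pointwise biLan} and the unit equivalence on $\lambda$-compacts from \Cref{bikan ext along ff}). The only difference is organizational, in that you package the argument around an explicit comparison $\xi$ and verify its pointwise invertibility by preservation of $\lambda$-bifiltered bicolimits on both sides, which is a sound and more detailed rendering of the same idea.
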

\begin{proof}
It follows from the density of $\mathcal{A}_\lambda$, the fact that biKan extensions are pointwise.
\end{proof}

% \begin{remark} \label{smallcolimito}
% In the theorem above, when $\mathcal{B}$ is $\Cat$, the theorem is telling use that $F$ is a small weighted bicolimit of corepresentables, as it is the bicolimit \textcolor{red}{what is that ?} $\{F\iota, \mathcal{A}(\iota-,-)\}$. 
% \end{remark}

\subsection{Adjoint Functor Theorems} \label{AFT}

In this section we will prove two versions of the adjoint functor theorem. In both cases, we reduce the existence of the left/right adjoint to the computation of a biKan extension, this is possible thanks to a classical result that we state without proof below. While the proof does not explicitely appear in the literature, it is a $2$-dimensional analog of \cite[3.7.2]{borceux1994handbook} and the proof carries to our context without any effort.

\begin{lemma}
Let $L: \mathcal{A} \to \mathcal{B}$ be a pseudofunctor between $2$-categories. Then $L$ has a right adjoint if and only if the following two conditions hold.
\begin{enumerate}
    \item[(a)] $\biLan_F(1_\mathcal{A})$ exists,
    \item[(b)] $F$ preserves it.
\end{enumerate}
\end{lemma}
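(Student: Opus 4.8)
The plan is to prove both directions by identifying a right biadjoint of $L$ with the left biKan extension $\biLan_L 1_{\mathcal{A}}$, following the $1$-dimensional template of \cite[3.7.2]{borceux1994handbook} but reading every equality as an essentially unique equivalence witnessed by the universal property recalled in the definition of left biKan extension (here the conditions should of course read $\biLan_L 1_{\mathcal{A}}$ and ``$L$ preserves it''). Throughout I write $R := \biLan_L 1_{\mathcal{A}} : \mathcal{B} \to \mathcal{A}$ together with its universal $2$-cell $\lambda : 1_{\mathcal{A}} \Rightarrow RL$, so that for every $H : \mathcal{B} \to \mathcal{A}$ pasting with $\lambda$ gives an equivalence $\ps[\mathcal{B}, \mathcal{A}][R, H] \simeq \ps[\mathcal{A}, \mathcal{A}][1_{\mathcal{A}}, HL]$, functorial in $H$; ``$L$ has a right adjoint'' is understood as the existence of $R$ with a pseudonatural equivalence $\mathcal{B}[LA, B] \simeq \mathcal{A}[A, RB]$.

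First I would treat the converse implication, assuming (a) and (b). Preservation (b) says exactly that the canonical comparison exhibits $(LR, L * \lambda)$ as the left biKan extension $\biLan_L L$. Applying the universal property of $\biLan_L L$ with target $1_{\mathcal{B}}$ yields an equivalence $\ps[\mathcal{B}, \mathcal{B}][\biLan_L L, 1_{\mathcal{B}}] \simeq \ps[\mathcal{A}, \mathcal{B}][L, L]$, and I would \emph{define} the counit $\epsilon : LR \Rightarrow 1_{\mathcal{B}}$ to be the $1$-cell classifying $\mathrm{id}_L$ under this equivalence, transported along $LR \simeq \biLan_L L$. The pair $(\lambda, \epsilon)$ is the candidate unit and counit. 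One triangle, $\epsilon L \cdot L\lambda \simeq \mathrm{id}_L$, holds by the very definition of $\epsilon$ as the cell classifying $\mathrm{id}_L$; the other, $R\epsilon \cdot \lambda R \simeq \mathrm{id}_R$, I would extract from the $2$-dimensional clause of the universal property of $R = \biLan_L 1_{\mathcal{A}}$ (essential uniqueness of induced modifications), precisely as uniqueness forces the second triangle in the strict case. This delivers the biadjunction $L \dashv R$.

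For the forward implication, assuming $L \dashv R$ with unit $\eta$ and counit $\epsilon$, I would check that $(R, \eta)$ satisfies the universal property of $\biLan_L 1_{\mathcal{A}}$: given $H : \mathcal{B} \to \mathcal{A}$ and $\zeta : 1_{\mathcal{A}} \Rightarrow HL$, the assignments $\xi \mapsto (\xi * L)\,\eta$ and $\zeta \mapsto (H * \epsilon)(\zeta * R)$ are mutually quasi-inverse equivalences of the relevant hom-categories, the triangle identities supplying the comparison modifications. This is the bicategorical transcription of the fact that an adjunction is an absolute left Kan extension, so it moreover gives preservation (b): the counit exhibits $(LR, L * \eta)$ as $\biLan_L L$, by running the same hom-category computation with $L$ on the outside.

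I expect the only genuine difficulty to be the coherence bookkeeping of the invertible modifications witnessing the two triangle identities, that is, confirming that $(\lambda, \epsilon)$ assemble into an honest biadjunction satisfying the swallowtail conditions rather than an incoherent equivalence of hom-categories. I would sidestep most of this by never writing the triangles out and instead verifying directly that $\mathcal{B}[LA, B] \simeq \mathcal{A}[A, RB]$ is pseudonatural in both variables, letting the $2$-dimensional universal property of the biKan extension absorb all the coherence; this is what makes the passage from \cite[3.7.2]{borceux1994handbook} go through, as the surrounding text claims, without any real effort.
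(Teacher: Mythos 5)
Your proposal is correct and matches the approach the paper intends: the paper in fact states this lemma \emph{without} proof, deferring to \cite[3.7.2]{borceux1994handbook} as the one-dimensional template whose argument ``carries to our context without any effort'', and your proof is precisely that template categorified — unit given by the universal $2$-cell of $\biLan_L 1_{\mathcal{A}}$, counit classified by $\mathrm{id}_L$ via preservation, one triangle holding by construction and the other extracted from the essential-uniqueness clause, with the hom-equivalence reading of biadjunction absorbing the swallowtail coherence. You also rightly flag the statement's typo: $\biLan_F(1_{\mathcal{A}})$ and ``$F$ preserves it'' should read $L$ throughout.
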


Of course, the dual version holds for left adjoints and right biKan extensions.

\begin{theorem}\label{RAFT}
Let $L: \mathcal{A} \to \mathcal{B}$ be a pseudofunctor preserving all weighted bicolimits between finitely bipresentable $2$-categories. Then it has a right biadjoint.
\end{theorem}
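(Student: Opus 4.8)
The plan is to apply the criterion stated just before the theorem: $L$ admits a right biadjoint as soon as $\biLan_L(1_{\mathcal{A}})$ exists and is preserved by $L$. By the cancellation rule of \Cref{cancellation rule} together with the existence result \Cref{cr}, this biKan extension exists provided the pseudofunctor $\mathcal{B}[L-,-] : \mathcal{B} \to \ps[\mathcal{A}^{\op}, \Cat]$ lands in the full sub-2-category of weights that are small bicolimits of representables. I would in fact establish the stronger statement that each weight $\mathcal{B}[L-, B]$ is \emph{representable}, say by an object $RB$ of $\mathcal{A}$; a single representable is trivially a small bicolimit of representables, so \Cref{cr} will apply, and the representing equivalence $\mathcal{A}[-, RB] \simeq \mathcal{B}[L-, B]$ is precisely the sought biadjunction $L \dashv R$.

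To produce $RB$ I would first restrict along the inclusion $\iota : \mathcal{A}_\omega \hookrightarrow \mathcal{A}$ of the generator of bicompact objects. The pseudofunctor $\mathcal{B}[L\iota-, B] : (\mathcal{A}_\omega)^{\op} \to \Cat$ is bilex: since $\mathcal{A}_\omega$ is closed under finite weighted bicolimits (\Cref{finite bicolimit of sigma-compact are sigma-compact}), for any finite weight $W$ and any $G : I \to \mathcal{A}_\omega$ one has, using that $L$ preserves weighted bicolimits and that $\mathcal{B}[-, B]$ turns bicolimits into bilimits,
\begin{align*}
\mathcal{B}[L\iota(\bicolim^{W} G), B] &\simeq \mathcal{B}[\bicolim^{W} L\iota G, B] \\ &\simeq \bilim^{W} \mathcal{B}[L\iota G, B].
\end{align*}
Hence $\mathcal{B}[L\iota-, B]$ is a flat (bilex) pseudofunctor, and the representation theorem \Cref{A is flat functors}, giving the biequivalence $\mathcal{A} \simeq \biLex[(\mathcal{A}_\omega)^{\op}, \Cat]$ through the binerve, furnishes an essentially unique $RB \in \mathcal{A}$ with $\mathcal{A}[\iota-, RB] \simeq \mathcal{B}[L\iota-, B]$.

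It then remains to upgrade this equivalence from $\mathcal{A}_\omega$ to all of $\mathcal{A}$ and to make $RB$ functorial. Both $\mathcal{A}[-, RB]$ and $\mathcal{B}[L-, B]$ are bicontinuous pseudofunctors $\mathcal{A}^{\op} \to \Cat$ (the first by the universal property of bicolimits, the second because $L$ preserves bicolimits while $\mathcal{B}[-, B]$ sends them to bilimits); since every object of $\mathcal{A}$ is the bifiltered bicolimit of its canonical diagram over $\mathcal{A}_\omega$ (\Cref{the pseudococone is bifiltered}) and the two functors agree on $\mathcal{A}_\omega$, they must agree on all of $\mathcal{A}$, yielding $\mathcal{B}[L-, B] \simeq \hirayo_{RB}$. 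Functoriality of $B \mapsto RB$ and pseudonaturality of the equivalence follow from the bifunctoriality of the representation, so that $R := \biLan_L(1_{\mathcal{A}})$ is well defined with value $RB$ at $B$ and $L \dashv R$. The main obstacle I anticipate is the coherence bookkeeping in this final step: checking that the pointwise representing objects assemble into a genuine pseudofunctor $R$, that the hom-equivalences are pseudonatural in both variables, and that the use of bidensity of $\mathcal{A}_\omega$ legitimately extends the equivalence off the bicompacts.
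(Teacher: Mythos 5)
Your proposal is correct, but it takes a genuinely different route from the paper's proof. The paper stays entirely inside the formal calculus of biKan extensions: it identifies $\biLan_L(1_{\mathcal{A}})$ with $\biLan_{L\iota}\,\iota$ (using bidensity of $\mathcal{A}_\omega$, i.e.\ $\biLan_\iota \iota \simeq 1_{\mathcal{A}}$, and the composition rule for extensions), then applies the cancellation rule of \Cref{cancellation rule} to exhibit this as the composite $\biLan_\hirayo \iota \circ \mathcal{B}[L\iota,-]$, which exists because $\mathcal{A}_\omega$ is small and $\mathcal{A}$ is bicocomplete; finally it verifies condition (b) of the criterion (preservation by $L$) from the fact that biKan extensions are pointwise weighted bicolimits and $L$ is bicocontinuous. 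You instead prove pointwise birepresentability of $\mathcal{B}[L-,B]$: you check that $\mathcal{B}[L\iota-,B]$ is bilex on $(\mathcal{A}_\omega)^{\op}$ (this uses closure of $\mathcal{A}_\omega$ under finite weighted bicolimits, \Cref{finite bicolimit of sigma-compact are sigma-compact}, together with bicocontinuity of $L$), invoke the representation theorem \Cref{A is flat functors} to obtain $RB$, and then extend the identification from $\mathcal{A}_\omega$ to all of $\mathcal{A}$ via the canonical bifiltered decomposition of \Cref{the pseudococone is bifiltered}. Neither the bilexness of the restricted hom nor \Cref{A is flat functors} appears anywhere in the paper's argument, so this is a real divergence, not a rephrasing. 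What your route buys: an explicit description of the right biadjoint as a representing object, and, notably, it renders condition (b) of the criterion superfluous, since the pseudonatural equivalence $\mathcal{A}[-,RB] \simeq \mathcal{B}[L-,B]$ \emph{is} the biadjunction; your appeal to \Cref{cr} is therefore only decorative framing. What the paper's route buys: it is shorter, avoids both the density-extension step and the coherence bookkeeping you rightly flag (assembling the objects $RB$ into a pseudofunctor and checking pseudonaturality in both variables), and reuses machinery already established. Those coherence verifications are genuine work but standard, of the same nature as what the paper carries out in \Cref{binerve preserve bifiltered bicolimits} and \Cref{laKan}, so they do not constitute a gap in your argument.
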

\begin{proof}
We reduced to show that: \begin{enumerate}
    \item[(a)] $\biLan_F(1_\mathcal{A})$ exists,
    \item[(b)] $F$ preserves it.
\end{enumerate}

Indeed, in this case $\biLan_F(1_\mathcal{A})$ provides a right biadjoint for $F$. (a) Because $\mathcal{A}$ is a large $2$-category, we cannot apply on the spot any bi-analog of \cite[3.7.2]{borceux1994handbook}, thus we need to massage the biKan extension. Consider the following diagram.

% https://q.uiver.app/?q=WzAsNCxbMCwwLCJcXG1hdGhjYWx7QX1fXFxvbWVnYSJdLFsyLDAsIlxcbWF0aGNhbHtBfSJdLFsyLDIsIlxcbWF0aGNhbHtCfSJdLFs0LDAsIlxcbWF0aGNhbHtBfSJdLFswLDEsImkiLDFdLFsxLDIsIkwiLDFdLFswLDIsIkxpIiwxXSxbMSwzLCIxX1xcbWF0aGNhbHtBfSIsMV0sWzIsMywiXFxiaUxhbl97TGl9IGkiLDEseyJzdHlsZSI6eyJib2R5Ijp7Im5hbWUiOiJkYXNoZWQifX19XV0=
% https://q.uiver.app/?q=WzAsNCxbMCwwLCJcXG1hdGhjYWx7QX1fXFxvbWVnYSJdLFsxLDAsIlxcbWF0aGNhbHtBfSJdLFsxLDEsIlxcbWF0aGNhbHtCfSJdLFsyLDAsIlxcbWF0aGNhbHtBfSJdLFswLDEsImkiXSxbMSwyLCJMIiwxXSxbMCwyLCJMaSIsMl0sWzEsMywiMV9cXG1hdGhjYWx7QX0iXSxbMiwzLCJcXGJpTGFuX3tMaX0gaSIsMl0sWzEsNiwiXFxzaW1lcSIsMSx7InNob3J0ZW4iOnsidGFyZ2V0IjoyMH0sInN0eWxlIjp7ImJvZHkiOnsibmFtZSI6Im5vbmUifSwiaGVhZCI6eyJuYW1lIjoibm9uZSJ9fX1dLFsxLDgsIlxcbGFtYmRhIiwwLHsic2hvcnRlbiI6eyJ0YXJnZXQiOjIwfX1dXQ==
\[\begin{tikzcd}
	{\mathcal{A}_\omega} & {\mathcal{A}} & {\mathcal{A}} \\
	& {\mathcal{B}}
	\arrow["\iota", hook, from=1-1, to=1-2]
	\arrow["L"{description}, from=1-2, to=2-2]
	\arrow[""{name=0, anchor=center, inner sep=0}, "L\iota"', from=1-1, to=2-2]
	\arrow["{1_\mathcal{A}}", from=1-2, to=1-3]
	\arrow[""{name=1, anchor=center, inner sep=0}, "{\biLan_{L\iota} \iota}"', from=2-2, to=1-3]
	\arrow["\eta", shorten >=2pt, Rightarrow, from=1-2, to=1]
\end{tikzcd}\]

We claim that, if it exists, $\biLan_{L\iota} \iota$ is pseudonaturally isomorphic to $\biLan_{L}1_{\mathcal{A}}$ by application of the cancellation rule \Cref{cancellation rule}: \[ \biLan_{L\iota} \iota \simeq \biLan_{L}\biLan_\iota \iota \simeq \biLan_{L}1_{\mathcal{A}}\]
Now applying \Cref{cancellation rule}, we know that in the the following diagram the leftmost 2-cell is invertible:
% https://q.uiver.app/?q=WzAsNCxbMCwwLCJcXG1hdGhjYWx7QX1fXFxvbWVnYSJdLFsxLDEsIlxcbWF0aGNhbHtCfSJdLFsyLDAsIlxcbWF0aGNhbHtBfSJdLFsxLDMsIlxcQ2F0XlxcbWF0aGNhbHtBXlxcY2lyY30iXSxbMCwxLCJMaSIsMV0sWzAsMiwiaSJdLFsxLDMsIlxcbWF0aGNhbHtCfShMaSwoLSkpIiwxXSxbMCwzLCJcXHlvbiIsMix7ImN1cnZlIjozfV0sWzMsMiwiXFxsYW5fXFx5b24gaSIsMix7ImN1cnZlIjozfV0sWzEsMiwiIiwxLHsic3R5bGUiOnsiYm9keSI6eyJuYW1lIjoiZGFzaGVkIn19fV1d
% https://q.uiver.app/?q=WzAsNCxbMCwwLCJcXG1hdGhjYWx7QX1fXFxvbWVnYSJdLFsxLDEsIlxcbWF0aGNhbHtCfSJdLFsyLDAsIlxcbWF0aGNhbHtBfSJdLFsxLDIsIlxccHNbKFxcbWF0aGNhbHtBfV9cXG9tZWdhKV57XFxvcH0sIFxcQ2F0XSJdLFswLDEsIkxcXGlvdGEiLDFdLFswLDIsIlxcaW90YSIsMCx7InN0eWxlIjp7InRhaWwiOnsibmFtZSI6Imhvb2siLCJzaWRlIjoidG9wIn19fV0sWzEsMywiXFxtYXRoY2Fse0J9W0xcXGlvdGEsLV0iXSxbMCwzLCJcXGhpcmF5byIsMix7ImN1cnZlIjozfV0sWzMsMiwiXFxiaUxhbl9cXGhpcmF5byBcXGlvdGEiLDIseyJjdXJ2ZSI6M31dLFsxLDIsIlxcYmlMYW5fe0xcXGlvdGF9IFxcaW90YSIsMSx7InN0eWxlIjp7ImJvZHkiOnsibmFtZSI6ImRhc2hlZCJ9fX1dLFs3LDEsIkxcXGlvdGEqIiwwLHsic2hvcnRlbiI6eyJzb3VyY2UiOjIwfX1dLFs1LDEsIlxcbGFtYmRhIiwyLHsic2hvcnRlbiI6eyJzb3VyY2UiOjIwfX1dLFsxLDgsIlxcc2ltZXEiLDEseyJzaG9ydGVuIjp7InRhcmdldCI6MjB9LCJzdHlsZSI6eyJib2R5Ijp7Im5hbWUiOiJub25lIn0sImhlYWQiOnsibmFtZSI6Im5vbmUifX19XV0=
\[\begin{tikzcd}
	{\mathcal{A}_\omega} && {\mathcal{A}} \\
	& {\mathcal{B}} \\
	& {\ps[(\mathcal{A}_\omega)^{\op}, \Cat]}
	\arrow["L\iota"{description}, from=1-1, to=2-2]
	\arrow[""{name=0, anchor=center, inner sep=0}, "\iota", hook, from=1-1, to=1-3]
	\arrow["{\mathcal{B}[L\iota,-]}", from=2-2, to=3-2]
	\arrow[""{name=1, anchor=center, inner sep=0}, "\hirayo"', curve={height=18pt}, from=1-1, to=3-2]
	\arrow[""{name=2, anchor=center, inner sep=0}, "{\biLan_\hirayo \iota}"', curve={height=18pt}, from=3-2, to=1-3]
	\arrow["{\biLan_{L\iota} \iota}"{description}, dashed, from=2-2, to=1-3]
	\arrow["{L\iota*}", shorten <=8pt, Rightarrow, from=1, to=2-2]
	\arrow["\eta"', shorten <=3pt, Rightarrow, from=0, to=2-2]
	\arrow["\simeq"{description}, Rightarrow, draw=none, from=2-2, to=2]
\end{tikzcd}\]
exhibiting $ \biLan_{L\iota} \iota$ as the composite $ \biLan_\hirayo \iota \mathcal{B}[L\iota, -]$, which both exist, ensuring it itself exists, and so does the desired $\biLan_L 1_\mathcal{A}$.
%\textcolor{red}{Who are the 2-cells in this diagram ? none is invertible} \textcolor{blue}{I do not understand, I never claimed that there are (even though there are) or that they should be invertible. In fact, their existence is not relevant in the proof and displaying them would just be incredibly confusing because the diagram would need a complete and very counter-intuitive re-design. Maybe we should discuss this on zoom, because while I do understand your critique, I do not see a good solution.}
%Indeed \cite[4.1.5]{descotte2018sigma} \textcolor{red}{2 things : in the rest of the paper citation style include the type of citation (propositon, theorem, def...) ; but here if this is something about general technics about left bikan we should rather have it stated explicitely in the short section about left bikan in the prerequisites} \textcolor{blue}{But I added it, check out 1.3.2 and 1.3.3} is telling us that the composition above is computing the desired (dashed) biKan extension, in complete analogy with \cite[A.6 and A.7]{di2020codensity}.

(b) Using \Cref{Weighted colimit expression of pointwise biLan} (which is the results of \cite[Sec. 4]{descotte2018sigma}), we know that if $\biLan_F1_\mathcal{A}$ exists, it is computed via a weighted bicolimits, in complete analogy to the $1$-dimensional theory of Kan extensions. Thus, because $F$ is bicocontinuous, there is no doubt that it would preserve $\biLan_F1_\mathcal{A}$.
\end{proof}

\begin{theorem}\label{LAFT}
Let $R: \mathcal{A} \to \mathcal{B}$ be a pseudofunctor preserving all weighted bilimits and bifiltered colimits between finitely bi-accessible $2$-categories with weighted bilimits. Then it has a left biadjoint.
\end{theorem}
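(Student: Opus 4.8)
The plan is to dualise the proof of \Cref{RAFT}, replacing left biKan extensions throughout by right ones. By the dual of the biKan-extension criterion for adjoints stated just before \Cref{RAFT}, the pseudofunctor $R$ admits a left biadjoint if and only if the right biKan extension $\biRan_R 1_\mathcal{A} : \mathcal{B} \to \mathcal{A}$ exists and is preserved by $R$, in which case $\biRan_R 1_\mathcal{A}$ \emph{is} the desired left biadjoint. Exactly as in \Cref{RAFT}, the preservation clause will come for free: whenever it exists, $\biRan_R 1_\mathcal{A}$ is computed pointwise as a weighted bilimit (the dual of \Cref{Weighted colimit expression of pointwise biLan}), namely $\biRan_R 1_\mathcal{A}(B) \simeq \bilim^{\mathcal{B}[B,R-]} 1_\mathcal{A}$, and since $R$ preserves all weighted bilimits by hypothesis it preserves this pointwise extension. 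So the entire burden is to prove that, for each $B$, the bilimit $\bilim^{W_B} 1_\mathcal{A}$ with the \emph{large} weight $W_B := \mathcal{B}[B,R-] : \mathcal{A} \to \Cat$ actually exists in $\mathcal{A}$.

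The crux is that, although $\mathcal{A}$ is large, the weight $W_B$ is both \emph{continuous} and \emph{accessible}, which allows us to contract this large bilimit to a small one. Continuity is immediate: $\mathcal{B}[B,-]$ and $R$ both preserve weighted bilimits, hence so does $W_B = \mathcal{B}[B,-]\circ R$. For accessibility I would invoke \Cref{closureiscute3}, valid already for bi-accessible $\mathcal{B}$ by \Cref{every object compact}, to fix a regular cardinal $\lambda$ (depending on $B$) for which $B$ is $\lambda$-bicompact in the sense of \Cref{infinitary compactnes}; then $\mathcal{B}[B,-]$ preserves $\lambda$-bifiltered bicolimits, and since $R$ preserves bifiltered bicolimits it a fortiori preserves the $\lambda$-bifiltered ones, these being instances of bifiltered bicolimits. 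Hence $W_B$ preserves $\lambda$-bifiltered bicolimits. One may also enlarge $\lambda$ so that the inclusion $\iota_\lambda : \mathcal{A}_\lambda \hookrightarrow \mathcal{A}$ of $\lambda$-bicompacts is essentially small and every object of $\mathcal{A}$ is a $\lambda$-bifiltered bicolimit of $\lambda$-bicompacts.

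The contraction itself proceeds in two moves. First I would establish the comparison $\biLan_{\iota_\lambda}(W_B\iota_\lambda) \simeq W_B$: the left-hand extension exists in $\ps[\mathcal{A},\Cat]$ by \Cref{cr} because $\mathcal{A}_\lambda$ is small and $\Cat$ is bicocomplete (the ambient bicocompleteness invoked here is that of $\Cat$, \emph{not} of $\mathcal{A}$, so no circularity with the cocompleteness we are ultimately after is incurred); by \Cref{bikan ext along ff} the canonical comparison restricts to an equivalence on $\mathcal{A}_\lambda$; and both functors preserve $\lambda$-bifiltered bicolimits — the extension because each $\mathcal{A}[\iota_\lambda K,-]$ does for $\lambda$-bicompact $K$ and bicolimits commute with bicolimits — so they agree on all of $\mathcal{A}$. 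Second, the universal properties of the biKan extension and of the weighted bilimit give the de-weighting identity
\[ \bilim^{W_B} 1_\mathcal{A} \simeq \bilim^{\biLan_{\iota_\lambda}(W_B\iota_\lambda)} 1_\mathcal{A} \simeq \bilim^{W_B\iota_\lambda} \iota_\lambda, \]
whose right-hand side is a small weighted bilimit (the weight lives on the essentially small $\mathcal{A}_\lambda$ with small value-categories) and therefore exists because $\mathcal{A}$ has all small weighted bilimits. This produces $\biRan_R 1_\mathcal{A}(B)$ pointwise, and assembling with the free preservation clause gives the left biadjoint.

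I expect the main obstacle to be precisely this contraction step: it is the $2$-dimensional incarnation of the solution-set condition, and it is the one place where both hypotheses on $R$ — preservation of weighted bilimits and of bifiltered bicolimits — genuinely enter together. The two delicate points to verify carefully are (i) that $\lambda$ can be chosen so that $W_B$ is truly $\lambda$-accessible, which the pointwise-in-$B$ choice via \Cref{closureiscute3} secures, and (ii) that $W_B \simeq \biLan_{\iota_\lambda}(W_B\iota_\lambda)$ holds \emph{without} presupposing $\mathcal{A}$ bicocomplete, so that this theorem may be legitimately used to deduce bicocompleteness in \Cref{Biacc is bipres iff bicomplete}.
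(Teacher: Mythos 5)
Your proposal is correct and follows essentially the same route as the paper's proof: reduce to the existence of $\biRan_R 1_\mathcal{A}$, use \Cref{closureiscute3} and \Cref{every object compact} to fix a $\lambda$ making each weight $\mathcal{B}[B,R-]$ $\lambda$-accessible, and then contract the resulting large bilimit to a small one via the biKan-extension cancellation rule — the paper packages this last step as showing $\mathcal{B}[B,R-]$ lands in the subcategory $\ps[\mathcal{A},\Cat]_s$ of small weighted (co)limits of corepresentables and citing \Cref{cr} and \Cref{laKan}. Your only real deviation is inlining a proof of the \Cref{laKan}-type identity $W_B \simeq \biLan_{\iota_\lambda}(W_B\iota_\lambda)$ rather than citing it, which is if anything more careful than the paper, since \Cref{laKan} as stated assumes $\lambda$-bipresentability of $\mathcal{A}$ and would otherwise risk circularity with \Cref{Biacc is bipres iff bicomplete}.
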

\begin{proof}
As in the proof of \Cref{RAFT}, and again by the bi-analog of \cite[3.7.6]{borceux1994handbook}, we are reduced only to show that the right biKan extension in the diagram below exists.

% https://q.uiver.app/?q=WzAsMyxbMSwwLCJcXG1hdGhjYWx7QX0iXSxbMCwwLCJcXG1hdGhjYWx7QX0iXSxbMCwxLCJcXG1hdGhjYWx7Qn0iXSxbMSwyLCJSIiwyXSxbMSwwLCIxX1xcbWF0aGNhbHtBfSJdLFsyLDAsIlxcYmlSYW5fUigxKSIsMl0sWzEsNSwiXFxlcHNpbG9uIiwyLHsic2hvcnRlbiI6eyJ0YXJnZXQiOjIwfX1dXQ==
\[\begin{tikzcd}
	{\mathcal{A}} & {\mathcal{A}} \\
	{\mathcal{B}}
	\arrow["R"', from=1-1, to=2-1]
	\arrow["{1_\mathcal{A}}", from=1-1, to=1-2]
	\arrow[""{name=0, anchor=center, inner sep=0.5}, "{\biRan_R(1_\mathcal{A})}"', from=2-1, to=1-2]
	\arrow["\epsilon"', shorten >=2pt, Rightarrow, from=0, to=1-1]
\end{tikzcd}\]

This time we cannot use the previous proof strategy to finish the proof, because we would need a codense (as opposed to dense) sub-$2$-category of $\mathcal{A}$. So we need to be more subtle. Call $\ps[\mathcal{A}, \Cat]_s \hookrightarrow \ps[\mathcal{A}, \Cat]$ the full $2$-subcategory spanned by those pseudofunctors that are \textit{small} weighted bilimits of corepresentables and consider the following diagrams.
% https://q.uiver.app/?q=WzAsNCxbMiwwLCJcXG1hdGhjYWx7QX0iXSxbMCwwLCJcXG1hdGhjYWx7QX0iXSxbMSwxLCJcXG1hdGhjYWx7Qn0iXSxbMSwzLCJcXHBzW1xcbWF0aGNhbHtBfSwgXFxDYXRdXlxcY2lyY19zIl0sWzEsMiwiUiIsMV0sWzEsMCwiMV9cXG1hdGhjYWx7QX0iXSxbMiwwLCJcXHRleHR7YmlSYW59X1IoMV9cXG1hdGhjYWx7QX0pIiwxXSxbMSwzLCJcXGthdGF5byIsMix7ImN1cnZlIjo1fV0sWzIsMywiXFxtYXRoY2Fse0J9KC0sUikiLDFdLFszLDAsIlxcdGV4dHtiaVJhbn1fe1xca2F0YXlvfSAxX1xcbWF0aGNhbHtBfSIsMix7ImN1cnZlIjo1fV0sWzUsMiwiXFxzaW1lcSIsMSx7InNob3J0ZW4iOnsic291cmNlIjoyMH0sInN0eWxlIjp7ImJvZHkiOnsibmFtZSI6Im5vbmUifSwiaGVhZCI6eyJuYW1lIjoibm9uZSJ9fX1dLFsyLDksIlxcc2ltZXEiLDEseyJzaG9ydGVuIjp7InRhcmdldCI6MjB9LCJzdHlsZSI6eyJib2R5Ijp7Im5hbWUiOiJub25lIn0sImhlYWQiOnsibmFtZSI6Im5vbmUifX19XSxbMiw3LCIiLDEseyJzaG9ydGVuIjp7InRhcmdldCI6MjB9LCJzdHlsZSI6eyJib2R5Ijp7Im5hbWUiOiJub25lIn0sImhlYWQiOnsibmFtZSI6Im5vbmUifX19XV0=
% https://q.uiver.app/?q=WzAsNCxbMiwwLCJcXG1hdGhjYWx7QX0iXSxbMCwwLCJcXG1hdGhjYWx7QX0iXSxbMSwxLCJcXG1hdGhjYWx7Qn0iXSxbMSwyLCJcXHBzW1xcbWF0aGNhbHtBfSwgXFxDYXRdXlxcb3AiXSxbMSwyLCJSIiwxXSxbMSwwLCIxX1xcbWF0aGNhbHtBfSJdLFsyLDAsIlxcdGV4dHtiaVJhbn1fUigxX1xcbWF0aGNhbHtBfSkiLDFdLFsxLDMsIlxca2F0YXlvIiwyLHsiY3VydmUiOjR9XSxbMiwzLCJcXG1hdGhjYWx7Qn1bLSxSXSJdLFszLDAsIlxcdGV4dHtiaVJhbn1fe1xca2F0YXlvfSAxX1xcbWF0aGNhbHtBfSIsMix7ImN1cnZlIjo0fV0sWzIsNSwiXFxlcHNpbG9uIiwwLHsic2hvcnRlbiI6eyJzb3VyY2UiOjIwfX1dLFsyLDksIlxcc2ltZXEiLDEseyJzaG9ydGVuIjp7InRhcmdldCI6MjB9LCJzdHlsZSI6eyJib2R5Ijp7Im5hbWUiOiJub25lIn0sImhlYWQiOnsibmFtZSI6Im5vbmUifX19XSxbNywyLCJSKiIsMCx7InNob3J0ZW4iOnsic291cmNlIjoyMCwidGFyZ2V0IjoyMH19XV0=
\[\begin{tikzcd}
	{\mathcal{A}} && {\mathcal{A}} \\
	& {\mathcal{B}} \\
	& {\ps[\mathcal{A}, \Cat]_s^\op}
	\arrow["R"{description}, from=1-1, to=2-2]
	\arrow[""{name=0, anchor=center, inner sep=0}, "{1_\mathcal{A}}", from=1-1, to=1-3]
	\arrow["{\text{biRan}_R1_\mathcal{A}}"{description}, from=2-2, to=1-3]
	\arrow[""{name=1, anchor=center, inner sep=0}, "\katayo^\op"', curve={height=24pt}, from=1-1, to=3-2]
	\arrow["{\mathcal{B}[-,R]}", from=2-2, to=3-2]
	\arrow[""{name=2, anchor=center, inner sep=0}, "{\text{biRan}_{\katayo^\op} 1_\mathcal{A}}"', curve={height=24pt}, from=3-2, to=1-3]
	\arrow["\epsilon", shorten <=3pt, Rightarrow, from=2-2, to=0]
	\arrow["\simeq"{description}, Rightarrow, draw=none, from=2-2, to=2]
	\arrow["{R*}"', shorten <=8pt, shorten >=8pt, Rightarrow, from=2-2, to=1]
\end{tikzcd}\]
% https://q.uiver.app/?q=WzAsNCxbMiwwLCJcXG1hdGhjYWx7QX0iXSxbMCwwLCJcXG1hdGhjYWx7QX0iXSxbMCwyLCJcXG1hdGhjYWx7Qn0iXSxbMCw0LCJcXHVuZGVybGluZXsoXFxDYXRee0F9KV5cXGNpcmN9Il0sWzEsMiwiUiIsMV0sWzEsMCwiMV9cXG1hdGhjYWx7QX0iLDFdLFsyLDAsIlxcdGV4dHtiaVJhbn1fUigxKSIsMV0sWzEsMywiXFxoaXJheW9eXFxmbGF0IiwxLHsiY3VydmUiOjV9XSxbMiwzLCJcXG1hdGhjYWx7Qn0oPSxSKC0pKSIsMSx7ImNvbG91ciI6WzMwMCw2MCw2MF19LFszMDAsNjAsNjAsMV1dLFszLDAsIlxcdGV4dHtiaVJhbn1fe1xcaGlyYXlvXlxcZmxhdH0gMV9cXG1hdGhjYWx7QX0iLDEseyJjdXJ2ZSI6NSwiY29sb3VyIjpbMCw2MCw2MF19LFswLDYwLDYwLDFdXV0=
The outer biKan extension exists because $\mathcal{A}$ has small weighted bilimits (in fact the two conditions are equivalent by the dual version \cite[Sec. 4]{descotte2018sigma}). If we prove that $\mathcal{B}[-,R]$ is well defined, then we finish as in the previous theorem following \Cref{cancellation rule for biRan}. Putting everything together, we need to show that for every $ B $ in $\mathcal{B}$, the functor $\mathcal{B}[B, R] : \mathcal{A} \to \Cat$ is a small weighted bicolimit of corepresentables in $\ps[\mathcal{A}, \Cat]_s$. Now, recall that, by \Cref{closureiscute3} and \Cref{every object compact}, $B$ must be $\lambda$-compact for some $\lambda$. Thus the functor $\mathcal{B}(B, R) : \mathcal{A} \to \Cat$ preserves $\lambda$-directed colimits for some $\lambda$, thus the theorem follow by \Cref{cr} and \Cref{laKan}.
\end{proof}

\subsection{2-dimensional Gabriel-Ulmer Duality} \label{GU}

We shall now present a Gabriel-Ulmer duality between bilex $2$-categories and locally finitely bipresentable $2$-categories. While the technical material that will deliver the duality is already disseminated in the previous sections of the paper (\Cref{sigma-accessible 2-cat are categories of flat functors} and \Cref{GUhalf1}), we must dedicate some paragraphs to clarify the $3$-dimensional setting in which the duality takes place. To do so, we will need to acknowledge that bilex $2$-categories and locally finitely bipresentable $2$-categories can be both organized in tricategories. As a general reference for the theory of tricategories we refer to \cite[Chap. 11]{johnson20212}. 

\begin{remark}
Before diving in the relevant definitions for this section, we may recall some delicate points related to the usual problem of coherence in higher dimensional category theory. Most of \cite[Chap. 11]{johnson20212} is dedicated to show that the collection of bicategories, pseudofunctors, pseudonatural transformations and modifications form a tricategory, and this amounts to quite a long and tedious proof. Of course, they cannot have a more strict structure, say that of a $3$-category, because none of the compositions can be defined on the nose due to the pseudo-ness. Despite restricting our attention to strict $2$-categories, our choice of morphisms puts up in a similar situation to that of \cite[Chap. 11]{johnson20212}. In \cite{shulman2012not}, Shulman observes that the tricategory $2\hy\Cat_{\ps}$ of $2$-categories, pseudofunctors, pseudonatural transformations and modifications is \textit{iconic}. Iconic tricategories are more general than Gray categories.
\end{remark}

\begin{definition}[The tricategory $\mathbf{biLex}$]
The tricategory $\mathbf{biLex}$ has objects small $2$-categories with weighed finite bilimits in the sense of \Cref{finiteweibili}. $1$-cells are pseudofunctors preserving finite bilimits, $2$-cells are pseudonatural transformations and $3$-cells are modifications. The structure of tricategory is inherited by that of $2\hy\Cat_{\ps}$.
\end{definition}

\begin{definition}[The tricategory $\mathbf{biP}_\omega$]
The tricategory $\mathbf{biP}_\omega$ has objects finitely bipresentable $2$-categories. $1$-cells are right biadjoints preserving bifiltered bicolimits, $2$-cells are pseudonatural transformations and $3$-cells are modifications. The structure of tricategory is inherited by that of $2\hy\Cat_{\ps}$.
\end{definition}

\begin{theorem}[$2$-dimensional Gabriel-Ulmer duality]\label{GUth}
There is a tri-equivalence of tricategories % https://q.uiver.app/?q=WzAsMixbMCwwLCJcXG1hdGhiZntMZXh9XntcXG9wfSJdLFsyLDAsIlxcbWF0aGJme0ZQfV9cXHNpZ21hIl0sWzAsMSwiXFxtYXRoYmZ7TW9kfSAiLDAseyJjdXJ2ZSI6LTN9XSxbMSwwLCJcXG1hdGhiZntUaH0iLDAseyJjdXJ2ZSI6LTN9XSxbMiwzLCIiLDAseyJsZXZlbCI6MSwic3R5bGUiOnsibmFtZSI6ImFkanVuY3Rpb24ifX1dXQ==
\[\begin{tikzcd}
	{\mathbf{biLex}^{\op}} && {\mathbf{biP}_\omega}
	\arrow[""{name=0, anchor=center, inner sep=0}, "{\mathbf{Mod} }", curve={height=-18pt}, from=1-1, to=1-3]
	\arrow[""{name=1, anchor=center, inner sep=0}, "{\mathbf{Th}}", curve={height=-18pt}, from=1-3, to=1-1]
	\arrow["\dashv"{anchor=center, rotate=-90}, draw=none, from=0, to=1]
\end{tikzcd}\]
\end{theorem}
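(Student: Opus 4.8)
The plan is to define the two trifunctors explicitly and then verify that $\mathbf{Mod}$ is a local biequivalence which is biessentially surjective on objects, since for tricategories this already forces a triequivalence (the three-dimensional analogue of ``fully faithful plus essentially surjective''), sparing us the explicit construction of a pseudo-inverse together with all its coherence modifications. Throughout I would regard both $\mathbf{biLex}$ and $\mathbf{biP}_\omega$ as sub-tricategories of the iconic tricategory $2\hy\Cat_{\ps}$, so that all structural constraint cells are inherited.

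On objects I set $\mathbf{Mod}(\mathcal{C}) = \biLex[\mathcal{C}, \Cat]$ and $\mathbf{Th}(\mathcal{B}) = (\mathcal{B}_\omega)^{\op}$, and these land in the correct tricategories: $\biLex[\mathcal{C},\Cat]$ is finitely bipresentable by \Cref{Flat pseudofunctors on bilex are bipres}, while $(\mathcal{B}_\omega)^{\op}$ is bilex because, by \Cref{finite bicolimit of sigma-compact are sigma-compact}, $\mathcal{B}_\omega$ is closed under finite weighted bicolimits inside the bicocomplete $\mathcal{B}$ and hence has finite weighted bicolimits. The two object-level round trips are exactly the representation theorems already proved: $\mathbf{Mod}\,\mathbf{Th}(\mathcal{B}) = \biLex[(\mathcal{B}_\omega)^{\op},\Cat] \simeq \mathcal{B}$ by \Cref{A is flat functors}, and $\mathbf{Th}\,\mathbf{Mod}(\mathcal{C}) = \big((\biLex[\mathcal{C},\Cat])_\omega\big)^{\op} \simeq (\mathcal{C}^{\op})^{\op} = \mathcal{C}$ by \Cref{GUhalf1}. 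In particular $\mathbf{Mod}$ is biessentially surjective on objects.

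On $1$-cells, a morphism $\mathcal{C} \to \mathcal{D}$ of $\mathbf{biLex}^{\op}$ is a bilex pseudofunctor $f : \mathcal{D} \to \mathcal{C}$, to which I assign the restriction $\mathbf{Mod}(f) = f^{*} = (-)\circ f : \biLex[\mathcal{C},\Cat] \to \biLex[\mathcal{D},\Cat]$; this is well defined since $Gf$ is bilex whenever $G$ and $f$ are, and it preserves weighted bilimits and bifiltered bicolimits because those are computed pointwise, so by \Cref{LAFT} it is a right biadjoint preserving bifiltered bicolimits, i.e. a $1$-cell of $\mathbf{biP}_\omega$. Conversely, given such a $1$-cell $R$ with left biadjoint $L$, \Cref{left biadjoint of finitary 2-functor preserve sigma-compact} shows that $L$ carries bicompacts to bicompacts, hence restricts along the equivalences of \Cref{GUhalf1} to a bilex pseudofunctor $L_\omega^{\op}$ between the theories. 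I would check that $f \mapsto f^{*}$ and $R \mapsto L_\omega^{\op}$ are mutually inverse up to pseudonatural equivalence, the essential point being that the left biadjoint of $f^{*}$ is $\biLan_f(-)$, whose value on a representable satisfies $\biLan_f\katayo_d \simeq \katayo_{f(d)}$, so its restriction to bicompacts recovers $f$ under the Yoneda equivalences of \Cref{GUhalf1}. This exhibits $\mathbf{Mod}$ as a biequivalence on each hom-bicategory $\mathbf{biLex}^{\op}[\mathcal{C},\mathcal{D}] \to \mathbf{biP}_\omega[\mathbf{Mod}(\mathcal{C}),\mathbf{Mod}(\mathcal{D})]$; the comparison on $2$-cells (pseudonatural transformations) and $3$-cells (modifications) then follows by transporting along these biadjunctions and presents no genuine resistance.

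The main obstacle is purely three-dimensional bookkeeping rather than conceptual: one must equip $\mathbf{Mod}$ with its compositor and unitor adjoint equivalences and the invertible modifications realizing the trifunctor axioms, and verify that the object- and hom-level equivalences above are coherent enough to assemble into a local biequivalence in the precise sense of \cite[Chap.~11]{johnson20212}. I expect this to be the longest part, and I would contain it by exploiting the iconic (Gray-like) structure of $2\hy\Cat_{\ps}$ following Shulman to strictify as many constraint cells as possible, thereby reducing the coherence checks to the pseudofunctoriality of precomposition and of left biKan extension, both standard. Once the trifunctor is in place, biessential surjectivity on objects together with the local biequivalence on hom-bicategories yields the triequivalence, with $\mathbf{Th}$ arising as its pseudo-inverse.
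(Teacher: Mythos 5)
Your proposal is correct in substance, but it assembles the triequivalence by a genuinely different route from the paper. The paper constructs \emph{both} $\mathbf{Mod}$ and $\mathbf{Th}$ as trifunctors and then produces the equivalence explicitly: the binerves $\nu_\mathcal{B}$ (which are $1$-cells of $\mathbf{biP}_\omega$ by \Cref{binerve preserve bifiltered bicolimits}) assemble into a transformation $1 \Rightarrow \mathbf{Mod}\circ\mathbf{Th}$ that is a pointwise biequivalence by \Cref{sigma-accessible 2-cat are categories of flat functors}, while the Yoneda embeddings give $\mathbf{Th}\circ\mathbf{Mod} \Rightarrow 1$, pointwise a biequivalence by \Cref{GUhalf1}. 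You instead build only $\mathbf{Mod}$ and invoke the Whitehead-type recognition principle for tricategories (biessential surjectivity on objects plus local biequivalence on hom-bicategories forces a triequivalence): biessential surjectivity is exactly \Cref{A is flat functors}, and your hom-level argument --- left biadjoints preserve bicompacts by \Cref{left biadjoint of finitary 2-functor preserve sigma-compact}, the formula $\biLan_f \katayo_d \simeq \katayo_{f(d)}$, and bifiltered density of the corepresentables --- is sound; it essentially reproves, in contravariant form, the unlabelled Proposition at the start of the subsection on bi-accessible pseudofunctors (just before \Cref{laKan}). What your route buys: $\mathbf{Th}$ never needs to be equipped with trifunctor coherence data, and no unit/counit tritransformations need to be verified. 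What it costs: the pseudo-inverse is obtained abstractly rather than exhibited; the $2$- and $3$-cell comparison you dismiss as presenting ``no genuine resistance'' is precisely the work the paper does when defining $\mathbf{Mod}$ and $\mathbf{Th}$ on $2$- and $3$-cells (via the mate correspondence $RAdj(\mathcal{A},\mathcal{B}) \simeq LAdj(\mathcal{B},\mathcal{A})^{\op}$); and, importantly, the recognition principle you rely on is not contained in the paper's tricategorical reference \cite[Chap. 11]{johnson20212}, so you should cite Gurski's coherence monograph, where triequivalences are characterized in exactly this way --- as written, that pivotal step is asserted without a source. Both proofs leave the bulk of the three-dimensional coherence bookkeeping implicit; yours at least flags it honestly.
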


\begin{division}[$\mathbf{Mod}$]
The trifunctor $\mathbf{Mod} : \mathbf{biLex}^{\op} \to \mathbf{biP}_\omega$ can be described by, \[\mathbf{Mod}(\mathcal{C}) = \textbf{biLex}[\mathcal{C}, \Cat].\]
To be more precise, this correspondence is well-defined at the level of objects because $\textbf{biLex}[\mathcal{C}, \Cat]$ is indeed a finitely bipresentable $2$-category by \Cref{Flat pseudofunctors on bilex are bipres}. At the level of morphisms, let $F: \mathcal{C} \to \mathcal{D}$ be a pseudofunctor preserving finite bilimits, then the precomposition 
% https://q.uiver.app/?q=WzAsMixbMCwwLCJcXHRleHRiZntiaUxleH1bXFxtYXRoY2Fse0R9LCBcXENhdF0iXSxbMSwwLCJcXHRleHRiZntiaUxleH1bXFxtYXRoY2Fse0N9LCBcXENhdF0iXSxbMCwxLCJGXiogIl1d
\[\begin{tikzcd}
	{\textbf{biLex}[\mathcal{D}, \Cat]} & {\textbf{biLex}[\mathcal{C}, \Cat]}
	\arrow["{F^* }", from=1-1, to=1-2]
\end{tikzcd}\]
is clearly well defined, indeed for every lex functor $X : \mathcal{D} \to \Cat$, $XF$ is bilex too, because both $X$ and $F$ are. Now, looking at the commutative diagram below we notice that,
% https://q.uiver.app/?q=WzAsNCxbMCwwLCJcXHRleHRiZntiaUxleH1bXFxtYXRoY2Fse0R9LCBcXENhdF0iXSxbMCwxLCJcXHBzW1xcbWF0aGNhbHtEfSwgXFxDYXRdIl0sWzEsMCwiXFx0ZXh0YmZ7YmlMZXh9W1xcbWF0aGNhbHtDfSwgXFxDYXRdIl0sWzEsMSwiXFxwc1tcXG1hdGhjYWx7Q30sIFxcQ2F0XSJdLFswLDEsIlxcaW90YV9EIiwyLHsic3R5bGUiOnsidGFpbCI6eyJuYW1lIjoiaG9vayIsInNpZGUiOiJib3R0b20ifX19XSxbMiwzLCJcXGlvdGFfQyIsMCx7InN0eWxlIjp7InRhaWwiOnsibmFtZSI6Imhvb2siLCJzaWRlIjoiYm90dG9tIn19fV0sWzAsMiwiRl4qIl0sWzEsMywiRl4qIiwyXSxbMCwzLCI9IiwxLHsic3R5bGUiOnsiYm9keSI6eyJuYW1lIjoibm9uZSJ9LCJoZWFkIjp7Im5hbWUiOiJub25lIn19fV1d
\[\begin{tikzcd}
	{\textbf{biLex}[\mathcal{D}, \Cat]} & {\textbf{biLex}[\mathcal{C}, \Cat]} \\
	{\ps[\mathcal{D}, \Cat]} & {\ps[\mathcal{C}, \Cat]}
	\arrow["{\iota_D}"', hook', from=1-1, to=2-1]
	\arrow["{\iota_C}", hook', from=1-2, to=2-2]
	\arrow["{F^*}", from=1-1, to=1-2]
	\arrow["{F^*}"', from=2-1, to=2-2]
	\arrow["{=}"{description}, draw=none, from=1-1, to=2-2]
\end{tikzcd}\]
\begin{itemize}
    \item $F^*$ preserves all weighted bilimits. This is because weighted bilimits are created by $\iota_\mathcal{D}$ (\Cref{preserveandcreatecolimits}) and $\iota_\mathcal{C}$ and the precomposition functor on the bottom of the diagram preserves all weighted bilimits and bicolimits.
    \item Similarly,  $F^*$ preserves all bifiltered bicolimits, again by applying \Cref{preserveandcreatecolimits}.
\end{itemize}
By our version of the adjoint functor theorem (\Cref{LAFT}), $F^*$ is a right biadjoint preserving bifiltered bicolimits. The action of $\mathbf{Mod}$ on $2$ and $3$-cells is relatively straightforward and does not require much justification, we describe it for the case of $2$-cells. Consider a pseudonatural transformation as below,

% https://q.uiver.app/?q=WzAsMixbMCwwLCJDIl0sWzIsMCwiRCJdLFswLDEsIkYiLDAseyJjdXJ2ZSI6LTN9XSxbMCwxLCJHIiwyLHsiY3VydmUiOjN9XSxbMiwzLCJcXGFscGhhIiwwLHsic2hvcnRlbiI6eyJzb3VyY2UiOjIwLCJ0YXJnZXQiOjIwfX1dXQ==
\[\begin{tikzcd}
	C && D
	\arrow[""{name=0, anchor=center, inner sep=0}, "F", curve={height=-18pt}, from=1-1, to=1-3]
	\arrow[""{name=1, anchor=center, inner sep=0}, "G"', curve={height=18pt}, from=1-1, to=1-3]
	\arrow["\alpha", shorten <=5pt, shorten >=5pt, Rightarrow, from=0, to=1]
\end{tikzcd}\]

Then of course we get a pseudonatural transformation between the precompositons functors $F^* \Rightarrow G^*$.

\end{division}

\begin{division}[$\mathbf{Th}$]
For the trifunctor $\mathbf{Th}: \mathbf{biP}_\omega \to \mathbf{biLex}^{\op}$ we proceed as follows,
\begin{itemize}
    \item at the level of objects, it maps a finitely bipresentable $2$-category $\mathcal{B}$ to the opposite of its full sub $2$-category of bicompact objects $\mathcal{B} \mapsto \mathcal{B}_\omega^{\op}$. Via \Cref{finite bicolimit of sigma-compact are sigma-compact}, $\mathcal{B}_\omega$ is closed under finite weighted bicolimits and thus $ \mathcal{B}_\omega^{\op}$ is in $\mathbf{biLex}$.
    \item Given a right adjoint preserving bifiltered colimits $R: \mathcal{A} \to \mathcal{B}$, we know that its left adjoint $L$ must map bicompact objects to bicompact objects via \Cref{left biadjoint of finitary 2-functor preserve sigma-compact}. So we define the opposite of its restriction % https://q.uiver.app/?q=WzAsMixbMCwwLCIoXFxtYXRoY2Fse0J9X1xcb21lZ2EpXntcXG9wfSJdLFsxLDAsIihcXG1hdGhjYWx7QX1fXFxvbWVnYSlee1xcb3B9Il0sWzAsMSwie0xee1xcb3B9fSJdXQ==
\[\begin{tikzcd}
	{(\mathcal{B}_\omega)^{\op}} & {(\mathcal{A}_\omega)^{\op}}
	\arrow["{{L^{\op}}}", from=1-1, to=1-2]
\end{tikzcd}\] to be the image of $R$ under the action of $\mathbf{Th}$.
\item For the behavior of $\mathbf{Th}$ at the level of $2$-cells and $3$-cells, recall that the $2$-category of right biadjoints, pseudonatural transformations and modifications is biequivalent to the opposite of left biadjoints, pseudonatural transformations and modifications $RAdj(\mathcal{A}, \mathcal{B}) \simeq LAdj(\mathcal{B}, \mathcal{A})^\op$ and thus we can easily define $\mathbf{Th}$ on $2$ and $3$-cells as we did for $\mathbf{Mod}$.
\end{itemize}
\end{division}

\begin{proof}[Proof of \Cref{GUth}]

As discussed in \Cref{biacc as category of flat pseudofunctors} and in the first lines of \Cref{sigma-accessible 2-cat are categories of flat functors}, we have a pseudofunctor 

% https://q.uiver.app/?q=WzAsMixbMCwwLCJcXG1hdGhjYWx7Qn0iXSxbMSwwLCJcXEZsYXRfXFxwc1soXFxtYXRoY2Fse0J9X1xcb21lZ2EpXntcXG9wfSwgXFxDYXRdIl0sWzAsMSwiXFxudV9cXG1hdGhjYWx7Qn0gIl1d
\[\begin{tikzcd}
	{\mathcal{B}} & {\Flat_\ps[(\mathcal{B}_\omega)^{\op}, \Cat]}
	\arrow["{\nu_\mathcal{B} }", from=1-1, to=1-2]
\end{tikzcd}\]
which is easy to acknowledge as a morphism of locally finitely bipresentable $2$-categories by \Cref{binerve preserve bifiltered bicolimits}. The collection of all the $\nu$'s gives us a pseudonatural transformation  $1 \Rightarrow \mathbf{Mod}\circ \mathbf{Th}$. \Cref{sigma-accessible 2-cat are categories of flat functors} proves that such transformation is a pointwise biequivalence of $2$-categories.
Similarly to the previous discussion, the Yoneda embedding discussed in \Cref{GUhalf1}, gives a biequivalence $\mathcal{C}^{\op} \simeq (\biLex[\mathcal{C}, \Cat])_\omega$ in $\mathbf{biLex}$ which yields a pseudonatural transformation $\mathbf{Th}\circ \mathbf{Mod} \Rightarrow 1$, which -- again -- is shown to be a pointwise biequivalence of $2$-categories.
\end{proof}

\section{Examples}

\subsection{Cat}

The following ur-example is key to the next results. Recall that in $\Cat$, finite categories are bicompact, which we already proved at \Cref{finite cat are sigma-compact}.

% \begin{lemma}
% In $\Cat$, finite categories are bicompact. 
% \end{lemma}

% \begin{remark}
% See also \cite{street1976limits}[Proposition 4] in the case where $ \alpha$ is $\omega$. Beware again, as observed in \cref{Bicompact categories might not be finite}, that the converse is not true. 
% \end{remark}

\begin{theorem}
$\Cat$ is finitely bipresentable.
\end{theorem}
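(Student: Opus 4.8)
The plan is to verify the two defining conditions of finite bipresentability (\Cref{bipres}) for $\Cat$: namely that it is finitely bi-accessible and that it has all small weighted bicolimits. The most efficient route, however, is to lean on the recognition theorem \Cref{1.11}, which reduces the entire statement to exhibiting a strong generator made of bicompact objects, provided one first notes that $\Cat$ has all weighted bicolimits. The latter is standard: $\Cat$ is cocomplete as an ordinary category and is in fact a complete and cocomplete $\Cat$-enriched category, so it admits all weighted (bi)colimits.

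First I would record that $\Cat$ has small weighted bicolimits, so that \Cref{1.11} applies. Then the main task is to produce a strong generator consisting of bicompact objects. The natural candidate is the two-object diagram $\mathcal{G} = \{\mathbf{1}, \mathbf{2}\}$, where $\mathbf{1}$ is the terminal category and $\mathbf{2}$ is the arrow category; both are finite, hence bicompact by \Cref{finite cat are sigma-compact}. It remains to check that the induced nerve $\nu : \Cat \to \ps[\mathcal{G}^{\op}, \Cat]$ is biconservative, i.e.\ reflects equivalences. Concretely, $\nu(C)$ records the category of objects $\Cat[\mathbf{1}, C] \simeq C_0$ (as a discrete-on-objects datum, really $C$ itself up to the usual identification) and the category of arrows $\Cat[\mathbf{2}, C]$, together with domain and codomain functors. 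The point is that a functor $F : C \to D$ inducing equivalences on $\Cat[\mathbf{1}, -]$ and $\Cat[\mathbf{2}, -]$ is essentially surjective and fully faithful: surjectivity on objects comes from the $\mathbf{1}$-component and fullness/faithfulness from the $\mathbf{2}$-component comparing hom-data. Hence $\{\mathbf{1}, \mathbf{2}\}$ is a strong (indeed dense) generator of bicompact objects, and \Cref{1.11} yields that $\Cat$ is finitely bipresentable.

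The hard part, and the step deserving genuine care, is the biconservativity verification: one must argue at the appropriate $2$-categorical level that reflecting equivalences of the relevant hom-categories forces $F$ to be an equivalence, keeping track of the $2$-cell data rather than merely the underlying functors. This is where the pseudonaturality of the comparison and the interplay of the domain/codomain structure maps must be handled honestly. An alternative, more self-contained route avoids \Cref{1.11} altogether: since $\Cat$ is bicocomplete, it suffices by \Cref{Biacc is bipres iff bicomplete} to show $\Cat$ is finitely bi-accessible, which means producing a bifiltered bicolimit decomposition of every category by its finite subcategories. Every small category is the filtered (hence bifiltered, via the locally discrete structure of \Cref{trivial}) colimit of its finite full subcategories, and these are bicompact by \Cref{finite cat are sigma-compact}; combined with bicocompleteness this gives finite bipresentability directly. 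I would present the \Cref{1.11} argument as the main line, since it is shortest, and remark that the direct decomposition offers an independent confirmation.
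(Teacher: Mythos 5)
Your main line coincides with the paper's proof: the paper simply observes that $\mathbf{1}$ and $\mathbf{2}$ form a strong generator consisting of bicompact objects (bicompactness being \Cref{finite cat are sigma-compact}) and invokes \Cref{1.11}. Your biconservativity check is correct, and can even be shortened: since $\Cat[\mathbf{1},C]\cong C$ canonically, the $\mathbf{1}$-component of the nerve of a functor $F$ is $F$ itself up to this identification, so reflection of equivalences is immediate.

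However, the ``independent confirmation'' you propose at the end is wrong and should be dropped. It is not true that every small category is the (bi)filtered (bi)colimit of its finite subcategories. First, ``finite full subcategories'' is already problematic, since a full subcategory on finitely many objects need not be finite. More seriously, even with arbitrary finite subcategories the claim fails: take $\mathbb{N}$, the additive monoid of natural numbers viewed as a one-object category. Any subcategory containing a nonzero arrow $n$ must contain all of its powers $kn$, hence is infinite; so the only finite subcategory of $\mathbb{N}$ is the trivial one, and $\mathbb{N}$ is certainly not their colimit. In fact $\mathbb{N}$ is not a bifiltered bicolimit of finite categories at all: by \Cref{Bicompact categories might not be finite} it is bicompact, so such a decomposition would let the identity of $\mathbb{N}$ factor, up to invertible $2$-cell, through a finite stage $C$, exhibiting a pseudoretraction $s:\mathbb{N}\to C$, $r:C\to\mathbb{N}$ with $rs\cong 1_{\mathbb{N}}$. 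Then $g=s(1)$ is an endomorphism in a finite category, so $g^{m}=g^{n}$ for some $m\neq n$; applying $r$, and using that the only invertible arrow of $\mathbb{N}$ is $0$, the naturality of the isomorphism $rs\cong 1_{\mathbb{N}}$ forces $m=n$, a contradiction. This is exactly why the paper argues through the recognition theorem \Cref{1.11}, whose proof enlarges the generator by closing it under finite weighted bicolimits, rather than by exhibiting the finite categories as a dense generator: the bicompact objects of $\Cat$ properly contain the finite categories, and the finite categories alone do not generate $\Cat$ under bifiltered bicolimits.
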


\begin{proof}
1 and 2 are strong generators, and they are bicompacts: hence $\Cat$ admits a strong generator of bicompact objects, which ensures it to be finitely bipresentable from \Cref{1.11}.
\end{proof}

\subsection{2-categories of pseudo-algebras of bifinitary pseudomonads}

There is a well known theory of pseudomonads and their algebras (see for example \cite{MarmolejoBeck}), for which we dispense us of definition. 

%Inheritance of pseudolimits in 2-category of strict algebras and pseudomorphisms is established in \cite{blackwell1989two}[Theorem 2.6] and also is similar to the more specific \cite{szyld2018lifting}:

\begin{comment}
Let $ T$ be a strict pseudomonad on $\mathcal{C}$: then if $\mathcal{C}$ has weighted bilimits, so has $ T\hy\Alg$ and they are preserved by the forgetful 2-functor $ U_T^p : T\hy\Alg \rightarrow \Cat$.
\end{comment}

%If $ T$ is a strict pseudomonad on $\mathcal{C}$, then if $ \mathcal{C}$ has weighted bicolimits, so has $ T\hy\Alg$, and they are computed as the free algebra on the underlying bicolimit in $\mathcal{C}$
%\[ \underset{i \in I}{\bicolim}\, (A_i, a_i) \simeq ( T(\underset{i \in I}{\bicolim}\, A_i), \mu \]

\begin{comment}
Here $T\hy\Alg $ denotes strict algebras and pseudomorphisms. Observe this result is an instance of inheritance of a class of limits in an adjunction where the right adjoint is not required to be full - nor even faithful in fact.  Moreover, observe that, as pseudolimits are in particular bilimits, $T\hy\Alg$ can be thought hence as having all bilimits. 
\end{comment}

\begin{definition}
A pseudomonad is said to be \emph{bifinitary} if it preserves bifiltered bicolimits.
\end{definition}

It is well known since \cite{blackwell1989two}[Theorem 5.8] that 2-categories of \emph{strict} algebras and pseudomorphisms for finitary 2-monads are bicocomplete. However, for our purposes, in particular in the context of $\Phi$-exactness we are investigating in the last subsection, we need a corresponding statement concerning the 2-category of \emph{pseudo-algebras}, furthermore in the case of a \emph{pseudomonad}. This is the content of \cite{osmondcodescent}, from which we use the following result:

\begin{proposition}{\cite{osmondcodescent}[Theorem 4.19]}\label{pseudoalg are bicocomplete}
Let $(T, \eta, \mu, (\xi, \zeta, \rho))$ be a bifinitary pseudomonad on a bicomplete and bicocomplete 2-category $\mathcal{C}$. Then the 2-category of pseudo-algebras and pseudomorphisms $ T\hy\psAlg$ is bicocomplete. 
\end{proposition}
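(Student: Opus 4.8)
The plan is to reduce the construction of arbitrary weighted bicolimits in $T\hy\psAlg$ to two manageable ingredients — bicolimits of \emph{free} pseudo-algebras and \emph{codescent objects} — exploiting the monadic analog of the one-dimensional slogan that ``colimits of algebras are built from colimits of free algebras together with reflexive coequalizers''. First I would record the free/forgetful biadjunction $F \dashv U$, with $U : T\hy\psAlg \to \mathcal{C}$ the forgetful pseudofunctor and $F$ assigning to $C$ the free pseudo-algebra $TC$. Since $F$ is a left biadjoint it preserves all bicolimits, so the bicolimit of any diagram of free pseudo-algebras $FD$ is computed as $F(\bicolim\, D)$, which exists because $\mathcal{C}$ is bicocomplete; in particular bicoproducts of free algebras present no difficulty.

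Next I would invoke the two-dimensional \emph{bar resolution}: every pseudo-algebra $A$ is the codescent object of a truncated diagram of free algebras $\cdots\, FUFUA \rightrightarrows FUA$, equipped with its coherent invertible 2-cells (this is the statement that $U$ is of codescent type, the pseudo-analog of pseudomonadicity). Granting that $T\hy\psAlg$ has codescent objects, an arbitrary weighted bicolimit $\bicolim^{W} D$ of pseudo-algebras can then be assembled as follows: resolve each $D(i)$ into free algebras, combine the resulting levelwise diagrams into a single (larger) diagram valued in free pseudo-algebras, form its weighted bicolimit by transporting it through $F$ as above, and finally take a codescent object of the outcome. In this way the whole problem is concentrated onto a single point — showing that $T\hy\psAlg$ admits codescent objects.

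The main obstacle is precisely the construction of these codescent objects, and this is where the hypotheses on $T$ and $\mathcal{C}$ are spent. Given a codescent datum in $T\hy\psAlg$, one forms the underlying codescent object $C$ in $\mathcal{C}$ (which exists, $\mathcal{C}$ being bicocomplete) and must promote it to a pseudo-algebra by producing a coherent action $TC \to C$. This demands that $T$ \emph{preserve} the codescent object at hand, so that $TC$ is again the codescent object of the $T$-image diagram and the levelwise actions descend canonically to $C$. Preservation is extracted from bifinitarity by presenting the relevant codescent objects through bifiltered bicolimits together with finite flexible bicolimits, the former being preserved by hypothesis; this is the delicate step, since a bare bifiltered-preservation hypothesis must be leveraged, via the structure of the bar resolution, into preservation of the codescent shapes actually used. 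The remaining verification — that $(C, \text{action})$ satisfies the pseudo-algebra coherence axioms and enjoys the correct universal property against pseudomorphisms, everything holding only up to coherent invertible 2-cells — is the laborious heart of the argument and is exactly what \cite{osmondcodescent}[Theorem 4.19] establishes in detail.
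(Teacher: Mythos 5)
First, a remark on the comparison itself: the paper does not prove this proposition at all — it is imported verbatim from \cite{osmondcodescent}[Theorem 4.19] — so there is no in-paper argument to measure your attempt against. Your first two paragraphs reconstruct what is indeed the standard skeleton for results of this kind (and, given that the cited reference is precisely about codescent, plausibly the skeleton used there): the free/forgetful biadjunction $F \dashv U$, the bar resolution exhibiting every pseudo-algebra as the codescent object of a $U$-split (hence absolute) codescent diagram of free algebras, and the Linton-style reduction of all weighted bicolimits to bicolimits of free algebras plus codescent objects. That part is sound.

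The genuine gap is in your third paragraph, at exactly the step you flag as delicate. You propose to build the codescent object of a codescent diagram in $T\hy\psAlg$ by forming the codescent object $C$ of the underlying diagram in $\mathcal{C}$ and lifting the actions onto it, which forces you to show that $T$ \emph{preserves} that codescent object; and you propose to extract this preservation from bifinitarity by ``presenting the relevant codescent objects through bifiltered bicolimits together with finite flexible bicolimits''. This inference does not work: codescent objects are \emph{finite} flexible weighted bicolimits, a class entirely disjoint from bifiltered bicolimits, and preservation of the latter gives no control over the former — any such presentation would still contain finite pieces whose preservation by $T$ is precisely what is in question. If bifinitarity implied preservation of the needed codescent objects, theorems of this type would be almost trivial, whereas already \cite{blackwell1989two}[Theorem 5.8] requires an elaborate proof; and already in dimension one, a finitary monad on a general cocomplete base need not preserve reflexive coequalizers, and the underlying object of a coequalizer of algebras is in general \emph{not} the coequalizer of the underlying objects with a lifted action. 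The correct role of (bi)finitarity in all arguments of this kind (Kelly-style transfinite constructions, or the accessibility arguments behind BKP) is different: the hard colimit of algebras is constructed as the bicolimit of an $\omega$-chain of successive corrections in $\mathcal{C}$, and bifinitarity guarantees that $T$ preserves \emph{that} chain bicolimit (which is bifiltered), so that a pseudo-algebra structure can be installed at stage $\omega$ and the universal property checked against pseudomorphisms. Your sketch is missing this iterative construction (or some substitute for it), and without it the reduction in your first two paragraphs never terminates in an actual existence proof.
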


% \begin{proposition}
% If $ T$ is a finitary pseudomonad on $\Cat$, then $ T\hy\Alg$ has weighted bicolimits. 
% \end{proposition}

% \begin{remark}
% Beware that those bicolimits may not be actually pseudocolimits; moreover they need not being preserved by the forgetfull functors - except the filtered ones, and in particular the bifiltered ones. In the following we need the slightly stronger condition of preservations of bifiltered pseudocolimits.
% \end{remark}

The theorem below is the 2-categorical analog of the famous result of \cite{gabriel2006lokal}: %{\color{red}Or a reference by Kelly ? Did the second paper ever appear ?}:

\begin{theorem}\label{Algebras of a finitary pseudomonad are sigma-presentable}
Let  $\mathcal{B}$ be a finitely bipresentable 2-category and $ T$ a bifinitary pseudomonad on $\mathcal{B}$. Then $T\hy\ps\Alg$ is also finitely bipresentable, and the forgetful 2-functor $ U_T : T\hy\ps\Alg \rightarrow \mathcal{B}$ is finitely bi-accessible. 
\end{theorem}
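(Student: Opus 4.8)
The strategy is to deploy the recognition theorem \Cref{1.11}: it suffices to exhibit a strong generator of $T\hy\ps\Alg$ consisting of bicompact objects, since $T\hy\ps\Alg$ is already known to be bicocomplete by \Cref{pseudoalg are bicocomplete} (as $\mathcal{B}$ is bicocomplete and $T$ is bifinitary). The natural candidate for the generator is the image under the free functor $ F_T : \mathcal{B} \rightarrow T\hy\ps\Alg$ of a strong generator $\mathcal{G} \hookrightarrow \mathcal{B}_\omega$ of bicompact objects in $\mathcal{B}$, that is the objects of the form $ F_T(G)$ with $ G \in \mathcal{G}$.

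\textbf{Key steps.} First I would record that the forgetful pseudofunctor $ U_T : T\hy\ps\Alg \rightarrow \mathcal{B}$ preserves bifiltered bicolimits: this is exactly the statement that $T$ is bifinitary, since bifiltered bicolimits in $T\hy\ps\Alg$ are computed as in $\mathcal{B}$ after applying $T$, and $T$ preserves them. This simultaneously gives the claimed bi-accessibility of $U_T$. Second, applying \Cref{left biadjoint of finitary 2-functor preserve sigma-compact} with the biadjunction $ F_T \dashv U_T$ and $ U_T$ preserving bifiltered bicolimits, the left biadjoint $F_T$ sends bicompact objects to bicompact objects; in particular each $ F_T(G)$ is bicompact in $T\hy\ps\Alg$. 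Third, I would check that $ \{ F_T(G) \mid G \in \mathcal{G} \}$ is a strong generator, i.e.\ that the associated binerve is biconservative. This follows from the standard adjunction argument: a pseudonatural transformation $ \theta : A \rightarrow B $ of pseudo-algebras is an equivalence as soon as $ T\hy\ps\Alg[F_T(G), \theta] $ is an equivalence for all $ G$; by the biadjunction this homcategory is equivalent to $ \mathcal{B}[G, U_T\theta] $, so the hypothesis says $ \mathcal{B}[G, U_T\theta]$ is an equivalence for all $ G \in \mathcal{G}$, whence $ U_T \theta$ is an equivalence since $\mathcal{G}$ is a strong generator of $\mathcal{B}$; and $ U_T$ being (pseudo)monadic reflects equivalences, so $\theta$ is an equivalence.

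\textbf{Conclusion and main obstacle.} With a strong generator of bicompact objects in a bicocomplete $2$-category in hand, \Cref{1.11} immediately yields that $T\hy\ps\Alg$ is finitely bipresentable, and the preservation of bifiltered bicolimits by $U_T$ established in the first step gives that $U_T$ is finitely bi-accessible. The main obstacle I anticipate is not the generation argument but the precise bookkeeping of strictness and coherence for \emph{pseudo}-algebras of a \emph{pseudo}monad: one must be careful that the biadjunction $ F_T \dashv U_T$, the computation of bifiltered bicolimits in $T\hy\ps\Alg$ via $U_T$, and the fact that $U_T$ reflects equivalences all genuinely hold at the pseudo level rather than only the strict one. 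For the computation of bicolimits and the reflection of equivalences I would lean on the results of \cite{osmondcodescent} already invoked for \Cref{pseudoalg are bicocomplete}, which set up exactly this pseudo-algebraic framework, so that the argument above goes through without needing to re-establish the monadicity machinery by hand.
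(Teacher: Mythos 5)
Your proposal is correct and follows essentially the same route as the paper's proof: bicocompleteness via \Cref{pseudoalg are bicocomplete}, bifiltered bicolimits computed on underlying objects (whence $U_T$ is finitely bi-accessible), bicompactness of free algebras via \Cref{left biadjoint of finitary 2-functor preserve sigma-compact}, a strong generator of the form $F_T(G)$ extracted through the biadjunction, and \Cref{1.11} to conclude. If anything, you are slightly more explicit than the paper on the conservativity of $U_T$ (the paper silently uses that pseudomonadic forgetful $2$-functors reflect equivalences), while the paper in turn writes out by hand the pseudo-algebra structure on the bifiltered bicolimit of underlying objects that you defer to \cite{osmondcodescent}.
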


\begin{proof}
%We know that $T\hy\Alg$ will inherit bilimits from $ \mathcal{B}$. Moreover, we know also that $ T\hy\Alg$ has bicolimits because $ T$ is finitary and $ \mathcal{B}$ is bicocomplete  by \cite[5.8 (and 3.8)]{blackwell1989two}. \\

%and as preservation of bifiltered bicolimits implies to be finitary, we also have existence of pseudocolimits. 

From \cref{pseudoalg are bicocomplete}, we know that $ T\hy\psAlg$ is bicocomplete for $ T$ is bifinitary and $\mathcal{B}$ is bicomplete (see \cref{bipres are bicomplete}) and bicocomplete as a finitely bipresentable 2-category. Though arbitrary bicolimits in $ T\hy\ps\Alg$ need not be preserved by the forgetful functor, we are going to prove that bifiltered bicolimits are. We claim that they are computed as follows: for $F : I \rightarrow  T\hy\ps\Alg$ a 2-functor with $I$ bifiltered, with $F(i) = (A_i, a_i)$, we have 
\[  T(\underset{i \in I }{\bicolim}\, A_i ) \simeq  \underset{i \in I }{\bicolim} \,TA_i \]
which, together with the morphism induced by the universal property of the bicolimit at the composites $q_ia_i$, provides us with a structure of pseudomorphism of pseudo-algebras for the bicolimit inclusions
% https://q.uiver.app/?q=WzAsNSxbMiwwLCJUKFxcdW5kZXJzZXR7aSBcXGluIEkgfXtcXGJpY29saW19XFwsIEFfaSApIl0sWzEsMCwiXFx1bmRlcnNldHtpIFxcaW4gSSB9e1xcYmljb2xpbX0gXFwsVEFfaSJdLFswLDAsIlRBX2kiXSxbMSwxLCJcXHVuZGVyc2V0e2kgXFxpbiBJIH17XFxiaWNvbGltfSBcXCxBX2kiXSxbMCwxLCJBX2kiXSxbMiwxLCJUcV9pIl0sWzIsNCwiYV9pIiwyXSxbMSwzLCJcXGxhbmdsZSBxX2lhX2kgXFxyYW5nbGVfe2kgXFxpbiBJfSJdLFs0LDMsInFfaSIsMl0sWzEsMCwiXFxzaW1lcSIsMSx7InN0eWxlIjp7ImJvZHkiOnsibmFtZSI6Im5vbmUifSwiaGVhZCI6eyJuYW1lIjoibm9uZSJ9fX1dLFsyLDMsIlxcc2ltZXEiLDEseyJzdHlsZSI6eyJib2R5Ijp7Im5hbWUiOiJub25lIn0sImhlYWQiOnsibmFtZSI6Im5vbmUifX19XV0=
% https://q.uiver.app/?q=WzAsNCxbMSwwLCJcXHVuZGVyc2V0e2kgXFxpbiBJIH17XFxiaWNvbGltfSBcXCxUQV9pIl0sWzAsMCwiVEFfaSJdLFsxLDEsIlxcdW5kZXJzZXR7aSBcXGluIEkgfXtcXGJpY29saW19IFxcLEFfaSJdLFswLDEsIkFfaSJdLFsxLDAsIlRxX2kiXSxbMSwzLCJhX2kiLDJdLFswLDIsIlxcbGFuZ2xlIHFfaWFfaSBcXHJhbmdsZV97aSBcXGluIEl9Il0sWzMsMiwicV9pIiwyXSxbMSwyLCJcXHNpbWVxIiwxLHsic3R5bGUiOnsiYm9keSI6eyJuYW1lIjoibm9uZSJ9LCJoZWFkIjp7Im5hbWUiOiJub25lIn19fV1d
\[\begin{tikzcd}
	{TA_i} & {\underset{i \in I }{\bicolim} \,TA_i} \\
	{A_i} & {\underset{i \in I }{\bicolim} \,A_i}
	\arrow["{Tq_i}", from=1-1, to=1-2]
	\arrow["{a_i}"', from=1-1, to=2-1]
	\arrow["{\langle q_ia_i \rangle_{i \in I}}", from=1-2, to=2-2]
	\arrow["{q_i}"', from=2-1, to=2-2]
	\arrow["\simeq"{description}, draw=none, from=1-1, to=2-2]
\end{tikzcd}\]
so the structure of $T$-pseudo-algebra is induced as the universal map 
\[\begin{tikzcd}[sep=large]
	& {A_i} & TAi \\
	{\underset{i \in I }{\bicolim}\, A_i } & {\underset{i \in I }{\bicolim}\, TA_i } & {A_i} \\
	& {\underset{i \in I }{\bicolim}\, A_i }
	\arrow["{q_i}"', from=1-2, to=2-1]
	\arrow["{q_i}", from=2-3, to=3-2]
	\arrow["{\eta_{\underset{i \in I }{\bicolim}\, A_i }}", pos=0.6, from=2-1, to=2-2]
	\arrow[""{name=0, anchor=center, inner sep=0}, Rightarrow, no head, from=2-1, to=3-2]
	\arrow[""{name=1, anchor=center, inner sep=0}, Rightarrow, no head, from=1-2, to=2-3]
	\arrow[""{name=2, anchor=center, inner sep=0}, "{\eta_{A_i}}", from=1-2, to=1-3]
	\arrow["{a_i}", from=1-3, to=2-3]
	\arrow["{\langle q_ia_i \rangle_{i \in I}}"{description}, from=2-2, to=3-2]
	\arrow["\simeq"{description}, draw=none, from=2-2, to=2-3]
	\arrow["\simeq"{description}, draw=none, from=1-2, to=2-2]
	\arrow[crossing over, from=1-3, to=2-2]
	\arrow["{\langle\alpha_i\rangle_{i \in I} \atop \simeq}"{description}, pos=0.3, Rightarrow, draw=none, from=2-2, to=0]
	\arrow["{\alpha_i \atop \simeq}"{description, pos=0.4}, Rightarrow, draw=none, from=2, to=1]
\end{tikzcd}\]
The top and left squares exhibit the bicolimit inclusions as pseudomorphisms of $T$-algebras. Hence $ T\hy\ps\Alg$ has bifiltered bicolimits. We left the verification that those data satisfy the coherence condition of pseudoalgebras and pseudomorphisms to the careful reader.

We must prove that the free algebras on bicompacts form a strong generator of bicompact objects for $T\hy\ps\Alg$. Consider the 2-adjunction
% https://q.uiver.app/?q=WzAsMixbMCwwLCJUXFxoeVxcQWxnX3AiXSxbMiwwLCJcXG1hdGhjYWx7Qn0iXSxbMCwxLCJVX1QiLDIseyJjdXJ2ZSI6Mn1dLFsxLDAsIkZfVCIsMix7ImN1cnZlIjoyfV0sWzMsMiwiIiwwLHsibGV2ZWwiOjEsInN0eWxlIjp7Im5hbWUiOiJhZGp1bmN0aW9uIn19XV0=
\[\begin{tikzcd}
	{T\hy\ps\Alg} && {\mathcal{B}}
	\arrow[""{name=0, anchor=center, inner sep=0}, "{U_T}"', start anchor=-12, bend right=20, from=1-1, to=1-3]
	\arrow[""{name=1, anchor=center, inner sep=0}, "{F_T}"', end anchor=12, bend right=20, from=1-3, to=1-1]
	\arrow["\dashv"{anchor=center, rotate=-90}, draw=none, from=1, to=0]
\end{tikzcd}\]

From \Cref{left biadjoint of finitary 2-functor preserve sigma-compact}, we know that free algebras on bicompacts are bicompact as the free algebra functor is left 2-adjoint to the forgetful functor which preserves bifiltered pseudocolimits. Moreover, as bicompact objects form a dense generator, the 2-functors $ \mathcal{B}[K,-]$ jointly reflects equivalences, as well as their restriction to objects that bear a structure of algebra; but by 2-adjunction we have at each $(A,a)$ of $T\hy\ps\Alg$ natural isomorphisms of categories $ \mathcal{B}[K, U_T(A,a)] \simeq T\hy\ps\Alg[F_T(K), (A,a)]$, which provides a natural equivalence of functors 
\[ \mathcal{B}[K, U_T] \simeq T\hy\ps\Alg[F_T(K), -] \]
Hence the representable $T\hy\ps\Alg[F_T(K), -]$ jointly reflect equivalences in $T\hy\ps\Alg$: hence they form a strong generator. \Cref{1.11} then ensures that $T\hy\ps\Alg$ is finitely bipresentable.
\end{proof}

\begin{comment}\label{pseudoalg are bipres}
Let  $\mathcal{B}$ be a finitely bipresentable 2-category and $ T$ a bifinitary pseudomonad on $\mathcal{B}$. Then $T\hy\\Alg$ is also finitely bipresentable, and the forgetful 2-functor $ U_p : T\hy\\Alg \rightarrow \mathcal{B}$ is finitely bi-accessible. 
\end{comment}

\subsection{$\Lex$}

Recall that $ \Lex$ is the 2-category of small lex categories and lex functors - where lex functors preserve finite limits only up to isomorphism. We will prove that $\Lex$ is finitely bipresentable. It is well known (see for instance \cite{blackwell1989two}) that $ \Lex$ is the 2-category of pseudo-algebras and pseudomorphisms for a finitary KZ-monad on $\Cat$. For the sake of completeness, we will prove again the finitary part of the result, as the rank of accessibility of $\Lex$ is really crucial here.  

\begin{division}
Let us give a few words on why $\Lex$ is KZ-monadic on $ \Cat$. Consider the free completion under finite limits 
% https://q.uiver.app/?q=WzAsMixbMCwwLCJcXENhdCJdLFsxLDAsIlxcQ2F0Il0sWzAsMSwiXFxMZXgiXV0=
% https://q.uiver.app/?q=WzAsMixbMCwwLCJcXENhdCJdLFsxLDAsIlxcQ2F0Il0sWzAsMSwiXFxMZXhbXFxoeV0iXV0=
\[\begin{tikzcd}
	\Cat & \Cat
	\arrow["{\Lex[\hy]}", from=1-1, to=1-2]
\end{tikzcd}\]
sending a small category to its free completion under finite limits - which is still small; this functor defines a pseudomonad on $\Cat$. This monad is well known to be KZ, and  we have a biequivalence exhibiting $\Lex$ as the 2-category of pseudo-algebras for this KZ-monad
\[ \Lex \simeq \Lex[-]\hy\ps\Alg \]

Hence $\Lex$ is ensured to have bicolimits as well as pseudolimits - and then bilimits; the later are also preserved by the pseudo-faithful 2-functor
% https://q.uiver.app/?q=WzAsMixbMCwwLCJcXExleCJdLFsxLDAsIlxcQ2F0Il0sWzAsMSwiVSIsMCx7InN0eWxlIjp7InRhaWwiOnsibmFtZSI6Imhvb2siLCJzaWRlIjoidG9wIn19fV1d
\[\begin{tikzcd}
	\Lex & \Cat
	\arrow["U", hook, from=1-1, to=1-2]
\end{tikzcd}\]

Beware that this functor is not full, which prevents us to use directly reflection theorems bipresentability as they uses fullness. 
\end{division}

\begin{lemma}\label{Lex is finitary}
$\Lex$ is closed in $\Cat$ under bifiltered bicolimits.
\end{lemma}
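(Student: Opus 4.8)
The plan is to show that the forgetful pseudofunctor $U : \Lex \hookrightarrow \Cat$ preserves bifiltered bicolimits; since $\Lex$, as a $2$-category of pseudo-algebras, already has bicolimits, this is precisely the assertion that a bifiltered bicolimit of lex categories computed in $\Cat$ is again lex, with lex coprojections, and that it is the bicolimit in $\Lex$ (equivalently, that the completion pseudomonad $\Lex[-]$ is bifinitary). So I would fix a bifiltered $2$-category $I$ and a pseudofunctor $F : I \to \Lex$, write $A_i = UF(i)$ for the underlying lex categories and $C = \bicolim_{i} A_i$ for the bicolimit taken in $\Cat$, with coprojections $q_i : A_i \to C$. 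The goal is then to equip $C$ with finite limits and to check the inclusions are lex.

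The heart of the argument is the commutation of bifiltered bicolimits with finitely weighted bilimits in $\Cat$. For any finite $1$-category $J$, the power $[J,-] : \Cat \to \Cat$ is a finite weighted bilimit, being constructible from finite biproducts, bicotensors with the arrow category $\mathbf 2$ and biequalizers as in \Cref{sufficient all finite bilimits}; hence the commutation yields a canonical equivalence $[J, C] \simeq \bicolim_i [J, A_i]$. I would then use that, each $A_i$ being lex, the diagonal $\Delta_J : A_i \to [J, A_i]$ admits a right biadjoint $\lim_J^{A_i}$ for every finite $J$, and that lexness of the transition functors $F(d)$ makes both $\Delta_J$ and, through the mate correspondence, $\lim_J^{A_i}$ pseudonatural in $i$. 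Applying the bicolimit pseudofunctor to this pointwise biadjunction, and using the equivalence above to identify $\bicolim_i \Delta_J$ with the diagonal $\Delta_J : C \to [J,C]$, I obtain a biadjunction $\Delta_J \dashv L_J$ on $C$: the units and counits are pseudonatural in $i$ and so descend to $C$, while the triangle identities, holding at each stage, are preserved because $\bicolim$ is a pseudofunctor. This exhibits every finite $J$-limit in $C$, so $C$ is lex; the coprojections $q_i$ commute with the $\lim_J$ by construction, hence are lex, and the universal property of $C$ in $\Cat$ restricts to that in $\Lex$ because any cocone of lex functors is in particular a cocone in $\Cat$.

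The main obstacle is the $2$-dimensional bookkeeping in passing from the pointwise biadjunctions to a biadjunction on $C$: one must verify that the right biadjoints $\lim_J^{A_i}$ genuinely assemble into a pseudonatural family (this is exactly where lexness of the transitions enters, via the Beck--Chevalley/mate squares comparing $F(d)\lim_J^{A_i}$ with $\lim_J^{A_j}[J,F(d)]$), and that the induced unit and counit on $C$ satisfy the triangle identities up to coherent invertible modification.

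A concrete alternative, which avoids the abstract transport of adjunctions, is to construct finite limits in $C$ by hand from the explicit description of bifiltered bicolimits of categories in \Cref{sigma-filtered sigma-colimit of categories}. By bicompactness of finite categories (\Cref{finite cat are sigma-compact}), any finite diagram in $C$ lifts, up to isomorphism, to a finite diagram in some $A_i$, where its limit exists since $A_i$ is lex; one then checks, using the same concrete description together with \Cref{coequification lemma} at the level of $2$-cells, that the image of this limit under $q_i$ is the limit in $C$ and is independent of the chosen lift. In either approach, lexness of the transition functors is exactly what guarantees that the construction is stable under refinement and hence well defined, so that the bicolimit inherits the finite-limit structure.
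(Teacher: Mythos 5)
Your main argument is correct, but it takes a genuinely different route from the paper. The paper's proof is, in essence, your ``concrete alternative'': it presents the bifiltered bicolimit as a pseudocolimit (localization of the oplax colimit at opcartesian arrows), lifts a finite diagram $G : J \rightarrow \pscolim_{i} \mathcal{C}_i$ into a single stage $\mathcal{C}_{i_J}$ by choosing representing objects and a pseudocone over their indices (via \Cref{sigmacone in sigmafiltered}, which plays the role your appeal to bicompactness of finite $J$ plays), computes the limit there and pushes it forward along the coprojection; it then checks by direct computation that the functor induced by any cocone of lex functors preserves these limits. Your primary route instead transports the pointwise adjunctions $\Delta_J \dashv \lim\nolimits_J^{A_i}$ along the bicolimit: the commutation of bifiltered bicolimits with finite weighted bilimits in $\Cat$ identifies $\bicolim_i [J,A_i] \simeq [J,C]$, lexness of the transition functors makes the right adjoints pseudonatural via mates, and the pointwise biadjunctions glue to one on $C$. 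This is precisely the machinery the paper itself develops later -- \Cref{colimits and adjunctions} and its use in \Cref{The phiexactmonad is finitary} for $\Phi$-exactness -- so your proof anticipates the paper's own technique and organizes the example more uniformly, at the price of the $2$-dimensional bookkeeping you flag (which \Cref{colimits and adjunctions} is designed to absorb).

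One step is too quick, however: ``the universal property of $C$ in $\Cat$ restricts to that in $\Lex$ because any cocone of lex functors is in particular a cocone in $\Cat$.'' That observation only produces the induced functor $\langle f_i \rangle : C \rightarrow \mathcal{D}$ out of a lex cocone; it does not show that this functor is lex, which is the nontrivial half of the universal property in $\Lex$ (since $\Lex[C,\mathcal{D}]$ is the full subcategory of $\Cat[C,\mathcal{D}]$ on the lex functors, one must show the equivalence with cocones restricts, i.e.\ that $\langle f_i \rangle$ is lex whenever all the $f_i$ are). The paper devotes the second half of its proof to exactly this verification. In your setup the fix is the same lifting argument you already invoke in the alternative: given a finite diagram $D : J \rightarrow C$, lift it up to isomorphism through some $q_i$ using bicompactness of $J$ (\Cref{finite cat are sigma-compact}), so $D \simeq q_i D'$; then $\lim D \simeq q_i(\lim D')$ because $q_i$ is lex, and $\langle f_i \rangle(\lim D) \simeq f_i(\lim D') \simeq \lim f_i D' \simeq \lim \langle f_i \rangle D$ because $f_i$ is lex. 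The ingredient is available to you; it just needs to be stated.
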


\begin{proof}
Recall that one can always use a pseudocolimit as a bicolimit; but in $\Cat$, pseudocolimit are obtained as localization of oplaxcolimits. Let $ \mathcal{C}_{(-)} : I \rightarrow \Lex $ be a bifiltered diagram of lex categories and lex functors. Then the can consider the Grothendieck construction (which is an opfibration on $I$) $ \oplaxcolim_{i \in I} \mathcal{C}_i$, and obtain the pseudocolimit in $\Cat$ as (the underlying category of) its localization at opcartesian morphisms 
\[ \underset{i \in I}{\pscolim} \; \mathcal{C}_i \simeq \underset{i \in I}{\oplaxcolim} \; \mathcal{C}_i[\textbf{opCart}^{-1}] \]

We claim that $ \pscolim_{i \in I} \; \mathcal{C}_i$ already is lex. It is clear it is still small as $I$ and each $\mathcal{C}_i$ are. Moreover, its finite limits are computed as follows. For each finite diagram $ G : J \rightarrow \pscolim_{i \in I} \; \mathcal{C}_i$ with $J$ a finite category, we can pick for each $ j$ a representing object $ (i_j, c_j)$ for $ G(j) $; then for $ J$ is finite, there is by \Cref{sigmacone in sigmafiltered} a pseudocone $ (d_j : i_j \rightarrow i_J)_{j \in J}$ in $I$, which produces then a diagram $ (f_{d_j}(c_j))_{j \in J}$ in $\mathcal{C}_{i_J}$, which admits hence a limit in $\mathcal{C}_{i_J}$. Then $ (i_J, \lim_{j \in J}f_{d_j}(c_j))$ is a representant for a limit in $ \pscolim_{i \in I} \;\mathcal{C}_i$. 

If now one has a pseudocone $( f_i : \mathcal{C}_i \rightarrow \mathcal{C})_{i \in I}$ in $\Lex$, then we have in particular a two-steps strict factorization in $\Cat$
% https://q.uiver.app/?q=WzAsNCxbMCwwLCJcXG1hdGhjYWx7Q31faSJdLFsyLDAsIlxcbWF0aGNhbHtDfSJdLFswLDEsIlxcdW5kZXJzZXR7aSBcXGluIEl9e1xcb3BsYXhjb2xpbX1cXDtcXG1hdGhjYWx7Q31faSJdLFswLDIsIlxcdW5kZXJzZXR7aSBcXGluIEl9e1xccHNjb2xpbX0gXFwsXFxtYXRoY2Fse0N9X2kgIl0sWzAsMiwicV9pIiwyXSxbMiwzLCJxIiwyLHsic3R5bGUiOnsiaGVhZCI6eyJuYW1lIjoiZXBpIn19fV0sWzAsMSwiZl9pIl0sWzIsMSwiXFxsYW5nbGUgZl9pIFxccmFuZ2xlX3tpIFxcaW4gSX0iLDFdLFszLDEsIlxcb3ZlcmxpbmV7XFxsYW5nbGUgZl9pIFxccmFuZ2xlX3tpIFxcaW4gSX19IiwyLHsiY3VydmUiOjJ9XV0=
\[\begin{tikzcd}
	{\mathcal{C}_i} && {\mathcal{C}} \\
	{\underset{i \in I}{\oplaxcolim}\;\mathcal{C}_i} \\
	{\underset{i \in I}{\pscolim} \,\mathcal{C}_i }
	\arrow["{q_i}"', from=1-1, to=2-1]
	\arrow["q"', two heads, from=2-1, to=3-1]
	\arrow["{f_i}", from=1-1, to=1-3]
	\arrow["{\langle f_i \rangle_{i \in I}}"{description}, from=2-1, to=1-3]
	\arrow["{\overline{\langle f_i \rangle_{i \in I}}}"', curve={height=12pt}, from=3-1, to=1-3]
\end{tikzcd}\]
Hence the induced functor $\overline{\langle f_i \rangle_{i \in I}} $ is lex as one has for any finite diagram $G : J \rightarrow \pscolim_{i \in I} \mathcal{C}_i$
\begin{align*}
    \overline{\langle f_i \rangle_{i \in I}}( [(i_J, \lim_{j \in J}\,f_{d_j}(c_j)) ]_{\sim_I} ) 
    &\simeq \langle f_i \rangle_{i \in I} (i_J, \lim_{j \in J}\,f_{d_j}(c_j)) \\
    &\simeq f_{i_J}(\lim_{j \in J}\,f_{d_j}(c_j)) \\
    &\simeq \lim_{j \in J}\, f_{i_J}f_{d_j}(c_j) \\
    &\simeq \lim_{j \in J}\, f_{i_j}(c_j) \\
    &\simeq \lim_{j \in J}\, \overline{\langle f_i \rangle_{i \in I}}( [(i_j, c_j) ]_{\sim_I} ) 
\end{align*}

Hence $ \pscolim_{i \in I} \; \mathcal{C}_i$, though computed in $\Cat$, is a small lex category and provides a bicolimit in $ \Lex$. Hence $ \Lex$ is closed in $ \Cat$ under bifiltered bicolimits. 
\end{proof}

\begin{theorem}
$\Lex$ is finitely bipresentable.
\end{theorem}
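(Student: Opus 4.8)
The plan is to recognize $\Lex$ as a $2$-category of pseudo-algebras for a bifinitary pseudomonad on $\Cat$ and then to invoke \Cref{Algebras of a finitary pseudomonad are sigma-presentable}. We have just seen that $\Cat$ is finitely bipresentable, and by the discussion above we have the biequivalence $\Lex \simeq \Lex[-]\hy\ps\Alg$, where $\Lex[-] : \Cat \rightarrow \Cat$ is the free finite-limit completion pseudomonad. So the only thing left to verify is that this pseudomonad is \emph{bifinitary}, i.e. that it preserves bifiltered bicolimits.

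To establish bifinitariness I would factor the underlying endofunctor $\Lex[-]$ as the composite $U \circ F$ of the free/forgetful biadjunction $F \dashv U$, with $U : \Lex \hookrightarrow \Cat$ the (pseudo-faithful, non-full) forgetful $2$-functor and $F$ the free lex completion. As a left biadjoint, $F$ preserves all weighted bicolimits, in particular bifiltered ones. On the other hand, \Cref{Lex is finitary} shows precisely that bifiltered bicolimits of small lex categories are computed in $\Cat$ and remain lex, so that the forgetful $U$ preserves (indeed creates) bifiltered bicolimits. Composing, $\Lex[-] = U F$ preserves bifiltered bicolimits, which is exactly the bifinitariness condition.

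With $\mathcal{B} = \Cat$ finitely bipresentable and $T = \Lex[-]$ bifinitary, \Cref{Algebras of a finitary pseudomonad are sigma-presentable} yields at once that $T\hy\ps\Alg \simeq \Lex$ is finitely bipresentable, as desired; as a bonus the same theorem records that the forgetful $U : \Lex \rightarrow \Cat$ is finitely bi-accessible. The main (and really the only) piece of genuine work is thus the bifinitariness of the free lex completion, which is the computation already carried out in \Cref{Lex is finitary}: a bifiltered pseudocolimit of lex categories, obtained as the localization of the oplax colimit at opcartesian arrows, is again small and lex, its finite limits being computed at a single stage of the bifiltered system after transporting any finite diagram there via \Cref{sigmacone in sigmafiltered}. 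Since everything downstream is a clean application of cited results, I expect no further obstacle.
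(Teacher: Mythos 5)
Your proposal is correct and follows essentially the same route as the paper: identify $\Lex$ with the pseudo-algebras of the free lex-completion pseudomonad on $\Cat$, deduce bifinitariness of that pseudomonad from \Cref{Lex is finitary}, and conclude by \Cref{Algebras of a finitary pseudomonad are sigma-presentable}. Your explicit factorization $\Lex[\hy] = UF$ (left biadjoint preserves all bicolimits, $U$ preserves bifiltered ones by \Cref{Lex is finitary}) merely spells out a step the paper leaves implicit, so there is no substantive difference.
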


\begin{proof}
From \Cref{left biadjoint of finitary 2-functor preserve sigma-compact} we know that free lex categories on finite categories are bicompact in $\Lex$. Now $ \Lex$ is the category of strict algebras and pseudomorphisms of the $\Lex[-]$-monad on $\Cat$, which is finitary by \Cref{Lex is finitary}. Hence by \Cref{Algebras of a finitary pseudomonad are sigma-presentable}, it is finitely bipresentable and its forgetful functor is finitely accessible. 
\end{proof}

Also, we end here with a short lemma to ensure that as expected finitely generated lex categories are bicompact:

\begin{lemma}\label{reviewer stupidity}
    A finitely generated lex category -- that is, the free lex category over a finite category -- is bicompact in $\Lex$
\end{lemma}

\begin{proof}
Trivial from the bicompactness of finite categories in $\Cat$, using the universal property of the free completion under finite limits.
\end{proof}

\begin{remark}
It was expected that $\Lex$ should be finitely bipresentable. Morally, this is because $\Lex$ should be thought of as a 2-category of models of some 2-limit theory with all the finite diagrams as arities. We should give a remark here about 2-dimensional limits and colimits in $\Lex$. It is known that $\Lex$ inherit bilimits from $\Cat$ as a category of pseudo-algebras, and bilimits of $\Cat$ being actually pseudolimits, so are they in $\Lex$. Moreover, $\cite{bourke2020accessible}$ tells us they are actually \emph{flexible}. However, it has not all strict 2-limits, see \cite[7.3]{betti1988complete}. Moreover, it only has bicolimits (some of them behave in a surprizing way, being also pseudolimits of some diagrams, see \cite{Cole}) though bifiltered ones are pseudocolimits as being computed in $\Cat$).  However, having only bicolimits rather than pseudo-ones or strict ones, $\cite{kelly1982structures}$ does not apply for it requires enriched colimits. Concerning \cite{bourke2020accessible}, we believe that it does not totally cover the 2-dimensional structure involved in bipresentability for it only considers 1-dimensional colimits. %But all those finite diagrams are sigma-compact in $\Cat$ are sigma-compact in $\Lex$. 
\end{remark}

\begin{division}[Monoidal categories] \label{monoidalcats}
Another example is provided by the 2-category $ \Mon_\textup{s}$ of \emph{monoidal categories}\footnote{Notice that by monoidal category we mean a monoidal structure whose coherences are only up-to-isomorphism.} and \emph{strong monoidal functors}. As explained in \cite{lack20092}[4.1], one can construct a 2-monad on $\Cat$ whose underlying functor maps \[A \mapsto  \coprod_{n \in \mathbb{N}} A^n,\] where $ A^n$ is the $n^{th}$ power of $A$ : this is the free monoidal category on the category $A$. Then one can show that monoidal categories are the pseudo-algebras of this 2-monad, while strong monoidal functors are the pseudomorphisms. Similarly, \emph{lax monoidal functors} correspond to the \emph{lax morphisms of algebras}.    
\end{division}

\begin{proposition}
    The 2-category $ \Mon_\textup{s}$ is locally finitely bipresentable.
\end{proposition}

\begin{proof}
Following \Cref{Algebras of a finitary pseudomonad are sigma-presentable} we have to prove the 2-monad above to be finitary. But this comes from the fact that $\Cat$ is itself locally finitely bipresentable, so that finitely weighted bilimits commute with bifiltered bicolimits here; but for each $ n \in \mathbb{N}$ the power with $n$ is a finite weighted bilimit, and combined with the commutation of bicolimits with coproducts, this ensure the 2-monad for free monoidal category to be finitary. 
\end{proof}

\begin{remark}
    One could also ask whether the category $\Mon_\textup{lax}$ of monoidal categories and lax monoidal functors is locally finitely bipresentable: but we know this cannot be the case, for this 2-category lacks bicolimits. This could be expected for our \Cref{Algebras of a finitary pseudomonad are sigma-presentable} does not apply to 2-categories of pseudo-algebra with lax morphisms. However, we do not know whether it is biaccessible or not. 
\end{remark}

\subsection{$ \textbf{Reg, Ex, Coh, Ext, Adh, Pretop}$}

Here we capture a large class of examples thanks to \cite{GarnerLacklexcolimits} amongst the different flavours of exact categories: regular, exact, extensive, coherent categories and (finitary) pretopoi. Those were unified under the formalism of \emph{$\Phi$-exactness}, which we will recall briefly before proving that the $\Phi$-exact categories it studies are instances of finitely bipresentable 2-categories. %\textcolor{red}{A word on the enriched nature of this construction} 

\begin{division}[$\Phi$-exactness à la Garner and Lack]\label{Phi-exact as pseudoalgebras}

In the following, $\Phi$ denotes a class of weights $ W : I^{\op} \rightarrow \Set$, where each $I$ is finitely complete. We will have to suppose them to be finite in the sense of \Cref{finiteweibili}. For such a $\Phi$ and a category $\mathcal{C}$, we can consider the category $ \Phi_l(\mathcal{C})$ as the full subcategory of the presheaf category $ \widehat{\mathcal{C}}$ consisting is the closure of the representables in $\widehat{\mathcal{C}}$ under finite limits and $\Phi$-lex-colimits (see \cite[Sec. 3, especially 3.1]{GarnerLacklexcolimits}). A small category $ \mathcal{C}$ is \emph{$\Phi$-lex-cocomplete} if it is lex, and for any weight $ W : I^{\op} \rightarrow \Set$ in $\Phi$ and any \emph{lex} functor $ F: I \rightarrow \mathcal{C}$ in $\Lex$, the colimit $ \colim^W_I \, F $ exists already in $\mathcal{C}$ -- beware that the functor we compute the colimit of has to be lex, as the indexing category. This amounts to requiring the existence of a left adjoint 
% https://q.uiver.app/?q=WzAsMixbMiwwLCJcXFBoaV9sKFxcbWF0aGNhbHtDfSkiXSxbMCwwLCJcXG1hdGhjYWx7Q30iXSxbMSwwLCJcXGlvdGFfXFxtYXRoY2Fse0N9IiwyLHsiY3VydmUiOjIsInN0eWxlIjp7InRhaWwiOnsibmFtZSI6Imhvb2siLCJzaWRlIjoidG9wIn19fV0sWzAsMSwiTF9cXG1hdGhjYWx7Q30iLDIseyJjdXJ2ZSI6Mn1dLFszLDIsIiIsMCx7ImxldmVsIjoxLCJzdHlsZSI6eyJuYW1lIjoiYWRqdW5jdGlvbiJ9fV1d
\[\begin{tikzcd}
	{\mathcal{C}} && {\Phi_l(\mathcal{C})}
	\arrow[""{name=0, anchor=center, inner sep=0}, "{\iota_\mathcal{C}}"', bend right=25, start anchor=-30, end anchor=200, hook, from=1-1, to=1-3]
	\arrow[""{name=1, anchor=center, inner sep=0}, "{L_\mathcal{C}}"', bend right=25, start anchor=160, end anchor=30, from=1-3, to=1-1]
	\arrow["\dashv"{anchor=center, rotate=-90}, draw=none, from=1, to=0]
\end{tikzcd}\]
Now a $\Phi$-lex-cocomplete category is said to be \emph{$\Phi$-exact} if this left adjoint is lex, which amounts to saying that $ (\mathcal{C}, L_\mathcal{C})$ bears a structure of pseudo-algebra for the pseudomonad $ \Phi_l$ on $\Lex$. Our proof technique will be based on an analysis of the forgetful functor % https://q.uiver.app/?q=WzAsMixbMCwwLCJcXFBoaV9sXFxoeVxcQWxnIl0sWzEsMCwiXFxMZXgiXSxbMCwxLCJVX1xcUGhpIl1d
\[\begin{tikzcd}
	{\Phi_l\hy\ps\Alg} & \Lex
	\arrow["{U_\Phi}", from=1-1, to=1-2]
\end{tikzcd}\]
Of course, by construction, the underlying category of $ U_\Phi\Phi_l(\mathcal{C})$ has as objects pairs $ (W,F)$ with $ W : I^{\op} \rightarrow \Set$ a weight in $ \Phi$ and $ F : I \rightarrow \mathcal{C}$ a functor, this observation will be relevant in the next Lemma.
\end{division}

We are going to prove that the 2-categories of pseudo-algebras and pseudomorphisms for the pseudomonad $ \Phi_l$ on $\Lex$ are finitely bipresentable if $\Phi$ consists of weight indexed by finitely generated lex categories: this will be done by showing the forgetfull functor $ U_\Phi : \Phi_l\hy\ps\Alg \rightarrow \Lex$ to be finitary. We need first the following lemma ensuring that $ U_\Phi$ preserves bifiltered colimits of free pseudo-algebras:

\begin{lemma}\label{colimits of free algebras}
Suppose that $\Phi$ consists of weights indexed by finitely generated lex categories and $I$ is bifiltered; then for any 2-functor $ F : I \rightarrow \Lex$, one has 
\[  U_\Phi \Phi_l (\underset{i \in I}{\bicolim} \, F(i)) \simeq  \underset{i \in I}{\bicolim} \,  U_\Phi\Phi_l F(i) \]
\end{lemma}

\begin{proof}
Take a weight $ W : J \rightarrow \Set$ in $ \Phi $ and a lex functor $ D : J \rightarrow  {\bicolim}_{i \in I} \, F(i) $: this is an object of the underlying category of $ U_\Phi\Phi_l( \bicolim_I F)$. %The bifiltered bicolimit above is computed in $\Cat$ from \Cref{Lex is finitary}. 
Now for $J $ is finitely generated, it is bicompact in $\Lex $ as observed in \Cref{reviewer stupidity} so the lex functor $D$ factorizes through some $ F(i)$ in $\Lex$
% https://q.uiver.app/?q=WzAsMyxbMCwxLCJKIl0sWzEsMSwiXFxzaWdtYV9cXFNpZ21hXFx1bmRlcnNldHtJfVxcYmljb2xpbSBcXCwgRiJdLFsxLDAsIkYoaSkiXSxbMiwxLCJxX2kiXSxbMCwxLCJEIiwyXSxbMCwyLCJEJyIsMCx7InN0eWxlIjp7ImJvZHkiOnsibmFtZSI6ImRhc2hlZCJ9fX1dXQ==
\[\begin{tikzcd}
	& {F(i)} \\
	J & {\underset{I}\bicolim \, F}
	\arrow["{q_i}", from=1-2, to=2-2]
	\arrow["D"', from=2-1, to=2-2]
	\arrow["{D'}", dashed, from=2-1, to=1-2]
\end{tikzcd}\]
But this latter factorization $ D'$ defines an object of $ \Phi_lF(i)$. It is routine to check such lifts are functorial and induce the desired equivalence. 
\end{proof}

\begin{lemma}[Gluing pointswise adjunctions into global adjunctions]\label{colimits and adjunctions}
Let $I$ be a small 2-category and $ F,G : I \rightrightarrows \mathcal{C}$ parallel 2-functors admitting both a bicolimit in $\mathcal{C}$, together with a pair of pseudonatural transformations $ R : F \Rightarrow G$ and $ L : G \Rightarrow F$ forming an adjunction $ L \dashv R$ in $ [I, \mathcal{C}]_p$. Then the induced functors $ \langle q'_iR_i \rangle_{i \in I} $ and $ \langle q_iL_i \rangle_{i \in I}$ between the respective colimits form an adjunction. 
\end{lemma}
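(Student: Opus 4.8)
The plan is to show that $\bar L := \langle q_i L_i \rangle_{i \in I}$ is left biadjoint to $\bar R := \langle q'_i R_i \rangle_{i \in I}$, where $q : F \Rightarrow \Delta_{\bicolim F}$ and $q' : G \Rightarrow \Delta_{\bicolim G}$ denote the colimiting pseudococones. The whole argument rests on two standard facts: that the universal property of bicolimits is pseudonatural not only in the tip but also in the diagram variable, and that contravariant hom-$2$-functors preserve adjunctions, turning a left adjoint into a right adjoint. Conceptually this is just the remark that $\bicolim$ is a partial left biadjoint to the diagonal $\Delta : \mathcal{C} \to [I,\mathcal{C}]_p$, hence a pseudofunctor on the diagrams where it is defined, and so carries the formal adjunction $L \dashv R$ to a formal adjunction; but I would rather spell it out through hom-equivalences to keep control of the strictness we work with.

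First I would record the correspondence of precompositions. For every $B$ in $\mathcal{C}$ the defining equivalence reads $\mathcal{C}[\bicolim F, B] \simeq [I,\mathcal{C}]_p[F, \Delta_B]$, and similarly for $G$, pseudonaturally in $B$. Unwinding the construction of $\bar R$ from the cocone $q' R$, one checks that under these equivalences the precomposition functor $(-)\circ\bar R : \mathcal{C}[\bicolim G, B] \to \mathcal{C}[\bicolim F, B]$ corresponds to precomposition with $R$, that is $R^* := (-)\circ R : [I,\mathcal{C}]_p[G,\Delta_B] \to [I,\mathcal{C}]_p[F,\Delta_B]$: indeed a $1$-cell $h : \bicolim G \to B$ names the cocone $hq'$, and $h\bar R$ names $(h\bar R)q \simeq h(q'R) = (hq')\circ R$. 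Dually $(-)\circ\bar L$ corresponds to $L^* := (-)\circ L$.

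Next I would transport the adjunction. The assignment $X \mapsto [I,\mathcal{C}]_p[X, \Delta_B]$ is a $2$-functor $([I,\mathcal{C}]_p)^{\op} \to \Cat$, and a contravariant $2$-functor sends an adjunction to one of reversed handedness; applying it to $L \dashv R$ yields, for each $B$, an adjunction $R^* \dashv L^*$ between the cocone categories. By the previous step this is precisely an adjunction $\big((-)\circ\bar R\big) \dashv \big((-)\circ\bar L\big)$ between $\mathcal{C}[\bicolim G, B]$ and $\mathcal{C}[\bicolim F, B]$, and since all the equivalences together with the transported unit and counit assemble pseudonaturally in $B$, this family is pseudonatural in $B$. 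Finally, by the contravariant $2$-dimensional Yoneda representability of adjunctions — a pseudonatural family $\big((-)\circ\bar R\big) \dashv \big((-)\circ\bar L\big)$ of precomposition functors arises from a unique internal adjunction — I conclude $\bar L \dashv \bar R$ in $\mathcal{C}$.

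The only real work is coherence bookkeeping: verifying that the bicolimit equivalence is pseudonatural in the diagram variable, so that it genuinely intertwines $(-)\circ\bar R$ with $R^*$ and $(-)\circ\bar L$ with $L^*$ compatibly with the structural isomorphisms $\bar R q \simeq q'R$ and $\bar L q' \simeq qL$, and that the resulting pointwise adjunctions vary pseudonaturally in $B$ so that representability applies. I do not expect a genuine obstacle here; it is the $2$-categorical analog of the elementary fact that a left adjoint between categories induces an adjunction between the associated colimit functors, and it could equally well be packaged as the statement that the bicolimit pseudofunctor preserves the formal adjunction $L \dashv R$.
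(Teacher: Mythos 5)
Your proof is correct, but it is organized along a genuinely different route than the paper's. The paper argues by \emph{direct construction}: it whiskers the colimiting cocones with the unit and counit of $L \dashv R$, obtaining modifications $q'*\eta$ and $q*\epsilon$, uses the functoriality of the universal property of the bicolimit to induce 2-cells $\langle q'*\eta\rangle : 1_{\bicolim_I G} \Rightarrow \langle q'_iR_i\rangle\langle q_iL_i\rangle$ and $\langle q*\epsilon\rangle : \langle q_iL_i\rangle\langle q'_iR_i\rangle \Rightarrow 1_{\bicolim_I F}$ in $\mathcal{C}$, and then checks the triangle identities by appealing to the functoriality of the hom-category equivalences together with the triangle identities already satisfied by $\eta$ and $\epsilon$. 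You instead never write down the unit and counit in $\mathcal{C}$: you transport $L \dashv R$ through the contravariant hom 2-functors $[I,\mathcal{C}]_p[-,\Delta_B]$ to get pointwise adjunctions $R^* \dashv L^*$, identify these under the bicolimit equivalences with the precomposition functors $(-)\circ\bar R \dashv (-)\circ\bar L$, and conclude by representability of adjunctions. The two arguments have the same mathematical core --- both rest entirely on the pseudonaturality/functoriality of the equivalences $\mathcal{C}[\bicolim F, B] \simeq [I,\mathcal{C}]_p[F,\Delta_B]$, and unwinding your representability step (evaluating the pointwise unit and counit at identities) reproduces exactly the paper's 2-cells --- but yours buys conceptual economy (it is visibly an instance of ``partial left biadjoints are pseudofunctorial, and pseudofunctors preserve adjunctions''), at the cost of invoking a Yoneda-type representability theorem for adjunctions that itself needs proof, whereas the paper's construction is self-contained. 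The coherence bookkeeping you defer (compatibility of the equivalences with $\bar Rq \simeq q'R$ and $\bar Lq' \simeq qL$, and pseudonaturality in $B$) is of exactly the same nature as what the paper also leaves to the reader, so there is no gap on either side that the other avoids.
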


\begin{proof}
Let $q : F \Rightarrow \Delta_{\bicolim_I F}  $ be and $ q' : G \Rightarrow \Delta_{\bicolim_I G}$ the corresponding bicolimiting cocones; the natural unit $ 1_G \Rightarrow RL $ and counit $ LR \Rightarrow 1_F$ induce the following natural modifications in $ [I, \mathcal{C}]$
% https://q.uiver.app/?q=WzAsMixbMCwwLCJHIl0sWzIsMCwiXFxEZWx0YV97XFxiaWNvbGltX0kgR30iXSxbMCwxLCJxIiwwLHsiY3VydmUiOi0zfV0sWzAsMSwicVJMIiwyLHsiY3VydmUiOjN9XSxbMiwzLCJxKlxcZXRhIiwxLHsic2hvcnRlbiI6eyJzb3VyY2UiOjIwLCJ0YXJnZXQiOjIwfX1dXQ==
\[\begin{tikzcd}
	G && {\Delta_{\bicolim_I G}}
	\arrow[""{name=0, anchor=center, inner sep=0}, "q'", bend left=25, start anchor=40, from=1-1, to=1-3]
	\arrow[""{name=1, anchor=center, inner sep=0, pos=0.53}, "q'RL"', bend right=25, start anchor=-40, from=1-1, to=1-3]
	\arrow["{q'*\eta}"', shorten <=5pt, shorten >=5pt, Rightarrow, from=0, to=1]
\end{tikzcd} \hskip1cm
% https://q.uiver.app/?q=WzAsMixbMCwwLCJGIl0sWzIsMCwiXFxEZWx0YV97XFxiaWNvbGltX0kgR30iXSxbMCwxLCJxIiwwLHsiY3VydmUiOi0zfV0sWzAsMSwicUxSIiwyLHsiY3VydmUiOjN9XSxbMywyLCJxKlxcZXBzaWxvbiIsMSx7InNob3J0ZW4iOnsic291cmNlIjoyMCwidGFyZ2V0IjoyMH19XV0=
\begin{tikzcd}
	F && {\Delta_{\bicolim_I F}}
	\arrow[""{name=0, anchor=center, inner sep=0}, "q", bend left=25, start anchor=40, from=1-1, to=1-3]
	\arrow[""{name=1, anchor=center, inner sep=0, pos=0.53}, "qLR"',bend right=25, start anchor=-40, from=1-1, to=1-3]
	\arrow["{q*\epsilon}", shorten <=5pt, shorten >=5pt, Rightarrow, from=1, to=0]
\end{tikzcd}\]

But now functoriality of the universal property of the bicolimits defines two 2-cells in $\mathcal{C}$
% https://q.uiver.app/?q=WzAsMixbMSwwLCJcXGxhbmdsZSBxJ19pUl9pIFxccmFuZ2xlX3tpIFxcaW4gSX0gXFxsYW5nbGUgcV9pTF9pIFxccmFuZ2xlX3tpIFxcaW4gSX0gIl0sWzAsMCwiMV97XFxiaWNvbGltX0kgR30iXSxbMSwwLCJcXGxhbmdsZSBxJypcXGV0YSBcXHJhbmdsZSIsMCx7ImxldmVsIjoyfV1d
\[\begin{tikzcd}
	{1_{\bicolim_I G}} & {\langle q'_iR_i \rangle_{i \in I} \langle q_iL_i \rangle_{i \in I} }
	\arrow["{\langle q'*\eta \rangle}", Rightarrow, from=1-1, to=1-2]
\end{tikzcd} \hskip-0.5cm
% https://q.uiver.app/?q=WzAsMyxbMSwwLCJcXGxhbmdsZSBxX2lMX2kgXFxyYW5nbGVfe2kgXFxpbiBJfSBcXGxhbmdsZSBxJ19pUl9pIFxccmFuZ2xlX3tpIFxcaW4gSX0gIl0sWzAsMF0sWzIsMCwiMV97XFxiaWNvbGltX0kgR30iXSxbMCwyLCJcXGxhbmdsZSBxKlxcZXBzaWxvbiBcXHJhbmdsZSIsMCx7ImxldmVsIjoyfV1d
\begin{tikzcd}
	{} & {\langle q_iL_i \rangle_{i \in I} \langle q'_iR_i \rangle_{i \in I} } & {1_{\bicolim_I G}}
	\arrow["{\langle q*\epsilon \rangle}", Rightarrow, from=1-2, to=1-3]
\end{tikzcd}\]
We must prove those two 2-cells to satisfy the triangles identities of adjunctions: but this is just a consequence of the functoriality of the equivalences between homcategories, combined to the fact that $\eta$, $ \epsilon$ satisfy already those identities. 
%Adjunctions being absolute, they are pointwise so that in each $i$ i $I$ we have an adjoint pair $ L_i \dashv R_i$ with $ \eta_i, \epsilon_i$ the associated unit and counit. 
\end{proof}

\begin{lemma}[Bifiltered bicolimits of $\Phi$-exact categories are computed in Cat]\label{The phiexactmonad is finitary}
For $ \Phi$ a class of finitely generated weights, $ U_\Phi$ is bifinitary. 
\end{lemma}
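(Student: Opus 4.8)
The plan is to show that $U_\Phi$ \emph{creates} bifiltered bicolimits, following the strategy of \Cref{Algebras of a finitary pseudomonad are sigma-presentable}. Given a bifiltered $2$-category $I$ and a pseudofunctor $F : I \to \Phi_l\hy\ps\Alg$ with $F(i) = (\mathcal{C}_i, L_i)$, I would form the bicolimit $\mathcal{C} := \bicolim_{i \in I} \mathcal{C}_i$ of the underlying lex categories in $\Lex$, which by \Cref{Lex is finitary} is computed as a pseudocolimit in $\Cat$, and then equip $\mathcal{C}$ with a canonical $\Phi_l$-pseudo-algebra structure exhibiting it as the bicolimit of $F$ upstairs. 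The essential input is that $\Phi_l$ is itself bifinitary on free algebras, which is exactly \Cref{colimits of free algebras}.

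First I would produce the algebra structure. By \Cref{colimits of free algebras} we have an equivalence $\Phi_l(\mathcal{C}) \simeq \bicolim_{i\in I}\Phi_l(\mathcal{C}_i)$ of underlying lex categories, hence of $\Lex$-bicolimits since these are computed in $\Cat$. The pointwise left adjoints $L_i \dashv \iota_{\mathcal{C}_i}$ are pseudonatural in $i$ --- the invertible comparison cells carried by the transition pseudomorphisms of $F$ supplying exactly the coherence required --- so they assemble into an adjunction in $[I, \Lex]_p$ between the diagrams $\Phi_l \circ F$ and $F$. Applying \Cref{colimits and adjunctions} glues these pointwise adjunctions into a global one, whose left adjoint
\[ L = \langle q_i L_i \rangle_{i\in I} \colon \Phi_l(\mathcal{C}) \longrightarrow \mathcal{C}, \qquad L \dashv \iota, \]
is the candidate structure map.

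The main step, and the only genuine obstacle, is to verify that $L$ is lex, which upgrades $(\mathcal{C}, L)$ from a merely $\Phi_l$-lex-cocomplete category to a $\Phi_l$-pseudo-algebra, i.e.\ a $\Phi$-exact category. Here I would reuse the factoring technique already employed in \Cref{Lex is finitary}: given a finite diagram $G : J \to \Phi_l(\mathcal{C}) \simeq \bicolim_i \Phi_l(\mathcal{C}_i)$, its finite shape $J$ is bicompact in $\Cat$ by \Cref{finite cat are sigma-compact} and, using bifilteredness (\Cref{sigmacone in sigmafiltered}), $G$ factors essentially through a single stage $\Phi_l(\mathcal{C}_{i_0})$, where its limit exists since $\mathcal{C}_{i_0}$ is lex. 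By the commutation of finite weighted bilimits with bifiltered bicolimits in $\Cat$ (the closing proposition of Section~2), this stagewise limit is transported along the inclusion to compute $\bilim G$; since $L$ restricts to $L_{i_0}$ on stage $i_0$ and $L_{i_0}$ is lex, a short chase gives $L(\bilim G) \simeq \bilim(L\circ G)$. The same commutation argument, applied to the inclusions $q_i$, shows they are pseudomorphisms of $\Phi_l$-algebras.

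Finally I would check the universal property: any pseudococone of $\Phi$-exact categories under $F$ has an underlying pseudococone in $\Lex$, inducing a unique lex functor out of $\mathcal{C}$ by the universal property of $\bicolim_i \mathcal{C}_i$, which is again a pseudomorphism by the preceding commutation argument. Thus $(\mathcal{C}, L)$ is the bicolimit of $F$ in $\Phi_l\hy\ps\Alg$, and since $U_\Phi$ carries it to $\mathcal{C} = \bicolim_i \mathcal{C}_i$, the forgetful functor preserves bifiltered bicolimits, i.e.\ is bifinitary. The coherence verifications for the pseudo-algebra and pseudomorphism axioms are routine given \Cref{colimits and adjunctions} and may be left to the reader.
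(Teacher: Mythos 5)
Your proposal is correct and takes essentially the same route as the paper: both proofs combine \Cref{colimits of free algebras} (identifying $\Phi_l(\bicolim_{i}\mathcal{C}_i)$ with $\bicolim_{i}\Phi_l(\mathcal{C}_i)$) with \Cref{colimits and adjunctions} (gluing the pointwise adjunctions $L_i \dashv \iota_i$ into a lex structure map $\langle q_iL_i\rangle_{i\in I} \dashv \iota$ on the bicolimit of underlying categories). Your additional paragraphs on the lexness of $L$ and on the universal property in $\Phi_l\hy\ps\Alg$ merely expand steps the paper compresses into the remark that bifiltered bicolimits of lex categories are created in $\Cat$.
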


\begin{proof}
If $I$ is bifiltered and $ F : I \rightarrow \Phi_l\hy\Alg$ is a 2-functor, then for each $i$ in $I$ we have an adjunction with $ L_i : \Phi_l F(i) \rightarrow F(i)$ lex:
% https://q.uiver.app/?q=WzAsMixbMCwwLCJGKGkpIl0sWzIsMCwiVV9cXFBoaVxcUGhpX2xGKGkpIl0sWzAsMSwiXFxpb3RhX2kiLDIseyJjdXJ2ZSI6Mn1dLFsxLDAsIkxfaSIsMix7ImN1cnZlIjoyfV0sWzMsMiwiIiwwLHsibGV2ZWwiOjEsInN0eWxlIjp7Im5hbWUiOiJhZGp1bmN0aW9uIn19XV0=
\[\begin{tikzcd}
	{F(i)} && {U_\Phi\Phi_lF(i)}
	\arrow[""{name=0, anchor=center, inner sep=0}, "{\iota_i}"', bend right=25, start anchor=-45, from=1-1, to=1-3]
	\arrow[""{name=1, anchor=center, inner sep=0}, "{L_i}"', bend right=25, end anchor=45, from=1-3, to=1-1]
	\arrow["\dashv"{anchor=center, rotate=-90}, draw=none, from=1, to=0]
\end{tikzcd}\] 
Then we can compose each left adjoint $ L_i$ with the corresponding colimit inclusion $ q_i : F(i) \rightarrow  {\bicolim}_{i \in I} \, F(i) $ to get a cocone in $\Cat $ as below, which lifts to a functor $ \langle q_i L_i \rangle_{i \in I}$ as below from \Cref{colimits of free algebras}: 
% https://q.uiver.app/?q=WzAsNSxbMCwyLCJGKGkpIl0sWzAsMCwiXFxzaWdtYV9cXFNpZ21hIFxcdW5kZXJzZXR7aSBcXGluIEl9e1xcYmljb2xpbX0gXFwsIEYoaSkgIl0sWzIsMCwiVV9cXFBoaSBcXFBoaV9sIChcXHNpZ21hX1xcU2lnbWEgXFx1bmRlcnNldHtpIFxcaW4gSX17XFxiaWNvbGltfSBcXCwgRihpKSkiXSxbMiwyLCJcXFBoaV9sIEYoaSkgIl0sWzIsMSwiIFxcc2lnbWFfXFxTaWdtYSBcXHVuZGVyc2V0e2kgXFxpbiBJfXtcXGJpY29saW19IFxcLCAgVV9cXFBoaVxcUGhpX2wgRihpKSAiXSxbMywwXSxbMCwxLCJxX2kiXSxbMyw0XSxbNCwyLCJcXHNpbWVxIiwzLHsic3R5bGUiOnsiYm9keSI6eyJuYW1lIjoibm9uZSJ9LCJoZWFkIjp7Im5hbWUiOiJub25lIn19fV0sWzIsMSwiIiwzLHsic3R5bGUiOnsiYm9keSI6eyJuYW1lIjoiZGFzaGVkIn19fV1d
\[\begin{tikzcd}
	{\underset{i \in I}{\bicolim} \, F(i) } && {U_\Phi \Phi_l (\underset{i \in I}{\bicolim} \, F(i))} \\
	&& { \underset{i \in I}{\bicolim} \,  U_\Phi\Phi_l F(i) } \\
	{F(i)} && {U_\Phi\Phi_l F(i) }
	\arrow["L_i",from=3-3, to=3-1]
	\arrow["{q_i}", from=3-1, to=1-1]
	\arrow["q'_i"', from=3-3, to=2-3]
	\arrow["\simeq"{marking}, draw=none, from=2-3, to=1-3]
	\arrow["\langle q_iL_i \rangle_{i \in I}"', dashed, from=1-3, to=1-1]
\end{tikzcd}\]
Moreover we know this functor $ \langle q_i L_i \rangle_{i \in I}$ to be lex for the bifiltered colimits above are created by the forgetful functor $ \Lex \rightarrow \Cat$. We must prove that this functor is left adjoint to the unit of $ {\bicolim}_{i \in I} \, F(i)$. The unit $ \eta : 1_\Lex \Rightarrow \Phi_l$ is natural, while at each $ d : i \rightarrow j$ we took a pseudomorphism of algebras $(F(d), \alpha_d)$ as below
% https://q.uiver.app/?q=WzAsNCxbMCwwLCJcXFBoaV9sRihpKSJdLFswLDEsIkYoaSkiXSxbMSwwLCJcXFBoaV9sRihqKSJdLFsxLDEsIkYoaikiXSxbMCwxLCJMX2kiLDJdLFswLDIsIlxcUGhpX2xGKGQpIl0sWzIsMywiTF9qIl0sWzEsMywiRihkKSIsMl0sWzAsMywiXFxhbHBoYV9kIFxcYXRvcCBcXHNpbWVxIiwxLHsic3R5bGUiOnsiYm9keSI6eyJuYW1lIjoibm9uZSJ9LCJoZWFkIjp7Im5hbWUiOiJub25lIn19fV1d
\[\begin{tikzcd}
	{\Phi_lF(i)} & {\Phi_lF(j)} \\
	{F(i)} & {F(j)}
	\arrow["{L_i}"', from=1-1, to=2-1]
	\arrow["{\Phi_lF(d)}", from=1-1, to=1-2]
	\arrow["{L_j}", from=1-2, to=2-2]
	\arrow["{F(d)}"', from=2-1, to=2-2]
	\arrow["{\alpha_d \atop \simeq}"{description}, draw=none, from=1-1, to=2-2]
\end{tikzcd}\]
so that the data of all $ L_i$ defines a pseudonatural transformation $ U_\Phi \Phi_l \Rightarrow F$ we denote as $L$. Then, from \Cref{colimits and adjunctions}, we know that the induced $ \langle q_iL_i \rangle_{i \in I}$ and $ \langle q_i' \iota_i \rangle_{i \in I}$ form an adjunction in $ \Cat$. This achieves to prove that the $ $filtered bicolimit is canonically equiped with a structure of pseudo-algebra which is sent to the underlying bifiltered colimit of categories by $U_\Phi$, which is hence finitary. 
\end{proof}

\begin{theorem}[2-categories of $\Phi$-exact categories are finitely bipresentable]
If $ \Phi$ is a class of finitely generated weights, then $ \Phi_l\hy\ps\Alg$ is finitely bipresentable, and moreover its bifiltered colimits are computed in $\Cat$.  
\end{theorem}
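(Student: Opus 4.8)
The plan is to read this as a direct instance of the algebra-transfer theorem \Cref{Algebras of a finitary pseudomonad are sigma-presentable}, applied with base $2$-category $\Lex$ and pseudomonad $\Phi_l$. That theorem has exactly two hypotheses: the base must be finitely bipresentable, and the pseudomonad must be bifinitary. The first is precisely the theorem that $\Lex$ is finitely bipresentable, which we have just proved; so the genuine work is to certify the second and then assemble.

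To see that $\Phi_l$ is bifinitary I would unwind the definition: the underlying endofunctor of the pseudomonad is $U_\Phi \Phi_l$ (the forgetful functor composed with the free-algebra functor), so bifinitariness is the assertion that $U_\Phi \Phi_l(\bicolim_{i \in I} F(i)) \simeq \bicolim_{i \in I} U_\Phi \Phi_l F(i)$ for every bifiltered $F : I \rightarrow \Lex$. This is exactly the statement of \Cref{colimits of free algebras}; the only point to keep in mind is that, by \Cref{Lex is finitary}, the bifiltered bicolimits occurring on both sides are those of $\Lex$ and are computed in $\Cat$, so that the displayed equivalence really does express preservation of the bicolimits of $\Lex$ rather than of some ambient $\Cat$-level construction. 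With both hypotheses verified, \Cref{Algebras of a finitary pseudomonad are sigma-presentable} yields at once that $\Phi_l\hy\ps\Alg$ is finitely bipresentable and that $U_\Phi$ is finitely bi-accessible. For the refined claim that the bifiltered bicolimits are computed in $\Cat$, I would then invoke \Cref{The phiexactmonad is finitary}, which says that $U_\Phi$ itself is bifinitary: a bifiltered bicolimit in $\Phi_l\hy\ps\Alg$ is carried by $U_\Phi$ to the corresponding bicolimit in $\Lex$, and by \Cref{Lex is finitary} that one is computed in $\Cat$, so the composite computation is in $\Cat$ as asserted.

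Since every ingredient is already available, no serious obstacle remains at the level of this final statement: the difficulty has been front-loaded into the preceding lemmas — the finite bipresentability of $\Lex$, the explicit $\Cat$-level computations of bifiltered bicolimits of free and of general $\Phi$-exact categories in \Cref{colimits of free algebras} and \Cref{The phiexactmonad is finitary}, and the gluing of pointwise adjunctions into a global adjunction in \Cref{colimits and adjunctions}. The one subtlety worth flagging, and the step I would be most careful about, is purely bookkeeping of strictness and identifications: one must consistently read \Cref{colimits of free algebras} as the bifinitariness of the \emph{pseudomonad} $\Phi_l$ (its endofunctor $U_\Phi \Phi_l$) and not merely of an incidental forgetful--free composite, so that the hypothesis of \Cref{Algebras of a finitary pseudomonad are sigma-presentable} is met on the nose; granting that identification, the theorem follows immediately.
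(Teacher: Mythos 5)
Your proposal is correct and follows essentially the same route as the paper: both reduce the statement to the transfer theorem \Cref{Algebras of a finitary pseudomonad are sigma-presentable}, applied to the bifinitary pseudomonad $\Phi_l$ on the finitely bipresentable $2$-category $\Lex$, with the ``computed in $\Cat$'' clause coming from the bifinitariness of $U_\Phi$ together with \Cref{Lex is finitary}. The only difference is bookkeeping: you certify bifinitariness of the monad directly from \Cref{colimits of free algebras}, whereas the paper deduces it by composing the free functor (a left biadjoint) with $U_\Phi$ via \Cref{The phiexactmonad is finitary} --- both arguments rest on exactly the same lemmas.
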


\begin{proof}
Under those assumptions, the pseudomonad $ \Phi_l$ is finitary by \Cref{The phiexactmonad is finitary}, as a composite of $ U_\Phi\Phi_l$ which both preserve bifiltered bicolimits. Hence by \Cref{Algebras of a finitary pseudomonad are sigma-presentable} we know $ \Phi_l\hy\Alg$ to be finitely bipresentable.
\end{proof}

\begin{corollary}
The following 2-categories are finitely bipresentable:\begin{itemize}
    \item $ \textbf{Reg}$, the 2-category of small regular categories and regular functors;
    \item $ \textbf{Ex}$, the 2-category of small (Barr)-exact categories and exact functors;
    \item $ \textbf{Coh}$, the 2-category of small coherent categories and coherent functors;
    \item $ \textbf{Ext}_\omega$, the 2-category of small finitely-extensive categories and functors preserving finite coproducts;
    \item $ \textbf{Adh}$, the 2-category of small adhesive categories and adhesive functors;
    \item $ \textbf{Pretop}_\omega$, the 2-category of small finitary pretopoi and coherent functors.
\end{itemize}
\end{corollary}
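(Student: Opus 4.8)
The plan is to realise each of the six listed 2-categories as $\Phi_l\hy\ps\Alg$ for a suitable class $\Phi$ of \emph{finite} weights, and then to invoke the theorem above, which guarantees that $\Phi_l\hy\ps\Alg$ is finitely bipresentable (with bifiltered bicolimits computed in $\Cat$) whenever $\Phi$ consists of finite weights. Thus the corollary is essentially a matter of exhibiting each doctrine in the Garner--Lack lex-colimit format and checking a finiteness condition.

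First I would recall, following \cite{GarnerLacklexcolimits}, the defining class of weights for each doctrine, and invoke \Cref{Phi-exact as pseudoalgebras} to identify the corresponding 2-category of $\Phi$-exact categories and $\Phi$-exact pseudomorphisms with $\Phi_l\hy\ps\Alg$. Concretely: $\textbf{Reg}$ arises from the weight whose lex colimit is the (stable) quotient of a kernel pair; $\textbf{Ex}$ from the weights encoding quotients of internal equivalence relations; $\textbf{Coh}$ from the weights for stable finite joins of subobjects; $\textbf{Ext}_\omega$ from the weights for disjoint and stable finite coproducts; $\textbf{Adh}$ from the weights for van Kampen pushouts of monomorphisms; and $\textbf{Pretop}_\omega$ from the union of the classes realising exactness and finite extensivity.

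The only genuine verification — and the crux of the argument — is that in each case the indexing category $I$ of the weight $W : I^{\op} \to \Set$ is essentially finite and each value $W(i)$ is a finite set, i.e. that $\Phi$ consists of finite weights in the sense of \Cref{finiteweibili}. This is immediate for the kernel-pair/coequalizer weight, the finite-join weight and the finite-coproduct weights, since all diagrams involved are patently finite. For $\textbf{Ex}$ one uses that an internal equivalence relation is presented by a finite diagram (a parallel pair together with reflexivity, symmetry and transitivity data), so its associated weight is again finite; likewise a van Kampen square is a finite diagram, so the adhesive weights are finite. The pretopos case is then immediate, being a finite union of finite classes of finite weights.

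With finiteness established, the theorem above applies verbatim to each class $\Phi$, yielding that each of $\textbf{Reg}$, $\textbf{Ex}$, $\textbf{Coh}$, $\textbf{Ext}_\omega$, $\textbf{Adh}$ and $\textbf{Pretop}_\omega$ is finitely bipresentable. I expect the main subtlety to lie precisely in the bookkeeping of finiteness for the exact and adhesive cases, where the exactness condition, though instantiated at infinitely many objects, is nevertheless \emph{generated} by a single finite weight; this is exactly the phenomenon for which the lex-colimit formalism of \cite{GarnerLacklexcolimits} was designed, so no essentially new obstacle arises beyond confirming that each generating weight has essentially finite domain and finite values.
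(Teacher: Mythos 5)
Your proposal is correct and follows essentially the same route as the paper's own proof: the paper simply cites \cite[Sec. 5]{GarnerLacklexcolimits} to identify each of $\textbf{Reg}$, $\textbf{Ex}$, $\textbf{Coh}$, $\textbf{Ext}_\omega$, $\textbf{Adh}$ as the 2-category of $\Phi$-exact categories for a class $\Phi$ of finite weights (with $\textbf{Pretop}_\omega$ handled as exact plus extensive), and then applies the preceding theorem. Your additional bookkeeping on the finiteness of each generating weight is exactly the verification the paper delegates to Garner--Lack, so nothing is missing.
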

\begin{proof}
By \cite[Sec. 5]{GarnerLacklexcolimits} these are all in the hypotheses of the theorem above, as in each case, the weights are indexed by the free lex category over some finite category. Indeed, $\textbf{Reg}$ is \cite[Sec. 5.1]{GarnerLacklexcolimits}, $ \textbf{Ex}$ is \cite[Sec. 5.2]{GarnerLacklexcolimits}, $ \textbf{Coh}$ is \cite[Sec. 5.6]{GarnerLacklexcolimits}, $ \textbf{Ext}_\omega$ is \cite[Sec. 5.3]{GarnerLacklexcolimits},   $ \textbf{Adh}$ is \cite[Sec. 5.7]{GarnerLacklexcolimits}. Pretopoi are just exact and extensive categories.
\end{proof}

\begin{remark}
    Notice that for classes of weights that are not indexed by finitely generated lex categories, the 2-category of $\Phi$-exact categories is not expected to be finitely bipresentable: for instance the 2-category of categories with filtered colimits where filtered colimits commutes with finite limits discussed in \cite[Sec. 5.9]{GarnerLacklexcolimits}. This would not be surprising, as those correspond to categories with unbounded operations, in analogy with the fact that \emph{preframes} as defined in \cite{elephant}[Part C, Remark C1.1.2] are not finitely presentable. 
\end{remark}

\section*{Acknowledgements}

The first author was supported by the Swedish Research Council (SRC, Vetenskapsrådet) under Grant No. 2019-04545. The research has received funding from Knut and Alice Wallenbergs Foundation through the Foundation’s program for mathematics. He is also grateful to the IRIF for their invitation in December 2021, when the the main proof of the last Section was found.
Both the authors are grateful to Paul-André Melliès for his support towards this project. We are also grateful to Eduardo Dubuc (whose previous work \cite{descotte2018sigma} was an important inspiration to us) for remarks on the first version of this paper. Similarly, we are grateful to John Bourke for a careful reading of the first version of this paper as well as important remarks. 
The authors are very grateful to the anonymous referee for their comments, which led to an improvement in the exposition of the paper and to the addition of \Cref{monoidalcats}.

\bibliography{Bib}
\bibliographystyle{alpha}

\vfill

 \begin{minipage}{0.49\textwidth}
Ivan Di Liberti\newline
Department of Mathematics\newline
Stockholm University\newline
Stockholm, Sweden\newline
\href{mailto:diliberti.math@gmail.com}{\sf diliberti.math@gmail.com}
  \end{minipage}
 \begin{minipage}{0.49\textwidth}
Axel Osmond\newline
Istituto Grothendieck \newline
Paris, France\newline
\href{mailto:axelosmond@orange.fr}
{\sf axelosmond@orange.fr}
  \end{minipage}

%\printbibliography

\end{document}